\documentclass[12pt,reqno]{amsart}

\usepackage[utf8]{inputenc}
\usepackage{listings}
\usepackage{amssymb}
\usepackage{amsmath}
\usepackage{amsthm}
\usepackage{amsfonts}
\usepackage{bbm}
\usepackage{bookmark}
\usepackage{hyperref}
\hypersetup{pdfstartview={FitH}}
\usepackage{color}
\usepackage{xcolor}
\usepackage{mathrsfs}
\usepackage{enumitem}
\usepackage[normalem]{ulem}
\usepackage{tikz}
\usepackage{tikz-3dplot}
\usetikzlibrary{trees,shapes,backgrounds}
\usepackage{caption}
\usetikzlibrary{arrows, arrows.meta, calc, positioning}
\usetikzlibrary{shapes.multipart}
\usepackage{graphicx}
\usepackage{accents}
\usepackage{calc}

\usepackage{fontawesome5}
\newcommand{\auto}{{({\tiny\faCar})}}

\theoremstyle{plain}
\newtheorem{theorem}{Theorem}[section]
\newtheorem{exttheorem}[theorem]{External Theorem}
\newtheorem{lemma}[theorem]{Lemma}

\newtheorem{proposition}[theorem]{Proposition}

\newtheorem{definition}[theorem]{Definition}

\numberwithin{equation}{section}
\newtheorem*{theorem*}{Theorem}

\theoremstyle{remark}
\newtheorem{rem}{Remark}[section]

\newcommand{\uses}[1]{}
\newcommand{\usesdefs}[1]{}
\newcommand{\leanok}{}
\newcommand{\lean}[1]{}

\DeclareRobustCommand{\leanlink}[1]{%
    \href
    {https://pjroos.com/lean-nct/docs/find/?pattern=#1\&strict=false\#doc}
    {\ensuremath{\forall}}
}
\newcommand{\Z}{{\mathbb Z}}
\newcommand{\R}{{\mathbb R}}
\newcommand{\Rm}{{\mathbb R}^m}
\newcommand{\N}{{\mathbb N}}
\newcommand{\C}{{\mathbb C}}

\newcommand{\M}{\mathbf{M}}
\newcommand{\F}{\mathbf{F}}

\DeclareMathOperator{\dist}{dist}
\DeclareMathOperator{\supp}{supp}

\DeclareFontFamily{U}{mathx}{\hyphenchar\font45}
\DeclareFontShape{U}{mathx}{m}{n}{
<5> <6> <7> <8> <9> <10>
<10.95> <12> <14.4> <17.28> <20.74> <24.88>
mathx10
}{}
\DeclareSymbolFont{mathx}{U}{mathx}{m}{n}
\DeclareFontSubstitution{U}{mathx}{m}{n}
\DeclareMathAccent{\widecheck}{0}{mathx}{"71}

\title[Norm-variation of multiple ergodic averages]{A blueprint for the formalization of norm-variation of multiple ergodic averages for commuting transformations}

\author[van Doorn]{Floris van Doorn}
\address{Mathematical Institute, \newline
	University of Bonn,
	Endenicher Allee 60, 53115 Bonn,
	Germany}
\email{vdoorn@math.uni-bonn.de}

\author[Durcik]{Polona Durcik}
\address{Schmid College of Science and Technology, \newline  Chapman University, One University Drive, Orange, CA 92866, USA}
\email{durcik@chapman.edu}

\author[Roos]{Joris Roos}
 \address{Department of Mathematics and Statistics, \newline
	University of Massachusetts Lowell, 1 University Ave,
	Lowell, MA 01854, USA}
\email{joris\_roos@uml.edu}

\author[Slav\'ikov\'a]{Lenka Slav\'ikov\'a}
\address{Department of Mathematical Analysis, Faculty of Mathematics and Physics, \newline Charles University, Sokolovsk\'a 83, 186 75 Praha 8, Czech Republic}
\email{slavikova@karlin.mff.cuni.cz}

\author[Thiele]{Christoph Thiele}
 \address{Mathematical Institute, \newline
	University of Bonn,
	Endenicher Allee 60, 53115 Bonn,
	Germany}
\email{thiele@math.uni-bonn.de}

\date{\today}
\subjclass[2020]{Primary 42B20, 37A30; Secondary 42B15, 68V20.}

\begin{document}
\tdplotsetmaincoords{70}{110}

\begin{abstract}
This blueprint serves as a companion to a forthcoming, shorter traditional mathematical paper. The purpose of this blueprint is two-fold: first, it has served as the foundation for a formalization in Lean 4 of these results. This formalization has been completed largely automatically, making essential use of current frontier large language models. Second, it will serve as a resource to readers of the main paper who are interested in further technical details of the proofs.

The main result concerns norm-variation estimates for multiple ergodic averages associated with $n\ge 2$ commuting measure preserving transformations, providing a quantitative strengthening of Tao's norm-convergence theorem and answering an open question of Avigad and Rute. At the core of the analysis lies an explicit real-variable estimate for twisted multilinear averages that is closely related to certain singular Brascamp--Lieb inequalities.   \end{abstract}

\maketitle

\section{Introduction}

Let $n$ be a natural number, $X$ a measurable space and $\mu$ a measure on $X$. 
Recall that that a map $T: X \to X$ is called a {\em measure preserving transformation} if for every measurable $A\subset X$ the set $T^{-1}(A)$ is measurable and has the
same measure as $A$. For a non-negative integer $n$ let $[n)=\{k\in \N\,:\,0\le k<n\}$.

\begin{definition}[Multiple ergodic average \leanlink{nCT.multipleErgodicAverage}]\label{ergodic averages}\lean{nCT.multipleErgodicAverage}\uses{}\usesdefs{}
Let $\mathbf{f}=(f_j)_{j\in [n)}$ be an $n$-tuple of complex-valued functions on $X$ and let $(T_j)_{j\in [n)}$ be an $n$-tuple of maps $X\to X$. For every positive integer $N$ and every $x\in X$, define the multiple ergodic average
\begin{equation}\label{eq:averages-multiple}
M_{N}(\mathbf{f})(x) = N^{-1} \sum_{i \in [N)} \prod_{j \in [n)} f_j(T_j^i x).
\end{equation}
\end{definition}

\begin{theorem}[Main ergodic theorem \leanlink{nCT.main_ergodic_theorem}]\label{thm:ergodicthm}\lean{nCT.main_ergodic_theorem}\leanok\uses{thm:ergodicthmJ,jump estimates imply variation}\usesdefs{ergodic averages}
Let $n\ge 2$ be an integer and let $r>2^{n-1}$, or $r\ge 2$ if $n=2$. 
Let $(X,\Sigma,\mu)$ be a $\sigma$-finite measure space, 
let $(T_j)_{j\in [n)}$ be an $n$-tuple of mutually commuting measure 
preserving transformations on $X$, 
and let $\mathbf{f}=(f_j)_{j\in [n)}$ be an $n$-tuple 
of complex-valued measurable functions on $X$
such that $\|f_j\|_{L^{2n}(X)}<\infty$ for all $j\in[n)$. 
Then $M_N(\mathbf{f})$ is measurable with 
finite $L^2(X)$ norm for every
positive integer $N$ and
\begin{equation}\label{main ergodic variation estimate}
 \| M_{N} (\mathbf{f}) \|_{V_{r}(L^2(X))}
\le C_{\ref{thm:ergodicthm},n,r}
 \prod_{j\in [n)} \|f_j\|_{2n},
\end{equation}
where for $n=2$,
$C_{\ref{thm:ergodicthm},2,r}=2^{344}$,
and, for $n\ge3$,
$C_{\ref{thm:ergodicthm},n,r}
=
2^{4n+337}
\Big(\frac{r}{r-2^{n-1}}\Big)^{1/r}.$
\end{theorem}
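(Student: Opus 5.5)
The theorem is proved through a jump estimate, as the dependency list indicates (\texttt{thm:ergodicthmJ} and \texttt{jump estimates imply variation}). The plan has four steps.

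\textbf{Step 1: measurability and square integrability.} Each $M_N(\mathbf f)$ is a finite sum of products of the measurable functions $f_j\circ T_j^i$. Each such function lies in $L^{2n}$ with norm $\|f_j\|_{2n}$, because $T_j$ is measure preserving. By H\"older's inequality with $n$ factors, each product has $L^2$ norm at most $\prod_j\|f_j\|_{2n}$. So $M_N(\mathbf f)\in L^2$.

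\textbf{Step 2: reduction to a jump inequality.} I prove, for every $\lambda>0$,
\[
\lambda\, J_\lambda\big(M_N(\mathbf f)\big)^{1/2^{n-1}} \lesssim \prod_{j}\|f_j\|_{2n},
\]
where $J_\lambda$ counts the maximal number of disjoint intervals $[N_{k-1},N_k)$ with $\|M_{N_k}-M_{N_{k-1}}\|_{L^2}\ge\lambda$. For $n=2$ this is a $\lambda\sqrt{J_\lambda}$ bound. The abstract lemma ``jump estimates imply variation'' then turns a $\lambda J_\lambda^{1/p}$ bound into a $V_r$ bound for every $r>p$. It does so by summing dyadic pieces $\lambda\sim 2^{-k}$, which gives the constant $(r/(r-p))^{1/r}$ with $p=2^{n-1}$.

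For $n=2$ we need $r=2$ itself, so this endpoint must come separately. I expect the paper's argument there to be an endpoint estimate, either a direct $V_2$ bound in the bilinear case or a square-function bound that controls $V_2$ directly, which is why $C_{2,r}$ does not depend on $r$.

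\textbf{Step 3: transference to a real-variable problem.} Following the Calder\'on transference principle, I fix $x$ and use commutativity to write
\[
\prod_j f_j(T_j^i x)=\prod_j F_j(\cdot),
\]
where the $F_j$ are functions on $\Z^n$ defined by $F_j(m)=f_j(T_0^{m_0}\cdots T_{n-1}^{m_{n-1}}x)$, cut off to a large box. The averages $M_N$ then become averages along the diagonal direction $e_j$ in $\Z^n$. I integrate over $x$, using measure preservation and $\sigma$-finiteness, and pass to the continuum through a comparison between the discrete averages and smooth averages. This reduces matters to a real-variable estimate for twisted multilinear averages of functions $F_j$ on $\R^n$ that do not depend on the $j$-th coordinate. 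That is the Brascamp--Lieb type estimate the introduction mentions.

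\textbf{Step 4: the main obstacle.} The hard step is this real-variable jump or variation estimate. I would split each difference of averages into a smooth (Gaussian-type) average plus a rough error. The error part is handled by a square-function and Cauchy--Schwarz argument, using the $L^{2n}$ bounds. For the smooth part I would use a telescoping ``positivity'' or heat-flow identity, in which the derivative in the scale parameter is written as a sum of terms, each involving a derivative in one variable. I would then apply Cauchy--Schwarz $n-1$ times to eliminate the functions one at a time. Each application halves the exponent, which produces the $2^{n-1}$ threshold in $r$ and the factor $2^{4n}$ in the constant. Keeping track of the explicit numerical constant $2^{337}$ is bookkeeping.
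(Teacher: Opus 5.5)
Your top-level skeleton is right (Step 1 by H\"older, which is fine, then a jump-type estimate fed into the abstract lemma), but the intermediate estimate you choose is the wrong one. A bound $\lambda J_\lambda^{1/2^{n-1}}\lesssim\prod_j\|f_j\|_{2n}$ is of weak type. It gives $V_r$ only for $r>2^{n-1}$ and can never give the $V_2$ bound the theorem asserts for $n=2$: a sequence oscillating with amplitude $m^{-1/2}$ at step $m$ has $\lambda J_\lambda^{1/2}\lesssim 1$ but infinite $V_2$ norm. So the case $n=2$, $r=2$ is left open in your proposal.

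The paper's input, Theorem~\ref{thm:ergodicthmJ}, is instead an $\ell^2$ bound on $J$ consecutive jumps:
\[
\sum_{j\in[J)}\|M_{N_{j+1}}(\mathbf f)-M_{N_j}(\mathbf f)\|_2^2\le 2^{8n+672}J^{1-2^{2-n}}\prod_{j\in[n)}\|f_j\|_{2n}^2\quad\text{for all } 0<N_0<\cdots<N_J .
\]
This implies your $\lambda$-jump bound but is strictly stronger.

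For $n=2$ the power of $J$ is zero, so it is literally the $V_2$ estimate with constant $2^{344}$. Monotonicity of $\ell^r$ norms then gives every $r\ge 2$, and no separate endpoint argument is needed.

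For $n\ge3$, Lemma~\ref{jump estimates imply variation} does not sum dyadic level sets in $\lambda$. It takes the $m$ largest jumps and splits them by parity of the index, so the chosen intervals are nonadjacent. It then reapplies the hypothesis to the increasing list of their endpoints and gets $d_m^*\le 2^{1-2^{1-n}}D^{1/2}m^{-2^{1-n}}$. Summing $m^{-r/2^{n-1}}$ yields exactly $2^{1-2^{1-n}}(r/(r-2^{n-1}))^{1/r}D^{1/2}\le 2^{4n+337}(r/(r-2^{n-1}))^{1/r}\prod_j\|f_j\|_{2n}$. Dyadic pigeonholing in $\lambda$ loses constant factors, so on your route the explicit constant is not mere bookkeeping.

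The second missing idea is how the symmetric $L^{2n}$ hypothesis is reached. The iterated Cauchy--Schwarz scheme you sketch does not produce symmetric exponents. Each application doubles the number of copies, and hence the H\"older exponents, of the functions that survive. Accordingly, Theorem~\ref{thm:nct main real} is proved only under $\|f_i\|_{2^{\min(n,i+2)}}=1$, i.e.\ for exponents $(4,8,\ldots,2^n,2^n)$, which differ from $(2n,\ldots,2n)$ once $n\ge 3$.

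The paper bridges this in three steps:
\begin{itemize}
\item it permutes coordinates (Proposition~\ref{permutation of endpoint estimate});
\item it notes that $((2n)^{-1},\ldots,(2n)^{-1})$ lies in the convex hull of the permutations of $(2^{-2},\ldots,2^{-n},2^{-n})$ (Proposition~\ref{exponent polytope});
\item it applies multilinear complex interpolation (Theorem~\ref{multilinear interpolation external}).
\end{itemize}
Only then does it invoke Calder\'on transference (Theorem~\ref{Calderon transference external}), with complex inputs split into real and imaginary parts.

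This is also where the $2^{4n}$ comes from. The constant $C_{\ref{thm:ergodicthmJ},n}=2^{2n}\cdot2^{4n+6}\cdot2^{2n}\cdot2^{666}$ collects the two complexification losses, the transference constant and the real-variable constant. Its square root is $2^{4n+336}$, and it has nothing to do with the number of Cauchy--Schwarz steps. Your Steps 3--4 are only a heuristic outline of the content behind Theorem~\ref{thm:ergodicthmJ}. For this theorem you should invoke that result in the $\ell^2$-jump form displayed above.
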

  The definition of the $r$-variation norm is given in Definition \ref{variation seminorm on positive integers}.
Since finite variation of a sequence implies convergence of the sequence and since $L^\infty(X)\subset L^{2n}(X)$ if $X$ is a probability space, Theorem \ref{thm:ergodicthm} immediately recovers Tao's theorem  \cite[Theorem 1.1]{Tao07} for $n\ge 2$, which concerned qualitative norm convergence. Ergodic-theoretic proofs of Tao's theorem were also given by Austin \cite{Austin10}, Host \cite{Host09}, and a generalization to nilpotent groups was proved in \cite{Walsh12}.
Norm-convergence for a single ergodic average with $n=1$ and  $r\ge 2$ was proved by Jones, Ostrovskii, and Rosenblatt \cite{JonesOstrovskiiRosenblatt96}. 
Theorem \ref{thm:ergodicthm} answers a question that was stated by Avigad and Rute in the closing section of \cite{AvigadRute15}. 
Versions of Theorem \ref{thm:ergodicthm} were proved for $n=2$ in \cite{DurcikKovacSkrebThiele19,Kovac16}, and for $n=3$ in \cite{DurcikSlavikovaThiele26}.

Theorem~\ref{thm:ergodicthm} is  reduced  to Theorem~\ref{thm:nct main real} below  by the Calder\'on transference principle; see  \cite{DurcikKovacSkrebThiele19} and  Pernegger \cite{Pernegger2025}, who also formalized this reduction.  Theorem~\ref{thm:nct main real} is an a priori estimate over the real numbers. The proof of this latter estimate is the heart of the matter of this paper. 
It is closely related to the study of cancellation estimates for the simplex Hilbert transform \cite{DurcikKovacThiele19}  and  the singular Brascamp--Lieb estimates  \cite{Kovac12,DurcikSlavikovaThiele22} in real harmonic analysis, but for more general kernels with multi-parameter features. We also refer the reader to   the survey \cite{DurcikThiele21survey}.

\begin{definition}[Twisted average \leanlink{nCT.twistedAverage}, \leanlink{nCT.rescaledTwistedAverage}]\lean{nCT.twistedAverage,nCT.rescaledTwistedAverage}
Let $n\in\mathbb{N}$. For an $n$-tuple of real-valued functions $\mathbf f = (f_i)_{i\in [n)}$ on $\R^n$, a function $\chi:\R\to\R$ and $x\in\mathbb{R}^n$ denote
\begin{equation}\label{A_def}\uses{}\usesdefs{}
A(\chi,\mathbf f)(x) = \int_{\mathbb{R}} \chi(s)\Big(\prod_{i\in [n)} f_i(x+se_i)\Big)\,ds,
\end{equation}
whenever the integrand is measurable and integrable.
Also set $A_t(\chi,\mathbf{f})=A(\chi_{(t)},\mathbf{f})$, where $\chi_{(t)}(x)=t^{-1}\chi(t^{-1}x)$ for $t>0$.
\end{definition}

\begin{theorem}[Main twisted theorem \leanlink{nCT.main_twisted_theorem}]\label{thm:nct main real}\uses{lem:smoothingdecomp,lem:norm_A_sum_le_sum,lem:mainbump1,lem:mainbump2,lem:leftbump,lem:leftbump1,constant main bump one,constant main bump two,constant left bump,constant left bump one}\usesdefs{A_def,defn:window based bump functions}
\lean{nCT.main_twisted_theorem}
\leanok
Let $n\ge 2$ be an integer. For every $J\in\mathbb{N}$, positive real numbers $t_0<t_1<\cdots<t_J$, and every $n$-tuple of real-valued Schwartz functions ${\mathbf f}=(f_i)_{i\in[n)}$ satisfying
\begin{equation}\label{main twisted normalization}
\|f_i\|_{2^{\min(n,i+2)}}=1
\end{equation}
for $i\in[n)$, we have 
\begin{equation}\label{e:ncommuting_real}
\sum_{j\in[J)}\|A_{t_{j+1}}(\mathbf 1_{[0,1]},\mathbf f)-A_{t_j}(\mathbf 1_{[0,1]},\mathbf f)\|_{L^2(\R^n)}^2
\le C_{\ref{thm:nct main real}}J^{1-2^{-n+2}},
\end{equation}
where $C_{\ref{thm:nct main real}}=2^{666}$.
\end{theorem}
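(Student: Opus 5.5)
We would follow the strategy indicated by the \texttt{uses} list: smooth the rough kernel, split the difference of averages into smooth pieces, and control each piece by a bump-function (square-function) estimate. We first decompose $\mathbf 1_{[0,1]}$ at each scale $t_j$ into a smooth part and a rough remainder, for instance $\mathbf 1_{[0,1]}=\mathbf 1_{[0,1]}*\varphi_{(2^{-k})}+\sum_{l\ge k}\big(\mathbf 1_{[0,1]}*\varphi_{(2^{-l-1})}-\mathbf 1_{[0,1]}*\varphi_{(2^{-l})}\big)$ with $\varphi$ a Gaussian-type bump (the smoothing decomposition lemma). After rescaling by $t_j$, the difference $A_{t_{j+1}}-A_{t_j}$ becomes a sum of twisted averages $A(\psi,\mathbf f)$ with $\psi$ smooth window-based bump functions. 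These kernels are either differences of nearby dilates, of the form $\partial_t$ of a dilated Gaussian, or boundary pieces localized near the endpoints $0$ and $1$ of the interval at scale $2^{-l}t$. We then move the norm through the sum using the triangle-type inequality $\|\sum A\|\le \sum \|A\|$ from the lemma on norms of sums, together with Cauchy--Schwarz, where the geometric weights $2^{-\epsilon l}$ coming from the boundary pieces keep the losses summable.

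Next we handle the main terms. For the smooth difference kernels we aim to show
\[
\sum_{j\in[J)} \|A_{t_{j+1}}(\phi,\mathbf f)-A_{t_j}(\phi,\mathbf f)\|_2^2\lesssim J^{1-2^{-n+2}},
\]
where $\phi$ is a Gaussian. We write $A_{t_{j+1}}-A_{t_j}=\int_{t_j}^{t_{j+1}}\partial_t A_t\,dt$ and expand the square. This reduces the left side to a multilinear form of singular Brascamp--Lieb type, $\int_0^\infty \langle t\partial_t A_t(\phi,\mathbf f),A_t(\phi,\mathbf f)\rangle\,dt/t$, plus error terms; these are the two main bump lemmas. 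The core tool is a telescoping identity from the heat equation. Since $\partial_t$ of Gaussians equals a sum of directional second derivatives, we can write $\sum_i \partial_{x_i}^2$ as a divergence and integrate by parts in $x$. This converts the form into a positive-definite part in the variable $x_0$ plus forms of the same type but with one fewer function degree, in which the variable $f_0$ has been paired off.

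Iterating this induction on $n$, with Cauchy--Schwarz applied at each stage, doubles the exponents. This is why $f_i$ is normalized in $L^{2^{\min(n,i+2)}}$, and why the gain is only $J^{1-2^{-n+2}}$: at each of the $n-2$ Cauchy--Schwarz steps we interpolate between the trivial bound $J$ (each $\|A_t\|_2\le \prod\|f_i\|$ by Hölder) and the telescoped bound $O(1)$. We obtain $J$-dependence via $\sum_{j}a_j\le J^{1-\theta}(\sum a_j^{1/(1-\theta)}\cdots)$, that is, via Hölder in $j$ at each level. The left bump lemmas treat the localized boundary pieces, whose Fourier support lacks cancellation at $0$; there we use positivity and a one-sided estimate.

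The main obstacle is the inductive positivity step. We must find, at each stage, a kernel identity that keeps every kernel of the window-based bump type with explicit constants, so that all error terms remain dominated by the same class of forms. We would try Gaussian kernels first, because they are closed under the needed products and derivatives. The explicit constant $2^{666}$ then follows by tracking the geometric sums.
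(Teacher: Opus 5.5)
Your first paragraph is, in outline, the paper's proof of this theorem: a smoothing decomposition (Lemma~\ref{lem:smoothingdecomp}), Lemma~\ref{lem:norm_A_sum_le_sum} to pass the norm through the series, the triangle inequality in $\ell^2([J))$, a $V_{2,J}$ bound for $t\mapsto A_t(\psi)$ for each piece $\psi$, and summation of geometric factors.

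Two corrections are needed for this to close. First, the decomposition must be independent of $j$; it cannot be chosen ``at each scale $t_j$''. Second, it must be the window-based one, $\mathbf 1_{[0,1]}=\phi_0+\sum_{k\ge-2}\varphi_{0,k}+\sum_{k\le-1}(\varphi_{1,k}+\varphi_{2,k})$. For that decomposition, Lemmas~\ref{lem:mainbump1}, \ref{lem:mainbump2}, \ref{lem:leftbump}, \ref{lem:leftbump1} give squared bounds with factors $1$, $2^{-2k}$, $2^{2k}$, $2^{k/2}$ times $J^{1-2^{2-n}}$, and citing them finishes the proof. A Gaussian telescoping does not fit those lemmas, because they rely on compact Fourier support. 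Lemma~\ref{L:fourier-transform-window} needs $\widehat{\phi_{0,a(j-1)}}=1$ on $\operatorname{supp}\widehat{\phi_{1,a(j)}}$. Proposition~\ref{P:vanishing-diagonal-reduction} needs $\widehat{M_j}$ supported where $|\xi+\eta|\le 2^{-1}a(j-1)^{-1}$.

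Where you instead try to prove the bump estimates yourself, there are genuine gaps.

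(i) Writing $A_{t_{j+1}}-A_{t_j}=\int_{t_j}^{t_{j+1}}\partial_tA_t\,dt$ controls a jump only at the cost of $\log(t_{j+1}/t_j)$, so long jumps are not handled. The paper splits short and long variation (Lemma~\ref{lem:shortlongjumps}) and uses \eqref{ftccs1-R} or \eqref{ftccs2-R} only inside dyadic intervals. It treats long jumps by Lemma~\ref{lem:bootstrap} for mean-zero pieces and, for $\phi_0$, by comparison with $\phi_1$ (Lemmas~\ref{lem:mainbump1_long1}--\ref{lem:mainbump1_long}).

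(ii) Your main form is a boundary term: $\int_0^\infty\langle t\partial_tA_t,A_t\rangle\,dt/t=\tfrac12\int_0^\infty\partial_t\|A_t\|_2^2\,dt$. An integral over all scales also leaves no index on which to apply H\"older in $j$. In the paper, $J$ enters only through sums over $J$ scales, $\sum_{j\in[J)}\|A_{2^{k_j}t}(\psi)\|_2^2\le CJ^{1-2^{2-n}}$ (Lemma~\ref{lem:main_aux1}). Ultimately it comes from the weight in the $\mathrm{M}(k)$ norm of Definition~\ref{kernel sequences} and the factor $\sum_{j\in[J)}\|\rho_j\|_1\lesssim J$ in Proposition~\ref{Cauchy-Schwarz at k}.

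(iii) The step you leave open, keeping every kernel in one class through the induction, is the whole of Section~\ref{sec:main argument}. Your picture of pairing off one function per Cauchy--Schwarz step and doubling the others is right; it is a reverse induction on the cube dimension $k$ of $\Lambda_k$. The mechanism, however, is not heat-equation integration by parts in $x$. It is telescoping in the scale index of Gaussian sandwich kernels (Propositions~\ref{telescoping terms} and \ref{positive terms}), plus the decomposition of the diagonal-vanishing kernels $H_\gamma$ into $L_{\gamma,\iota}$. It also requires Gaussian domination (Proposition~\ref{Gaussian domination combined}) with controlled growth of $\Delta_\gamma$.

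(iv) After rescaling, the endpoint pieces are Fourier supported in $[-1,-2^{-2}]\cup[2^{-2},1]$, so they do have cancellation. The real issue with $\varphi_{2,k}$ is that after rescaling it is centered at $2^{-k}$. This forces the rotated term in Lemma~\ref{lem:thetat_offcenter} and limits the gain to $2^{k/2}$.
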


\subsection{Use of automation}\label{sec:comments-on-automation}
A first draft of this blueprint was entirely human-written.
In particular, all mathematical decisions about the statements, the organization of the proof, and the dependencies are entirely human.

At the final editing stage when all the statements and definitions, and most of the proofs were in place, parts of the blueprint, and eventually the entire blueprint, were given to \texttt{GPT-5.6-Sol-Pro} with detailed instructions. Over several automation passes with human feedback, the present manuscript was produced.
A major goal of the automation was to compute and optimize the explicit constants and minimize the possibility of mistakes and small inaccuracies in the arguments.
Another goal was to fill in routine arguments, or arguments by analogy that were initially left out.

Consider for example Gaussian domination, Proposition \ref{Gaussian domination combined}, which features several slightly different cases. The proofs of the different cases differ largely only in a routine way (up to switching the function in which mean zero is exploited to obtain decay). Once the statements and the division into cases had been fixed by the authors, there was little value in typing out every local calculation by hand.\\

For transparency, every machine-generated piece of content is tagged with \auto\; throughout the manuscript. Everything not tagged with \auto\; is entirely human-generated, up to possible automatic fixing of typos and minor inaccuracies, tweaking of constants and other minor automated changes.\\

Separate automation passes checked all arguments for local correctness. %This does not replace human mathematical expert review, but has decreased friction during the formalization stage.
Nevertheless, the formalization still surfaced some minor discrepancies in the values of explicit constants in statements of some intermediate propositions, which have then been fixed. Automatically generated informal proofs \auto\, have undergone machine checks, but have not been thoroughly hand-checked . Thus it is possible they contain harmless inaccuracies. This is acceptable since all proofs have been successfully formalized in Lean.

\subsection{Formalization in Lean}

After this blueprint was completed, its contents were formally verified using the proof assistant Lean 4 \cite{LeanPaper} and its mathematical library \texttt{mathlib} \cite{Mathlib}. The formalization is available on GitHub at 
\begin{center}
\href{https://github.com/roos-j/lean-nct}{https://github.com/roos-j/lean-nct}
\end{center}
%and is registered on \href{TODO}{Palomar} and archived at \href{TODO}{Zenodo}.
The main statements and definitions in this introduction are annotated with links to the respective Lean statements (\ensuremath{\forall}). 
The formalization made heavy use of emerging autoformalization capabilities of current frontier models. The bulk of the work was completed using OpenAI's Codex framework with the model \texttt{5.6-terra-ultra}, with some subsequent cleanup done by Anthropic's Claude Code framework with the model \texttt{opus-5-high}. While the statements of Theorem \ref{thm:ergodicthm} and Theorem \ref{thm:nct main real} and its associated definitions have been carefully formalized by hand, virtually all of the remaining formalization was machine-generated across many extended sessions during a one-week period, with a custom instruction set and human supervision. The timeline and required amount of human effort are in striking contrast to those of previous large formalization projects in analysis such as the Carleson project \cite{CarlesonBlueprint}, \cite{CarlesonPaper}, which was predominantly human-generated and concluded its main goals before the current set of highly formalization-capable models was made publicly available.
Both the instruction set and the structure of the blueprint were designed with autoformalization in mind and to mitigate some of the problems with current Codex models during long formalization tasks, such as drifting away from target, avoiding the actual task in favor of creating more and more trivial wrappers, and creating an opaque zoo of auxiliary statements and files.

\subsection{Blueprint choices}
Several convention choices were crucial in guiding both the structure and the content of this blueprint.

\begin{itemize}
\item {\em Strict forward reasoning.} After the introduction, all theorems and definitions must exclusively use theorems that were either previously stated and proved, or are standard theorems. The proof of a theorem must directly follow the corresponding statement (with the exception of the theorems in the introduction).
This naturally aligns the blueprint with the structure of the formalization.
\item {\em Mathlib awareness.} Generally, only facts that are currently available in mathlib in some shape or form can pass as `standard theorems' that require no proof. 
We make two exceptions to this rule: an instance of the Calder\'on transference principle (Theorem \ref{Calderon transference external}), and a multilinear complex interpolation theorem (Theorem \ref{multilinear interpolation external}). These were essentially already formalized prior to this formalization project: the Calder\'on transference principle was first hand-formalized by Pernegger \cite{Pernegger2025}; complex interpolation was autoformalized by one of the authors.
\item {\em Explicit dependencies.} Whenever a named theorem is used in a proof it must be referred to explicitly. The parameters of the theorem should also usually be made explicit, unless it is clear from context (including proofs of hypotheses). This allows for reliable automatic generation of \href{https://pjroos.com/lean-nct/dependency_graph}{dependency graph} metadata.
\item {\em Treatment of constants.} Every claimed inequality must come with an explicitly defined constant, rather than a constant introduced by an existential quantifier. Every constant must be named after the corresponding statement, e.g. $C_{\ref{H kernel derivative estimate Gaussian domination}}$ for the constant of Proposition \ref{H kernel derivative estimate Gaussian domination}. All dependencies on parameters must be explicitly specified and all constants must be recursively defined in terms of previously introduced constants.
Most statements with such constants also have a separate lemma following it that unravels the recursion and provides an explicit estimate not depending on any other constants. The latter are exclusively automatically generated.
\end{itemize}

In this blueprint we also make use of a somewhat unconventional test function space, the Wiener space $W_0$ (see \S \ref{sec:wiener}). This space enjoys convenient invariance properties with respect to the fiberwise constructions common in this project (see e.g. Proposition \ref{W_0 fiber integrals}), and contains certain bump functions lacking rapid decay that play an important role in the argument (see Proposition \ref{diagonal square root}).

\subsection*{Acknowledgments}

FvD and CT were supported by
ERC Synergy Grant 101224275 and Germany's
Excellence Strategy grant
 EXC-2047/1 - 390685813.
% For blueprint, CT supported by ERC. For human paper, CT supported by CRC
PD, JR, and LS acknowledge support by 
the ERC Synergy Grant 101224275 during research stays at Bonn. PD was supported in part by NSF grant DMS-2554859 and a grant from the Simons Foundation. JR was supported in part by NSF grant DMS-2154835 and a grant from the Simons Foundation.
LS was partially supported by the grant no.\ 26-21107S of the Czech Science Foundation.

\section{Preliminaries}\label{sec:prelim}

\newcommand{\g}{\mathfrak{g}}

\subsection{Notation}
We use indexing starting at zero rather than starting at one. For example, if $x$ is a vector in $\R^n$ we write $x=(x_0,\dots,x_{n-1})$. By $\N$ we always denote the natural numbers including zero.
For $n\in\N$ recall that
\begin{equation}\label{auto:index-set-definition} [n)=\{0,\dots,n-1\}. \end{equation}
For a vector $x\in\R^n$ we denote by $|x|$ its standard Euclidean norm. For a multiindex $m\in\N^n$ we denote
\begin{equation}\label{auto:multiindex-length} |m| = \sum_{i\in [n)}m_i. \end{equation}

For a subset $I\subset [n)$ (which inherits the natural order on $[n)$) and $x\in \R^n$ we use the notation
\begin{equation}\label{auto:coordinate-subvector} x_I = (x_{i})_{i\in I} \in \mathbb{R}^{|I|} \end{equation}
and when $I\subset [n)$ and $i\in [n)$ we also write $I\setminus i$ for the set $I$ without the index $i$.

For $x\in \R^n$ let
\begin{equation}\label{auto:coordinate-sum}\Sigma(x)=\sum_{i\in[n)} x_i.\end{equation}

For $y\in(\mathbb{R}^2)^n$ we write $y=(y^m_i)_{i\in [n), m\in [2)}$, $y^m=(y^m_i)_{i\in [n)}$ and similarly if $h\in [2)^n$, then write
\begin{equation}\label{auto:binary-coordinate-selection} y^h = (y_i^{h_i})_{i\in [n)}. \end{equation}
Note there is no ambiguity in having $y^h$, $y^m$ mean different things, because the variables $m$, $h$ have different types.

For $i\in [n)$ we denote by $e_i$ the $i$th standard unit vector in $\R^n$, i.e. the vector with $(e_i)_{i'}=1$ if $i=i'$ and $=0$ otherwise.
Similarly, for $i\in [n), j\in [2)$ denote by $e_{i}^j\in (\R^2)^n$ the standard unit vector with $(e_{i}^j)_{i'}^{j'}=1$ if $i=i', j=j'$ and $=0$ otherwise.

The Fourier transform of an $L^1$ function on $\R^n$ will be defined by
\begin{equation}\label{auto:Fourier-transform-formula}\mathcal{F}f(\xi)= \widehat{f}(\xi) = \int_{\R^n} f(x) e^{-2\pi i x\cdot \xi} dx.\end{equation}
Here $x\cdot \xi=\sum_{i\in [n)} x_i \xi_i$ denotes the standard inner product on $\R^n$.
We have the usual properties of the Fourier transform. For example, $\widehat{f*g}=\widehat{f}\cdot \widehat{g}$.
The Fourier transform extends to an isometry on $L^2$: we have $\|\widehat{f}\|_2 = \|f\|_2$ for $f\in L^2$.
The inverse Fourier transform of an $L^1$ function on $\R^n$ is defined by
\begin{equation}\label{auto:inverse-Fourier-transform-formula} \mathcal{F}^{-1}g(x)= \widecheck{g}(x) = \int_{\R^n} g(\xi) e^{2\pi i x\cdot \xi} d\xi.\end{equation}

If $\phi:\R^n\to \C, \psi:\R^m\to\C$ are functions, then $\phi\otimes \psi:\R^{n+m}\to\C$ denotes their tensor product
\begin{equation}\label{auto:tensor-product-formula} (\phi\otimes \psi)(x,y) = \phi(x)\psi(y), \end{equation}
where we have identified $\R^{n+m}$ with $\R^n\times \R^m$. For $d\in\N$ also define
\begin{equation}\label{auto:tensor-power-formula}\chi^{\otimes d} = \underbrace{\chi \otimes \cdots \otimes \chi}_{d\;\text{times}}.\end{equation}

\auto\ For $r,R>0$ we use the notation
\begin{equation}\label{one dimensional annulus notation}
\mathrm{Ann}_1(r,R)=\{\xi\in\R:R^{-1}r\le|\xi|\le Rr\}.
\end{equation}

\begin{definition}\label{closed ball}\uses{}\usesdefs{}
\lean{Metric.closedBall}
    For $n\in\N$ and $r>0$ and $x\in \R^n$ denote
\begin{equation}\label{auto:closed-ball-definition} B_n(x,r) = \{ \xi\in\mathbb{R}^n\,:\, |\xi-x|\le r \} \end{equation}
\begin{equation}\label{auto:centered-closed-ball-definition} B_n(r) = B_n(0,r) \end{equation}

\end{definition}

\begin{definition}\label{gaussian}\uses{}\usesdefs{}
\lean{Auto.gaussian}
The standard Gaussian on $\R$ is defined by
\begin{equation}\label{standard Gaussian formula}
\g(x)=e^{-\pi x^2}
\end{equation}
for $x\in\R$. This is chosen so that $\int_{\R}\g=1$.
\end{definition}

For $t>0$ and a function $\psi$ on $\R^n$ define the rescaled function
\begin{equation}\label{auto:L1-normalized-dilation}
\psi_{(t)}(x)=t^{-n} \psi(t^{-1}x).
\end{equation}
This is chosen so that $\int_{\R^n} \psi_{(t)} = \int_{\R^n} \psi$.

Denote by $\mathcal{S}(\R^n)$ the space of Schwartz functions on $\R^n$ and by $\mathcal{S}'(\R^n)$ the space of tempered distributions. The Fourier transform of a distribution $T\in\mathcal{S}'(\R^n)$ is defined as
\begin{equation}\label{auto:Fourier-transform-of-distribution} \mathcal{F}(T)(\phi) = T(\mathcal{F}\phi)\quad\text{for}\;\phi\in\mathcal{S}(\R^n). \end{equation}

Unless otherwise specified, all functions can be assumed real-valued.
We shall use freely the fact that if a function, or distribution $f$ on $\R^n$ is real-valued and even, then $\mathcal{F} f$ is also real-valued.

\begin{definition}[bracket bump]\label{bracket bump}\uses{}\usesdefs{}
\lean{Auto.bracketBump,Auto.scaledBracketBump,Auto.scaledBracketBumpReal}
Define
\begin{equation}\label{auto:bracket-weight-definition}
    \left<x\right>=(1+|x|)^{-1}\, .
\end{equation}
We agree on the following parsing, which interprets the power first:
\begin{equation}\label{auto:scaled-bracket-weight}
    \left<x\right>^N_{(s)}=(\left< .\right>^N)_{(s)}(x)=s^{-1}(1+|s^{-1}x|)^{-N}\, .
\end{equation}
\end{definition}

\subsection{The Wiener space \texorpdfstring{$W_0$}{}}\label{sec:wiener}
Let $\mathbb{K}$ be $\R$ or $\C$.
Functions in this section are $\mathbb{K}$-valued.

The  Wiener space $W$ is a space which locally behaves like $L^\infty$ and globally like $L^1$
\cite{Wiener32,Groechenig}. 
In this paper we will only use the space $W_0$, which is the closed subspace of continuous functions in $W$.
The exact definition follows below.

\begin{proposition}\label{auto:local-supremum-measurability}\uses{}\usesdefs{closed ball}
\lean{Auto.localSupremumMeasurability}
\leanok
Let $d\in \N$ and $f:\R^d\to \mathbb{K}$ be continuous.
Then for $r\in (0,\infty)$, the function $g:\R^d\to [0,\infty]$  defined by
\begin{equation}\label{auto:local-supremum-function}
    g(x)=\sup_{|x-y|\le r}|f(y)|
\end{equation}
is finite and upper semi continuous and measurable.
\end{proposition}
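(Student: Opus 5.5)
The plan is to prove the three properties in order: finiteness, then upper semicontinuity, and finally measurability as a consequence of upper semicontinuity.

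\emph{Finiteness.} Fix $x\in\R^d$. The closed ball $B_d(x,r)$ of Definition \ref{closed ball} is closed and bounded in $\R^d$, hence compact (Heine--Borel). The function $|f|$ is continuous, so it attains a maximum on this compact nonempty set. Therefore $g(x)=\max_{y\in B_d(x,r)}|f(y)|<\infty$, and the supremum is in fact attained at some point $y_x\in B_d(x,r)$.

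\emph{Upper semicontinuity.} We must show that $\{x : g(x)<c\}$ is open for every $c$. Equivalently, if $x_k\to x$ and $g(x_k)\ge c$ for all $k$, then $g(x)\ge c$.
\begin{itemize}
\item By the finiteness step, choose $y_k\in B_d(x_k,r)$ with $|f(y_k)|=g(x_k)\ge c$.
\item The sequence $(y_k)$ lies in the bounded set $B_d(x,r+\sup_k|x_k-x|)$. By Bolzano--Weierstrass, a subsequence converges to some $y$.
\item Passing to the limit in $|y_k-x_k|\le r$ gives $|y-x|\le r$.
\item By continuity of $f$, $|f(y)|=\lim|f(y_k)|\ge c$, so $g(x)\ge |f(y)|\ge c$.
\end{itemize}
Applying this with $c=g(x)+\varepsilon$ shows that the set $\{g<g(x)+\varepsilon\}$ contains a neighbourhood of $x$, which is upper semicontinuity. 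This is the only step with real content. The key point is that the balls are \emph{closed}, so the limit point $y$ stays in the ball around $x$; with open balls one would only get lower semicontinuity.

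\emph{Measurability.} An upper semicontinuous function has open sublevel sets $\{g<c\}$. These generate the Borel $\sigma$-algebra on $[0,\infty]$ under preimages, so $g$ is Borel measurable; this is a standard fact available in mathlib.

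The main obstacle is the compactness and limit argument in the semicontinuity step, and it is routine. An alternative route, if more convenient for formalization, is to write $g(x)=\sup_{z\in B_d(0,r)}|f(x+z)|$. Each map $x\mapsto |f(x+z)|$ is continuous, and upper semicontinuity then follows from the uniform continuity of $f$ on compact sets: for $|x'-x|<\delta$, every point of $B_d(x',r)$ lies within $\delta$ of the compact set $B_d(x,r)$, so $g(x')\le g(x)+\varepsilon$.
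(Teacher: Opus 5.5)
Your proof is correct, but your main argument for upper semicontinuity differs from the paper's.

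The paper fixes $x$ and uses that $f$ is uniformly continuous on the compact ball $B_d(x,2r)$. For each $\lambda>g(x)$ this gives an $\epsilon>0$ with $|f(z)|<\lambda$ whenever $z\in B_d(x,2r)$ lies within $\epsilon$ of $B_d(x,r)$. Radial projection toward $x$ shows every point of $B_d(x,r+\epsilon)$ is such a $z$. Since $B_d(x',r)\subset B_d(x,r+\epsilon)$ for $|x'-x|\le\epsilon$, this yields $g\le\lambda$ on $B_d(x,\epsilon)$. That is essentially the alternative you sketch at the end.

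Your main route instead shows that the superlevel sets $\{g\ge c\}$ are sequentially closed, using attained maximizers $y_k$ and Bolzano--Weierstrass. It avoids the radial geometry and the auxiliary radius $2r$, but it is non-quantitative and relies on the sequential characterization of closed sets in $\R^d$. The paper's argument directly produces an explicit neighbourhood of $x$ on which $g\le\lambda$.

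One side remark of yours is inaccurate: that with open balls one would only get lower semicontinuity. For continuous $f$ and $r>0$, the supremum over the open ball equals the supremum over the closed ball, so the open-ball version is the same function. Moreover, your $g$ is also lower semicontinuous, being a supremum of the continuous functions $x\mapsto|f(x+z)|$ with $|z|\le r$, so $g$ is in fact continuous. This does not affect your proof.
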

\begin{proof}
 Pick $x\in \R^d$. Let $r>0$.
 The function $f$ is continuous on the compact ball $B_d(x,2r)$ (Definition \ref{closed ball}) and thus bounded and uniformly continuous on that ball. It follows that $g$ is finite. By uniform continuity, for every $\lambda>g(x)$ there is an $\epsilon$ such that for $y\in B_d(x,r)$ and $z\in B_d(x,2r)$ with $|y-z|\le \epsilon$ we have
 $|f(z)|<\lambda$. For $z\in B_d(x,r+\epsilon)$
 there is $y\in B_d(x,r)$ on the same ray emanating from $x$ as $z$ such that $|y-z|\le \epsilon$.
 Hence
 \begin{equation}
     \sup_{|y-x|\le r+\epsilon}|f(y)|\le \lambda \, .
 \end{equation}
 Hence, for  every $y\in B_d(x,\epsilon)$, we have
 $g(y)\le \lambda$. Upper semicontinuity of $g$ follows and then also measurability.
 \end{proof}

\begin{definition}\label{auto:Wiener-space-definition}\uses{auto:local-supremum-measurability}\usesdefs{}
\lean{Auto.wienerEnvelope,Auto.MemW0,Auto.wienerNorm,Auto.wienerNormOne}
Let $d\in \N$.  By Proposition \ref{auto:local-supremum-measurability}, the integrand in the following definition is finite and measurable. Define
\begin{equation}\label{auto:Wiener-local-norm}
\|f\|_{W_0,d,r}=    \int_{\R^d} \sup_{|x-y|\le r} |f(y)|\,dx.
\end{equation}
As $d$ can be inferred from the type of $f$, we
will omit it from the notation.
We also set
\begin{equation}\label{auto:Wiener-norm}
    \|f\|_{W_0}=\|f\|_{W_0,1}\, .
\end{equation}
Write $W_0(\R^d)$ or $W_0(d)$ for the set of continuous functions $f:\R^d\to\mathbb{K}$ so that
\begin{equation}\label{auto:Wiener-space-finiteness}  \|f\|_{W_0}< \infty\, . \end{equation}
We also say that $f$ is in $W_0$ as again the type of $f$
determines $d$.
\end{definition}

\begin{proposition}\label{W_0 radius independence}\uses{auto:local-supremum-measurability}\usesdefs{closed ball,auto:Wiener-space-definition}
\lean{Auto.wienerNorm_le_max_one_three_mul_div_pow_mul,Auto.memW0_iff_integrable_wienerEnvelope}
\leanok
Let $d\in \N$.
Let $0<r,s<\infty$ and set
\begin{equation}\label{auto:Wiener-radius-comparison-constant}
    C_{\ref{W_0 radius independence},d,r,s}=\max(1,(3s/r)^d)\, .
\end{equation}

Then for all continuous
$f:\R^d\to \mathbb{K}$ we have
\begin{equation}\label{W_0(r) property}
    \|f\|_{W_0,s}\le C_{\ref{W_0 radius independence},d,r,s}\|f\|_{W_0,r}\, .
\end{equation}
In particular, for every $0<r<\infty$, a  continuous
$f:\R^d\to \mathbb{K}$ is in $W_0$ if and only if $\|f\|_{W_0,r}<\infty$.
\end{proposition}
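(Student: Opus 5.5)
If $s\le r$, the local supremum over the ball of radius $s$ is pointwise at most the one over radius $r$, so $\|f\|_{W_0,s}\le\|f\|_{W_0,r}$ and the constant $1$ suffices. The substantive case is $s>r$. There I will cover the ball $B_d(x,s)$ by a fixed family of translates of $B_d(x,r)$, with a number of translates independent of $x$, and then use translation invariance of Lebesgue measure.

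\emph{Covering.} Choose a maximal $r$-separated subset $\{z_1,\dots,z_K\}$ of $B_d(0,s)$, meaning $|z_k-z_l|>r$ for $k\ne l$. By maximality, every point of $B_d(0,s)$ lies within distance $r$ of some $z_k$, so $B_d(0,s)\subset\bigcup_k B_d(z_k,r)$. The balls $B_d(z_k,r/2)$ are pairwise disjoint and lie in $B_d(0,s+r/2)$. Comparing volumes gives
\[
K\le\Big(\frac{s+r/2}{r/2}\Big)^d=\Big(\frac{2s}{r}+1\Big)^d\le\Big(\frac{3s}{r}\Big)^d,
\]
where the last step uses $s\ge r$. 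For finiteness of the separated set, note that any $r$-separated subset satisfies the same volume bound, so a maximal one exists and has at most $(3s/r)^d$ points.

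\emph{Integration.} Write $g_r(x)=\sup_{|x-y|\le r}|f(y)|$. The covering shows $g_s(x)\le\sum_k g_r(x+z_k)$ pointwise: any $y$ with $|y-x|\le s$ satisfies $y-x\in B_d(z_k,r)$ for some $k$. Proposition~\ref{auto:local-supremum-measurability} gives that each $g_r$ is measurable and finite, so we may integrate over $\R^d$. Since Lebesgue measure is translation invariant, each translate contributes $\|f\|_{W_0,r}$, and therefore
\[
\|f\|_{W_0,s}\le K\|f\|_{W_0,r}\le\max\big(1,(3s/r)^d\big)\|f\|_{W_0,r}.
\]
For the ``in particular'' statement, apply the inequality in both directions: once with the pair $(1,r)$ and once with $(r,1)$. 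This shows that $\|f\|_{W_0,r}$ is finite if and only if $\|f\|_{W_0}=\|f\|_{W_0,1}$ is finite.

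\emph{Main obstacle.} The only delicate step is the counting argument: producing an explicit finite net of cardinality at most $(3s/r)^d$ and justifying the volume comparison. If a formalization prefers to avoid maximal separated sets, a fallback is a cubic lattice net $\frac{r}{\sqrt d}\Z^d\cap B_d(0,s+r)$. That route gives a worse constant depending on $\sqrt d$, so the packing argument is the one to use to get exactly $(3s/r)^d$.
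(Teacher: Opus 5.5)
Your proof is correct and follows the paper's argument essentially step for step. Both handle the trivial nesting case $s\le r$ first. Both then take a maximal $r$-separated net in $B_d(0,s)$, count it by disjoint balls of radius $r/2$ inside $B_d(0,s+r/2)\subset B_d(0,3s/2)$, bound $g_s$ pointwise by a sum of translates of $g_r$, and finish with translation invariance. The only cosmetic difference is that you use strict separation with closed half-radius balls, while the paper uses separation $\ge r$ with open ones.
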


\begin{proof}
By Proposition \ref{auto:local-supremum-measurability}, all local suprema integrated below are measurable. If $0<s\le r<\infty$, the statement \eqref{W_0(r) property} follows with factor $1$  by nesting of the suprema. Now assume $0<r<s<\infty.$
Let $X$ be a set of points in $B_d(s)$ of
mutual distance at least $r$. As the open balls of radius $r/2$ about these points are disjoint and contained in $B_d(3s/2)$, volume comparison shows there
are at most $(3s/r)^d$ many points in $X$.
We may therefore pick a maximal set $X$ of such points. The closed balls of radius $r$ about these
points cover $B_d(s)$, because any uncovered point would contradict maximality of $X$.

By translation, for every $x\in \R^d$,
the closed ball $B_d(x,s)$ is covered by the closed balls
$B_d(x+z,r)$ with $z\in X$. It follows that
\begin{equation*}
    \|f\|_{W_0,s}\le \int_{\R^d} \sup_{z\in X} \left(\sup_{|x+z-y|\le r} |f(y)|\right)\,dx\, .
\end{equation*}
Estimating the supremum in $X$ by the sum in $X$ and pulling the sum out of the integral, we estimate the last display using translation invariance of the integral by
\begin{equation*}
         \sum_{z\in X} \int_{\R^d} \sup_{|x+z-y|\le r} |f(y)|\,dx \le \sum_{z\in X} \int_{\R^d} \sup_{|x-y|\le r} |f(y)|\,dx\le (3s/r)^d \|f\|_{W_0,r}\, .
\end{equation*}
This completes the proof of \eqref{W_0(r) property}.

The characterization of $W_0(X)$ by arbitrary $r$ follows.

\end{proof}

\begin{proposition}\label{P:lp-embedding}\uses{}\usesdefs{closed ball,auto:Wiener-space-definition}
\lean{Auto.MemW0.memLp}
\leanok
Let $d\in \N$ be given and set
\begin{equation}\label{auto:Wiener-Lp-embedding-constant}
    C_{\ref{P:lp-embedding},d}=\pi^{d/2}\Gamma(d/2+1)^{-1}\, .
\end{equation}
If $f\in W_0(d)$ and $1\le p\le \infty$, then
\begin{equation}\label{Lp W_0}
\|f\|_p\le C_{\ref{P:lp-embedding},d}^{\tfrac 1p -1}\|f\|_{W_0}    \, .
\end{equation}
\end{proposition}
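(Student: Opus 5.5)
Write $g(x)=\sup_{|x-y|\le 1}|f(y)|$, so that $\|f\|_{W_0}=\int_{\R^d} g$, and let $V=C_{\ref{P:lp-embedding},d}$ denote the volume of the unit ball $B_d(1)$. The plan is to prove the two endpoint estimates $\|f\|_\infty\le V^{-1}\|f\|_{W_0}$ and $\|f\|_1\le \|f\|_{W_0}$, and then obtain the intermediate exponents by the elementary log-convexity bound
\[
\|f\|_p^p=\int |f|^p\le \|f\|_\infty^{p-1}\|f\|_1
\]
for $1<p<\infty$. Taking $p$-th roots gives
\[
\|f\|_p\le \bigl(V^{-1}\|f\|_{W_0}\bigr)^{1-1/p}\|f\|_{W_0}^{1/p}=V^{1/p-1}\|f\|_{W_0}.
\]
The cases $p=1$ and $p=\infty$ are exactly the two endpoint estimates. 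By Proposition \ref{auto:local-supremum-measurability}, $g$ is measurable, and $f$ is continuous, hence measurable, so all the integrals involved make sense.

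For the $L^1$ endpoint, I will use the pointwise bound $|f(x)|\le g(x)$, which holds because $y=x$ is admissible in the supremum. Integrating gives $\|f\|_1\le \|f\|_{W_0}$.

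For the $L^\infty$ endpoint, I fix $x_0\in\R^d$. For every $x\in B_d(x_0,1)$ we have $|x-x_0|\le 1$, so $g(x)\ge |f(x_0)|$. Integrating only over this ball gives
\[
\|f\|_{W_0}\ge \int_{B_d(x_0,1)} g\ge V\,|f(x_0)|.
\]
Hence $\sup_{x_0}|f(x_0)|\le V^{-1}\|f\|_{W_0}$. Since $f$ is continuous, this supremum dominates the essential supremum, so the $L^\infty$ bound follows.

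The main points needing care are the identification of $V=\pi^{d/2}\Gamma(d/2+1)^{-1}$ with the Lebesgue measure of the closed unit ball, which is a standard mathlib fact, and the degenerate case $d=0$. When $d=0$, $\R^0$ is a single point with measure one and $V=1$, so both endpoint estimates reduce to $|f(0)|\le \|f\|_{W_0}$ and the argument goes through unchanged. I will also note that if $\|f\|_\infty=0$ the interpolation inequality holds trivially, so no division issues arise.
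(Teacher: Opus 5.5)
Your proposal is correct and follows the paper's proof: the $L^1$ bound from $|f(x)|\le\sup_{|x-y|\le1}|f(y)|$, the $L^\infty$ bound from integrating that local supremum over a unit ball around a point where $|f|$ is large, and log-convexity of $L^p$ norms, which you make explicit as $\int|f|^p\le\|f\|_\infty^{p-1}\|f\|_1$. One small remark: the pointwise supremum always dominates the essential supremum, so the appeal to continuity of $f$ at that step is not needed.
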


\begin{proof}
Let $f\in W_0(d)$. As $|f(x)|\le \sup_{|x-y|\le 1}|f(y)|$ for every $x\in \R^d$, integration yields
\begin{equation}
    \|f\|_1\le \|f\|_{W_0}\, .
\end{equation}
Moreover, for every $\lambda<\|f\|_\infty$  there is a point $x$ with $|f(x)|\ge \lambda$.
For every $y$ in  $B_d(x,1)$ (Definition \ref{closed ball}), we have
\begin{equation*}
    \sup_{|z-y|\le 1}
|f(z)|\ge \lambda \, .\end{equation*}
With $C_{\ref{P:lp-embedding},d}$  the volume of  $B_d(x,1)$, we obtain by integration
\begin{equation}
    \lambda C_{\ref{P:lp-embedding},d}\le \|f\|_{W_0}\, .
\end{equation}
Taking supremum over all $\lambda<
\|f\|_\infty$ gives
\begin{equation}
    \|f\|_\infty\le C_{\ref{P:lp-embedding},d}^{-1}\|f\|_{W_0}\, .
\end{equation}
Inequality \eqref{Lp W_0} follows by logarithmic convexity of the $L^p$ norms.
\end{proof}

\begin{proposition}\label{W_0 fiber integrals}\uses{P:lp-embedding,W_0 radius independence,auto:local-supremum-measurability}\usesdefs{auto:Wiener-space-definition}
\lean{Auto.exists_wienerEnvelope_fiber_and_integral_comp_injective_continuousLinearMap_bound}
\leanok
For every injective linear map $\pi:\R^{m}\times \R^{l}\to \R^n$ with $m,l\in\N$, $1\le m+l\le n$
there are $0<C_{\ref{W_0 fiber integrals},\pi,1},C_{\ref{W_0 fiber integrals},\pi,2}<\infty$ such that the following holds. For every $f\in W_0(n)$ and $u\in\R^m$, the function $h_u$ defined by
\begin{equation}\label{auto:Wiener-fiber-function}
    h_u(v)=f(\pi(u,v))
\end{equation}
is in $W_0(l)$ with
\begin{equation}\label{W_0 on fiber}
    \|h_u\|_{W_0}\le C_{\ref{W_0 fiber integrals},\pi,1} \|f\|_{W_0}\, .
\end{equation}
The function $g:\Rm\to \C$ defined by
\begin{equation}\label{auto:Wiener-fiber-integral} g(u) = \int_{\R^l} f(\pi(u,v)) \,dv= \int_{\R^l} h_u(v) \,dv \end{equation}
is in $W_0(m)$ with
\begin{equation}\label{W_0 across fibers}
    \|g\|_{W_0}\le C_{\ref{W_0 fiber integrals},\pi,2}  \|f\|_{W_0}\, .
\end{equation}
\end{proposition}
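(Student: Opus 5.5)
The plan is to first reduce to the case where $\pi$ is a linear isomorphism onto $\R^n$, and then do a direct computation with local suprema. Choose a complement: since $\pi$ is injective, its image $V$ has dimension $m+l$. Pick an injective linear $\sigma:\R^{k}\to\R^n$ with $k=n-m-l$ whose image is complementary to $V$. Then $\tilde\pi:\R^m\times\R^l\times\R^k\to\R^n$, $(u,v,w)\mapsto \pi(u,v)+\sigma(w)$, is a linear isomorphism. Let $a=\|\tilde\pi\|$ and $b=\|\tilde\pi^{-1}\|$ be its operator norms.

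\emph{Reduction to an isomorphism.} For $f\in W_0(n)$ set $F=f\circ\tilde\pi$. Balls of radius $r$ are mapped by $\tilde\pi$ into balls of radius $ar$. Together with the change of variables $x=\tilde\pi(z)$, this gives
\[
\|F\|_{W_0,1/a}\le |\det\tilde\pi|^{-1}\|f\|_{W_0}.
\]
Proposition \ref{W_0 radius independence} then bounds $\|F\|_{W_0}$ by a constant times $\|f\|_{W_0}$. So it suffices to treat $f$ on $\R^{m+l+k}$ with the coordinate inclusion $(u,v)\mapsto(u,v,0)$.

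\emph{Fiber estimate \eqref{W_0 on fiber}.} Here $h_u(v)=F(u,v,0)$, which is continuous. For every $y=(u',v',w')$ with $|y-(u,v,0)|\le 1$ we have
\[
\sup_{|v-v''|\le 1}|h_u(v'')|\le \sup_{|(u'',v'',w'')-y|\le 2}|F(u'',v'',w'')|=:G_2(y).
\]
Averaging this inequality over the unit ball in the $(u',w')$-directions centered at $(u,0)$ and integrating in $v$ gives
\[
\|h_u\|_{W_0}\le c^{-1}\|F\|_{W_0,2},
\]
where $c$ is the volume of the unit ball in $\R^{m+k}$, i.e.\ $C_{\ref{P:lp-embedding},m+k}$. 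The case $m+k=0$ is trivial. Proposition \ref{W_0 radius independence} then converts $\|F\|_{W_0,2}$ into $\|F\|_{W_0}$. Measurability of the suprema comes from Proposition \ref{auto:local-supremum-measurability}.

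\emph{Across fibers, estimate \eqref{W_0 across fibers}.} Define $g(u)=\int h_u$. By the previous step and Proposition \ref{P:lp-embedding}, $h_u\in L^1$, so $g$ is well defined. For $|u''-u|\le 1$ we have pointwise
\[
|h_{u''}(v)|\le \sup_{|y-(u,v,0)|\le 1}|F(y)|,
\]
so
\[
\sup_{|u''-u|\le1}|g(u'')|\le\int_{\R^l}\sup_{|y-(u,v,0)|\le 1}|F(y)|\,dv.
\]
Repeating the averaging trick over the $w$-directions, and integrating in $u$, bounds $\|g\|_{W_0}$ by a constant times $\|F\|_{W_0,2}$.

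\emph{Continuity of $g$.} This is the step I expect needs the most care. Take $u_k\to u$. Then $h_{u_k}\to h_u$ pointwise by continuity of $F$. A dominating function is $v\mapsto \sup_{|y-(u,v,0)|\le1}|F(y)|$, which is integrable in $v$ for this fixed $u$ by the same fiber argument: it is bounded by the fiber $W_0$-norm of the radius-$2$ envelope. Dominated convergence then gives $g(u_k)\to g(u)$.

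The degenerate cases $l=0$ (where $g=h_u$ restricted) and $m=0$ are handled by the same formulas with zero-dimensional integrals and balls. Every constant depends only on $a$, $b$, $\det\tilde\pi$ and dimensions, so it depends only on $\pi$ once $\sigma$ is fixed as a function of $\pi$.
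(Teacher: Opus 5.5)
Your proposal is correct and follows essentially the paper's argument. Both complete $\pi$ to an isomorphism with a transversal map. Both dominate the local supremum on a fiber by averaging a larger-radius full-space envelope over unit balls in the transversal directions, then change variables and invoke Proposition \ref{W_0 radius independence}, with continuity of $g$ coming from dominated convergence using the same pre-integration majorant.

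The only differences are cosmetic. You first pull back to $F=f\circ\tilde\pi$ and work in coordinates, whereas the paper keeps $\pi$ and absorbs its geometry into the single radius $R=\sqrt2\|\pi\|+\|\tau\|$. You also average over one ball in the combined $(u',w')$ variables rather than a product of balls.
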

 Note that $l,m,n$ are determined through the type of $\pi$, thus the two constants in the theorem
 may in particular depend on $l,m,n$.

\begin{proof}[Proof \auto]
By Proposition \ref{auto:local-supremum-measurability}, the local suprema of the continuous functions occurring below are measurable. Choose a linear map $\tau:\R^{n-m-l}\to\R^n$ such that
\[
(u,v,z)\longmapsto \pi(u,v)+\tau z
\]
is an isomorphism from $\R^m\times\R^l\times\R^{n-m-l}$ onto $\R^n$. Let $D>0$ be the absolute value of its determinant and set
\[
R=\sqrt{2}\|\pi\|+\|\tau\|.
\]
We use the convention that the volume of the unit ball in $\R^0$ is $1$.

Fix $u\in\R^m$. If $|v-v'|\le1$, $|u-u'|\le1$, and $|z|\le1$, then
\[
|\pi(u,v')-(\pi(u',v)+\tau z)|\le R.
\]
Averaging first over the unit balls in the $u'$ and $z$ variables, integrating in $v$, and applying the change of variables above therefore gives
\[
\|h_u\|_{W_0,l,1}
\le
D^{-1}C_{\ref{P:lp-embedding},m}^{-1}C_{\ref{P:lp-embedding},n-m-l}^{-1}
\|f\|_{W_0,n,R}.
\]
Proposition \ref{W_0 radius independence} gives \eqref{W_0 on fiber} with
\[
C_{\ref{W_0 fiber integrals},\pi,1}
=
D^{-1}C_{\ref{P:lp-embedding},m}^{-1}C_{\ref{P:lp-embedding},n-m-l}^{-1}
C_{\ref{W_0 radius independence},n,1,R}.
\]

The same estimate, before integration in $u'$, supplies an integrable majorant when $u'$ ranges in a neighborhood of $u$. Dominated convergence therefore shows that $g$ is continuous. Moreover,
\[
\int_{\R^m}\sup_{|u-u'|\le1}|g(u')|\,du
\le
D^{-1}C_{\ref{P:lp-embedding},n-m-l}^{-1}\|f\|_{W_0,n,R}.
\]
A second application of Proposition \ref{W_0 radius independence} proves \eqref{W_0 across fibers} with
\[
C_{\ref{W_0 fiber integrals},\pi,2}
=
D^{-1}C_{\ref{P:lp-embedding},n-m-l}^{-1}
C_{\ref{W_0 radius independence},n,1,R}.
\]
\end{proof}

\begin{proposition}\label{tensor Wiener}\uses{auto:local-supremum-measurability}\usesdefs{auto:Wiener-space-definition}
\lean{Auto.MemW0.fintype_tensor,Auto.fintype_tensor_wienerNorm_le}
\leanok
    Let $J\in \N$ with $J\ge 1$. For $j\in [J)$, let $I_j\subset \N$ and assume the $I_j$ are pairwise disjoint.
    Set $I=\bigcup_{j\in [J)}I_j$. Let $f_j:\R^{|I_j|}\to \mathbb{K}$ in $W_0(|I_j|)$ and define
    \begin{equation}\label{auto:Wiener-product-function}
        f(x_I)=\prod_{j\in [J)}f_j(x_{I_j})\, .
    \end{equation}
    Then $f\in W_0(|I|)$ and
    \begin{equation}\label{product W_0}
        \|f\|_{W_0}\le \prod_{j\in [J)}\|f_j\|_{W_0}\, .
    \end{equation}
\end{proposition}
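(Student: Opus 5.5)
The plan is to compare the Euclidean ball of radius $1$ in $\R^{|I|}$ with a product of balls in the factors, using only the definition of $\|\cdot\|_{W_0}$ and Fubini. First, $f$ is continuous as a finite product of continuous functions composed with coordinate projections; the empty factors, where $|I_j|=0$, are constants with $\|f_j\|_{W_0}=|f_j|$, since the integral over $\R^0$ is evaluation. By Proposition \ref{auto:local-supremum-measurability}, every local supremum below is measurable.

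The key pointwise observation is this. If $|x_I-y_I|\le 1$ in $\R^{|I|}$, then for each $j$ we have $|x_{I_j}-y_{I_j}|\le |x_I-y_I|\le 1$, because the $I_j$ are disjoint and the Euclidean norm of a subvector is at most the norm of the whole vector. Therefore
\begin{equation*}
\sup_{|x_I-y_I|\le 1}|f(y_I)|\le \prod_{j\in[J)}\sup_{|x_{I_j}-y_{I_j}|\le 1}|f_j(y_{I_j})| .
\end{equation*}
The right-hand side is a tensor product of the nonnegative measurable functions $g_j(x_{I_j})=\sup_{|x_{I_j}-y|\le1}|f_j(y)|$. Identifying $\R^{|I|}$ with $\prod_j\R^{|I_j|}$ by a coordinate permutation, which preserves Lebesgue measure, Tonelli's theorem gives
\begin{equation*}
\int_{\R^{|I|}}\prod_j g_j(x_{I_j})\,dx_I=\prod_j\int_{\R^{|I_j|}} g_j=\prod_j\|f_j\|_{W_0}.
\end{equation*}
This yields \eqref{product W_0} and also the finiteness needed for $f\in W_0(|I|)$.

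The only real obstacle is bookkeeping rather than analysis. The measure-preserving identification of $\R^{|I|}$ with the product space has to respect the inherited order on $I$. To keep this clean, I would induct on $J$: the case $J=1$ is trivial, and for the inductive step I would split off one factor, treat the product of the remaining factors as a single function on $\R^{|I\setminus I_{J-1}|}$, and apply the two-factor version of the argument above, which needs only a single Fubini step.
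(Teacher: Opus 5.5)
Your proposal is correct and follows the paper's proof essentially verbatim: the bound $|x_{I_j}-y_{I_j}|\le|x_I-y_I|$ dominates the local supremum of $f$ by the tensor product of the local suprema of the $f_j$, and Tonelli then factors the integral into $\prod_{j\in[J)}\|f_j\|_{W_0}$. The induction on $J$ you suggest is only extra bookkeeping, since the paper applies Tonelli to all $J$ factors at once.
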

\begin{proof}
    Define $g_j(x_I)=f_j(x_{I_j})$. Then $g_j$ is continuous as composition of the continuous
    projection $x_I\to x_{I_j}$ and the continuous map $f_j$. The function $f$ as product of
    continuous functions is also continuous.
We have
\begin{equation}
     \sup_{j\in [J)} |x_{I_j}-y_{I_j}|\le |x_I-y_I|\, ,
\end{equation}
and hence
\begin{equation}
    \sup_{|x_I-y_I|\le 1}|f(y_I)|\le \prod_{j\in [J)} \sup_{|x_{I}-y_{I}|\le 1}|f_j(y_{I_j})|\le
    \prod_{j\in [J)} \sup_{|x_{I_j}-y_{I_j}|\le 1}|f_j(y_{I_j})|\, .
\end{equation}
By Proposition \ref{auto:local-supremum-measurability}, each factor on the right is measurable. It follows by Tonelli that
\begin{equation}
    \int_{\R^{|I|}}\sup_{|x_I-y_I|\le 1}|f(y_I)|dx_I\le
    \prod_{j\in [J)} \int_{\R^{|I_j|}}\sup_{|x_{I_j}-y_{I_j}|\le 1}|f_j(y_{I_j})|dx_{I_j}\, ,
\end{equation}
    where each factor is finite by assumption and we have $f\in W_0$ and \eqref{product W_0}.
\end{proof}

\begin{proposition}\label{W_0 Brascamp Lieb}\uses{tensor Wiener,W_0 fiber integrals}\usesdefs{auto:Wiener-space-definition}
\lean{Auto.exists_brascamp_lieb_memW0}
\leanok
 Let  $m,l\in\N$, $m+l\le n$ and assume $l\neq 0$.
Let $J\in \N$ with $J\ge 1$. For $j\in [J)$, let $l_j\in \N$ and let  $\Pi_j:\R^m\times \R^l \to \R^{l_j}$
be a linear map such that
\begin{equation}\label{pi injective}
    \bigcap_{j\in [J)} {\rm ker}\Pi_j=\{0\}\, .
\end{equation}
Then there is $C>0$ such that the following holds.
For $j\in [J)$, let $f_j\in W_0(\R^{l_j})$. Then for each
 $u\in \R^m$, the function
 \begin{equation}\label{auto:Wiener-Brascamp-Lieb-fiber-integrand}
     v\to \prod_{j\in [J)}f_j(\Pi_{j}(u,v))
 \end{equation}
 is in $W_0(\R^l)$ and with $f$ defined by
    \begin{equation}\label{auto:Wiener-Brascamp-Lieb-integral}
        f(u)=\int_{\R^l}\prod_{j\in [J)}f_j(\Pi_{j}(u,v))\, dv \, ,
    \end{equation}
    we have $f\in W_0(\R^{m})$ and
    \begin{equation}\label{auto:Wiener-Brascamp-Lieb-bound}
        \|f\|_{W_0}\le C\prod_{j\in[J)}\|f_j\|_{W_0}\, .
    \end{equation}
\end{proposition}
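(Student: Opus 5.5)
The plan is to reduce the statement to Proposition \ref{tensor Wiener} followed by Proposition \ref{W_0 fiber integrals}. Let $L=\sum_{j\in[J)} l_j$. Define the linear map
\[
\Pi:\R^m\times\R^l\to\R^{L},\qquad \Pi(u,v)=(\Pi_j(u,v))_{j\in[J)} ,
\]
and split the target coordinates into pairwise disjoint blocks $I_j$ with $|I_j|=l_j$. By \eqref{pi injective}, $\Pi$ is injective. Moreover $1\le l\le m+l\le L$, since an injective map from an $(m+l)$-dimensional space forces $m+l\le L$, and $l\neq 0$ gives $m+l\ge 1$.

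Define $F$ on $\R^L$ by $F(x)=\prod_{j\in[J)} f_j(x_{I_j})$. Proposition \ref{tensor Wiener} shows $F\in W_0(L)$ with $\|F\|_{W_0}\le\prod_j\|f_j\|_{W_0}$. If $J=0$ were allowed we would need a separate argument, but $J\ge1$ is assumed. The integrand in \eqref{auto:Wiener-Brascamp-Lieb-fiber-integrand} is exactly $F(\Pi(u,v))$, and $f(u)=\int_{\R^l}F(\Pi(u,v))\,dv$.

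Apply Proposition \ref{W_0 fiber integrals} with $\pi=\Pi$, $n=L$. It gives that $v\mapsto F(\Pi(u,v))$ lies in $W_0(l)$, and that $f\in W_0(m)$ with
\[
\|f\|_{W_0}\le C_{\ref{W_0 fiber integrals},\Pi,2}\|F\|_{W_0}\le C_{\ref{W_0 fiber integrals},\Pi,2}\prod_j\|f_j\|_{W_0}.
\]
So we may take $C=C_{\ref{W_0 fiber integrals},\Pi,2}$, which depends only on the maps $\Pi_j$ and not on the $f_j$, as the statement requires.

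The one point needing care is bookkeeping rather than analysis. Proposition \ref{tensor Wiener} is phrased with index sets $I_j\subset\N$ and functions of $x_I$. We identify $\R^{|I|}$ with $\R^L$ by taking $I_j$ to be consecutive blocks of $[L)$, and we use that the tensor function composed with $\Pi$ equals the product $\prod_j f_j\circ\Pi_j$. The hypothesis $m+l\le n$ in the statement plays no role; the relevant dimension constraint is $m+l\le L$, which comes from injectivity. No step appears to be a genuine obstacle.
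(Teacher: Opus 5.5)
Your proposal is correct and matches the paper's proof. You stack the $\Pi_j$ into a single map $\Pi$, which is injective by \eqref{pi injective}, bound the tensor product $\prod_j f_j(x_{I_j})$ via Proposition \ref{tensor Wiener}, and apply Proposition \ref{W_0 fiber integrals} to $\Pi$, taking $C=C_{\ref{W_0 fiber integrals},\Pi,2}$; your explicit check that $1\le m+l\le L$, so that Proposition \ref{W_0 fiber integrals} applies with target dimension $L$, is a detail the paper leaves implicit.
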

Note that $m=0$ is a case in the above proposition.
\begin{proof}
Let
\begin{equation}
    I_j=[\sum_{h\le j}l_h)\setminus [\sum_{h<j}l_h)
\end{equation}
and $I=[\sum_{h<J} l_h)$.
Define
$\Pi:\R^{m}\times \R^l\to \R^{|I|}$ by
\begin{equation}
    (\Pi(u,v))_{I_j}=(\Pi_{j}(u,v))\, .
\end{equation}
Then $\Pi$ is injective by \eqref{pi injective}.

Define $g:\R^{|I|}\to \C$ by
\begin{equation}
    g(x_I)=\prod_{j\in [J)}f_j(x_{I_j})\, .
\end{equation}
By Proposition \ref{tensor Wiener}, $g\in W_0(\R^{|I|})$ with
\begin{equation}
    \|g\|_{W_0}\le \prod_{j\in[J)}\|f_j\|_{W_0}\ .
\end{equation}
By Proposition \ref{W_0 fiber integrals}, $f\in W_0$ with
\begin{equation}
    \|f\|_{W_0}\le C_{\ref{W_0 fiber integrals},\Pi,2}\|g\|_{W_0}\, .
\end{equation}

This proves Proposition \ref{W_0 Brascamp Lieb} with
\[
C=C_{\ref{W_0 fiber integrals},\Pi,2}.
\]
\end{proof}

\begin{proposition}\label{P:schwartz-into-wiener}\uses{auto:local-supremum-measurability}\usesdefs{auto:Wiener-space-definition}
\lean{Auto.SchwartzMap.memW0}
\leanok
We have $\mathcal{S}(\R^n)\subset W_0(\R^n)$.
\end{proposition}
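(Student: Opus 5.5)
Given $f\in\mathcal S(\R^n)$, continuity is automatic, so it suffices to show $\|f\|_{W_0}=\|f\|_{W_0,1}<\infty$. Proposition \ref{auto:local-supremum-measurability} guarantees that the integrand in \eqref{auto:Wiener-local-norm} is measurable, so what remains is to dominate the local supremum pointwise by an integrable function.

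Since $f$ is Schwartz, the seminorm
\[
K=\sup_{y\in\R^n}(1+|y|)^{n+1}|f(y)|
\]
is finite, and therefore $|f(y)|\le K(1+|y|)^{-n-1}$ for every $y$. Now fix $x$. If $|x-y|\le 1$, the triangle inequality gives $|y|\ge |x|-1$. From this we get $1+|x|\le 2+|y|\le 2(1+|y|)$, that is, $(1+|y|)^{-1}\le 2(1+|x|)^{-1}$. It follows that
\[
\sup_{|x-y|\le1}|f(y)|\le 2^{n+1}K(1+|x|)^{-n-1}.
\]

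The only step needing an actual calculation is that $(1+|x|)^{-n-1}$ is integrable on $\R^n$. Writing the integral in polar coordinates, it equals a constant times $\int_0^\infty r^{n-1}(1+r)^{-n-1}\,dr$. This is finite, since $r^{n-1}(1+r)^{-n-1}\le (1+r)^{-2}$. The same conclusion can also be taken directly from the integrability of $(1+|x|)^{-N}$ for $N>n$, which is a standard mathlib fact. Integrating the pointwise bound then shows $\|f\|_{W_0}<\infty$, so $f\in W_0(\R^n)$. The degenerate case $n=0$ is trivial, since every function on a point is bounded and the integral is over a single point.

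I do not expect any step to be a real obstacle. The one place that needs care is the comparison $(1+|y|)^{-1}\le 2(1+|x|)^{-1}$ for $|x-y|\le 1$. It has to be checked in both regimes, $|x|\le 1$ and $|x|>1$, but in each it reduces to the inequality $1+|x|\le 2+|y|$.
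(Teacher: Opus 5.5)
Your proposal is correct and follows the paper's own proof essentially step for step: Schwartz decay $|f(y)|\le C(1+|y|)^{-N}$ with $N>n$, the comparison $1+|x|\le 2(1+|y|)$ for $|x-y|\le 1$, measurability from Proposition~\ref{auto:local-supremum-measurability}, and integrability of $(1+|x|)^{-N}$. One remark is unnecessary: no case split into $|x|\le 1$ and $|x|>1$ is needed, because $1+|x|\le 2+|y|$ follows directly from $|x|\le |y|+|x-y|$.
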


\begin{proof}[Proof \auto]
Let $f\in\mathcal S(\R^n)$. Choose $N>n$. There is a finite constant $C$ such that
\[
|f(y)|\le C(1+|y|)^{-N}
\]
for every $y\in\R^n$. If $|x-y|\le1$, then $1+|x|\le2(1+|y|)$, and hence
\[
|f(y)|\le 2^NC(1+|x|)^{-N}.
\]
Proposition \ref{auto:local-supremum-measurability} shows that the local supremum is measurable. The right-hand side is integrable in $x$, so $f\in W_0(\R^n)$.
\end{proof}

\subsubsection{Convolution along a vector}

\begin{definition}[Convolution along a vector]\label{auto:convolution-along-vector-definition}\uses{}\usesdefs{auto:Wiener-space-definition}
\lean{Auto.convolutionAlongVector}
For $\rho\in W_0(\R^n), \varphi\in W_0(\R)$ and a vector $\alpha\in\mathbb{R}^n$ define for $x\in\mathbb{R}^n$,
\begin{equation}\label{auto:convolution-along-vector-formula} (\rho *_\alpha \varphi)(x) = \int_{\R} \rho(x-p\alpha) \varphi(p)\,dp \end{equation}
\end{definition}

\begin{proposition}[Properties of convolution along a vector]
\label{convolution vector}\uses{W_0 Brascamp Lieb}\usesdefs{auto:Wiener-space-definition,auto:convolution-along-vector-definition}
\lean{Auto.memW0_convolutionAlongVector,Auto.fourier_convolutionAlongVector}
\leanok
For $\rho \in W_0(\R^n), \varphi\in W_0(\R), \alpha,\xi\in\mathbb{R}^n$ we have $\rho *_\alpha \varphi\in W_0(\R^n)$ and
\begin{equation}\label{auto:convolution-along-vector-Fourier-transform} \widehat{\rho *_\alpha \varphi}(\xi) = \widehat{\rho}(\xi)\widehat{\varphi}(\alpha\cdot\xi). \end{equation}
\end{proposition}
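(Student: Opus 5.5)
The plan is to deduce membership in $W_0$ from Proposition \ref{W_0 Brascamp Lieb}, and then obtain the Fourier formula by Fubini followed by a change of variables.

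\emph{Membership.} Take $m=n$ and $l=1$. The integral defining $(\rho *_\alpha \varphi)(x)$ is then an integral over $v=p\in\R$ of a product of two factors, indexed by $J=2$:
\begin{itemize}
\item $f_0=\rho$, with $\Pi_0(u,v)=u-v\alpha$ (so $l_0=n$);
\item $f_1=\varphi$, with $\Pi_1(u,v)=v$ (so $l_1=1$).
\end{itemize}
Both maps are linear. Their kernels intersect trivially: $\Pi_1(u,v)=0$ forces $v=0$, and then $\Pi_0(u,v)=0$ forces $u=0$. So \eqref{pi injective} holds. The proposition requires $m+l\le n$, which reads $n+1\le n$ here and fails. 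I therefore would not apply Proposition \ref{W_0 Brascamp Lieb} literally, but go back to its proof.

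That proof uses Proposition \ref{tensor Wiener} and Proposition \ref{W_0 fiber integrals}. The map $(u,v)\mapsto(u-v\alpha,v)\in\R^{n+1}$ is an injective linear map $\R^n\times\R\to\R^{n+1}$. Applying Proposition \ref{W_0 fiber integrals} with ambient dimension $n+1$ and $m+l=n+1$, and Proposition \ref{tensor Wiener} to $\rho\otimes\varphi\in W_0(n+1)$, gives $\rho*_\alpha\varphi\in W_0(\R^n)$ with norm bounded by $C\|\rho\|_{W_0}\|\varphi\|_{W_0}$. The $n$ appearing in Proposition \ref{W_0 Brascamp Lieb} is only an ambient dimension, so this instance is exactly the same argument with $n$ replaced by $n+1$. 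The same application also shows that for each $x$ the integrand $p\mapsto\rho(x-p\alpha)\varphi(p)$ is integrable, so the definition makes sense.

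\emph{Fourier formula.} By Proposition \ref{P:lp-embedding}, $\rho$ and $\varphi$ lie in $L^1$, and so does $\rho*_\alpha\varphi$. The function
\[
(x,p)\mapsto \rho(x-p\alpha)\varphi(p)
\]
is absolutely integrable on $\R^{n}\times\R$: by Tonelli and translation invariance, its $L^1$ norm is $\|\rho\|_1\|\varphi\|_1$. Fubini therefore gives
\[
\widehat{\rho*_\alpha\varphi}(\xi)=\int_\R\varphi(p)\int_{\R^n}\rho(x-p\alpha)e^{-2\pi i x\cdot\xi}\,dx\,dp .
\]
Substituting $x=y+p\alpha$, the inner integral equals $\widehat\rho(\xi)e^{-2\pi i p\,\alpha\cdot\xi}$. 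The remaining integral in $p$ is then $\widehat\varphi(\alpha\cdot\xi)$, which yields \eqref{auto:convolution-along-vector-Fourier-transform}.

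\emph{Main obstacle.} The only real obstacle is the dimension bookkeeping in the first step, namely the mismatch with the hypothesis $m+l\le n$. I would resolve it by invoking the Wiener-space propositions in dimension $n+1$, as above. The remaining steps are routine measurability and Fubini checks.
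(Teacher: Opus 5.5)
Your proposal is correct and follows the paper's proof. Membership in $W_0$ comes from the Wiener-space Brascamp--Lieb machinery (Propositions \ref{tensor Wiener} and \ref{W_0 fiber integrals}), and the Fourier identity comes from Fubini together with the substitution $x\mapsto x+p\alpha$. You are also right that the hypothesis $m+l\le n$ of Proposition \ref{W_0 Brascamp Lieb} literally fails for $m=n$, $l=1$, which the paper's one-line ``Apply Proposition \ref{W_0 Brascamp Lieb}'' glosses over. This is harmless: that proof only uses injectivity of $\Pi$ into $\R^{n+1}$, exactly as you argue.
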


\begin{proof}
Apply Proposition \ref{W_0 Brascamp Lieb}. The left-hand side equals
\[\int_{\R^n} (\rho *_\alpha \varphi)(x)e^{-2\pi ix\cdot \xi} dx=\int_{\R^n}  \int_{\R} \rho(x-p\alpha) \varphi(p)\,dp\,e^{-2\pi ix\cdot \xi} dx\]
\[=\int_{\R^n}  \int_{\R} \rho(x) \varphi(p) e^{-2\pi i (x+p\alpha)\cdot \xi} dp\,dx,\]
which equals the right-hand side.
\end{proof}

\subsection{\texorpdfstring{$K$ kernels}{K kernels}}

\begin{definition}[normalized function tuples]\label{normalized function tuples}\uses{}\usesdefs{}
\lean{Auto.normalizedFunctionTuples}
 Let $\mathfrak{F}$ be the set of tuples
$\F=(F_i)_{i\in [n)}$ of real-valued functions in $\mathcal{S}(\R^n)$

satisfying for $i\in [n)$ the normalization
 \begin{equation}\label{auto:normalized-function-tuple} \|F_{i}\|_{2^{i+\min(n-i,2)}}=1. \end{equation}
In other words,
 \begin{equation}\label{f-normalization}
  \|F_{i}\|_{2^{i+2}}=1\;\text{if}\;i\le n-2\;\text{and}\;\|F_{n-1}\|_{2^{n}}=1  \, .
 \end{equation}

\end{definition}

The word cube in the following definition stands for the parameter set $[2)^k$, which can be interpreted as a $k$ dimensional cube.

\begin{definition}[cube Brascamp--Lieb form]\label{cube Brascamp--Lieb}\uses{W_0 Brascamp Lieb}\usesdefs{auto:Wiener-space-definition}
\lean{Auto.cubeBrascampLiebForm}
Define for $k\in\N$ with $1 \le k\le  n$
and  $L\in W_0(\R^k)$ and $F\in W_0(\R^n)$
\begin{equation}\label{auto:cube-Brascamp-Lieb-form} \Theta_{{\rm cube}}(k,L)(F) = \int_{(\R^2)^k} \int_{\R^{n-k}} L(y^1-y^0) \Big(\prod_{h\in [2)^k} F(y^h, x_{[k,n)})\Big)^{2^{\min(n-k,1)}}\;dx_{[k,n)}\,dy.\end{equation}
\end{definition}
Note that the integrand in $\Theta_{{\rm cube}}(k, L)(F)$ is in $W_0$ by Proposition \ref{W_0 Brascamp Lieb}.

\begin{proposition}[cube BL inequality]\label{cube BL inequality}\uses{}\usesdefs{auto:Wiener-space-definition,cube Brascamp--Lieb}
\lean{Auto.cubeBLInequality}
\leanok
Let $k\in\N$ satisfy $1 \le k\le n$.  Let $L\in W_0(\R^k)$. Then, for all $F\in W_0(\R^n)$ with $\|F\|_{2^{k+\min(n-k,1)}}=1$,
\begin{equation}\label{cube-k-m-bound}
   |\Theta_{\rm cube}(k,L)(F)| \le\|L\|_1 .
\end{equation}
\end{proposition}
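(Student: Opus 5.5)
The plan is to pull the kernel $L$ out by a change of variables and reduce everything to a Gowers--box type (Cauchy--Schwarz--Gowers) inequality. Substitute $y^1=y^0+z$ with $z\in\R^k$. Since $L$ depends only on $z$, Fubini (legitimate because the integrand is in $W_0$ by Proposition \ref{W_0 Brascamp Lieb}, hence integrable) gives
\begin{equation*}
\Theta_{\rm cube}(k,L)(F)=\int_{\R^k} L(z)\,G(z)\,dz,\qquad G(z)=\int_{\R^k}\int_{\R^{n-k}}\Big(\prod_{h\in[2)^k}F\big((y^0+hz), x_{[k,n)}\big)\Big)^{2^{\min(n-k,1)}}dx_{[k,n)}\,dy^0 .
\end{equation*}
Here $(y^0+hz)_i=y^0_i+h_iz_i$. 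It then suffices to show $|G(z)|\le 1$ for every $z$, because then $|\Theta_{\rm cube}(k,L)(F)|\le\|L\|_1\sup_z|G(z)|$.

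For fixed $z$, define the functions $F_h(y,x)=F(y+hz,x)$ on $\R^n$. These are translates of $F$, so they all have the same $L^p$ norms as $F$. The expression $G(z)$ is then a cube form in the first $k$ variables, with no remaining integration over differences.

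\textbf{Case $k=n$.} The exponent is $1$, and $G(z)=\int_{\R^n}\prod_{h}F(y+h z)\,dy$ is a $2^n$-linear form in translates of $F$. Hölder's inequality with all $2^n$ exponents equal to $2^n$ gives $|G(z)|\le\|F\|_{2^n}^{2^n}=1$. This matches the normalization, since $2^{k+\min(0,1)}=2^n$.

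\textbf{Case $k<n$.} The exponent is $2$, so the integrand $\big(\prod_h F(\cdots)\big)^2$ is nonnegative. Hölder's inequality over the full variable $(y^0,x_{[k,n)})\in\R^n$ with $2^k$ factors $F_h^2$, each in $L^{2^k}$, gives
\begin{equation*}
|G(z)|\le\prod_h\|F_h^2\|_{2^k}=\|F\|_{2^{k+1}}^{2^{k+1}}=1,
\end{equation*}
again matching the normalization $\|F\|_{2^{k+1}}=1$.

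So the whole proof is the change of variables, Fubini, and a single Hölder step; no Gowers--Cauchy--Schwarz iteration is needed, since $L$ is only measured in $L^1$. The main point of care is justifying Fubini and measurability, since $F$ and $L$ are only assumed to be in $W_0$ and the form is a priori only defined via Proposition \ref{W_0 Brascamp Lieb}. I would handle this by Tonelli applied to $|L(z)|\prod_h|F_h|^{2^{\min(n-k,1)}}$. Its integral is bounded by $\|L\|_1$ by exactly the Hölder computation above, which gives absolute integrability and then allows Fubini. One should also check that the Jacobian of the shear $(y^0,y^1)\mapsto(y^0,y^1-y^0)$ is $1$.
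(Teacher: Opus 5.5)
Your proposal is correct and follows the paper's proof. You shear $y^1=y^0+z$, pull out $\|L\|_1$ by the triangle inequality, and bound the remaining product of $2^k$ translates of $F$ (squared when $k<n$) by a single H\"older step, using that translation preserves $L^p$ norms. The Gowers--Cauchy--Schwarz framing in your first sentence is unnecessary, as you yourself observe.
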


\begin{proof}
This follows from H\"older's inequality.
Changing variables $y^1\mapsto y^0 + u$ we have $y^h_i=y^0_i+h_i u_i$, so
\[ \Theta_{{\rm cube}}(k,L)(F) = \int_{\R^k} L(u) \Big(\int_{\R^k}\int_{\R^{n-k}} \Big(\prod_{h\in [2)^k} F((y^0_i+h_i u_i)_{i\in [k)}, x_{[k,n)})\Big)^{2^{\min(n-k,1)}}\;dx_{[k,n)}\,dy^0\Big)\,du \]
By the triangle inequality, $|\Theta_{{\rm cube}}(k,L)(F)|$ is no greater than $\|L\|_1$ times the supremum over $u\in\R^k$ of
\[ \int_{\R^n} \Big(\prod_{h\in [2)^k} |F(\pi_h(x))|\Big)^{2^{\min(n-k,1)}}\;dx, \]
where we have renamed $y^0$ to $x_{[0,k-1]}$
and set $\pi_h(x)=(x_i+h_i u_i)_{i\in [k)}, x_{[k,n)}$. By H\"older's inequality this is
\[ \le \Big(\prod_{h\in [2)^k} \|F\circ \pi_h\|_{2^{k+\min(n-k,1)}}\Big)^{2^{\min(n-k,1)}}, \]
which is equal to $1$ because each $\pi_h$ is an isometry and $\|F\|_{2^{k+\min(n-k,1)}}=1$.
\end{proof}

The word {\em prism} in what follows stands for the Cartesian product of a cube with a simplex.
The $2^k$ corners of the $k$ dimensional cubes are parameterized by $[2)^k$,
while the $n-k+1$ corners of the simplex are
parameterized by integers in the interval $[k-1,n)$.
Note that the simplex with two corners is an interval,
hence both cases $k=n-1$ and $k=n$ can be identified with a cube.

\begin{definition}[prism Brascamp--Lieb form]\label{prism Brascamp--Lieb}\uses{W_0 Brascamp Lieb}\usesdefs{auto:Wiener-space-definition,normalized function tuples}
\lean{Auto.prismBrascampLiebForm}
Define for integers $1 \le k\le  n$
and  $K\in W_0(\R^{k+1})$ and $\F\in \mathfrak{F}$,

\begin{equation}\label{auto:K-kernel-form} \Theta_k(K)(\F) = \int_{(\R^{2})^k} \int_{\R^{n-k+1}} K(y^1-y^0, \Sigma(y^0) + \Sigma(x_{[k-1,n)}))
\prod_{\substack{h\in [2)^k,\\i\in [k-1,n)}} F_{i}( y^h, x_{[k-1,n)\setminus i})\;dx_{[k-1,n)}\;dy \end{equation}
\end{definition}

Note that the integrand in $\Theta_k(K)(\F)$ is in $W_0$ by Proposition \ref{W_0 Brascamp Lieb}.
The argument of $K$ in
this definition is split as $\R^{k}\times \R$
while
the argument of $F_{i-1}$  is split as $\R^{k}\times \R^{n-k}$. Recall that superscripts $0$ and $1$ as in $y^0$ and $y^1$ are not powers but just superscripts.

\begin{proposition}[prism BL inequality]\label{prism BL inequality}\uses{}\usesdefs{auto:Wiener-space-definition,normalized function tuples,prism Brascamp--Lieb}
\lean{Auto.prismBLInequality}
\leanok

Let $k\in\N$ satisfy $1\le k\le n$ and $K\in W_0(\R^{k+1})$ and $\F\in \mathfrak{F}$. Then
\begin{equation}\label{prism-k-bound}
   |\Theta_k(K)(\F)| \le \|K\|_1.
\end{equation}
\end{proposition}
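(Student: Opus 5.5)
The plan is to follow the proof of Proposition \ref{cube BL inequality}: first change variables so that the arguments of $K$ become free variables, then bound the remaining inner integral uniformly by H\"older's inequality. In \eqref{auto:K-kernel-form} I substitute $y^1=y^0+u$ with $u\in\R^k$, so that $y^h_i=y^0_i+h_iu_i$. I also replace $x_{n-1}$ by the new variable
\[
s=\Sigma(y^0)+\Sigma(x_{[k-1,n)}).
\]
For fixed $y^0$ and $x_{[k-1,n-1)}$ this is a translation in $x_{n-1}$, so it has Jacobian $1$. All integrands are continuous and lie in $W_0$ by Proposition \ref{W_0 Brascamp Lieb}, so Tonelli's theorem applied to absolute values justifies the rearrangement. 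We get
\[
|\Theta_k(K)(\F)|\le \int_{\R^k\times\R}|K(u,s)|\,\Phi(u,s)\,du\,ds ,
\]
where $\Phi(u,s)$ is the integral of $\prod_{h,i}|F_i(y^h,x_{[k-1,n)\setminus i})|$ over the affine hyperplane
\[
H_s=\{(y^0,x_{[k-1,n)})\in\R^{n+1}:\Sigma(y^0)+\Sigma(x_{[k-1,n)})=s\}.
\]
The measure on $H_s$ is the one obtained by parametrizing $H_s$ through the coordinates $(y^0,x_{[k-1,n-1)})$. It therefore suffices to show $\Phi(u,s)\le 1$ for all $u,s$.

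The key observation concerns the other parametrizations of $H_s$. For each $i\in[k-1,n)$, the set $H_s$ can also be parametrized by $(y^0,x_{[k-1,n)\setminus i})\in\R^n$, solving for $x_i$. Every coordinate enters the defining equation with coefficient $1$, so the change of parameters between any two of these parametrizations is a shear with determinant $\pm1$. Hence all of them induce the same measure $\sigma_s$ on $H_s$. In the parametrization adapted to $i$, the factor $F_i(y^h,x_{[k-1,n)\setminus i})$ is $F_i$ composed with the translation $(y^0,x_{[k-1,n)\setminus i})\mapsto(y^0+h\cdot u,\,x_{[k-1,n)\setminus i})$ of $\R^n$, where $h\cdot u=(h_ju_j)_{j\in[k)}$. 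Consequently
\[
\int_{H_s}|F_i(y^h,x_{[k-1,n)\setminus i})|^p\,d\sigma_s=\|F_i\|_p^p \qquad\text{for every } p .
\]

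Next I apply H\"older's inequality on $(H_s,\sigma_s)$ to the $2^k(n-k+1)$ factors. The factor indexed by $(h,i)$ gets exponent $p_i=2^{i+2}$ for $i\le n-2$ and $p_{n-1}=2^n$, matching Definition \ref{normalized function tuples}. The reciprocals sum to
\[
2^k\Big(\sum_{i=k-1}^{n-2}2^{-i-2}+2^{-n}\Big)=2^k\big(2^{-k}-2^{-n}+2^{-n}\big)=1 .
\]
In the degenerate case $k=n$ the sum is empty and the total is $2^n\cdot2^{-n}=1$. H\"older's inequality then gives
\[
\Phi(u,s)\le\prod_{h,i}\|F_i\|_{p_i}=1 ,
\]
which yields \eqref{prism-k-bound}.

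The main obstacle is the measure-theoretic bookkeeping in the middle step, namely checking that the $n-k+1$ different coordinate charts of $H_s$ all induce the same measure. In the formalization I would avoid abstract hyperplane measures. Instead I would work in the fixed chart $(y^0,x_{[k-1,n-1)})$ and, for each $i$, verify the identity for $\int|F_i(\cdots)|^{p_i}$ by an explicit unimodular affine change of variables on $\R^n$ that sends $x_i$ to $x_{n-1}$. The cases $k=n$ and $i=k-1$ need a separate check, since there the simplex degenerates and $x_{k-1}$ is paired with the cube coordinate $y_{k-1}$.
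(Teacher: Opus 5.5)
Your proposal is correct and is essentially the paper's proof. Both arguments substitute $y^1=y^0+u$, solve the last argument of $K$ for one simplex coordinate, pull out $\|K\|_1$, and apply H\"older on $\R^n$, using that each factor is $F_i$ composed with a unimodular affine map. The only difference is that the paper eliminates $x_{k-1}$ and you eliminate $x_{n-1}$, which is immaterial. Your explicit check that the $2^k(n-k+1)$ reciprocal exponents sum to one is a detail the paper leaves implicit. The separate check you anticipate for $k=n$ or $i=k-1$ is not needed, since your general chart argument already covers those cases.
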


\begin{proof}
This follows from H\"older's inequality.
Changing variables $(y^1,x_{k-1})\mapsto (u,s)$, where
\[
y^1=y^0+u,\qquad x_{k-1}=s-\Sigma(y^0)-\Sigma(x_{[k,n)}),
\]
we have $y_i^h=y_i^0+h_iu_i$ and
$\Sigma(y^0)+\Sigma(x_{[k-1,n)})=s$. For fixed $(u,s)$, rename
$(y^0,x_{[k,n)})$ as $x\in\R^n$ and set
\[
\pi_{h,i}(x)
=
\Big((x_l+h_lu_l)_{l\in [k)},
\big(s-\Sigma(x),x_{[k,n)}\big)_{[k-1,n)\setminus i}\Big).
\]
Then
\[
\Theta_k(K)(\F)
=
\int_{\R^{k+1}} K(u,s)
\int_{\R^n}
\prod_{\substack{h\in [2)^k\\ i\in [k-1,n)}}
F_i(\pi_{h,i}(x))
\,dx\,du\,ds.
\]
By the triangle inequality, $|\Theta_k(K)(\F)|$ is no greater than $\|K\|_1$ times the supremum over $(u,s)\in\R^{k+1}$ of
\[
\int_{\R^n}
\prod_{\substack{h\in [2)^k\\ i\in [k-1,n)}}
|F_i(\pi_{h,i}(x))|
\,dx.
\]
By H\"older's inequality this is no greater than
\[
\prod_{\substack{h\in [2)^k\\ i\in [k-1,n)}}
\|F_i\circ\pi_{h,i}\|_{2^{i+\min(n-i,2)}}.
\]
Each $\pi_{h,i}$ is invertible and measure preserving, so this is equal to one by the definition of $\mathfrak{F}$.
\end{proof}

The following proposition concerns the special case $k=n$.
\begin{proposition}[simplification 1 prism]\label{simplification 1 prism}\uses{}\usesdefs{auto:Wiener-space-definition,normalized function tuples,prism Brascamp--Lieb}
\lean{Auto.simplificationOnePrism}
\leanok
    Let $K\in W_0(\R^{n+1})$. If for all $z\in \R^{n+1}$ we have
    \begin{equation}\label{1 prism mean zero assumption}
        \int_\R K(z+qe_n)\, dq=0\, ,
    \end{equation}
    then for all $\F\in \mathfrak{F}$,
    \begin{equation}\label{auto:terminal-K-form-zero}
         \Theta_n(K)(\F)= 0\, .
    \end{equation}
\end{proposition}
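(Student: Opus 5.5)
The plan is to integrate out the single simplex variable first: for $k=n$ the product of the $F_i$ does not depend on it, so the mean zero assumption \eqref{1 prism mean zero assumption} makes the inner integral vanish.

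\textbf{Structure of the integrand.} For $k=n$ the simplex index set is $[n-1,n)=\{n-1\}$. The product in \eqref{auto:K-kernel-form} therefore runs only over $h\in[2)^n$ with $i=n-1$. Its factors are $F_{n-1}(y^h, x_{[n-1,n)\setminus (n-1)})=F_{n-1}(y^h)$, since $x_{[n-1,n)\setminus(n-1)}$ is the empty tuple. So
\begin{equation*}
\Theta_n(K)(\F)=\int_{(\R^2)^n}\int_{\R} K\big(y^1-y^0,\Sigma(y^0)+x_{n-1}\big)\prod_{h\in[2)^n}F_{n-1}(y^h)\,dx_{n-1}\,dy,
\end{equation*}
and the product depends only on $y$, not on $x_{n-1}$.

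\textbf{Absolute integrability (the only real obstacle).} To justify Fubini I would show that the integrand is absolutely integrable on $(\R^2)^n\times\R$. The first option is to rerun the proof of Proposition \ref{prism BL inequality} with $K$ and the $F_i$ replaced by $|K|$ and $|F_i|$. The change of variables and H\"older argument there already bound the integral of the absolute value by $\|K\|_1\prod\|F_i\circ\pi_{h,i}\|=\|K\|_1<\infty$. Here $\|K\|_1<\infty$ holds by Proposition \ref{P:lp-embedding}, and the $\F$-normalization is unaffected by taking absolute values. Alternatively, Proposition \ref{W_0 Brascamp Lieb} with $m=0$ gives integrability directly, using the injectivity of the combined linear map $(y,x_{n-1})\mapsto(y^1-y^0,\Sigma(y^0)+x_{n-1},(y^h)_h)$ and Proposition \ref{P:schwartz-into-wiener} for $F_{n-1}\in W_0$.

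\textbf{Vanishing of the inner integral.} With integrability in hand, Fubini lets me integrate in $x_{n-1}$ first for each fixed $y$:
\begin{equation*}
\int_\R K\big(y^1-y^0,\Sigma(y^0)+x_{n-1}\big)\,dx_{n-1}=\int_\R K(z+qe_n)\,dq=0,
\end{equation*}
with $z=(y^1-y^0,\Sigma(y^0))\in\R^{n+1}$ and $q=x_{n-1}$. This is exactly \eqref{1 prism mean zero assumption}. Hence the outer integrand is identically zero, and $\Theta_n(K)(\F)=0$.
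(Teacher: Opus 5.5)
Your proof is correct and is essentially the paper's argument: for $k=n$ the product $\prod_{h\in[2)^n}F_{n-1}(y^h)$ does not depend on the single simplex variable, so integrating that variable first gives $\int_\R K(z+qe_n)\,dq=0$ with $z=(y^1-y^0,\Sigma(y^0))$. You also justify absolute integrability, via the Tonelli version of the prism BL argument or via Proposition \ref{W_0 Brascamp Lieb}, which the paper leaves implicit; and your label $x_{n-1}$ for that variable matches Definition \ref{prism Brascamp--Lieb} better than the paper's $x_n$.
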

\begin{proof}
The form is
\[ \Theta_{n}(K)(\F) = \int_{(\R^{2})^n} \int_{\R} K(y^1-y^0, \Sigma(y^0) + x_n)
\prod_{\substack{h\in [2)^n}} F_{n-1}( y^h)\;dx_n\;dy. \]
Since the product $\prod_{\substack{h\in [2)^k}} F_{n-1}( y^h)$ does not depend on $x_n$, this is equal to
\[\Theta_n(K)(\F) = \int_{(\R^{2})^n} \Big( \prod_{\substack{h\in [2)^n}} F_{n-1}( y^h) \Big) \Big(\int_{\R} K(y^1-y^0, \Sigma(y^0) + x_n)\,dx_n\Big)\,dy. \]
Setting $z=(y^1-y^0, \Sigma(y^0))\in\R^{n+1}$ for every fixed $y\in (\R^2)^n$, we see that the inner integral over $x_n$ is equal to zero by the assumption \eqref{1 prism mean zero assumption}.
\end{proof}

In the next proposition we apply the Cauchy-Schwarz inequality as in the proof of Proposition \ref{prism BL inequality}, but this time carefully preserving in one of the terms on the right-hand side
a cancellation if the function $\phi_j$ has integral zero.

\begin{proposition}[singly cancellative Cauchy-Schwarz]\label{single cancellative Cauchy-Schwarz}\uses{cube BL inequality,W_0 Brascamp Lieb}\usesdefs{auto:Wiener-space-definition,normalized function tuples,cube Brascamp--Lieb,prism Brascamp--Lieb}
\lean{Auto.singlyCancellativeKernel_memW0,Auto.singlyCancellativeLift_memW0,Auto.singlyCancellativeCauchySchwarz_bound}
\leanok
Let $J\ge 1$ and $1\le k<n$ be integers. For
$(\rho_j)_{j\in [J)}\subset W_0(\R^{k+1})$, and
$(\phi_j)_{j\in [J)}\subset W_0(\R)$, define, for $u\in \R^{k+1}$,
\begin{equation}\label{auto:Cauchy-Schwarz-kernel-K}
    K(u)=\sum_{j\in[J)}\int_\R \rho_j(u_{[k)},u_k-q)\phi_j(q)\,dq\,.
\end{equation}

Then $K\in W_0(\R^{k+1})$ by Proposition \ref{W_0 Brascamp Lieb}.
For $u\in \R^{k+2}$, define
\begin{equation}\label{auto:Cauchy-Schwarz-kernel-tilde-K}
    \tilde{K}(u)=\sum_{j\in[J)}\int_\R
    |\rho_j|(u_{[k)},u_{k+1}-q)\phi_j(u_k+q)\phi_j(q)\,dq\, .
\end{equation}
Then $\tilde{K}\in W_0(\R^{k+2})$.
Moreover, for all $\F\in\mathfrak{F}$,
\begin{equation}\label{auto:Cauchy-Schwarz-at-k-bound}
    |\Theta_k(K)(\F)|^2
    \le \Theta_{k+1}(\tilde{K})(\F)\sum_{j\in[J)}\|\rho_j\|_1\, .
\end{equation}
\end{proposition}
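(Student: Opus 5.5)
The plan is to expand $K$ inside $\Theta_k(K)(\F)$, isolate the factor that does not see the simplex variable $x_{k-1}$, apply Cauchy--Schwarz with weight $|\rho_j|$, and identify the two resulting factors as a cube form and a prism form of one higher cube dimension.

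\emph{Membership in $W_0$.} For $\tilde K\in W_0(\R^{k+2})$, each summand is a fiber integral of the product $|\rho_j|(u_{[k)},u_{k+1}-q)\,\phi_j(u_k+q)\,\phi_j(q)$ over $q$. Here $|\rho_j|\in W_0$, since the local supremum of $|\rho_j|$ equals that of $\rho_j$. The three linear maps $(u,q)\mapsto(u_{[k)},u_{k+1}-q)$, $(u,q)\mapsto u_k+q$ and $(u,q)\mapsto q$ have trivial common kernel. So Proposition \ref{W_0 Brascamp Lieb} applies with $m=k+2$ and $l=1$, and the finite sum stays in $W_0$. Throughout, Fubini is justified because all integrands are absolutely integrable by Proposition \ref{W_0 Brascamp Lieb}.

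\emph{Rewriting the form.} Insert \eqref{auto:Cauchy-Schwarz-kernel-K} into \eqref{auto:K-kernel-form}. For fixed $(y,x_{[k,n)})$, change variables from $(x_{k-1},q)$ to $(r,q)$, where
\[
r=\Sigma(y^0)+\Sigma(x_{[k-1,n)})-q .
\]
This gives
\[
\Theta_k(K)(\F)=\sum_{j\in[J)}\int \rho_j(y^1-y^0,r)\,A(y,x_{[k,n)})\,B_j(y,x_{[k,n)},r)\,dr\,dx_{[k,n)}\,dy .
\]
Here $A=\prod_{h\in[2)^k}F_{k-1}(y^h,x_{[k,n)})$ is the factor with $i=k-1$; it does not depend on $x_{k-1}$. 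The other factor is
\[
B_j=\int_\R \phi_j(q)\prod_{h\in[2)^k,\ i\in[k,n)}F_i(y^h,x_{[k-1,n)\setminus i})\,dq,
\]
where $x_{k-1}=r+q-\Sigma(y^0)-\Sigma(x_{[k,n)})$.

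\emph{Cauchy--Schwarz.} Apply Cauchy--Schwarz over $(j,y,x_{[k,n)},r)$ with weight $|\rho_j(y^1-y^0,r)|$:
\[
|\Theta_k(K)(\F)|^2\le\Big(\sum_j\int|\rho_j|A^2\Big)\Big(\sum_j\int|\rho_j|B_j^2\Big).
\]

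\emph{First factor.} Integrating out $r$ turns it into $\sum_j\Theta_{\rm cube}(k,L_j)(F_{k-1})$ with $L_j(u)=\int_\R|\rho_j|(u,r)\,dr$. The exponent $2^{\min(n-k,1)}=2$ matches because $k<n$. Since $\|F_{k-1}\|_{2^{k+1}}=1$ by \eqref{f-normalization}, Proposition \ref{cube BL inequality} bounds this factor by $\sum_j\|L_j\|_1=\sum_j\|\rho_j\|_1$.

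\emph{Second factor.} Since everything is real, $B_j^2$ expands as a double integral over $q,q'$. I will rename the two resulting values of $x_{k-1}$ as $y^0_k$ and $y^1_k$; they correspond to $q$ and $q'$, and their difference is $q'-q$. With this renaming, $(y^h,x_{k-1})$ become the points $y^h$ of a $(k+1)$-cube. The remaining simplex variables are $x_{[k,n)}$, indexed by $[k,n)$ as in $\Theta_{k+1}$.

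A direct check gives the kernel argument of $\Theta_{k+1}$:
\[
\Sigma(y^0_{[k+1)})+\Sigma(x_{[k,n)})=r+q .
\]
So the variables $(r,q,q')$ are traded for $u_k=q'-q=y^1_k-y^0_k$, $u_{k+1}=r+q$, and a free $q$. The weight then becomes
\[
\int_\R|\rho_j|(y^1_{[k)}-y^0_{[k)},u_{k+1}-q)\,\phi_j(u_k+q)\,\phi_j(q)\,dq,
\]
which is exactly $\tilde K$. Each $F_i$ with $i\ge k$ then appears with the argument $(y^h,x_{[k,n)\setminus i})$ for $h\in[2)^{k+1}$, so the second factor equals $\Theta_{k+1}(\tilde K)(\F)$.

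The main obstacle is this last identification. It requires the change of variables and a check that its Jacobian is $1$, and it is the only step that is not routine.
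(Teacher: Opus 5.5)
Your proposal is correct and follows essentially the same route as the paper: expand $K$, separate the $F_{k-1}$ factor (which does not see $x_{k-1}$), apply Cauchy--Schwarz with weight $|\rho_j|$, bound the $F_{k-1}$ side by Proposition~\ref{cube BL inequality} through $L_j(u)=\int_\R|\rho_j|(u,r)\,dr$, and identify the squared side as $\Theta_{k+1}(\tilde K)(\F)$ by renaming the doubled variable as $y^0_k,y^1_k$; using the last argument $r$ of $\rho_j$ as the integration variable is only a cosmetic reparametrization of the paper's substitution. The Jacobian check you single out is immediate, since the linear map $(r,q,q')\mapsto(q'-q,\,r+q,\,q)$ has determinant $1$.
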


\begin{proof}
We compute, substituting $x_{k-1}$ by $x_{k-1}+q$ and then $q$ by $r-x_{k-1}$ in combination with suitable orders of integration in $dx_{k-1}$ and $dq$, $\Theta_k(K)(\F)$ equals
\begin{equation}\label{sccs0}
\Theta_k(K)(\F)=\sum_{j\in[J)} \int_{(\R^2)^k}\int_{\R^{n-k+1}}
\rho_j(y^1-y^0,\Sigma(y^0)+\Sigma(x_{[k-1,n)}))
\end{equation}
\[
{}\times
\Big(
\int_\R
\Big(
\prod_{\substack{h\in[2)^k\\ i\in[k,n)}}
F_{i}(y^h,r,x_{[k,n)\setminus i})
\Big)
\phi_j(r-x_{k-1})\,dr
\Big)
\]
\[
{}\times
\Big(
\prod_{h\in[2)^k}F_{k-1}(y^h,x_{[k,n)})
\Big)
dx_{[k-1,n)}\,dy\, .
\]
We estimate $\rho_j$ by $|\rho_j|$ and also the term in outmost brackets in \eqref{sccs0} by its absolute value $A$
and the last bracket in \eqref{sccs0} by its absolute value $B$.
We interpret the resulting expression as a pairing $\left< A , B \right>$ with respect to a positive measure.
Applying Cauchy-Schwarz, we estimate the modulus squared of \eqref{sccs0} by the product of two terms.
The first term $\left< A , A \right>$ is no greater than
\[
\sum_{j\in[J)}\int_{(\R^2)^k}\int_{\R^{n-k+1}}
|\rho_j|(y^1-y^0,\Sigma(y^0)+\Sigma(x_{[k-1,n)}))
\]
\[
{}\times
\Big|
\int_\R
\Big(
\prod_{\substack{h\in[2)^k\\ i\in[k,n)}}
F_{i}(y^h,r,x_{[k,n)\setminus i})
\Big)
\phi_j(r-x_{k-1})\,dr
\Big|^2
dx_{[k-1,n)}\,dy,
\]
Using that the $F_i$ are real and renaming the integration variables $r$, which appear twice
through the square, once $y_k^0$ and once $y_k^1$,

\[
\sum_{j\in[J)}\int_{(\R^2)^{k+1}}\int_{\R^{n-k}}
\int_\R
|\rho_j|((y^1-y^0)_{[k)},\Sigma(y^0_{[k)})+x_{k-1}+\Sigma(x_{[k,n)}))
\]
\[
{}\times
\Big(
\prod_{\substack{h\in[2)^{k+1}\\ i\in[k,n)}}
F_{i}(y^h,x_{[k,n)\setminus i})
\Big)
\phi_j(y_k^1-x_{k-1})\phi_j(y_k^0-x_{k-1})
\,dx_{k-1}\,dx_{[k,n)}\,dy.
\]
This is equal to $\Theta_{k+1}(\tilde{K})(\F)$.
In the second identity, we substitute
$x_{k-1}$ by $x_{k-1}+y_k^0$ and then name the variable $x_{k-1}$ as $-q$.

The second term $\left< B , B \right>$, in Cauchy-Schwarz, after integrating the variable $x_{k-1}$ which only appears in the last argument of $\rho_j$,
is identified as
\begin{equation}
    \sum_{j\in[J)} \Theta_{\rm cube}(k,L_j)(F_{k-1})
\end{equation}
with
\begin{equation}
    L_j(u)=\int_\R |\rho_j(u,q)|\,dq
\end{equation}
and estimated by Proposition \ref{cube BL inequality}.
\end{proof}

If $k=n-1$, one may do an even more careful Cauchy-Schwarz, preserving cancellation in both terms on the right-hand-side.

\begin{proposition}[doubly cancellative Cauchy-Schwarz, $k=n-1$]\label{doubly cancellative Cauchy-Schwarz}\uses{}\usesdefs{auto:Wiener-space-definition,normalized function tuples,prism Brascamp--Lieb}
\lean{Auto.doublyCancellativeKernel_memW0,Auto.doublyCancellativeLift_memW0,Auto.doublyCancellativeCauchySchwarz_bound}
\leanok
Let $J\ge 1$ be an integer. For $(\rho_j)_{j\in[J)}\subset W_0(\R^n)$ and
$(\phi_j)_{j\in [J)}\subset W_0(\R)$ define for $u\in \R^n$,
\begin{equation}\label{auto:Cauchy-Schwarz-terminal-kernel-K}
    K(u)=\sum_{j\in [J)}\int_{\R^2} \rho_j(u_{[n-1)}, u_{n-1}-r-s) \phi_j(r) \phi_j(s) \, ds dr\, .
\end{equation}
Then $K\in W_0(\R^n)$.
For $u\in\R^{n+1}$ let
\begin{equation}\label{auto:Cauchy-Schwarz-terminal-kernel-tilde-K}
    \tilde{K}(u)=\sum_{j\in [J)}\int_\R |\rho_j|(u_{[n-1)}, u_{n}-q) \phi_j(u_{n-1}+q) \phi_j(q) \, dq\, .
\end{equation}
Then $\tilde{K}\in W_0(\R^{n+1})$ and for all $\F\in\mathfrak{F}$,
\begin{equation}\label{auto:Cauchy-Schwarz-at-n-minus-one-bound}
    |\Theta_{n-1}(K)(\F)| \le \sup_{\mathbf{\tilde{F}}\in\mathfrak{F}} |\Theta_{n}(\tilde{K})(\mathbf{\tilde{F}})|.
\end{equation}
\end{proposition}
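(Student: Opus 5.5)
We plan to mimic the proof of Proposition \ref{single cancellative Cauchy-Schwarz}, but to split the cancellation of the two copies of $\phi_j$ between the two factors of Cauchy--Schwarz. First, the membership statements $K\in W_0(\R^n)$ and $\tilde K\in W_0(\R^{n+1})$ follow from Proposition \ref{W_0 Brascamp Lieb}. The integrands are products of $W_0$ functions composed with linear maps, and the intersection of the kernels of these maps is trivial: for $K$, we integrate out $(r,s)$ with $\rho_j,\phi_j,\phi_j$ evaluated at $(u_{[n-1)},u_{n-1}-r-s)$, $r$ and $s$. The same argument, applied to $|\rho_j|$ (which is in $W_0$ with the same norm), handles $\tilde K$.

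For $k=n-1$, the form $\Theta_{n-1}(K)(\F)$ integrates over $y\in(\R^2)^{n-1}$ and $x_{n-2},x_{n-1}$. The kernel is evaluated at $(y^1-y^0,\Sigma(y^0)+x_{n-2}+x_{n-1})$, against
\[
\prod_h F_{n-2}(y^h,x_{n-1})\cdot\prod_h F_{n-1}(y^h,x_{n-2}).
\]
Inserting the definition of $K$, we substitute $x_{n-2}\mapsto x_{n-2}+r$ and $x_{n-1}\mapsto x_{n-1}+s$, and then rename the shifted variables, exactly as in \eqref{sccs0}. This produces
\[
\sum_j\int \rho_j\big(y^1-y^0,\Sigma(y^0)+x_{n-2}+x_{n-1}\big)\, A_j\, B_j\,dx_{n-2}\,dx_{n-1}\,dy,
\]
where
\[
A_j=\int_\R\prod_h F_{n-2}(y^h,s')\,\phi_j(s'-x_{n-1})\,ds',\qquad
B_j=\int_\R\prod_h F_{n-1}(y^h,r')\,\phi_j(r'-x_{n-2})\,dr'.
\]
We bound $\rho_j$ by $|\rho_j|$ and apply Cauchy--Schwarz with respect to the positive measure $\sum_j|\rho_j|(\cdots)\,dx\,dy$. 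This gives
\[
|\Theta_{n-1}(K)(\F)|^2\le\langle A,A\rangle\langle B,B\rangle.
\]

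Next we identify each factor. In $\langle A,A\rangle$ we expand $A_j^2$, using that the $F_i$ are real, and name the two copies of $s'$ as $y^0_{n-1}$ and $y^1_{n-1}$. The product then becomes $\prod_{h\in[2)^n}F_{n-2}(y^h)$ with $y\in(\R^2)^n$. The remaining free variables are $x_{n-1}$ and $x_{n-2}$. Shifting $x_{n-1}$ by $y^0_{n-1}$ and calling it $-q$, and calling the combination $\Sigma(y^0_{[n-1)})+x_{n-2}+y^0_{n-1}$ the new last variable, we recognise
\[
\langle A,A\rangle=\Theta_n(\tilde K)(\F'),
\]
where $\F'$ is any tuple in $\mathfrak F$ with $F'_{n-1}=F_{n-2}$. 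This tuple is admissible because $\|F_{n-2}\|_{2^n}=1$ by \eqref{f-normalization}, and by the proof of Proposition \ref{simplification 1 prism} only $F'_{n-1}$ enters $\Theta_n$. By the symmetry of the argument $x_{n-2}+x_{n-1}$ of $\rho_j$, the same computation with the roles of $x_{n-2}$ and $x_{n-1}$ swapped gives
\[
\langle B,B\rangle=\Theta_n(\tilde K)(\F''),\qquad F''_{n-1}=F_{n-1}.
\]
Both quantities are nonnegative, so each is at most $\sup_{\tilde\F\in\mathfrak F}|\Theta_n(\tilde K)(\tilde\F)|$. Taking square roots yields \eqref{auto:Cauchy-Schwarz-at-n-minus-one-bound}.

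The main obstacle is the bookkeeping of the changes of variables. We must check that after squaring, the arguments of $|\rho_j|$ and of the two $\phi_j$ factors match \eqref{auto:Cauchy-Schwarz-terminal-kernel-tilde-K} exactly, namely $u_{n-1}=y^1_{n-1}-y^0_{n-1}$ and $u_n=\Sigma(y^0)+x_n$. We also need Fubini to be justified in each reordering. For the latter we will rely on absolute integrability, which follows from the $W_0$ and Hölder bounds of Proposition \ref{prism BL inequality}.
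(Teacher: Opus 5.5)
Your proposal is correct and follows essentially the same route as the paper. Both arguments use the same two substitutions to produce the two bracketed factors, apply Cauchy--Schwarz against the positive measure weighted by $|\rho_j|$, and identify the two resulting terms as $\Theta_n(\tilde K)$ evaluated at $\F$ and at the tuple with $F_{n-2}$ and $F_{n-1}$ interchanged; that tuple is your $\F'$, admissible since both functions are normalized in $L^{2^n}$. You also justify the $W_0$ memberships via Proposition \ref{W_0 Brascamp Lieb}, which the paper leaves implicit.
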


\begin{proof}
We expand $\Theta_{n-1}(K)(\F)$, replacing $x_{n-2}$ by $x_{n-2}+r$ and $x_{n-1}$ by $x_{n-1}+s$
and then $r$ by $r-x_{n-2}$ and $s$ by $s-x_{n-1}$. We obtain that $\Theta_{n-1}(K)(\F)$ equals
\begin{equation}\label{ncs1}
\Theta_{n-1}(K)(\F)=\sum_{j\in[J)} \int_{(\R^2)^{n-1}} \int_{\R^2}
\rho_j(y^1-y^0, \Sigma(y^0)+\Sigma(x_{[n-2,n)}))
\end{equation}
\[
{}\times \Big( \int_{\R} \Big(\prod_{h\in [2)^{n-1}} F_{n-1}(y^h,r)\Big)
\phi_j(r-x_{n-2})\,dr\Big)
\]
\[
{}\times \Big( \int_{\R} \Big(\prod_{h\in [2)^{n-1}} F_{n-2}(y^h,s)\Big)
\phi_j(s-x_{n-1})\,ds\Big)
    dx_{[n-2,n)}\,dy .
\]
We estimate $\rho_j$ by $|\rho_j|$ and also the two bracketed terms in \eqref{ncs1} by their absolute values.
We interpret the resulting expression as a pairing with respect to a positive measure.
Applying Cauchy-Schwarz, we estimate the modulus squared of \eqref{ncs1} by the product of two terms.

The first term is no greater than
\[
\sum_{j\in[J)} \int_{(\R^2)^{n-1}} \int_{\R^2}
|\rho_j|(y^1-y^0, \Sigma(y^0)+\Sigma(x_{[n-2,n)}))
\]
\[
{}\times
\Big|
 \int_{\R} \Big(\prod_{h\in [2)^{n-1}} F_{n-1}(y^h,r)\Big)
\phi_j(r-x_{n-2})\,dr
\Big|^2
    dx_{[n-2,n)}\,dy .
\]
Using that the $F_i$ are real and renaming the integration variables $r$, which appear twice
through the square, once $y_{n-1}^0$ and once $y_{n-1}^1$, this is equal to
\[
\sum_{j\in[J)} \int_{(\R^2)^n}\int_{\R^2}
|\rho_j|((y^1-y^0)_{[n-1)},\Sigma(y^0_{[n-1)})+x_{n-2}+x_{n-1})
\]
\[
{}\times
\Big(\prod_{h\in[2)^n}F_{n-1}(y^h)\Big)
\phi_j(y_{n-1}^1-x_{n-2})\phi_j(y_{n-1}^0-x_{n-2})
\,dx_{n-2}\,dx_{n-1}\,dy .
\]
In this identity, we substitute $x_{n-2}$ by $x_{n-2}+y_{n-1}^0$ and then name the variable $x_{n-2}$ as $-q$.
This identifies the last display as $\Theta_n(\tilde K)(\F)$.

The second term is no greater than
\[
\sum_{j\in[J)} \int_{(\R^2)^{n-1}} \int_{\R^2}
|\rho_j|(y^1-y^0, \Sigma(y^0)+\Sigma(x_{[n-2,n)}))
\]
\[
{}\times
\Big|
 \int_{\R} \Big(\prod_{h\in [2)^{n-1}} F_{n-2}(y^h,s)\Big)
\phi_j(s-x_{n-1})\,ds
\Big|^2
    dx_{[n-2,n)}\,dy .
\]
Proceeding analogously, using that the $F_i$ are real and renaming the integration variables $s$, which appear twice
through the square, once $y_{n-1}^0$ and once $y_{n-1}^1$, this becomes
\[
\sum_{j\in[J)} \int_{(\R^2)^n}\int_{\R^2}
|\rho_j|((y^1-y^0)_{[n-1)},\Sigma(y^0_{[n-1)})+x_{n-2}+x_{n-1})
\]
\[
{}\times
\Big(\prod_{h\in[2)^n}F_{n-2}(y^h)\Big)
\phi_j(y_{n-1}^1-x_{n-1})\phi_j(y_{n-1}^0-x_{n-1})
\,dx_{n-2}\,dx_{n-1}\,dy .
\]
In this identity, we substitute $x_{n-1}$ by $x_{n-1}+y_{n-1}^0$ and then name the variable $x_{n-1}$ as $-q$.
This identifies the last display as $\Theta_n(\tilde K)(\mathbf{\tilde F})$, where $\mathbf{\tilde F}$ arises from $\F$ by interchanging the functions $F_{n-2}$ and $F_{n-1}$.
Thus
\begin{equation}
    |\Theta_{n-1}(K)(\F)|^2
    \le \Theta_n(\tilde K)(\F)\Theta_n(\tilde K)(\mathbf{\tilde F}),
\end{equation}
which implies the claim since $\tilde{F}$ is again in $\mathfrak{F}$.
\end{proof}

The terms on the right-hand-side of Cauchy-Schwarz are by nature positive. This is
stated independently in the following proposition.
\begin{proposition}[Positivity]\label{Positivity K}\uses{}\usesdefs{auto:Wiener-space-definition,normalized function tuples,prism Brascamp--Lieb}
\lean{Auto.positivityKernel_memW0,Auto.positivityKernel_nonnegative}
\leanok
 Let $1\le k\le n-1$. Let $\rho\in W_0(\R^{k+1})$ be non-negative and
$\phi\in W_0(\R)$. Set for $u\in\R^{k+2}$ and $i\in [k+1)$
\begin{equation}\label{auto:positive-kernel-K}
    K(u)=\int_\R \rho(u_{[k+1)\setminus i}, u_{k+1}-q) \phi(u_{i}+q) \phi(q) \, dq\, .
\end{equation}
Then $K\in W_0(\R^{k+2})$ and for all $\mathbf{F}\in\mathfrak{F}$
\begin{equation}\label{auto:positive-K-form}
    \Theta_{k+1}(K)(\F)\ge 0\, .
\end{equation}
\end{proposition}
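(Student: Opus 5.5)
The plan is to reverse the Cauchy--Schwarz computation from the proof of Proposition \ref{single cancellative Cauchy-Schwarz}. After a change of variables, $\Theta_{k+1}(K)(\F)$ should become an integral of a non-negative weight $\rho$ times a perfect square.

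\emph{Membership in $W_0$.} First, $K\in W_0(\R^{k+2})$ follows from Proposition \ref{W_0 Brascamp Lieb}. Take $m=k+2$, $l=1$ and $J=3$, with the linear maps
\[
\Pi_0(u,q)=(u_{[k+1)\setminus i},u_{k+1}-q),\qquad \Pi_1(u,q)=u_i+q,\qquad \Pi_2(u,q)=q .
\]
These have trivial common kernel. Indeed, $q=0$ forces $u_i=0$, and then $\Pi_0$ forces the remaining coordinates of $u$ to vanish. The functions plugged in are $\rho$, $\phi$ and $\phi$.

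\emph{Expanding the form.} Write out $\Theta_{k+1}(K)(\F)$ from Definition \ref{prism Brascamp--Lieb} with $k+1$ in place of $k$. The first argument of $K$ is $u=y^1-y^0\in\R^{k+1}$ and the last argument is $\Sigma(y^0)+\Sigma(x_{[k,n)})$. Insert the formula for $K$ and interchange the integrals in $q$ and in $(x,y)$. This is justified by absolute integrability: the same expansion with $|\rho|$, $|\phi|$ and $|F_{i'}|$ is a finite Brascamp--Lieb-type integral, by Proposition \ref{W_0 Brascamp Lieb} together with the H\"older argument of Proposition \ref{prism BL inequality}. For fixed $y,x$ substitute $q=y^0_i-w$. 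Then the two $\phi$ factors become $\phi(y^1_i-w)\,\phi(y^0_i-w)$. The last argument of $\rho$ becomes $\Sigma(y^0)-y^0_i+w+\Sigma(x_{[k,n)})$, in which $y_i^0$ cancels. The first argument is $(y^1-y^0)_{[k+1)\setminus i}$. Hence the $\rho$ factor does not depend on $y^0_i$ or $y^1_i$.

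\emph{Completing the square.} Now fix $w$, $x_{[k,n)}$ and all $y^0_l,y^1_l$ with $l\neq i$. Only $y^0_i$ and $y^1_i$ remain. The product over $h\in[2)^{k+1}$ and $i'\in[k,n)$ splits according to $h_i\in\{0,1\}$ into $G(y^0_i)\,G(y^1_i)$. Here
\[
G(r)=\prod_{h'\in[2)^{k}}\ \prod_{i'\in[k,n)}F_{i'}(\ldots)
\]
is the same real-valued function in both factors, with $r$ occupying coordinate $i$. By Fubini the inner double integral equals
\[
\Big(\int_\R G(r)\,\phi(r-w)\,dr\Big)^2\ \ge 0,
\]
using that the $F_{i'}$ and $\phi$ are real-valued. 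What remains is an integral of $\rho(\ldots)\ge0$ times this square, which is non-negative.

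\emph{Main obstacle.} The main care point is the bookkeeping of this substitution. One must check that $\Sigma(y^0)-y^0_i$ really removes every dependence on the $i$-th cube coordinate, and that the two $\phi$ factors pair with $h_i=1$ and $h_i=0$ respectively. The Fubini justification is routine given the $W_0$ and Schwartz bounds.
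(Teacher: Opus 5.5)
Your proposal is correct and is essentially the paper's proof. The paper makes the same substitution $q\mapsto y_i^0-q$, notes that the $\rho$ factor then no longer involves $y_i^0,y_i^1$, and integrates these two variables first to obtain $\bigl|\int_\R\bigl(\prod_{h_i=0,\,j\in[k,n)}F_j(y^h,x_{[k,n)\setminus j})\bigr)\phi(y_i^0-q)\,dy_i^0\bigr|^2$ against a non-negative weight. Your use of Proposition \ref{W_0 Brascamp Lieb} for $K\in W_0(\R^{k+2})$ and your absolute-integrability justification of the Fubini steps supply details that the paper's proof leaves implicit.
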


\begin{proof}
We have
\[
\Theta_{k+1}(K)(\F)
= \int_{(\R^2)^{k+1}}\int_{\R^{n-k}}\int_\R
\rho((y^1-y^0)_{[k+1)\setminus i},\Sigma(y^0)+\Sigma(x_{[k,n)})-q)
\]
\[
{}\times
\phi(y_i^1-y_i^0+q)\phi(q)
\prod_{\substack{h\in [2)^{k+1}\\j\in[k,n)}}
F_j(y^h,x_{[k,n)\setminus j})
\,dq\,dx_{[k,n)}\,dy .
\]

Substituting \(q\) by \(y_i^0-q\) and integrating first in \(y_i^0,y_i^1\), this is equal to
\[
\int_{(\R^2)^{k}}\int_{\R^{n-k}}\int_\R
\rho((y^1-y^0)_{[k+1)\setminus i},
\Sigma(y^0_{[k+1)\setminus i})+q+\Sigma(x_{[k,n)}))
\]
\[
{}\times
\Big|
\int_\R
\Big(
\prod_{\substack{h\in [2)^{k+1}\\ h_i=0\\ j\in[k,n)}}
F_j(y^h,x_{[k,n)\setminus j})
\Big)
\phi(y_i^0-q)\,dy_i^0
\Big|^2
\,dq\,dx_{[k,n)}\,dy_{[k+1)\setminus i},
\]
which is non-negative.
\end{proof}

\begin{proposition}[Monotonicity]\label{Monotonicity K}\uses{Positivity K}\usesdefs{auto:Wiener-space-definition,normalized function tuples,prism Brascamp--Lieb}In the situation of Proposition \ref{Positivity K},
\lean{Auto.monotonicityK}
\leanok
in particular, if $\rho\le \tilde{\rho}$ and
\begin{equation}\label{auto:kernel-monotonicity-K-rho} K_\rho(u)=\int_\R \rho(u_{[k)}, u_{k+1}-q) \phi(u_{k}+q) \phi(q) \, dq\end{equation}
then
\begin{equation}\label{auto:kernel-monotonicity-bound} \Theta_{k+1}(K_\rho)(\F) \le \Theta_{k+1}(K_{\tilde{\rho}})(\F) \end{equation}
\end{proposition}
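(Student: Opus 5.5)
The plan is to reduce the monotonicity statement to Proposition \ref{Positivity K} by linearity of the kernel in $\rho$. The map $\rho\mapsto K_\rho$ in \eqref{auto:kernel-monotonicity-K-rho} is linear: for $\rho,\tilde\rho\in W_0(\R^{k+1})$ we have $K_{\tilde\rho}-K_\rho=K_{\tilde\rho-\rho}$ pointwise, since the defining integrals converge absolutely. Indeed, for fixed $u$ the integrand is a product of $W_0$ functions of $q$, hence integrable by Proposition \ref{W_0 Brascamp Lieb}. Likewise, $K\mapsto\Theta_{k+1}(K)(\F)$ is linear on $W_0(\R^{k+2})$. The integrand of \eqref{auto:K-kernel-form} is integrable for each $K\in W_0$ by Proposition \ref{W_0 Brascamp Lieb}, and more quantitatively by the bound in Proposition \ref{prism BL inequality}, so the integral splits. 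Consequently
\[
\Theta_{k+1}(K_{\tilde\rho})(\F)-\Theta_{k+1}(K_\rho)(\F)=\Theta_{k+1}(K_{\tilde\rho-\rho})(\F).
\]

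Next I apply Proposition \ref{Positivity K} with the nonnegative function $\tilde\rho-\rho$ in place of $\rho$ and with index $i=k$. In that case $u_{[k+1)\setminus k}=u_{[k)}$, so the kernel \eqref{auto:positive-kernel-K} coincides exactly with $K_{\tilde\rho-\rho}$. The hypotheses require $\tilde\rho-\rho\in W_0(\R^{k+1})$, which holds because $W_0$ is a vector space (continuity is preserved and the local suprema are subadditive), together with $\tilde\rho-\rho\ge0$, which is the assumption $\rho\le\tilde\rho$. The proposition then gives $\Theta_{k+1}(K_{\tilde\rho-\rho})(\F)\ge0$, and this is \eqref{auto:kernel-monotonicity-bound}.

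I read ``in the situation of Proposition \ref{Positivity K}'' as follows: $\rho$ and $\tilde\rho$ both lie in $W_0(\R^{k+1})$, $\phi\in W_0(\R)$, $1\le k\le n-1$, and $\F\in\mathfrak F$. Nonnegativity of $\rho$ itself is not needed, only of the difference $\tilde\rho-\rho$. The one step needing care is justifying the linearity, that is, the absolute integrability used to split both integrals. This follows from the $W_0$ membership furnished by Proposition \ref{W_0 Brascamp Lieb} and the $L^1$ integrability underlying Proposition \ref{prism BL inequality}, so I do not expect a real obstacle.
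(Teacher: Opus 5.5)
Your proposal is correct and matches the paper's proof: the paper applies Proposition~\ref{Positivity K} to $K_{\tilde\rho-\rho}$ and uses linearity. You do the same, and additionally make explicit the choice $i=k$ and the absolute-integrability facts that justify splitting the integrals.
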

\begin{proof}
    Apply Proposition \ref{Positivity K} to $K_{\tilde{\rho}-\rho}$ and use linearity.
\end{proof}

\subsection{\texorpdfstring{$M$}{M} kernels}

\begin{proposition}[$M$ to $K$]\label{M to K}\uses{W_0 Brascamp Lieb}\usesdefs{auto:Wiener-space-definition}
\lean{Auto.mToK,Auto.mToK_integrand_memW0,Auto.mToK_memW0,Auto.mToK_eLpNorm_one_le}
\leanok
Let $k\in\N$ satisfy $1\le k\le n$. Let $M\in W_0((\R^2)^k)$.
Define for $z\in \R^{k+1}$,
\begin{equation}\label{kviam_2}
    K_k(M)(z)=\int_{\R^{k-1}}  M((z_{l}+p_{l},p_{l})_{l\in [k-1)}, z_{k-1}+z_{k}-\Sigma(p),z_{k}-\Sigma(p))\,dp.
\end{equation}
The integrand is in $W_0(\R^{k-1})$ for every $z\in\R^{k+1}$.
The argument of $M$ is identified with an element of $(\R^2)^{k}$.
Then $K_k(M)\in W_0(\R^{k+1})$ and
\begin{equation}\label{auto:K-map-L1-bound} \|K_k(M)\|_1 \le \|M\|_1. \end{equation}
\end{proposition}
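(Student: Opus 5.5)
The plan is to recognize $K_k(M)$ as a fiber integral of $M$ along an injective linear map, and then read off both the membership in $W_0$ and the $L^1$ bound from earlier results.

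Define $\pi:\R^{k+1}\times\R^{k-1}\to(\R^2)^k$ by
\[
\pi(z,p)=\big((z_l+p_l,p_l)_{l\in[k-1)},\,z_{k-1}+z_k-\Sigma(p),\,z_k-\Sigma(p)\big).
\]
This map is linear. It is injective, because $(z,p)$ can be recovered from its image as follows:
\begin{itemize}
\item $p_l$ is the second coordinate of the $l$th pair, for $l\in[k-1)$;
\item $z_l$ is the first coordinate of that pair minus $p_l$;
\item $z_k$ is the last coordinate plus $\Sigma(p)$;
\item $z_{k-1}$ is the difference of the two coordinates of the last pair.
\end{itemize}
Counting dimensions, $(k+1)+(k-1)=2k$, so $\pi$ is in fact a linear isomorphism.

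With this map, $K_k(M)(z)=\int_{\R^{k-1}}M(\pi(z,p))\,dp$. Proposition \ref{W_0 fiber integrals}, applied with $m=k+1$, $l=k-1$ and $n$ replaced by $2k$, gives the two membership claims:
\begin{itemize}
\item for each $z$, the integrand $p\mapsto M(\pi(z,p))$ lies in $W_0(\R^{k-1})$;
\item $K_k(M)\in W_0(\R^{k+1})$.
\end{itemize}
The degenerate case $k=1$ has $l=0$. There the integral is over $\R^0$, so $K_1(M)(z)=M(z_0+z_1,z_1)$, which is a composition of $M$ with a linear isomorphism. This case can be handled by the same proposition, or directly.

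For the $L^1$ bound, use Tonelli together with the change of variables $w=\pi(z,p)$:
\[
\|K_k(M)\|_1\le\int_{\R^{k+1}}\int_{\R^{k-1}}|M(\pi(z,p))|\,dp\,dz=|\det\pi|^{-1}\|M\|_1.
\]
It remains to show that $|\det\pi|=1$, equivalently that $\pi$ preserves Lebesgue measure. I would prove this by writing $\pi$ as a composition of shears, each of determinant one. Starting from the coordinates $(z,p)$:
\begin{itemize}
\item map $z_l\mapsto z_l+p_l$ for $l\in[k-1)$;
\item map $z_k\mapsto z_k-\Sigma(p)$;
\item map $z_{k-1}\mapsto z_{k-1}+(\text{new }z_k)$;
\item finally permute coordinates into the stated order. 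A permutation has determinant $\pm1$, and only the absolute value matters here.
\end{itemize}
Each shear is unipotent, so the composition has $|\det|=1$.

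I expect the main obstacle to be bookkeeping rather than ideas. Two points need care: checking the measure-preserving property cleanly, where the Tonelli measurability of $|M\circ\pi|$ follows from continuity, and handling the edge case $k=1$ uniformly.
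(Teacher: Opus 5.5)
Your proposal is correct and is essentially the paper's argument. The paper gets the $W_0$ claims from Proposition~\ref{W_0 Brascamp Lieb}, which for a single factor is just the fiber-integral Proposition~\ref{W_0 fiber integrals} that you apply directly; your direct use also covers $k=1$, i.e.\ $l=0$, which Proposition~\ref{W_0 Brascamp Lieb} formally excludes. The paper likewise obtains the $L^1$ bound from the triangle inequality and Fubini--Tonelli, leaving the unimodularity of the change of variables implicit, and your shear decomposition makes that step explicit.
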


\begin{proof}
The claims on membership in $W_0$ follow from
Proposition \ref{W_0 Brascamp Lieb}.
The claimed inequality follows by the triangle inequality and Fubini-Tonelli's theorem.
\end{proof}

\begin{rem}
For example, for $k=1$,
\begin{equation}\label{auto:K-one-explicit-formula}K_1(M)(z)=M(z_0+z_1,z_1).\end{equation}
\end{rem}

\begin{rem}
Note that $K_k(M)$ can also be written as
\begin{equation}\label{kviam_2'}
    K_k(M)(z)=\int_{\R^{k}}  M((z_{l}+p_{l},p_{l})_{l\in[k)}) \delta( z_{k}-\Sigma(p))\,dp.
\end{equation}
\end{rem}

\begin{definition}[prism form]\label{auto:prism-form-definition}\uses{M to K}\usesdefs{auto:Wiener-space-definition,normalized function tuples,prism Brascamp--Lieb}
\lean{Auto.prismForm}
Let $1\le k\le n$. By Proposition \ref{M to K}, $K_k(M)\in W_0(\R^{k+1})$ for every $M\in W_0((\R^2)^k)$. We define for such $M$ and $\mathbf{F}\in\mathfrak{F}$,
\begin{equation}\label{auto:prism-form-formula}\Lambda_k(M)(\mathbf{F}) = \Theta_k(K_k(M))(\mathbf{F}).\end{equation}
\end{definition}

\begin{proposition}[Positivity]\label{Positivity M}\uses{tensor Wiener,Positivity K}\usesdefs{auto:Wiener-space-definition,normalized function tuples,prism Brascamp--Lieb,auto:prism-form-definition}
\lean{Auto.positivityM_memW0,Auto.positivityM_nonnegative}
\leanok
Let $1\le k\le n$ and $i\in [k)$. Let $\tilde{M}\in W_0((\R^2)^{k-1})$ be nonnegative
and let $\phi\in W_0(\R)$ be real valued. Define for $y\in (\R^2)^k$,
\begin{equation}\label{auto:positive-prism-product-kernel}
    M(y)=\tilde{M}(y_{[k)\setminus i}) \phi^{\otimes 2}(y_{i}).
\end{equation}
Then $M\in W_0((\R^2)^k)$ and for all $\F\in {\mathfrak{F}}$,
\begin{equation}\label{auto:positive-prism-form}
    \Lambda_k(M)(\F)\ge 0.
\end{equation}
\end{proposition}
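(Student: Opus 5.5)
The plan is to recognize $\Lambda_k(M)(\F)$ as a form already known to be nonnegative. For $k\ge 2$ that form is the one in Proposition \ref{Positivity K}; the case $k=1$ is a direct perfect-square computation.

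\emph{Membership.} View $M$ as the tensor product of $\tilde M$ (in the variables $y_{[k)\setminus i}$) and $\phi\otimes\phi$ (in the variables $y_i^0,y_i^1$). Proposition \ref{tensor Wiener} then gives $M\in W_0((\R^2)^k)$, with $\|M\|_{W_0}\le \|\tilde M\|_{W_0}\|\phi\|_{W_0}^2$.

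\emph{Case $k\ge 2$.} I will compute $K_k(M)$ explicitly using the defining formula \eqref{kviam_2}, or equivalently its $\delta$-form \eqref{kviam_2'}:
\[
K_k(M)(z)=\int_{\R^k}\tilde M\big((z_l+p_l,p_l)_{l\in[k)\setminus i}\big)\,\phi(z_i+p_i)\phi(p_i)\,\delta\big(z_k-\Sigma(p)\big)\,dp .
\]
Separate the variable $q=p_i$ and integrate out the remaining $k-1$ variables $p_{[k)\setminus i}$ against the constraint $\Sigma(p_{[k)\setminus i})=z_k-q$. This yields
\[
K_k(M)(z)=\int_\R \rho\big(z_{[k)\setminus i},\,z_k-q\big)\,\phi(z_i+q)\phi(q)\,dq,\qquad \rho:=K_{k-1}(\tilde M)\in W_0(\R^k).
\]
Here $\rho\in W_0(\R^k)$ by Proposition \ref{M to K}, and $\rho\ge 0$ because $\tilde M\ge0$. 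To make this rigorous without $\delta$, I will use \eqref{kviam_2} directly. There one coordinate of $p$ is already eliminated, so one needs a linear change of variables in $p\in\R^{k-1}$ that brings the $i$-th pair to the form $(z_i+q,q)$. This reindexing is the main bookkeeping obstacle, especially when $i=k-1$, where the eliminated coordinate is exactly the $i$-th one and one must instead eliminate a different coordinate. Fubini is justified by the $W_0$ bounds. The resulting kernel has exactly the shape \eqref{auto:positive-kernel-K}, with $k$ replaced by $k-1$; the hypothesis $1\le k-1\le n-1$ holds and $i\in[k)$. Proposition \ref{Positivity K} therefore gives $\Lambda_k(M)(\F)=\Theta_k(K_k(M))(\F)\ge0$.

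\emph{Case $k=1$.} Here $(\R^2)^0$ is a point, so $\tilde M=c\ge0$ is a constant and $K_1(M)(z)=c\,\phi(z_0+z_1)\phi(z_1)$. Plugging this into $\Theta_1$ with $u=y^1-y^0$ and $s=y^0+\Sigma(x)$ makes the kernel equal to $c\,\phi(y^1+\Sigma(x))\phi(y^0+\Sigma(x))$, writing $\Sigma(x)$ for $\Sigma(x_{[0,n)})$. For fixed $x$, the product of the $F_i$ over $h\in[2)$ factors as $G(y^0,x)G(y^1,x)$. Hence
\[
\Theta_1(K_1(M))(\F)=c\int_{\R^n}\Big(\int_\R G(y,x)\phi(y+\Sigma(x))\,dy\Big)^2dx\ge0,
\]
using that $\phi$ and the $F_i$ are real.
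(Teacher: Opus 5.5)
Your proposal is correct and is essentially the paper's argument. Membership comes from Proposition \ref{tensor Wiener}, and positivity comes from rewriting $K_k(M)$ as the kernel of Proposition \ref{Positivity K} (with $k-1$ in place of $k$ and $\rho=K_{k-1}(\tilde M)\ge 0$), which is exactly the ``change of variables'' the paper invokes. Your separate perfect-square computation for $k=1$ is a worthwhile addition: Proposition \ref{Positivity K} only produces forms $\Theta_{k+1}$ with $k+1\ge 2$, and the paper's one-line proof passes over this case silently.
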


\begin{proof}
    Apply Proposition \ref{tensor Wiener} and Proposition \ref{Positivity K} after a change of variables.
\end{proof}

\begin{proposition}[Cauchy-Schwarz at $k$]\label{Cauchy-Schwarz at k}\uses{single cancellative Cauchy-Schwarz,Monotonicity K,M to K}\usesdefs{auto:Wiener-space-definition,auto:convolution-along-vector-definition,normalized function tuples,auto:prism-form-definition}
\lean{Auto.cauchySchwarzKernel_memW0,Auto.cauchySchwarzLift_memW0,Auto.cauchySchwarzAtK_bound}
\leanok
Let $k,J\in\N$ with $1\le k<n-1$ and $J\ge1$. For $j\in[J)$, let $\rho_j\in W_0((\R^2)^k)$ and let $\varphi_j\in W_0(\R)$ be real valued. Let $i\in[k)$. Let $M\in W_0((\R^2)^k)$ and $\widetilde M\in W_0((\R^2)^{k+1})$ be defined by
\begin{equation}\label{before CS}
M=\sum_{j\in[J)}\rho_j*_{e_i^0+e_i^1}\varphi_j,
\end{equation}
and, for $y\in(\R^2)^{k+1}$,
\begin{equation}\label{Cauchy Schwarz tilde M definition}
\widetilde M(y)=\sum_{j\in[J)}|\rho_j(y_{[k)})|\varphi_j^{\otimes2}(y_k).
\end{equation}
Then for all $\F\in\mathfrak F$,
\begin{equation}\label{Cauchy Schwarz at k estimate}
|\Lambda_k(M)(\F)|
\le
|\Lambda_{k+1}(\widetilde M)(\F)|^{1/2}
\Big(\sum_{j\in[J)}\|\rho_j\|_1\Big)^{1/2}.
\end{equation}
\end{proposition}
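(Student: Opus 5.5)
The plan is to reduce the statement to the singly cancellative Cauchy--Schwarz inequality (Proposition \ref{single cancellative Cauchy-Schwarz}) at the level of $K$ kernels. The two bridges are the map $M\mapsto K_k(M)$ of Proposition \ref{M to K} and monotonicity (Proposition \ref{Monotonicity K}).

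\textbf{Step 1: rewrite $K_k(M)$ in the form required by Proposition \ref{single cancellative Cauchy-Schwarz}.} By Definition \ref{auto:convolution-along-vector-definition},
\[
M(y)=\sum_{j\in[J)}\int_\R \rho_j\big(y-p(e_i^0+e_i^1)\big)\varphi_j(p)\,dp .
\]
Insert this into \eqref{kviam_2}. Shifting the $i$-th pair $(y_i^0,y_i^1)=(z_i+p_i,p_i)$ by $(-q,-q)$ amounts to the substitution $p_i\mapsto p_i-q$. This changes $\Sigma(p)$ by $-q$, so in the $\delta$-formulation \eqref{kviam_2'} it only moves the last variable $z_k$ by $\pm q$. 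For $i=k-1$, where the pair is $(z_{k-1}+z_k-\Sigma(p),\,z_k-\Sigma(p))$, the same conclusion follows directly. After Fubini (justified by Proposition \ref{W_0 Brascamp Lieb}) I expect, up to replacing $\varphi_j$ by its reflection if a sign appears,
\[
K_k(M)(z)=\sum_{j\in[J)}\int_\R K_k(\rho_j)(z_{[k)},z_k-q)\,\varphi_j(q)\,dq .
\]
This is exactly \eqref{auto:Cauchy-Schwarz-kernel-K} with $\rho_j$ replaced by $K_k(\rho_j)\in W_0(\R^{k+1})$. Note that the reflection $q\mapsto -q$ leaves $\varphi_j(u_k+q)\varphi_j(q)$ structurally unchanged after renaming, so it is harmless.

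\textbf{Step 2: apply singly cancellative Cauchy--Schwarz and bound the $L^1$ factor.} Proposition \ref{single cancellative Cauchy-Schwarz} (valid since $1\le k<n$) then gives
\[
|\Lambda_k(M)(\F)|^2\le \Theta_{k+1}(\tilde K)(\F)\sum_{j}\|K_k(\rho_j)\|_1 .
\]
Here $\tilde K$ is built from $|K_k(\rho_j)|$ and $\varphi_j$. By \eqref{auto:K-map-L1-bound}, $\|K_k(\rho_j)\|_1\le\|\rho_j\|_1$.

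\textbf{Step 3: pass from $\tilde K$ to $\Lambda_{k+1}(\widetilde M)$.} The integral defining $K_k$ satisfies $|K_k(\rho_j)|\le K_k(|\rho_j|)$ pointwise. Since $k+1\le n-1$, Proposition \ref{Monotonicity K} applies to each summand $j$, with $\rho=|K_k(\rho_j)|$, $\tilde\rho=K_k(|\rho_j|)$ and $\phi=\varphi_j$. Summing over $j$ gives
\[
\Theta_{k+1}(\tilde K)(\F)\le\Theta_{k+1}(\tilde K')(\F),
\]
where $\tilde K'$ is the analogous kernel built from $K_k(|\rho_j|)$.

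It remains to identify $\tilde K'=K_{k+1}(\widetilde M)$, so that the right side equals $\Lambda_{k+1}(\widetilde M)(\F)$. This is a change of variables in \eqref{kviam_2} for $k+1$ with $\widetilde M(y)=\sum_j|\rho_j|(y_{[k)})\varphi_j(y_k^0)\varphi_j(y_k^1)$. The $\delta(z_{k+1}-\Sigma(p))$ constraint splits off the new pair $(z_k+p_k,p_k)$, and writing $p_k=q$ produces exactly $\int K_k(|\rho_j|)(z_{[k)},z_{k+1}-q)\varphi_j(z_k+q)\varphi_j(q)\,dq$.

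\textbf{Conclusion.} By Proposition \ref{Positivity M}, $\Lambda_{k+1}(\widetilde M)(\F)\ge0$, so it equals its absolute value. Taking square roots yields \eqref{Cauchy Schwarz at k estimate}. The $W_0$ memberships throughout follow from Propositions \ref{W_0 Brascamp Lieb}, \ref{tensor Wiener} and \ref{M to K}.

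I expect the main obstacle to be the bookkeeping in Steps 1 and 3. The task there is to verify the precise variable placement and signs in \eqref{kviam_2} so that the convolution along $e_i^0+e_i^1$ becomes a convolution in the last $K$-variable, and so that $K_{k+1}$ of the tensor kernel matches the $\tilde K$ shape. The case $i=k-1$ needs separate checking because of its special parametrization.
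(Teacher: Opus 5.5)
Your proposal is correct and follows the paper's proof. The steps are the same: rewrite $K_k(M)(z)=\sum_{j\in[J)}\int_\R K_k(\rho_j)(z_{[k)},z_k-q)\varphi_j(q)\,dq$, apply Proposition \ref{single cancellative Cauchy-Schwarz} with $K_k(\rho_j)$, bound $\|K_k(\rho_j)\|_1\le\|\rho_j\|_1$, use Proposition \ref{Monotonicity K} to replace $|K_k(\rho_j)|$ by $K_k(|\rho_j|)$, and recognize the result as $K_{k+1}(\widetilde M)$. The paper first reduces to $i=k-1$ by permuting pairs, whereas you treat general $i$ directly through the $\delta$-form \eqref{kviam_2'}; that works, and the change of variables in $p_i$ gives exactly $z_k-q$, so no reflection of $\varphi_j$ arises and your hedge there is unnecessary.
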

\begin{proof}
Without loss of generality we may set $i=k-1$ by applying the coordinate permutation which switches the $i$th pair with the $(k-1)$st pair and preserves the order inside each pair. Then
\[
K_k(M)(z)
=
\sum_{j\in[J)}\int_\R K_k(\rho_j)(z_{[k)},z_k-q)\varphi_j(q)\,dq.
\]
Apply Proposition \ref{single cancellative Cauchy-Schwarz} with $K_k(\rho_j)$ in place of $\rho_j$, and then Proposition \ref{Monotonicity K} with $K_k(|\rho_j|)$ in place of $|K_k(\rho_j)|$. Proposition \ref{M to K} gives
\[
\|K_k(\rho_j)\|_1\le\|\rho_j\|_1.
\]
The resulting kernel is
\[
\sum_{j\in[J)}\int_\R
K_k(|\rho_j|)(z_{[k)},z_{k+1}-q)
\varphi_j(z_k+q)\varphi_j(q)\,dq,
\]
which equals $K_{k+1}(\widetilde M)(z)$. This proves \eqref{Cauchy Schwarz at k estimate}.
\end{proof}

\begin{proposition}[Cauchy-Schwarz at $n-1$]\label{Cauchy-Schwarz at n-1}\uses{doubly cancellative Cauchy-Schwarz,Monotonicity K}\usesdefs{auto:Wiener-space-definition,auto:convolution-along-vector-definition,normalized function tuples,prism Brascamp--Lieb,auto:prism-form-definition}
\lean{Auto.doublyCauchySchwarzKernel_memW0,Auto.cauchySchwarzLift_memW0,Auto.cauchySchwarzAtNMinusOne_bound,Auto.cauchySchwarzAtNMinusOne}
\leanok
Let $J\in\N$ with $J\ge1$ and, for $j\in [J)$, let
$\rho_j\in W_0((\R^2)^{n-1})$  and $\varphi_j\in W_0(\R)$ be real valued.
Let $i \in [n-1)$.
Define
\begin{equation}\label{before CS 2}
    M=\sum_{j\in [J)} \rho_j *_{e_i^0+e_i^1} (\varphi_j * \varphi_j),
\end{equation}
and for $y\in (\R^2)^n$,
\begin{equation}\label{auto:terminal-prism-majorant-kernel}
    \tilde{M}(y)=\sum_{j\in [J)} |\rho_j(y_{[n-1)})| \varphi_j^{\otimes 2}(y_{n-1}).
\end{equation}
Then $M\in W_0((\R^2)^{n-1})$ and $\tilde{M}\in W_0((\R^2)^n)$ and for all $\F\in\mathfrak{F}$
\begin{equation}\label{auto:terminal-prism-majorant-bound}
|\Lambda_{n-1}(M)(\F)| \le \sup_{\mathbf{\tilde{F}}\in\mathfrak{F}}|\Lambda_{n}(\tilde{M})(\mathbf{\tilde{F}})|.
\end{equation}
\end{proposition}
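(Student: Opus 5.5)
The plan is to mirror the proof of Proposition \ref{Cauchy-Schwarz at k}, replacing the singly cancellative input with Proposition \ref{doubly cancellative Cauchy-Schwarz}.

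\emph{Reduction to $i=n-2$.} First, I reduce to the case $i=n-2$. Apply the coordinate permutation of $(\R^2)^{n-1}$ that swaps the $i$th pair with the $(n-2)$nd pair and preserves the order inside each pair. Under this permutation $\Sigma(y^0)$ and the cube structure in the definition of $K_{n-1}$ (Proposition \ref{M to K}) are invariant. So, after relabeling the integration variables in \eqref{kviam_2}, the form $\Lambda_{n-1}(M)(\F)$ is unchanged. The same permutation, applied to the first $n-1$ pairs of $(\R^2)^n$, leaves $\Lambda_n(\tilde M)$ unchanged.

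\emph{Membership in $W_0$.} The claims $M\in W_0$ and $\tilde M\in W_0$ follow from three earlier results: Proposition \ref{convolution vector}, the observation $\varphi_j*\varphi_j\in W_0(\R)$ (Proposition \ref{W_0 Brascamp Lieb}), and Proposition \ref{tensor Wiener}.

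\emph{Computing the kernel.} Next I compute $K_{n-1}(M)$. Write $\varphi_j*\varphi_j(p)=\int\varphi_j(r)\varphi_j(p-r)\,dr$. Convolution along $e_{n-2}^0+e_{n-2}^1$ shifts both entries of the last pair by $p$. In formula \eqref{kviam_2} with $k=n-1$, this pair enters as $(z_{n-2}+z_{n-1}-\Sigma(p'),\,z_{n-1}-\Sigma(p'))$. A shift of both entries by $p$ is therefore equivalent to replacing $z_{n-1}$ by $z_{n-1}-p$, with $z_{n-2}$ unchanged. This yields
\[
K_{n-1}(M)(z)=\sum_{j\in[J)}\int_{\R^2}K_{n-1}(\rho_j)(z_{[n-2)},z_{n-2},z_{n-1}-r-s)\varphi_j(r)\varphi_j(s)\,dr\,ds.
\]
This is exactly the kernel \eqref{auto:Cauchy-Schwarz-terminal-kernel-K}, with $K_{n-1}(\rho_j)\in W_0(\R^n)$ in place of $\rho_j$. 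Here $u_{[n-1)}=z_{[n-1)}$ and $u_{n-1}=z_{n-1}$, since $K_{n-1}(\rho_j)$ lives on $\R^{n}$.

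\emph{Applying the doubly cancellative estimate.} Proposition \ref{doubly cancellative Cauchy-Schwarz} then gives
\[
|\Lambda_{n-1}(M)(\F)|\le\sup_{\tilde\F\in\mathfrak F}|\Theta_n(\tilde K)(\tilde\F)|,
\]
where $\tilde K$ is built from $|K_{n-1}(\rho_j)|$. Its proof in fact bounds $|\Theta_{n-1}(K)(\F)|^2$ by $\Theta_n(\tilde K)(\F)\,\Theta_n(\tilde K)(\tilde\F)$, and each of these factors is given by a nonnegative kernel in the sense of Proposition \ref{Positivity K}. So I apply Proposition \ref{Monotonicity K} to each factor with $\rho=|K_{n-1}(\rho_j)|\le K_{n-1}(|\rho_j|)=\tilde\rho$, which holds pointwise by the triangle inequality in \eqref{kviam_2}. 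This replaces $\tilde K$ by the kernel
\[
\sum_{j\in[J)}\int_\R K_{n-1}(|\rho_j|)(z_{[n-1)},z_n-q)\varphi_j(z_{n-1}+q)\varphi_j(q)\,dq .
\]

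\emph{Identifying the final kernel.} It remains to identify this kernel with $K_n(\tilde M)$. Use the delta form \eqref{kviam_2'}: in $K_n(\tilde M)$ the last pair is $(z_{n-1}+p_{n-1},p_{n-1})$, and $\tilde M$ contributes $\varphi_j(z_{n-1}+p_{n-1})\varphi_j(p_{n-1})$ on it. The delta constraint then becomes $z_n-p_{n-1}-\Sigma(p_{[n-1)})$. Setting $q=p_{n-1}$ leaves exactly $K_{n-1}(|\rho_j|)(z_{[n-1)},z_n-q)$. Both sides of the inequality are then expressed as $\Lambda$ forms, and the claim follows.

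\emph{Main obstacle.} I expect the main obstacle to be bookkeeping rather than ideas. One part is verifying the change of variables in the identification of $K_{n-1}(M)$. The other is that Proposition \ref{Monotonicity K} is stated for the $i=k$ configuration with $k+1=n$, whereas here it is needed with the supremum over $\tilde\F$. This works because monotonicity holds for every $\tilde\F$, so it passes through the supremum. The nonnegativity supplied by Proposition \ref{Positivity K} then lets us drop the absolute values.
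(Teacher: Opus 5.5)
Your proposal is correct and follows the paper's proof step for step. It reduces to $i=n-2$, rewrites $K_{n-1}(M)$ as the kernel of Proposition \ref{doubly cancellative Cauchy-Schwarz} with $K_{n-1}(\rho_j)$ in place of $\rho_j$, applies that proposition, and then uses Proposition \ref{Monotonicity K} with $|K_{n-1}(\rho_j)|\le K_{n-1}(|\rho_j|)$ to recognize $K_n(\tilde M)$.

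One small correction: the pair swap does not leave $\Lambda_{n-1}(M)(\F)$ literally unchanged, since it permutes the first $n-1$ arguments of each $F_i$. This is harmless, because $\mathfrak F$ is invariant under that coordinate permutation, the left side is quantified over all $\F\in\mathfrak F$, and the right side is a supremum over $\mathfrak F$.
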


\begin{proof}Without loss of generality we may set $i=n-2$ (replace $\rho_j$ with $\rho_j\circ \sigma$ where $\sigma:(\R^2)^{n-1}\to (\R^2)^{n-1}$ is the coordinate permutation switching the  $i$th pair with the $(n-2)$th pair, keeping the order inside the pair unchanged).
Recall
\[ K_{n-1}(M)(z) = \int_{\R^{n-2}} M((z_l+p_l,p_l)_{l\in [n-2)}, z_{n-2} + z_{n-1} - \Sigma(p), z_{n-1} - \Sigma(p))\,dp. \]
For $z\in\R^n$,

$K_{n-1}(M)(z)$ equals
\[ \sum_{j\in [J)} \int_{\R^{n-2}} \int_\R \int_\R \rho_j((z_l+p_l,p_l)_{l\in [n-2)}, z_{n-2} + z_{n-1} - \Sigma(p) - q, z_{n-1} - \Sigma(p) - q) \varphi_j(q-r)\varphi_j(r)\,dr\,dq\,dp.  \]

Changing variables $q\mapsto q+r$ this is equal to
\[ \sum_{j\in [J)} \int_{\R^2} K_{n-1}(\rho_j)(z_{[n-1)}, z_{n-1} - q - r)\varphi_j(q)\varphi_j(r)\,d(q,r). \]

Applying Proposition \ref{doubly cancellative Cauchy-Schwarz} with $K_{n-1}(\rho_j)$ in place of $\rho_j$ gives
\[ |\Lambda_{n-1}(M)(\F)| \le \sup_{\mathbf{\tilde{F}}\in\mathfrak{F}}|\Theta_n(\tilde{K})(\mathbf{\tilde{F}})|,  \]
where for $u\in \R^{n+1}$,
\[
\tilde{K}(u)=\sum_{j\in [J)}\int_\R |K_{n-1}(\rho_j)|(u_{[n-1)}, u_{n}-q)| \varphi_j(u_{n-1}+q) \varphi_j(q) \, dq.
\]
By Proposition \ref{Monotonicity K} we may replace $|K_{n-1}(\rho_j)|$ by $K_{n-1}(|\rho_j|)$ and then recover $K_n(\tilde{M})(u)$.
\end{proof}

\subsection{Multiplicatively spaced monotone sequences}

\begin{definition}[Multiplicatively spaced monotone sequences]\label{multiplicatively spaced monotone sequences}\uses{}\usesdefs{}
\lean{Auto.SpacedSequence,Auto.A}
A sequence $a:\Z\to\R$ with $a(j)>0$ for every $j\in\Z$ is called multiplicatively spaced if, for each $j\in\Z$
\begin{equation}\label{auto:spaced-sequence-growth}
    2a(j)\le a(j+1)\, .
\end{equation}
Let ${\rm A}$ be the set of such sequences.
\end{definition}

\begin{proposition}[Extension of sequences]\label{Extension of sequences}\uses{}\usesdefs{multiplicatively spaced monotone sequences}
\lean{Auto.extensionOfSequences}
\leanok
Let $J\in\N$ with $J\ge1$ and let $a:[J)\to\R$ satisfy $a(j)>0$ for every $j\in[J)$ and, for all $j\in[J-1)$, we have $a(j+1)\ge 2a(j)$.
Then there is a unique $b\in A$ such that for all $j\in [J)$ we have $a(j)=b(j)$ and for $j\ge J$ we have $b(j)=2^{j-J+1} a(J-1)$ and for $j<0$ we have $b(j)=2^j a(0)$\, .
\end{proposition}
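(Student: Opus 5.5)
Since the statement prescribes $b$ completely on all of $\Z$, existence and uniqueness are handled by one explicit definition:
\[
b(j)=\begin{cases} 2^{j}a(0), & j<0,\\ a(j), & j\in[J),\\ 2^{j-J+1}a(J-1), & j\ge J.\end{cases}
\]
Uniqueness is immediate: any $b'$ with the three listed properties agrees with $b$ at every integer, because $\Z=\{j<0\}\cup[J)\cup\{j\ge J\}$ is a disjoint cover. The only real content is that this $b$ lies in ${\rm A}$ (Definition \ref{multiplicatively spaced monotone sequences}).

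Positivity is clear. On $[J)$ we have $b(j)=a(j)>0$ by hypothesis. Elsewhere $b(j)$ is a power of $2$ times $a(0)>0$ or $a(J-1)>0$; here $J\ge1$ guarantees that $a(0)$ and $a(J-1)$ exist.

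For the growth condition $2b(j)\le b(j+1)$ I would fix $j\in\Z$ and split according to which pieces contain $j$ and $j+1$.
\begin{enumerate}
\item If $j,j+1<0$, then $b(j+1)=2^{j+1}a(0)=2b(j)$, with equality.
\item If $j=-1$, then $j+1=0\in[J)$, so $b(0)=a(0)=2\cdot 2^{-1}a(0)=2b(-1)$.
\item If $j,j+1\in[J)$, i.e.\ $j\in[J-1)$, the hypothesis gives $a(j+1)\ge 2a(j)$.
\item If $j=J-1$, then $b(J)=2^{1}a(J-1)=2b(J-1)$.
\item If $j\ge J$, the two powers of $2$ differ by exactly one factor of $2$, so equality holds again.
\end{enumerate}

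These five cases are exhaustive, including the degenerate case $J=1$, where case 3 is vacuous. There is no real obstacle. The only points needing care are the two transition cases $j=-1$ and $j=J-1$, and checking the exponent bookkeeping $2^{(j+1)-J+1}=2\cdot 2^{j-J+1}$ in case 5.
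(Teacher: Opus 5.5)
Your proposal is correct and follows the paper's proof: the same piecewise definition of $b$, the same five-case verification of $2b(j)\le b(j+1)$ (for $j<-1$, $j=-1$, $j\in[J-1)$, $j=J-1$, $j\ge J$), and uniqueness from the disjoint cover of $\Z$. You also check positivity explicitly, which the paper leaves implicit.
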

\begin{proof}[Proof \auto]
Define $b:\Z\to\R$ by
\[
b(j)=2^ja(0)\qquad(j<0),
\]
\[
b(j)=a(j)\qquad(j\in[J)),
\]
and
\[
b(j)=2^{j-J+1}a(J-1)\qquad(j\ge J).
\]
Then clearly $b(j)=a(j)$ for every $j\in[J)$, while the desired formulas for
$j<0$ and $j\ge J$ hold by definition.

It remains to check that $b\in {\rm A}$. Let $j\in\Z$. If $j<-1$, then
\[
 b(j+1)=2^{j+1}a(0)=2b(j).
\]
If $j=-1$, then
\[
 b(0)=a(0)=2\cdot 2^{-1}a(0)=2b(-1).
\]
If $j\in [J-1)$, then by assumption
\[
 b(j+1)=a(j+1)\ge 2a(j)=2b(j).
\]
If $j=J-1$, then
\[
 b(J)=2a(J-1)=2b(J-1).
\]
Finally, if $j\ge J$, then
\[
 b(j+1)=2^{j+1-J+1}a(J-1)=2b(j).
\]
Thus for every $j\in\Z$ we have $2b(j)\le b(j+1)$, so $b\in{\rm A}$.

The conditions in the statement prescribe the value of $b(j)$ for every
$j\in\Z$, since the three ranges $j<0$, $j\in[J)$, and $j\ge J$ cover
$\Z$ and are disjoint. Hence such a $b$ is unique.
\end{proof}

\begin{proposition}[Operations on spaced sequences]\label{Operations on spaced sequences}\uses{}\usesdefs{multiplicatively spaced monotone sequences}
\lean{Auto.max_mem_A,Auto.smul_mem_A,Auto.shift_mem_A,Auto.sqrt_sq_add_sq_mem_A}
\leanok
    Let $a,b\in {\rm A}$. Then the following hold:

    (i) $\max(a,b)\in {\rm A}$

    (ii) $t\cdot a\in {\rm A}$ for every $t>0$.

    (iii) Let $n\in\mathbb{Z}$. If $c:\Z\to(0,\infty)$ is defined by $c(j)=a(j+n)$ for all $j\in\mathbb{Z}$, then $c\in {\rm A}$.

    (iv) If $c:\Z\to (0,\infty)$ is defined by $c(j)=\sqrt{a(j)^2+b(j)^2}$ for all $j\in\Z$, then $c\in {\rm A}$.
\end{proposition}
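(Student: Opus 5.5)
Each of the four claims reduces to checking the defining inequality \eqref{auto:spaced-sequence-growth} pointwise in $j$, together with positivity of the new sequence. So the plan is to fix $j\in\Z$, use the hypotheses $2a(j)\le a(j+1)$ and $2b(j)\le b(j+1)$, and verify $2c(j)\le c(j+1)$ for each construction $c$. Positivity is immediate in every case: maxima, positive multiples and square roots of sums of squares of positive numbers are positive, and a shift of a positive sequence is positive.

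For (i), note that $2\max(a(j),b(j))=\max(2a(j),2b(j))$. Each entry of this maximum is at most the corresponding entry of $\max(a(j+1),b(j+1))$. Since $\max$ is monotone in each argument, this gives the inequality.

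For (ii), multiply $2a(j)\le a(j+1)$ by $t>0$.

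For (iii), apply the hypothesis at the index $j+n$ to get $2c(j)=2a(j+n)\le a(j+n+1)=c(j+1)$.

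For (iv), square the hypotheses. Because both sides are nonnegative, we get $4a(j)^2\le a(j+1)^2$ and $4b(j)^2\le b(j+1)^2$. Adding these gives $4(a(j)^2+b(j)^2)\le a(j+1)^2+b(j+1)^2$. Then take square roots, using monotonicity of $\sqrt{\cdot}$ on $[0,\infty)$ and $\sqrt{4x}=2\sqrt{x}$.

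There is no real obstacle here. The only point needing care is in (iv): we must use nonnegativity of $a(j)$ and $b(j)$ when squaring, and we must keep track of the fact that $c$ is required to map into $(0,\infty)$.
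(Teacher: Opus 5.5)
Your proposal is correct and follows the paper's proof: each item is checked pointwise from $2a(j)\le a(j+1)$ and $2b(j)\le b(j+1)$. For (i), your monotonicity-of-$\max$ argument is equivalent to the paper's case split on which term attains the maximum, and for (iv) the paper writes the same square-and-add step as $2c(j)=\sqrt{(2a(j))^2+(2b(j))^2}\le c(j+1)$.
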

\begin{proof}[Proof \auto]
For (i), let $j\in\Z$. If $\max(a,b)(j)=a(j)$, then
\[
2\max(a,b)(j)=2a(j)\le a(j+1)\le\max(a,b)(j+1).
\]
The case $\max(a,b)(j)=b(j)$ is identical.

For (ii), for every $j\in\Z$,
\[
2(ta)(j)=t(2a(j))\le ta(j+1)=(ta)(j+1).
\]

For (iii), for every $j\in\Z$,
\[
2c(j)=2a(j+n)\le a(j+n+1)=c(j+1).
\]

For (iv), for every $j\in\Z$,
\[
2c(j)=\sqrt{(2a(j))^2+(2b(j))^2}
\le\sqrt{a(j+1)^2+b(j+1)^2}=c(j+1).
\]
Thus each sequence belongs to ${\rm A}$.
\end{proof}

\begin{definition}[Distance of spaced sequences]\label{Distance of spaced sequences}\uses{}\usesdefs{multiplicatively spaced monotone sequences}
\lean{Auto.WithinSequenceDistance,Auto.SequenceDistance}
Given $a,b\in {\rm A}$, define
\begin{equation}\label{auto:spaced-sequence-distance}
    \dist(a,b)=\min\{k\in\mathbb{Z}_{\ge 0}\,:\,\forall j, a(j-k)\le b(j)\le a(j+k)\}
\end{equation}
with the understanding that the value is $\infty$ if there is no such $k$.
\end{definition}

\begin{proposition}[Properties of distance of sequences]\label{Properties of distance of sequences}\uses{}\usesdefs{multiplicatively spaced monotone sequences,Distance of spaced sequences}
\lean{Auto.sequenceDistance_zero_eq,Auto.sequenceDistance_comm,Auto.sequenceDistance_triangle,Auto.sequenceDistance_shift_le,Auto.sequenceDistance_smul,Auto.sequenceDistance_pow_two_smul_le}
\leanok
    Let $a,b,c\in {\rm A}$.
    Then

    (i) If $\dist(a,b)=0$, then $a=b$.

    (ii) $\dist(a,b)=\dist(b,a)$

    (iii) $\dist(a,c)\le \dist(a,b)+\dist(b,c)$

    (iv) If $c(j)=b(j+1)$, then $\dist(a,c)\le \dist(a,b)+1$.

    (v) For all $t>0$, $\dist(t\cdot a,t\cdot b)=\dist(a,b)$

    (vi) For all $h\in\mathbb{Z}$,
    $\dist(a,2^h a)\le |h|$.
\end{proposition}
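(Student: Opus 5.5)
All six items follow directly from the definition of $\dist$ together with monotonicity of sequences in ${\rm A}$. First I record that any $a\in{\rm A}$ is strictly increasing: $a(j+1)\ge 2a(j)>a(j)$ since $a(j)>0$. In particular, for integers $j\le j'$ we have $a(j)\le a(j')$. I also note that the admissible set $\{k:\forall j,\ a(j-k)\le b(j)\le a(j+k)\}$ is upward closed by this monotonicity, so the minimum is well defined whenever the set is nonempty. Throughout I treat the value $\infty$ by the usual conventions: if the right-hand side of an inequality is infinite there is nothing to prove.

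For (i), $\dist(a,b)=0$ means $a(j)\le b(j)\le a(j)$ for all $j$, so $a=b$. For (ii), suppose $k$ is admissible for $(a,b)$. Then $b(j)\le a(j+k)$ for all $j$; substituting $j\mapsto j-k$ gives $b(j-k)\le a(j)$. Likewise $a(j-k)\le b(j)$ with $j\mapsto j+k$ gives $a(j)\le b(j+k)$. Hence $k$ is admissible for $(b,a)$, and by symmetry the two minima agree. For (iii), take $k=\dist(a,b)$ and $l=\dist(b,c)$. Then for every $j$,
\[
c(j)\le b(j+l)\le a(j+l+k),
\]
and similarly $c(j)\ge b(j-l)\ge a(j-l-k)$. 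So $k+l$ is admissible for $(a,c)$.

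Item (iv) reduces to (iii) once I show $\dist(b,c)\le1$. Since $c(j)=b(j+1)$, we have $b(j-1)\le b(j+1)=c(j)\le b(j+1)$, which uses monotonicity for the lower bound. For (v), the inequalities $ta(j-k)\le tb(j)\le ta(j+k)$ are equivalent to the unscaled ones because $t>0$, so the admissible sets, and hence the minima, coincide. For (vi), by (ii) and (v) it is enough to treat $h\ge0$; the case $h<0$ follows because $\dist(a,2^{h}a)=\dist(2^{-h}\cdot 2^{h}a,\,2^{-h}a)=\dist(a,2^{-h}a)$, using (v) and then (ii). For $h\ge0$, iterating the growth condition gives $2^h a(j)\le a(j+h)$, and also $a(j-h)\le a(j)\le 2^h a(j)$. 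Thus $k=h$ is admissible.

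I do not expect a genuine obstacle here. The only points needing care are the handling of the value $\infty$ in (iii) and (iv), and the index-shift bookkeeping in (ii), where the substitution must be applied separately to each of the two one-sided inequalities.
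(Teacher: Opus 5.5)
Your proof is correct and follows the paper's argument item by item: the same unfolding of the definition for (i), (ii), (iii), (v), and the same iterated growth bound $2^h a(j)\le a(j+h)$ for (vi). The only deviations are minor reorganizations. You obtain (iv) from (iii) via $\dist(b,c)\le 1$ rather than shifting indices directly. You also reduce the case $h<0$ of (vi) to $h>0$ using (ii) and (v), which needs $2^{h}a\in{\rm A}$, guaranteed by Proposition~\ref{Operations on spaced sequences}; the paper instead computes that case directly.
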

\begin{proof}[Proof \auto]
For (i), if $\dist(a,b)=0$, then for every $j\in\Z$,
\[
 a(j)\le b(j)\le a(j),
\]
and hence $a(j)=b(j)$.

For (ii), suppose first that $\dist(a,b)\le k<\infty$. Then for every $j\in\Z$,
\[
 a(j-k)\le b(j)\le a(j+k).
\]
Applying this with $j-k$ in place of $j$ gives
\[
 b(j-k)\le a(j),
\]
and applying it with $j+k$ in place of $j$ gives
\[
 a(j)\le b(j+k).
\]
Therefore
\[
 b(j-k)\le a(j)\le b(j+k)
\]
for every $j\in\Z$, so $\dist(b,a)\le k$. By symmetry, $\dist(a,b)=\dist(b,a)$.

For (iii), if either $\dist(a,b)$ or $\dist(b,c)$ is infinite, there is nothing to prove. Otherwise let
\[
 \dist(a,b)\le k,\qquad \dist(b,c)\le l.
\]
Then for every $j\in\Z$,
\[
 b(j-l)\le c(j)\le b(j+l).
\]
Using the inequalities for $a$ and $b$ on the two outside terms gives
\[
 a(j-l-k)\le b(j-l)\le c(j)\le b(j+l)\le a(j+l+k).
\]
Hence
\[
 \dist(a,c)\le k+l.
\]
Taking the minimum over admissible $k$ and $l$ gives
\[
 \dist(a,c)\le \dist(a,b)+\dist(b,c).
\]

For (iv), if $\dist(a,b)$ is infinite, there is nothing to prove. Otherwise let $\dist(a,b)\le k$. Since $c(j)=b(j+1)$, for every $j\in\Z$,
\[
 a(j+1-k)\le c(j)\le a(j+1+k).
\]
Because $a\in{\rm A}$ is increasing, we have
\[
 a(j-k-1)\le a(j+1-k)
\]
and
\[
 a(j+1+k)=a(j+k+1).
\]
Therefore
\[
 a(j-(k+1))\le c(j)\le a(j+(k+1)),
\]
so $\dist(a,c)\le k+1$. Taking the minimum over admissible $k$ gives
\[
 \dist(a,c)\le \dist(a,b)+1.
\]

For (v), for every $k\in\Z_{\ge 0}$, the inequalities
\[
 a(j-k)\le b(j)\le a(j+k)
\]
hold for every $j\in\Z$ if and only if
\[
 t a(j-k)\le t b(j)\le t a(j+k)
\]
hold for every $j\in\Z$. Hence
\[
 \dist(t\cdot a,t\cdot b)=\dist(a,b).
\]

For (vi), first suppose $h\ge 0$. Iterating the defining inequality for $a\in{\rm A}$ gives
\[
 2^h a(j)\le a(j+h)
\]
for every $j\in\Z$. Also, since $a$ is increasing,
\[
 a(j-h)\le a(j)\le 2^h a(j).
\]
Thus
\[
 a(j-h)\le 2^h a(j)\le a(j+h),
\]
so $\dist(a,2^h a)\le h=|h|$.

If $h<0$, write $-h>0$. Iterating the defining inequality gives
\[
 2^{-h}a(j+h)\le a(j),
\]
and hence
\[
 a(j+h)\le 2^h a(j).
\]
Also, since $a$ is increasing,
\[
 2^h a(j)\le a(j)\le a(j-h).
\]
Therefore
\[
 a(j-(-h))\le 2^h a(j)\le a(j+(-h)),
\]
so $\dist(a,2^h a)\le -h=|h|$.

\end{proof}

\begin{definition}[Closed balls in $\mathrm{A}$]\label{closed balls in A}\uses{}\usesdefs{multiplicatively spaced monotone sequences,Distance of spaced sequences}
\lean{Auto.sequenceDistanceBall}
For $a\in\mathrm{A}$ and $r>0$ denote
\begin{equation}\label{auto:spaced-sequence-distance-ball}B_{\mathrm{dist}}(a,r)=\{b\in\mathrm{A}\,:\,\dist(a,b)\le r\}.\end{equation}
\end{definition}

\subsection{Gaussians}

Recall that $\g(x)=e^{-\pi x^2}$ for $x\in\R$.

\begin{proposition}\label{square root one minus Gaussian}\uses{}\usesdefs{gaussian}
\lean{Auto.sqrtOneMinusGaussian,Auto.sqrtOneMinusGaussian_wellDefined,Auto.continuous_sqrtOneMinusGaussian,Auto.sqrtOneMinusGaussian_lower,Auto.sqrtOneMinusGaussian_bounds}
\leanok
    The function
    \begin{equation}\label{auto:square-root-one-minus-Gaussian}
        f(x)=\sqrt{1-\g(x)}
    \end{equation}
    is well-defined
    using the nonnegative square root and is continuous on $\R$.
    For $|x|\le \frac 12$, we have
    \begin{equation}\label{auto:square-root-one-minus-Gaussian-small-lower-bound}
        f(x)\ge \tfrac 12 |x|
    \end{equation}
    and for $|x|\ge \frac 12$ we have
 \begin{equation}\label{auto:square-root-one-minus-Gaussian-large-bounds}
        1-\g(x)\le f(x)\le 1.
    \end{equation}
\end{proposition}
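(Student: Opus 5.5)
Since $\g(x)=e^{-\pi x^2}\in(0,1]$ for all $x\in\R$, the quantity $1-\g(x)$ lies in $[0,1)$. The nonnegative square root is therefore defined everywhere. The function $f$ is continuous as the composition of the continuous maps $x\mapsto 1-e^{-\pi x^2}$ and $\sqrt{\cdot}$ on $[0,\infty)$. The upper bound $f(x)\le 1$ also follows immediately, since $1-\g(x)\le 1$ gives $\sqrt{1-\g(x)}\le 1$.

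For the lower bound when $|x|\ge\tfrac12$, I will use the elementary fact that $\sqrt{t}\ge t$ for $t\in[0,1]$. Applying it with $t=1-\g(x)\in[0,1)$ gives $f(x)\ge 1-\g(x)$. This part in fact holds for all $x$, not just for $|x|\ge\tfrac12$.

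For $|x|\le\tfrac12$, it suffices to show $1-e^{-\pi x^2}\ge \tfrac14 x^2$, because both sides of the claimed inequality are nonnegative and we can then take square roots. Set $u=\pi x^2\in[0,\pi/4]$. I will use the bound $1-e^{-u}\ge u-u^2/2$, which holds for $u\ge0$: the function $h(u)=1-e^{-u}-u+u^2/2$ has $h(0)=0$ and $h'(u)=e^{-u}-1+u\ge 0$. Since $u\le\pi/4<1$, this gives $u-u^2/2\ge u/2$. Hence $1-\g(x)\ge \pi x^2/2\ge x^2/4$.

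The only step requiring any care is the choice of the elementary exponential inequality. Alternatively, one can use the concavity of $u\mapsto 1-e^{-u}$ on $[0,\pi/4]$: it lies above its chord, so $1-e^{-u}\ge u\,(1-e^{-\pi/4})/(\pi/4)$. Since $1-e^{-\pi/4}>0.54$, this gives $1-e^{-u}\ge 0.68\,u$, and hence $1-\g(x)\ge 0.68\,\pi x^2 > x^2/4$. Either route suffices, and I would use the derivative-based bound because it is easier to formalize.
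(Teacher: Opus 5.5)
Your proposal is correct and follows essentially the same route as the paper: the inequality $e^{-u}\le 1-u+u^2/2$ with $u=\pi x^2\le \pi/4<1$ for the bound when $|x|\le\tfrac12$, and $a\le\sqrt a\le 1$ for $a\in[0,1]$ for the bounds when $|x|\ge\tfrac12$. The only addition is that you justify the exponential inequality via $h'(u)=e^{-u}-1+u\ge 0$, which the paper takes as known.
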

\begin{proof}[Proof \auto]
For every $x\in\R$ we have $0<\g(x)\le 1$, and hence
$1-\g(x)\ge 0$. Thus $f$ is well defined using the nonnegative square
root. Since $\g$ is continuous and the square root is continuous on
$[0,\infty)$, the function $f$ is continuous.

Let $|x|\le 1/2$ and put $u=\pi x^2$. Then $0\le u\le \pi/4<1$. Using
$e^{-u}\le 1-u+u^2/2$, we get
\[
1-e^{-u}\ge u-\tfrac{u^2}{2}\ge \tfrac u2 .
\]
Therefore
\[
f(x)^2=1-e^{-\pi x^2}\ge \tfrac{\pi x^2}{2}\ge \tfrac{x^2}{4},
\]
and hence $f(x)\ge \frac12 |x|$.

Finally, if $|x|\ge 1/2$, then $0\le 1-\g(x)\le 1$. For every
$a\in[0,1]$ we have $a\le \sqrt a\le 1$, so with $a=1-\g(x)$ this gives
\[
1-\g(x)\le f(x)\le 1 .
\]
\end{proof}

\begin{proposition}\label{Gaussian bump decay}\uses{}\usesdefs{gaussian,bracket bump}
\lean{Auto.gaussianBumpDecay}
\leanok
For every $x\in\R$, $m,N\in\N$,
\begin{equation}\label{auto:Gaussian-derivative-decay} |\g^{(m)}(x)|\le C_{\ref{Gaussian bump decay},m,N} \langle x\rangle^N, \end{equation}
where
\begin{equation}\label{auto:Gaussian-bump-decay-constant}
C_{\ref{Gaussian bump decay},m,N}
=\bigl(64(m+1)\bigr)^{m/2}\bigl(4(N+1)\bigr)^{N/2}.
\end{equation}

\end{proposition}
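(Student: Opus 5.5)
We need $|\g^{(m)}(x)|(1+|x|)^N\le (64(m+1))^{m/2}(4(N+1))^{N/2}$. The plan is to split this into two separate bounds: one on $|\g^{(m)}(x)|e^{\pi x^2/2}$ and one on $(1+|x|)^N e^{-\pi x^2/2}$. The product of the two gives the claim, with the constant factoring as the displayed product.

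\emph{Derivative factor.} Write $\g^{(m)}(x)=(-1)^m H_m(x)e^{-\pi x^2}$, where $H_m$ is the polynomial determined by the recursion
\[
H_{m+1}(x)=2\pi x H_m(x)-H_m'(x),\qquad H_0=1 .
\]
Rather than tracking coefficients, I will use the Cauchy integral formula. Since $\g$ is entire,
\[
\g^{(m)}(x)=\frac{m!}{2\pi i}\oint_{|z-x|=R}\frac{e^{-\pi z^2}}{(z-x)^{m+1}}\,dz .
\]
For $|w|=R$ we have $\operatorname{Re}(x+w)^2\ge x^2-2|x|R-R^2\ge x^2/2-3R^2$, using $2|x|R\le x^2/2+2R^2$. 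This gives
\[
|\g^{(m)}(x)|\le m!\,R^{-m}e^{3\pi R^2}e^{-\pi x^2/2}.
\]
Choose $R=\sqrt{(m+1)/(6\pi)}$ (or any comparable value). Then $e^{3\pi R^2}=e^{(m+1)/2}$, and together with $m!\le (m+1)^m$ this yields
\[
m!\,R^{-m}e^{3\pi R^2}\le (m+1)^{m/2}(6\pi)^{m/2}e^{(m+1)/2}.
\]
The leftover factor $e^{1/2}$ at $m=0$ is a problem, since the claimed constant equals $1$ when $m=0$. So the case $m=0$ is handled directly: there $|\g|e^{\pi x^2/2}=e^{-\pi x^2/2}\le 1$. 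For $m\ge1$ we have $e^{(m+1)/2}\le e^{m}$, so the bound is at most $(6\pi e^2)^{m/2}(m+1)^{m/2}$. Since $6\pi e^2\approx 139$ exceeds $64$, the choice of $R$ must be tuned. Minimizing $R^{-m}e^{3\pi R^2}$ gives $R^2=m/(6\pi)$ and the bound $m!\,(6\pi e/m)^{m/2}$. With $m!\le m^m$ this becomes $(6\pi e\, m)^{m/2}\approx (51.2\,m)^{m/2}$, which is at most $(64(m+1))^{m/2}$.

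\emph{Polynomial factor.} Maximize $(1+y)^N e^{-\pi y^2/2}$ over $y\ge0$. Using $(1+y)^2\le 2(1+y^2)$, it suffices to bound $2^{N/2}\sup_y (1+y^2)^{N/2}e^{-\pi y^2/2}$. With $t=1+y^2$, the quantity $t^{N/2}e^{-\pi (t-1)/2}$ is maximized at $t=N/\pi$ when $N\ge\pi$, giving the value $(N/(\pi e))^{N/2}e^{\pi/2}$. In general one obtains $\le e^{\pi/2}\max(1,(N/(\pi e))^{N/2})$. The factor $e^{\pi/2}$ breaks the case $N=0$, so treat $N=0$ separately: there the bound is $\le 1$.

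For $N\ge1$, I would instead use the cruder estimate $t^{N/2}e^{-\pi(t-1)/2}\le (N+1)^{N/2}$. This holds for $t\le N+1$ trivially, and for $t\ge N+1$ because the function is decreasing once $t\ge N/\pi$. Hence $(1+y)^N e^{-\pi y^2/2}\le (2(N+1))^{N/2}\le (4(N+1))^{N/2}$, which also covers $N=0$.

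The main obstacle is purely the explicit constant bookkeeping in the Cauchy-estimate step, in particular confirming that the optimized constant $6\pi e\approx 51.2$ stays below $64$. If that fails, a fallback is to expand $H_m$ explicitly, with coefficients bounded by $(2\pi)^k m^{m-k}$-type counts from the recursion, and absorb the powers of $|x|$ into $e^{-\pi x^2/2}$ using the same elementary maximization as in the polynomial factor.
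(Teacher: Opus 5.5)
Your proof is correct. It takes a genuinely different route from the paper for the derivative factor.

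The paper stays on the real line. It writes $\g^{(m)}=P_m\,\g$ with the explicit Hermite-type expansion of $P_m$, obtained by induction. It then bounds each monomial against $e^{-\pi x^2/2}$ using $|x|^d e^{-\pi x^2/2}\le((d+1)/(\pi e))^{d/2}$, controls the coefficients by $m!/(j!\,d!)\le 2^m m^j$, and multiplies by the number of summands.

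You instead regard $\g$ as the restriction of the entire function $z\mapsto e^{-\pi z^2}$ and apply Cauchy's estimate on the circle $|z-x|=R$. The lower bound $\operatorname{Re}(x+w)^2\ge x^2/2-3R^2$ for $|w|=R$, together with the optimized radius $R^2=m/(6\pi)$, gives for $m\ge1$
\[
|\g^{(m)}(x)|\,e^{\pi x^2/2}\le m!\,(6\pi e/m)^{m/2}\le(6\pi e\, m)^{m/2}.
\]
Since $6\pi e<52<64$, this is at most $(64(m+1))^{m/2}$.

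This removes all coefficient bookkeeping and gives a noticeably sharper constant. The cost is importing complex analysis. In particular you need to identify the real iterated derivative of $\g$ with the complex derivative of the entire extension at real points. That is standard on paper but extra work in the Lean setting that motivates this blueprint; the paper's self-contained real-variable induction avoids it.

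Your polynomial factor is essentially the paper's elementary maximization. Your version via $(1+y)^2\le2(1+y^2)$ and monotonicity of $t^{N/2}e^{-\pi(t-1)/2}$ for $t\ge N/\pi$ even yields $(2(N+1))^{N/2}$.

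In a write-up, drop the false start with $R=\sqrt{(m+1)/(6\pi)}$ and the proposed fallback, since the optimized radius already closes the argument. Present that choice directly for $m\ge1$, and treat $m=0$ separately via $e^{-\pi x^2/2}\le 1$.
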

\begin{proof}[Proof \auto]
We first prove that, for every $m\in\N$ and $x\in\R$,
\[
|\g^{(m)}(x)|
\le
\bigl(64(m+1)\bigr)^{m/2}e^{-\pi x^2/2}.
\]
For $m=0$ this is immediate. For $m\ge 1$, repeated differentiation gives
\[
\g^{(m)}(x)=P_m(x)e^{-\pi x^2},
\]
where
\[
P_m(x)
=
m!\sum_{j=0}^{\lfloor m/2\rfloor}
\tfrac{(-1)^{m+j}2^{m-2j}\pi^{m-j}x^{m-2j}}
{j!(m-2j)!}.
\]
Indeed, one proves this using induction and
$P_{m+1}(x)=P_m'(x)-2\pi xP_m(x).$
Set $d=m-2j$. We use
\[
|x|^d e^{-\pi x^2/2}
\le
\left(\tfrac{d+1}{\pi e}\right)^{d/2}
\]
with the convention that the right hand side is $1$ when $d=0$.
Also,
\[
\tfrac{m!}{j!d!}
=
\binom{m}{d}\tfrac{(2j)!}{j!}
\le
2^m m^j .
\]
Hence, using $\pi<4$ and $e>2$, each summand in $|P_m(x)|e^{-\pi x^2/2}$ is at most
\[
2^m(\pi m)^j
\left(\tfrac{4\pi(d+1)}{e}\right)^{d/2}
\le
\bigl(32(m+1)\bigr)^{m/2}.
\]
Since there are at most $2^{m/2}$ summands, we get
\[
|P_m(x)|e^{-\pi x^2/2}
\le
\bigl(64(m+1)\bigr)^{m/2}.
\]
This proves the displayed Gaussian derivative bound.

Next, for every $N\in\N$,
\[
(1+|x|)^N e^{-\pi x^2/2}
\le
2^N\max\bigl(1,|x|^Ne^{-\pi x^2/2}\bigr)
\le
\bigl(4(N+1)\bigr)^{N/2}.
\]
Equivalently,
\[
e^{-\pi x^2/2}
\le
\bigl(4(N+1)\bigr)^{N/2}\langle x\rangle^N .
\]
Therefore
\[
|\g^{(m)}(x)|
\le
\bigl(64(m+1)\bigr)^{m/2}
\bigl(4(N+1)\bigr)^{N/2}
\langle x\rangle^N.
\]
This proves the claim.
\end{proof}

\begin{proposition}[Elementary Gaussian properties]\label{Elementary Gaussian properties}\uses{Gaussian bump decay}\usesdefs{gaussian,auto:Wiener-space-definition}
\lean{Auto.gaussian_memSchwartz,Auto.gaussian_memW0,Auto.gaussian_fourier_fixed,Auto.gaussianRescale_convolution,Auto.gaussianRescale_fourier}
\leanok

(i) $\g\in \mathcal{S}(\R)\subset W_0(\R)$

(ii) $\widehat{\g}=\g$.

(iii) We have for $\lambda,\mu>0$
    \begin{equation}\label{eq:gaussconv}  \g_{(\lambda)}*\g_{(\mu)}=\g_{(\sqrt{\lambda^2+\mu^2})}\, .
    \end{equation}

(iv) We have for $\lambda>0$
 \begin{equation}\label{eq:scaleFT}
     \widehat{\g_{(\lambda)}}(\xi)=\g(\lambda \xi)
 \end{equation}
\end{proposition}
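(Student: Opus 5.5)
We prove the four items in order, each resting on a standard fact or an earlier proposition of the paper.

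For (i), Proposition \ref{Gaussian bump decay} gives $|\g^{(m)}(x)|\le C_{\ref{Gaussian bump decay},m,N}\langle x\rangle^N$ for all $m,N\in\N$. Multiplying by $(1+|x|)^N$ shows that $\sup_x (1+|x|)^N|\g^{(m)}(x)|<\infty$ for every pair $(m,N)$. Hence $\g\in\mathcal S(\R)$. The inclusion $\mathcal S(\R)\subset W_0(\R)$ is Proposition \ref{P:schwartz-into-wiener}.

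For (ii), we use the standard self-duality of $e^{-\pi x^2}$ under the normalization \eqref{auto:Fourier-transform-formula}. This is available in mathlib, and we cite it as such. Should a direct argument be needed, there are two routes.
\begin{itemize}
\item Set $G(\xi)=\widehat{\g}(\xi)$ and differentiate under the integral sign, which is justified by the decay from (i). Integrating by parts, using $\g'(x)=-2\pi x\g(x)$, gives $G'(\xi)=-2\pi\xi G(\xi)$. Together with $G(0)=\int\g=1$, this forces $G=\g$.
\item Alternatively, complete the square and shift the contour.
\end{itemize}

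For (iv), we change variables $x=\lambda y$ in $\int \lambda^{-1}\g(\lambda^{-1}x)e^{-2\pi i x\xi}\,dx$. This gives $\widehat{\g}(\lambda\xi)$, which equals $\g(\lambda\xi)$ by (ii).

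For (iii), both sides are in $L^1$, since $\g_{(\lambda)}$ is integrable and convolutions of $L^1$ functions are $L^1$. We compare Fourier transforms. By $\widehat{f*g}=\widehat f\widehat g$ and (iv), the left side has transform
\[
\g(\lambda\xi)\g(\mu\xi)=e^{-\pi(\lambda^2+\mu^2)\xi^2}=\g\big(\sqrt{\lambda^2+\mu^2}\,\xi\big),
\]
which by (iv) is the transform of the right side. Injectivity of the Fourier transform on $L^1$ then gives equality almost everywhere. Since both sides are continuous (a convolution of an $L^1$ function with a bounded continuous one is continuous), equality holds everywhere.

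Alternatively, one can compute the convolution integral directly by completing the square in the exponent. This avoids Fourier injectivity but requires a Gaussian integral $\int e^{-a(y-b)^2}\,dy=\sqrt{\pi/a}$.

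The main obstacle is (ii), if it must be proved from scratch rather than cited: the differentiation under the integral and the ODE uniqueness need some care. Everything else is a routine change of variables.
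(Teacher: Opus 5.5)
Your proposal is correct and follows essentially the same route as the paper. For (i) you use the decay from Proposition~\ref{Gaussian bump decay}; for (ii) the differential equation $I'(\xi)=-2\pi\xi I(\xi)$ with $I(0)=1$; for (iv) the change of variables $x=\lambda y$; and for (iii) a comparison of Fourier transforms followed by Fourier inversion (your explicit citation of Proposition~\ref{P:schwartz-into-wiener} and your continuity remark in (iii) simply spell out steps the paper leaves implicit).
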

\begin{proof}[Proof \auto]
By Proposition \ref{Gaussian bump decay}, every derivative of $\g$ decays
faster than any polynomial. Hence $\g\in \mathcal S(\R)$, and therefore
$\g\in W_0(\R)$.

We next prove the Fourier transform identity. Set
\[
I(\xi)=\int_\R e^{-\pi x^2}e^{-2\pi i x\xi}\,dx.
\]
Differentiation under the integral and integration by parts give
\[
I'(\xi)=i\int_\R \g'(x)e^{-2\pi i x\xi}\,dx=-2\pi\xi I(\xi).
\]
The standard Gaussian integral gives $I(0)=1$. Hence the unique solution of this differential equation is $I(\xi)=e^{-\pi\xi^2}$, and therefore $\widehat{\g}=\g$.

Let $\lambda>0$ and write
\[
\g_{(\lambda)}(x)=\lambda^{-1}\g(x/\lambda).
\]
Then, by the change of variables $x=\lambda y$,
\[
\widehat{\g_{(\lambda)}}(\xi)
=
\int_\R \lambda^{-1}\g(x/\lambda)e^{-2\pi i x\xi}\,dx
=
\int_\R \g(y)e^{-2\pi i y(\lambda \xi)}\,dy
=
\g(\lambda \xi).
\]
This proves \eqref{eq:scaleFT}.

Finally, using \eqref{eq:scaleFT} and $\widehat{\g}=\g$, we have
\[
\widehat{\g_{(\lambda)}*\g_{(\mu)}}(\xi)
=
\widehat{\g_{(\lambda)}}(\xi)\widehat{\g_{(\mu)}}(\xi)
=
\g(\lambda \xi)\g(\mu \xi)
=
e^{-\pi(\lambda^2+\mu^2)\xi^2}.
\]
Since
\[
e^{-\pi(\lambda^2+\mu^2)\xi^2}
=
\g(\sqrt{\lambda^2+\mu^2}\,\xi)
=
\widehat{\g_{(\sqrt{\lambda^2+\mu^2})}}(\xi),
\]
Fourier inversion gives
\[
\g_{(\lambda)}*\g_{(\mu)}
=
\g_{(\sqrt{\lambda^2+\mu^2})}.
\]
\end{proof}

\begin{proposition}\label{poisson to abel}\uses{}\usesdefs{}
\lean{Auto.poissonKernel,Auto.poissonKernel_fourier}
\leanok
Let
\begin{equation}\label{auto:Gaussian-comparison-function}
    p(x)=2(1+(2\pi x)^2)^{-1}.
\end{equation}
Then
\begin{equation}\label{eq: p hat}
    \widehat{p}(\xi)=e^{- |\xi|}.
\end{equation}
\end{proposition}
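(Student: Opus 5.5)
The plan is to avoid computing the Fourier integral of $p$ directly, which would need contour integration, and instead compute the Fourier transform of $q(\xi)=e^{-|\xi|}$, which is elementary, then conclude by Fourier inversion. Both $p$ and $q$ are continuous and integrable on $\R$, and both are real-valued and even, so every transform that appears is real and even.

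First, for $x\in\R$, split the integral at $0$ and use the explicit antiderivative of an exponential:
\[
\widehat{q}(x)=\int_0^\infty e^{-\xi}\big(e^{-2\pi i x\xi}+e^{2\pi i x\xi}\big)\,d\xi
=\frac{1}{1+2\pi i x}+\frac{1}{1-2\pi i x}=\frac{2}{1+(2\pi x)^2}=p(x).
\]
Second, apply the Fourier inversion theorem to $q$. This is legitimate because $q\in L^1$, $\widehat q=p\in L^1$, and $q$ is continuous, so inversion holds at every point. It gives
\[
q(\xi)=\int_\R p(x)e^{2\pi i x\xi}\,dx .
\]
Since $p$ is even, the substitution $x\mapsto -x$ turns the right-hand side into $\int_\R p(x)e^{-2\pi i x\xi}\,dx=\widehat p(\xi)$. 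Hence $\widehat p(\xi)=e^{-|\xi|}$, which is \eqref{eq: p hat}.

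The only step that needs any care is the use of inversion: we must check the integrability of $\widehat q=p$, which is clear from its $x^{-2}$ decay, and use the pointwise form of the inversion theorem available for continuous functions. The rest is a routine computation.
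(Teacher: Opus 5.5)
Your proof is correct and is essentially the paper's argument: compute the Fourier transform of $e^{-|\xi|}$ by an elementary integral, then apply Fourier inversion, which is justified by the continuity and integrability of both $p$ and $e^{-|\xi|}$. The differences are cosmetic. You evaluate the integral with complex exponentials, while the paper uses real integration by parts for $\int_0^\infty e^{-t}\cos(at)\,dt$. You also invoke the evenness of $p$ to pass between the forward and inverse transforms, whereas the paper computes $\mathcal F^{-1}(e^{-|\cdot|})=p$ directly.
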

\begin{proof}[Proof of Proposition \ref{poisson to abel}]
For \(a\in\R\), set
\[
    I(a)=\int_0^\infty e^{-t}\cos(at)\,dt,
    \qquad
    J(a)=\int_0^\infty e^{-t}\sin(at)\,dt.
\]
Integration by parts, differentiating the trigonometric functions and integrating the exponential function gives
\[
    I(a)=1-aJ(a),
    \qquad
    J(a)=aI(a).
\]
Hence
\[
    I(a)=\tfrac{1}{1+a^2}.
\]
Using this for $a=2\pi x$ for \(x\in\R\)
\begin{equation}
    p(x)=2\int_0^\infty e^{-\xi}\cos(2\pi x\xi)\,d\xi  =\int_{\R}e^{-|\xi|}e^{2\pi i x\xi}\,d\xi
\end{equation}
 The Fourier inversion formula gives \eqref{eq: p hat}. Namely, it is applicable because
 both $p$ and the right hand side of $\eqref{eq: p hat}$ are continuous and integrable.

\end{proof}

\begin{proposition}\label{auxiliary function B}\uses{square root one minus Gaussian,poisson to abel}\usesdefs{gaussian}
\lean{Auto.auxiliaryFunctionB_properties}
\leanok
    With $p$ as in Proposition \ref{poisson to abel}, define $B:\R\to \R$ by
\begin{equation}\label{auto:Gaussian-square-root-remainder}
    B(\xi)=1-\sqrt{1-\g(\xi)}-\widehat{p}(\sqrt{\pi}\xi)\, .
\end{equation}
Then $B$ is smooth on $\R\setminus \{0\}$ and continuous on $\R$. The function $B'$
has a continuous extension to $\R$ which is integrable and the
function $B''$, extended to be $0$ at $0$, is Borel measurable and integrable on $\R$.
Moreover,
\begin{equation}\label{eq:B-explicit-norms}
    \|B\|_1\le 6,\qquad
    \|B'\|_1\le 12,\qquad
    \|B''\|_1\le 68.
\end{equation}
\end{proposition}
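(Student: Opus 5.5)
The plan is to write $B=B_1+B_2$ with $B_1(\xi)=1-\sqrt{1-\g(\xi)}$ and $B_2(\xi)=-e^{-\sqrt\pi|\xi|}$, using $\widehat p(\sqrt\pi\xi)=e^{-\sqrt\pi|\xi|}$ from Proposition \ref{poisson to abel}. Both pieces are smooth away from $0$: $B_2$ trivially, and $B_1$ because $1-\g(\xi)>0$ for $\xi\neq0$, so the square root is smooth there. Continuity on $\R$ follows from Proposition \ref{square root one minus Gaussian} and continuity of $e^{-\sqrt\pi|\xi|}$. The key point is the behaviour at $0$. Writing $1-\g(\xi)=\pi\xi^2 h(\xi)$ with $h$ smooth, even and $h(0)=1$, we get $\sqrt{1-\g(\xi)}=\sqrt\pi|\xi|\sqrt{h(\xi)}$. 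Hence
\[
B(\xi)=1-e^{-\sqrt\pi|\xi|}-\sqrt\pi|\xi|\sqrt{h(\xi)} .
\]
Both $|\xi|$-singular terms have derivative jumps $\pm\sqrt\pi\,\mathrm{sgn}\,\xi$ at $0$. These cancel, since $\frac{d}{d\xi}(1-e^{-\sqrt\pi|\xi|})\to\sqrt\pi\,\mathrm{sgn}\,\xi$ and $\frac{d}{d\xi}(\sqrt\pi|\xi|\sqrt h)\to\sqrt\pi\,\mathrm{sgn}\,\xi$. So $B'$ extends continuously with $B'(0)=0$. The second derivatives of both pieces are bounded near $0$: $B_2''$ is bounded, and $(|\xi|\sqrt h)''=2\,\mathrm{sgn}(\xi)(\sqrt h)'+|\xi|(\sqrt h)''$ is bounded. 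Therefore $B''$, set to $0$ at $0$, is a bounded Borel function near $0$, being a pointwise limit of difference quotients of the continuous $B'$ off a null set.

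For integrability and the explicit norms, I split $\R$ into $|\xi|\le\frac12$ and $|\xi|\ge\frac12$ and use evenness to reduce to $\xi>0$.

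On $\xi\ge\frac12$ I use $1-\sqrt{1-\g}=\g/(1+\sqrt{1-\g})\le\g$. Then $\|B_1\|_{L^1(\xi\ge 1/2)}\le\frac12$ and $\int_0^\infty e^{-\sqrt\pi\xi}d\xi=\pi^{-1/2}$.

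For derivatives, $B_1'=\g'/(2\sqrt{1-\g})$. On $\xi\ge\frac12$ the factor $\sqrt{1-\g}\ge 1-\g(\tfrac12)\ge\frac12$ by Proposition \ref{square root one minus Gaussian}, so $|B_1'|\le|\g'|$. Also
\[
B_1''=\frac{\g''}{2\sqrt{1-\g}}+\frac{(\g')^2}{4(1-\g)^{3/2}},
\]
which is bounded by $|\g''|+2(\g')^2$ there. The $L^1$ norms of $\g'$ and $\g''$ on the half-line are explicit: $\int_{1/2}^\infty|\g'|=\g(\frac12)$, and $\g''$ changes sign once, at $1/\sqrt{2\pi}$. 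The exponential contributes $\|B_2'\|_{L^1}\le 1$ and $\|B_2''\|_{L^1}\le\sqrt\pi$.

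On $0<\xi\le\frac12$ the cancellation must be used quantitatively. I bound $|B'|$ by estimating $\sqrt\pi\,(e^{-\sqrt\pi\xi}-\sqrt h)$ and the term $\sqrt\pi\xi(\sqrt h)'$ with Taylor bounds on $h$. I would also note that the crude bound via the lower estimate $\sqrt{1-\g}\ge\frac12\xi$ from Proposition \ref{square root one minus Gaussian} already gives $|B_1''|\lesssim$ a constant, since $\g''$ is bounded and $(\g')^2/(1-\g)^{3/2}\lesssim\xi^2/\xi^3$. That bound is only $O(1/\xi)$, however, which is not integrable. So the cancellation with $B_2''$ is genuinely needed only for $B''$, and I will instead use the representation $B_1=1-\sqrt\pi\xi\sqrt{h}$ with explicit bounds $|h-1|\le\frac{\pi}{2}\xi^2$ and bounds on $h',h''$ from the alternating series of $(1-e^{-u})/u$.

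The main obstacle is obtaining the clean explicit constants $6,12,68$, specifically the bound on $B''$ near $0$ with explicit control of $(\sqrt h)''$. I would bound $h,h',h''$ via the series $h(\xi)=\sum_k(-\pi\xi^2)^k/(k+1)!$, using that on $[0,\frac12]$ we have $\pi\xi^2\le\pi/4$. This gives $h\in[\frac23,1]$ and generous bounds on the derivatives. Everything else is routine bookkeeping with generous slack, since $6$, $12$ and $68$ are far from sharp.
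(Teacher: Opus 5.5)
Your proposal is correct and follows essentially the paper's route. Your factorization $\sqrt{1-\g(\xi)}=\sqrt{\pi}|\xi|\sqrt{h(\xi)}$ is the paper's $s(\xi)=uR(u)$ with $u=\sqrt{\pi}\xi$ and $R(u)^2=h(\xi)$, which yields $B'(0^\pm)=0$ and a bounded $B''$ on $0<|\xi|\le\tfrac12$. On $|\xi|\ge\tfrac12$ both arguments use $\sqrt{1-\g}\ge\tfrac12$ to dominate $B,B',B''$ by Gaussian and exponential majorants, and the constants $6,12,68$ follow with room to spare.

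One slip: your remark that cancellation against $B_2''$ is needed for $B''$ is wrong, and it contradicts the sentence before it. $B_2''$ is bounded, and the two $\pm\sqrt{\pi}/\xi$ singularities in your formula for $B_1''$ cancel each other, as your own computation of $(|\xi|\sqrt{h})''$ shows. The $B_1$--$B_2$ cancellation is needed only for the continuity of $B'$ at $0$.
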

\begin{proof}[Proof \auto]
Set
\[
    \alpha=\sqrt{\pi},\qquad
    E(\xi)=e^{-\alpha|\xi|},\qquad
    s(\xi)=\sqrt{1-\g(\xi)}.
\]
Then \(B=1-s-E\). By Proposition \ref{square root one minus Gaussian}, the function $s$ is continuous. Hence \(B\) is smooth on \(\R\setminus\{0\}\), even, and continuous on \(\R\), since \(s(0)=0\) and \(E(0)=1\).

We first control the origin. For \(\xi\ge0\), put \(u=\alpha \xi\). Since
\[
    1-e^{-u^2}=u^2\int_0^1 e^{-tu^2}\,dt,
\]
we can write
\[
    s(\xi)=uR(u),\qquad
    R(u)=\left(\int_0^1 e^{-tu^2}\,dt\right)^{1/2}.
\]
For \(0\le u\le1\), elementary differentiation under the integral sign gives
\[
    \tfrac12\le R(u)\le1,\qquad |R'(u)|\le1,\qquad |R''(u)|\le5.
\]
Thus, with
\[
    F(u)=1-uR(u)-e^{-u},
\]
we have \(B(\xi)=F(\alpha\xi)\) for \(\xi\ge0\), \(F(0)=F'(0)=0\), and
\[
    |F''(u)|
    =
    |-2R'(u)-uR''(u)-e^{-u}|
    \le 8
    \qquad (0\le u\le1).
\]
Since \(\alpha^{-1}>1/2\), it follows that
\[
    |B''(\xi)|\le 8\alpha^2=8\pi<32
    \qquad (0<|\xi|\le1/2),
\]
and \(B'\) extends continuously to \(0\) by setting \(B'(0)=0\).

It remains to estimate the tails. By Proposition \ref{square root one minus Gaussian},
$s(\xi)\ge1-\g(\xi)$ when $|\xi|\ge1/2$. Since $\pi>3$ and
\[
e^{3/4}>1+\tfrac34+\tfrac12\left(\tfrac34\right)^2=\tfrac{65}{32}>2,
\]
we have $\g(\xi)\le e^{-\pi/4}<1/2$, and hence $s(\xi)\ge1/2$. For \(\xi>0\),
\[
    B'(\xi)
    =
    -\tfrac{\pi\xi\g(\xi)}{s(\xi)}
    +
    \alpha e^{-\alpha\xi},
\]
and
\[
    B''(\xi)
    =
    \tfrac{-\pi\g(\xi)+2\pi^2\xi^2\g(\xi)}{s(\xi)}
    +
    \tfrac{\pi^2\xi^2\g(\xi)^2}{s(\xi)^3}
    -
    \pi e^{-\alpha\xi}.
\]
By evenness, the same absolute value bounds hold on the negative half-line. Hence, for
\(|\xi|\ge1/2\),
\[
    |B(\xi)|\le \g(\xi)+E(\xi),
\]
\[
    |B'(\xi)|\le 2\pi|\xi|\g(\xi)+\alpha E(\xi),
\]
and
\[
    |B''(\xi)|
    \le
    2\pi\g(\xi)+12\pi^2\xi^2\g(\xi)+\pi E(\xi).
\]

We now integrate. On \(|\xi|\le1/2\), the trivial bound \(|B|\le3\) gives contribution at
most \(3\). On the complement,
\[
    \int_\R(\g+E)
    =
    1+\tfrac{2}{\alpha}
    \le3.
\]
Thus \(\|B\|_1\le6\).

For \(B'\), the local bound \(|B'(\xi)|<32|\xi|\) on \(|\xi|\le1/2\) gives local
contribution at most \(8\). The tail contributes at most
\[
    2\pi\int_\R |\xi|\g(\xi)\,d\xi
    +
    \alpha\int_\R E(\xi)\,d\xi
    =
    2+2=4.
\]
Thus \(\|B'\|_1\le12\).

Finally, for \(B''\), the local contribution is at most \(32\). The tail contributes at most
\[
\int_\R \bigl(2\pi\g(\xi)+12\pi^2\xi^2\g(\xi)+\pi E(\xi)\bigr)\,d\xi = 2\pi+12\pi^2\tfrac{1}{2\pi}+\pi\tfrac{2}{\alpha} = 8\pi+2\sqrt{\pi}<36.
\]
Therefore
\[
    \|B''\|_1\le 32+36=68.
\]
This proves the claimed integrability and the explicit estimates. The stated Borel measurability
of the extension of \(B''\) is immediate from continuity on each half-line and the definition at
the single point \(0\).
\end{proof}

\begin{proposition}[square root of Gaussian decay]\label{square root of Gaussian decay}\uses{auxiliary function B,poisson to abel,Elementary Gaussian properties}\usesdefs{gaussian,bracket bump,auto:Wiener-space-definition}
\lean{Auto.sqrtGaussianDecay}
\leanok
For $x\in\R$ let
\begin{equation}\label{auto:inverse-Fourier-square-root-Gaussian} \rho(x) = \mathcal{F}^{-1}(\xi \mapsto 1- \sqrt{1-\g(\xi)})(x). \end{equation}
This is a well-defined function in $W_0(\R)$ satisfying for all $x\in\R$,
\begin{equation} \label{sqr gauss C}
    0\le \rho(x) \le C_{\ref{square root of Gaussian decay}} \langle x\rangle^2,
\end{equation}
where $C_{\ref{square root of Gaussian decay}}=17$.
\end{proposition}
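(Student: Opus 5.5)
The plan is to split the multiplier as $1-\sqrt{1-\g(\xi)} = \widehat{p}(\sqrt{\pi}\xi) + B(\xi)$, with $B$ as in Proposition \ref{auxiliary function B}, and to treat the two pieces separately. Since $1-\sqrt{1-\g}$ lies between $0$ and $1$, is continuous, and is integrable (its tail is $B+E$, both integrable by that proposition and by the explicit formula $E(\xi)=e^{-\sqrt\pi|\xi|}$), the inverse Fourier transform $\rho$ is a well-defined, bounded, continuous function. By Proposition \ref{poisson to abel} and scaling, $\mathcal F^{-1}(\widehat p(\sqrt\pi\,\cdot))(x)=\pi^{-1/2}p(x/\sqrt\pi)=2\pi^{-1/2}(1+4\pi x^2)^{-1}$. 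This is an explicit nonnegative function bounded by a constant times $\langle x\rangle^2$; a quick calculation gives a constant of about $2$ at the origin and about $1/(2\pi^{3/2})$ at infinity.

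For the $B$ part, I would use the bound $|\check B(x)|\le\|B\|_1\le 6$ for all $x$. For large $x$, I would integrate by parts twice, which is legitimate because $B$ and $B'$ are continuous and integrable, $B''$ is integrable, and all of them vanish at infinity. This gives $|\check B(x)|\le (2\pi |x|)^{-2}\|B''\|_1\le 68/(4\pi^2x^2)$. Combining the two estimates via $\min(6,\,68/(4\pi^2 x^2))$ together with the bound $(1+|x|)^2\le 2(1+x^2)$ yields $|\check B(x)|\le C\langle x\rangle^2$. The constant is roughly $6\cdot(1+|x_*|)^2$ at the crossover point $|x_*|\approx 0.54$, so on the order of $14$; adding the Poisson part should keep the total under $17$. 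The arithmetic needs care, and I may split the range into $|x|\le 1$ and $|x|\ge1$ rather than using the exact crossover.

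The upper bound then implies $\rho\in W_0(\R)$. Since $\langle x\rangle^2$ is monotone in $|x|$, the local supremum over $|x-y|\le1$ is bounded by $17\langle \max(|x|-1,0)\rangle^2$, which is integrable. Continuity of $\rho$ follows from dominated convergence.

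Nonnegativity is the main obstacle, because it does not follow from the decay estimates. I would try writing $1-\sqrt{1-t}=\sum_{k\ge1}c_k t^k$ with $c_k\ge0$ for $0\le t\le1$, which is the binomial series with positive coefficients and converges at $t=1$. Then $1-\sqrt{1-\g(\xi)}=\sum_k c_k\g(\sqrt k\,\xi)$, so $\rho=\sum_k c_k\,\g_{(\sqrt k)}$, using Proposition \ref{Elementary Gaussian properties}(iv) and Fourier inversion. Every term is a positive Gaussian, and the series converges in $L^1$ with total mass $\sum c_k=1$, so exchanging the sum with the inverse Fourier transform is justified by monotone convergence. This gives $\rho\ge0$, and it also gives an alternative route to continuity.
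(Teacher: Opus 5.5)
Your proposal is correct and follows the paper's proof essentially step for step. It uses the same splitting $1-\sqrt{1-\g(\xi)}=B(\xi)+e^{-\sqrt{\pi}|\xi|}$, the same Poisson-kernel formula $\tfrac{2/\sqrt{\pi}}{1+4\pi x^2}$ for the second piece, and the same bounds $|\check B|\le\|B\|_1\le 6$ and $|\check B(x)|\le\|B''\|_1/(4\pi^2x^2)$ combined near the crossover. The paper splits at $|x|=5/9$, obtaining $15\langle x\rangle^2$ for the $B$-part and $\tfrac32\langle x\rangle^2$ for the Poisson part. Like you, it proves nonnegativity via the binomial series $\rho=\sum_k a_k\g_{(\sqrt{k})}$.

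One caution: do not take the alternative split at $|x|=1$ that you mention. There the bound $6(1+|x|)^2$ reaches $24>17$, so the split point has to sit near your crossover $|x_*|\approx 0.54$, as in your main plan.
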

\begin{proof}[Proof \auto]
Set
\[
    m(\xi)=1-\sqrt{1-\g(\xi)}.
\]
We first construct \(\rho\). By Proposition \ref{auxiliary function B},
\[
    m(\xi)=B(\xi)+e^{-\sqrt{\pi}|\xi|}.
\]
Both terms on the right-hand side are integrable, so \(m\in L^1(\R)\). Define
\[
    \rho=\widecheck m.
\]
We will prove that \(\rho\in W_0(\R)\), that it satisfies the claimed decay estimate, and that
\(\widehat\rho=m\).

Let \(\alpha=\sqrt{\pi}\) and \(E_\alpha(\xi)=e^{-\alpha|\xi|}\). By Proposition
\ref{poisson to abel},
\[
    E_\alpha(\xi)=\widehat p(\alpha \xi).
\]
By the scaling rule for the Fourier transform,
\[
    \widecheck{E_\alpha}(x)
    =
    \alpha^{-1}p(x/\alpha)
    =
    \tfrac{2\alpha}{\alpha^2+(2\pi x)^2}
    =
    \tfrac{2/\sqrt{\pi}}{1+4\pi x^2}.
\]
In particular, since $\pi>3$,
\[
\frac{2}{\sqrt\pi}\frac{(1+|x|)^2}{1+4\pi x^2}
<\frac76\frac{(1+|x|)^2}{1+12x^2}
\le\frac76\frac{13}{12}<\frac32.
\]
Here the middle inequality follows from
$12(1+|x|)^2\le13(1+12x^2)$.
Thus
\begin{equation}\label{eq:abel-decay}
    0\le \widecheck{E_\alpha}(x)\le \tfrac32\langle x\rangle^{2}.
\end{equation}

It remains to estimate \(\widecheck B\). By Proposition \ref{auxiliary function B},
\[
    \|B\|_1\le 6,
    \qquad
    \|B''\|_1\le 68.
\]
Therefore
\[
    |\widecheck B(x)|\le 6
    \qquad (x\in\R).
\]
For \(x\neq 0\), integration by parts twice and $\pi>3$ give
\[
    |\widecheck B(x)|
    \le
    \tfrac{\|B''\|_1}{4\pi^2x^2}
    <
    \tfrac{17}{9x^2}.
\]
If $|x|\le5/9$, then
$6(1+|x|)^2\le6(14/9)^2<15$.
If $|x|>5/9$, then
\[
\tfrac{17}{9x^2}(1+|x|)^2
<\tfrac{17}{9}(14/5)^2<15.
\]
Thus
\begin{equation}\label{eq:Bcheck-decay}
|\widecheck B(x)|\le15\langle x\rangle^2.
\end{equation}
Combining these estimates, we obtain
\[
|\rho(x)|\le\tfrac{33}{2}\langle x\rangle^2
<17\langle x\rangle^2.
\]
The function \(\rho\) is continuous because \(m\in L^1\). The preceding decay estimate implies
\(\rho\in W_0(\R)\): indeed, for \(|x|\le 2\) the local \(W_0\)-supremum is bounded, while for
\(|x|>2\) and \(|x-y|\le 1\) we have \(|y|\ge |x|/2\), so the local supremum is bounded by a
constant multiple of \(\langle x\rangle^{2}\), which is integrable on \(\R\).
It remains to prove non-negativity. For \(0\le t\le 1\), the binomial series gives
\[
    1-\sqrt{1-t}
    =
    \sum_{k=1}^\infty a_k t^k,
    \qquad
    a_k=(-1)^{k+1}\binom{1/2}{k}>0.
\]
Moreover \(\sum_{k=1}^\infty a_k=1\). Hence, for every \(\xi\in\R\),
\[
    m(\xi)
    =
    \sum_{k=1}^\infty a_k \g(\xi)^k
    =
    \sum_{k=1}^\infty a_k \g(\sqrt{k}\xi).
\]
By Proposition \ref{Elementary Gaussian properties},
\[
    \g(\sqrt{k}\xi)=\widehat{\g_{(\sqrt{k})}}(\xi).
\]
The series
\[
\sum_{k=1}^\infty a_k\g_{(\sqrt{k})}(x)
\]
converges in $L^1(\R)$, because $\|\g_{(\sqrt{k})}\|_1=1$ and $\sum_ka_k=1$. Its Fourier transform is $m$. Since both this series and $\rho=\widecheck m$ are in $L^1$, Fourier inversion and injectivity of the Fourier transform give
\[
    \rho(x)
    =
    \sum_{k=1}^\infty a_k \g_{(\sqrt{k})}(x).
\]
Every term on the right-hand side is non-negative, so \(\rho(x)\ge 0\) for every \(x\in\R\).
\end{proof}

\subsection{Bumps and their estimates}\label{bump section}

\begin{lemma}
\label{lem:smoothdecay}\uses{}\usesdefs{auto:Wiener-space-definition}
\lean{Auto.smoothDecay}
\leanok
Let $N\geq 2$ be an integer. Let $\zeta:\R\to\C$ be an $N$ times continuously differentiable function with compact support. Then the function $\phi=\mathcal F^{-1}\zeta$ belongs to $W_0(\R)$ and we have for all $x\in\R$, $x\neq0$, the estimate

\begin{equation}\label{auto:Fourier-decay-minimum-bound}|\phi(x)|\le \min(\|\widehat{\phi}\|_1, \|\widehat{\phi}^{(N)}\|_1 (2\pi)^{-N} |x|^{-N}).\end{equation}
\end{lemma}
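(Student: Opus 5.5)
Since $\zeta$ is continuous with compact support, it lies in $L^1(\R)$, so $\phi=\mathcal F^{-1}\zeta$ is defined pointwise by $\phi(x)=\int_\R \zeta(\xi)e^{2\pi i x\xi}\,d\xi$. I would first identify $\widehat\phi=\zeta$. By continuity of $\zeta$ and the decay estimate proved below, $\phi\in L^1$, so Fourier inversion applies; alternatively one can simply read $\widehat\phi$ in the statement as $\zeta$. With this, the first bound $|\phi(x)|\le\|\zeta\|_1=\|\widehat\phi\|_1$ is just the triangle inequality applied under the integral.

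For the decay bound, fix $x\neq0$ and integrate by parts $N$ times. Write $e^{2\pi i x\xi}=(2\pi i x)^{-1}\partial_\xi e^{2\pi i x\xi}$. Because $\zeta^{(k)}$ is continuous and compactly supported for every $k\le N$, the boundary terms vanish at each step. This gives
\[
\phi(x)=(-2\pi i x)^{-N}\int_\R \zeta^{(N)}(\xi)e^{2\pi i x\xi}\,d\xi ,
\]
and therefore $|\phi(x)|\le (2\pi)^{-N}|x|^{-N}\|\zeta^{(N)}\|_1$. Taking the minimum of the two bounds yields the stated estimate.

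It remains to show $\phi\in W_0(\R)$. The function $\phi$ is continuous by dominated convergence, since $\zeta\in L^1$. Set $A=\|\zeta\|_1$ and $B=(2\pi)^{-N}\|\zeta^{(N)}\|_1$, so that $|\phi(y)|\le\min(A,B|y|^{-N})$. For $|x|\le2$, the local supremum $\sup_{|x-y|\le1}|\phi(y)|$ is at most $A$. For $|x|>2$ and $|x-y|\le1$ we have $|y|\ge|x|/2$, so the local supremum is at most $2^N B|x|^{-N}$. Since $N\ge2$, this is integrable on $|x|>2$. Proposition \ref{auto:local-supremum-measurability} supplies the measurability of the local supremum, so $\|\phi\|_{W_0}\le 4A+2^{N+1}B/(N-1)<\infty$.

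The main obstacle is the bookkeeping in identifying $\widehat\phi$ with $\zeta$, which requires $\phi\in L^1$ before Fourier inversion can be invoked. Since the $L^1$ bound comes from the decay estimate, which uses only $\zeta$, the argument is not circular.
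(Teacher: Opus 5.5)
Your proof is correct and follows the paper's argument: the triangle inequality gives the first bound, $N$ integrations by parts give the decay bound, the two together give $\phi\in W_0(\R)$, and Fourier inversion (using $\phi\in L^1$ from the decay) identifies $\widehat\phi=\zeta$. You also make the $W_0$ estimate explicit and get the sign $(-2\pi i x)^{-N}$ right where the paper writes $(2\pi i x)^{-N}$; the sign does not affect the modulus bound.
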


\begin{proof}
By the assumptions on $\zeta$, we deduce that $\zeta\in L^1(\R)$. Thus, $\phi$ is a well-defined continuous function.

By definition,
\begin{equation}
    \label{fourierinvphi}
    \phi(x) = \int_{\R} \zeta(\xi)e^{2\pi i x\xi} d\xi,
\end{equation}
and  by the triangle inequality
\begin{equation}
    \label{xleone}
    |\phi(x)| = \Big | \int_{\R} \zeta(\xi)e^{2\pi i x\xi} d\xi \Big | \le \int_{\R} |\zeta(\xi)e^{2\pi i x\xi}| d\xi \le \|\zeta\|_1 .
\end{equation}

For the second estimate we integrate by parts  in \eqref{fourierinvphi} to obtain
\[\phi(x) = (2\pi i x)^{-N} \int_{\R} \zeta^{(N)}(\xi) e^{2\pi i x \xi} d\xi, \]
and thus by the triangle inequality,
\[|\phi(x)| \le (2\pi)^{-N}|x|^{-N} \int_{\R}|\zeta^{(N)}(\xi) e^{2\pi i x \xi}| d\xi \]
\begin{equation}
    \label{xgeone}
    \le  (2\pi)^{-N} |x|^{-N} \|\zeta^{(N)}\|_1.
\end{equation}
Combining \eqref{xleone} and \eqref{xgeone} gives that $\phi \in W_0(\R)$. By the Fourier inversion formula, we then conclude that $\zeta=\widehat{\phi}$.
\end{proof}

\begin{lemma}
    \label{lem: min and bracket}\uses{}\usesdefs{bracket bump} Let $N\ge 1$ be an integer, then
\lean{Auto.min_and_bracket}
\leanok
\begin{equation}\label{auto:min-power-bracket-bound}\min(1, |x|^{-N})\le 2^N   \langle x\rangle^{N}\, .\end{equation}
\end{lemma}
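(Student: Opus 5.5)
We unfold the definition of the bracket, $\langle x\rangle^N=(1+|x|)^{-N}$. The inequality then reads $\min(1,|x|^{-N})(1+|x|)^N\le 2^N$ for every $x\in\R$, with the convention $|0|^{-N}=\infty$ so that the minimum equals $1$ at $x=0$. The plan is a two-case split according to whether $|x|\le 1$ or $|x|>1$. In each case the minimum is replaced by the appropriate branch, and we use only monotonicity of $t\mapsto t^N$ on $[0,\infty)$.

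\emph{Case $|x|\le 1$.} We bound the minimum by $1$. Then $(1+|x|)^N\le 2^N$, since $0\le 1+|x|\le 2$ and $t\mapsto t^N$ is increasing on $[0,\infty)$. Dividing by $(1+|x|)^N>0$ gives $\min(1,|x|^{-N})\le 1\le 2^N(1+|x|)^{-N}$.

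\emph{Case $|x|>1$.} We bound the minimum by $|x|^{-N}$. Here $1+|x|\le 2|x|$, hence $(1+|x|)^N\le 2^N|x|^N$. Rearranging, which is legitimate because $|x|^N$ and $(1+|x|)^N$ are positive, gives $|x|^{-N}\le 2^N(1+|x|)^{-N}$.

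There is no genuine obstacle here. The only care needed is handling $x=0$, where the expression $|x|^{-N}$ must be read as $+\infty$ or the minimum taken as $1$, and making the monotonicity of powers explicit for the formalization. The hypothesis $N\ge1$ is not actually needed for the inequality; it also holds for $N=0$.
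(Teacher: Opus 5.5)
Your proof is correct and is essentially the paper's argument. The paper writes $\min(1,|x|^{-N})=\max(1,|x|)^{-N}$ and uses $\tfrac12(1+|x|)\le\max(1,|x|)$ (the mean is at most the maximum), which is exactly what your two cases $|x|\le 1$ and $|x|>1$ verify separately.
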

\begin{proof}
 We have, using that the mean is less than the maximum
    \[\min(1, |x|^{-N})=\max(1, |x|)^{-N}\le (2^{-1}(1+|x|))^{-N}=2^N  \langle x\rangle^{N}\, .\]
\end{proof}

We will mainly use these lemmas when we have control on the $L^\infty$ norm and compact support of   the Fourier transform, so we state this case separately.

\begin{lemma}
   \label{lem:smoothdecay2}\uses{lem:smoothdecay,lem: min and bracket}\usesdefs{bracket bump,auto:Wiener-space-definition}
\lean{Auto.smoothDecay2}
\leanok
Let $N\geq 2$ be an integer. Let $\zeta:\R\to\C$ be an $N$ times continuously differentiable function with compact support.
Then the function $\phi=\mathcal F^{-1} \zeta$ belongs to $W_0(\R)$ and we have for all $x\in \R$ the estimate
\begin{equation}\label{E:smoothdecay2}
|\phi(x)|\leq C_{\ref{lem:smoothdecay2}, N} \max(\|\widehat{\phi}\|_\infty, (2\pi )^{-N}\|\widehat{\phi}^{(N)}\|_\infty)|\supp (\widehat{\phi})|   \langle x\rangle^{N}
\end{equation}
with $C_{\ref{lem:smoothdecay2}, N}=2^{N} $.
\end{lemma}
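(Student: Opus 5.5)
The plan is to combine Lemma \ref{lem:smoothdecay} with Lemma \ref{lem: min and bracket}, after bounding the $L^1$ norms of $\zeta=\widehat{\phi}$ and $\zeta^{(N)}$ by sup norms times the measure of the support. Membership $\phi\in W_0(\R)$ is already part of the conclusion of Lemma \ref{lem:smoothdecay}, and that lemma also gives $\widehat{\phi}=\zeta$. It remains to prove the pointwise estimate \eqref{E:smoothdecay2}.

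First, since $\zeta$ vanishes outside $S=\supp(\widehat{\phi})$, we have $\|\widehat\phi\|_1\le \|\widehat\phi\|_\infty|S|$. The function $\zeta^{(N)}$ also vanishes outside $S$. Indeed, on the open complement of the closed support, $\zeta$ is identically zero in a neighborhood of each point, so all its derivatives vanish there. Hence $\|\widehat\phi^{(N)}\|_1\le\|\widehat\phi^{(N)}\|_\infty|S|$. Writing $Q=\max(\|\widehat{\phi}\|_\infty,(2\pi)^{-N}\|\widehat{\phi}^{(N)}\|_\infty)|S|$, Lemma \ref{lem:smoothdecay} gives, for $x\neq 0$,
\[
|\phi(x)|\le \min\bigl(\|\widehat\phi\|_\infty|S|,\ (2\pi)^{-N}\|\widehat\phi^{(N)}\|_\infty|S|\,|x|^{-N}\bigr)\le Q\min(1,|x|^{-N}).
\]

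Next, Lemma \ref{lem: min and bracket} bounds $\min(1,|x|^{-N})$ by $2^N\langle x\rangle^N$, which yields \eqref{E:smoothdecay2} for $x\ne0$ with $C_{\ref{lem:smoothdecay2},N}=2^N$. At $x=0$ we use only the first bound $|\phi(0)|\le\|\widehat\phi\|_1\le Q$, and $Q\le 2^N\langle 0\rangle^N Q=2^NQ$.

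There is no genuine obstacle here. The only point needing care is the justification that $\zeta^{(N)}$ is supported in $\supp\zeta$, which is needed for the sup-times-measure bound; the argument above via the open complement handles it.
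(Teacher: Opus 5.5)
Your proposal is correct and takes essentially the same route as the paper. You bound the two $L^1$ norms in Lemma~\ref{lem:smoothdecay} by sup norms times $|\supp(\widehat{\phi})|$ and then apply Lemma~\ref{lem: min and bracket}; your separate handling of $x=0$ and your check that $\widehat{\phi}^{(N)}$ vanishes off $\supp(\widehat{\phi})$ are details the paper leaves implicit.
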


\begin{proof}
The fact that $\phi \in W_0(\R)$ and $\widehat{\phi}=\zeta$ follows from Lemma~\ref{lem:smoothdecay}.
We have for any compactly supported bounded function $f:\R\to\C$
  \[\|f\|_1 \le \|f\|_\infty |\textup{supp}(f)| .\]
Using this twice and Lemma \ref{lem:smoothdecay} estimates $|\phi(x)|$ by
\[\le \min(\|\widehat{\phi}\|_1, \|\widehat{\phi}^{(N)}\|_1 (2\pi)^{-N} |x|^{-N})
\le |\supp (\widehat{\phi})| \min(\|\widehat{\phi}\|_\infty, \|\widehat{\phi}^{(N)}\|_\infty (2\pi)^{-N} |x|^{-N})\, . \]
  The claim now follows from Lemma \ref{lem: min and bracket}.
\end{proof}

\begin{proposition}[mean value bump estimate]\label{mean value bump estimate 2}\uses{lem:smoothdecay2,lem: min and bracket}\usesdefs{bracket bump,auto:Wiener-space-definition}
\lean{Auto.meanValueBumpEstimate}
\leanok
Let $N\in\mathbb N$ with $N\ge1$ and let $\rho$ be a $W_0(1)$ function with
$\widehat{\rho}$ supported in $[-1,1]$ and $\widehat{\rho}$ being $N$ times continuously differentiable.

Then for all $x,y\in\mathbb{R}$
\begin{equation}\label{auto:mean-value-bump-bound} |\rho(x+y)-\rho(x)| \le C_{\ref{mean value bump estimate 2},N}\max_{0\le \nu\le N}\|(2\pi)^{-\nu}\widehat{\rho}^{(\nu)}\|_\infty \min(1, 2\pi |y|) (\langle x+y\rangle^N + \langle x\rangle^N). \end{equation}
where
\begin{equation}\label{auto:mean-value-bump-constant-definition}
C_{\ref{mean value bump estimate 2},N}=2^N\max(2,(1+(2\pi)^{-1})^N).
\end{equation}

\end{proposition}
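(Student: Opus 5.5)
The plan is to split into the two regimes $2\pi|y|\ge 1$ and $2\pi|y|<1$, corresponding to the two values of $\min(1,2\pi|y|)$. Throughout write $D=\max_{0\le \nu\le N}\|(2\pi)^{-\nu}\widehat{\rho}^{(\nu)}\|_\infty$.

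\emph{Large shifts.} When $2\pi|y|\ge1$ we use the triangle inequality $|\rho(x+y)-\rho(x)|\le|\rho(x+y)|+|\rho(x)|$ and bound each term pointwise. Since $\widehat\rho$ is supported in $[-1,1]$, its support has measure at most $2$.
\begin{itemize}
\item If $N\ge2$, Lemma \ref{lem:smoothdecay2} gives $|\rho(z)|\le 2^N\cdot 2\, D\,\langle z\rangle^N$.
\item If $N=1$, Lemma \ref{lem:smoothdecay2} is not available, so we argue as in Lemma \ref{lem:smoothdecay}. Integrating by parts once (for $\widehat\rho\in C^1$ with compact support) gives $|\rho(z)|\le 2D\min(1,|z|^{-1})$, and Lemma \ref{lem: min and bracket} converts this to $2^{N}\cdot 2D\langle z\rangle^N$.
\end{itemize}
Here we use Fourier inversion $\rho=\mathcal F^{-1}\widehat\rho$, valid since $\rho\in W_0\subset L^1$ and $\widehat\rho$ is continuous and compactly supported. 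Either way this regime contributes a constant $2^N\cdot 2$, matching the factor $\max(2,\cdot)$ in the claimed constant.

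\emph{Small shifts.} When $2\pi|y|<1$ we write
\[\rho(x+y)-\rho(x)=\int \widehat\rho(\xi)e^{2\pi i x\xi}(e^{2\pi i y\xi}-1)\,d\xi = y\int_0^1\rho'(x+ty)\,dt,\]
where $\rho'=\mathcal F^{-1}(2\pi i\xi\widehat\rho)$. The function $\zeta_1(\xi)=2\pi i\xi\widehat\rho(\xi)$ is $C^N$ with support in $[-1,1]$, and the Leibniz rule gives
\[(2\pi)^{-\nu}\zeta_1^{(\nu)}=2\pi\Big(i\xi(2\pi)^{-\nu}\widehat\rho^{(\nu)}+i\nu(2\pi)^{-1}(2\pi)^{-(\nu-1)}\widehat\rho^{(\nu-1)}\Big).\]
Since $|\xi|\le1$ on the support, this has sup norm at most $2\pi D(1+\nu/(2\pi))$. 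Applying the large-shift bound to $\rho'$ yields
\[|\rho'(z)|\lesssim 2\pi D\,(1+\tfrac{N}{2\pi})\langle z\rangle^N.\]
The main obstacle is matching exactly the constant $(1+(2\pi)^{-1})^N$. This suggests we should not split off the factor $\xi$ with Leibniz. Instead we should estimate directly $\int|\widehat\rho^{(N)}|$-type terms for the multiplier $\widehat\rho(\xi)(e^{2\pi iy\xi}-1)$, whose $\nu$-th derivatives are controlled via $|(e^{2\pi iy\xi}-1)^{(k)}|\le (2\pi|y|)^k\cdot$ and $|e^{2\pi i y\xi}-1|\le 2\pi|y|$ on $|\xi|\le1$. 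The binomial sum $\sum\binom{N}{k}(2\pi)^{-(N-k)}\cdots$ then produces $(1+(2\pi)^{-1})^N$ up to the $2\pi|y|$ factor. So the refined plan for small $y$ is as follows.
\begin{itemize}
\item Set $\zeta=\widehat\rho\,(e^{2\pi iy\cdot}-1)$, so that $\rho(\cdot+y)-\rho=\mathcal F^{-1}\zeta$.
\item Bound $\|\zeta\|_\infty\le 2\pi|y|D$.
\item Bound $(2\pi)^{-N}\|\zeta^{(N)}\|_\infty\le 2\pi|y|D(1+(2\pi)^{-1})^N$, using $|y|<1/(2\pi)$ to absorb the extra powers of $2\pi|y|$.
\item Apply Lemma \ref{lem:smoothdecay} and Lemma \ref{lem: min and bracket} (with support size $2$) to get the bound with $\langle x\rangle^N$, which is at most $\langle x+y\rangle^N+\langle x\rangle^N$.
\end{itemize}
For $N=1$ the same direct integration-by-parts argument is used in place of Lemma \ref{lem:smoothdecay2}.

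Combining the two regimes gives the claim with $C=2^N\max(2,(1+(2\pi)^{-1})^N)$.
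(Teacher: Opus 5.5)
Your overall route is the paper's. You use the same split at $2\pi|y|=1$, and the same triangle-inequality bound with constant $2^{N+1}$ for large shifts (including the one-step integration by parts when $N=1$). You also use the same multiplier $\zeta=\widehat\rho\,(e^{2\pi i y\cdot}-1)$ with the Leibniz estimate $(2\pi)^{-N}\|\zeta^{(N)}\|_\infty\le D\delta(1+(2\pi)^{-1})^N$, where $\delta=2\pi|y|$.

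The gap is in the last step of the small-shift regime. The pointwise bound of Lemma \ref{lem:smoothdecay2} (or its $N=1$ analogue) carries the factor $|\supp\zeta|\le 2$. It therefore gives $|\rho(x+y)-\rho(x)|\le 2^{N+1}(1+(2\pi)^{-1})^N D\delta\langle x\rangle^N$. Bounding $\langle x\rangle^N$ by $\langle x+y\rangle^N+\langle x\rangle^N$ then leaves the constant $2^{N+1}(1+(2\pi)^{-1})^N$. This is strictly larger than $C_{\ref{mean value bump estimate 2},N}=2^N\max(2,(1+(2\pi)^{-1})^N)$ for every $N\ge1$, because $2q>\max(2,q)$ whenever $q=(1+(2\pi)^{-1})^N>1$. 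So your closing claim, that the two regimes combine to give exactly the stated constant, is off by a factor of $2$ in the small-shift case.

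The missing idea is a symmetrization, which is how the paper recovers this factor. The quantity $\rho(x)-\rho(x+y)$ is the same kind of difference with $-y$ in place of $y$, evaluated at the point $x+y$. Also $\min(1,2\pi|y|)$ is invariant under $y\mapsto -y$. So the same estimate gives $|\rho(x+y)-\rho(x)|\le 2^{N+1}(1+(2\pi)^{-1})^N D\delta\langle x+y\rangle^N$ as well. Take the minimum of the two bounds and use $\min(a,b)\le\frac12(a+b)$. This yields $|\rho(x+y)-\rho(x)|\le 2^{N}(1+(2\pi)^{-1})^N D\delta\,(\langle x+y\rangle^N+\langle x\rangle^N)$, which is within $C_{\ref{mean value bump estimate 2},N}$. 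With this one step added, your argument is complete and coincides with the paper's.
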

\begin{proof}[Proof \auto]
For $N=1$, Fourier inversion and one integration by parts show directly that the pointwise estimate in Lemma \ref{lem:smoothdecay2}, with coefficient $2$, remains valid for functions whose Fourier transform is supported in $[-1,1]$. Indeed, if $\zeta$ has this support and
\[
B=\max(\|\zeta\|_\infty,(2\pi)^{-1}\|\zeta'\|_\infty),
\]
then, by Lemma \ref{lem: min and bracket},
\[
|\mathcal F^{-1}\zeta(x)|\le2B\min(1,|x|^{-1})\le4B\langle x\rangle.
\]
Thus the pointwise estimate from that lemma may be used below for every $N\ge1$.

Set
\[
\delta=2\pi|y|,
\qquad
B=\max_{0\le \nu\le N}\|(2\pi)^{-\nu}\widehat\rho^{(\nu)}\|_\infty.
\]
If $\delta>1$, the triangle inequality and the pointwise estimate just discussed give
\[
|\rho(x+y)-\rho(x)|
\le2^{N+1}B(\langle x+y\rangle^N+\langle x\rangle^N),
\]
which is bounded by the right-hand side of \eqref{auto:mean-value-bump-bound}.

Assume now that $\delta\le1$ and set $\widetilde\rho(x)=\rho(x+y)-\rho(x)$. Then
\[
\widehat{\widetilde\rho}(\xi)=\widehat\rho(\xi)(e^{2\pi i\xi y}-1)
\]
and
\[
\widehat{\widetilde\rho}^{(N)}(\xi)
=\widehat\rho^{(N)}(\xi)(e^{2\pi i\xi y}-1)
+\sum_{\nu=0}^{N-1}\binom N\nu\widehat\rho^{(\nu)}(\xi)(2\pi iy)^{N-\nu}e^{2\pi i\xi y}.
\]
Since $|\xi|\le1$ on the support and $\delta^{N-\nu}\le\delta$ for $\nu<N$, these identities give
\[
\|\widehat{\widetilde\rho}\|_\infty\le B\delta
\]
and
\[
(2\pi)^{-N}\|\widehat{\widetilde\rho}^{(N)}\|_\infty
\le B\delta(1+(2\pi)^{-1})^N.
\]
The pointwise estimate gives
\[
|\widetilde\rho(x)|
\le2^{N+1}B\delta(1+(2\pi)^{-1})^N\langle x\rangle^N.
\]
Applying the same estimate with $-y$ in place of $y$, at the point $x+y$, gives the same bound with $\langle x+y\rangle^N$ in place of $\langle x\rangle^N$. Averaging the two bounds yields
\[
|\rho(x+y)-\rho(x)|
\le2^NB\delta(1+(2\pi)^{-1})^N(\langle x+y\rangle^N+\langle x\rangle^N),
\]
and \eqref{auto:mean-value-bump-bound} follows.
\end{proof}

\begin{definition}[standard bump]\label{standard bump}\uses{}\usesdefs{}
\lean{Auto.standardBumpFinite,Auto.standardBump,Auto.standardBumpRescale}
Define, for \(l\in\N\) with \(l\ge1\),
\begin{equation}\label{auto:standard-bump-finite-product}
 \Phi_l
 =
 \widehat{1_{[-3/4,3/4]}}
 \prod_{i\in [l)}
 2^{i+2}
 \widehat{1_{[-2^{-i-3},2^{-i-3}]}} .
\end{equation}
For $x\in\R$, define
\begin{equation}\label{auto:standard-bump-limit}\Phi(x)=\lim_{l\to\infty}\Phi_l(x).\end{equation}
(The limit exists for every $x\in\R$.)
\end{definition}

\begin{proposition}[standard bump]\label{standard bump properties}\uses{lem:smoothdecay2}\usesdefs{bracket bump,standard bump}
\lean{Auto.standardBumpProperties,Auto.standardBumpProperties_l1Convergence,Auto.standardBumpProperties_linfConvergence,Auto.standardBumpProperties_schwartz,Auto.standardBumpProperties_fourierShape,Auto.standardBumpProperties_fourierDerivativeEstimate,Auto.standardBumpProperties_derivativeDecay}
\leanok
The limit $\lim_{l\to \infty} \Phi_l$ exists in $L^\infty$  and $L^1$ sense and is a Schwartz function $\Phi$
whose Fourier transform takes values in $[0,1]$, is supported in $[-1,1]$, and constant $1$
on $[-1/2,1/2]$.

It satisfies for every $m,N\in\N$ and every $x\in\R$,
\begin{equation}
    \label{Phi_derbound}
    \|  (\widehat{\Phi^{(m)}})^{(N)}  \|_\infty \le \tilde{C}_{\ref{standard bump properties},m,N}
\end{equation}
and
\begin{equation}\label{auto:standard-bump-derivative-decay} |\Phi^{(m)}(x)| \le C_{\ref{standard bump properties},m,N} \langle x\rangle^N, \end{equation}
where $\tilde{C}_{\ref{standard bump properties},m,N}=2^{4m + 2N^2 + 5N}$ and $C_{\ref{standard bump properties},m,N}=2^{4m+2N^2+6N+2}.$
\end{proposition}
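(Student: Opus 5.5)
The plan is to work entirely on the Fourier side. Each factor $2^{i+2}\widehat{1_{[-2^{-i-3},2^{-i-3}]}}$ is the Fourier transform of $h_i=2^{i+2}1_{[-2^{-i-3},2^{-i-3}]}$, a probability density supported in $[-2^{-i-3},2^{-i-3}]$. Hence
\[
\widehat{\Phi_l}=\psi_l:=1_{[-3/4,3/4]}*h_0*\cdots*h_{l-1}
\]
(Fourier inversion, all functions being $L^1$ with bounded compactly supported transforms). The properties of $\psi_l$ come from elementary properties of convolution with probability densities.
\begin{itemize}
\item $\psi_l$ takes values in $[0,1]$.
\item $\psi_l$ is supported in $[-3/4-\sum_i 2^{-i-3},\,3/4+\sum_i 2^{-i-3}]\subset[-1,1]$.
\item $\psi_l$ equals $1$ on $[-1/2,1/2]$, because $h_0*\cdots*h_{l-1}$ is supported in $[-1/4,1/4]$.
\item $\psi_l$ is $C^{l-1}$.
\end{itemize}

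\textbf{Derivative estimates for $\psi_l$.} The key identity is
\[
(f*h_i)'=2^{i+2}\bigl(f(\cdot+2^{-i-3})-f(\cdot-2^{-i-3})\bigr).
\]
Iterating, for $l\ge N+1$ I place the $N$ derivatives on $h_0,\dots,h_{N-1}$. This writes $\psi_l^{(N)}$ as a signed sum of $2^N$ translates of $1_{[-3/4,3/4]}*h_N*\cdots*h_{l-1}$, with coefficient $\prod_{i<N}2^{i+2}$. This gives
\[
\|\psi_l^{(N)}\|_\infty\le 2^N\prod_{i<N}2^{i+2}=2^{N(N+7)/2}.
\]
The same representation with one more derivative shows that $\psi_l^{(N)}$ is Lipschitz uniformly in $l\ge N+2$.

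Next I compare $\psi_{l+1}^{(N)}=\psi_l^{(N)}*h_l$ with $\psi_l^{(N)}$. Since $h_l$ has width $2^{-l-2}$, the Lipschitz bound gives
\[
\|\psi_{l+1}^{(N)}-\psi_l^{(N)}\|_\infty\lesssim_N 2^{-l}.
\]
So $\psi_l^{(N)}$ is uniformly Cauchy for every $N$. The limit $\psi$ is therefore $C^\infty$, supported in $[-1,1]$, valued in $[0,1]$, equal to $1$ on $[-1/2,1/2]$, and $\|\psi^{(N)}\|_\infty\le 2^{N(N+7)/2}$.

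\textbf{Convergence and Schwartz property.} Convergence of $\Phi_l$ in $L^\infty$ follows from $\|\psi_l-\psi\|_1\le 2\|\psi_l-\psi\|_\infty\to0$. For $L^1$ convergence I apply Lemma \ref{lem:smoothdecay2} with $N=2$ to $\Phi_l-\Phi_{l'}$ with $l,l'\ge3$; its Fourier transform is $C^2$ with support in $[-1,1]$, and both it and its second derivative are uniformly small. This yields a pointwise bound $\epsilon\langle x\rangle^2$, which is integrable, so $(\Phi_l)$ is Cauchy in $L^1$. The limit $\Phi=\widecheck{\psi}$ is Schwartz, since $\psi\in C_c^\infty$. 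This also identifies the pointwise limit in Definition \ref{standard bump}.

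\textbf{The bounds for $\Phi^{(m)}$.} We have $\widehat{\Phi^{(m)}}(\xi)=(2\pi i\xi)^m\psi(\xi)$. By Leibniz and $|\xi|\le1$ on the support,
\[
\|(\widehat{\Phi^{(m)}})^{(N)}\|_\infty\le(2\pi)^m\sum_{k\le\min(m,N)}\binom Nk\frac{m!}{(m-k)!}\,\|\psi^{(N-k)}\|_\infty.
\]
Bounding $(2\pi)^m\le 2^{3m}$, $\frac{m!}{(m-k)!}\le 2^{m}\cdot(\text{something absorbed})$ and $\sum_k\binom Nk\le 2^N$ should fit within $\tilde C_{\ref{standard bump properties},m,N}=2^{4m+2N^2+5N}$. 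For the pointwise decay with $N\ge2$, Lemma \ref{lem:smoothdecay2} with $|\supp\widehat{\Phi^{(m)}}|\le2$ gives the factor $2^N\cdot2\cdot\tilde C_{\ref{standard bump properties},m,N}\le C_{\ref{standard bump properties},m,N}$. For $N\in\{0,1\}$ I use the $N=2$ bound together with $\langle x\rangle^2\le\langle x\rangle^N$.

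\textbf{Main obstacle.} I expect the hardest part to be the bookkeeping in the derivative estimate. There are two issues: making sure the mixed Leibniz terms really fit under $2^{4m+2N^2+5N}$, and making sure the finite-$l$ smoothness ($\psi_l\in C^{l-1}$ only) is handled, by always passing to the limit before differentiating beyond order $l-1$.
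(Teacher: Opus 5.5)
Your route is essentially the paper's. You work on the Fourier side with $\widehat{\Phi_l}$ written as $1_{[-3/4,3/4]}$ convolved with probability densities, put the $N$ derivatives on the first $N$ density factors, and finish with Leibniz and Lemma~\ref{lem:smoothdecay2}. Two parts differ and are fine. Your uniform-Cauchy argument for $\psi_l^{(N)}$ is more careful than the paper's remark that the properties ``pass to the limit''. Proving $L^1$ convergence by applying Lemma~\ref{lem:smoothdecay2} to $\Phi_l-\Phi_{l'}$ is a valid substitute for the paper's domination $|\Phi_l|\le|\Phi_2|\in L^1$ plus dominated convergence.

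\textbf{The gap: the case $N\in\{0,1\}$ of the pointwise decay.} Using $\langle x\rangle^2\le\langle x\rangle^N$, the $N=2$ estimate only gives $|\Phi^{(m)}(x)|\le 2^{4m+22}\langle x\rangle^N$, which carries the $N=2$ constant. The statement requires $2^{4m+2}$ for $N=0$ and $2^{4m+10}$ for $N=1$. Since $C_{\ref{standard bump properties},m,N}=2^{4m+2N^2+6N+2}$ increases in $N$, reducing to a larger $N$ can never recover the stated constants, so this step fails as written.

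\textbf{The fix} is what the paper does: run the integration-by-parts bound from the proof of Lemma~\ref{lem:smoothdecay} directly with $N=0$ or $N=1$. The hypothesis $N\ge2$ there is needed only for $W_0$ membership, not for the pointwise bound. With $|\supp\widehat{\Phi^{(m)}}|\le 2$ and $\|\widehat{\Phi^{(m)}}\|_\infty\le(2\pi)^m\le\tilde C_{\ref{standard bump properties},m,N}$, you get $|\Phi^{(m)}(x)|\le 2\tilde C_{\ref{standard bump properties},m,N}\min(1,|x|^{-N})$. For $N=1$, Lemma~\ref{lem: min and bracket} then gives $|\Phi^{(m)}(x)|\le 2^{N+1}\tilde C_{\ref{standard bump properties},m,N}\langle x\rangle^N\le C_{\ref{standard bump properties},m,N}\langle x\rangle^N$; for $N=0$ this is immediate.

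\textbf{Two minor points.}
\begin{itemize}
\item $2^N\prod_{i<N}2^{i+2}$ equals $2^{N(N+5)/2}$, not $2^{N(N+7)/2}$. Your bound is still true, just weaker.
\item The Leibniz bookkeeping you left open closes as follows. Since $\binom Nk\frac{m!}{(m-k)!}=\frac{N!}{(N-k)!}\binom mk\le N!\binom mk$, the sum is at most $(2\pi)^m\,2^m\,N!\,\max_{r\le N}\|\psi^{(r)}\|_\infty$. Using $2\pi<8$ and $N!\le N^N\le 2^{N^2}$, this is at most $2^{4m}\,2^{N^2}\,2^{N(N+7)/2}\le 2^{4m+2N^2+5N}$, so even your weaker derivative bound suffices.
\end{itemize}
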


\begin{proof}[Proof \auto]
Let
\[
a_l=\sum_{i\in[l)}2^{-i-3}=\tfrac14-2^{-l-2}.
\]
The Fourier transform of $\Phi_l$ is the convolution of $1_{[-3/4,3/4]}$ with probability densities supported on the intervals $[-2^{-i-3},2^{-i-3}]$, $i\in[l)$. Hence it takes values in $[0,1]$, is supported in
\[
[-3/4-a_l,3/4+a_l]=[-1+2^{-l-2},1-2^{-l-2}],
\]
and is equal to one on
\[
[-3/4+a_l,3/4-a_l]=[-1/2-2^{-l-2},1/2+2^{-l-2}].
\]

Every factor in the product defining $\Phi_l$ has modulus at most one. Thus $|\Phi_l|$ decreases pointwise in $l$. For $l\ge2$, $|\Phi_l|\le|\Phi_2|$, and $\Phi_2\in L^1(\R)$. Dominated convergence gives convergence in $L^1$, while convergence in $L^\infty$ follows by uniform convergence on compact sets together with the uniform decay supplied by the two first factors. Denote the limit by $\Phi$. The preceding support and plateau statements pass to the limit.

For $r\in\N$, differentiating the first $r$ convolution factors gives
\[
\|\widehat\Phi^{(r)}\|_\infty
\le\prod_{i\in[r)}2^{i+3}
=2^{r(r-1)/2+3r}.
\]
Since
\[
\widehat{\Phi^{(m)}}(\xi)=(2\pi i\xi)^m\widehat\Phi(\xi),
\]
the Leibniz rule gives
\[
(\widehat{\Phi^{(m)}})^{(N)}(\xi)
=
\sum_{q=0}^{\min(m,N)}
\binom{N}{q}\tfrac{m!}{(m-q)!}(2\pi i)^m
\xi^{m-q}\widehat\Phi^{(N-q)}(\xi).
\]
Using $|\xi|\le1$ on the support, the preceding derivative estimate, and the elementary bounds for the binomial coefficients and factorial quotients, we obtain
\[
\|(\widehat{\Phi^{(m)}})^{(N)}\|_\infty
\le2^{4m+2N^2+5N}.
\]
In particular, $\widehat\Phi$ is smooth and compactly supported, so $\Phi$ is a Schwartz function. Applying Lemma \ref{lem:smoothdecay2} when $N\ge2$, and the same integration-by-parts estimate directly when $N\in[2)$, yields
\[
|\Phi^{(m)}(x)|
\le2^{N+2}\max\bigl(2^{4m},2^{4m+2N^2+5N}\bigr)\langle x\rangle^N
\le2^{4m+2N^2+6N+2}\langle x\rangle^N.
\]
\end{proof}

\begin{proposition}\label{compare brackets}\uses{}\usesdefs{bracket bump}
\lean{Auto.compare_brackets}
\leanok
Let $0<\lambda\le\mu$ with $1\le\mu$ and let $\lambda|y|\le\mu|x|$. Then for every $s,N>0$,
\begin{equation}\label{compare brackets estimate}
\mu^{-N}\langle x\rangle_{(s)}^N\le\lambda^{-N}\langle y\rangle_{(s)}^N.
\end{equation}
\end{proposition}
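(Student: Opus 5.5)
Unwinding the notation, the claim reduces to an elementary one-variable inequality. By Definition \ref{bracket bump}, $\langle x\rangle^N_{(s)}=s^{-1}(1+|x|/s)^{-N}$. Multiplying both sides of \eqref{compare brackets estimate} by $s$ and taking $N$-th roots, which is allowed since both sides are positive and $t\mapsto t^{1/N}$ is increasing on $(0,\infty)$, the claim becomes
\[
\mu^{-1}(1+|x|/s)^{-1}\le \lambda^{-1}(1+|y|/s)^{-1}.
\]
Equivalently, after cross-multiplying positive quantities,
\[
\lambda(1+|y|/s)\le \mu(1+|x|/s),
\]
that is, $\lambda+\lambda|y|/s\le \mu+\mu|x|/s$.

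This last inequality follows by adding two facts. The hypothesis $\lambda\le\mu$ gives $\lambda\le\mu$ for the constant terms. The hypothesis $\lambda|y|\le\mu|x|$, divided by $s>0$, gives $\lambda|y|/s\le\mu|x|/s$. Summing the two and reversing the equivalences proves \eqref{compare brackets estimate}.

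The hypothesis $1\le\mu$ is not needed in this argument. There is no real obstacle; the only care required is parsing the notation $\langle x\rangle^N_{(s)}$ correctly (the power is applied before the rescaling) and justifying the monotonicity of $t\mapsto t^{N}$ on positive reals. That monotonicity is exactly the step one would state explicitly in a formalization.
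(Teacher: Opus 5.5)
Your proof is correct and is essentially the paper's argument. The paper writes $\mu^{-N}\langle x\rangle^N_{(s)}=s^{-1}(\mu+s^{-1}\mu|x|)^{-N}$ and lowers both summands using $\lambda\le\mu$ and $\lambda|y|\le\mu|x|$, which is your inequality $\lambda(1+|y|/s)\le\mu(1+|x|/s)$ combined with monotonicity of $t\mapsto t^{-N}$; you are also right that $1\le\mu$ is not used, and the paper's proof does not use it either.
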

\begin{proof}
We have, lowering both summands in the bracket on the left-hand side,
\[
s^{-1}(\mu+s^{-1}\mu|x|)^{-N}
\le s^{-1}(\lambda+s^{-1}|\lambda y|)^{-N}.
\]
Pulling out the factors $\mu$ and $\lambda$ proves the proposition.
\end{proof}

\begin{proposition}[two bump estimate]\label{two bump estimate}\uses{}\usesdefs{bracket bump}
\lean{Auto.twoBumpEstimate}
\leanok
    Consider real numbers $x_i$ and  $s_i>0$ and $n_i>1$ for $i=0,1$.  Assume $s_0\ge s_1$ and define
\begin{equation}\label{tb0}
     C_{\ref{two bump estimate},n_0,n_1}=2^{1+\min(n_0,n_1)} [1+(\min(n_0,n_1)-1)^{-1}]
\end{equation}
  Then
\begin{equation}\label{two decays}
   | \int_{\R} (\prod_{i=0}^1 \left<x_i-p\right>^{n_i}_{(s_i)}) dp|\le C_{\ref{two bump estimate},n_0,n_1}
\left<x_0-x_1\right>^{\min(n_0,n_1)}_{(s_0)}\, .
\end{equation}
\end{proposition}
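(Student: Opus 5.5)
The plan is to bound the integral by splitting the real line into two regions according to which of the two points $x_0,x_1$ is farther from $p$. The integrand is nonnegative, so the absolute value can be dropped. Set $n=\min(n_0,n_1)>1$ and $d=|x_0-x_1|$. Since $\langle z\rangle^{n_i}_{(s)}=s^{-1}(1+|z|/s)^{-n_i}\le s^{-1}(1+|z|/s)^{-n}$, we may first replace both exponents by $n$. We also use $s_1\le s_0$, which gives $1+|z|/s_1\ge 1+|z|/s_0$ and hence $s_1\langle z\rangle^n_{(s_1)}\le s_0\langle z\rangle^n_{(s_0)}$. We do not directly convert the $s_1$ bump into an $s_0$ bump, since that would lose the normalization $s_1^{-1}$. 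Instead we keep the $s_1$ bump only where it is integrated.

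Split $\R$ into $E_0=\{p:|x_0-p|\ge d/2\}$ and $E_1=\{p:|x_1-p|\ge d/2\}$; their union is $\R$ by the triangle inequality.

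On $E_0$, bound the $x_0$ factor pointwise by $s_0^{-1}(1+d/(2s_0))^{-n}\le 2^n\langle x_0-x_1\rangle^n_{(s_0)}$. The remaining factor then integrates to $\int s_1^{-1}(1+|z|/s_1)^{-n}\,dz=2/(n-1)$.

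On $E_1$, bound the $x_1$ factor by $s_1^{-1}(1+d/(2s_1))^{-n}$. Then integrate the $x_0$ factor, $\int s_0^{-1}(1+|z|/s_0)^{-n}dz=2/(n-1)$. What remains is the expression $s_1^{-1}(1+d/(2s_1))^{-n}$, which must be compared with $\langle x_0-x_1\rangle^n_{(s_0)}$. This comparison is the main obstacle: we would need $s_1^{-1}(1+d/(2s_1))^{-n}\lesssim s_0^{-1}(1+d/s_0)^{-n}$, and this fails when $d$ is small and $s_1\ll s_0$, because the left side is then of size $s_1^{-1}$.

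So on $E_1$ we argue differently and bound the $x_0$ factor pointwise instead. Since $|x_0-p|\le |x_1-p|+d$ does not give a lower bound, we split further. In the subregion of $E_1$ where also $|x_0-p|\ge d/2$, the estimate from $E_0$ applies. In the subregion where $|x_0-p|<d/2$, the $x_0$ factor is still bounded by $s_0^{-1}\le s_0^{-1}\cdot\ldots$, but we need decay in $d$.

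The cleaner route is to integrate the $s_1$ bump in all cases and bound the $s_0$ bump pointwise. For $|x_0-p|\ge d/2$ this gives $2^n\langle x_0-x_1\rangle^n_{(s_0)}$. For $|x_0-p|<d/2$ we have $|x_1-p|\ge d/2$, and then the $s_1$ bump satisfies $s_1^{-1}(1+d/(2s_1))^{-n}\le s_1^{-1}(d/(2s_1))^{-1}(1+d/(2s_1))^{-(n-1)}$. Integrating the $s_0$ factor over this region of length $d$ gives at most $d\,s_0^{-1}$. Choosing the right factor to integrate depending on whether $d\le s_0$ is the real work here.

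If $d\le s_0$, only the first region matters: bound the $s_0$ bump by $s_0^{-1}\le 2^n\langle x_0-x_1\rangle^n_{(s_0)}$ everywhere and integrate the $s_1$ bump to get $2/(n-1)$. If $d>s_0\ge s_1$, the first region gives $2^n\cdot\frac{2}{n-1}\langle\cdot\rangle$. The second region, of length $d$, has the $s_1$ bump bounded by $s_1^{-1}(2s_1/d)^n\le 2^n s_0^{n-1}d^{-n}$, using $s_1\le s_0$ and $n>1$. Its contribution is then at most $2^n s_0^{n-1}d^{1-n}\cdot s_0^{-1}\le 2^n\cdot 2^n\langle x_0-x_1\rangle^n_{(s_0)}$; the second factor $2^n$ is absorbed when comparing $d^{-n}s_0^{n-1}$ to $s_0^{-1}(1+d/s_0)^{-n}$, which is valid since $d>s_0$. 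Collecting the constants, aiming for $2^{1+n}(1+(n-1)^{-1})$, is bookkeeping. The total mass of the two regions is split as $2^n\cdot 2/(n-1)$ plus $2^{n+1}$, which matches $C_{\ref{two bump estimate},n_0,n_1}$ exactly.
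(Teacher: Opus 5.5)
Your overall structure matches the paper's. You split into cases according to whether $d=|x_0-x_1|$ is small compared to $s_0$ (the paper uses the threshold $2s_0$). In the near case you take $s_0^{-1}$, the supremum of the $s_0$-bump, times the $L^1$ norm of the $s_1$-bump. In the far case you split $\R$ at the midpoint of $x_0,x_1$. Your near case and your estimate on $\{|x_0-p|\ge d/2\}$ are correct.

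The step that fails is the far-case estimate on $\{|x_0-p|<d/2\}$. There you bound the $s_0$-bump by its supremum $s_0^{-1}$ and multiply by the length $d$, arriving at $2^n s_0^{n-2}d^{1-n}$. But $\langle x_0-x_1\rangle^n_{(s_0)}\le s_0^{-1}(d/s_0)^{-n}=s_0^{n-1}d^{-n}$, so this quantity is at least $2^n(d/s_0)\langle x_0-x_1\rangle^n_{(s_0)}$. Hence your inequality $2^ns_0^{n-2}d^{1-n}\le 4^n\langle x_0-x_1\rangle^n_{(s_0)}$ fails as soon as $d/s_0>2^n$. For instance, $s_0=s_1=1$, $n=2$, $d=100$ gives $0.04$ against roughly $1.6\cdot10^{-3}$. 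For $d>s_0$, using $d\,s_0^{-1}$ as the mass of the $s_0$-bump on that interval loses the unbounded factor $d/s_0$.

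The remedy is the strategy you set aside earlier, which is also the paper's "favorable $L^1$--$L^\infty$" choice on this half-line. Integrate the $s_0$-bump over all of $\R$ (total mass $2/(n-1)$) and bound the $s_1$-bump pointwise:
\[
s_1^{-1}(1+d/(2s_1))^{-n}\le 2^ns_1^{n-1}d^{-n}\le 2^ns_0^{n-1}d^{-n}.
\]
The obstruction you noticed for this comparison only arises for small $d$, and that regime is already covered by your near case. Since $s\mapsto s^{-1}(d/(2s))^{-n}$ is increasing for $n>1$ (the paper's remark that the relevant supremum is maximized by $i=0$), and $1+d/s_0<2d/s_0$ when $d>s_0$, you get $2^ns_0^{n-1}d^{-n}\le 4^n\langle x_0-x_1\rangle^n_{(s_0)}$. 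This yields an estimate of the required form.

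It does not support your closing claim that the constants add up to exactly $C_{\ref{two bump estimate},n_0,n_1}$. In the final sum you used $2^{n+1}$ for the second region, whereas your own bound there was $2^n\cdot 2^n$. The corrected far-case bound $\tfrac{2}{n-1}(2^n+4^n)$ exceeds $2^{n+1}(1+(n-1)^{-1})$ for every $n>1$. Reaching the stated constant therefore needs finer bookkeeping than this region-by-region argument provides.
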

\begin{proof}
Let us prove \eqref{two decays}.
By translating $p$ we may assume $x_0=-x_1$.

If $|x_0-x_1|\le 2s_0$, we estimate the the first bump
by its $L^\infty$ norm, which is $s_i^{-1}$, and the second bump by its $L^1$ norm.
The latter is equal to the $L^1$ norm of $\langle x\rangle^{n_1}$, which in turn is estimated by splitting the integral
at $|x|<1$ and $|x|>1$ and thus by $2+2(n_1-1)^{-1}$. This gives the desired estimate.

If $|x_0-x_1|> 2s_0$, we split the integral into positive and negative axis
and do the favorable $L^1$ - $L^\infty$ estimate on either side.
The $L^1$ norm is estimated as above, possibly with $n_0$ in place of $n_1$ if needed. The $L^\infty$ norm is
bounded by
\begin{equation}
    s_i^{-1}(s_i^{-1}|x_0-x_1|/2)^{-\min(n_0,n_1)}
\end{equation}
for suitable $i$, which is however
maximized by $i=0$.
\end{proof}

\begin{proposition}\label{orthogonal domination}\uses{}\usesdefs{}
\lean{Auto.orthogonalDomination}
\leanok
Let     $\alpha_0 ,\alpha_1\in\mathbb{R}^2$ be linearly independent unit vectors.
Then for every $x\in \R^2$ we have
 \begin{equation}\label{auto:orthogonal-coordinate-first-bound}
     |x\cdot \alpha_0^\perp|\le  2|\alpha_0\cdot \alpha_1^\perp|^{-1}| x\cdot \alpha_1|
 \end{equation}
or
 \begin{equation}\label{auto:orthogonal-coordinate-second-bound}
     | x\cdot \alpha_1^\perp|\le   2| \alpha_0\cdot \alpha_1^\perp|^{-1}|x\cdot \alpha_0|
 \end{equation}
\end{proposition}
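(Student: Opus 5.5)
The plan is to work in the orthonormal basis $\alpha_1,\alpha_1^\perp$ and split into two cases according to which component of $x$ dominates. The one identity needed is this. For unit vectors in $\R^2$, with $\alpha^\perp$ the rotation of $\alpha$ by a right angle, we have $|\alpha_0^\perp\cdot\alpha_1^\perp|=|\alpha_0\cdot\alpha_1|$ and $|\alpha_0\cdot\alpha_1^\perp|=|\alpha_1\cdot\alpha_0^\perp|$. Write $\delta=|\alpha_0\cdot\alpha_1^\perp|$. Then $\delta\in(0,1]$, since linear independence rules out $\delta=0$.

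Expand $x=(x\cdot\alpha_1)\alpha_1+(x\cdot\alpha_1^\perp)\alpha_1^\perp$ and set $a=x\cdot\alpha_1$, $b=x\cdot\alpha_1^\perp$. This gives
\[
x\cdot\alpha_0^\perp = a\,(\alpha_1\cdot\alpha_0^\perp)+b\,(\alpha_1^\perp\cdot\alpha_0^\perp),
\qquad
x\cdot\alpha_0 = a\,(\alpha_1\cdot\alpha_0)+b\,(\alpha_1^\perp\cdot\alpha_0).
\]
Let $c=|\alpha_1\cdot\alpha_0|$. Then $c^2+\delta^2=1$, and also $|\alpha_1^\perp\cdot\alpha_0^\perp|=c$.

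\emph{Case 1: $|b|\le |a|$.} The first identity and the triangle inequality give $|x\cdot\alpha_0^\perp|\le \delta|a|+c|b|\le 2|a|$. Since $\delta\le1$, this is at most $2\delta^{-1}|a|$, which is the first alternative.

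\emph{Case 2: $|b|>|a|$.} We aim for the second alternative, $|b|\le 2\delta^{-1}|x\cdot\alpha_0|$. Case 2 alone does not bound $|x\cdot\alpha_0|$ from below: when $c$ is close to $1$, the terms $ca$ and $\delta b$ in the second identity can nearly cancel. So I would refine the case split and take the threshold $|b|\le \delta^{-1}|a|$ versus $|b|>\delta^{-1}|a|$ in place of $|b|\le|a|$.

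\emph{Refined Case 1: $|b|\le\delta^{-1}|a|$.} Then $|x\cdot\alpha_0^\perp|\le \delta|a|+c\delta^{-1}|a|\le 2\delta^{-1}|a|$, using $\delta\le \delta^{-1}$ and $c\le1$.

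\emph{Refined Case 2: $|b|>\delta^{-1}|a|$.} Here $|x\cdot\alpha_0|\ge \delta|b|-c|a|\ge \delta|b|-|a|$. This is positive, but it is not comparable to $\delta|b|$ without a margin. So I would move the threshold once more, to $|b|\le 2\delta^{-1}|a|$ versus $|b|> 2\delta^{-1}|a|$.

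\emph{Final Case 1: $|b|\le 2\delta^{-1}|a|$.} Then $|x\cdot\alpha_0^\perp|\le(\delta+2c\delta^{-1})|a|$. This exceeds $2\delta^{-1}|a|$ when $c\approx1$, so this direction fails with constant $2$.

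This is the main obstacle: bookkeeping the constants so that both alternatives close with the factor $2\delta^{-1}$ rather than something like $3\delta^{-1}$. The better route is to compare the two target quantities symmetrically. Put $A=|x\cdot\alpha_0^\perp|$, $B=|x\cdot\alpha_1|$, $A'=|x\cdot\alpha_1^\perp|$, $B'=|x\cdot\alpha_0|$, and suppose both alternatives fail: $A>2\delta^{-1}B$ and $A'>2\delta^{-1}B'$. Expanding in the basis $\alpha_0,\alpha_0^\perp$ gives $A'\le \delta B'+cA$. By the symmetric identity, $A\le \delta B+cA'$. Combining these,
\[
A< \tfrac{\delta^2}{2}A+c\big(\delta B'+cA\big)< \tfrac{\delta^2}{2}A+\tfrac{\delta^2}{2}A'+c^2A,
\]
and likewise with the roles of $A$ and $A'$ exchanged. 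Adding the two inequalities gives $(A+A')(1-c^2-\delta^2)>0$, that is, $0>0$, a contradiction. So at least one alternative holds.

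The remaining checks are that the strict inequalities are legitimate. If $A=A'=0$, then $x=0$ and both alternatives hold trivially. Otherwise $A+A'>0$, and the strictness comes from the failing alternatives together with $c\le1$. I expect this final algebraic combination to be the step requiring care.
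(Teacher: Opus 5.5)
Your final argument is correct, but it is organized differently from the paper's. In your notation, the paper (whose $s$ is your $\delta$) assumes only that the first alternative fails and argues forward. Expanding $\alpha_1$ in the basis $\alpha_0,\alpha_0^\perp$ gives $\delta A\le B+cB'$, and with $B<\tfrac{\delta}{2}A$ this yields $A<2c\delta^{-1}B'$. Expanding $\alpha_1^\perp$ in the same basis then gives $A'\le\delta B'+cA\le(1+c^2)\delta^{-1}B'\le2\delta^{-1}B'$.

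You instead assume both alternatives fail, take one expansion in each basis, cross-substitute, and add. The ingredients are the same: substituting $A'\le\delta B'+cA$ into $A\le\delta B+cA'$ gives $\delta^2A\le\delta B+c\delta B'$, which is exactly the paper's $\delta A\le B+cB'$. The paper's route gives a direct implication: failure of the first alternative forces the second, even with the constant $(1+c^2)\delta^{-1}$. Yours is symmetric and never has to choose which alternative to aim for. It can be shortened to $A<cA'$ and $A'<cA$, whose sum $A+A'<c(A+A')$ contradicts $c\le1$.

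There are a few slips in the write-up, none affecting the final argument.
\begin{itemize}
\item After adding, the inequality should read $(A+A')(1-c^2-\delta^2)<0$, not $>0$. This is harmless, since the left side is $0$.
\item Your dismissal of the threshold $|b|\le2\delta^{-1}|a|$ is mistaken. In that case $|x\cdot\alpha_0^\perp|\le(\delta+2c\delta^{-1})|a|$, and $\delta+2c\delta^{-1}=\delta^{-1}\bigl(2-(1-c)^2\bigr)\le2\delta^{-1}$, so the first alternative holds.
\item In the complementary case $|b|>2\delta^{-1}|a|$, you get $|x\cdot\alpha_0|\ge\delta|b|-c|a|\ge(1-\tfrac{c}{2})\delta|b|\ge\tfrac{\delta}{2}|b|$, which is the second alternative. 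So that two-case split already closes with constant exactly $2$, and it is close in spirit to the paper's one-sided argument.
\item The case $A=A'=0$ needs no separate treatment. Failure of both alternatives already forces $A,A'>0$, and the strictness comes from $\delta B<\tfrac{\delta^2}{2}A$.
\end{itemize}
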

\begin{proof}[Proof \auto]
Put
\[
s=|\alpha_0\cdot\alpha_1^\perp|,
\qquad
c=|\alpha_0\cdot\alpha_1|.
\]
Then $s>0$ and $s^2+c^2=1$. Suppose that the first asserted inequality fails. Writing $\alpha_1$ in the orthonormal basis $\alpha_0,\alpha_0^\perp$ gives
\[
s|x\cdot\alpha_0^\perp|
\le |x\cdot\alpha_1|+c|x\cdot\alpha_0|
<\tfrac{s}{2}|x\cdot\alpha_0^\perp|+c|x\cdot\alpha_0|.
\]
Consequently,
\[
|x\cdot\alpha_0^\perp|<2cs^{-1}|x\cdot\alpha_0|.
\]
Writing $\alpha_1^\perp$ in the same basis, we obtain
\[
|x\cdot\alpha_1^\perp|
\le s|x\cdot\alpha_0|+c|x\cdot\alpha_0^\perp|
<(s^2+2c^2)s^{-1}|x\cdot\alpha_0|
\le2s^{-1}|x\cdot\alpha_0|.
\]
Thus the second asserted inequality holds.
\end{proof}
\begin{proposition}[orthogonal decay]\label{orthogonal decay}\uses{compare brackets,orthogonal domination}\usesdefs{bracket bump}
\lean{Auto.orthogonalDecay}
\leanok
Let $\alpha_0,\alpha_1\in\mathbb{R}^2$ be linearly independent unit vectors.
Let $\alpha_{m}^\perp$ be unit vectors orthogonal to $\alpha_m$ and note that $\alpha_0\cdot \alpha_1^\perp\neq 0$. Setting \begin{equation}\label{auto:orthogonal-decay-constant}
    C_{\ref{orthogonal decay}, \alpha_0,\alpha_1,n_0,n_1}=\max( (2 |\alpha_0\cdot \alpha_1^\perp|^{-1})^{n_0},(2 |\alpha_0\cdot \alpha_1^\perp|^{-1})^{n_1}) \, ,
\end{equation}
then for all $n_0,n_1,s_0,s_1>0$ and $x\in\mathbb{R}^2$,
\begin{equation}\label{eq:two bump}
    \langle x\cdot \alpha_0\rangle_{(s_0)}^{n_0} \langle x\cdot \alpha_1\rangle_{(s_1)}^{n_1}
\le C_{\ref{orthogonal decay},\alpha_0,\alpha_1,n_0,n_1} (\langle x\cdot \alpha_0\rangle_{(s_0)}^{n_0} \langle x\cdot \alpha_0^\perp\rangle_{(s_1)}^{n_1} +
\langle x\cdot \alpha_1^\perp\rangle_{(s_0)}^{n_0} \langle x\cdot \alpha_1\rangle_{(s_1)}^{n_1}).
\end{equation}
\end{proposition}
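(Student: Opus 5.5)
The plan is to split into two cases according to which alternative of Proposition \ref{orthogonal domination} holds at the given point $x\in\R^2$, and in each case bound one of the two left-hand factors by the corresponding factor in one of the two right-hand summands using Proposition \ref{compare brackets}. Set $\kappa=2|\alpha_0\cdot\alpha_1^\perp|^{-1}$. Since $|\alpha_0\cdot\alpha_1^\perp|\le 1$, we have $\kappa\ge 2\ge 1$, which is exactly what Proposition \ref{compare brackets} needs with $\lambda=1$ and $\mu=\kappa$.

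\emph{Case 1: \eqref{auto:orthogonal-coordinate-first-bound} holds}, i.e. $|x\cdot\alpha_0^\perp|\le\kappa|x\cdot\alpha_1|$. Apply Proposition \ref{compare brackets} with $\lambda=1$, $\mu=\kappa$, $y=x\cdot\alpha_0^\perp$, the point $x\cdot\alpha_1$ in place of $x$, scale $s=s_1$ and exponent $N=n_1$. The hypothesis $\lambda|y|\le\mu|x\cdot\alpha_1|$ is precisely the case assumption. This gives
\[
\langle x\cdot\alpha_1\rangle_{(s_1)}^{n_1}\le\kappa^{n_1}\langle x\cdot\alpha_0^\perp\rangle_{(s_1)}^{n_1}.
\]
Multiplying by the nonnegative factor $\langle x\cdot\alpha_0\rangle_{(s_0)}^{n_0}$ bounds the left-hand side of \eqref{eq:two bump} by $\kappa^{n_1}$ times the first summand on the right.

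\emph{Case 2: \eqref{auto:orthogonal-coordinate-second-bound} holds}, i.e. $|x\cdot\alpha_1^\perp|\le\kappa|x\cdot\alpha_0|$. Symmetrically, apply Proposition \ref{compare brackets} with $s=s_0$, $N=n_0$ and $y=x\cdot\alpha_1^\perp$ to get
\[
\langle x\cdot\alpha_0\rangle_{(s_0)}^{n_0}\le\kappa^{n_0}\langle x\cdot\alpha_1^\perp\rangle_{(s_0)}^{n_0}.
\]
Multiplying by $\langle x\cdot\alpha_1\rangle_{(s_1)}^{n_1}$ bounds the left-hand side by $\kappa^{n_0}$ times the second summand.

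In either case the left-hand side is at most $\max(\kappa^{n_0},\kappa^{n_1})=C_{\ref{orthogonal decay},\alpha_0,\alpha_1,n_0,n_1}$ times one of the two summands. Both summands are nonnegative, so it is also at most this constant times their sum, which is \eqref{eq:two bump}.

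There is no real obstacle here. The only points to check are that $\alpha_0\cdot\alpha_1^\perp\ne0$, which holds by linear independence (otherwise $\alpha_0$ would be parallel to $\alpha_1$), and that $\mu=\kappa\ge1$, as noted above. One should also confirm that Proposition \ref{compare brackets} does not secretly require integer exponents, since here $N$ is the positive real $n_i$; its statement allows any $N>0$.
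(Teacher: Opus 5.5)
Your proof is correct and follows the paper's route: the paper only says the result follows from Propositions \ref{compare brackets} and \ref{orthogonal domination}. Your case split on the two alternatives of Proposition \ref{orthogonal domination}, with $\lambda=1$ and $\mu=2|\alpha_0\cdot\alpha_1^\perp|^{-1}\ge 2$ in Proposition \ref{compare brackets}, supplies exactly those omitted details.
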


\begin{proof}
This follows from Propositions \ref{compare brackets} and \ref{orthogonal domination}
\end{proof}

\begin{proposition}[bump triangle]\label{bump triangle}\uses{compare brackets}\usesdefs{bracket bump}
\lean{Auto.bump_triangle}
\leanok
Let $\alpha_0,\alpha_1\in\mathbb{R}^2$ and let $\alpha_2=c_0\alpha_0+c_1\alpha_1$ be a non-trivial linear combination of $\alpha_0,\alpha_1$, i.e. $c_0\not=0, c_1\not=0$.
Let
\begin{equation}\label{auto:bump-triangle-scale-constant}\tilde{C}_{\ref{bump triangle},c_0,c_1}=\max(2|c_0|, (2|c_0|)^{-1},2|c_1|, (2|c_1|)^{-1}),\end{equation}
\begin{equation}\label{auto:bump-triangle-constant}
    C_{\ref{bump triangle},c_0,c_1,n_0,n_1}=\max(\tilde{C}_{\ref{bump triangle},c_0,c_1}^{n_0}, \tilde{C}_{\ref{bump triangle},c_0,c_1}^{n_1}).
\end{equation}
Then
\begin{equation}\label{auto:bump-triangle-bound} \langle x\cdot \alpha_0\rangle_{(s_0)}^{n_0} \langle x\cdot \alpha_1\rangle_{(s_1)}^{n_1}
\le C_{\ref{bump triangle},c_0,c_1,n_0,n_1}(\langle x\cdot \alpha_0\rangle_{(s_0)}^{n_0}\langle x\cdot \alpha_2\rangle_{(s_1)}^{n_1} + \langle x\cdot \alpha_2\rangle_{(s_0)}^{n_0}\langle x\cdot \alpha_1\rangle_{(s_1)}^{n_1}).
\end{equation}
\end{proposition}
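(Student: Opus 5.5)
The plan is a case split on which of $|x\cdot\alpha_0|$, $|x\cdot\alpha_1|$ is comparatively larger. In each case one bracket factor on the left is replaced by a bracket in $x\cdot\alpha_2$ via Proposition \ref{compare brackets}, while the other factor is kept unchanged. Write $a_m=x\cdot\alpha_m$, so that $a_2=c_0a_0+c_1a_1$ by linearity. Put $\tilde C=\tilde C_{\ref{bump triangle},c_0,c_1}$, and note that $\tilde C\ge 1$ because $\max(t,t^{-1})\ge1$.

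\emph{Case 1:} $|c_0a_0|\le|c_1a_1|$. Then $|a_2|\le 2|c_1||a_1|$, and I keep the factor $\langle a_0\rangle^{n_0}_{(s_0)}$ and replace $\langle a_1\rangle^{n_1}_{(s_1)}$. I apply Proposition \ref{compare brackets} with $x=a_1$, $y=a_2$, $s=s_1$, $N=n_1$ and the choice $\lambda=1$, $\mu=\tilde C$. Its hypotheses hold: $0<\lambda\le\mu$ and $1\le\mu$ since $\tilde C\ge1$, and
\[
\lambda|a_2|\le 2|c_1||a_1|\le\tilde C|a_1|=\mu|a_1|.
\]
The conclusion is
\[
\tilde C^{-n_1}\langle a_1\rangle^{n_1}_{(s_1)}\le\langle a_2\rangle^{n_1}_{(s_1)},
\]
so the left-hand side of \eqref{auto:bump-triangle-bound} is at most $\tilde C^{n_1}\langle a_0\rangle^{n_0}_{(s_0)}\langle a_2\rangle^{n_1}_{(s_1)}$. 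This is the first term on the right.

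\emph{Case 2:} $|c_1a_1|<|c_0a_0|$. Symmetrically, $|a_2|\le 2|c_0||a_0|$. I keep $\langle a_1\rangle^{n_1}_{(s_1)}$ and apply Proposition \ref{compare brackets} with $x=a_0$, $y=a_2$, $s=s_0$, $N=n_0$, $\lambda=1$, $\mu=\tilde C$. This gives the bound $\tilde C^{n_0}\langle a_2\rangle^{n_0}_{(s_0)}\langle a_1\rangle^{n_1}_{(s_1)}$, which is the second term.

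In both cases the constant is at most $\max(\tilde C^{n_0},\tilde C^{n_1})$, and the omitted term on the right is nonnegative, so the claimed inequality follows. The reciprocals $(2|c_m|)^{-1}$ appearing in $\tilde C$ are not needed in this argument; they only make the constant larger, which is harmless.

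The main point to check is that Proposition \ref{compare brackets} is stated for arbitrary $s,N>0$ and with the requirement $\mu\ge1$. Choosing $\mu=\tilde C$ rather than $\mu=2|c_m|$ is exactly what guarantees this requirement. Beyond that, the proof is routine bookkeeping.
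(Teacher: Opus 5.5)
Your proof is correct and follows the paper's argument. The paper uses the same case split from the triangle inequality, on whether $2|c_0||x\cdot\alpha_0|$ or $2|c_1||x\cdot\alpha_1|$ dominates $|x\cdot\alpha_2|$, and then applies Proposition \ref{compare brackets} to the dominated bracket. The only difference is that you take $\lambda=1$, $\mu=\tilde C$ in both cases, whereas the paper splits into the subcases $2|c_i|\ge1$ and $2|c_i|<1$. One small inconsistency in your closing remark: your justification of $\tilde C\ge1$ does use the reciprocals, although $\max(1,2|c_0|,2|c_1|)$ would serve equally well.
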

\begin{rem}
It is not needed that $\alpha_0,\alpha_1$ are linearly independent, but in the linearly dependent case the proposition becomes trivial.
\end{rem}
\begin{proof}
We have by the triangle inequality and estimating the sum by twice the maximum
\begin{equation}
    |x\cdot \alpha_2|\le \max(2|c_0||x\cdot \alpha_0|,2|c_1||x\cdot \alpha_1|)\, .
\end{equation}
Assume the maximum is attained by one of the terms with index $i$.
If $2|c_i|\ge 1$, apply Proposition \ref{compare brackets} with $\lambda=1$ and $\mu=2|c_i|$.
If $2|c_i|< 1$, apply Proposition \ref{compare brackets} with $\lambda=2|c_i|$ and $\mu=1$.
\end{proof}

Recall the Gaussian $\g(x)=e^{-\pi x^2}$ for $x\in\R$.

\begin{proposition}[Gaussian domination]\label{Gaussian domination}\uses{}\usesdefs{gaussian,bracket bump}
\lean{Auto.gaussianDomination}
\leanok
    Let
    $N,s\in\R$, $N>1$, and $s>0$. Then
\begin{equation}\label{auto:bracket-Gaussian-domination}
    \left< x\right>^{N}_{(s)}  \le C_{\ref{Gaussian domination}} 2^N   \sum_{m\in {\N}}  2^{(1-N)m} \g_{(2^ms)}(x),
\end{equation}
where $C_{\ref{Gaussian domination}}=e^\pi$.
\end{proposition}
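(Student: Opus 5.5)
By scaling we may take $s=1$. Both sides transform the same way under $x\mapsto s^{-1}x$: the left side is $s^{-1}\langle s^{-1}x\rangle^N$, and each term on the right satisfies $\g_{(2^m s)}(x)=s^{-1}\g_{(2^m)}(s^{-1}x)$. So it suffices to prove, for all $x\in\R$,
\[
(1+|x|)^{-N}\le e^{\pi}2^N\sum_{m\in\N}2^{(1-N)m}2^{-m}e^{-\pi 2^{-2m}x^2}
= e^{\pi}2^N\sum_{m\in\N}2^{-Nm}e^{-\pi 4^{-m}x^2}.
\]
Every summand on the right is nonnegative, so the plan is to bound the left side by a single well-chosen term of the sum.

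\textbf{Case $|x|\le 1$.} Take $m=0$. The term is $e^{\pi}2^N e^{-\pi x^2}\ge 2^N\ge 1\ge(1+|x|)^{-N}$.

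\textbf{Case $|x|>1$.} Take the least $m\ge1$ with $2^{m-1}<|x|\le 2^m$; such $m$ exists since $|x|>1$. Then $4^{-m}x^2\le 1$, so $e^{\pi}e^{-\pi4^{-m}x^2}\ge 1$, and the chosen term is at least $2^N2^{-Nm}=2^{-N(m-1)}$. On the other side, $(1+|x|)^{-N}\le |x|^{-N}<2^{-N(m-1)}$, which closes this case.

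The only mildly delicate point is bookkeeping: the normalization $\g_{(t)}(x)=t^{-1}\g(x/t)$ produces the factor $2^{-m}$, which combines with $2^{(1-N)m}$ to give $2^{-Nm}$, and the scaling step must produce the $s^{-1}$ factors consistently on both sides. The hypothesis $N>1$ is not needed for the inequality itself; $N>0$ suffices. It only ensures the series converges, and even that is automatic here since each summand is at most $2^{-Nm}$.
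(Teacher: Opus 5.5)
Your proof is correct and matches the paper's argument: reduce to $s=1$ by scaling, use the $m=0$ term when $|x|\le 1$, and when $|x|>1$ choose $m$ with $2^{m-1}<|x|\le 2^m$, so that $2^m e^{\pi}\g_{(2^m)}(x)\ge 1$ and $\langle x\rangle^N\le (2^{m-1})^{-N}$ is dominated by the $m$-th term. Your bookkeeping of the factor $2^{-m}$ is accurate, and so is your remark that $N>0$ already suffices for the inequality.
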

 \begin{proof}
     The case for general $s$ follows from the case $s=1$ by scaling.  Consider $s=1$. For $|x|\le 1$, the summand $m=0$ on the right hand side suffices as it is at least $1$.

     For $|x|>1$ there is $m\in \N$ with $2^{m-1}<|x|\le 2^{m}$, and we estimate
     \begin{equation}
          \left< x\right>^{N}\le (2^{m-1})^{-N} \le (2^{m-1})^{-N} 2^me^\pi \g_{(2^m)}(x)\, . \,
     \end{equation}
     which is the $m$-th term on the right hand side.
 \end{proof}

\begin{proposition}[diagonal square root]\label{diagonal square root}\uses{square root of Gaussian decay,Gaussian bump decay,two bump estimate}\usesdefs{gaussian,bracket bump,auto:Wiener-space-definition}
\lean{Auto.diagonalSquareRoot,Auto.diagonalSquareRoot_bound,Auto.diagonalSquareRoot_memW0}
\leanok
For $t_0,t_1\in\R$ with $0<2t_0\le t_1$ define
\begin{equation}\label{auto:diagonal-square-root-definition} s = \mathcal{F}^{-1}(\xi\mapsto \sqrt{\g(t_0 \xi)- \g(t_1 \xi)}), \end{equation}
where the function under the square root is nonnegative. Then
$s\in W_0(\R)$.
For all $N\in \mathbb N$ with $N\geq 1$ and $x\in \R$, the following estimate holds:
\begin{equation}\label{E:s-estimate}
    |s(x)|\le  C_{\ref{diagonal square root},N} ( \left<x\right>^N_{(t_0)}+ \left<x\right>^2_{(t_1)}),
\end{equation}
where
$C_{\ref{diagonal square root},N}=\sqrt{2}\max(C_{\ref{Gaussian bump decay},0,N}, C_{\ref{Gaussian bump decay},0,2}C_{\ref{square root of Gaussian decay}}C_{\ref{two bump estimate},2,2})$.

\end{proposition}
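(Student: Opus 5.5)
We factor the multiplier so that the previously established decay bounds apply directly. For every $\xi$,
\[
\g(t_0\xi)-\g(t_1\xi)=\g(t_0\xi)\bigl(1-\g(\tau\xi)\bigr),\qquad \tau=\sqrt{t_1^2-t_0^2},
\]
because $\g(a)\g(b)=\g(\sqrt{a^2+b^2})$. Since $\g\le 1$, the right-hand side is nonnegative, and the square root splits as $\sqrt{\g(t_0\xi)}\,\sqrt{1-\g(\tau\xi)}$. Using $\sqrt{\g(t_0\xi)}=\g(t_0\xi/\sqrt2)$, we obtain
\[
\sqrt{\g(t_0\xi)-\g(t_1\xi)}=\g(t_0\xi/\sqrt2)-\g(t_0\xi/\sqrt2)\bigl(1-\sqrt{1-\g(\tau\xi)}\bigr).
\]
By Proposition \ref{Elementary Gaussian properties} (iv) and Proposition \ref{square root of Gaussian decay} (after rescaling), this gives
\[
s=\g_{(t_0/\sqrt2)}-\g_{(t_0/\sqrt2)}*\rho_{(\tau)},
\]
where $\rho$ is the nonnegative function of Proposition \ref{square root of Gaussian decay}. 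Both terms are $L^1$ functions with $L^1$ Fourier transforms, so Fourier inversion identifies $s$ with this continuous function.

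\textbf{First term.} Proposition \ref{Gaussian bump decay} with $m=0$ gives
\[
\g_{(t_0/\sqrt2)}(x)\le \tfrac{\sqrt2}{t_0}\,C_{\ref{Gaussian bump decay},0,N}\,\langle \sqrt2 x/t_0\rangle^N .
\]
We have $\langle\sqrt2 x/t_0\rangle^N\le\langle x/t_0\rangle^N$, so the first term is at most
\[
\sqrt2\,C_{\ref{Gaussian bump decay},0,N}\,\langle x\rangle^N_{(t_0)}.
\]

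\textbf{Second term.} Proposition \ref{Gaussian bump decay} with $(m,N)=(0,2)$ bounds the Gaussian by $\sqrt2\,C_{\ref{Gaussian bump decay},0,2}\langle x\rangle^2_{(t_0/\sqrt2)}$, which in turn is at most $\sqrt2\,C_{\ref{Gaussian bump decay},0,2}\langle x\rangle^2_{(t_0)}$. Proposition \ref{square root of Gaussian decay} gives $0\le\rho_{(\tau)}\le 17\,\langle x\rangle^2_{(\tau)}$. The convolution is therefore bounded by
\[
\sqrt2\, C_{\ref{Gaussian bump decay},0,2}\,C_{\ref{square root of Gaussian decay}}\int \langle p\rangle^2_{(t_0)}\langle x-p\rangle^2_{(\tau)}\,dp .
\]
We apply Proposition \ref{two bump estimate} with $n_0=n_1=2$, putting the larger scale first. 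The hypothesis $2t_0\le t_1$ gives $\tau\ge\sqrt3\,t_0\ge t_0$, so the larger scale is $\tau$, and the integral is at most $C_{\ref{two bump estimate},2,2}\langle x\rangle^2_{(\tau)}$. Since $\tau\le t_1$, we have
\[
\langle x\rangle^2_{(\tau)}=\tau^{-1}(1+|x|/\tau)^{-2}\le \tfrac{t_1}{\tau}\,\langle x\rangle^2_{(t_1)}\le \tfrac{2}{\sqrt3}\,\langle x\rangle^2_{(t_1)} .
\]

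\textbf{The constant.} This is the step I expect to be the main obstacle. The factor $t_1/\tau\le 2/\sqrt3$ is not present in the stated constant, so it has to be absorbed. I would first try to recover it from slack in Proposition \ref{two bump estimate}. Failing that, I would use slack in Proposition \ref{square root of Gaussian decay}: its proof actually gives $33/2$ rather than $17$. A third option is to avoid the Gaussian factor in front and write $s$ differently, for instance expanding $\sqrt{1-\g}$ through the series in that proposition's proof. Adding the two terms and taking the maximum of the two constants then gives \eqref{E:s-estimate}, up to that factor.

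\textbf{Membership in $W_0$.} The bound \eqref{E:s-estimate} with $N=2$ shows that $s$ is continuous and dominated by an integrable bracket function. The local-supremum argument from the end of the proof of Proposition \ref{square root of Gaussian decay} then gives $s\in W_0(\R)$.
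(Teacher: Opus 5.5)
You follow the paper's route exactly: the decomposition $s=\g_{(\sigma)}-\g_{(\sigma)}*\rho_{(\tau)}$ with $\sigma=t_0/\sqrt2$ and $\tau=\sqrt{t_1^2-t_0^2}$, followed by the three cited estimates. Your first term and your $W_0$ argument are fine. The gap is the constant in the second term. You leave it unresolved, and the one concrete remedy you offer fails numerically. Using $33/2$ in place of $17$, you would need $\tfrac{33}{2}\cdot\tfrac{2}{\sqrt3}=11\sqrt3\le 17$, but $11\sqrt3\approx 19.05$. Recovering slack from Proposition \ref{two bump estimate} would mean reproving that proposition, and your third option is not worked out. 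As written, the proposal proves \eqref{E:s-estimate} only with a larger constant.

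The extra factor is self-inflicted. You convert the Gaussian's bracket from scale $\sigma$ to scale $t_0$ before applying Proposition \ref{two bump estimate}, which costs $\sqrt2$. That conversion is unnecessary, because the output of that proposition depends only on the larger scale. By scaling, $\g_{(\sigma)}(x)=\sigma^{-1}\g(x/\sigma)\le C_{\ref{Gaussian bump decay},0,2}\langle x\rangle^2_{(\sigma)}$ with no extra factor; the $\sqrt2$ in your intermediate bound at scale $t_0/\sqrt2$ is spurious. Since $\tau\ge\sqrt3\,t_0>\sigma$, Proposition \ref{two bump estimate} gives $|\g_{(\sigma)}*\rho_{(\tau)}(x)|\le C_{\ref{Gaussian bump decay},0,2}C_{\ref{square root of Gaussian decay}}C_{\ref{two bump estimate},2,2}\langle x\rangle^2_{(\tau)}$. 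The only remaining loss is your factor $t_1/\tau\le 2/\sqrt3$. Since $2/\sqrt3<\sqrt2$, the $\sqrt2$ already present in $C_{\ref{diagonal square root},N}$ absorbs it, and this is how the paper closes the argument.
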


\begin{proof}
The function $s$ is real-valued because it is the inverse Fourier transform of an even real-valued function.
By algebra, we obtain,
\begin{equation}\label{dsr5}
    \widehat{s}(\xi) = \g(t_02^{-\tfrac12} \xi) \sqrt{1 - \g(r \xi)},
\end{equation}
where $r=(t_1^2-t_0^2)^{\frac 12}$.
Let $\rho$ be the function from Proposition \ref{square root of Gaussian decay} satisfying
\begin{equation}
    \widehat{\rho}(\xi)=1- \sqrt{1-\g(\xi)}.
\end{equation}
Then
\begin{equation}
\widehat{s}(\xi) = \g(\sigma\xi)(1-\widehat{\rho}(r\xi)),
\end{equation}
where $\sigma=t_0 2^{-\frac 12}$.

By Proposition \ref{square root of Gaussian decay},
\begin{equation}\label{dsr7}
    \rho(x)\le C_{\ref{square root of Gaussian decay}} \left<x\right>^2\, .
\end{equation}
It follows
\begin{equation}
    s=\g_{(\sigma)} - \g_{(\sigma)}*\rho_{(r)}.
\end{equation}
We use Proposition \ref{Gaussian bump decay} to obtain
\[|\g_{(\sigma)}(x)|\le C_{\ref{Gaussian bump decay},0,N} \langle x \rangle_{(\sigma)}^N\]
and Proposition \ref{two bump estimate}, using $r>\sigma$
by assumptions on $t_i$, $i=0,1$, to obtain
\[|(\g_{(\sigma)}*\rho_{(r)})(x)|\le C_{\ref{Gaussian bump decay},0,2}C_{\ref{square root of Gaussian decay}}C_{\ref{two bump estimate},2,2} \langle x \rangle_{(r)}^2.\]
 This gives
\begin{equation}
    |s(x)|\le \max(C_{\ref{Gaussian bump decay},0,N}, C_{\ref{Gaussian bump decay},0,2}C_{\ref{square root of Gaussian decay}}C_{\ref{two bump estimate},2,2}) (\left<x\right>_{(\sigma)}^{N} +  \left<x\right>_{(r)}^2)\, .
\end{equation}

Since $\sigma=t_0 2^{-\frac12}$ and $\frac{\sqrt{3}}{2}t_1\le r\le t_1$, we obtain~\eqref{E:s-estimate}. In particular, $s\in W_0(\R)$.
 \end{proof}

\begin{lemma}[constant $C_{\ref{diagonal square root},N}$ \auto]\label{constant diagonal square root}\uses{Gaussian bump decay,two bump estimate,square root of Gaussian decay,diagonal square root}\usesdefs{}
\lean{Auto.constantDiagonalSquareRoot}
\leanok
For every $N\ge1$,
\begin{equation}\label{constant diagonal square root bound}
C_{\ref{diagonal square root},N}
\le2^{1+\max((N+1)^2,12)}.
\end{equation}
Moreover,
\begin{equation}\label{auto:constant-diagonal-square-root-two}
C_{\ref{diagonal square root},2}=51\cdot2^6\sqrt2<2^{13}.
\end{equation}
\end{lemma}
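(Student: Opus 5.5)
We unfold the definition
\[
C_{\ref{diagonal square root},N}=\sqrt2\max\bigl(C_{\ref{Gaussian bump decay},0,N},\;C_{\ref{Gaussian bump decay},0,2}\,C_{\ref{square root of Gaussian decay}}\,C_{\ref{two bump estimate},2,2}\bigr)
\]
and bound each entry of the maximum separately, using only the explicit formulas stated earlier.

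\textbf{First entry.} With $m=0$, the formula for $C_{\ref{Gaussian bump decay},m,N}$ gives $C_{\ref{Gaussian bump decay},0,N}=(4(N+1))^{N/2}$. Since $4(N+1)\le 2^{2(N+1)}$, we get
\[
C_{\ref{Gaussian bump decay},0,N}\le 2^{N(N+1)}\le 2^{(N+1)^2}.
\]

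\textbf{Second entry.} Here $C_{\ref{Gaussian bump decay},0,2}=(4\cdot3)^{1}=12$ and $C_{\ref{square root of Gaussian decay}}=17$. From \eqref{tb0} with $n_0=n_1=2$,
\[
C_{\ref{two bump estimate},2,2}=2^{3}(1+1)=16.
\]
So the second entry equals $12\cdot17\cdot16=3264=51\cdot 2^6$. This is less than $2^{12}=4096$.

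\textbf{General bound.} Combining the two entries,
\[
C_{\ref{diagonal square root},N}\le \sqrt2\,\max\bigl(2^{(N+1)^2},2^{12}\bigr)\le 2^{1+\max((N+1)^2,12)},
\]
using $\sqrt2\le2$. This proves \eqref{constant diagonal square root bound}.

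\textbf{Case $N=2$.} The first entry is $C_{\ref{Gaussian bump decay},0,2}=12$, which is smaller than $3264$. The maximum is therefore $51\cdot2^6$, and
\[
C_{\ref{diagonal square root},2}=51\cdot2^6\sqrt2 .
\]
For the final inequality, $51\sqrt2<64$ holds because $51^2\cdot2=5202<4096\cdot\ldots$; more precisely $5202<64^2=4096$ is false, so we check directly: $51\sqrt2\approx72.1<128=2^7$. Hence
\[
51\cdot2^6\sqrt2<2^6\cdot2^7=2^{13}.
\]

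The only step needing care is reading off the constants exactly from their defining formulas. In particular, for $N=2$ one must note that $\min(n_0,n_1)-1=1$ in \eqref{tb0}. Everything else is elementary arithmetic.
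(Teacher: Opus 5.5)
Your proof is correct and follows the paper's argument exactly. You unfold the defining maximum and bound $C_{\ref{Gaussian bump decay},0,N}=(4(N+1))^{N/2}\le2^{(N+1)^2}$. You compute the second entry as $12\cdot17\cdot16=3264<2^{12}$ and observe that it dominates at $N=2$. The only blemish is the aborted claim $51\sqrt2<64$: it is false and should be deleted. The clean check is $(51\sqrt2)^2=5202<128^2$, which gives $51\cdot2^6\sqrt2<2^{13}$.
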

\begin{proof}[Proof \auto]
Using the defining maximum in Proposition \ref{diagonal square root}, and by \eqref{auto:Gaussian-bump-decay-constant},
\[
C_{\ref{Gaussian bump decay},0,N}
=(4(N+1))^{N/2}
\le2^{(N+1)^2}.
\]
Also
\[
C_{\ref{Gaussian bump decay},0,2}=12,
\qquad
C_{\ref{two bump estimate},2,2}=16,
\]
so
\[
C_{\ref{Gaussian bump decay},0,2}
C_{\ref{square root of Gaussian decay}}
C_{\ref{two bump estimate},2,2}
=3264<2^{12}.
\]
Substitution in the defining maximum proves the first estimate. At $N=2$, the second entry in the maximum is the larger one, which gives the stated exact value.
\end{proof}

\begin{proposition}[derivative of diagonal square root]\label{derivative of diagonal square root}\uses{square root of Gaussian decay,Elementary Gaussian properties,Gaussian bump decay,two bump estimate,diagonal square root}\usesdefs{gaussian,bracket bump}
\lean{Auto.derivativeDiagonalSquareRoot_differentiable,Auto.derivativeDiagonalSquareRoot_bound}
\leanok
Let $t_0,t_1,s$ be as in Proposition \ref{diagonal square root}.
Then for all $N\in\N$ with $N\ge1$ and all $x\in\R$:
\begin{equation}\label{E:s-derivative-estimate}
    |s'(x)|\le  C_{\ref{derivative of diagonal square root},N} ( t_0^{-1} \left<x\right>^N_{(t_0)}+ t_1^{-1}\left<x\right>^2_{(t_1)}),
\end{equation}
where
$C_{\ref{derivative of diagonal square root},N}
=
2\max\bigl(
C_{\ref{Gaussian bump decay},1,N},
C_{\ref{Gaussian bump decay},0,2}
C_{\ref{Gaussian bump decay},1,2}
C_{\ref{two bump estimate},2,2}
\bigr).$
\end{proposition}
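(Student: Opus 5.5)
We mirror the proof of Proposition \ref{diagonal square root}. With $\sigma=t_0 2^{-1/2}$, $r=(t_1^2-t_0^2)^{1/2}$ and $\rho$ from Proposition \ref{square root of Gaussian decay}, that proof gives
\[
s=\g_{(\sigma)}-\g_{(\sigma)}*\rho_{(r)}.
\]
Both terms are differentiable: $\g_{(\sigma)}$ is Schwartz by Proposition \ref{Elementary Gaussian properties}, and $\rho_{(r)}\in L^1$ because it is in $W_0$. We may therefore differentiate under the integral in the convolution, putting the derivative on the Gaussian factor. This yields
\[
s'=(\g_{(\sigma)})'-(\g_{(\sigma)})'*\rho_{(r)},\qquad (\g_{(\sigma)})'(x)=\sigma^{-1}(\g')_{(\sigma)}(x).
\]
Dominated convergence justifies the differentiation. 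The dominating function comes from the uniform bound on $\g'$ in Proposition \ref{Gaussian bump decay} together with $\rho\in L^1$.

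For the first term, Proposition \ref{Gaussian bump decay} with $m=1$ gives
\[
|(\g_{(\sigma)})'(x)|\le \sigma^{-1}C_{\ref{Gaussian bump decay},1,N}\langle x\rangle^N_{(\sigma)}.
\]
For the second term we bound the integrand pointwise in absolute value. We use $|(\g')_{(\sigma)}|\le C_{\ref{Gaussian bump decay},1,2}\langle\cdot\rangle^2_{(\sigma)}$ and $0\le\rho_{(r)}\le C_{\ref{square root of Gaussian decay}}\langle\cdot\rangle^2_{(r)}$. Proposition \ref{two bump estimate} with $s_0=r\ge s_1=\sigma$ and $n_0=n_1=2$ then gives
\[
|((\g_{(\sigma)})'*\rho_{(r)})(x)|\le \sigma^{-1}C_{\ref{Gaussian bump decay},1,2}C_{\ref{square root of Gaussian decay}}C_{\ref{two bump estimate},2,2}\langle x\rangle^2_{(r)}.
\]
The condition $r\ge\sigma$ holds because $2t_0\le t_1$.

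The main obstacle is the prefactor $\sigma^{-1}$ in the second term. It is comparable to $t_0^{-1}$, which is larger than the desired $t_1^{-1}$. I would remove it by moving the derivative onto $\rho$ instead. This requires a decay bound $|\rho'(x)|\lesssim\langle x\rangle^2$, obtained from the same $B$-decomposition as in Proposition \ref{square root of Gaussian decay}. Such a bound is not among the previously stated results, and the named constant in the statement does not suggest it.

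Instead I would exploit the Gaussian semigroup. Write $\g_{(\sigma)}=\g_{(\sigma')}*\g_{(\sigma'')}$ with $\sigma'^2+\sigma''^2=\sigma^2$, using \eqref{eq:gaussconv}. The idea is to absorb a Gaussian into $\rho_{(r)}$ at scale comparable to $t_1$. The cleanest version factors $\widehat s$ differently: since $\g(t_0\xi)-\g(t_1\xi)=\g(t_0\xi)(1-\g(r'\xi))$, the argument can be arranged so that the differentiated Gaussian has scale comparable to $t_1$ in the second term.

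This is where the named constant $C_{\ref{Gaussian bump decay},0,2}$ enters. One Gaussian factor is estimated with $m=0$ and the other, at the large scale, with $m=1$. Two applications of Proposition \ref{two bump estimate} then give the $t_1^{-1}\langle x\rangle^2_{(t_1)}$ term. Finally, Proposition \ref{compare brackets} converts $\sigma,r$ to $t_0,t_1$, at a cost absorbed into the factor $2$ of $C_{\ref{derivative of diagonal square root},N}$.
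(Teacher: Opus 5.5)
\textbf{There is a genuine gap: your proposed fix does not work, and it replaces the one that does.} You diagnose the obstacle correctly. With the derivative on $\g_{(\sigma)}$, the second term carries the prefactor $\sigma^{-1}\sim t_0^{-1}$. For $|x|\gg t_1$ the resulting bound is $\sigma^{-1}\langle x\rangle^2_{(r)}\approx t_1t_0^{-1}|x|^{-2}$, which exceeds the right-hand side of \eqref{E:s-derivative-estimate} by the unbounded factor $t_1/t_0$.

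The Gaussian-semigroup remedy does not close this. Splitting $\g_{(\sigma)}=\g_{(\sigma')}*\g_{(\sigma'')}$ via \eqref{eq:gaussconv} only produces Gaussians at scales $\le\sigma\approx t_0$. The "different" factorization $\g(t_0\xi)(1-\g(r'\xi))$ is the identity $\widehat s(\xi)=\g(\sigma\xi)(1-\widehat\rho(r\xi))$ you are already using. Nothing in your sketch yields a differentiated Gaussian at scale $\sim t_1$ unless $\rho$ itself is decomposed, and that needs exactly the information about $\rho$ you set aside. Your last paragraph is therefore not a proof. It would also not reproduce the stated constant, which has a single factor $C_{\ref{two bump estimate},2,2}$ and no $C_{\ref{square root of Gaussian decay}}$.

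\textbf{The missing step is the route you discarded: put the derivative on $\rho$.} The bound on $\rho'$ follows directly from the Gaussian-mixture representation $\rho=\sum_{k\ge1}a_k\g_{(\sqrt k)}$, with $a_k\ge0$ and $\sum_k a_k=1$. This representation is already established in the nonnegativity part of the proof of Proposition \ref{square root of Gaussian decay}, from the binomial series and Proposition \ref{Elementary Gaussian properties}. Differentiate termwise; the differentiated series converges uniformly by Proposition \ref{Gaussian bump decay}. Each term satisfies
\[
|\g_{(\sqrt k)}'(x)|\le C_{\ref{Gaussian bump decay},1,2}\,k^{-1}\bigl(1+k^{-1/2}|x|\bigr)^{-2}=C_{\ref{Gaussian bump decay},1,2}(\sqrt k+|x|)^{-2}\le C_{\ref{Gaussian bump decay},1,2}\langle x\rangle^2 .
\]
Hence $|\rho'|\le C_{\ref{Gaussian bump decay},1,2}\langle\cdot\rangle^2$ and $|(\rho_{(r)})'|\le C_{\ref{Gaussian bump decay},1,2}\,r^{-1}\langle\cdot\rangle^2_{(r)}$.

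Then write $s'=\g_{(\sigma)}'-\g_{(\sigma)}*(\rho_{(r)})'$. A single application of Proposition \ref{two bump estimate}, using $|\g_{(\sigma)}|\le C_{\ref{Gaussian bump decay},0,2}\langle\cdot\rangle^2_{(\sigma)}$ and $r\ge\sigma$, bounds the second term by
\[
C_{\ref{Gaussian bump decay},0,2}\,C_{\ref{Gaussian bump decay},1,2}\,C_{\ref{two bump estimate},2,2}\;r^{-1}\langle x\rangle^2_{(r)} .
\]
Finally, $\sigma=t_0/\sqrt2$ and $\tfrac{\sqrt3}{2}t_1\le r\le t_1$ convert the scales to $t_0,t_1$ at the cost of the factor $2$.

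Contrary to your remark, the named constant fits exactly this route. $C_{\ref{Gaussian bump decay},0,2}$ is the undifferentiated small-scale Gaussian, and $C_{\ref{Gaussian bump decay},1,2}$ is the bound for $\rho'$. The absence of $C_{\ref{square root of Gaussian decay}}$ indicates that neither the decay bound for $\rho$ nor the $B$-decomposition is used.
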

\begin{proof}[Proof \auto]
By algebra, we obtain
\[
\widehat{s}(\xi)
=
\g(t_02^{-\tfrac12}\xi)\sqrt{1-\g(r\xi)},
\]
where $r=(t_1^2-t_0^2)^{\frac12}$.
Let $\rho$ be the function from Proposition
\ref{square root of Gaussian decay} satisfying
\[
\widehat{\rho}(\xi)=1-\sqrt{1-\g(\xi)}.
\]
Then
\[
\widehat{s}(\xi)
=
\g(\sigma\xi)(1-\widehat{\rho}(r\xi)),
\]
where $\sigma=t_02^{-\frac12}$.

The power series identity
\[
1-\sqrt{1-z}
=
\sum_{k=1}^{\infty}
\tfrac{1}{2k-1}\binom{2k}{k}4^{-k}z^k
\]
has nonnegative coefficients whose sum is $1$. Since
\[
\|\g(\sqrt{k}\,\cdot)\|_1=k^{-\tfrac12},
\]
Proposition \ref{Elementary Gaussian properties} gives
\[
\rho(x)
=
\sum_{k=1}^{\infty}
\tfrac{1}{2k-1}\binom{2k}{k}4^{-k}
\g_{(\sqrt{k})}(x).
\]
The series and its differentiated series converge uniformly by
Proposition \ref{Gaussian bump decay}. Hence
\[
\rho'(x)
=
\sum_{k=1}^{\infty}
\tfrac{1}{2k-1}\binom{2k}{k}4^{-k}
\g_{(\sqrt{k})}'(x).
\]
By Proposition \ref{Gaussian bump decay},
\[
\left|\g_{(\sqrt{k})}'(x)\right|
\le
C_{\ref{Gaussian bump decay},1,2}
k^{-\tfrac12}\langle x\rangle_{(\sqrt{k})}^2
=
C_{\ref{Gaussian bump decay},1,2}
(\sqrt{k}+|x|)^{-2}
\le
C_{\ref{Gaussian bump decay},1,2}
\langle x\rangle^2.
\]
It follows that
\[
|\rho'(x)|
\le
C_{\ref{Gaussian bump decay},1,2}
\langle x\rangle^2.
\]
Consequently,
\[
|(\rho_{(r)})'(x)|
\le
C_{\ref{Gaussian bump decay},1,2}
r^{-1}\langle x\rangle_{(r)}^2.
\]

We have
\[
s=\g_{(\sigma)}-\g_{(\sigma)}*\rho_{(r)}
\]
and therefore
\[
s'=\g_{(\sigma)}'
-\g_{(\sigma)}*(\rho_{(r)})'.
\]
We use Proposition \ref{Gaussian bump decay} to obtain
\[
|\g_{(\sigma)}'(x)|
\le
C_{\ref{Gaussian bump decay},1,N}
\sigma^{-1}\langle x\rangle_{(\sigma)}^N
\]
and Proposition \ref{two bump estimate}, using $r>\sigma$ by
the assumptions on $t_i$, $i=0,1$, to obtain
\[
|(\g_{(\sigma)}*(\rho_{(r)})')(x)| \le{} C_{\ref{Gaussian bump decay},0,2} C_{\ref{Gaussian bump decay},1,2} C_{\ref{two bump estimate},2,2} \times r^{-1}\langle x\rangle_{(r)}^2.
\]
This gives
\[
|s'(x)| \le{} \max\bigl( C_{\ref{Gaussian bump decay},1,N}, C_{\ref{Gaussian bump decay},0,2} C_{\ref{Gaussian bump decay},1,2} C_{\ref{two bump estimate},2,2} \bigr) \times \bigl( \sigma^{-1}\langle x\rangle_{(\sigma)}^N + r^{-1}\langle x\rangle_{(r)}^2 \bigr).
\]
Since $\sigma=t_02^{-\frac12}$ and
$\frac{\sqrt{3}}{2}t_1\le r\le t_1$, we have
\[
\sigma^{-1}\langle x\rangle_{(\sigma)}^N
\le
2t_0^{-1}\langle x\rangle_{(t_0)}^N
\]
and
\[
r^{-1}\langle x\rangle_{(r)}^2
\le
2t_1^{-1}\langle x\rangle_{(t_1)}^2.
\]
The desired estimate follows.
\end{proof}

\begin{lemma}[constant $C_{\ref{derivative of diagonal square root},N}$ \auto]\label{constant derivative diagonal square root}\uses{Gaussian bump decay,two bump estimate,derivative of diagonal square root}\usesdefs{}
\lean{Auto.constantDerivativeDiagonalSquareRoot}
\leanok
For every $N\ge1$,
\begin{equation}\label{constant derivative diagonal square root bound}
C_{\ref{derivative of diagonal square root},N}
\le2^{1+\max((N+1)^2+4,15)}.
\end{equation}
Moreover,
\begin{equation}\label{auto:constant-derivative-diagonal-square-root-two}
C_{\ref{derivative of diagonal square root},2}=9\cdot2^{12}\sqrt2<2^{16}.
\end{equation}
\end{lemma}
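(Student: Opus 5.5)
The plan is to substitute the explicit constants from \eqref{auto:Gaussian-bump-decay-constant} and \eqref{tb0} into the defining maximum of $C_{\ref{derivative of diagonal square root},N}$ in Proposition \ref{derivative of diagonal square root}. Then I bound each of the two entries separately by a power of two. No analysis is needed beyond elementary arithmetic.

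\emph{Evaluating the constants.} From \eqref{auto:Gaussian-bump-decay-constant}, with $m=1$, we get $C_{\ref{Gaussian bump decay},1,N}=(128)^{1/2}(4(N+1))^{N/2}=8\sqrt2\,(4(N+1))^{N/2}$. With $m=0$ and $N=2$ it gives $C_{\ref{Gaussian bump decay},0,2}=12$, and with $m=1$ and $N=2$ it gives $C_{\ref{Gaussian bump decay},1,2}=96\sqrt2$. From \eqref{tb0}, $C_{\ref{two bump estimate},2,2}=2^{3}\cdot 2=16$. Hence the second entry of the maximum is
\[
C_{\ref{Gaussian bump decay},0,2}\,C_{\ref{Gaussian bump decay},1,2}\,C_{\ref{two bump estimate},2,2}=12\cdot 96\sqrt2\cdot 16=9\cdot 2^{11}\sqrt2 .
\]

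\emph{General bound.} For the first entry I use $(4(N+1))^{N/2}\le 2^{(N+1)^2}$. This holds because $\tfrac N2\bigl(2+\log_2(N+1)\bigr)\le N+\tfrac N2\,N\le (N+1)^2$, using $\log_2(N+1)\le N$. Combined with $8\sqrt2<2^4$, this gives $C_{\ref{Gaussian bump decay},1,N}\le 2^{(N+1)^2+4}$. For the second entry, $9\sqrt2<16$ gives $9\cdot2^{11}\sqrt2<2^{15}$. Multiplying the maximum by the prefactor $2$ yields \eqref{constant derivative diagonal square root bound}.

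\emph{The case $N=2$.} Here the first entry is $8\sqrt2\cdot 12=96\sqrt2$, which is smaller than $9\cdot2^{11}\sqrt2$. So the maximum equals the second entry, and $C_{\ref{derivative of diagonal square root},2}=2\cdot 9\cdot 2^{11}\sqrt2=9\cdot2^{12}\sqrt2$. Finally $9\sqrt2\approx12.73<16$ gives $9\cdot2^{12}\sqrt2<2^{16}$.

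The only step that needs care is the elementary inequality $(4(N+1))^{N/2}\le 2^{(N+1)^2}$ for all $N\ge1$, which is the same bound already used in Lemma \ref{constant diagonal square root}. The rest is bookkeeping of the $\sqrt2$ factors coming from $(64(m+1))^{m/2}$ with $m=1$.
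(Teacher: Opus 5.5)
Your proposal is correct and matches the paper's proof. Both substitute $C_{\ref{Gaussian bump decay},1,N}=8\sqrt2\,(4(N+1))^{N/2}\le 2^{(N+1)^2+4}$, $C_{\ref{Gaussian bump decay},0,2}=12$, $C_{\ref{Gaussian bump decay},1,2}=96\sqrt2$ and $C_{\ref{two bump estimate},2,2}=16$ into the defining maximum, bound the second entry $18432\sqrt2$ by $2^{15}$, and note that this entry dominates at $N=2$. Your version also writes out the justification of $(4(N+1))^{N/2}\le 2^{(N+1)^2}$ via $\log_2(N+1)\le N$, which the paper states without proof.
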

\begin{proof}[Proof \auto]
By \eqref{auto:Gaussian-bump-decay-constant},
\[
C_{\ref{Gaussian bump decay},1,N}
=8\sqrt2(4(N+1))^{N/2}
\le2^{(N+1)^2+4}.
\]
Furthermore,
\[
C_{\ref{Gaussian bump decay},0,2}=12,
\qquad
C_{\ref{Gaussian bump decay},1,2}=96\sqrt2,
\qquad
C_{\ref{two bump estimate},2,2}=16.
\]
The second entry in the defining maximum is therefore $18432\sqrt2<2^{15}$. Substitution proves the first estimate. At $N=2$, this entry is the larger one; multiplication by the outer factor $2$ gives the stated exact value.
\end{proof}

\begin{lemma}\label{L:gaussian-estimate}\uses{}\usesdefs{gaussian}
\lean{Auto.gaussianEstimate}
\leanok
Let $N\in \N$. Then for $\xi \in \R$, $|\xi|\leq 1$, we have
\begin{equation}\label{auto:inverse-Gaussian-derivative-bound}
|(\g^{-1})^{(N)}(\xi)| \leq C_{\ref{L:gaussian-estimate},N},
\end{equation}
where $C_{\ref{L:gaussian-estimate},N}=e^{\pi} \sum_{l=0}^{\lfloor N/2 \rfloor} \frac{N!}{l!(N-2l)!} 2^{N-2l} \pi^{N-l}$.
\end{lemma}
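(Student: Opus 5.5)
We write $\g^{-1}(\xi)=e^{\pi\xi^2}$ and derive an explicit closed formula for its $N$th derivative, mirroring the Hermite-type expansion used for $\g^{(m)}$ in the proof of Proposition \ref{Gaussian bump decay}. The claim is that for every $N\in\N$ and $\xi\in\R$,
\[
(\g^{-1})^{(N)}(\xi)=\Big(\sum_{l=0}^{\lfloor N/2\rfloor}\frac{N!}{l!(N-2l)!}2^{N-2l}\pi^{N-l}\xi^{N-2l}\Big)e^{\pi\xi^2}.
\]
The quickest route is via the generating function. We have $e^{\pi(\xi+h)^2}=e^{\pi\xi^2}e^{2\pi\xi h}e^{\pi h^2}$, and we expand both exponentials in $h$:
\[
e^{2\pi\xi h}e^{\pi h^2}=\sum_{a,l\ge0}\frac{(2\pi\xi)^a\pi^l}{a!\,l!}h^{a+2l}.
\]
Comparing the coefficient of $h^N/N!$ with Taylor's formula (the function is entire, so this is legitimate) gives the formula with $a=N-2l$. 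Note that $(2\pi\xi)^{N-2l}\pi^l=2^{N-2l}\pi^{N-l}\xi^{N-2l}$. Alternatively, to stay closer to the style of the earlier proof, we could prove the formula by induction on $N$ using $P_{N+1}=P_N'+2\pi\xi P_N$ and Pascal-type identities for the coefficients. We would pick whichever formalizes more easily; the induction avoids power series manipulations.

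Given the formula, the estimate is immediate. For $|\xi|\le1$ we have $|\xi|^{N-2l}\le1$ and $e^{\pi\xi^2}\le e^\pi$. All coefficients are nonnegative, so the triangle inequality yields
\[
|(\g^{-1})^{(N)}(\xi)|\le e^{\pi}\sum_{l=0}^{\lfloor N/2\rfloor}\frac{N!}{l!(N-2l)!}2^{N-2l}\pi^{N-l}=C_{\ref{L:gaussian-estimate},N}.
\]

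The main obstacle is the bookkeeping in establishing the closed formula. If we use induction, the coefficient recursion $c_{N+1,l}=(N-2l+2)c_{N,l-1}\cdot(\text{factor})+2\pi c_{N,l}$ must be matched carefully, including the boundary terms $l=0$ and $l=\lfloor (N+1)/2\rfloor$ and the parity cases. The generating-function argument sidesteps this by reducing everything to a Cauchy product of two exponential series, so we would try that first.
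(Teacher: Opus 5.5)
Your proposal is correct and is the paper's argument: the paper also writes down the closed formula $(\g^{-1})^{(N)}(\xi)=(\sqrt{\pi})^{N}e^{\pi\xi^2}N!\sum_{l=0}^{\lfloor N/2\rfloor}\tfrac{2^{N-2l}}{l!(N-2l)!}(\sqrt{\pi}\xi)^{N-2l}$, which is the same as yours, and then bounds each term using $|\xi|\le1$. The only difference is that the paper states the formula without proof, whereas your generating-function computation supplies a valid derivation of it.
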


\begin{proof}
We have $\g^{-1}(\xi)=e^{\pi\xi^2}$. The desired estimate then follows from the equality
\[
(\g^{-1})^{(N)}(\xi)
=(\sqrt{\pi})^{N} e^{\pi  \xi^2} N!\sum_{l=0}^{\lfloor N/2\rfloor} \tfrac{2^{N-2l}}{l!(N -2l)!} (\sqrt{\pi} \xi)^{N-2l}.
\]
\end{proof}

\begin{lemma}\label{L:gaussian-bump-estimate}\uses{L:gaussian-estimate}\usesdefs{gaussian,auto:Wiener-space-definition}
\lean{Auto.gaussianBumpEstimate}
\leanok
Let $c>0$, $N \in \mathbb N$ and assume that $\phi$ is a $W_0(\R)$ function such that $\widehat{\phi}$ is supported in $[-1,1]$, $\widehat{\phi}$ is $N$ times continuously differentiable and $\max_{0\leq m \leq N} \|\widehat{\phi}^{(m)}\|_{\infty} \leq c$.
For $\mu \in (0,1]$ and $\xi \in \R$, we define
\begin{equation}\label{auto:Gaussian-bump-quotient}
Q(\xi)=(\g(\mu\xi))^{-1}\widehat{\phi}(\xi).
\end{equation}
Then $Q$ is $N$ times continously differentiable and
\begin{equation}\label{E:estimate-Q}
\|(2\pi)^{-N}Q^{(N)}\|_{\infty} \leq c C_{\ref{L:gaussian-bump-estimate},N},
\end{equation}
where $C_{\ref{L:gaussian-bump-estimate},N}= (2\pi)^{-N}\sum_{l=0}^N \binom{N}{l} C_{\ref{L:gaussian-estimate},l}$.
\end{lemma}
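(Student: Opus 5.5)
The plan is to apply the Leibniz rule to the product $Q=h\cdot\widehat{\phi}$, where $h(\xi)=\g(\mu\xi)^{-1}=e^{\pi\mu^2\xi^2}$. Both factors are $N$ times continuously differentiable: $h$ is smooth, and $\widehat\phi$ is $C^N$ by assumption. Hence $Q$ is $C^N$ and
\[
Q^{(N)}(\xi)=\sum_{l=0}^N\binom{N}{l}h^{(l)}(\xi)\,\widehat{\phi}^{(N-l)}(\xi).
\]
Since $\widehat\phi$ and all its derivatives up to order $N$ vanish outside $[-1,1]$, only $|\xi|\le 1$ matters in the supremum. There each factor $\widehat\phi^{(N-l)}$ is bounded by $c$, by hypothesis.

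It remains to bound $h^{(l)}$ on $[-1,1]$. By the chain rule, $h^{(l)}(\xi)=\mu^l(\g^{-1})^{(l)}(\mu\xi)$. For $|\xi|\le1$ and $\mu\in(0,1]$ we have $|\mu\xi|\le1$, so Lemma \ref{L:gaussian-estimate} gives
\[
|h^{(l)}(\xi)|\le \mu^l C_{\ref{L:gaussian-estimate},l}\le C_{\ref{L:gaussian-estimate},l}.
\]
Combining the two bounds,
\[
\|Q^{(N)}\|_\infty\le c\sum_{l=0}^N\binom Nl C_{\ref{L:gaussian-estimate},l}.
\]
Multiplying by $(2\pi)^{-N}$ yields \eqref{E:estimate-Q} with exactly the stated constant.

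There is no real obstacle. The only point needing care is that the vanishing of the derivatives of $\widehat\phi$ outside $[-1,1]$ follows from the support assumption together with continuity of the derivatives; this is what lets us restrict the supremum to $|\xi|\le1$, where Lemma \ref{L:gaussian-estimate} applies.
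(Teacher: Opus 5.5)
Your proposal is correct and is the paper's proof, just written out in more detail. The paper likewise applies the Leibniz rule to $\g(\mu\cdot)^{-1}\widehat\phi$, uses the chain rule together with $\mu\le1$ to reduce to Lemma \ref{L:gaussian-estimate} at points with $|\mu\xi|\le1$, and uses the support of $\widehat\phi$ to restrict to $|\xi|\le1$. One small point: the vanishing of $\widehat\phi^{(m)}$ off $[-1,1]$ already follows because $\widehat\phi$ vanishes on the open set $\R\setminus[-1,1]$, so continuity of the derivatives is not needed there.
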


\begin{proof}

This follows from the Leibniz rule, Lemma~\ref{L:gaussian-estimate} and from the facts that $\mu \leq 1$ and $\widehat{\phi}$ is supported in the set where $|\xi|\leq 1$.
\end{proof}

\begin{lemma}[constant $C_{\ref{L:gaussian-bump-estimate},N}$ \auto]\label{constant gaussian bump estimate}\uses{L:gaussian-estimate,L:gaussian-bump-estimate}\usesdefs{}
\lean{Auto.constantGaussianBumpEstimate}
\leanok
For every $N\in\N$,
\begin{equation}\label{constant gaussian bump estimate bound}
C_{\ref{L:gaussian-bump-estimate},N}\le2^{8(N+1)^2}.
\end{equation}
For $0\le N\le3$,
\begin{equation}\label{auto:constant-gaussian-bump-estimate-small}
C_{\ref{L:gaussian-bump-estimate},0}<81,\qquad
C_{\ref{L:gaussian-bump-estimate},1}<95,\qquad
C_{\ref{L:gaussian-bump-estimate},2}<124,\qquad
C_{\ref{L:gaussian-bump-estimate},3}<176.
\end{equation}
\end{lemma}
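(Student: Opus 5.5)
The plan is to make the two nested sums that define the constant completely explicit, and then bound them crudely for the general estimate and exactly for the small cases. By Lemma~\ref{L:gaussian-estimate},
\[
C_{\ref{L:gaussian-estimate},l}=e^{\pi}\sum_{k=0}^{\lfloor l/2\rfloor}\frac{l!}{k!(l-2k)!}2^{l-2k}\pi^{l-k}.
\]
Substituting this into the definition in Lemma~\ref{L:gaussian-bump-estimate} gives
\[
C_{\ref{L:gaussian-bump-estimate},N}=(2\pi)^{-N}\sum_{l=0}^N\binom Nl C_{\ref{L:gaussian-estimate},l}.
\]

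\emph{General bound.} I will bound $C_{\ref{L:gaussian-estimate},l}$ termwise, using
\[
\frac{l!}{k!(l-2k)!}\le l!\le l^l,\qquad 2^{l-2k}\le 2^l,\qquad \pi^{l-k}\le 4^l,
\]
and noting that there are at most $l+1\le 2^l$ summands. This yields
\[
C_{\ref{L:gaussian-estimate},l}\le e^\pi\, 2^l\cdot 8^l\cdot l^l\le 2^5\cdot 2^{4l}\cdot 2^{l\log_2\max(l,1)}.
\]
Summing against the binomial coefficients costs at most $2^N$ times the $l=N$ term, and the prefactor $(2\pi)^{-N}$ is at most $1$. Hence
\[
C_{\ref{L:gaussian-bump-estimate},N}\le 2^{5+5N+N\log_2\max(N,1)}.
\]
It remains to check that this exponent is at most $8(N+1)^2$. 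Since $\log_2 N\le N$, we have $5+5N+N^2\le 8(N+1)^2$, so this holds with plenty of room.

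\emph{Small cases.} Here I will compute $C_{\ref{L:gaussian-estimate},l}$ exactly:
\[
C_{\ref{L:gaussian-estimate},0}=e^\pi,\quad
C_{\ref{L:gaussian-estimate},1}=2\pi e^\pi,\quad
C_{\ref{L:gaussian-estimate},2}=(4\pi^2+2\pi)e^\pi,\quad
C_{\ref{L:gaussian-estimate},3}=(8\pi^3+12\pi^2)e^\pi.
\]
For $l=3$, the $k=0$ term is $8\pi^3$ and the $k=1$ term is $\frac{6}{1}\cdot 2\cdot\pi^2=12\pi^2$.

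With these values, each case reduces to a short expression:
\begin{itemize}
\item $N=0$: the constant is $e^\pi\approx 23.14<81$.
\item $N=1$: the constant is $(2\pi)^{-1}e^\pi(1+2\pi)=e^\pi\bigl(1+\tfrac1{2\pi}\bigr)\approx 26.8$.
\item $N=2$: the constant is $(2\pi)^{-2}e^\pi(1+4\pi+4\pi^2+2\pi)$.
\item $N=3$: the constant is $(2\pi)^{-3}e^\pi\bigl(1+6\pi+3(4\pi^2+2\pi)+8\pi^3+12\pi^2\bigr)$.
\end{itemize}
For these I will use rigorous bounds $3.1415<\pi<3.1416$ and $e^\pi<23.15$. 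All four values then lie well below the stated thresholds; for instance, the $N=3$ value is about $23.14\cdot 502/248\approx 47<176$.

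The general bound only needs loose estimates, so the work is mainly bookkeeping. The main obstacle is making the numerical checks of the small cases rigorous, in a form that can be formalized, using rational enclosures of $\pi$ and $e^\pi$.
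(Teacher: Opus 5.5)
Your proposal is correct and is essentially the paper's argument. For the general bound the paper likewise bounds $C_{\ref{L:gaussian-estimate},l}$ termwise (with the cruder uniform bound $2^{7(N+1)^2}$ for $l\le N$), then uses $\sum_l\binom Nl=2^N$ and $(2\pi)^{-N}\le1$; for $N\le3$ it evaluates the same closed forms, e.g.\ $e^\pi\bigl(1+\tfrac3\pi+\tfrac3{2\pi^2}+\tfrac1{8\pi^3}\bigr)$ for $N=3$.

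Two small remarks. First, the thresholds $81,95,124,176$ are exactly what the crude bounds $e^\pi<3^4=81$ (from $e<3$, $\pi<4$) and $\pi>3$ give, so no sharp enclosure of $e^\pi$ is needed and your ``main obstacle'' disappears; in the general bound, using $e^\pi<2^7$ instead of $2^5$ also costs nothing. Second, your $N=3$ illustration has a harmless arithmetic slip: the numerator $1+12\pi+24\pi^2+8\pi^3$ is about $523.6$, so the value is near $48.8$ rather than $47$.
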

\begin{proof}[Proof \auto]
Since $\pi<4$ and $e<3$, we have $e^\pi<e^4<3^4=81<2^7$. For $0\le l\le N$, the defining formula in Lemma \ref{L:gaussian-estimate} then gives
\[
C_{\ref{L:gaussian-estimate},l}\le2^{7(N+1)^2}.
\]
Here the case $N=0$ is immediate, and for $N\ge1$ we use that the sum has at most $N+1$ terms, $l!\le(N+1)^N$, $2^l\le2^N$, and $\pi^l<2^{2N}$. The binomial coefficients in the definition of $C_{\ref{L:gaussian-bump-estimate},N}$ sum to $2^N$, and $(2\pi)^{-N}\le1$, which proves the first estimate.

The exact formulas for $N=0,1,2,3$ give, respectively,
\[
e^\pi,
\qquad
e^\pi\left(1+\frac1{2\pi}\right),
\qquad
e^\pi\left(1+\frac3{2\pi}+\frac1{4\pi^2}\right),
\qquad
e^\pi\left(1+\frac3\pi+\frac3{2\pi^2}+\frac1{8\pi^3}\right).
\]
Using $e^\pi<81$ and $\pi>3$, these four expressions are smaller than
\[
81,\qquad \tfrac{189}{2},\qquad \tfrac{495}{4},\qquad \tfrac{1407}{8},
\]
which gives the four displayed bounds.
\end{proof}

\begin{lemma}\label{L:derivative-estimate-for-G}\uses{}\usesdefs{}
\lean{Auto.derivativeEstimateForG}
\leanok
Let $N\in \N$, $\nu\in [-1,0)$ and let $H(x)=(1-x)^{\nu}-1$ for $x\in [0,1)$. Then for $x\in [0,e^{-3\pi/16}]$, we have
\begin{equation}\label{auto:exponential-composition-derivative-bound}
|H^{(N)}(x)| \leq C_{\ref{L:derivative-estimate-for-G},N},
\end{equation}
where $C_{\ref{L:derivative-estimate-for-G},N}= N! \left(1 - e^{-3\pi/16}\right)^{-(N+1)}$.
\end{lemma}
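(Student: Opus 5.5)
The plan is to compute the derivatives of $H$ in closed form and then bound them termwise. For $N=0$ the constant term $-1$ matters. For $N\ge1$ it disappears.

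\emph{Derivatives.} For $N\ge 1$, differentiating $(1-x)^{\nu}$ repeatedly gives
\[
H^{(N)}(x)=\Big(\prod_{k\in[N)}(\nu-k)\Big)(-1)^N(1-x)^{\nu-N}=\Big(\prod_{k\in[N)}(k-\nu)\Big)(1-x)^{\nu-N}.
\]
This is proved by a one-line induction on $N$, using the chain rule for $(1-x)^{a}$ with real exponent $a$ on $[0,1)$.

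\emph{Case $N\ge1$.} Since $\nu\in[-1,0)$, each factor satisfies $0<k-\nu\le k+1$, so
\[
0<\prod_{k\in[N)}(k-\nu)\le N!.
\]
Next, $x\le e^{-3\pi/16}<1$ gives $1-x\ge 1-e^{-3\pi/16}=:a\in(0,1)$. The exponent $\nu-N$ is negative and lies in $[-(N+1),-N)$. The function $y\mapsto y^{\nu-N}$ is decreasing on $(0,\infty)$, so $(1-x)^{\nu-N}\le a^{\nu-N}$. Because $a<1$ and $\nu-N\ge-(N+1)$, we also have $a^{\nu-N}\le a^{-(N+1)}$. Multiplying the two bounds yields $|H^{(N)}(x)|\le N!\,a^{-(N+1)}=C_{\ref{L:derivative-estimate-for-G},N}$.

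\emph{Case $N=0$.} Here $H(x)=(1-x)^\nu-1$. Since $1-x\in(0,1]$ and $\nu<0$, we get $H(x)\ge0$. On the other side, $(1-x)^{\nu}\le a^{\nu}\le a^{-1}$, so $0\le H(x)\le a^{-1}-1\le a^{-1}=C_{\ref{L:derivative-estimate-for-G},0}$.

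There is no real obstacle here. The only points needing care are the monotonicity in the exponent, which relies on $a<1$, and keeping track of signs in the product. In Lean, the main friction is likely to be the iterated derivative of a real power, which is probably best handled by an explicit induction using the derivative of $\mathrm{rpow}$.
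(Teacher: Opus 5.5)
Your proof is correct and is essentially the paper's argument. For $N\ge1$ both use the closed form $H^{(N)}(x)=\prod_{l\in[N)}(|\nu|+l)\,(1-x)^{\nu-N}$, bounded by $N!$ times $(1-e^{-3\pi/16})^{-(N+1)}$. Both treat $N=0$ separately: the paper writes $|H(x)|\le x/(1-x)=(1-x)^{-1}-1$, which is the same as your $a^{-1}-1$, and you spell out the monotonicity in the base and in the exponent a bit more explicitly than the paper does.
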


\begin{proof}
Assume that $N >0$. Then
\[
H^{(N)}(x)=\prod_{l\in [N)} (|\nu|+l)(1-x)^{\nu-N},
\]
and so
\[
|H^{(N)}(x)| \leq N! \left(1 - e^{-3\pi/16}\right)^{-(N+1)}.
\]
Also,
\[
|H(x)| \leq \tfrac{x}{1-x} \leq \left(1 - e^{-3\pi/16}\right)^{-1}.
\]
This yields the claim.
\end{proof}

\begin{lemma}\label{L:faa-di-bruno}\uses{L:derivative-estimate-for-G,Gaussian bump decay}\usesdefs{gaussian,bracket bump}
\lean{Auto.faaDiBruno}
\leanok
Let $N\in\N$, $\nu\in[-1,0)$, $t\ge\sqrt3/2$, and let $S(\xi)=(1-\g(t\xi))^\nu-1$ for $\xi\ne0$. For $|\xi|\ge1/2$,
\begin{equation}\label{E:S}
|S^{(N)}(\xi)|\le C_{\ref{L:faa-di-bruno},N}|\xi|^{-2},
\end{equation}
where
\begin{equation}\label{auto:Faa-di-Bruno-zero-constant}
C_{\ref{L:faa-di-bruno},0}
=2C_{\ref{L:derivative-estimate-for-G},0}C_{\ref{Gaussian bump decay},0,2},
\end{equation}
and, for $N\ge1$,
\begin{equation}\label{auto:Faa-di-Bruno-positive-constant}
C_{\ref{L:faa-di-bruno},N}
=2^{N+1}\sum_{p\in\Pi}C_{\ref{L:derivative-estimate-for-G},|p|}
\prod_{B\in p}C_{\ref{Gaussian bump decay},|B|,N+2}.
\end{equation}
Here $p$ runs through the partitions of $[N)$, $B$ through the blocks of $p$, $|p|$ is the number of blocks, and $|B|$ is the size of $B$.
\end{lemma}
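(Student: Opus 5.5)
We write $S=H\circ G$ with $G(\xi)=\g(t\xi)$ and $H(x)=(1-x)^\nu-1$ as in Lemma \ref{L:derivative-estimate-for-G}, and apply Fa\`a di Bruno's formula. The first step is to check that $G$ takes values in the range where Lemma \ref{L:derivative-estimate-for-G} applies. For $|\xi|\ge 1/2$ and $t\ge\sqrt3/2$ we have $t|\xi|\ge\sqrt3/4$, hence
\[
G(\xi)=e^{-\pi t^2\xi^2}\le e^{-3\pi/16}.
\]
So $G(\xi)\in[0,e^{-3\pi/16}]$, and $|H^{(k)}(G(\xi))|\le C_{\ref{L:derivative-estimate-for-G},k}$ for every $k$.

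For $N=0$ we use the mean value bound from the proof of Lemma \ref{L:derivative-estimate-for-G}: $|H(x)|\le C_{\ref{L:derivative-estimate-for-G},0}\,x$. (Indeed $|H(x)|\le x/(1-x)$, and the constant is $(1-e^{-3\pi/16})^{-1}$.) Then Proposition \ref{Gaussian bump decay} with $m=0$, $N=2$ gives
\[
|S(\xi)|\le C_{\ref{L:derivative-estimate-for-G},0}\,C_{\ref{Gaussian bump decay},0,2}\,(1+t|\xi|)^{-2}.
\]
Since $t\ge\sqrt3/2$ we have $1+t|\xi|\ge t|\xi|\ge \tfrac{\sqrt3}{2}|\xi|$, so $(1+t|\xi|)^{-2}\le \tfrac43|\xi|^{-2}\le 2|\xi|^{-2}$. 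This gives the constant in \eqref{auto:Faa-di-Bruno-zero-constant}.

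For $N\ge1$, Fa\`a di Bruno in set-partition form gives
\[
S^{(N)}(\xi)=\sum_{p\in\Pi}H^{(|p|)}(G(\xi))\prod_{B\in p}G^{(|B|)}(\xi),
\]
with $G^{(m)}(\xi)=t^m\g^{(m)}(t\xi)$. Each block has $|B|\ge1$, so we estimate one factor with decay and the others without. By Proposition \ref{Gaussian bump decay} with exponent $N+2$,
\[
|G^{(m)}(\xi)|\le C_{\ref{Gaussian bump decay},m,N+2}\,t^m(1+t|\xi|)^{-(N+2)}.
\]
The factor $(1+t|\xi|)^{-1}\le 1$ can be used freely in the remaining blocks. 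Using the decay in every block is harmless: the product of the powers $t^{|B|}$ is $t^N$. We keep a single factor $(1+t|\xi|)^{-(N+2)}$ and bound
\[
t^N(1+t|\xi|)^{-(N+2)}\le (t|\xi|)^{-2}|\xi|^{-N}\le \tfrac43|\xi|^{-2}\,2^N.
\]
Here we used $|\xi|\ge1/2$, which gives $|\xi|^{-N}\le 2^N$. This yields the bound $2^{N+1}\sum_p C_{\ref{L:derivative-estimate-for-G},|p|}\prod_B C_{\ref{Gaussian bump decay},|B|,N+2}\,|\xi|^{-2}$, as claimed.

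The main obstacle is bookkeeping in the powers of $t$, because $t$ is unbounded above. The fix is to use $t^N(1+t|\xi|)^{-N}\le|\xi|^{-N}$, which absorbs every power of $t$; this is why the decay exponent $N+2$ is chosen in Proposition \ref{Gaussian bump decay}.

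A technical point is justifying the set-partition form of Fa\`a di Bruno, together with smoothness of $S$ away from $0$. This is standard, since $G<1$ for $\xi\neq0$.
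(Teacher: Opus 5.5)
Your proposal is correct and is essentially the paper's proof. For $N=0$ you use the same bound $|H(x)|\le C_{\ref{L:derivative-estimate-for-G},0}\,x$ (from $H(x)\le x/(1-x)$), and for $N\ge1$ you use the set-partition Fa\`a di Bruno formula with decay exponent $N+2$ in every block, collapse to a single decay factor, and finish with $t^N(1+t|\xi|)^{-(N+2)}\le t^{-2}|\xi|^{-N-2}\le 2^{N+1}|\xi|^{-2}$, exactly as the paper does; the only cosmetic difference is that you differentiate $\g(t\,\cdot)$ directly via $G^{(m)}(\xi)=t^m\g^{(m)}(t\xi)$, whereas the paper first treats $t=1$ on $|\xi|\ge\sqrt3/4$ and then rescales.
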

\begin{proof}
Let $H$ be as in Lemma \ref{L:derivative-estimate-for-G}. If $N=0$, then, since $H(0)=0$, Lemmas \ref{L:derivative-estimate-for-G} and \ref{Gaussian bump decay} give
\[
|S(\xi)|
\le C_{\ref{L:derivative-estimate-for-G},0}\g(t\xi)
\le2C_{\ref{L:derivative-estimate-for-G},0}C_{\ref{Gaussian bump decay},0,2}|\xi|^{-2}.
\]

Assume $N\ge1$. For $t=1$, the Fa\`a di Bruno formula gives
\[
S^{(N)}(\xi)
=
\sum_{p\in\Pi}H^{(|p|)}(\g(\xi))
\prod_{B\in p}\g^{(|B|)}(\xi).
\]
For $|\xi|\ge\sqrt3/4$, Lemma \ref{L:derivative-estimate-for-G} and Proposition \ref{Gaussian bump decay} yield
\[
|S^{(N)}(\xi)|
\le
\sum_{p\in\Pi}C_{\ref{L:derivative-estimate-for-G},|p|}
\prod_{B\in p}C_{\ref{Gaussian bump decay},|B|,N+2}
\langle\xi\rangle^{N+2}.
\]
For general $t\ge\sqrt3/2$ and $|\xi|\ge1/2$, rescaling gives an additional factor $t^N$, and
\[
t^N\langle t\xi\rangle^{N+2}
\le t^{-2}|\xi|^{-N-2}
\le2^{N+1}|\xi|^{-2}.
\]
This proves \eqref{E:S}.
\end{proof}

\begin{lemma}[constant $C_{\ref{L:faa-di-bruno},N}$ \auto]\label{constant faa di bruno}\uses{L:faa-di-bruno,L:derivative-estimate-for-G,Gaussian bump decay}\usesdefs{}
\lean{Auto.constantFaaDiBruno}
\leanok
For every $N\in\N$,
\begin{equation}\label{constant faa di bruno bound}
C_{\ref{L:faa-di-bruno},N}\le2^{10(N+1)^3}.
\end{equation}
Moreover,
\begin{equation}\label{auto:constant-faa-di-bruno-small}
C_{\ref{L:faa-di-bruno},0}\le72,\qquad
C_{\ref{L:faa-di-bruno},1}<2^{15},\qquad
C_{\ref{L:faa-di-bruno},2}<2^{34},\qquad
C_{\ref{L:faa-di-bruno},3}<2^{58}.
\end{equation}
\end{lemma}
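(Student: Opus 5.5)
The plan is to bound each ingredient of the defining formula \eqref{auto:Faa-di-Bruno-positive-constant} separately and then count partitions. The case $N=0$ is direct. We have $C_{\ref{L:derivative-estimate-for-G},0}=(1-e^{-3\pi/16})^{-1}$, and since $e^{-3\pi/16}<e^{-1/2}<0.61$ this is below $2.57$. Also $C_{\ref{Gaussian bump decay},0,2}=12$. Hence $C_{\ref{L:faa-di-bruno},0}=24\,C_{\ref{L:derivative-estimate-for-G},0}$. The stated bound $72$ requires $C_{\ref{L:derivative-estimate-for-G},0}\le 3$, which follows from $e^{-3\pi/16}\le 2/3$, i.e.\ $3\pi/16\ge\ln(3/2)\approx0.405$; this holds because $3\pi/16>0.58$.

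For general $N\ge1$, the key steps are the following.

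\emph{Bound on $C_{\ref{L:derivative-estimate-for-G},k}$.} With $1-e^{-3\pi/16}\ge 1/3$ (established above), for $k=|p|\le N$ we get $C_{\ref{L:derivative-estimate-for-G},k}\le N!\,3^{N+1}$.

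\emph{Bound on the Gaussian constants.} For a block $B$ with $|B|=m\le N$ and second index $N+2$, the formula \eqref{auto:Gaussian-bump-decay-constant} gives $C_{\ref{Gaussian bump decay},m,N+2}=(64(m+1))^{m/2}(4(N+3))^{(N+2)/2}$. The product over blocks is handled as follows. The first factors multiply to at most $(64(N+1))^{N/2}$, using $\sum|B|=N$ and $m+1\le N+1$. The second factor occurs at most $N$ times, giving $(4(N+3))^{N(N+2)/2}$. This second factor dominates, and its logarithm in base $2$ is about $\tfrac{N(N+2)}{2}\log_2(4(N+3))$.

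\emph{Counting partitions.} The number of partitions of $[N)$ is the Bell number, which is at most $N^N$.

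Multiplying everything together, including the prefactor $2^{N+1}$, and taking $\log_2$, the exponent is
\[
O\bigl(N\log N+N^2\log N\bigr),
\]
which is comfortably below $10(N+1)^3$. To make this rigorous I would use $\log_2(4(N+3))\le 2+N+2$ and $N!,N^N\le 2^{N^2}$, reducing the claim to a polynomial inequality in $N$ that is checked termwise.

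For the small cases $N=1,2,3$ I would compute the sum exactly. For $N=1$ there is a single partition, giving $C=4\cdot C_{\ref{L:derivative-estimate-for-G},1}\,C_{\ref{Gaussian bump decay},1,3}$, where $C_{\ref{Gaussian bump decay},1,3}=8\sqrt2\cdot16^{3/2}=512\sqrt2$ and $C_{\ref{L:derivative-estimate-for-G},1}\le 9$ (more precisely about $6.6$); numerically this is below $2^{15}$. For $N=2$ and $N=3$ the partitions are enumerated by their block-size types ($\{2\},\{1,1\}$ and $\{3\},\{2,1\}\times3,\{1,1,1\}$), and the resulting products are evaluated numerically, with the sharper value $(1-e^{-3\pi/16})^{-1}<2.27$ used where needed.

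The main obstacle I expect is the tightness of these small-case numerics, notably $2^{34}$ and $2^{58}$. There, crude bounds such as $3^{N+1}$ may be too lossy, so a precise estimate of $e^{-3\pi/16}$ and exact Gaussian constants will be needed.
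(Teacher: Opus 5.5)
Your proposal is correct and follows essentially the same route as the paper. Both arguments bound $C_{\ref{L:derivative-estimate-for-G},q}\le q!\,3^{q+1}$ via $1-e^{-3\pi/16}\ge\frac13$, then combine this with the explicit Gaussian constants, at most $N$ blocks and at most $N^N$ partitions, and evaluate $N\le3$ by direct substitution. The obstacle you anticipate does not arise: the crude bound $(1-e^{-3\pi/16})^{-1}\le3$ already gives $C_{\ref{L:faa-di-bruno},2}\le 8852889600<2^{34}$ and $C_{\ref{L:faa-di-bruno},3}<255689986286813184<2^{58}$, so no sharper numerics are needed.
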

\begin{proof}[Proof \auto]
Since $\pi>3$, we have $3\pi/16>9/16$. The estimate $e^x\ge1+x$ for $x\ge0$ gives
\[
1-e^{-3\pi/16}>1-e^{-9/16}\ge\frac{9}{25}>\frac13.
\]
Consequently, Lemma \ref{L:derivative-estimate-for-G} gives
\[
C_{\ref{L:derivative-estimate-for-G},q}
\le q!3^{q+1}.
\]
Together with Proposition \ref{Gaussian bump decay}, the fact that a partition of $[N)$ has at most $N$ blocks, and the bound $N^N$ for the number of partitions, this proves the first estimate by substitution in the defining sum of Lemma \ref{L:faa-di-bruno}.

For $N=0$, the defining formula gives
\[
C_{\ref{L:faa-di-bruno},0}\le72.
\]
For $N=1$, it gives
\[
C_{\ref{L:faa-di-bruno},1}
\le18432\sqrt2
<27648
<2^{15},
\]
where $\sqrt2<3/2$. For $N=2$, direct substitution gives
\[
C_{\ref{L:faa-di-bruno},2}
\le8852889600
<2^{34}.
\]
For $N=3$, the defining sum is
\[
679477248\sqrt6
+31701690482688\sqrt2
+146081389744226304\sqrt3.
\]
Using $\sqrt6<5/2$, $\sqrt2<3/2$, and $\sqrt3<7/4$, this is smaller than
\[
255689986286813184<2^{58}.
\]
\end{proof}

\begin{lemma}\label{L:second-gaussian-estimate}\uses{L:gaussian-bump-estimate,L:faa-di-bruno}\usesdefs{gaussian,auto:Wiener-space-definition}
\lean{Auto.secondGaussianEstimate}
\leanok
Let $c>0$, $N\in \mathbb N$ and assume that $\phi$ is a $W_0(\R)$ function such that
$\widehat{\phi}$ is supported in $[-1,1]$ and equal to $1$ on $[-1/2,1/2]$ and $\widehat{\phi}$ is $N$ times continuously differentiable and $\max_{0\leq m \leq N} \|\widehat{\phi}^{(m)}\|_{\infty} \leq c$.
Assume that $0<\mu \leq \lambda \leq 1/2$, $t\geq \sqrt{3}/2$ and $\nu \in [-1,0)$. For $\xi \in \R$, we define
\begin{equation}\label{auto:second-Gaussian-multiplier}
R(\xi)=(\g(\mu\xi))^{-1}\left(\widehat{\phi}(\lambda\xi)-\widehat{\phi}(\xi)\right)\left((1-\g(t\xi))^{\nu} -1\right).
\end{equation}
Then $R$ is $N$ times continuously differentiable and
\begin{equation}\label{E:estimate-R}
(2\pi)^{-N} \max(\|R^{(N)}\|_1,\|R^{(N)}\|_\infty) \leq cC_{\ref{L:second-gaussian-estimate},N},
\end{equation}
where $C_{\ref{L:second-gaussian-estimate},N}=8\sum_{l=0}^N \binom{N}{l} (2\pi)^{l-N} C_{\ref{L:gaussian-bump-estimate},l} C_{\ref{L:faa-di-bruno},N-l}$.

\end{lemma}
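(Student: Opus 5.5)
The plan is to write $R=Q\cdot D\cdot S$, where $Q(\xi)=\g(\mu\xi)^{-1}\cdots$ is the Gaussian-quotient factor, $D(\xi)=\widehat\phi(\lambda\xi)-\widehat\phi(\xi)$, and $S(\xi)=(1-\g(t\xi))^\nu-1$ is the function of Lemma \ref{L:faa-di-bruno}. The key structural observation is about the support of $D$. Since $\widehat\phi=1$ on $[-1/2,1/2]$ and $\lambda\le1/2$, we have $D(\xi)=0$ for $|\xi|\le1/2$. Since $\widehat\phi$ is supported in $[-1,1]$, we also have $D(\xi)=0$ for $|\xi|\ge\lambda^{-1}$. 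Hence $R$ is supported in $\{1/2\le|\xi|\le\lambda^{-1}\}$. On this region $S$ is smooth, since $\xi\neq0$, and so $R$ is $N$ times continuously differentiable. Near $|\xi|=1/2$ all derivatives of $D$ vanish identically, so there is no issue at the boundary of the support.

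Next I will group the factors as $R=P\cdot S$ with $P(\xi)=\g(\mu\xi)^{-1}D(\xi)$, and apply the Leibniz rule:
\[
R^{(N)}=\sum_{l=0}^N\binom Nl P^{(l)}S^{(N-l)}.
\]
For $P^{(l)}$ I want to invoke Lemma \ref{L:gaussian-bump-estimate}. The natural move is to split
\[
P(\xi)=\g(\mu\xi)^{-1}\widehat\phi(\lambda\xi)-\g(\mu\xi)^{-1}\widehat\phi(\xi).
\]
The second term is exactly $Q$ of that lemma, which gives $\|(2\pi)^{-l}Q^{(l)}\|_\infty\le cC_{\ref{L:gaussian-bump-estimate},l}$. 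For the first term, write $\g(\mu\xi)^{-1}\widehat\phi(\lambda\xi)=\tilde Q(\lambda\xi)$ with $\tilde Q(\eta)=\g((\mu/\lambda)\eta)^{-1}\widehat\phi(\eta)$. Because $\mu/\lambda\in(0,1]$, the lemma applies to $\tilde Q$, and the chain rule contributes a factor $\lambda^l\le1$. So each term has $l$-th derivative bounded by $(2\pi)^l cC_{\ref{L:gaussian-bump-estimate},l}$, and therefore $\|P^{(l)}\|_\infty\le 2(2\pi)^l cC_{\ref{L:gaussian-bump-estimate},l}$. For $S^{(N-l)}$ on $|\xi|\ge1/2$, Lemma \ref{L:faa-di-bruno} (valid since $t\ge\sqrt3/2$ and $\nu\in[-1,0)$) gives $|S^{(N-l)}(\xi)|\le C_{\ref{L:faa-di-bruno},N-l}|\xi|^{-2}$.

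Combining, on the support of $R$,
\[
(2\pi)^{-N}|R^{(N)}(\xi)|\le 2c\sum_l\binom Nl(2\pi)^{l-N}C_{\ref{L:gaussian-bump-estimate},l}C_{\ref{L:faa-di-bruno},N-l}|\xi|^{-2},
\]
and $R^{(N)}=0$ off the support. The two norms then follow from this one pointwise bound.

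For the $L^\infty$ bound, use $|\xi|^{-2}\le4$ on $|\xi|\ge1/2$. This gives the factor $8$.

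For the $L^1$ bound, use $\int_{|\xi|\ge1/2}|\xi|^{-2}\,d\xi=4$, again giving the factor $8$. This is why the decay $|\xi|^{-2}$, rather than mere boundedness, is needed. It also explains why the estimate is uniform in $\lambda$ even though the support extends to $\lambda^{-1}$.

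The main point requiring care is verifying that Lemma \ref{L:gaussian-bump-estimate} applies to the rescaled term, that is, that $\mu/\lambda\le1$ and that the chain-rule factor $\lambda^l\le1$. Everything else is bookkeeping.
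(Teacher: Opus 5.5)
Your proof is correct and is essentially the paper's proof. Your $P$ is the paper's $\widetilde Q$, which it bounds by $\|(2\pi)^{-l}\widetilde Q^{(l)}\|_\infty\le 2cC_{\ref{L:gaussian-bump-estimate},l}$ using Lemma~\ref{L:gaussian-bump-estimate} and $\mu\le\lambda\le1$; the support $1/2\le|\xi|\le\lambda^{-1}$, Lemma~\ref{L:faa-di-bruno} and the Leibniz rule then give the same pointwise $|\xi|^{-2}$ bound, from which both norms follow. Your explicit rescaling $\eta=\lambda\xi$ (with $\mu/\lambda\le1$ and chain-rule factor $\lambda^l\le1$) and the computation $\sup_{|\xi|\ge1/2}|\xi|^{-2}=\int_{|\xi|\ge1/2}|\xi|^{-2}\,d\xi=4$ behind the factor $8$ just make explicit what the paper leaves implicit.
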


\begin{proof}
Let
\[
\widetilde{Q}(\xi)=(\g(\mu\xi))^{-1}\left(\widehat{\phi}(\lambda\xi)-\widehat{\phi}(\xi)\right).
\]

By Lemma~\ref{L:gaussian-bump-estimate} and by the fact that $\mu \leq \lambda \leq 1$, we obtain for any $l\in \mathbb N$, $l\leq N$,
\begin{equation}\label{E:q-tilde-estimate}
\|(2\pi)^{-l}\widetilde{Q}^{(l)}\|_{\infty} \leq 2c C_{\ref{L:gaussian-bump-estimate},l}.
\end{equation}
We observe that $R$ is supported where $1/2\le|\xi|\le1/\lambda$.
Using~\eqref{E:q-tilde-estimate}, Lemma~\ref{L:faa-di-bruno} and the Leibniz rule, we obtain for $|\xi|\geq 1/2$,
\[
|R^{(N)}(\xi)| \leq 2c\sum_{l=0}^N \binom{N}{l} (2\pi)^l C_{\ref{L:gaussian-bump-estimate},l} C_{\ref{L:faa-di-bruno},N-l} |\xi|^{-2}.
\]
This yields~\eqref{E:estimate-R}.
\end{proof}

\begin{lemma}[constant $C_{\ref{L:second-gaussian-estimate},N}$ \auto]\label{constant second gaussian estimate}\uses{L:gaussian-bump-estimate,L:faa-di-bruno,constant gaussian bump estimate,constant faa di bruno,L:second-gaussian-estimate}\usesdefs{}
\lean{Auto.constantSecondGaussianEstimate}
\leanok
For every $N\in\N$,
\begin{equation}\label{constant second gaussian estimate bound}
C_{\ref{L:second-gaussian-estimate},N}\le2^{20(N+1)^3}.
\end{equation}
Moreover,
\begin{equation}\label{auto:constant-second-gaussian-estimate-small}
C_{\ref{L:second-gaussian-estimate},0}<2^{16},
\qquad
C_{\ref{L:second-gaussian-estimate},1}<2^{22},
\end{equation}
\begin{equation}\label{auto:constant-second-gaussian-estimate-small-high}
C_{\ref{L:second-gaussian-estimate},2}<2^{38},
\qquad
C_{\ref{L:second-gaussian-estimate},3}<2^{60}.
\end{equation}
\end{lemma}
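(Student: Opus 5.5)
The plan is direct substitution into the defining formula
\[
C_{\ref{L:second-gaussian-estimate},N}=8\sum_{l=0}^N \binom{N}{l} (2\pi)^{l-N} C_{\ref{L:gaussian-bump-estimate},l}\, C_{\ref{L:faa-di-bruno},N-l},
\]
using the two preceding constant lemmas, Lemma \ref{constant gaussian bump estimate} and Lemma \ref{constant faa di bruno}. We treat the general bound and the four small cases separately.

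\emph{General bound.} For $0\le l\le N$ we have $(2\pi)^{l-N}\le 1$. Lemma \ref{constant gaussian bump estimate} gives $C_{\ref{L:gaussian-bump-estimate},l}\le 2^{8(l+1)^2}\le 2^{8(N+1)^2}$. Lemma \ref{constant faa di bruno} gives $C_{\ref{L:faa-di-bruno},N-l}\le 2^{10(N-l+1)^3}\le 2^{10(N+1)^3}$. The binomial coefficients sum to $2^N$, so
\[
C_{\ref{L:second-gaussian-estimate},N}\le 2^{3+N+8(N+1)^2+10(N+1)^3}.
\]
Put $M=N+1\ge 1$. It remains to check $3+(M-1)+8M^2\le 10M^3$, i.e. $2+M+8M^2\le 10M^3$, which is true for $M\ge1$ (at $M=1$ it reads $11\le 10$).

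That fails at $M=1$, so the case $N=0$ must be handled by the explicit small-case bound instead. For $M\ge2$ the inequality holds, since $10M^3\ge 20M^2\ge 8M^2+12M\ge 8M^2+M+2$.

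\emph{Small cases $N=0,1,2,3$.} Insert the explicit bounds $C_{\ref{L:gaussian-bump-estimate},l}<81,95,124,176$ for $l=0,\dots,3$ and $C_{\ref{L:faa-di-bruno},m}\le72,\ <2^{15},\ <2^{34},\ <2^{58}$ for $m=0,\dots,3$. Use $(2\pi)^{-1}<1/6$ (from $\pi>3$) on each factor $(2\pi)^{l-N}$.
\begin{itemize}
\item $N=0$: we get $8\cdot 81\cdot 72=46656<2^{16}$. This also covers the general bound at $N=0$, since $2^{16}\le 2^{20}$.
\item $N=1$: we get $8\bigl(\tfrac{1}{2\pi}\cdot81\cdot2^{15}+95\cdot72\bigr)$. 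The dominant term is about $8\cdot 13.5\cdot 2^{15}\approx 2^{21.8}$, and together with the small term $\approx 2^{15.7}$ the total stays below $2^{22}$.
\item $N=2$: the dominant term is $8\cdot(2\pi)^{-2}\cdot 81\cdot 2^{34}\le 8\cdot 81\cdot 2^{34}/36=18\cdot 2^{34}$, about $2^{38.2}$. This looks too large, so the crude bound $(2\pi)^{-2}<1/36$ must be refined to $(2\pi)^{-2}<0.0254$ (from $\pi>3.14$). That gives about $8\cdot 0.0254\cdot 81\cdot 2^{34}\approx 16.5\cdot 2^{34}$, which is still marginally above $2^{38}$. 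Including the remaining terms $8\bigl(2\cdot\tfrac{1}{2\pi}\cdot 95\cdot 2^{15}+124\cdot 72\bigr)$ only adds to this.
\item $N=3$: the dominant term is $8(2\pi)^{-3}\cdot 81\cdot 2^{58}\approx 2.6\cdot 2^{58}$, which is comfortably below $2^{60}$; the other terms are lower order.
\end{itemize}

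\emph{Main obstacle.} The tight case is $N=2$. The rounded bound $C_{\ref{L:faa-di-bruno},2}<2^{34}$ is too lossy to close it. I would go back to the exact value $C_{\ref{L:faa-di-bruno},2}\le 8852889600\approx 2^{33.04}$ from the proof of Lemma \ref{constant faa di bruno}, and to $C_{\ref{L:gaussian-bump-estimate},0}=e^\pi<23.15$ (from $e^\pi<23.2$) rather than the bound $81$. With these, the dominant term becomes about $8\cdot0.0254\cdot23.2\cdot2^{33.04}\approx 2^{35.2}$, and the total is safely below $2^{38}$.

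The same sharper value $e^\pi<23.2$ also tightens the other small cases. It can be proved by elementary means, for instance $e^\pi<e^{3.15}$ combined with an explicit series bound.
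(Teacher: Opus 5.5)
Your proposal is correct and is the paper's argument: substitute the bounds of Lemmas \ref{constant gaussian bump estimate} and \ref{constant faa di bruno} into the defining sum, and close the tight case $N=2$ with the unrounded value $C_{\ref{L:faa-di-bruno},2}\le 8852889600$ from the proof of Lemma \ref{constant faa di bruno}. The paper's figures $3040704$, $159359088384$, $767070519557262336$ are computed with exactly these unrounded intermediate values, and your explicit handling of $N=0$ in the general bound (via the small case $46656<2^{16}$) fills in a detail the paper leaves implicit. The extra refinement $e^\pi<23.2$ is unnecessary, since $C_{\ref{L:gaussian-bump-estimate},0}<81$, $(2\pi)^{-1}<1/6$ and $8852889600$ already give $C_{\ref{L:second-gaussian-estimate},2}<1.6\cdot 10^{11}<2^{38}$. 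In any case, your suggested proof of $e^\pi<23.2$ via $e^\pi<e^{3.15}$ cannot work because $e^{3.15}>23.3$, and ``$e^\pi<23.15$ (from $e^\pi<23.2$)'' is a non sequitur.
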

\begin{proof}[Proof \auto]
Using the defining sum in Lemma \ref{L:second-gaussian-estimate}, substitute the estimates from the defining formulas of Lemmas \ref{L:gaussian-bump-estimate} and \ref{L:faa-di-bruno}. Bounding the $N+1$ summands and the binomial coefficients gives the first estimate. For $N=0,1,2,3$, use the finite estimates in Lemmas \ref{constant gaussian bump estimate} and \ref{constant faa di bruno}, together with $(2\pi)^{-1}<1/6$. Direct substitution in the four finite sums gives, respectively,
\[
46656,\qquad 3040704,\qquad 159359088384,\qquad 767070519557262336,
\]
which proves the displayed estimates.
\end{proof}

\begin{lemma}\label{L:gaussian-bump-decomposition}\uses{L:gaussian-estimate,L:second-gaussian-estimate}\usesdefs{gaussian,auto:Wiener-space-definition}
\lean{Auto.gaussianBumpDecomposition}
\leanok
Let $\phi$ be a $W_0(\R)$ function such that
$\widehat{\phi}$ is supported in $[-1,1]$ and equal to $1$ on $[-1/2,1/2]$.
Let $\mu_\pm,\lambda_\pm\in\mathbb R$ be positive and satisfy
\begin{equation}\label{E:assumption-on-mu-lambda}
2\mu_-\le 2\lambda_-\le \lambda_+\le \mu_+,
\end{equation}
and let $\nu\in [-1,0)$.

Define a function $\rho$ by
\begin{equation}\label{E:definition-rho}
\rho = \mathcal F^{-1}\left(\xi \mapsto (\widehat{\phi}(\lambda_-\xi) - \widehat{\phi}(\lambda_+\xi)) (\g(\mu_- \xi)-\g(\mu_+\xi))^{\nu}\right).
\end{equation}
Also, for $\xi \in \R$ we define
\begin{equation}\label{auto:four-scale-Gaussian-first-multiplier}
    \varrho_0(\xi)=(\g(\mu_{-}|\nu|^{1/2}\xi))^{-1}\widehat{\phi}(\lambda_{-}\xi)
\end{equation}
\begin{equation}\label{auto:four-scale-Gaussian-second-multiplier}
    \varrho_1(\xi)=-(\g(\mu_{-}|\nu|^{1/2}\xi))^{-1}\widehat{\phi}(\lambda_{+}\xi)
\end{equation}
\begin{equation}\label{auto:four-scale-Gaussian-remainder-multiplier}
    \varrho_2(\xi)=(\g(\mu_{-}|\nu|^{1/2}\xi))^{-1}\left(\widehat{\phi}(\lambda_{-}\xi)-\widehat{\phi}(\lambda_{+}\xi)\right)\left((1-\g(t\xi))^{\nu} -1\right)
\end{equation}
with $t=\sqrt{\mu_{+}^2-\mu_{-}^2}$.
Then $\rho$ is a well-defined continuous function and
\begin{equation}\label{E:decomposition-rho}
\rho=\sum_{l\in [3)} \mathcal F^{-1} (\varrho_l).
\end{equation}
\end{lemma}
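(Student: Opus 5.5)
The plan is to establish the pointwise identity of multipliers
\[
m(\xi):=(\widehat{\phi}(\lambda_-\xi) - \widehat{\phi}(\lambda_+\xi)) (\g(\mu_- \xi)-\g(\mu_+\xi))^{\nu}=\varrho_0(\xi)+\varrho_1(\xi)+\varrho_2(\xi)
\]
for every $\xi$, then to show that each $\varrho_l$ is integrable, and finally to deduce \eqref{E:decomposition-rho} by linearity of $\mathcal F^{-1}$ on $L^1$. Along the way we obtain that $m\in L^1$, so $\rho$ is well defined and continuous.

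\textbf{Where $m$ lives and the algebraic identity.} First we locate the support. Since $\widehat\phi=1$ on $[-1/2,1/2]$, the difference $\widehat{\phi}(\lambda_-\xi)-\widehat{\phi}(\lambda_+\xi)$ vanishes for $|\xi|\le 1/(2\lambda_+)$; in particular it vanishes at $\xi=0$. We adopt the convention $m(0)=0$ there, because the factor $(\g(\mu_-\xi)-\g(\mu_+\xi))^\nu$ blows up at $0$ (recall $\nu<0$). The difference also vanishes for $|\xi|\ge 1/\lambda_-$. For $\xi\neq0$ the base is positive because $\mu_-<\mu_+$. We factor
\[
\g(\mu_-\xi)-\g(\mu_+\xi)=\g(\mu_-\xi)\bigl(1-\g(t\xi)\bigr),\qquad t^2=\mu_+^2-\mu_-^2,
\]
which holds because $\g(a)\g(b)=\g(\sqrt{a^2+b^2})$. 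Raising to the power $\nu$ gives
\[
\g(\mu_-\xi)^{\nu}=\g(\mu_-|\nu|^{1/2}\xi)^{-1},
\]
since $\g(\mu_-\xi)^\nu=e^{\pi|\nu|\mu_-^2\xi^2}$. Writing $(1-\g(t\xi))^\nu=1+\bigl((1-\g(t\xi))^\nu-1\bigr)$ and expanding the difference of the two $\widehat\phi$ terms gives exactly $\varrho_0+\varrho_1+\varrho_2$ for $\xi\ne0$. At $\xi=0$ the two sides also agree: $\varrho_2(0)=0$ by the support of the first factor, and $\varrho_0(0)+\varrho_1(0)=\widehat\phi(0)-\widehat\phi(0)=0$. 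Here $\varrho_2$ must be read with the same convention at $0$; near $0$ the singular factor is multiplied by an identically zero factor, so $\varrho_2$ extends continuously.

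\textbf{Integrability and conclusion.} Each $\varrho_l$ is bounded and compactly supported. Indeed, $\varrho_0$ and $\varrho_1$ are supported in $|\xi|\le 1/\lambda_\mp$, where the inverse Gaussian is bounded, and $\widehat\phi$ is bounded because $\phi\in W_0\subset L^1$. The function $\varrho_2$ is supported in $1/(2\lambda_+)\le|\xi|\le1/\lambda_-$, where $1-\g(t\xi)$ is bounded below by a positive constant, so it too is bounded. Continuity (or at least measurability) of each $\varrho_l$ follows from the continuity of $\widehat\phi$ as the Fourier transform of an $L^1$ function. Hence $m\in L^1$, $\rho=\mathcal F^{-1}m$ is a well-defined continuous function, and $\mathcal F^{-1}m=\sum_l\mathcal F^{-1}\varrho_l$ by linearity of the integral.

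The main obstacle is bookkeeping rather than mathematics. The delicate points are the behaviour at $\xi=0$, where the negative power is singular, and making sure that the support claims use $2\lambda_-\le\lambda_+$ only where needed; in fact only positivity is needed for this lemma. The hypotheses \eqref{E:assumption-on-mu-lambda} beyond positivity and $\mu_-<\mu_+$ matter for later estimates, not for this decomposition.
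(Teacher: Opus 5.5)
Your proposal is correct and follows the paper's route: the paper also shows that each $\varrho_l$ is continuous and compactly supported, then gets \eqref{E:decomposition-rho} from the factorization $\g(\mu_-\xi)-\g(\mu_+\xi)=\g(\mu_-\xi)(1-\g(t\xi))$ and linearity of $\mathcal F^{-1}$ on $L^1$. The only difference is that the paper obtains the boundedness of the $\varrho_l$ by citing Lemmas \ref{L:gaussian-estimate} and \ref{L:second-gaussian-estimate} with $N=0$, whereas you argue it directly and also state the convention at $\xi=0$, which is harmless because it changes the multiplier only on a null set.
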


\begin{proof}
Applying Lemmas~\ref{L:gaussian-estimate} and~\ref{L:second-gaussian-estimate} with $N=0$, we obtain that $\varrho_l$, $l\in [3)$, are continuous compactly supported functions. Therefore, their inverse Fourier transform is well-defined and continuous. The decomposition~\eqref{E:decomposition-rho} follows by algebra and elementary properties of $\g$. Thus, $\rho$ is a well defined continuous function.
Additionally, $\rho$ is real-valued because it is the inverse Fourier transform of an even real-valued function.
\end{proof}

\begin{proposition}[four scale Gaussian kernel estimate]\label{four scale Gaussian kernel}\uses{L:gaussian-bump-decomposition,lem:smoothdecay2,L:gaussian-bump-estimate,lem:smoothdecay,lem: min and bracket,L:second-gaussian-estimate}\usesdefs{gaussian,bracket bump,auto:Wiener-space-definition}
\lean{Auto.fourScaleGaussianKernel}
\leanok
Let $c>0$, $N\in \mathbb N$ with $N\geq 2$ and assume that $\phi$ is a $W_0(\R)$ function such that
$\widehat{\phi}$ is supported in $[-1,1]$ and equal to $1$ on $[-1/2,1/2]$ and $\widehat{\phi}$ is $N$ times continuously differentiable and $\max_{0\leq m \leq N} \|\widehat{\phi}^{(m)}\|_{\infty} \leq c$.
Let $\mu_\pm,\lambda_\pm\in\mathbb R$ be positive and satisfy~\eqref{E:assumption-on-mu-lambda}, let $\nu\in [-1,0)$ and let $\rho$ be defined by~\eqref{E:definition-rho}.
Then $\rho \in W_0(\R)$ and for every $x\in\mathbb{R}$,
\begin{equation}\label{auto:four-scale-Gaussian-kernel-bound} |\rho(x)| \le cC_{\ref{four scale Gaussian kernel},N} (\langle x\rangle^N_{(\lambda_-)} + \langle x\rangle^N_{(\lambda_+)}), \end{equation}
where
\begin{equation}\label{auto:four-scale-Gaussian-kernel-constant} C_{\ref{four scale Gaussian kernel},N} = 2C_{\ref{lem:smoothdecay2},N}\max(C_{\ref{L:gaussian-bump-estimate},0},C_{\ref{L:gaussian-bump-estimate},N}) +2^N\max(C_{\ref{L:second-gaussian-estimate},0},C_{\ref{L:second-gaussian-estimate},N}). \end{equation}

\end{proposition}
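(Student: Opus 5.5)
We start from the decomposition $\rho=\sum_{l\in[3)}\mathcal F^{-1}(\varrho_l)$ of Lemma \ref{L:gaussian-bump-decomposition}, with $t=\sqrt{\mu_+^2-\mu_-^2}$. We bound each of the three pieces by a constant multiple of one of the two brackets and then add. Throughout, we rescale so that each multiplier has Fourier support in $[-1,1]$. This lets us apply Lemma \ref{lem:smoothdecay2} to the first two pieces and Lemma \ref{lem:smoothdecay} together with Lemma \ref{lem: min and bracket} to the third.

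\emph{The pieces $\varrho_0,\varrho_1$.} Write $\varrho_0(\xi)=Q(\lambda_-\xi)$ with
\[Q(\eta)=\g(\mu\eta)^{-1}\widehat\phi(\eta),\qquad \mu=\mu_-|\nu|^{1/2}\lambda_-^{-1}.\]
Since $|\nu|\le1$ and $\mu_-\le\lambda_-$, we have $\mu\in(0,1]$. Lemma \ref{L:gaussian-bump-estimate} then gives
\[\|Q\|_\infty\le cC_{\ref{L:gaussian-bump-estimate},0},\qquad (2\pi)^{-N}\|Q^{(N)}\|_\infty\le cC_{\ref{L:gaussian-bump-estimate},N},\]
and $|\supp Q|\le2$. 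Lemma \ref{lem:smoothdecay2} yields
\[|\mathcal F^{-1}Q(y)|\le 2C_{\ref{lem:smoothdecay2},N}\,c\max(C_{\ref{L:gaussian-bump-estimate},0},C_{\ref{L:gaussian-bump-estimate},N})\langle y\rangle^N.\]
Since $\mathcal F^{-1}\varrho_0=(\mathcal F^{-1}Q)_{(\lambda_-)}$, this gives the $\langle x\rangle^N_{(\lambda_-)}$ term. For $\varrho_1$ we proceed in the same way with $\lambda_+$ in place of $\lambda_-$. Now $\mu=\mu_-|\nu|^{1/2}/\lambda_+\le1$, because $\mu_-\le\lambda_-\le\lambda_+$. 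This gives the $\langle x\rangle^N_{(\lambda_+)}$ term. Together the two pieces account for the first summand of $C_{\ref{four scale Gaussian kernel},N}$ (a factor $2$ appears from the support length).

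\emph{The remainder $\varrho_2$, the main obstacle.} Rescale by $\lambda_+$ and set $R(\eta)=\varrho_2(\eta/\lambda_+)$. This matches Lemma \ref{L:second-gaussian-estimate} with parameters
\[\mu'=\mu_-|\nu|^{1/2}/\lambda_+,\qquad \lambda'=\lambda_-/\lambda_+\le1/2,\qquad t'=t/\lambda_+.\]
We need the following conditions. First, $\mu'\le\lambda'$, which follows from $\mu_-\le\lambda_-$. Second, $t'\ge\sqrt3/2$, which holds since $\mu_+\ge\lambda_+$ and $\mu_-\le\mu_+/2$ give $t\ge\frac{\sqrt3}{2}\mu_+$. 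The order of $\widehat\phi(\lambda'\eta)-\widehat\phi(\eta)$ matches up to sign.

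The lemma gives $L^1$ and $L^\infty$ control of $R$ and $(2\pi)^{-N}R^{(N)}$ by $cC_{\ref{L:second-gaussian-estimate},\cdot}$. The support of $R$ is not of bounded size, since it extends to $|\eta|\le1/\lambda'$, so Lemma \ref{lem:smoothdecay2} does not apply. Instead we use Lemma \ref{lem:smoothdecay}, whose estimate needs only the $L^1$ norms. This is exactly why the $L^1$ bounds appear in Lemma \ref{L:second-gaussian-estimate}.

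Applying Lemma \ref{lem:smoothdecay}, then Lemma \ref{lem: min and bracket} to convert $\min(1,|y|^{-N})$ into $2^N\langle y\rangle^N$, we get
\[|\mathcal F^{-1}R(y)|\le c\,2^N\max(C_{\ref{L:second-gaussian-estimate},0},C_{\ref{L:second-gaussian-estimate},N})\langle y\rangle^N.\]
Rescaling back gives a multiple of $\langle x\rangle^N_{(\lambda_+)}$. The delicate points are checking the hypotheses of Lemma \ref{lem:smoothdecay}, namely $N$-fold continuous differentiability and compact support; here $R$ vanishes near $0$ because $\widehat\phi(\lambda'\cdot)-\widehat\phi$ vanishes on $[-1/2,1/2]$.

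\emph{Conclusion.} Summing the three bounds gives the estimate with the stated constant. Membership $\rho\in W_0(\R)$ follows from the continuity established in Lemma \ref{L:gaussian-bump-decomposition} together with the integrable bracket majorant with $N\ge2$, by the same local-supremum argument used in Proposition \ref{square root of Gaussian decay}.
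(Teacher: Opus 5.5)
Your proposal is correct and follows the paper's proof essentially verbatim. You use the same three-term decomposition from Lemma \ref{L:gaussian-bump-decomposition}. You rescale $\varrho_0,\varrho_1$ by $\lambda_\mp$ and treat them with Lemma \ref{lem:smoothdecay2} and Lemma \ref{L:gaussian-bump-estimate}, where the support length $2$ gives the first summand of the constant. You rescale $\varrho_2$ by $\lambda_+$ and treat it with Lemma \ref{lem:smoothdecay}, Lemma \ref{lem: min and bracket} and Lemma \ref{L:second-gaussian-estimate}, using the $L^1$ bounds because the support length $1/\lambda'$ is unbounded. Your parameter checks $\mu'\le\lambda'\le 1/2$ and $t'\ge\sqrt3/2$ are exactly what is needed. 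One small remark: $\widehat\phi(\lambda'\eta)-\widehat\phi(\eta)$ matches Lemma \ref{L:second-gaussian-estimate} exactly, not merely up to sign.
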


\begin{proof}
Let $\varrho_l$, $l\in [3)$, be as in Lemma~\ref{L:gaussian-bump-decomposition}.
Rescaling the functions $\varrho_0$ and $\varrho_1$ so that $\lambda_{-}=1$ or $\lambda_{+}=1$, respectively, applying Lemmas~\ref{lem:smoothdecay2} and~\ref{L:gaussian-bump-estimate} and then rescaling back, we obtain
\begin{equation*}
     |\mathcal{F}^{-1}(\varrho_0)(x)|\le 2cC_{\ref{lem:smoothdecay2},N}\max(C_{\ref{L:gaussian-bump-estimate},0},C_{\ref{L:gaussian-bump-estimate},N}) \left<x\right>_{(\lambda_{-})}^N\,
\end{equation*}
and
\begin{equation*}
     |\mathcal{F}^{-1}(\varrho_1)(x)|\le 2cC_{\ref{lem:smoothdecay2},N}\max(C_{\ref{L:gaussian-bump-estimate},0},C_{\ref{L:gaussian-bump-estimate},N}) \left<x\right>_{(\lambda_{+})}^N\,.
\end{equation*}
Also, rescaling the function $\varrho_2$ so that $\lambda_{+}=1$, applying Lemmas~\ref{lem:smoothdecay}, \ref{lem: min and bracket} and~\ref{L:second-gaussian-estimate} and then rescaling back, we obtain
\begin{equation*}
     |\mathcal{F}^{-1}(\varrho_2)(x)|\le 2^Nc\max(C_{\ref{L:second-gaussian-estimate},0},C_{\ref{L:second-gaussian-estimate},N}) \left<x\right>_{(\lambda_{+})}^N.
\end{equation*}
Applying the decomposition~\eqref{E:decomposition-rho}, we get the desired estimate. In particular, since $N\geq 2$, we get $\rho \in W_0(\R)$.
\end{proof}

\begin{lemma}[constant $C_{\ref{four scale Gaussian kernel},N}$ \auto]\label{constant four scale Gaussian kernel}\uses{lem:smoothdecay2,L:gaussian-bump-estimate,L:second-gaussian-estimate,constant gaussian bump estimate,constant second gaussian estimate,four scale Gaussian kernel}\usesdefs{}
\lean{Auto.constantFourScaleGaussianKernel}
\leanok
For every $N\in\N$,
\begin{equation}\label{constant four scale Gaussian kernel bound}
C_{\ref{four scale Gaussian kernel},N}\le2^{25(N+1)^3}.
\end{equation}
Moreover,
\begin{equation}\label{auto:constant-four-scale-Gaussian-kernel-small}
C_{\ref{four scale Gaussian kernel},2}<2^{40},
\qquad
C_{\ref{four scale Gaussian kernel},3}<2^{63}.
\end{equation}
\end{lemma}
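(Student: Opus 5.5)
The bound is obtained by substituting the explicit estimates already established into the defining formula \eqref{auto:four-scale-Gaussian-kernel-constant}. There $C_{\ref{lem:smoothdecay2},N}=2^N$, so
\[
C_{\ref{four scale Gaussian kernel},N}=2^{N+1}\max(C_{\ref{L:gaussian-bump-estimate},0},C_{\ref{L:gaussian-bump-estimate},N})+2^N\max(C_{\ref{L:second-gaussian-estimate},0},C_{\ref{L:second-gaussian-estimate},N}).
\]

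\emph{General bound.} By Lemma \ref{constant gaussian bump estimate}, $C_{\ref{L:gaussian-bump-estimate},0}<81<2^{8}$ and $C_{\ref{L:gaussian-bump-estimate},N}\le 2^{8(N+1)^2}$. Hence the first maximum is at most $2^{8(N+1)^2}$.

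By Lemma \ref{constant second gaussian estimate}, $C_{\ref{L:second-gaussian-estimate},0}<2^{16}\le 2^{20(N+1)^3}$ and $C_{\ref{L:second-gaussian-estimate},N}\le 2^{20(N+1)^3}$. Hence the second maximum is at most $2^{20(N+1)^3}$.

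Since $8(N+1)^2\le 20(N+1)^3$, both summands are bounded by $2^{N+1+20(N+1)^3}$. Therefore
\[
C_{\ref{four scale Gaussian kernel},N}\le 2^{N+2+20(N+1)^3}.
\]
Finally, $N+2\le 2(N+1)\le 5(N+1)^3$ for every $N\in\N$, which gives $C_{\ref{four scale Gaussian kernel},N}\le 2^{25(N+1)^3}$.

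\emph{The case $N=2$.} The first summand is $8\max(81,124)=992$, using Lemma \ref{constant gaussian bump estimate}. For the second summand, the crude bound $C_{\ref{L:second-gaussian-estimate},2}<2^{38}$ would give $4\cdot 2^{38}=2^{40}$ with no room left. Instead we use the explicit number $159359088384$ that bounds $C_{\ref{L:second-gaussian-estimate},2}$ in the proof of Lemma \ref{constant second gaussian estimate}. This gives a second summand at most $637436353536$. Since $992+637436353536<2^{40}=1099511627776$, we get $C_{\ref{four scale Gaussian kernel},2}<2^{40}$.

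\emph{The case $N=3$.} The first summand is $16\max(81,176)=2816$. For the second summand we again use the explicit bound $767070519557262336$ for $C_{\ref{L:second-gaussian-estimate},3}$ from the same proof. This gives a second summand at most $8\cdot 767070519557262336=6136564156458098688$. Adding $2816$ leaves the total well below $2^{63}\approx 9.22\cdot 10^{18}$, so $C_{\ref{four scale Gaussian kernel},3}<2^{63}$.

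The only step needing care is in the small cases. There one must use the explicit numerical bounds from the proof of Lemma \ref{constant second gaussian estimate}, rather than the rounded powers of two in its statement, because the rounded bounds leave no slack.
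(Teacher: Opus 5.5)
Your proposal is correct and follows the paper's argument: substitute $C_{\ref{lem:smoothdecay2},N}=2^N$ and the bounds from Lemmas \ref{constant gaussian bump estimate} and \ref{constant second gaussian estimate} into \eqref{auto:four-scale-Gaussian-kernel-constant}. For $N=2,3$ you use the explicit numbers $159359088384$ and $767070519557262336$ from the proof of the latter lemma, which reproduces exactly the paper's totals $637436354528$ and $6136564156458101504$. The only nit is that the first summands are bounded by $992$ and $2816$, not equal to them.
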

\begin{proof}[Proof \auto]
Using the defining formula in Proposition \ref{four scale Gaussian kernel}, substitute $C_{\ref{lem:smoothdecay2},N}=2^N$ and the defining estimates from Lemmas \ref{L:gaussian-bump-estimate} and \ref{L:second-gaussian-estimate}. This proves the first estimate. For $N=2$, Lemmas \ref{constant gaussian bump estimate} and \ref{constant second gaussian estimate} give
\[
C_{\ref{L:gaussian-bump-estimate},2}<124,
\qquad
C_{\ref{L:second-gaussian-estimate},2}<159359088384.
\]
For $N=3$, use
\[
C_{\ref{L:gaussian-bump-estimate},3}<176,
\qquad
C_{\ref{L:second-gaussian-estimate},3}<767070519557262336.
\]
Substitution in \eqref{auto:four-scale-Gaussian-kernel-constant} gives the two upper bounds
\[
637436354528
\qquad\text{and}\qquad
6136564156458101504,
\]
respectively.
\end{proof}

\begin{proposition}[mean value four scale Gaussian kernel estimate]\label{mean four scale Gaussian kernel}\uses{L:gaussian-bump-decomposition,mean value bump estimate 2,L:gaussian-bump-estimate,L:second-gaussian-estimate}\usesdefs{gaussian,bracket bump,auto:Wiener-space-definition}
\lean{Auto.meanFourScaleGaussianKernel}
\leanok
Let $c>0$, $N\in \mathbb N$ with $N\geq 2$ and assume that $\phi$ is a $W_0(\R)$ function such that
$\widehat{\phi}$ is supported in $[-1,1]$ and equal to $1$ on $[-1/2,1/2]$ and $\widehat{\phi}$ is $N$ times continuously differentiable and $\max_{0\leq m \leq N} \|\widehat{\phi}^{(m)}\|_{\infty} \leq c$.
Let $\mu_\pm,\lambda_\pm\in\mathbb R$ be positive and satisfy~\eqref{E:assumption-on-mu-lambda}. Assume, in addition, that $\lambda_{+}=2\lambda_{-}$.
Let $\nu\in [-1,0)$ and let $\rho$ be defined by~\eqref{E:definition-rho}.
Then for all $x,y \in\mathbb{R}$,
\begin{equation}\label{E:mean-value}
|\rho(x+y)-\rho(x)| \le c C_{\ref{mean four scale Gaussian kernel},N} \min(1,2\pi \lambda_{+}^{-1}|y|) (\langle x+y\rangle^N_{(\lambda_+)}+
\langle x\rangle^N_{(\lambda_+)}),
\end{equation}
where
\begin{equation}\label{auto:mean-four-scale-Gaussian-constant} C_{\ref{mean four scale Gaussian kernel},N} = C_{\ref{mean value bump estimate 2},N}\left(5\max_{0\leq l \leq N} C_{\ref{L:gaussian-bump-estimate},l} +4\max_{0\leq l \leq N} 2^lC_{\ref{L:second-gaussian-estimate},l}\right).\end{equation}

\end{proposition}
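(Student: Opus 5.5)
We use the decomposition of Lemma \ref{L:gaussian-bump-decomposition},
\[
\rho=\mathcal F^{-1}(\varrho_0)+\mathcal F^{-1}(\varrho_1)+\mathcal F^{-1}(\varrho_2),
\]
and prove the mean value estimate for each of the three pieces. Each piece is rescaled so that its Fourier transform is supported in $[-1,1]$, and then Proposition \ref{mean value bump estimate 2} is applied. Since $\lambda_+=2\lambda_-$, the brackets $\langle\cdot\rangle^N_{(\lambda_-)}$ and $\langle\cdot\rangle^N_{(\lambda_+)}$ are comparable. Indeed,
\[
\langle x\rangle^N_{(\lambda_-)}=\lambda_-^{-1}(1+|x|/\lambda_-)^{-N}\le 2\lambda_+^{-1}(1+|x|/\lambda_+)^{-N}=2\langle x\rangle^N_{(\lambda_+)}.
\]
Similarly, $\min(1,2\pi\lambda_-^{-1}|y|)\le 2\min(1,2\pi\lambda_+^{-1}|y|)$. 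So it suffices to produce a bound at whichever scale is natural for each piece. These comparison factors $2$ and $4$ are what I expect to account for the coefficients $5$ and $4$ in the constant.

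\textbf{Pieces $\varrho_0$ and $\varrho_1$.} For $\varrho_0$, set $\psi_0(x)=\lambda_-\mathcal F^{-1}(\varrho_0)(\lambda_-x)$, so that
\[
\widehat{\psi_0}(\xi)=\varrho_0(\xi/\lambda_-)=\g(\mu_-|\nu|^{1/2}\lambda_-^{-1}\xi)^{-1}\widehat\phi(\xi).
\]
This has support in $[-1,1]$. Moreover $\mu_-|\nu|^{1/2}/\lambda_-\le 1$, so Lemma \ref{L:gaussian-bump-estimate} with this $\mu$ gives
\[
\|(2\pi)^{-l}\widehat{\psi_0}^{(l)}\|_\infty\le c\,C_{\ref{L:gaussian-bump-estimate},l}\quad\text{for each } l\le N.
\]
Proposition \ref{mean value bump estimate 2} then bounds $|\psi_0(x+y)-\psi_0(x)|$ by
\[
C_{\ref{mean value bump estimate 2},N}\,c\max_l C_{\ref{L:gaussian-bump-estimate},l}\,\min(1,2\pi|y|)\,(\langle x+y\rangle^N+\langle x\rangle^N).
\]
Rescaling back gives the same bound with scale $\lambda_-$, and the comparison above converts it to scale $\lambda_+$. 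For $\varrho_1$ we rescale by $\lambda_+$ directly; the Gaussian parameter is now $\mu_-|\nu|^{1/2}/\lambda_+\le 1/2$, and the argument is identical.

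\textbf{Piece $\varrho_2$.} Rescale by $\lambda_+$, so that $\widehat{\psi_2}(\xi)=\varrho_2(\xi/\lambda_+)$ is supported in $[-1,1]$. Written in the form of Lemma \ref{L:second-gaussian-estimate}, this is
\[
\g(\mu\xi)^{-1}\bigl(\widehat\phi(\lambda\xi)-\widehat\phi(\xi)\bigr)\bigl((1-\g(t'\xi))^\nu-1\bigr),
\]
with
\[
\lambda=\lambda_-/\lambda_+=1/2,\qquad \mu=\mu_-|\nu|^{1/2}/\lambda_+\le 1/2,\qquad t'=t/\lambda_+.
\]
We need $t'\ge\sqrt3/2$. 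This holds because $\mu_+\ge\lambda_+$ and $\mu_-\le\lambda_-=\lambda_+/2$ give $t^2=\mu_+^2-\mu_-^2\ge\tfrac34\lambda_+^2$. We also need $\mu\le\lambda$, which holds since $|\nu|\le 1$ gives $\mu\le \mu_-/\lambda_+\le 1/2=\lambda$.

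The remaining issue is that Lemma \ref{L:second-gaussian-estimate} gives $L^1$ and $L^\infty$ control only of the top derivative $R^{(N)}$. Proposition \ref{mean value bump estimate 2}, however, needs sup bounds of all derivatives of order $\nu\le N$. So I apply Lemma \ref{L:second-gaussian-estimate} separately for each $l\le N$, using that the hypotheses with $N$ imply those with $l$. This gives
\[
\|(2\pi)^{-l}\widehat{\psi_2}^{(l)}\|_\infty\le c\,C_{\ref{L:second-gaussian-estimate},l},
\]
and Proposition \ref{mean value bump estimate 2} applies as before. The factor $2^l$ in the constant is slack left for this bookkeeping.

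Summing the three bounds gives \eqref{E:mean-value}. I expect the main obstacle to be the constant bookkeeping: tracking the scale-comparison factors $2$ and $4$, which arise when converting $\lambda_-$-brackets and the $\min(1,\cdot)$ factor, to match exactly the coefficients $5$ and $4$.
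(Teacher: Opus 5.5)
Your proposal has a genuine gap in the treatment of $\varrho_2$. After rescaling by $\lambda_+$ you correctly write the multiplier as $\g(\mu\xi)^{-1}\bigl(\widehat\phi(\xi/2)-\widehat\phi(\xi)\bigr)\bigl((1-\g(t'\xi))^\nu-1\bigr)$. This function is \emph{not} supported in $[-1,1]$. The factor $\widehat\phi(\xi/2)$ lives on $|\xi|\le 2$, and for $|\xi|$ slightly larger than $1$ we have $\widehat\phi(\xi)=0$ while $\widehat\phi(\xi/2)$ is close to $1$. The other two factors are strictly positive there, so the multiplier is nonzero. Indeed, the proof of Lemma \ref{L:second-gaussian-estimate} itself notes that $R$ is supported where $1/2\le|\xi|\le1/\lambda$. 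Proposition \ref{mean value bump estimate 2} requires Fourier support in $[-1,1]$: it uses $|\xi|\le1$ to bound $|e^{2\pi i\xi y}-1|$ by $2\pi|y|$, and it uses the support measure. So that proposition cannot be applied to your $\psi_2$. For the same reason, the factors $2^l$ and $4$ in the constant are not slack. Your accounting would give a strictly smaller constant, which should have been a warning sign.

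The fix is the one the paper uses. After normalizing $\lambda_+=1$, the paper passes to $\varrho_2(2\,\cdot)$. In your language, rescale $\varrho_2$ by $\lambda_-$ rather than $\lambda_+$, exactly as you did for $\varrho_0$. The resulting multiplier is $\xi\mapsto R(2\xi)$, where $R$ is your $\lambda_+$-rescaled multiplier, and it is supported in $[-1,1]$. Its $l$-th derivative is $2^lR^{(l)}(2\xi)$. Lemma \ref{L:second-gaussian-estimate}, applied separately for each order $l\le N$ as you propose, then gives the sup bound $c\,2^lC_{\ref{L:second-gaussian-estimate},l}$ after multiplying by $(2\pi)^{-l}$. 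Proposition \ref{mean value bump estimate 2} yields the estimate at scale $\lambda_-$. Your comparison of brackets and of the $\min$ factor then costs a factor $4$ to return to scale $\lambda_+$. This produces exactly the term $4\max_l 2^lC_{\ref{L:second-gaussian-estimate},l}$.

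Your handling of $\varrho_0$ and $\varrho_1$ is correct and matches the paper. Those two pieces contribute factors $4$ and $1$, which give the coefficient $5$.
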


\begin{proof}
By homogeneity, we may assume $\lambda_{+}=1$. Then $\lambda_{-}=1/2$.
Let $\varrho_l$, $l\in [3)$, be as in Lemma~\ref{L:gaussian-bump-decomposition}. Rescaling the function $\varrho_0$ so that $\lambda_{-}=1$, applying Lemmas~\ref{mean value bump estimate 2} and~\ref{L:gaussian-bump-estimate} and then rescaling back, we obtain
\begin{equation*}
     |\mathcal{F}^{-1}(\varrho_0)(x+y)-\mathcal{F}^{-1}(\varrho_0)(x)|\le cC_{\ref{mean value bump estimate 2},N}\max_{0\leq l \leq N} C_{\ref{L:gaussian-bump-estimate},l}
     \min(1,4\pi|y|) (\langle x+y\rangle^N_{(1/2)}+
\langle x\rangle^N_{(1/2)})
\end{equation*}
\[
\leq c 4C_{\ref{mean value bump estimate 2},N} \max_{0\leq l \leq N} C_{\ref{L:gaussian-bump-estimate},l} \min(1,2\pi|y|) (\langle x+y\rangle^N+
\langle x\rangle^N).
\]
Similarly, applying Proposition~\ref{mean value bump estimate 2} and Lemma~\ref{L:gaussian-bump-estimate} to the function $\varrho_1$, we obtain
\begin{equation*}
     |\mathcal{F}^{-1}(\varrho_1)(x+y)-\mathcal{F}^{-1}(\varrho_1)(x)|\le c C_{\ref{mean value bump estimate 2},N}\max_{0\leq l \leq N} C_{\ref{L:gaussian-bump-estimate},l}
     \min(1,2\pi|y|) (\langle x+y\rangle^N+
\langle x\rangle^N).
\end{equation*}

To estimate $\varrho_2$, we first consider its rescaled version $\widetilde{\varrho_2}=\varrho_2(2\cdot)$, which is supported in $[-1,1]$. By Lemma~\ref{L:second-gaussian-estimate}, we have for $l\in \mathbb N$, $l\leq N$,
\[
\|(2\pi)^{-l}\widetilde{\varrho_2}^{(l)}\|_{\infty}
=2^l\|(2\pi)^{-l}\varrho_2^{(l)}\|_{\infty}
\leq c 2^lC_{\ref{L:second-gaussian-estimate},l}.
\]
By Proposition~\ref{mean value bump estimate 2},
\begin{equation*}
     |\mathcal{F}^{-1}(\varrho_2)(x+y)-\mathcal{F}^{-1}(\varrho_2)(x)|\le c 4C_{\ref{mean value bump estimate 2},N}\max_{0\leq l \leq N} 2^lC_{\ref{L:second-gaussian-estimate},l}
     \min(1,2\pi|y|) (\langle x+y\rangle^N+
\langle x\rangle^N).
\end{equation*}
Using the decomposition~\eqref{E:decomposition-rho}, we get~\eqref{E:mean-value}.
\end{proof}

\begin{lemma}[constant $C_{\ref{mean four scale Gaussian kernel},N}$ \auto]\label{constant mean four scale Gaussian kernel}\uses{mean value bump estimate 2,L:gaussian-bump-estimate,L:second-gaussian-estimate,constant gaussian bump estimate,constant second gaussian estimate,mean four scale Gaussian kernel}\usesdefs{}
\lean{Auto.constantMeanFourScaleGaussianKernel}
\leanok
For every $N\in\N$,
\begin{equation}\label{constant mean four scale Gaussian kernel bound}
C_{\ref{mean four scale Gaussian kernel},N}\le2^{30(N+1)^3}.
\end{equation}
Moreover,
\begin{equation}\label{auto:constant-mean-four-scale-Gaussian-kernel-two}
C_{\ref{mean four scale Gaussian kernel},2}<2^{45}.
\end{equation}
\end{lemma}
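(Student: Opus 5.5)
We substitute the explicit formulas into the defining expression \eqref{auto:mean-four-scale-Gaussian-constant}. That expression is the product of two factors, which we bound separately: $C_{\ref{mean value bump estimate 2},N}$ and the bracket $5\max_{l\le N}C_{\ref{L:gaussian-bump-estimate},l}+4\max_{l\le N}2^lC_{\ref{L:second-gaussian-estimate},l}$.

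\textbf{First factor.} From \eqref{auto:mean-value-bump-constant-definition}, and since $1+(2\pi)^{-1}<2$,
\[
C_{\ref{mean value bump estimate 2},N}=2^N\max\bigl(2,(1+(2\pi)^{-1})^N\bigr)\le 2^{2N+1}.
\]

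\textbf{Bracket.} Both constants are monotone in $l$ as upper bounds. Lemma \ref{constant gaussian bump estimate} gives $C_{\ref{L:gaussian-bump-estimate},l}\le 2^{8(N+1)^2}$, and Lemma \ref{constant second gaussian estimate} gives $2^lC_{\ref{L:second-gaussian-estimate},l}\le 2^{N+20(N+1)^3}$. Since $5+4<2^4$, the bracket is at most $2^{4+N+20(N+1)^3}$. Hence
\[
C_{\ref{mean four scale Gaussian kernel},N}\le 2^{3N+5+20(N+1)^3}.
\]
It remains to check $3N+5\le 10(N+1)^3$. At $N=0$ this reads $5\le10$, and the right-hand side grows faster. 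This proves the first claim.

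\textbf{Case $N=2$.} This is the only delicate step, since the crude bound above is far too weak. We use the sharp small-$N$ values instead. The first factor is $C_{\ref{mean value bump estimate 2},2}=4\max(2,(1+1/(2\pi))^2)=8$, because $(1+1/(2\pi))^2<1.35$.

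For the Gaussian bump constants, Lemma \ref{constant gaussian bump estimate} gives the maximum over $l\le2$ as less than $124$. For the second Gaussian constants, Lemma \ref{constant second gaussian estimate} gives
\[
\max\bigl(2^{16},\,2\cdot2^{22},\,4\cdot 2^{38}\bigr)=2^{40}.
\]
Therefore
\[
C_{\ref{mean four scale Gaussian kernel},2}<8\,(5\cdot124+4\cdot2^{40})=8\cdot620+2^{45}.
\]
This exceeds $2^{45}$ by a small amount, so the crude strict bounds are not enough.

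To close this gap we would use the exact numerical value underlying Lemma \ref{constant second gaussian estimate}, namely $C_{\ref{L:second-gaussian-estimate},2}\le159359088384$ from its proof. Since $2^{38}=274877906944$, we get $4\cdot4\cdot159359088384\approx2.55\cdot10^{12}$. Adding $5\cdot124\cdot 8$ and comparing with $2^{45}\approx3.5\cdot10^{13}$ divided by the factor $8$ then gives ample room, which proves \eqref{auto:constant-mean-four-scale-Gaussian-kernel-two}. This numerical comparison is the main point to verify carefully.
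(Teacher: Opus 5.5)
Your proposal is correct and matches the paper's proof. The general bound comes from $C_{\ref{mean value bump estimate 2},N}\le 2^{2N+1}$ together with substitution of Lemmas \ref{constant gaussian bump estimate} and \ref{constant second gaussian estimate}. For $N=2$ the paper likewise uses $C_{\ref{mean value bump estimate 2},2}=8$ and the exact value $159359088384$ from the proof of Lemma \ref{constant second gaussian estimate}, which gives $8(5\cdot 124+16\cdot 159359088384)=20397963318112<2^{45}$. You are right that the rounded bound $C_{\ref{L:second-gaussian-estimate},2}<2^{38}$ alone falls just short, and the stray factor $8$ on the $5\cdot 124$ term in your last sentence is harmless.
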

\begin{proof}[Proof \auto]
By \eqref{auto:mean-value-bump-constant-definition}, $C_{\ref{mean value bump estimate 2},N}\le2^{2N+1}$. Substitution of the defining estimates from Lemmas \ref{L:gaussian-bump-estimate} and \ref{L:second-gaussian-estimate} proves the first bound. For $N=2$, the inequality $\pi>3$ gives $(1+(2\pi)^{-1})^2<2$, so $C_{\ref{mean value bump estimate 2},2}=2^3$, exactly as in the preceding value. Use the finite estimates in Lemmas \ref{constant gaussian bump estimate} and \ref{constant second gaussian estimate}. In the second maximum in \eqref{auto:mean-four-scale-Gaussian-constant}, the term with $l=2$ is the largest. Substitution gives
\[
C_{\ref{mean four scale Gaussian kernel},2}<20397963318112<2^{45}.
\]
\end{proof}

\section{The main argument}\label{sec:main argument}

The main theorem of this section is Theorem \ref{induct positive terms theorem} below.

\subsection{The sandwich kernel}

We first introduce  and discuss some geometric parameters useful to describe a family of kernels in Definition \ref{sandwich kernel}.

\begin{definition}[geometric parameters]\label{geometric parameters}\uses{}\usesdefs{multiplicatively spaced monotone sequences,Distance of spaced sequences}
\lean{Auto.GeometricParameters,Auto.sequencePairDistance,Auto.geometricDelta}
  Let $\Gamma$ be the set of triples $(k,u,a)$ with $k\in\N$, $1\le k\le n$, $u\in[2)^k$, $a\in ({\rm A}^{2})^k$  so that $\dist(a_{i}^0,a_{i}^1)<\infty$ for all $i\in [k)$.
  For a sequence pair $\alpha=(\alpha_0,\alpha_1)\in {\rm A}^2$ we write $\Delta(\alpha) = \mathrm{dist}(\alpha_{0}, \alpha_1)$
  and for $\gamma\in\Gamma$ let
  \begin{equation}\label{auto:sandwich-distance-sum} \Delta_\gamma= 1+\sum_{i\in [k)} \Delta(a_i). \end{equation}
\end{definition}

\begin{definition}[two unitary matrices]\label{auto:unitary-matrices-definition}\uses{}\usesdefs{}
\lean{Auto.W}
For $u\in [2)$ define unitary matrices $W_u\in\R^{2\times 2}$ as follows:
Let $W_0$ be the $2\times 2$ identity matrix and $W_1=\frac{1}{\sqrt{2}}\begin{pmatrix}1 & 1\\-1 & 1\end{pmatrix}$.
\end{definition}

Next, we introduce some functions and functional concepts involved in the family of kernels of Definition \ref{sandwich kernel}.

\begin{definition}[double sequence of 2D functions]\label{double sequence of 2D functions}\uses{}\usesdefs{auto:Wiener-space-definition}
\lean{Auto.DoubleSequence,Auto.MemDoubleSequence}
  For $1\le k\le n$, let $\mathcal{X}_k$ be the set of
  double sequences
  \begin{equation}\label{auto:sandwich-data-array}
      X=(X_{i,j})_{i\in [k),j\in \Z}
  \end{equation}
  with $X_{i,j}\in W_0(\R^2)$.
\end{definition}

The kernels of interest in Definition \ref{sandwich kernel} will be partial sums over a sequence of kernels.
We introduce this concept of kernel sequences.

\begin{definition}[kernel sequences]\label{kernel sequences}\uses{}\usesdefs{auto:Wiener-space-definition,normalized function tuples,auto:prism-form-definition}
\lean{Auto.KernelSequence,Auto.MemKernelSequence,Auto.kernelSequenceSeminorm}
Let $k\in\N$ with $1\le k\le n$. Let ${\rm M}(k)$ be the space of sequences $\M=(M_j)_{j\in \Z}$ such that for $j\in \Z$ we have
$M_j\in W_0((\R^2)^k)$. Define
\begin{equation}\label{auto:M-k-norm-definition}
\|\M\|_{{\rm M}(k)}=\sup_{J\in\N,\,J\ge1,\,\F\in\mathfrak F}\min(1,J^{-1+2^{k-n+1}}) |\Lambda_k(\sum_{j\in [J)} M_j)(\F)| \, .
\end{equation}
\end{definition}

\begin{rem}
Note $\min(1,J^{-1+2^{k-n+1}})$ equals $J^{-1+2^{k-n+1}}$ if $k\le n-1$ and equals $1$ if $k=n$.
\end{rem}

\begin{definition}[2D Gaussians]\label{2D Gaussians}\uses{}\usesdefs{gaussian,geometric parameters,auto:unitary-matrices-definition}
\lean{Auto.twoDimensionalGaussian,Auto.gammaGaussian}
For $q\in\R^2$ with $q_0,q_1>0$, $u\in[2)$, and $v\in\mathbb R^2$, define
\begin{equation}\label{auto:rotated-Gaussian-kernel} G_{q,u}(v) = \g_{(q_0)}((W_u v)_0) \g_{(q_1)}((W_u v)_1). \end{equation}
For $\gamma=(k,u,a)\in \Gamma$, define
\begin{equation}\label{auto:sandwich-Gaussian-factor} (G_{\gamma})_{i,j} = G_{a_i(j),u(i)}. \end{equation}
\end{definition}

A sandwich kernel is a product, most factors of which are two dimensional Gaussians differently scaled and oriented
using the data $\gamma$, while one
factor, which is  sandwiched between these Gaussians, is different and determined by an element of $\mathcal{X}_k$.
\begin{definition}[sandwich kernel]\label{sandwich kernel}\uses{}\usesdefs{geometric parameters,double sequence of 2D functions,2D Gaussians}
\lean{Auto.sandwichKernel}
Let $\gamma=(k,u,a)\in \Gamma$, $X\in \mathcal{X}_k$, and $i\in [k)$. Define for  $j\in \Z$ and $y\in (\R^{2})^k$,
\begin{equation}\label{auto:sandwich-kernel-definition} (\M(\gamma,X,i))_j(y) =  \Big(\prod_{m\in [i)} (G_{\gamma})_{m,j}(y_m) \Big) X_{i,j}(y_i) \Big(\prod_{m=i+1}^{k-1} (G_{\gamma})_{m,j-1}(y_m) \Big).\end{equation}
\end{definition}

\begin{rem}
When $k=1$, $i=0$, $\M(\gamma,X,0)$ doesn't depend on $u,a$.
Generally, $\M(\gamma,X,i)$ does not depend on $u_i, a_i$.
\end{rem}

\begin{definition}[difference of 2D Gaussians]\label{auto:Gaussian-difference-kernel-definition}\uses{}\usesdefs{geometric parameters,2D Gaussians}
\lean{Auto.gaussianDifference}
Let $\gamma=(k,u,a)\in \Gamma$.  For $i\in [k)$, $j\in \Z$, define
\begin{equation}\label{auto:Gaussian-difference-kernel} (Y_\gamma)_{i,j} = (G_\gamma)_{i,j-1} - (G_\gamma)_{i,j}\ .\end{equation}
\end{definition}

\begin{proposition}[telescoping terms]\label{telescoping terms}\uses{P:schwartz-into-wiener,prism BL inequality,M to K}\usesdefs{geometric parameters,double sequence of 2D functions,kernel sequences,2D Gaussians,sandwich kernel,auto:Gaussian-difference-kernel-definition}
\lean{Auto.telescopingTerms}
\leanok
Let $\gamma=(k,u,a)\in \Gamma$.
Then $Y_\gamma\in \mathcal{X}_k$ and
\begin{equation}\label{auto:telescoping-terms-bound}
    \|\sum_{i\in [k)} \M(\gamma,Y_\gamma,i)\|_{{\rm M}(k)}\le 2.
\end{equation}
\end{proposition}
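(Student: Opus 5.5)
The plan is to telescope the sum over $i$. First, $Y_\gamma\in\mathcal X_k$: each $(G_\gamma)_{i,j}$ is a tensor product of two rescaled one-dimensional Gaussians composed with the orthogonal map $W_{u(i)}$. It is therefore a Schwartz function on $\R^2$, hence in $W_0(\R^2)$ by Proposition \ref{P:schwartz-into-wiener}, and so is the difference of two of them. Each $M_j$ is a product of functions of disjoint pairs of variables, so it lies in $W_0((\R^2)^k)$ by Proposition \ref{tensor Wiener}.

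The key identity is obtained by writing $P_j(y)=\prod_{m\in[k)}(G_\gamma)_{m,j}(y_m)$. Substituting $X=Y_\gamma$ into Definition \ref{sandwich kernel}, the $i$-th term is
\[
\Big(\prod_{m<i}G_{m,j}\Big)G_{i,j-1}\Big(\prod_{m>i}G_{m,j-1}\Big)-\Big(\prod_{m\le i}G_{m,j}\Big)\Big(\prod_{m>i}G_{m,j-1}\Big).
\]
Summing over $i\in[k)$ telescopes, so $\sum_{i\in[k)}(\M(\gamma,Y_\gamma,i))_j=P_{j-1}-P_j$. A second telescoping in $j$ over $j\in[J)$ gives
\[
\sum_{j\in[J)}\sum_{i\in[k)}(\M(\gamma,Y_\gamma,i))_j=P_{-1}-P_{J-1}.
\]

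By linearity of $\Lambda_k$ and Definition \ref{auto:prism-form-definition}, $\Lambda_k(P_{-1}-P_{J-1})(\F)=\Theta_k(K_k(P_{-1}))(\F)-\Theta_k(K_k(P_{J-1}))(\F)$. Proposition \ref{prism BL inequality} together with Proposition \ref{M to K} bounds each term by $\|P_l\|_1$. Since $W_u$ is orthogonal and the rescaled Gaussians have integral one, $\|P_l\|_1=\prod_m\|G_{a_m(l),u(m)}\|_1=1$. Hence $|\Lambda_k(\cdot)(\F)|\le2$, and as the weight $\min(1,J^{-1+2^{k-n+1}})\le1$, the supremum defining $\|\cdot\|_{{\rm M}(k)}$ is at most $2$.

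The main obstacle is only bookkeeping in the telescoping. One has to check the index shift $j$ versus $j-1$ in the factors after position $i$. One also has to check that the boundary cases $i=0$ and $i=k-1$ yield exactly $P_{j-1}$ and $P_j$.
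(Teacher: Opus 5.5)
Your proposal is correct and takes the same approach as the paper. Schwartz-ness of the Gaussian differences gives $Y_\gamma\in\mathcal X_k$. Telescoping in $i$ and then in $j$ reduces the sum to the boundary products $P_{-1}-P_{J-1}$, each bounded by $\|P_l\|_1=1$ via Propositions \ref{prism BL inequality} and \ref{M to K}. You also spell out the intermediate identity $\sum_{i\in[k)}(\M(\gamma,Y_\gamma,i))_j=P_{j-1}-P_j$, which the paper leaves implicit.
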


\begin{proof}
The fact that $Y_\gamma\in \mathcal{X}_k$ follows from Proposition~\ref{P:schwartz-into-wiener} since $(Y_\gamma)_{i,j}\in \mathcal S(\R^2)$ for $i\in [k), j\in \Z$.
For a given $J\in\N$ with $J\ge1$:
\[
\sum_{j\in [J)} \sum_{i\in [k)} (\M(\gamma,Y_\gamma,i))_j
\]
telescopes into a difference of two boundary terms
\[
\prod_{m\in [k)} (G_{\gamma})_{m,-1}(y_m) -\prod_{m\in [k)} (G_{\gamma})_{m,J-1}(y_m).
\]
Estimate each with Propositions \ref{prism BL inequality} and \ref{M to K} and then make use of the fact that the $L^1$ norm of both these terms equals $1$.
\end{proof}

\begin{proposition}[positive terms]\label{positive terms}\uses{Positivity M}\usesdefs{auto:Wiener-space-definition,normalized function tuples,auto:prism-form-definition,geometric parameters,double sequence of 2D functions,sandwich kernel}
\lean{Auto.positiveTerms}
\leanok
Let $\gamma=(k,u,a)\in \Gamma$. Let $X\in \mathcal{X}_k$ and assume for each $i\in [k)$ and  $j\in \Z$ we have
 $X_{i,j}(u,v)=f_{i,j}(u)f_{i,j}(v)$ for some real valued $f_{i,j}\in W_0(\R)$.
Then for each $i\in [k)$, $j\in\Z$, and for all $\F\in {\mathfrak{F}}$,
\begin{equation}\label{E:form-nonnegative}
    \Lambda_k(\M(\gamma,X,i)_j)(\mathbf{F})\ge 0.
\end{equation}
\end{proposition}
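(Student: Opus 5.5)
The plan is to apply Proposition \ref{Positivity M} directly, with the index $i$ of the statement playing the role of the distinguished pair and $\phi=f_{i,j}$. Fix $i\in[k)$ and $j\in\Z$. By Definition \ref{sandwich kernel},
\[
(\M(\gamma,X,i))_j(y)=\tilde M(y_{[k)\setminus i})\,f_{i,j}^{\otimes 2}(y_i),
\]
where
\[
\tilde M(y_{[k)\setminus i})=\Big(\prod_{m\in[i)}(G_\gamma)_{m,j}(y_m)\Big)\Big(\prod_{m=i+1}^{k-1}(G_\gamma)_{m,j-1}(y_m)\Big),
\]
regarded as a function on $(\R^2)^{k-1}$. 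The hypothesis $X_{i,j}(u,v)=f_{i,j}(u)f_{i,j}(v)$ is exactly the statement that $X_{i,j}=f_{i,j}^{\otimes 2}$.

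First I will verify the hypotheses of Proposition \ref{Positivity M}. Each factor $(G_\gamma)_{m,j'}=G_{a_m(j'),u(m)}$ is the product of two positive rescaled Gaussians evaluated at the coordinates of $W_{u(m)}y_m$. It is therefore nonnegative and a Schwartz function on $\R^2$, since $W_{u(m)}$ is a linear isomorphism and tensor products of Schwartz functions are Schwartz. By Proposition \ref{P:schwartz-into-wiener} it lies in $W_0(\R^2)$. Proposition \ref{tensor Wiener}, applied with the disjoint index blocks corresponding to the pairs $m\neq i$, then shows $\tilde M\in W_0((\R^2)^{k-1})$ and $\tilde M\ge 0$. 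When $k=1$ the product is empty, so $\tilde M\equiv 1$ on the zero-dimensional space; this is trivially nonnegative and in $W_0$. Finally, $f_{i,j}$ is real valued and in $W_0(\R)$ by assumption.

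Proposition \ref{Positivity M} now gives $\Lambda_k(\M(\gamma,X,i)_j)(\F)\ge0$ for all $\F\in\mathfrak F$.

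The only step needing care is bookkeeping. The variable ordering in Proposition \ref{Positivity M} must match that of Definition \ref{sandwich kernel}: the pair $y_i$ is removed, and the remaining pairs keep their natural order, which is exactly how the factorization above is written. The degenerate case $k=1$ should also be checked against the convention for $W_0$ of $\R^0$. I expect neither point to be a genuine obstacle.
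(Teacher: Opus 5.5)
Your proposal is correct and follows the same route as the paper: the paper's proof is the single line ``This follows from Proposition \ref{Positivity M},'' applied exactly as you describe, with $\tilde M$ the nonnegative product of the Gaussian factors $(G_\gamma)_{m,j}$ for $m<i$ and $(G_\gamma)_{m,j-1}$ for $m>i$, and with $\phi=f_{i,j}$. Your additional checks, namely that $\tilde M\in W_0$ via Propositions \ref{P:schwartz-into-wiener} and \ref{tensor Wiener}, the ordering of the remaining pairs, and the degenerate case $k=1$, are precisely the routine details the paper leaves implicit.
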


\begin{proof}
This follows from Proposition \ref{Positivity M}.
\end{proof}

\subsection{\texorpdfstring{Multipliers $H$, $L$, $N$}{Multipliers H, L, N}}

\begin{definition}[square root Gaussian difference]\label{square root Gaussian difference}\uses{}\usesdefs{gaussian,multiplicatively spaced monotone sequences}
\lean{Auto.squareRootGaussianDifference}
Let $a\in A$ and $j\in\Z$.
Define the function $s(a,j):\R \to \R$ by
\begin{equation}\label{auto:diagonal-Gaussian-square-root}s(a,j) = \mathcal{F}^{-1}(\xi \mapsto \sqrt{\g(a(j-1)\xi)-\g(a(j) \xi)}).\end{equation}
\end{definition}

\begin{definition}[$s$ multiplier]\label{s multiplier}\uses{}\usesdefs{geometric parameters,square root Gaussian difference}
\lean{Auto.sMultiplier}
    Let $\gamma=(k,u,a)\in \Gamma$. Let $i\in [k)$ and
    $j\in \Z$. Define  $(s_{\gamma})_{i,j}: \R \to \R$ by
\begin{equation}\label{E:b-definition}
   (s_{\gamma})_{i,j}=s(b,j)
\end{equation}
with
\begin{equation}\label{auto:combined-scale-b}
   b(j)=\sqrt{a_{i}^0(j)^2 + a_{i}^1(j)^2}
\end{equation}
if $u_i=0$, and by
\begin{equation}\label{auto:sandwich-square-root-factor}
   (s_{\gamma})_{i,j}=s(\sqrt{2}a_{i}^1,j)
\end{equation}
if $u_i=1$.
\end{definition}

\begin{proposition}\label{square root Gaussian difference W0}\uses{diagonal square root,Operations on spaced sequences}\usesdefs{auto:Wiener-space-definition,multiplicatively spaced monotone sequences,geometric parameters,square root Gaussian difference,s multiplier}
\lean{Auto.squareRootGaussianDifference_memW0,Auto.sMultiplier_memW0}
\leanok
  For every $a\in\mathrm{A}$ and $j\in\mathbb{Z}$
  we have that $s(a,j)$ is a well-defined function in $W_0(\R)$. Consequently, if $\gamma \in \Gamma$ and $i\in [k)$, $j\in \Z$, then $(s_{\gamma})_{i,j}$ is a well-defined function in $W_0(\R)$.
\end{proposition}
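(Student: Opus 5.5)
The plan is to reduce directly to Proposition \ref{diagonal square root}, applied with $t_0=a(j-1)$ and $t_1=a(j)$. Since $a\in\mathrm{A}$ is multiplicatively spaced, Definition \ref{multiplicatively spaced monotone sequences} gives
\[
0<2a(j-1)\le a(j).
\]
This is exactly the hypothesis $0<2t_0\le t_1$ of Proposition \ref{diagonal square root}.

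That proposition asserts several things at once. The function under the square root, $\g(a(j-1)\xi)-\g(a(j)\xi)$, is nonnegative, because $\g$ is even and decreasing in $|\xi|$ and $a(j-1)\le a(j)$. The function under the square root is also integrable, so the inverse Fourier transform defining $s(a,j)$ in Definition \ref{square root Gaussian difference} makes sense. Finally, the resulting function lies in $W_0(\R)$. This settles the first claim.

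For the consequence, fix $\gamma=(k,u,a)\in\Gamma$, $i\in[k)$ and $j\in\Z$. We only need to check that the sequence fed into $s(\cdot,j)$ in Definition \ref{s multiplier} belongs to $\mathrm{A}$; then the first part applies.

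\textbf{Case $u_i=0$.} The sequence is $b(j)=\sqrt{a_i^0(j)^2+a_i^1(j)^2}$ with $a_i^0,a_i^1\in\mathrm{A}$. Proposition \ref{Operations on spaced sequences}(iv) gives $b\in\mathrm{A}$.

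\textbf{Case $u_i=1$.} The sequence is $\sqrt2\,a_i^1$, which lies in $\mathrm{A}$ by Proposition \ref{Operations on spaced sequences}(ii) with $t=\sqrt2$.

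In both cases $(s_\gamma)_{i,j}=s(c,j)$ for some $c\in\mathrm{A}$, so it is a well-defined function in $W_0(\R)$.

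There is essentially no obstacle here. The only point to handle carefully is verifying the scale condition $2a(j-1)\le a(j)$, which is exactly the spacing property of $\mathrm{A}$ applied at index $j-1$.
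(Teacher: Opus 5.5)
Your proposal is correct and matches the paper's proof: the first part is Proposition~\ref{diagonal square root} with $t_0=a(j-1)$ and $t_1=a(j)$, where the spacing property of $\mathrm{A}$ gives $2t_0\le t_1$, and the second part reduces to the first via parts (iv) and (ii) of Proposition~\ref{Operations on spaced sequences}. One small wording correction: for the inverse Fourier transform to make sense, it is the square root itself that must be integrable, and it is, since $\sqrt{\g(t_0\xi)-\g(t_1\xi)}\le\g(t_0\xi/\sqrt2)$.
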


\begin{proof}
The first part follows from Proposition~\ref{diagonal square root}. The second part follows from the first part and Proposition~\ref{Operations on spaced sequences}.
\end{proof}

Note that $\widehat{s(a,j)}$ is not smooth at the origin, due to the square root. The function $s(a,j)$ only decays with second power.

\begin{definition}[H multiplier]\label{H multiplier}\uses{}\usesdefs{geometric parameters,auto:Gaussian-difference-kernel-definition,s multiplier}
\lean{Auto.hMultiplier}
Let $\gamma=(k,u,a)\in \Gamma$. Define
    $H_{\gamma}=(H_\gamma)_{i\in [k),j\in \Z}$ by
\begin{equation}\label{auto:H-kernel-definition}
(H_{\gamma})_{i,j}=(s_{\gamma})_{i,j}\otimes (s_{\gamma})_{i,j}
  -(Y_{\gamma})_{i,j}
\end{equation}
for every $i\in [k), j\in\mathbb{Z}$.
\end{definition}

\begin{proposition}\label{H-in-X}\uses{tensor Wiener,telescoping terms,square root Gaussian difference W0}\usesdefs{geometric parameters,double sequence of 2D functions,H multiplier}
\lean{Auto.hMultiplier_memDoubleSequence}
\leanok
Let $\gamma=(k,u,a)\in \Gamma$. Then $H_{\gamma}\in \mathcal{X}_k$.
\end{proposition}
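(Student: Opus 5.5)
We have to show that for each $i\in[k)$ and $j\in\Z$ the function $(H_\gamma)_{i,j}$ is in $W_0(\R^2)$. Since $(H_\gamma)_{i,j}$ is the difference of two terms, we handle the terms separately and then use that $W_0(\R^2)$ is a vector space.

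The space $W_0(\R^2)$ is closed under subtraction. Continuity is preserved under differences. Moreover, $\sup_{|x-y|\le1}|f(y)-g(y)|\le \sup_{|x-y|\le1}|f(y)|+\sup_{|x-y|\le1}|g(y)|$, so integrating gives $\|f-g\|_{W_0}\le\|f\|_{W_0}+\|g\|_{W_0}<\infty$. Measurability of the integrand is supplied by Proposition \ref{auto:local-supremum-measurability}.

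\textbf{Step 1: the Gaussian difference term.} By Proposition \ref{telescoping terms}, $Y_\gamma\in\mathcal X_k$. Hence $(Y_\gamma)_{i,j}\in W_0(\R^2)$ for every $i\in[k)$ and $j\in\Z$.

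\textbf{Step 2: the tensor term.} By Proposition \ref{square root Gaussian difference W0}, $(s_\gamma)_{i,j}$ is a well-defined function in $W_0(\R)$. Its Fourier transform is even and real-valued, so $(s_\gamma)_{i,j}$ is real-valued. Apply Proposition \ref{tensor Wiener} with $J=2$, $I_0=\{0\}$, $I_1=\{1\}$ and $f_0=f_1=(s_\gamma)_{i,j}$. This shows $(s_\gamma)_{i,j}\otimes(s_\gamma)_{i,j}\in W_0(\R^2)$, with norm at most $\|(s_\gamma)_{i,j}\|_{W_0}^2$.

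\textbf{Step 3: conclusion.} By Steps 1 and 2 and the closure under subtraction, $(H_\gamma)_{i,j}\in W_0(\R^2)$ for all $i\in[k)$ and $j\in\Z$. By Definition \ref{double sequence of 2D functions}, this means $H_\gamma\in\mathcal X_k$.

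There is no real obstacle. The only point not covered by a cited result is closure of $W_0$ under subtraction, and the bound above gives it directly.
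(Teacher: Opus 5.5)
Your proposal is correct and follows the same route as the paper, whose proof simply cites Propositions \ref{tensor Wiener}, \ref{telescoping terms} and \ref{square root Gaussian difference W0}. The one step the paper leaves implicit is that $W_0(\R^2)$ is closed under subtraction, and your explicit check of this fills that gap.
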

\begin{proof}
This follows from Propositions~\ref{tensor Wiener}, \ref{telescoping terms} and \ref{square root Gaussian difference W0}.
\end{proof}

\begin{proposition}[H vanishing]\label{H vanishing}\uses{Elementary Gaussian properties,square root Gaussian difference W0}\usesdefs{gaussian,geometric parameters,2D Gaussians,auto:Gaussian-difference-kernel-definition,square root Gaussian difference,s multiplier,H multiplier}
\lean{Auto.hMultiplier_fourier_diagonal_vanishing}
\leanok
    For $\gamma=(k,u,a)\in \Gamma$, $i\in [k)$, $j\in \Z$, and $\xi\in \R$,
\begin{equation}\label{auto:H-kernel-diagonal-cancellation-Fourier}
    \widehat{(H_{\gamma})_{i,j}}(\xi,-\xi)=0\, .
\end{equation}
\end{proposition}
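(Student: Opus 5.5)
The plan is to compute both Fourier transforms on the anti-diagonal $(\xi,-\xi)$ explicitly and check that they coincide. We treat the cases $u_i=0$ and $u_i=1$ separately. Throughout write $\alpha=a_i^0$ and $\beta=a_i^1$, so that $(G_\gamma)_{i,j}=G_{(\alpha(j),\beta(j)),u_i}$.

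First consider the tensor term. By Proposition \ref{square root Gaussian difference W0} and the definition of $s(b,j)$ as an inverse Fourier transform of an $L^1$ function (indeed of a continuous integrable function, by Proposition \ref{diagonal square root}), we have
\[
\widehat{s(b,j)}(\xi)=\sqrt{\g(b(j-1)\xi)-\g(b(j)\xi)}.
\]
This function is even. Hence
\[
\widehat{s\otimes s}(\xi,-\xi)=\g(b(j-1)\xi)-\g(b(j)\xi).
\]
Here $b(j)=\sqrt{\alpha(j)^2+\beta(j)^2}$ if $u_i=0$, and $b(j)=\sqrt2\,\beta(j)$ if $u_i=1$.

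Next consider the Gaussian term. The goal is the identity $\widehat{G_{q,u}}(\xi,-\xi)=\g(c\,\xi)$, with $c=\sqrt{q_0^2+q_1^2}$ for $u=0$ and $c=\sqrt2\,q_1$ for $u=1$.

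For $u=0$, $G_{q,0}=\g_{(q_0)}\otimes\g_{(q_1)}$. By Proposition \ref{Elementary Gaussian properties}(iv) its transform at $(\xi,-\xi)$ is $\g(q_0\xi)\g(q_1\xi)$, and $\g(x)\g(y)=\g(\sqrt{x^2+y^2})$ turns this into $\g(\sqrt{q_0^2+q_1^2}\,\xi)$.

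For $u=1$, $G_{q,1}(v)=(\g_{(q_0)}\otimes\g_{(q_1)})(W_1v)$. Since $W_1$ is orthogonal, $\widehat{G_{q,1}}(\eta)=\widehat{\g_{(q_0)}\otimes\g_{(q_1)}}(W_1\eta)$. For $\eta=(\xi,-\xi)$ we get $W_1\eta=\tfrac1{\sqrt2}(0,-2\xi)=(0,-\sqrt2\xi)$. The transform there is therefore $\g(0)\g(\sqrt2 q_1\xi)=\g(\sqrt2\,q_1\xi)$.

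Applying this with $q=a_i(j-1)$ and $q=a_i(j)$ and subtracting gives
\[
\widehat{(Y_\gamma)_{i,j}}(\xi,-\xi)=\g(b(j-1)\xi)-\g(b(j)\xi),
\]
with exactly the same $b$ as in Definition \ref{s multiplier}. This matches the tensor term, so the difference $\widehat{(H_\gamma)_{i,j}}(\xi,-\xi)$ vanishes.

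The only delicate points are bookkeeping rather than analysis. One must keep the Fourier convention for rotated functions straight, which is $\widehat{f\circ W}=\widehat f\circ W$ for orthogonal $W$. One must also confirm that the scaling in the definition of $G$ matches the $L^1$-normalized dilation used in Proposition \ref{Elementary Gaussian properties}(iv). Finally, $b\in\mathrm A$ by Proposition \ref{Operations on spaced sequences}, and the radicand is nonnegative by monotonicity of $b$, so the square root is well defined.
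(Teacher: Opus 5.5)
Your proposal is correct and follows essentially the same route as the paper. Both arguments evaluate $\widehat{(G_\gamma)_{i,j}}(\xi,-\xi)$ using Proposition \ref{Elementary Gaussian properties} in the two orientation cases, and match the resulting difference $\g(b(j-1)\xi)-\g(b(j)\xi)$ with $\widehat{s}(\xi)\widehat{s}(-\xi)=\widehat{s}(\xi)^2$ from Definition \ref{s multiplier}. You also make explicit the rotation step $\widehat{f\circ W_1}=\widehat{f}\circ W_1$ and the computation $W_1(\xi,-\xi)=(0,-\sqrt2\xi)$, which the paper leaves implicit.
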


\begin{proof}
Recall
\[ (Y_\gamma)_{i,j} = (G_\gamma)_{i,j-1} - (G_\gamma)_{i,j}\ \]
and with \eqref{eq:gaussconv} and \eqref{eq:scaleFT}, if $u_i=0$, then
\[ (\widehat{G_\gamma})_{i,j}(\xi, -\xi) = \g(\sqrt{(a_{i}^0(j))^2 + (a_{i}^1(j))^2}\cdot \xi) \]
and if $u_i=1$, then
\[ (\widehat{G_\gamma})_{i,j}(\xi, -\xi) = \g(\sqrt{2} a_{i}^1(j) \xi).\]
The formulas above use Proposition \ref{Elementary Gaussian properties}. The proposition now follows with Definition \ref{square root Gaussian difference} of $s_\gamma$ and Proposition~\ref{square root Gaussian difference W0}.
\end{proof}

\begin{proposition}[H vanishing integral]\label{H vanishing integral}\uses{H vanishing}\usesdefs{geometric parameters,H multiplier}
\lean{Auto.hMultiplier_vanishing_integral}
\leanok
    For $\gamma=(k,u,a)\in \Gamma$, $i\in [k)$, $j\in \Z$, and $z\in \R$,
\begin{equation}\label{auto:H-kernel-diagonal-cancellation-integral}
   \int_{\R} (H_{\gamma})_{i,j}(z+p,p)\,dp=0.
\end{equation}
\end{proposition}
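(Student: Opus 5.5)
The plan is to deduce this from Proposition \ref{H vanishing} by Fourier inversion along the one-dimensional fiber. Fix $\gamma,i,j$ and write $H=(H_\gamma)_{i,j}$. By Proposition \ref{H-in-X}, $H\in W_0(\R^2)$, so by Proposition \ref{W_0 fiber integrals} (applied to the injective map $\pi(z,p)=(z+p,p)$ from $\R\times\R$ to $\R^2$) the function
\[
g(z)=\int_\R H(z+p,p)\,dp
\]
is well defined and lies in $W_0(\R)$. In particular $g$ is continuous and, by Proposition \ref{P:lp-embedding}, integrable.

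Next compute $\widehat g$. For $\eta\in\R$, Fubini (justified since $H\in L^1(\R^2)$ by Proposition \ref{P:lp-embedding}) together with the change of variables $x=z+p$, $y=p$ (Jacobian one) gives
\[
\widehat g(\eta)=\int_\R\int_\R H(z+p,p)e^{-2\pi i z\eta}\,dp\,dz=\int_{\R^2}H(x,y)e^{-2\pi i (x-y)\eta}\,dx\,dy=\widehat H(\eta,-\eta).
\]
By Proposition \ref{H vanishing} this vanishes for every $\eta$. Hence $\widehat g\equiv 0$.

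Finally, since $g\in L^1(\R)$ is continuous and $\widehat g=0$, injectivity of the Fourier transform on $L^1$ gives $g=0$ almost everywhere, and continuity upgrades this to $g(z)=0$ for every $z\in\R$, which is the claim. The only step needing care is the measure-theoretic bookkeeping: integrability of $H$ on $\R^2$ for Fubini, and continuity of $g$ to pass from almost-everywhere to everywhere vanishing. Both are supplied by the $W_0$ results cited above, so I expect no real obstacle. An alternative is to observe directly that $\widehat g\equiv 0$ together with $g\in L^1$ gives, via Fourier inversion for continuous integrable functions with integrable transform, $g(z)=\int\widehat g(\eta)e^{2\pi i z\eta}\,d\eta=0$.
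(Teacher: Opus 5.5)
Your argument is correct and is exactly the reasoning behind the paper's one-line proof, which simply cites Proposition \ref{H vanishing}. The paper leaves the steps implicit: the fiber integral $g$ lies in $W_0(\R)$ by Proposition \ref{W_0 fiber integrals}, its Fourier transform is $\widehat{(H_\gamma)_{i,j}}(\eta,-\eta)=0$, and injectivity plus continuity give $g\equiv 0$; the paper writes out this same argument in full later, in Lemma \ref{lem:affine-diagonal-cancellation-reduction}.
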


\begin{proof}
This follows from Proposition~\ref{H vanishing}.
\end{proof}

\begin{definition}\label{auto:L-kernel-definition}\uses{}\usesdefs{auto:convolution-along-vector-definition,standard bump,geometric parameters,H multiplier}
\lean{Auto.lMultiplierAtScale}
Let $\gamma=(k,u,a)\in \Gamma$, $i\in [k)$, $j\in \Z$. For $t>0$, we set
\begin{equation}\label{auto:L-kernel-convolution}
(L_{\gamma,t})_{i,j} =(H_{\gamma})_{i,j} \ast_{(1,1)} \Phi_{(t)}.
\end{equation}
\end{definition}

\begin{lemma}\label{L:F_t}
\lean{Auto.lMultiplierAtScale_memDoubleSequence,Auto.lMultiplierAtScale_tendsto_hMultiplier}
\leanok
\uses{P:schwartz-into-wiener,convolution vector,H-in-X}\usesdefs{standard bump,geometric parameters,double sequence of 2D functions,H multiplier,auto:L-kernel-definition}
Let $\gamma=(k,u,a)\in \Gamma$ and $t>0$. Then $L_{\gamma,t} \in \mathcal{X}_k$ and for $i\in [k)$, $j\in \Z$, we have
\begin{equation}\label{auto:L-kernel-small-scale-limit}
\lim_{t\to 0_+} (L_{\gamma,t})_{i,j} = (H_{\gamma})_{i,j} \quad\text{in}\;L^1.
\end{equation}
\end{lemma}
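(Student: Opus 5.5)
The proof has two parts: membership of each $(L_{\gamma,t})_{i,j}$ in $W_0(\R^2)$, and the $L^1$ limit as $t\to0_+$.

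\emph{Membership.} Fix $i\in[k)$ and $j\in\Z$. By Proposition \ref{H-in-X}, $(H_\gamma)_{i,j}\in W_0(\R^2)$. By Proposition \ref{standard bump properties}, $\Phi$ is a Schwartz function, hence so is $\Phi_{(t)}$. Proposition \ref{P:schwartz-into-wiener} then gives $\Phi_{(t)}\in W_0(\R)$. Now Proposition \ref{convolution vector}, applied with $\rho=(H_\gamma)_{i,j}$, $\varphi=\Phi_{(t)}$ and $\alpha=(1,1)$, gives $(L_{\gamma,t})_{i,j}\in W_0(\R^2)$. Doing this for every $i,j$ yields $L_{\gamma,t}\in\mathcal X_k$.

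\emph{Convergence.} Write $H=(H_\gamma)_{i,j}$. The key input is that $\int_\R\Phi=\widehat\Phi(0)=1$, by Proposition \ref{standard bump properties}. Substituting $p=tq$ in the definition of convolution along a vector gives
\[
(L_{\gamma,t})_{i,j}(x)-H(x)=\int_\R \big(H(x-tq(1,1))-H(x)\big)\Phi(q)\,dq .
\]
Take the $L^1$ norm in $x$ and apply Minkowski's integral inequality (Tonelli suffices here, since everything is nonnegative after absolute values). This bounds the difference by
\[
\int_\R \|H(\cdot-tq(1,1))-H\|_{L^1(\R^2)}\,|\Phi(q)|\,dq .
\]
By Proposition \ref{P:lp-embedding}, $H\in W_0\subset L^1$. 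Continuity of translation in $L^1$ gives $\|H(\cdot-tq(1,1))-H\|_1\to0$ as $t\to0$ for each fixed $q$. The integrand is dominated by $2\|H\|_1|\Phi(q)|$, which is integrable because $\Phi\in\mathcal S$. Dominated convergence therefore shows the bound tends to $0$.

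\emph{Main obstacle.} The only delicate point is the measurability and integrability needed to justify Fubini and Tonelli in the rewriting above. Here I would use that $H$ is continuous and integrable and $\Phi$ is Schwartz, so the function $(x,q)\mapsto H(x-tq(1,1))\Phi(q)$ is continuous and absolutely integrable on $\R^3$. Continuity of translation in $L^1$ is standard in mathlib, so no further work should be needed.
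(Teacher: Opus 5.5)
Your proposal is correct and matches the paper's proof: membership in $\mathcal{X}_k$ follows from the same three propositions, and the limit comes from the same chain of steps. That chain is: use $\int_\R\Phi=1$ to write $(L_{\gamma,t})_{i,j}-(H_\gamma)_{i,j}$ as an average of translation differences, apply Fubini/Tonelli to bound the $L^1$ norm by $\int_\R|\Phi(q)|\,\omega(tq)\,dq$, then conclude by continuity of translation in $L^1$ and dominated convergence with majorant $2\|(H_\gamma)_{i,j}\|_1|\Phi|$.
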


\begin{proof}
$L_{\gamma,t} \in \mathcal{X}_k$ by Propositions \ref{P:schwartz-into-wiener}, \ref{convolution vector}  and~\ref{H-in-X}.

Using the fact that $\int_{\R} \Phi_{(t)}(p)\,dp=1$, we write
\[
(L_{\gamma,t})_{i,j}(v)-(H_{\gamma})_{i,j}(v)
=\int_{\R} \Phi_{(t)}(p) ((H_\gamma)_{i,j}(v_1-p,v_2-p)-(H_\gamma)_{i,j}(v_1,v_2))\,dp.
\]

Applying the previous expression and Fubini's theorem, we see that the $L^1$ norm of $(L_{\gamma,t})_{i,j}-(H_{\gamma})_{i,j}$ is bounded by
\[
\int_{\R} |\Phi_{(t)}(p)| \omega(p)\,dp
=\int_{\R} |\Phi(q)| \omega(tq)\,dq
\]
where
\[
\omega(p)=\int_{\R^2} |(H_\gamma)_{i,j}(v_1-p,v_2-p)-(H_\gamma)_{i,j}(v_1,v_2)|\,dv.
\]
By the continuity of the $L^1$ norm, we have $\lim_{t\to 0_+} \omega(tq)=0$ for every $q\in \R$. Additionally, $\omega(tq) \leq 2\|(H_{\gamma})_{i,j}\|_1$. Applying the Lebesgue dominated convergence theorem finishes the proof.
\end{proof}

\begin{lemma}\label{L:ft-infty}
\lean{Auto.lMultiplierAtScale_tendsto_zero}
\leanok
\uses{H vanishing integral}\usesdefs{standard bump,geometric parameters,H multiplier,auto:L-kernel-definition}
Let $\gamma=(k,u,a)\in \Gamma$, $i\in [k)$, $j\in \Z$, $t>0$. Then
\begin{equation}\label{auto:L-kernel-large-scale-limit}
\lim_{t\to \infty} (L_{\gamma,t})_{i,j} = 0 \quad\text{in}\;L^1.
\end{equation}
\end{lemma}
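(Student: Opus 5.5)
The natural route is a change of variables adapted to the diagonal direction $(1,1)$, which reduces the statement to a one-dimensional fact: convolution with a wide bump kills a function of mean zero in $L^1$. Write $H=(H_\gamma)_{i,j}\in W_0(\R^2)$, so $H\in L^1(\R^2)$ by Proposition \ref{P:lp-embedding}. By Definition \ref{auto:L-kernel-definition},
\[
(L_{\gamma,t})_{i,j}(v)=\int_\R H(v_0-p,v_1-p)\,\Phi_{(t)}(p)\,dp .
\]
Introduce coordinates $v=(z+w,w)$ with $z,w\in\R$; this map has Jacobian one. Set $h_z(w)=H(z+w,w)$. Then for almost every $z$ we have $h_z\in L^1(\R)$, $\int_\R\|h_z\|_1\,dz=\|H\|_1$, and the $L$-kernel is the one-dimensional convolution $h_z*\Phi_{(t)}$ evaluated at $w$. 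Proposition \ref{H vanishing integral} gives $\int_\R h_z(w)\,dw=0$ for every $z$. Hence
\[
\|(L_{\gamma,t})_{i,j}\|_{L^1(\R^2)}=\int_\R \|h_z*\Phi_{(t)}\|_{L^1(\R)}\,dz .
\]

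Next, fix $z$ and show $\|h_z*\Phi_{(t)}\|_1\to0$ as $t\to\infty$ for each $h\in L^1(\R)$ with $\int h=0$. Using the mean zero,
\[
h*\Phi_{(t)}(w)=\int_\R h(q)\big(\Phi_{(t)}(w-q)-\Phi_{(t)}(w)\big)\,dq .
\]
Fubini then gives the bound $\int|h(q)|\,\|\Phi_{(t)}(\cdot-q)-\Phi_{(t)}\|_1\,dq$. By scaling, $\|\Phi_{(t)}(\cdot-q)-\Phi_{(t)}\|_1=\|\Phi(\cdot-q/t)-\Phi\|_1$. This tends to $0$ as $t\to\infty$ by continuity of translation in $L^1$, since $\Phi$ is Schwartz by Proposition \ref{standard bump properties}. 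It is also bounded by $2\|\Phi\|_1$, so dominated convergence in $q$ gives the claim.

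Finally, conclude in $z$: $\|h_z*\Phi_{(t)}\|_1\le\|h_z\|_1\|\Phi\|_1$, which is integrable in $z$, so a second application of dominated convergence yields the lemma. The main point requiring care is the measurability and Fubini bookkeeping for the fibers $h_z$. This can be handled via Proposition \ref{W_0 fiber integrals}, which gives $h_z\in W_0(\R)\subset L^1$ for every $z$, together with Tonelli on $|H|$. It would be convenient to run the argument along any sequence $t_m\to\infty$, since the dominated convergence theorem is stated for sequences. The remaining steps are routine.
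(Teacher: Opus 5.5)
Your proposal is correct and follows essentially the same route as the paper: pass to diagonal coordinates, use Proposition \ref{H vanishing integral} to subtract $\Phi_{(t)}(y)$, and conclude by dominated convergence using the translation modulus $\omega(q/t)=\|\Phi(\cdot-q/t)-\Phi\|_1\le 2\|\Phi\|_1$. The only difference is cosmetic: the paper applies dominated convergence once, to the two-dimensional majorant $\int_{\R^2}\omega(q/t)\,|H(x+q,q)|\,dx\,dq$, whereas you apply it twice, first in $q$ for each fiber and then in $z$.
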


\begin{proof}
Let $t>0$. We have
\begin{equation}\label{E:ft}
\|(L_{\gamma,t})_{i,j}\|_{1}
=\int_{\R^2} |(L_{\gamma,t})_{i,j}(x+y,y)|\,dx dy
=\int_{\R^2} \Big|\int_{\R} \Phi_{(t)}(p) (H_\gamma)_{i,j}(x+y-p,y-p)\,dp\Big| dx dy.
\end{equation}
By Proposition~\ref{H vanishing integral}, $H$ has integral zero in the diagonal direction. Using this and making the change of variables $q=y-p$, \eqref{E:ft} becomes
\begin{equation}\label{E:ft-rewritten}
\int_{\R^2} \Big|\int_{\R} (\Phi_{(t)}(y-q)-\Phi_{(t)}(y)) (H_\gamma)_{i,j}(x+q,q)\,dq\Big| dx dy.
\end{equation}
Performing a further change of variables $z=y/t$ and using the triangle inequality, \eqref{E:ft-rewritten} is bounded by
\[
\int_{\R^2} \omega\Big(\tfrac{q}{t}\Big) |(H_\gamma)_{i,j}(x+q,q)|\,dx dq,
\]
where
\[
\omega(p)=\int_{\R}|\Phi(z-p)-\Phi(z)|\,dz.
\]
By the continuity of the $L^1$ norm, we have $\lim_{t\to \infty} \omega(q/t)=0$ for every $q\in \R$. Additionally, $\omega(q/t) \leq 2\|\Phi\|_1$. Applying the Lebesgue dominated convergence theorem finishes the proof.
\end{proof}

\begin{definition}[L multiplier]\label{L multiplier}\uses{}\usesdefs{auto:convolution-along-vector-definition,standard bump,geometric parameters,H multiplier}
\lean{Auto.multiplierIndexSet,Auto.lMultiplier}
Let $\gamma=(k,u,a)\in \Gamma$.

Define the index set
\begin{equation}\label{auto:L-kernel-index-set}\mathcal{I}_{\gamma} = \{(m,0)\,:\,m\in\mathbb{Z}, m\not=0\} \cup \{(0,l)\,:\,|l|\le \Delta_{\gamma}\} \subset \mathbb{Z}^2.\end{equation}

For every $\iota \in\mathcal{I}_{\gamma}$, we define  $L_{\gamma,\iota}=(L_{\gamma,\iota})_{i\in [k),j\in \Z}$
such that for $|l|\le \Delta_{\gamma}$,
\begin{equation}\label{auto:L-kernel-central-band}
  (L_{\gamma,(0,l)})_{i,j}= (H_{\gamma})_{i,j} \ast_{(1,1)} (\Phi_{(a_i^1(j+l-1))}-\Phi_{(a_i^1(j+l))})\, ,
 \end{equation}

if $h>0$, then
\begin{equation}\label{auto:L-kernel-positive-band}
(L_{\gamma,(h,0)})_{i,j}= (H_{\gamma})_{i,j} \ast_{(1,1)} (\Phi_{(2^{h-1}a_i^1(j+\Delta_{\gamma}))}-\Phi_{(2^{h}a_i^1(j+\Delta_{\gamma}))})\, ,
\end{equation}
and if $h<0$, then
\begin{equation}\label{auto:L-kernel-negative-band}
 (L_{\gamma,(h,0)})_{i,j}= (H_{\gamma})_{i,j} \ast_{(1,1)} (\Phi_{(2^{h}a_i^1(j-\Delta_{\gamma}-1))}-\Phi_{(2^{h+1}a_i^1(j-\Delta_{\gamma}-1))})\, .
\end{equation}
\end{definition}

\begin{rem}
Note that we could have equivalently used $a_{i}^0$ in place of $a_i^1$.
\end{rem}

\begin{definition}[Summation over $\mathcal I_{\gamma}$]\label{summation-definition}\uses{}\usesdefs{geometric parameters,L multiplier}
\lean{Auto.sumOverMultiplierIndex}
Let $X$ be a normed $\R$-vector space.
Let $\gamma=(k,u,a)\in \Gamma$ and let $D_\iota \in X$ for $\iota \in \mathcal{I}_{\gamma}$. We define
\begin{equation}\label{auto:symmetric-series-definition}
\sum_{\iota \in \mathcal{I}_{\gamma}} D_{\iota}=\lim_{N\to \infty} \sum_{\iota \in\mathcal{I}_{\gamma}, ~|\iota| \leq N} D_{\iota},
\end{equation}
where the limit is taken in $X$.
\end{definition}

\begin{proposition}[Properties of L multipliers]\label{sum L multiplier convergence-L1}
\lean{Auto.sumLMultiplierConvergenceL1}
\leanok
\uses{P:schwartz-into-wiener,convolution vector,H-in-X,L:F_t,L:ft-infty}\usesdefs{auto:Wiener-space-definition,geometric parameters,double sequence of 2D functions,H multiplier,auto:L-kernel-definition,L multiplier,summation-definition}
Let $\gamma=(k,u,a)\in \Gamma$ and $\iota \in\mathcal{I}_{\gamma}$. Then
 $L_{\gamma,\iota}$ is a well-defined element of $\mathcal{X}_k$. In addition, if $i
\in [k)$ and $j\in \Z$ then
\begin{equation}\label{E:L2-convergence}
(H_{\gamma})_{i,j} = \sum_{\iota \in\mathcal{I}_{\gamma}} (L_{\gamma,\iota})_{i,j},
\end{equation}
where the sum on the right converges in $L^1$.
\end{proposition}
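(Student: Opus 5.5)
The plan is to read the symmetric partial sums over $\mathcal I_\gamma$ as a telescoping sum in the scale parameter of $\Phi$, and then pass to the limit using Lemmas \ref{L:F_t} and \ref{L:ft-infty}.

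First, well-definedness. Each $(L_{\gamma,\iota})_{i,j}$ has the form $(H_\gamma)_{i,j}\ast_{(1,1)}(\Phi_{(t)}-\Phi_{(t')})$ with $t,t'>0$. Proposition \ref{H-in-X} gives $(H_\gamma)_{i,j}\in W_0(\R^2)$. Proposition \ref{P:schwartz-into-wiener} gives $\Phi_{(t)},\Phi_{(t')}\in W_0(\R)$, since $\Phi$ is Schwartz by Proposition \ref{standard bump properties}. Proposition \ref{convolution vector} then puts the convolution in $W_0(\R^2)$. By bilinearity we get
\[
(L_{\gamma,\iota})_{i,j}=(L_{\gamma,t})_{i,j}-(L_{\gamma,t'})_{i,j},
\]
in the notation of Definition \ref{auto:L-kernel-definition}. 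Hence $L_{\gamma,\iota}\in\mathcal X_k$.

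Next, the telescoping. Fix $i,j$, write $c=a_i^1(j)$-type scales, and set $T_l=a_i^1(j+l)$ and $D=\Delta_\gamma$. For $N$ larger than $D$, the indices $\iota\in\mathcal I_\gamma$ with $|\iota|\le N$ are $(0,l)$ for $|l|\le D$ and $(h,0)$ for $1\le|h|\le N$. I take $|\iota|$ to be the Euclidean norm, which is what makes this range of indices correct.

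The central block telescopes to
\[
L_{\gamma,T_{-D-1}}-L_{\gamma,T_D}.
\]
The positive block, $h=1,\dots,N$, telescopes to
\[
L_{\gamma,T_D}-L_{\gamma,2^N T_D}.
\]
The negative block, $h=-N,\dots,-1$, has terms $L_{\gamma,2^hT_{-D-1}}-L_{\gamma,2^{h+1}T_{-D-1}}$ and telescopes to
\[
L_{\gamma,2^{-N}T_{-D-1}}-L_{\gamma,T_{-D-1}}.
\]
Adding the three blocks, the partial sum equals
\[
(L_{\gamma,2^{-N}T_{-D-1}})_{i,j}-(L_{\gamma,2^{N}T_{D}})_{i,j}.
\]

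Finally, let $N\to\infty$. Since $2^{-N}T_{-D-1}\to0$, Lemma \ref{L:F_t} sends the first term to $(H_\gamma)_{i,j}$ in $L^1$. Since $2^NT_D\to\infty$, Lemma \ref{L:ft-infty} sends the second term to $0$ in $L^1$. So the partial sums converge in $L^1$ to $(H_\gamma)_{i,j}$. This is exactly the meaning of \eqref{E:L2-convergence} under Definition \ref{summation-definition} with $X=L^1(\R^2)$.

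The main obstacle is careful bookkeeping rather than analysis: matching the index shifts ($j+l-1$ versus $j+l$, and $j-\Delta_\gamma-1$) so that the three blocks glue without gaps, and confirming that the truncation $|\iota|\le N$ contains the whole central band once $N\ge D$. The analytic input is entirely supplied by Lemmas \ref{L:F_t} and \ref{L:ft-infty}.
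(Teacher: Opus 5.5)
Your proposal is correct and is essentially the paper's proof. You get well-definedness from Propositions \ref{P:schwartz-into-wiener}, \ref{convolution vector} and \ref{H-in-X}, telescope the partial sum over $|\iota|\le N$ (for $N>\Delta_\gamma$) to $(L_{\gamma,2^{-N}a_i^1(j-\Delta_\gamma-1)})_{i,j}-(L_{\gamma,2^{N}a_i^1(j+\Delta_\gamma)})_{i,j}$, and conclude with Lemmas \ref{L:F_t} and \ref{L:ft-infty}, exactly as the paper does; the only quibble is that the Euclidean norm for $|\iota|$ is not special, since every element of $\mathcal{I}_\gamma$ has at most one nonzero coordinate and any norm restricting to the absolute value on the axes gives the same truncation.
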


\begin{proof}
Each
$(L_{\gamma,\iota})_{i,j}$ belongs to $W_0(\R^2)$ by Propositions \ref{P:schwartz-into-wiener}, \ref{convolution vector} and~\ref{H-in-X}. Thus, $L_{\gamma,\iota}\in \mathcal{X}_k$.

Let $N\in \N$ be such that $N>\Delta_\gamma$. Then
\[
\sum_{\iota \in\mathcal{I}_{\gamma}, ~|\iota| \leq N} (L_{\gamma,\iota})_{i,j}
=(L_{\gamma,2^{-N} a_i^1(j-\Delta_\gamma-1)})_{i,j}-(L_{\gamma,2^{N} a_i^1(j+\Delta_\gamma)})_{i,j}.
\]
An application of Lemmas~\ref{L:F_t} and~\ref{L:ft-infty} yields~\eqref{E:L2-convergence}.
\end{proof}

\begin{lemma}\label{sandwich sums L1}
\lean{Auto.sandwichSumsL1}
\leanok
\uses{}\usesdefs{geometric parameters,double sequence of 2D functions,2D Gaussians,sandwich kernel,L multiplier,summation-definition}
Let $\gamma=(k,u,a)\in \Gamma$.
Assume $X, X_\iota\in \mathcal{X}_k$ for $\iota\in \mathcal{I}_{\gamma}$ are such that for every $i\in [k)$, $j\in\Z$, $X_{i,j}=\sum_{\iota\in \mathcal{I}_{\gamma}} (X_\iota)_{i,j}$ holds in $L^1$.
Then for every $i\in [k), j\in\Z$,
\begin{equation}\label{auto:H-kernel-L-decomposition} \mathbf{M}(\gamma,X,i)_j = \sum_{\iota\in \mathcal{I}_{\gamma}} \mathbf{M}(\gamma,X_\iota,i)_j\quad\text{in}\;L^1.\end{equation}
\end{lemma}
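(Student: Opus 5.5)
The identity is linear in the sandwiched factor, and multiplication by a bounded function is continuous on $L^1$; the plan is to exploit exactly this. Fix $i\in[k)$ and $j\in\Z$. By Definition \ref{sandwich kernel}, $\mathbf{M}(\gamma,X,i)_j(y)=P(y_{[k)\setminus i})\,X_{i,j}(y_i)$, where
\[
P(y_{[k)\setminus i})=\Big(\prod_{m\in[i)}(G_\gamma)_{m,j}(y_m)\Big)\Big(\prod_{m=i+1}^{k-1}(G_\gamma)_{m,j-1}(y_m)\Big)
\]
does not depend on $X$. This factor is the same for $X$ and for every $X_\iota$. In particular, for each $N$, linearity of the tensor product in the $i$-th slot gives
\[
\sum_{\iota\in\mathcal I_\gamma,\,|\iota|\le N}\mathbf{M}(\gamma,X_\iota,i)_j=P\otimes\Big(\sum_{\iota\in\mathcal I_\gamma,\,|\iota|\le N}(X_\iota)_{i,j}\Big),
\]
where the tensor product is understood with the variable $y_i$ placed in the $i$-th slot.

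Next, by Fubini--Tonelli we have $\|P\otimes g\|_{L^1((\R^2)^k)}=\|P\|_{L^1((\R^2)^{k-1})}\|g\|_{L^1(\R^2)}$ for every $g\in L^1(\R^2)$. Each factor $(G_\gamma)_{m,j'}$ is a rotated product of two $L^1$-normalized nonnegative Gaussians, so it has $L^1$ norm $1$ because $W_u$ is unitary. Hence $\|P\|_1=1$, and when $k=1$ we take $P\equiv1$. It follows that
\[
\Big\|\mathbf{M}(\gamma,X,i)_j-\sum_{|\iota|\le N}\mathbf{M}(\gamma,X_\iota,i)_j\Big\|_1=\Big\|X_{i,j}-\sum_{|\iota|\le N}(X_\iota)_{i,j}\Big\|_1.
\]
By hypothesis and Definition \ref{summation-definition}, the right-hand side tends to $0$ as $N\to\infty$, which proves \eqref{auto:H-kernel-L-decomposition}.

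I expect no real obstacle here. The only care points are bookkeeping ones. First, we must justify measurability and apply Tonelli to the product of the nonnegative function $|P|$ with $|g|$. Second, we must check that the unitary change of variables $v\mapsto W_uv$ preserves the $L^1$ norm of the Gaussians. Third, we must treat the degenerate case $k=1$, where the product $P$ is empty.
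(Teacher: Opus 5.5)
Your proposal is correct and follows the paper's proof. You factor out the $X$-independent Gaussian factors, write the partial-sum difference as these factors times $X_{i,j}-\sum_{|\iota|\le N}(X_\iota)_{i,j}$, and conclude by Fubini--Tonelli; your observation that $\|P\|_1=1$ exactly (by unitarity of $W_u$) just sharpens the paper's remark that the Gaussian factors have finite $L^1$ norms independent of $N$.
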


\begin{proof}
For $N\in\N$ and $y\in(\R^2)^k$,
\[
\mathbf M(\gamma,X,i)_j(y)-
\sum_{\substack{\iota\in\mathcal I_\gamma\\|\iota|\le N}}
\mathbf M(\gamma,X_\iota,i)_j(y)
\]
\[
=
\Big(\prod_{m\in[i)}(G_\gamma)_{m,j}(y_m)\Big)
\Big(X_{i,j}(y_i)-
\sum_{\substack{\iota\in\mathcal I_\gamma\\|\iota|\le N}}
(X_\iota)_{i,j}(y_i)\Big)
\Big(\prod_{m=i+1}^{k-1}(G_\gamma)_{m,j-1}(y_m)\Big).
\]
The Gaussian factors have finite $L^1$ norms independent of $N$. The assumed $L^1$ convergence of the middle factor therefore gives the asserted $L^1$ convergence.
\end{proof}

\begin{lemma}\label{prism sum le sum prism-L1}
\lean{Auto.prismSumLeSumPrismL1}
\leanok
\uses{prism BL inequality,M to K}\usesdefs{auto:Wiener-space-definition,normalized function tuples,auto:prism-form-definition,geometric parameters,L multiplier,summation-definition}
Let $\gamma=(k,u,a)\in \Gamma$.
If $M=\sum_{\iota\in \mathcal{I}_\gamma} M_\iota$ holds in $L^1$ for $M,M_\iota\in W_0((\R^2)^k)$, then for every $\mathbf{F}\in\mathfrak{F}$,
\begin{equation}\label{auto:prism-form-series-bound} |\Lambda_k(M)(\mathbf{F})| \le \sup_{N\in\mathbb N}\sum_{\substack{\iota\in \mathcal{I}_\gamma\\|\iota|\le N}} |\Lambda_k(M_\iota)(\mathbf{F})|. \end{equation}
Here the supremum is taken in the extended half-line $[0,\infty]$.
\end{lemma}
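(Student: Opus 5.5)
The plan is to use that $\Lambda_k$ is a linear functional on $W_0((\R^2)^k)$ which is continuous with respect to the $L^1$ norm. Then pass to the limit in the partial sums of Definition \ref{summation-definition}.

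\emph{Step 1: linearity and the $L^1$ bound.} By Definition \ref{auto:prism-form-definition}, $\Lambda_k(M)(\F)=\Theta_k(K_k(M))(\F)$. Both $M\mapsto K_k(M)$ and $K\mapsto\Theta_k(K)(\F)$ are linear, since they are defined by absolutely convergent integrals of $W_0$ functions (Proposition \ref{W_0 Brascamp Lieb}). Combining Proposition \ref{prism BL inequality} with Proposition \ref{M to K} gives, for every $M'\in W_0((\R^2)^k)$,
\[
|\Lambda_k(M')(\F)|\le \|K_k(M')\|_1\le \|M'\|_1 .
\]

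\emph{Step 2: apply this to the tail.} For $N\in\N$ set $S_N=\sum_{\iota\in\mathcal I_\gamma,\,|\iota|\le N}M_\iota$. This is a finite sum, since $\mathcal I_\gamma\cap\{|\iota|\le N\}$ is finite, so $S_N\in W_0$. Hence $M-S_N\in W_0((\R^2)^k)$, and by linearity
\[
|\Lambda_k(M)(\F)|\le |\Lambda_k(M-S_N)(\F)|+\sum_{|\iota|\le N}|\Lambda_k(M_\iota)(\F)|\le \|M-S_N\|_1+\sup_{N'}\sum_{|\iota|\le N'}|\Lambda_k(M_\iota)(\F)|.
\]

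\emph{Step 3: take the limit.} Let $N\to\infty$. By hypothesis $\|M-S_N\|_1\to0$, which gives the claim. If the supremum is $+\infty$, there is nothing to prove.

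The only step needing care is the linearity of $\Lambda_k$ and the fact that $\Theta_k(K)(\F)$ is a genuine (absolutely convergent) integral. This makes splitting $\Lambda_k(M)=\Lambda_k(M-S_N)+\sum\Lambda_k(M_\iota)$ legitimate. Absolute convergence follows from the H\"older argument in the proof of Proposition \ref{prism BL inequality} applied to $|K|$; the rest is routine.
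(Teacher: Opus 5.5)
Your proposal is correct and follows the paper's proof: you split off the tail $E_N=M-S_N$, bound $|\Lambda_k(E_N)(\F)|\le\|K_k(E_N)\|_1\le\|E_N\|_1$ using Propositions \ref{prism BL inequality} and \ref{M to K}, and let $N\to\infty$. Your added remarks on linearity, absolute convergence and the finiteness of the partial index sets only spell out details the paper leaves implicit.
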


\begin{proof}
For $N\in\N$, set
\[
E_N=M-\sum_{\substack{\iota\in\mathcal I_\gamma\\|\iota|\le N}}M_\iota.
\]
Then $\|E_N\|_1\to0$. By the triangle inequality,
\[
|\Lambda_k(M)(\mathbf F)|
\le
\sum_{\substack{\iota\in\mathcal I_\gamma\\|\iota|\le N}}
|\Lambda_k(M_\iota)(\mathbf F)|
+|\Lambda_k(E_N)(\mathbf F)|.
\]
Propositions \ref{prism BL inequality} and \ref{M to K} give
\[
|\Lambda_k(E_N)(\mathbf F)|\le\|E_N\|_1.
\]
Letting $N\to\infty$ proves the claim.
\end{proof}

\begin{definition}[N multiplier]\label{N multiplier}\uses{}\usesdefs{geometric parameters,square root Gaussian difference,L multiplier}
\lean{Auto.sigmaMultiplier,Auto.nMultiplier}
  Let $\gamma=(k,u,a)\in \Gamma$ and assume $k\le n-1$.
Let $\nu=1$ if $k<n-1$ and $\nu=2$ if $k=n-1$ and let $i\in [k)$, $j\in \Z$.
For $\iota\in\mathcal{I}_{\gamma}$ define functions $\sigma_{\gamma,\iota,i,j}:\R\to\R$ so that
if $|l|\le \Delta_{\gamma}$,
\begin{equation}\label{auto:sigma-central-band} \sigma_{\gamma,(0,l),i,j} = s(a_i^1(\cdot+l), j) \end{equation}
and for $h>0$,
\begin{equation}\label{auto:sigma-positive-band} \sigma_{\gamma,(h,0),i,j} = s(2^h a_i^1(\cdot+\Delta_{\gamma}), j) \end{equation}
and for $h<0$,
\begin{equation}\label{auto:sigma-negative-band} \sigma_{\gamma,(h,0),i,j} = s(2^h a_i^1(\cdot-\Delta_{\gamma}), j). \end{equation}
For every $\iota\in\mathcal{I}_{\gamma}$ we define $N_{\gamma,\iota}= (N_{\gamma,\iota})_{i\in [k),j\in \Z}$ such that
\begin{equation}\label{auto:N-kernel-definition}
 (N_{\gamma,\iota})_{i,j} =
 \mathcal F^{-1}((\xi,\eta) \mapsto \widehat{\sigma_{\gamma,\iota,i,j}}(\xi+\eta)^{-\nu}\widehat{(L_{\gamma,\iota})_{i,j}}(\xi,\eta))\, .
\end{equation}
\end{definition}

\begin{proposition}\label{auto:N-kernel-well-definedness}
\lean{Auto.nKernelWellDefinedness}
\leanok
\uses{four scale Gaussian kernel,H-in-X,convolution vector}\usesdefs{auto:Wiener-space-definition,auto:convolution-along-vector-definition,geometric parameters,double sequence of 2D functions,H multiplier,L multiplier,N multiplier}
Let $\gamma=(k,u,a)\in \Gamma$ and assume $k\le n-1$. Then for every $\iota\in\mathcal{I}_{\gamma}$, the double sequence $N_{\gamma,\iota}$ is well-defined and belongs to $\mathcal{X}_k$.
\end{proposition}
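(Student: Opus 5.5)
We factor the Fourier multiplier of each entry $(N_{\gamma,\iota})_{i,j}$ as a one-dimensional multiplier in the diagonal frequency $\xi+\eta$ times the Fourier transform of $(H_\gamma)_{i,j}$. We then identify the one-dimensional factor as a kernel $\rho$ of the type treated in Proposition \ref{four scale Gaussian kernel}, and conclude with Proposition \ref{convolution vector}.

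First we compute the relevant Fourier transforms. By Definition \ref{L multiplier} and Proposition \ref{convolution vector}, each $(L_{\gamma,\iota})_{i,j}=(H_\gamma)_{i,j}*_{(1,1)}\psi$, where $\psi=\Phi_{(\lambda_-)}-\Phi_{(\lambda_+)}$. The scales satisfy $\lambda_+=2\lambda_-$ when $\iota=(h,0)$, and $\lambda_\pm=a_i^1(j+l)$, $a_i^1(j+l-1)$ when $\iota=(0,l)$; in either case $2\lambda_-\le\lambda_+$. Hence
\[
\widehat{(L_{\gamma,\iota})_{i,j}}(\xi,\eta)=\widehat{(H_\gamma)_{i,j}}(\xi,\eta)\bigl(\widehat\Phi(\lambda_-(\xi+\eta))-\widehat\Phi(\lambda_+(\xi+\eta))\bigr).
\]
Next, $\widehat{\sigma_{\gamma,\iota,i,j}}(\zeta)=(\g(\mu_-\zeta)-\g(\mu_+\zeta))^{1/2}$, where $\mu_-,\mu_+$ are consecutive values $b(j-1),b(j)$ of the spaced sequence $b$ defining $\sigma$. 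The sequence $b$ lies in ${\rm A}$ by Proposition \ref{Operations on spaced sequences}, so $2\mu_-\le\mu_+$ and the quantity under the square root is nonnegative. The multiplier therefore equals $\widehat{(H_\gamma)_{i,j}}(\xi,\eta)\,m(\xi+\eta)$ with
\[
m(\zeta)=(\widehat\Phi(\lambda_-\zeta)-\widehat\Phi(\lambda_+\zeta))(\g(\mu_-\zeta)-\g(\mu_+\zeta))^{-\nu/2}.
\]
This is exactly $\widehat\rho$ for $\rho$ as in \eqref{E:definition-rho}, with exponent $-\nu/2\in\{-1/2,-1\}\subset[-1,0)$ and $\phi=\Phi$. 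The function $\Phi$ meets the hypotheses on $\phi$ by Proposition \ref{standard bump properties}, with $N=2$ and $c=\tilde C$.

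The main obstacle is verifying the ordering \eqref{E:assumption-on-mu-lambda}, namely $2\mu_-\le2\lambda_-\le\lambda_+\le\mu_+$, for each case of $\iota$. For $\iota=(0,l)$ we have $\sigma$ built from $a_i^1(\cdot+l)$, giving $\mu_-=a_i^1(j+l-1)=\lambda_-$ and $\mu_+=a_i^1(j+l)=\lambda_+$. The needed inequalities then hold with equality except $2\lambda_-\le\lambda_+$, which is the spacing property. For $h>0$ we get $\mu_-=2^ha_i^1(j-1+\Delta_\gamma)$ and $\mu_+=2^ha_i^1(j+\Delta_\gamma)$, while $\lambda_-=2^{h-1}a_i^1(j+\Delta_\gamma)$ and $\lambda_+=2^ha_i^1(j+\Delta_\gamma)$. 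The required $\mu_-\le\lambda_-$ becomes $2a_i^1(j-1+\Delta_\gamma)\le a_i^1(j+\Delta_\gamma)$, which is again spacing, and $\lambda_+\le\mu_+$ holds with equality; the case $h<0$ is analogous. If some degenerate case fails these exact inequalities, we would adjust using the room built into $\Delta_\gamma$, or rescale. Once the ordering is checked, Proposition \ref{four scale Gaussian kernel} gives $\rho\in W_0(\R)$ with the decay bound, and in particular $m=\widehat\rho$ is continuous and bounded.

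Finally, by Proposition \ref{H-in-X} we have $(H_\gamma)_{i,j}\in W_0(\R^2)$, so Proposition \ref{convolution vector} gives $(H_\gamma)_{i,j}*_{(1,1)}\rho\in W_0(\R^2)$, with Fourier transform $\widehat{(H_\gamma)_{i,j}}(\xi,\eta)\widehat\rho(\xi+\eta)$. This is the multiplier in \eqref{auto:N-kernel-definition}. That multiplier is the Fourier transform of an $L^1$ function, so its inverse Fourier transform is well-defined in the distributional sense, and by injectivity of the Fourier transform it coincides with this $W_0$ function. Hence $N_{\gamma,\iota}$ is well-defined and lies in $\mathcal X_k$.
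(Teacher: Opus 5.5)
Your proposal is correct and follows the paper's route. You write $(N_{\gamma,\iota})_{i,j}=(H_\gamma)_{i,j}*_{(1,1)}\rho$, obtain $\rho\in W_0(\R)$ from Proposition \ref{four scale Gaussian kernel} with $\phi=\Phi$, and conclude via Propositions \ref{H-in-X} and \ref{convolution vector}, while also spelling out the scale-ordering check that the paper leaves implicit. That check holds exactly in every case, so your fallback about adjusting via $\Delta_\gamma$ is not needed: for $h<0$ one has $2\mu_-=2\lambda_-=\lambda_+$, and $\lambda_+\le\mu_+$ is just the spacing of $a_i^1$. Like the paper, you could also note that $\rho$ and $(H_\gamma)_{i,j}$ are real-valued, so the resulting kernel is real-valued.
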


\begin{proof}
Each $(N_{\gamma,\iota})_{i,j}$ is real-valued because it is the inverse Fourier transform of an even real-valued function.
Additionally, $(N_{\gamma,\iota})_{i,j}$ can be written in the form
\[
(N_{\gamma,\iota})_{i,j}=(H_\gamma)_{i,j} \ast_{(1,1)} \rho
\]
for some function $\rho: \R \to \R$.
Thanks to Proposition~\ref{four scale Gaussian kernel}, we have $\rho \in W_0(\R)$.  Proposition~\ref{H-in-X} yields that $(H_{\gamma})_{i,j} \in W_0(\R^2)$. Thus, $(N_{\gamma,\iota})_{i,j} \in W_0(\R^2)$ by Proposition~\ref{convolution vector}, i.e.\ $N_{\gamma,\iota} \in \mathcal{X}_k$.
\end{proof}

\subsection{Gaussian domination}

\begin{proposition}[Kernel estimate for Gaussian domination]\label{H kernel estimate Gaussian domination}
\lean{Auto.hKernelEstimateGaussianDomination}
\leanok
\uses{Operations on spaced sequences,Properties of distance of sequences,diagonal square root,Gaussian bump decay}\usesdefs{gaussian,bracket bump,multiplicatively spaced monotone sequences,Distance of spaced sequences,closed balls in A,geometric parameters,auto:unitary-matrices-definition,2D Gaussians,auto:Gaussian-difference-kernel-definition,square root Gaussian difference,s multiplier,H multiplier}
Let $\gamma=(k,u,a)\in \Gamma$ and let $i\in [k)$.
There exists a finite multiset $\mathcal{P}\subset B_{\mathrm{dist}}(a_i^1,\Delta_\gamma)^2\times [2)$ with $\#\mathcal{P}= 6$ and for every $v\in \R^2$ and $j\in\Z$,
\begin{equation}\label{auto:H-kernel-Gaussian-domination}|(H_\gamma)_{i,j}|(v) \le C_{\ref{H kernel estimate Gaussian domination}} \sum_{(t_0,t_1,u)\in\mathcal{P}} \langle (W_u v)_0\rangle_{(t_0(j))}^2 \langle (W_u v)_1\rangle_{(t_1(j))}^2, \end{equation}
where
$C_{\ref{H kernel estimate Gaussian domination}}
=
4C_{\ref{diagonal square root},2}^{\,2}
+
C_{\ref{Gaussian bump decay},0,2}^{\,2}.$
\end{proposition}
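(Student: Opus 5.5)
Write $(H_\gamma)_{i,j}=(s_\gamma)_{i,j}\otimes(s_\gamma)_{i,j}-(G_\gamma)_{i,j-1}+(G_\gamma)_{i,j}$ and bound the three terms separately, producing two elements of $\mathcal P$ from the tensor square and two from each Gaussian (one per orientation, possibly repeated). That gives $\#\mathcal P=6$.

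\textbf{Gaussian terms.} For $(G_\gamma)_{i,j}(v)=\g_{(a_i^0(j))}((W_uv)_0)\,\g_{(a_i^1(j))}((W_uv)_1)$, Proposition~\ref{Gaussian bump decay} with $m=0$, $N=2$ bounds each factor by $C_{\ref{Gaussian bump decay},0,2}\langle\cdot\rangle^2_{(a_i^m(j))}$. This gives the summand $(a_i^0,a_i^1,u_i)$ with constant $C_{\ref{Gaussian bump decay},0,2}^2$.

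For the $j-1$ term, use the shifted sequences $t_m(j)=a_i^m(j-1)$. By Proposition~\ref{Properties of distance of sequences}(iv),(ii), these lie in $B_{\mathrm{dist}}(a_i^1,\Delta(a_i)+1)\subset B_{\mathrm{dist}}(a_i^1,\Delta_\gamma)$. Here we use (iii) for $a_i^0$, together with $\dist(a_i^1,a_i^0)=\Delta(a_i)$ and $\Delta_\gamma\ge 1+\Delta(a_i)$. The shifted sequences remain in $\mathrm A$ by Proposition~\ref{Operations on spaced sequences}(iii).

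The two Gaussian terms contribute only $C_{\ref{Gaussian bump decay},0,2}^2$ in total, not twice that, because at each point we may take the maximum. Alternatively, one could absorb the factor into the first summand and reserve slack. Handling this bookkeeping against the exact stated constant is a minor detail.

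\textbf{Tensor square.} Let $b$ be as in Definition~\ref{s multiplier}: $b=\sqrt{(a_i^0)^2+(a_i^1)^2}$ if $u_i=0$, and $b=\sqrt2a_i^1$ if $u_i=1$. Then $b\in\mathrm A$ by Proposition~\ref{Operations on spaced sequences}(ii),(iv). Proposition~\ref{diagonal square root} with $t_0=b(j-1)$, $t_1=b(j)$ and $N=2$ gives
\[
|s|(x)\le C_{\ref{diagonal square root},2}\big(\langle x\rangle^2_{(b(j-1))}+\langle x\rangle^2_{(b(j))}\big).
\]
The product $|s(v_0)||s(v_1)|$ expands into four terms with constant $C_{\ref{diagonal square root},2}^2$. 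Each is a tensor of brackets in the coordinates $v_0,v_1$, i.e.\ orientation $u=0$ with $W_0=\mathrm{id}$.

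The mixed terms $\langle v_0\rangle_{(b(j-1))}\langle v_1\rangle_{(b(j))}$ can be dominated by pure ones. Since $b(j-1)\le b(j)$, Proposition~\ref{compare brackets} gives
\[
\langle x\rangle^2_{(b(j))}\le\langle x\rangle^2_{(b(j-1))}\cdot\frac{b(j)}{b(j-1)}\,,
\]
which is not uniformly bounded, so this route fails. Instead keep all four combinations but bound them by $2$ pure-type summands via $AB'+A'B\le AA'+BB'$-type inequalities. This does not work directly either. More honestly: the four terms are $(t_0,t_1)\in\{b(\cdot-1),b\}^2$, and the factor $4$ in the constant suggests absorbing them into fewer elements.

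\textbf{Main obstacle.} Showing all scales lie in $B_{\mathrm{dist}}(a_i^1,\Delta_\gamma)$. We have $\dist(b,a_i^1)\le\Delta(a_i)$ when $u_i=0$, since $\max(a^0,a^1)\le b\le\sqrt2\max\le 2\max$ and $\max(a^0,a^1)\in B(a^1,\Delta(a_i))$; the factor $\sqrt2$ is handled by monotonicity with shift $1$. When $u_i=1$, item (vi) gives distance $\le 1$. Shifting by $j-1$ adds one more. The total is $\Delta(a_i)+1$ or $\Delta(a_i)+2$, which may exceed $\Delta_\gamma=1+\sum\Delta$. I would try to control this through the precise estimate $a(j-1)\le b(j)/\sqrt2$ using the $2$-spacing, i.e.\ that $\sqrt2\max(a^0,a^1)(j-1)\le \max(\cdots)(j)$. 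The remaining obstacle is this careful distance accounting, so that six elements in the ball suffice with constant $4C_{\ref{diagonal square root},2}^2+C_{\ref{Gaussian bump decay},0,2}^2$.
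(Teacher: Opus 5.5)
As written, the proposal does not close. You leave the tensor-square bookkeeping and the distance bound as open obstacles, and one claim is false. Both gaps can be filled within your own setup.

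\textbf{Counting and constants.} You do not need to merge the four tensor terms. The four summands $(c,c',0)$ with $c,c'\in\{b(\cdot-1),b\}$ each carry coefficient $C_{\ref{diagonal square root},2}^2$. The two Gaussian summands $(a_i^0,a_i^1,u_i)$ and $(a_i^0(\cdot-1),a_i^1(\cdot-1),u_i)$ each carry $C_{\ref{Gaussian bump decay},0,2}^2$. Together they give exactly $\#\mathcal P=6$, and a single constant in front of the sum only has to dominate each coefficient. Your opening count (``two from the tensor square, two from each Gaussian'') is what sent you looking for an inequality merging the mixed terms. The factor $4$ in the stated constant is not a merging cost (see below). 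Your remark that $|(G_\gamma)_{i,j}-(G_\gamma)_{i,j-1}|\le\max((G_\gamma)_{i,j},(G_\gamma)_{i,j-1})$ is true but not needed.

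\textbf{Distances.} Your claim $\dist(b,a_i^1)\le\Delta(a_i)$ for $u_i=0$ is false: if $a_i^0=a_i^1$, then $b=\sqrt2a_i^1$ is at distance $1$ from $a_i^1$. Going through the triangle inequality and Proposition~\ref{Properties of distance of sequences}(iv) only gives $\Delta(a_i)+2$ for $b(\cdot-1)$, which is too large. Estimate directly instead, as your last sentence suggests. With $D=\Delta(a_i)$ we have $\max(a_i^0,a_i^1)(j)\le a_i^1(j+D)$, so by $2$-spacing
\[
b(j)\le\sqrt2\,a_i^1(j+D)\le a_i^1(j+D+1),\qquad b(j-1)\le\sqrt2\,a_i^1(j+D-1)\le a_i^1(j+D),
\]
while $b(j)\ge b(j-1)\ge a_i^1(j-1)$. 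Hence $b,b(\cdot-1)\in B_{\mathrm{dist}}(a_i^1,1+\Delta(a_i))\subset B_{\mathrm{dist}}(a_i^1,\Delta_\gamma)$. For $u_i=1$, both $\sqrt2a_i^1$ and $\sqrt2a_i^1(\cdot-1)$ are at distance at most $1$ from $a_i^1$. With this, your argument proves the proposition, even with the smaller constant $\max(C_{\ref{diagonal square root},2}^2,C_{\ref{Gaussian bump decay},0,2}^2)$.

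\textbf{Comparison with the paper.} The paper avoids this distance computation. It uses the scales $t_m^+(j)=\max(a_i^0,a_i^1)(j+m-1)$ (resp.\ $a_i^1(j+m-1)$ if $u_i=1$), which lie in the ball by the triangle inequality and (iv) alone. It then pays for $t_m^+(j)\le b(j+m-1)\le\sqrt2\,t_m^+(j)$ with a factor $2$ in the bracket bound for $|s|$. Squared, that factor is the $4$ in $4C_{\ref{diagonal square root},2}^2$.
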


\begin{proof}
For $m,\ell\in[2)$, $j\in\Z$, let
$t_{m,\ell}(j)=a_i^\ell(j+m-1).$
Thus $t_{0,\ell}(j)=a_i^\ell(j-1)$ and
$t_{1,\ell}(j)=a_i^\ell(j)$. By Propositions
\ref{Operations on spaced sequences} and
\ref{Properties of distance of sequences}, we have
$t_{m,\ell}\in{\rm A}$ and
$\dist(a_i^1,t_{m,\ell})
\le \Delta_\gamma$.

Assume first that $u(i)=0$. For $m\in[2)$, let
$t_m^+=\max(t_{m,0},t_{m,1})\in \mathrm{A}$, and set
\[
\mathcal P
=
\left\{
(t_m^+,t_{m'}^+,0):m,m'\in[2)
\right\}
\cup
\left\{
(t_{m,0},t_{m,1},0):m\in[2)
\right\}.
\]
By Proposition~\ref{Operations on spaced sequences} and Proposition~\ref{Properties of distance of sequences}, if $t$ is one of $t_m^+,t_{m,0},t_{m,1}\in\mathrm{A}$, then
\[\dist(a_i^1,t)\le 1 + \Delta(a_i)\le \Delta_\gamma,\]
so $\mathcal P\subset{B_{\dist}(a_i^1,\Delta_\gamma)}^2\times[2)$. Also by definition
$\#\mathcal P=6$.

Fix $j\in\mathbb Z$, and write
$s=(s_\gamma)_{i,j}$ and $H=(H_\gamma)_{i,j}$. If $b$ is the
sequence in \eqref{E:b-definition}, then
\[
t_m^+(j)\le b(j+m-1)\le\sqrt2\,t_m^+(j).
\]

By Definitions~\ref{H multiplier} and \ref{2D Gaussians}, and the
nonnegativity of the Gaussians,
\[
|H(v)|
\le
|s(v_0)s(v_1)|
+
\sum_{m=0}^1
\g_{(t_{m,0}(j))}(v_0)
\g_{(t_{m,1}(j))}(v_1).
\]
Proposition~\ref{diagonal square root}, applied with $N=2$,
gives
\[
|s(x)|
\le
2C_{\ref{diagonal square root},2}
(\langle x\rangle_{(t_0^+(j))}^2+
\langle x\rangle_{(t_1^+(j))}^2).
\]
By Proposition~\ref{Gaussian bump decay} and scaling,
$\g_{(r)}(x)\le
C_{\ref{Gaussian bump decay},0,2}\langle x\rangle_{(r)}^2$. The claim follows.

Assume now that $u(i)=1$, and set
\[
\mathcal P
=
\left\{
(t_{m,1},t_{m',1},0):m,m'\in[2)
\right\}
\cup
\left\{
(t_{m,0},t_{m,1},1):m\in[2)
\right\}.
\]
Again, $\#\mathcal P=6$, and the required distance condition
holds.

In this case $s=(s_\gamma)_{i,j}=s(\sqrt2a_i^1,j)$.
Moreover,
\[
|H(v)|
\le
|s(v_0)s(v_1)|
+
\sum_{m=0}^1
\g_{(t_{m,0}(j))}\bigl((W_1v)_0\bigr)
\g_{(t_{m,1}(j))}\bigl((W_1v)_1\bigr).
\]
Comparing the scales $\sqrt2t_{m,1}(j)$ with $t_{m,1}(j)$ and
applying Proposition~\ref{diagonal square root} with $N=2$, we get
\[
|s(x)|
\le
2C_{\ref{diagonal square root},2}
(
\langle x\rangle_{(t_{0,1}(j))}^2
+
\langle x\rangle_{(t_{1,1}(j))}^2
).
\]
Together Proposition~\ref{Gaussian bump decay} we obtain the claim.
\end{proof}

\begin{lemma}[constant $C_{\ref{H kernel estimate Gaussian domination}}$ \auto]\label{constant H kernel estimate Gaussian domination}
\lean{Auto.constantHKernelEstimateGaussianDomination}
\leanok
\uses{constant diagonal square root,diagonal square root,Gaussian bump decay,H kernel estimate Gaussian domination}\usesdefs{}
\begin{equation}\label{constant H kernel estimate Gaussian domination bound}
C_{\ref{H kernel estimate Gaussian domination}}
=51^2\cdot2^{15}+12^2<2^{27}.
\end{equation}
\end{lemma}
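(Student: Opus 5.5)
This is a direct computation. Start from the defining formula in Proposition \ref{H kernel estimate Gaussian domination},
\[
C_{\ref{H kernel estimate Gaussian domination}}=4C_{\ref{diagonal square root},2}^{\,2}+C_{\ref{Gaussian bump decay},0,2}^{\,2}.
\]
Substitute the exact values already computed earlier in the paper. By Lemma \ref{constant diagonal square root}, equation \eqref{auto:constant-diagonal-square-root-two}, we have $C_{\ref{diagonal square root},2}=51\cdot 2^6\sqrt2$. Squaring gives
\[
C_{\ref{diagonal square root},2}^{\,2}=51^2\cdot 2^{13}\cdot 2=51^2\cdot 2^{13},
\]
since $2^{12}\cdot 2=2^{13}$. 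Hence $4C_{\ref{diagonal square root},2}^{\,2}=51^2\cdot 2^{15}$.

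From \eqref{auto:Gaussian-bump-decay-constant} with $m=0$, $N=2$ we get $C_{\ref{Gaussian bump decay},0,2}=(4\cdot 3)^{1}=12$. This value is also recorded in the proof of Lemma \ref{constant diagonal square root}. So the second summand is $12^2$, and the stated identity follows.

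For the strict inequality, note that $51^2=2601<2^{12}=4096$, so $51^2\cdot 2^{15}<2^{27}$. The margin is large: the difference is $(4096-2601)\cdot 2^{15}=1495\cdot 2^{15}$, far more than $144$. Hence
\[
51^2\cdot 2^{15}+144<2^{27}.
\]

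There is essentially no obstacle. The only point needing care is the factor of $2$ coming from $(\sqrt2)^2$, so that the power of $2$ is exactly $2^{15}$ rather than $2^{14}$.
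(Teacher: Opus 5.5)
Your computation is correct and is exactly the paper's argument: substitute $C_{\ref{diagonal square root},2}=51\cdot2^6\sqrt2=3264\sqrt2$ and $C_{\ref{Gaussian bump decay},0,2}=12$ into the defining sum and compare with $2^{27}$ (the paper just evaluates the total as $85229712$). One cosmetic slip: the middle expression in your squaring display should read $51^2\cdot2^{12}\cdot2$, not $51^2\cdot2^{13}\cdot2$; your final value $51^2\cdot2^{13}$ is right.
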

\begin{proof}[Proof \auto]
Lemma \ref{constant diagonal square root}, in particular \eqref{auto:constant-diagonal-square-root-two}, gives
$C_{\ref{diagonal square root},2}=3264\sqrt2$, while
$C_{\ref{Gaussian bump decay},0,2}=12$. Substitution in the defining sum gives
\[
C_{\ref{H kernel estimate Gaussian domination}}
=4(3264\sqrt2)^2+12^2
=85229712<2^{27}.
\]
\end{proof}

\begin{proposition}[Kernel derivative estimate for Gaussian domination]\label{H kernel derivative estimate Gaussian domination}
\lean{Auto.hKernelDerivativeEstimateGaussianDomination}
\leanok
\uses{H kernel estimate Gaussian domination,diagonal square root,derivative of diagonal square root,Gaussian bump decay}\usesdefs{bracket bump,closed balls in A,geometric parameters,auto:unitary-matrices-definition,2D Gaussians,auto:Gaussian-difference-kernel-definition,s multiplier,H multiplier}
Let $\gamma=(k,u,a)\in \Gamma$ and let $i\in [k)$.
There exists a finite multiset $\mathcal{P}\subset B_{\mathrm{dist}}(a_i^1,\Delta_\gamma)^2\times [2)$ with $\#\mathcal{P}\le 6$ and for every $v\in \R^2$ and $j\in\Z$,
    \begin{equation}\label{auto:H-kernel-derivative-Gaussian-domination}|(\partial_0+\partial_1)(H_\gamma)_{i,j}(v)| \le C_{\ref{H kernel derivative estimate Gaussian domination}} \sum_{(t_0,t_1,u)\in\mathcal{P}} (t_0(j)^{-1} + t_1(j)^{-1}) \langle (W_u v)_0\rangle_{(t_0(j))}^2 \langle (W_u v)_1\rangle_{(t_1(j))}^2, \end{equation}
where $
C_{\ref{H kernel derivative estimate Gaussian domination}}
=
4C_{\ref{diagonal square root},2}
C_{\ref{derivative of diagonal square root},2}
+
2C_{\ref{Gaussian bump decay},0,2}
C_{\ref{Gaussian bump decay},1,2}.
$
\end{proposition}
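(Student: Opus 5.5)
I will follow the proof of Proposition \ref{H kernel estimate Gaussian domination} closely, reusing the same multiset $\mathcal P$ in each of the two cases $u(i)=0$ and $u(i)=1$. The sequences $t_{m,\ell}(j)=a_i^\ell(j+m-1)$ and $t_m^+=\max(t_{m,0},t_{m,1})$ are defined as there, so the membership $\mathcal P\subset B_{\mathrm{dist}}(a_i^1,\Delta_\gamma)^2\times[2)$ and $\#\mathcal P\le 6$ are inherited directly. What changes is that I apply the derivative $D=\partial_0+\partial_1$ to each summand of $H=(H_\gamma)_{i,j}=s\otimes s-(G_\gamma)_{i,j-1}+(G_\gamma)_{i,j}$ and bound the resulting terms.

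For the tensor term, $D(s\otimes s)(v)=s'(v_0)s(v_1)+s(v_0)s'(v_1)$. In the case $u(i)=0$, I bound $|s'|$ by Proposition \ref{derivative of diagonal square root} with $N=2$ and $|s|$ by Proposition \ref{diagonal square root} with $N=2$. The scales are $b(j-1),b(j)$, and these are comparable to $t_0^+(j),t_1^+(j)$ within a factor $\sqrt2$. This produces terms of the form $t^{-1}\langle v_0\rangle^2_{(t)}\langle v_1\rangle^2_{(t')}$ with $t,t'\in\{t_0^+(j),t_1^+(j)\}$. Each is bounded by $(t(j)^{-1}+t'(j)^{-1})$ times the corresponding product indexed by $(t_m^+,t_{m'}^+,0)\in\mathcal P$. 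Passing from scale $b$ to $t^+$ inside the brackets costs a factor from Proposition \ref{compare brackets}, and also a factor in the prefactor $b^{-1}\le (t^+)^{-1}$. Tracking these factors, together with the factor $2$ already absorbed in the earlier proof, should reproduce the constant $4C_{\ref{diagonal square root},2}C_{\ref{derivative of diagonal square root},2}$. In the case $u(i)=1$, the scales $\sqrt2 t_{m,1}$ are compared with $t_{m,1}$ in the same way.

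For the Gaussian terms, I use that $D$ is the derivative in the diagonal direction $(1,1)$.
\begin{itemize}
\item When $u(i)=0$, $D[\g_{(p)}(v_0)\g_{(q)}(v_1)]=\g'_{(p)}(v_0)\g_{(q)}(v_1)+\g_{(p)}(v_0)\g'_{(q)}(v_1)$. Here $|\g'_{(p)}(x)|=p^{-1}|(\g')_{(p)}(x)|\le C_{\ref{Gaussian bump decay},1,2}\,p^{-1}\langle x\rangle^2_{(p)}$ by Proposition \ref{Gaussian bump decay} and scaling. Together with $\g_{(q)}\le C_{\ref{Gaussian bump decay},0,2}\langle\cdot\rangle^2_{(q)}$, this gives the bound $(p^{-1}+q^{-1})$ times a product of brackets, with constant $C_{\ref{Gaussian bump decay},0,2}C_{\ref{Gaussian bump decay},1,2}$, for each $m$.
\item When $u(i)=1$, the vector $(1,1)$ is mapped by $W_1$ to $(\sqrt2,0)$. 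Hence $D$ acts only on the first rotated coordinate, and only the $\g'$ factor appears, multiplied by $\sqrt2$.
\end{itemize}
Summing the two values of $m$ yields the term $2C_{\ref{Gaussian bump decay},0,2}C_{\ref{Gaussian bump decay},1,2}$ in the constant. The $\sqrt2$ in the case $u(i)=1$ is absorbed because only one of the two summands survives.

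I expect the main obstacle to be the bookkeeping of constants in the $s\otimes s$ term. This means verifying that the scale comparisons between $b$ (or $\sqrt2a_i^1$) and the elements of $\mathcal P$ cost no more than the claimed factor $4$. It also requires that the cross products, where the derivative falls on one factor at scale $t_m^+$ and the other factor sits at scale $t_{m'}^+$, are each matched to a single element of $\mathcal P$ with prefactor $t_0(j)^{-1}+t_1(j)^{-1}$. If the factor $4$ turns out to be too tight, I would first check whether the bound $|s|\le 2C(\cdots)$ used in Proposition \ref{H kernel estimate Gaussian domination} already includes the scale comparison. If so, the derivative bound needs only an analogous factor $2$, and $2\cdot 2=4$ closes the estimate.
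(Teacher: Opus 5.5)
Your proposal is correct and is essentially the paper's proof. You reuse the multiset $\mathcal P$ from Proposition \ref{H kernel estimate Gaussian domination} and apply the Leibniz rule to $s\otimes s$ with Propositions \ref{diagonal square root} and \ref{derivative of diagonal square root}; each scale comparison costs at most a factor $2$, giving $2\cdot 2=4$, and the two Leibniz terms supply the prefactors $t_0(j)^{-1}$ and $t_1(j)^{-1}$ separately. The Gaussian part is handled by scaling and the chain rule, but one correction: the factor $2$ in $2C_{\ref{Gaussian bump decay},0,2}C_{\ref{Gaussian bump decay},1,2}$ does not come from summing over $m$ (the two values of $m$ index distinct elements of $\mathcal P$); it is only needed to absorb the $\sqrt2$ in the case $u=1$.
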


\begin{proof}[Proof \auto]
Let $\mathcal P$ be the multiset constructed in the proof of
Proposition \ref{H kernel estimate Gaussian domination}; it has the
required cardinality and distance properties. Fix $j\in\Z$, and write
$s=(s_\gamma)_{i,j}$ and $H=(H_\gamma)_{i,j}$.

The same scale comparisons used there, together with Propositions
\ref{diagonal square root} and
\ref{derivative of diagonal square root}, give
\[
|(\partial_0+\partial_1)(s\otimes s)(v)|
\le
4C_{\ref{diagonal square root},2}
C_{\ref{derivative of diagonal square root},2}
\sum_{(t_0,t_1,0)\in\mathcal P}
(t_0(j)^{-1}+t_1(j)^{-1})
\langle v_0\rangle_{(t_0(j))}^2
\langle v_1\rangle_{(t_1(j))}^2.
\]
By Proposition \ref{Gaussian bump decay}, scaling, and the chain rule,
\[
|(\partial_0+\partial_1)(Y_\gamma)_{i,j}(v)|
\le
2C_{\ref{Gaussian bump decay},0,2}
C_{\ref{Gaussian bump decay},1,2}
\sum_{(t_0,t_1,u)\in\mathcal P}
(t_0(j)^{-1}+t_1(j)^{-1})
\langle (W_uv)_0\rangle_{(t_0(j))}^2
\langle (W_uv)_1\rangle_{(t_1(j))}^2.
\]
Here the factor $2$ also covers the factor $\sqrt2$ arising when
$u=1$, since
\[
(\partial_0+\partial_1)(W_1v)_0=\sqrt2,
\qquad
(\partial_0+\partial_1)(W_1v)_1=0.
\]
The claim follows from Definition \ref{H multiplier}.
\end{proof}

\begin{lemma}[constant $C_{\ref{H kernel derivative estimate Gaussian domination}}$ \auto]\label{constant H kernel derivative estimate Gaussian domination}
\lean{Auto.constantHKernelDerivativeEstimateGaussianDomination}
\leanok
\uses{constant diagonal square root,constant derivative diagonal square root,diagonal square root,derivative of diagonal square root,Gaussian bump decay,H kernel derivative estimate Gaussian domination}\usesdefs{}
\begin{equation}\label{constant H kernel derivative estimate Gaussian domination bound}
C_{\ref{H kernel derivative estimate Gaussian domination}}
<2^{30}.
\end{equation}
\end{lemma}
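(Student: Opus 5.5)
The constant is defined by
\[
C_{\ref{H kernel derivative estimate Gaussian domination}}
=4C_{\ref{diagonal square root},2}C_{\ref{derivative of diagonal square root},2}
+2C_{\ref{Gaussian bump decay},0,2}C_{\ref{Gaussian bump decay},1,2},
\]
so the plan is to substitute the exact values of the four constituent constants and bound the resulting number. This is the same pattern as the proof of Lemma \ref{constant H kernel estimate Gaussian domination}.

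\textbf{First term.} By \eqref{auto:constant-diagonal-square-root-two} in Lemma \ref{constant diagonal square root}, $C_{\ref{diagonal square root},2}=51\cdot 2^6\sqrt2=3264\sqrt2$. By \eqref{auto:constant-derivative-diagonal-square-root-two} in Lemma \ref{constant derivative diagonal square root}, $C_{\ref{derivative of diagonal square root},2}=9\cdot2^{12}\sqrt2$. Their product is therefore $2\cdot 51\cdot 9\cdot 2^{18}$, with the $\sqrt2$ factors combining to a rational number. Multiplying by $4$ gives
\[
4\cdot 3264\sqrt2\cdot 9\cdot 2^{12}\sqrt2 = 918\cdot 2^{20} = 962592768 .
\]

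\textbf{Second term.} From \eqref{auto:Gaussian-bump-decay-constant}:
\begin{itemize}
\item $C_{\ref{Gaussian bump decay},0,2}=(4\cdot3)^{1}=12$;
\item $C_{\ref{Gaussian bump decay},1,2}=(128)^{1/2}\cdot 12=96\sqrt2$, as already recorded in the proof of Lemma \ref{constant derivative diagonal square root}.
\end{itemize}
Hence the second term is $2\cdot 12\cdot 96\sqrt2=2304\sqrt2$. Using $\sqrt2<3/2$, this is less than $3456$.

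\textbf{Conclusion.} Summing, $C_{\ref{H kernel derivative estimate Gaussian domination}}<962592768+3456<2^{30}=1073741824$. There is comfortable margin: the first term is about $0.9\cdot 2^{30}$ and the second is negligible.

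The main obstacle is simply bookkeeping of exact values. One must use the sharp identities \eqref{auto:constant-diagonal-square-root-two} and \eqref{auto:constant-derivative-diagonal-square-root-two} rather than the crude bounds $2^{13}$ and $2^{16}$. Those crude bounds would give $4\cdot 2^{29}$, which exceeds $2^{30}$, so the exact rational product after cancelling $\sqrt2\cdot\sqrt2=2$ is essential.
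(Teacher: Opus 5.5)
Your proposal is correct and follows essentially the same route as the paper: substitute the exact values $C_{\ref{diagonal square root},2}=3264\sqrt2$, $C_{\ref{derivative of diagonal square root},2}=36864\sqrt2$, $C_{\ref{Gaussian bump decay},0,2}=12$, $C_{\ref{Gaussian bump decay},1,2}=96\sqrt2$, and use $\sqrt2<3/2$. This yields the same bound $962596224<2^{30}$. Your additional observation, that the crude bounds $2^{13}$ and $2^{16}$ alone would not suffice, is accurate.
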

\begin{proof}[Proof \auto]
Lemmas \ref{constant diagonal square root} and \ref{constant derivative diagonal square root}, in particular \eqref{auto:constant-diagonal-square-root-two} and \eqref{auto:constant-derivative-diagonal-square-root-two}, give
\[
C_{\ref{diagonal square root},2}=3264\sqrt2,
\qquad
C_{\ref{derivative of diagonal square root},2}=36864\sqrt2.
\]
Also
\[
C_{\ref{Gaussian bump decay},0,2}=12,
\qquad
C_{\ref{Gaussian bump decay},1,2}=96\sqrt2.
\]
Using $\sqrt2<3/2$, the defining sum in Proposition \ref{H kernel derivative estimate Gaussian domination} is smaller than
\[
8\cdot3264\cdot36864+2\cdot12\cdot96\cdot\tfrac32
=962596224<2^{30}.
\]
\end{proof}

\begin{proposition}[Gaussian domination, combined]\label{Gaussian domination combined}\uses{Gauss domination case 1,Gauss domination case 2,Gauss domination case 3,Gauss domination constant}\usesdefs{multiplicatively spaced monotone sequences,geometric parameters,2D Gaussians,L multiplier,N multiplier}
\lean{Auto.gaussianDominationCombined}
\leanok
Set
\begin{equation}\label{Gaussian domination combined constants}
C_{\ref{Gaussian domination combined},0}=36,\qquad
C_{\ref{Gaussian domination combined},1}=2,\qquad
C_{\ref{Gaussian domination combined},2}=2^{153}.
\end{equation}\label{auto:Gaussian-domination-combined-distance-constant}\label{auto:Gaussian-domination-combined-majorant-constant}
Let $\gamma=(k,u,a)\in\Gamma$ and assume $k\le n-1$. Let $i\in[k)$. For every $\iota\in\mathcal I_\gamma$ there exists a finite set $\mathcal B=\mathcal B_{\gamma,i,\iota}$ with
\begin{equation}\label{auto:Gaussian-domination-index-cardinality}
\#\mathcal B\le C_{\ref{Gaussian domination combined},0},
\end{equation}
and, for every $b\in\mathcal B$, an element $u_b\in[2)$, and, for every $(b,m)\in\mathcal B\times\N^2$, a sequence pair $p_{b,m}\in{\rm A}^2$, such that
\begin{equation}\label{Gaussian domination combined distance}
\Delta(p_{b,m})\le C_{\ref{Gaussian domination combined},1}(\Delta_\gamma+|\iota_0|+|m|)
\end{equation}
and, for every $j\in\Z$ and $v\in\R^2$,
\begin{equation}\label{Gaussian domination main estimate}
|(N_{\gamma,\iota})_{i,j}(v)|
\le C 2^{-|\iota_0|/2}
\sum_{m\in\N^2}2^{-|m|/2}
\sum_{b\in\mathcal B}G_{p_{b,m}(j),u_b}(v)
\end{equation}
holds with $C=C_{\ref{Gaussian domination combined},2}$.
The nonnegative series on the right converges for every $v\in\R^2$ and has finite $L^1$ norm.
\end{proposition}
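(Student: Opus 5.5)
The plan is to write $(N_{\gamma,\iota})_{i,j}=(H_\gamma)_{i,j}*_{(1,1)}\rho$, with $\widehat\rho(\tau)=\widehat{\sigma_{\gamma,\iota,i,j}}(\tau)^{-\nu}\,(\widehat\Phi(\lambda_-\tau)-\widehat\Phi(\lambda_+\tau))$, as in the proof of Proposition \ref{auto:N-kernel-well-definedness}. Here $\lambda_\pm$ are the two scales appearing in Definition \ref{L multiplier}, and $\mu_\pm$ are the two scales defining $\sigma$ via Definition \ref{square root Gaussian difference}. Note that $\widehat{\sigma}^{-\nu}=(\g(\mu_-\tau)-\g(\mu_+\tau))^{-\nu/2}$, so Proposition \ref{four scale Gaussian kernel} applies with exponent $-\nu/2\in[-1,0)$. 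The choices in Definition \ref{N multiplier} are arranged so that $2\mu_-\le 2\lambda_-\le\lambda_+\le\mu_+$ holds up to harmless shifts.

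The argument splits into the three cases of $\iota$. In the central case $\iota=(0,l)$ we have $\lambda_\pm=\mu_\pm$ up to a factor of at most $2$. For $h>0$ the Gaussian scales lie $2^h$ above $a_i^1(j+\Delta_\gamma)$, while for $h<0$ they lie $2^{h}$ below. I would write the proof once with $\lambda_+=2\lambda_-$ and treat the three cases as parameter substitutions, relying on the gain $2^{-|\iota_0|/2}$ coming from the offset.

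\textbf{Key steps.}
\begin{enumerate}
\item \emph{Bound $\rho$.} By Proposition \ref{four scale Gaussian kernel} with $N=2$, $\phi=\Phi$ and $c=\tilde C_{\ref{standard bump properties},0,2}$, we get $|\rho|\lesssim\langle\cdot\rangle^2_{(\lambda_-)}+\langle\cdot\rangle^2_{(\lambda_+)}$.

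\item \emph{Bound $H$.} By Proposition \ref{H kernel estimate Gaussian domination}, $|H|$ is dominated by six products $\langle (W_uv)_0\rangle^2_{(t_0)}\langle (W_uv)_1\rangle^2_{(t_1)}$, with $t_0,t_1\in B_{\rm dist}(a_i^1,\Delta_\gamma)$.

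\item \emph{Gain when $|\iota_0|$ is large.} Convolving a bound on $|H|$ alone along $(1,1)$ loses any gain, so the cancellation must be used. For $h>0$, $\rho$ lives at scale $2^h a_i^1$, much larger than the scales of $H$. I would use the vanishing diagonal integral of $H$ (Proposition \ref{H vanishing integral}) and write $H*_{(1,1)}\rho(v)=\int H(v-p(1,1))(\rho(p)-\rho(p_0))\,dp$ for a suitable base point $p_0$. Then Proposition \ref{mean four scale Gaussian kernel} supplies a factor $\min(1,2\pi\lambda_+^{-1}|y|)$, giving gain $2^{-h}$ times a bump at the big scale.

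For $h<0$, $\rho$ is at a small scale, and the derivative bound of Proposition \ref{H kernel derivative estimate Gaussian domination} along $\partial_0+\partial_1$ gives a factor $\lambda_+ t^{-1}\sim2^{h}$. For this I would use the mean-zero property of $\rho$, which holds since $\widehat\rho(0)=0$. Because the stated gain is only $2^{-|\iota_0|/2}$, spending half the decay on tails is fine.

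\item \emph{Reduce to a two-variable bump integral.} What remains are integrals $\int\langle\cdot\rangle^2\langle\cdot\rangle^2\langle p\rangle^2\,dp$ along the diagonal direction. I would use Proposition \ref{bump triangle} or Proposition \ref{orthogonal decay} to rewrite the relevant product in coordinates adapted to $(1,1)$, and then Proposition \ref{two bump estimate} to integrate out $p$. This yields products of brackets in the rotated coordinates $W_{u_b}v$ with scales that are maxima of the input scales. These scales remain in ${\rm A}$ by Proposition \ref{Operations on spaced sequences}, at distance $\lesssim\Delta_\gamma+|\iota_0|$ from $a_i^1$ by Proposition \ref{Properties of distance of sequences}.

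\item \emph{Pass to Gaussians.} Convert each bracket into Gaussians using Proposition \ref{Gaussian domination} with exponent $N=2$, one sum over $m_0$ and one over $m_1$. This produces $\sum_m 2^{-|m|}\g_{(2^{m_0}t_0)}\g_{(2^{m_1}t_1)}$. I define $p_{b,m}=(2^{m_0}t_0,2^{m_1}t_1)$, so $\Delta(p_{b,m})\le\Delta(t_0,t_1)+|m|$ by Proposition \ref{Properties of distance of sequences}(iii),(vi). This accounts for the factor $2$ in $C_{\ref{Gaussian domination combined},1}$ and the decay $2^{-|m|/2}$, with room to spare.
\end{enumerate}

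\textbf{Bookkeeping.} Counting terms gives $6$ terms from $H$, times $2$ from $\rho$, times $3$ from the orthogonal/triangle splitting, for $36$ in total. Convergence and finite $L^1$ norm of the majorant follow because each Gaussian product has $L^1$ norm $1$ and the weights are summable. The constant $2^{153}$ is collected from Lemma \ref{constant four scale Gaussian kernel}, Lemma \ref{constant mean four scale Gaussian kernel}, and Lemma \ref{constant H kernel estimate Gaussian domination}.

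\textbf{Main obstacle.} I expect step 3 to be the hardest: extracting the decay $2^{-|\iota_0|/2}$ uniformly in both signs of $h$ and in the orientation $u\in[2)$ of the $H$ bumps. This is especially delicate when $u=1$, where the bumps are already diagonal and the convolution along $(1,1)$ interacts with only one coordinate. In that subcase I would handle the integral directly with Proposition \ref{two bump estimate} in the coordinate $(W_1v)_0$.
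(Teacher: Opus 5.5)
Your strategy matches the paper's proof. It uses the same three-way case split and the factorization $(N_{\gamma,\iota})_{i,j}=(H_\gamma)_{i,j}*_{(1,1)}\rho$. For $h>0$ it uses the diagonal cancellation of $H$ (Proposition~\ref{H vanishing integral}) with Proposition~\ref{mean four scale Gaussian kernel}; the paper takes the base point $w_0=(v_0+v_1)/2$. For $h<0$ it uses the mean zero of $\rho$ together with Proposition~\ref{H kernel derivative estimate Gaussian domination}. It then applies bump triangle, orthogonal decay, two bump and Gaussian domination. However, two of your steps do not work as written.

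\textbf{First, the case $h<0$.} Integrating by parts against the primitive $\int_{-\infty}^p\rho$ cannot use the $N=2$ bound from your step 1.
\begin{itemize}
\item That bound only gives $|\int_{-\infty}^p\rho|\lesssim\lambda\langle p\rangle^1_{(\lambda)}$, an exponent-one bracket. Proposition~\ref{two bump estimate}, which needs exponents $>1$, is then unavailable, and Proposition~\ref{Gaussian domination} needs $N>1$.
\item In fact the majorant you would get diverges logarithmically along the diagonal $v_0=v_1$.
\item You need Proposition~\ref{four scale Gaussian kernel} with $N=3$, which gives $|\int_{-\infty}^p\rho|\lesssim\lambda\langle p\rangle^2_{(\lambda)}$.
\end{itemize}
This is also where $C_{\ref{four scale Gaussian kernel},3}$, $\tilde C_{\ref{standard bump properties},0,3}$ and $C_{\ref{H kernel derivative estimate Gaussian domination}}$ enter. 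Your list of constants omits them, and this case is what forces $C_{\ref{Gaussian domination combined},2}=2^{153}$.

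\textbf{Second, the case $h>0$.} The factor $\min(1,\lambda^{-1}|p|)$ does not give a clean $2^{-h}$.
\begin{itemize}
\item The $H$-brackets have exponent only $2$, so multiplying by $|p|/t$ destroys integrability.
\item The paper uses $\min(1,x)\le x^{1/2}$ and $t/\lambda\le 2^{-h}$, which holds for $t\in B_{\rm dist}(a_i^1,\Delta_\gamma)$. This gains $2^{-h/2}$ while lowering the brackets to exponent $3/2$.
\item Consequently Proposition~\ref{Gaussian domination} must be applied with $N=3/2$ in this case. That gives exactly the factor $2^{-|m|/2}$ in the statement, not $2^{-|m|}$ ``with room to spare'' from $N=2$.
\end{itemize}
Relatedly, you cannot treat all cases with $\lambda_+=2\lambda_-$. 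In the central case $\lambda_\pm=\mu_\pm$ are $a_i^1(j+l-1)$ and $a_i^1(j+l)$, whose ratio is unbounded, though the mean-value estimate is not needed there. With these corrections your argument coincides with the paper's.
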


The proof of this proposition is split into three cases: Propositions \ref{Gauss domination case 1}, \ref{Gauss domination case 2}, and \ref{Gauss domination case 3}, below. Their common constant is verified in Lemma \ref{Gauss domination constant}.

\begin{proposition}[Gaussian domination, near case]\label{Gauss domination case 1}\uses{standard bump properties,mean four scale Gaussian kernel,H kernel estimate Gaussian domination,two bump estimate,bump triangle,Operations on spaced sequences,H vanishing integral,Properties of distance of sequences,orthogonal decay,Gaussian domination}\usesdefs{gaussian,bracket bump,multiplicatively spaced monotone sequences,Distance of spaced sequences,closed balls in A,standard bump,geometric parameters,auto:unitary-matrices-definition,2D Gaussians,square root Gaussian difference,H multiplier,L multiplier,N multiplier}
\lean{Auto.gaussDominationCase1}
\leanok
The conclusion of Proposition \ref{Gaussian domination combined} holds in the case $\iota=(h,0)$, $h>0$, with the constant in \eqref{Gaussian domination main estimate} equal to
\begin{equation}\label{auto:Gaussian-domination-case-one-constant}
C_{\ref{Gauss domination case 1}}
=2^8\pi e^{2\pi}
\tilde C_{\ref{standard bump properties},0,2}
C_{\ref{mean four scale Gaussian kernel},2}
C_{\ref{H kernel estimate Gaussian domination}}
\max\bigl(
C_{\ref{two bump estimate},\tfrac32,\tfrac32}+4,
4C_{\ref{bump triangle},-\tfrac12,\tfrac12,\tfrac32,\tfrac32}
C_{\ref{two bump estimate},\tfrac32,\tfrac32}
\bigr).
\end{equation}
\end{proposition}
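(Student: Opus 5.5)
We write $(N_{\gamma,\iota})_{i,j}$ for $\iota=(h,0)$, $h>0$, as a convolution along the diagonal. By Definition \ref{N multiplier} and Definition \ref{L multiplier},
\[
(N_{\gamma,\iota})_{i,j}=(H_\gamma)_{i,j}\ast_{(1,1)}\rho ,
\]
where $\rho$ is the kernel of Proposition \ref{four scale Gaussian kernel}. Its parameters are $\phi=\Phi$, $\lambda_-=2^{h-1}a_i^1(j+\Delta_\gamma)$ and $\lambda_+=2\lambda_-$. The scales $\mu_\pm$ are the two consecutive scales of the sequence $2^h a_i^1(\cdot+\Delta_\gamma)$ used in $\sigma_{\gamma,\iota,i,j}$, and the exponent $\nu$ of Definition \ref{N multiplier} enters through $(\cdot)^{\nu/2}$, i.e.\ with exponent $-\nu/2\in[-1,0)$. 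We first check the ordering \eqref{E:assumption-on-mu-lambda} using the spacing of sequences in ${\rm A}$. The constants $c=\tilde C_{\ref{standard bump properties},0,2}$ from \eqref{Phi_derbound} and $N=2$ are then admissible.

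The key point is that $\lambda_\pm$ are at least $2^{h-1}$ times larger than every scale $t(j)$ appearing in the domination of $H$, since $\dist(a_i^1,t)\le\Delta_\gamma$. We exploit the vanishing diagonal integral of $H$ (Proposition \ref{H vanishing integral}) by writing
\[
(H\ast_{(1,1)}\rho)(v)=\int_{\R}H(v-p(1,1))\bigl(\rho(p+\tau(v))-\rho(\tau(v))\bigr)\,dp ,
\]
where $\tau(v)$ is a fixed linear functional of $v$ transverse to the diagonal, for instance $\tau(v)=(v_0+v_1)/2$ up to the chosen normalization. Proposition \ref{mean four scale Gaussian kernel} bounds the difference by
\[
C\min(1,2\pi\lambda_+^{-1}|q|)\bigl(\langle\cdot\rangle^2_{(\lambda_+)}+\langle\cdot\rangle^2_{(\lambda_+)}\bigr),
\]
and $H$ is bounded by Proposition \ref{H kernel estimate Gaussian domination}. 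The factor $|q|/\lambda_+$ with $|q|\lesssim t(j)$ gives the gain. We keep half of it, $2^{-h/2}$, and use the remaining half-power of the bracket decay to absorb the large-$|q|$ region. For this we trade $\langle\cdot\rangle^2$ for $\langle\cdot\rangle^{3/2}$, at the cost of multiplying by $|q|/t$ raised to the power $1/2$. This explains the exponents $3/2$ in the constant.

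Next we integrate out $p$. Each term is a product of brackets in the directions $(W_uv)_0$, $(W_uv)_1$ and the diagonal variable. When $u=1$ the diagonal is one of the bracketed directions, and Proposition \ref{two bump estimate} with exponents $\tfrac32,\tfrac32$ merges the diagonal brackets into a single bracket at the larger scale. When $u=0$ the directions are not aligned, and we first apply Proposition \ref{bump triangle} with $c_0=-\tfrac12$, $c_1=\tfrac12$ to redistribute the decay onto two coordinate directions, then Proposition \ref{two bump estimate}. Proposition \ref{orthogonal decay} handles the case where the remaining pair of directions is not orthogonal. The outcome is a bounded number of terms of the form $\langle x\cdot\alpha\rangle^{3/2}_{(s_0)}\langle x\cdot\alpha^\perp\rangle^{3/2}_{(s_1)}$, with scales drawn from the $t$'s of $\mathcal P$ and from $\lambda_+$.

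Finally, each one-dimensional bracket with exponent $3/2$ is replaced by a Gaussian sum using Proposition \ref{Gaussian domination}. This gives $2\cdot 2^{3/2}e^\pi\sum_{m}2^{-m/2}\g_{(2^ms)}$, and the tensor product over the two coordinates produces the sum over $m\in\N^2$ with weights $2^{-|m|/2}$. The new scale pairs are $p_{b,m}=(2^{m_0}s_0,2^{m_1}s_1)$. The bound on $\Delta(p_{b,m})$ in \eqref{Gaussian domination combined distance} follows from Proposition \ref{Properties of distance of sequences}: $\dist(a_i^1,\lambda_+)\le\Delta_\gamma+h+1$, each $t$ lies within $\Delta_\gamma$ of $a_i^1$, and each factor $2^{m_l}$ adds $m_l$. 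Together these give the bound $2(\Delta_\gamma+|h|+|m|)$. The terms come from $6$ elements of $\mathcal P$, each split into a bounded number of cases, so $\#\mathcal B\le 36$. Collecting the factors gives $C_{\ref{Gauss domination case 1}}$: the $2^8\pi$ comes from $2\pi$ in the mean value estimate together with powers of two, and $e^{2\pi}$ from two applications of Proposition \ref{Gaussian domination}. Convergence and finiteness of the $L^1$ norm are immediate from the geometric weights.

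I expect the main obstacle to be the $u=0$ geometry. There the diagonal differencing variable is not aligned with the bracket directions, and we must check that the bump triangle and orthogonal decay steps preserve both the scale ordering needed for Proposition \ref{two bump estimate} and the $2^{-h/2}$ gain. I would first try to treat the diagonal and antidiagonal as the pair $(\alpha_0,\alpha_1)$ throughout, and fall back to the explicit cases otherwise.
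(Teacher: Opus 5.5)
Your proposal follows essentially the same route as the paper. You represent $(N_{\gamma,\iota})_{i,j}=(H_\gamma)_{i,j}\ast_{(1,1)}\rho$, use Proposition~\ref{H vanishing integral} to insert $\rho(w_0+p)-\rho(w_0)$, and apply Proposition~\ref{mean four scale Gaussian kernel} at scale $\lambda=2^ha_i^1(j+\Delta_\gamma)$. You then extract $2^{-h/2}$ from $\min(1,a)\le a^{1/2}$ and $t/\lambda\le 2^{-h}$ while lowering the exponents to $\tfrac32$, integrate out $p$ with the two bump, bump triangle and orthogonal decay estimates, and finish with Proposition~\ref{Gaussian domination}.

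Two slips to fix when you write it out. First, in your cancellation identity $H$ must be shifted together with $\rho$: it should read $H(v-(p+\tau(v))(1,1))$, which for $\tau(v)=w_0=(v_0+v_1)/2$ is the paper's $H(-w_1-p,w_1-p)$. Second, Proposition~\ref{Gaussian domination} with $N=\tfrac32$ gives the factor $2^{3/2}e^{\pi}$ per coordinate, not $2\cdot 2^{3/2}e^{\pi}$. This matters for the stated constant: the paper's bookkeeping produces the prefactor $2^7\pi e^{2\pi}$, so the stated $2^8\pi e^{2\pi}$ leaves only one spare factor of $2$, while your extra factor of $2$ in each coordinate would cost a factor of $4$.
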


\begin{proof}[Proof]
\[ C = C_{\ref{Gauss domination case 1}} =
2^8\pi e^{2\pi}
\tilde{C}_{\ref{standard bump properties},0,2}
C_{\ref{mean four scale Gaussian kernel},2}
C_{\ref{H kernel estimate Gaussian domination}}
\max(
C_{\ref{two bump estimate},\tfrac32,\tfrac32}+4,
4C_{\ref{bump triangle},-\tfrac12,\tfrac12,\tfrac32,\tfrac32}
C_{\ref{two bump estimate},\tfrac32,\tfrac32}
). \]
Let $\iota=(h,0)\in\mathcal{I}_{\gamma}$ with $h>0$.
Let $\rho$ be the Schwartz function defined as
\[\rho=\mathcal{F}^{-1}\Big(\frac{\widehat{\Phi_{(2^{h-1}a_i^1(j+\Delta_{\gamma}))}}-\widehat{\Phi_{(2^{h}a_i^1(j+\Delta_{\gamma}))}}}{\widehat{s(2^h a_i^1(\cdot+\Delta_{\gamma}), j)}^{\nu}}\Big).\]
Let $\lambda=\lambda(j)=2^h a_i^1(j+\Delta_{\gamma})$.
Note $\lambda\in{\rm A}$ by Proposition~\ref{Operations on spaced sequences}. We will omit the argument $j$ where clear from context.
Moreover, letting $F = (N_{\gamma,\iota})_{i,j}$ and $H=(H_\gamma)_{i,j}$, we have for $v\in\mathbb{R}^2$,
\[ F(v) = \int_{\mathbb{R}} \rho(p) H(v_0-p, v_1-p)\, dp. \]
By Proposition \ref{H vanishing integral}, $H$ has mean zero in the diagonal direction.
Setting $v_0=w_0-w_1$ and $v_1=w_0+w_1$ (then $w_m = 2^{-\frac12} (W_1 v)_m$) and changing variables in the integral and subtracting zero gives
\begin{equation}\label{eqn:Gaussianbumppf1}
F(v) = \int_{\mathbb{R}} \Big(\rho(w_0+p)-\rho(w_0)\Big) H(-w_1-p, w_1-p)\,dp.
\end{equation}
By Proposition \ref{mean four scale Gaussian kernel} and Proposition \ref{standard bump properties} (each applied with $N=2$),
\begin{equation}\label{eqn:Gaussianbumppf1rhomvt}
|\rho(w_0+p)-\rho(w_0)| \leq 2\pi \tilde{C}_{\ref{standard bump properties},0,2}
C_{\ref{mean four scale Gaussian kernel},2}\min(1, \lambda^{-1}|p|) (\langle w_0+p\rangle^2_{(\lambda)}+
\langle w_0\rangle^2_{(\lambda)})
\end{equation}
By Proposition \ref{H kernel estimate Gaussian domination} we obtain $\mathcal{P}\subset B_{\mathrm{dist}}(a_i^1,\Delta_\gamma)^2\times [2)$ with $\#\mathcal{P}\le 6$
such that for all $v\in\R^2$,
\[ |H(v)| \le C_{\ref{H kernel estimate Gaussian domination}} \sum_{(t_0,t_1,u)\in\mathcal{P}} \langle (W_u v)_0\rangle_{(t_0)}^2 \langle (W_u v)_1\rangle_{(t_1)}^2, \]
where we omitted the argument $j$.
Note that by Proposition \ref{Properties of distance of sequences}, if $t\in B_{\dist}(a_i^1,\Delta_\gamma)$, then
\begin{equation}\label{eqn:Gaussianbumppf1rfac}
\tfrac{t}{\lambda} \le 2^{-h}.
\end{equation}
In the following we will use the inequality $\min(1,a)\le a^\frac12$, valid for all $a\ge 0$. For every $p,w_1\in\R$ we have
\[ \min(1, \lambda^{-1}|p|) \le \sum_{\pm} \min(1, \tfrac12\lambda^{-1}|p\pm w_1|) \le \sum_{\pm} (\lambda^{-1}|p\pm w_1|)^{\tfrac12}\]
and if $t$ satisfies \eqref{eqn:Gaussianbumppf1rfac}, then the right-hand side is
\begin{equation}\label{Gaussianbumppf1u0}
\le 2^{-\tfrac{h}{2}} \sum_{\pm} (1+ t^{-1} |p\pm w_1|)^{\tfrac12}.
\end{equation}
Altogether, the integrand in \eqref{eqn:Gaussianbumppf1} is no greater than
\[ C_1 2^{-\tfrac{h}{2}} \sum_{\alpha\in[2)} \langle w_0+\alpha p\rangle_{(\lambda)}^2 \Big( 2\sum_{(t_0,t_1,0)\in\mathcal{P}} \langle -w_1-p\rangle_{(t_0)}^{\tfrac32} \langle w_1-p\rangle_{(t_1)}^{\tfrac32} +\sum_{(t_0,t_1,1)\in\mathcal{P}} \langle p\rangle^{\tfrac32}_{(t_0)} \langle w_1\rangle_{(t_1)}^2  \Big),\]
where we have used \eqref{Gaussianbumppf1u0} for the terms with $u=0$ and $\min(1,\lambda^{-1}|p|)\le 2^{-\frac{h}{2}}(1+t_0^{-1}|p|)^{\frac12}$ for the terms with $u=1$, and
$C_1 = 2\pi \tilde{C}_{\ref{standard bump properties},0,2}
C_{\ref{mean four scale Gaussian kernel},2} C_{\ref{H kernel estimate Gaussian domination}}.$
Integrating over $p\in\R$ we obtain that $|F(v)|$ is no greater than $2C_1 2^{-\frac{h}{2}}$ times
\begin{equation}\label{eqn:Gaussianbumppf1bump}
\sum_{(t_0,t_1,1)\in\mathcal{P}}\langle w_0\rangle_{(\lambda)}^2 \langle w_1\rangle^2_{(t_1)} \int_{\R}\langle p\rangle_{(t_0)}^{\tfrac32}\,dp
\end{equation}
\begin{equation}\label{eqn:Gaussianbumppf2bump}
 +\langle w_0\rangle_{(\lambda)}^2 \sum_{(t_0,t_1,0)\in\mathcal{P}} \int_{\R} \langle w_1+p\rangle_{(t_0)}^{\tfrac32} \langle w_1-p\rangle_{(t_1)}^{\tfrac32}\,dp  + \sum_{(t_0,t_1,1)\in\mathcal{P}} \langle w_1\rangle^2_{(t_1)} \int_\R \langle w_0+p\rangle_{(\lambda)}^2 \langle p\rangle_{(t_0)}^{\tfrac32}  \, dp
\end{equation}
\begin{equation}\label{eqn:Gaussianbumppf3bump}
+ \sum_{(t_0,t_1,0)\in\mathcal{P}}\int_{\R} \langle w_0+p\rangle_{(\lambda)}^2 \langle w_1+p\rangle_{(t_0)}^{\tfrac32} \langle w_1-p\rangle_{(t_1)}^{\tfrac32}\,dp
\end{equation}

By the two bump estimate, Proposition \ref{two bump estimate}, we may bound \eqref{eqn:Gaussianbumppf1bump} and \eqref{eqn:Gaussianbumppf2bump} by
\[
C_{2} \sum_{r\in \mathcal{M}_1} \langle (W_1 v)_0\rangle_{(\lambda)}^{\tfrac32}\langle (W_1 v)_1\rangle_{(r)}^{\tfrac32},
\]
where $\mathcal{M}_1=\{\max(t_0,t_1)\,:\,(t_0,t_1,0)\in \mathcal{P}\}\cup \{ t_1 \,:\,(t_0,t_1,1)\in \mathcal{P}\}$ as multiset, and
$C_2 = 4(C_{\ref{two bump estimate},\frac32,\frac32}+ \int_{\R}\langle p\rangle^{\frac32}\,dp).$

To estimate \eqref{eqn:Gaussianbumppf3bump} we first use Proposition \ref{bump triangle} to obtain
\[ \langle w_1+p\rangle_{(t_0)}^{\tfrac32} \langle w_1-p\rangle_{(t_1)}^{\tfrac32}
\leq C_{\ref{bump triangle},-\tfrac12,\tfrac12,\tfrac32,\tfrac32}(\langle w_1\rangle_{(t_0)}^{\tfrac32} \langle w_1-p\rangle_{(t_1)}^{\tfrac32} + \langle w_1+p\rangle_{(t_0)}^{\tfrac32} \langle w_1\rangle_{(t_1)}^{\tfrac32}). \]
Together with Proposition \ref{two bump estimate} this allows us to bound \eqref{eqn:Gaussianbumppf3bump} by
\[ C_3 \sum_{(t_0,t_1,0)\in\mathcal{P}} \langle w_0+w_1\rangle_{(\lambda)}^{\tfrac32} \langle w_1\rangle_{(t_0)}^{\tfrac32} + \langle w_0-w_1\rangle_{(\lambda)}^{\tfrac32} \langle w_1\rangle_{(t_1)}^{\tfrac32},   \]
where $C_3=C_{\ref{bump triangle},-\frac12,\frac12,\frac32,\frac32} C_{\ref{two bump estimate},\frac32,\frac32}$.
By Proposition \ref{orthogonal decay} the previous is no greater than
\[
16C_3\sum_{(t_0,t_1,0)\in\mathcal P}
\langle v_0\rangle_{(t_0)}^{\tfrac32}
\langle v_1\rangle_{(\lambda)}^{\tfrac32} +
\langle v_0\rangle_{(\lambda)}^{\tfrac32}
\langle v_1\rangle_{(t_1)}^{\tfrac32} +
\langle (W_1v)_0\rangle_{(\lambda)}^{\tfrac32}
\langle (W_1v)_1\rangle_{(t_0)}^{\tfrac32}+
\langle (W_1v)_0\rangle_{(\lambda)}^{\tfrac32}
\langle (W_1v)_1\rangle_{(t_1)}^{\tfrac32}
\]
which we can write as
\[ 16C_3 \sum_{(r_0,r_1,u)\in\mathcal{M}_2}\langle (W_u v)_0\rangle_{(r_0)}^{\tfrac32}
\langle (W_u v)_1\rangle_{(r_1)}^{\tfrac32} \]
with $\mathcal{M}_2
=
\{(t_0,\lambda,0):(t_0,t_1,0)\in\mathcal{P}\}
\cup
\{(\lambda,t_1,0):(t_0,t_1,0)\in\mathcal{P}\}
\cup
\{(\lambda,t_0,1):(t_0,t_1,0)\in\mathcal{P}\}
\cup
\{(\lambda,t_1,1):(t_0,t_1,0)\in\mathcal{P}\}$ as a multiset.
To summarize, we showed that
\[ |F(v)|\le C_4 2^{-\tfrac h2} \sum_{b\in\mathcal{B}}\langle (W_{u_b} v)_0\rangle_{(p_{b,0})}^{\tfrac32}
\langle (W_{u_b} v)_1\rangle_{(p_{b,1})}^{\tfrac32},  \]
where $\mathcal{B}$ is a finite set in bijection with the multiset $\mathcal{M}_1\sqcup \mathcal{M}_2$ and $u_b=1$, $p_{b}=(\lambda, r)\in\mathrm{A}^2$ when $b$ corresponds to $r\in \mathcal{M}_1$, and $u_b=u$, $p_b=(r_0,r_1)$ when $b$ corresponds to $(r_0,r_1,u)\in\mathcal{M}_2$, and
$C_4 = 2C_1 \max(C_2,16C_3).$
Finally we use Gaussian domination, Proposition \ref{Gaussian domination}, to estimate the right hand side of the previous display by
\[ 8 C_{\ref{Gaussian domination}}^2 C_4  2^{-\tfrac h2}
\sum_{b\in\mathcal B}
\sum_{m\in\mathbb{N}^2}
2^{-\tfrac{|m|}{2}}
\g_{(2^{m_0}p_{b,0})}\big((W_{u_b}v)_0\big)
\g_{(2^{m_1}p_{b,1})}\big((W_{u_b}v)_1\big). \]
For $b\in\mathcal B$ and $m\in\N^2$, set
\[
p_{b,m}=(2^{m_0}p_{b,0},2^{m_1}p_{b,1}).
\]
The explicit multisets $\mathcal M_1$ and $\mathcal M_2$ contain at most $6$ and $24$ elements, respectively, so $\#\mathcal B\le30$. Propositions \ref{Operations on spaced sequences} and \ref{Properties of distance of sequences} give
\[
\Delta(p_{b,m})\le2(\Delta_\gamma+h+|m|).
\]
The series converges pointwise and in $L^1$ by comparison with the product of two geometric series.

\medskip
\end{proof}

\begin{proposition}[Gaussian domination, far case]\label{Gauss domination case 2}\uses{standard bump properties,four scale Gaussian kernel,H kernel derivative estimate Gaussian domination,bump triangle,two bump estimate,Operations on spaced sequences,orthogonal decay,H kernel estimate Gaussian domination,Properties of distance of sequences,Gaussian domination}\usesdefs{bracket bump,multiplicatively spaced monotone sequences,Distance of spaced sequences,closed balls in A,standard bump,geometric parameters,auto:unitary-matrices-definition,2D Gaussians,square root Gaussian difference,H multiplier,L multiplier,N multiplier}
\lean{Auto.gaussDominationCase2}
\leanok
The conclusion of Proposition \ref{Gaussian domination combined} holds in the case $\iota=(h,0)$, $h<0$, with the constant in \eqref{Gaussian domination main estimate} equal to
\begin{equation}\label{auto:Gaussian-domination-case-two-constant}
C_{\ref{Gauss domination case 2}}
=3\cdot2^7e^{2\pi}
\tilde C_{\ref{standard bump properties},0,3}
C_{\ref{four scale Gaussian kernel},3}
C_{\ref{H kernel derivative estimate Gaussian domination}}
C_{\ref{bump triangle},1,1,2,2}
C_{\ref{two bump estimate},2,2}.
\end{equation}
\end{proposition}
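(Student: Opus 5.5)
The plan mirrors the proof of Proposition \ref{Gauss domination case 1}, with the roles of the two scales reversed. Let $\iota=(h,0)$ with $h<0$ and set $\lambda(j)=2^{h+1}a_i^1(j-\Delta_\gamma-1)$, which lies in ${\rm A}$ by Proposition \ref{Operations on spaced sequences}. Every $t\in B_{\dist}(a_i^1,\Delta_\gamma)$ then satisfies $\lambda/t\le 2^{h+1}=2^{-|h|+1}$. Thus the bump $\rho$ in
\[
F(v)=\int_\R\rho(p)H(v_0-p,v_1-p)\,dp
\]
now lives at the small scale $\lambda$, whereas $H$ lives at the larger scales $t_0,t_1$. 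Here $\rho=\mathcal F^{-1}$ of the quotient of $\widehat{\Phi_{(2^h\cdot)}}-\widehat{\Phi_{(2^{h+1}\cdot)}}$ by $\widehat{s}^{\,\nu}$.

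Since the small-scale factor is $\rho$, the cancellation will come from $\rho$ rather than from $H$. The multiplier of $\rho$ is supported away from the origin (in $|\xi|\gtrsim \lambda^{-1}$), so $\int\rho=0$. In the rotated coordinates $v_0=w_0-w_1$, $v_1=w_0+w_1$, the diagonal direction is the $w_0$ direction. I will write
\[
F(v)=\int_\R\rho(p)\bigl(H(-w_1+w_0-p,w_1+w_0-p)-H(-w_1+w_0,w_1+w_0)\bigr)\,dp
\]
and bound the difference by the fundamental theorem of calculus along the diagonal. This gives $|p|\sup|(\partial_0+\partial_1)H|$ on the segment. Proposition \ref{H kernel derivative estimate Gaussian domination} bounds the derivative by $(t_0^{-1}+t_1^{-1})$ times products of $\langle\cdot\rangle^2$ bumps, so I get a gain $|p|/t\lesssim(\lambda/t)\langle p\rangle$-type factor. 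The extra power of $|p|$ is absorbed by the decay of $\rho$ with $N=3$. For that decay I will use Proposition \ref{four scale Gaussian kernel} with $N=3$, together with $\tilde C_{\ref{standard bump properties},0,3}$ for the constant $c$; this explains the constants in \eqref{auto:Gaussian-domination-case-two-constant}.

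The main obstacle is that the bumps along the segment are evaluated at intermediate points $w_0-\theta p$, not at $w_0$. I will handle this by using $\min(1,|p|/t)$, with the $1$ branch coming from the trivial bound via Proposition \ref{H kernel estimate Gaussian domination}. I will then convert $\min(1,a)\le a^{1/2}$ to obtain $2^{-|h|/2}$, keeping bumps of power at least $3/2$. Next I will use Proposition \ref{bump triangle} with coefficients $(1,1)$ to split the product of bumps at shifted arguments into terms where one variable is unshifted. After that I will integrate in $p$ with Proposition \ref{two bump estimate}, where $\langle p\rangle^{3-1}_{(\lambda)}$ pairs with a bump at scale $t\ge\lambda$. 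Each resulting term has the form $\langle (W_uv)_0\rangle^{3/2}_{(r_0)}\langle(W_uv)_1\rangle^{3/2}_{(r_1)}$ with $r_0,r_1\in\{t_0,t_1\}$ or $\lambda$ mixed in. Where the directions do not match, the comparison is done with Proposition \ref{orthogonal decay}.

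Finally, Proposition \ref{Gaussian domination} turns each bracket product into a sum over $m\in\N^2$ of $2^{-|m|/2}G_{p_{b,m}(j),u_b}$ with $p_{b,m}=(2^{m_0}r_0,2^{m_1}r_1)$. The count $\#\mathcal B\le36$ comes from the six elements of $\mathcal P$ times the split terms. The distance bound $\Delta(p_{b,m})\le2(\Delta_\gamma+|h|+|m|)$ follows from Proposition \ref{Properties of distance of sequences}(iii),(vi), using $\dist(a_i^1,\lambda)\le\Delta_\gamma+|h|+1$. Pointwise and $L^1$ convergence follow by comparison with geometric series.
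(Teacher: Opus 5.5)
Your argument is correct, but it uses the cancellation $\int_\R\rho=0$ differently from the paper.

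\textbf{The paper's route.} It integrates by parts. The primitive satisfies $|\int_{-\infty}^p\rho|\lesssim\lambda\langle p\rangle^2_{(\lambda)}$, and $F(v)=\int_\R(\int_{-\infty}^p\rho)\,(\partial_0+\partial_1)H(v_0-p,v_1-p)\,dp$. So Proposition \ref{H kernel derivative estimate Gaussian domination} is applied exactly at $v-p(1,1)$, with gain $\lambda(t_0^{-1}+t_1^{-1})\le 2^{h+1}$. There are no intermediate points and no cut-off, and the size bound for $H$ is not needed. Only one family of integrals $\int\langle p\rangle^2_{(\lambda)}\langle v_0-p\rangle^2_{(t_0)}\langle v_1-p\rangle^2_{(t_1)}\,dp$, plus its $u=1$ analogue, has to be processed by Propositions \ref{bump triangle}, \ref{two bump estimate} and \ref{orthogonal decay}.

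\textbf{Your route.} Subtracting $H(v)$ and differentiating along the segment is the dual form of the same cancellation. It avoids estimating the primitive of $\rho$. In exchange it needs the segment comparison, the additional size bound of Proposition \ref{H kernel estimate Gaussian domination} in the far regime, and it produces an extra family of unshifted bracket products at $v$, which are already in final form.

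\textbf{Two points to fix when you write it out.}
\begin{itemize}
\item The cut-off between the mean value branch and the trivial branch must be common to all of $H$. The size and derivative bounds are majorants of the whole function, not a decomposition indexed by $\mathcal P$, so you cannot take $\min(1,|p|/t)$ separately for each $(t_0,t_1,u)\in\mathcal P$. The choice $\tau=a_i^1(\cdot-\Delta_\gamma)$ works. It lies below every scale in $B_{\dist}(a_i^1,\Delta_\gamma)$, so for $|p|\le\tau$ the bumps on the segment are comparable to those at $v$ and $|p|(t_0^{-1}+t_1^{-1})\le 2|p|/\tau$. It also satisfies $\lambda\le 2^h\tau$.
\item The step $\min(1,a)\le a^{1/2}$ with exponents lowered to $3/2$, carried over from the near case, is unnecessary here. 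Since $(|p|/\lambda)\langle p\rangle^3_{(\lambda)}\le\langle p\rangle^2_{(\lambda)}$, the whole factor $|p|/\tau$ is absorbed by $\rho$. This gives the gain $2^{h}$ with power-$2$ bumps throughout. That is what matches $C_{\ref{bump triangle},1,1,2,2}$, $C_{\ref{two bump estimate},2,2}$ and Proposition \ref{Gaussian domination} with $N=2$ in the stated constant. Because $C_{\ref{H kernel estimate Gaussian domination}}\le C_{\ref{H kernel derivative estimate Gaussian domination}}$, the extra numerical factors from the segment comparison then fit within it.
\end{itemize}
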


\begin{proof}[Proof \auto]
\[ C = C_{\ref{Gauss domination case 2}} =
3\cdot 2^7 e^{2\pi}
\tilde{C}_{\ref{standard bump properties},0,3}
C_{\ref{four scale Gaussian kernel},3}
C_{\ref{H kernel derivative estimate Gaussian domination}}
C_{\ref{bump triangle},1,1,2,2}
C_{\ref{two bump estimate},2,2} \]
Let $\iota=(h,0)\in\mathcal I_\gamma$ with $h<0$, and let
\[
\rho=\mathcal F^{-1}\Bigg(
\frac{
\widehat{\Phi_{(2^h a_i^1(j-\Delta_\gamma-1))}}
-
\widehat{\Phi_{(2^{h+1}a_i^1(j-\Delta_\gamma-1))}}
}{
\widehat{s(2^h a_i^1(\cdot-\Delta_\gamma),j)}^\nu
}
\Bigg).
\]
Set $\lambda(j)=2^{h+1}a_i^1(j-\Delta_\gamma-1)$. Then $\lambda\in{\rm A}$ by Proposition \ref{Operations on spaced sequences}. As before, write
$F=(N_{\gamma,\iota})_{i,j}$ and $H=(H_\gamma)_{i,j}$, so that
\[
F(v)=\int_\R \rho(p)H(v_0-p,v_1-p)\,dp.
\]

By Propositions \ref{four scale Gaussian kernel} and
\ref{standard bump properties}, applied with $N=3$,
\[
|\rho(p)|
\le
\tilde{C}_{\ref{standard bump properties},0,3}
C_{\ref{four scale Gaussian kernel},3}
\bigl(
\langle p\rangle_{(\lambda/2)}^3+
\langle p\rangle_{(\lambda)}^3
\bigr)
\le
3\tilde{C}_{\ref{standard bump properties},0,3}
C_{\ref{four scale Gaussian kernel},3}
\langle p\rangle_{(\lambda)}^3.
\]
The multiplier defining $\rho$ vanishes in a neighborhood of the origin, hence $\int_\R\rho=0$. Consequently,
\[
\left|\int_{-\infty}^p\rho(q)\,dq\right|
\le
\tfrac32
\tilde{C}_{\ref{standard bump properties},0,3}
C_{\ref{four scale Gaussian kernel},3}
\lambda\langle p\rangle_{(\lambda)}^2.
\]
Integration by parts gives
\[
F(v)=
\int_\R
\left(\int_{-\infty}^p\rho(q)\,dq\right)
(\partial_0+\partial_1)H(v_0-p,v_1-p)\,dp.
\]

Let $\mathcal P$ be the multiset from Proposition
\ref{H kernel derivative estimate Gaussian domination}, and omit the argument $j$ from the scales. Setting
$v_0=w_0-w_1$ and $v_1=w_0+w_1$, Propositions
\ref{bump triangle}, \ref{two bump estimate}, and
\ref{orthogonal decay} bound, for $(t_0,t_1,0)\in\mathcal P$,
\[
\int_\R
\langle p\rangle_{(\lambda)}^2
\langle v_0-p\rangle_{(t_0)}^2
\langle v_1-p\rangle_{(t_1)}^2\,dp
\]
by
\[
9C_{\ref{bump triangle},1,1,2,2}
C_{\ref{two bump estimate},2,2}
\Bigl(
\langle v_0\rangle_{(t_0)}^2
\langle v_1\rangle_{(\max(\lambda,t_1))}^2
+
\langle v_0\rangle_{(\lambda)}^2
\langle v_1\rangle_{(\max(t_0,t_1))}^2
+
\langle (W_1v)_0\rangle_{(\lambda)}^2
\langle (W_1v)_1\rangle_{(\max(t_0,t_1))}^2
\Bigr).
\]
For $(t_0,t_1,1)\in\mathcal P$, the corresponding integral is bounded by
\[
4C_{\ref{two bump estimate},2,2}
\langle (W_1v)_0\rangle_{(\max(\lambda,t_0))}^2
\langle (W_1v)_1\rangle_{(t_1)}^2.
\]

If $t\in B_{\dist}(a_i^1,\Delta_\gamma)$, then
\[
\tfrac{\lambda}{t}\le 2^h.
\]
Thus
\[
\lambda(t_0^{-1}+t_1^{-1})\le 2^{h+1}.
\]
Let $\mathcal B$ index the bracket products displayed above, after collecting repetitions, and let $u_b$ and $p_b$ denote their orientations and sequence pairs. Using the explicit multiset $\mathcal P$ constructed in the proof of Proposition
\ref{H kernel estimate Gaussian domination}, we have
$\#\mathcal B\le30$. We obtain
\[
|F(v)|
\le
27
\tilde{C}_{\ref{standard bump properties},0,3}
C_{\ref{four scale Gaussian kernel},3}
C_{\ref{H kernel derivative estimate Gaussian domination}}
C_{\ref{bump triangle},1,1,2,2}
C_{\ref{two bump estimate},2,2}
2^{h/2}
\sum_{b\in\mathcal B}
\langle (W_{u_b}v)_0\rangle_{(p_{b,0})}^2
\langle (W_{u_b}v)_1\rangle_{(p_{b,1})}^2.
\]

For $m\in\N^2$, set
\[
p_{b,m}=(2^{m_0}p_{b,0},2^{m_1}p_{b,1}).
\]
Propositions \ref{Operations on spaced sequences} and
\ref{Properties of distance of sequences} give
\[
\Delta(p_{b,m})
\le
2(\Delta_\gamma+|h|+|m|).
\]
Finally, Proposition \ref{Gaussian domination}, applied with $N=2$ to both factors, gives
\[
|F(v)|
\le
C_{\ref{Gauss domination case 2}}
2^{-\tfrac{|h|}{2}}
\sum_{m\in\N^2}2^{-\tfrac{|m|}{2}}
\sum_{b\in\mathcal B}
G_{p_{b,m}(j),u_b}(v).
\]
The series converges pointwise and has finite $L^1$ norm by comparison with the product of two geometric series.

\medskip
\end{proof}

\begin{proposition}[Gaussian domination, intermediate case]\label{Gauss domination case 3}\uses{standard bump properties,four scale Gaussian kernel,H kernel estimate Gaussian domination,bump triangle,two bump estimate,Operations on spaced sequences,orthogonal decay,Properties of distance of sequences,Gaussian domination}\usesdefs{bracket bump,multiplicatively spaced monotone sequences,standard bump,geometric parameters,auto:unitary-matrices-definition,2D Gaussians,square root Gaussian difference,H multiplier,L multiplier,N multiplier}
\lean{Auto.gaussDominationCase3}
\leanok
The conclusion of Proposition \ref{Gaussian domination combined} holds in the case $\iota=(0,l)$, $|l|\le\Delta_\gamma$, with the constant in \eqref{Gaussian domination main estimate} equal to
\begin{equation}\label{auto:Gaussian-domination-case-three-constant}
C_{\ref{Gauss domination case 3}}
=2^7e^{2\pi}
\tilde C_{\ref{standard bump properties},0,2}
C_{\ref{four scale Gaussian kernel},2}
C_{\ref{H kernel estimate Gaussian domination}}
C_{\ref{bump triangle},1,1,2,2}
C_{\ref{two bump estimate},2,2}.
\end{equation}
\end{proposition}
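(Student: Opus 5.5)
The intermediate case $\iota=(0,l)$ with $|l|\le\Delta_\gamma$ has no decay factor to extract: $2^{-|\iota_0|/2}=1$. So no mean-zero or integration-by-parts argument is needed, and I will only bound $|\rho|$ directly. As in the other two cases, I write $F=(N_{\gamma,\iota})_{i,j}$ and $H=(H_\gamma)_{i,j}$, so that
\[
F(v)=\int_\R\rho(p)H(v_0-p,v_1-p)\,dp,
\]
where $\rho$ is the inverse Fourier transform of
\[
\bigl(\widehat{\Phi_{(a_i^1(j+l-1))}}-\widehat{\Phi_{(a_i^1(j+l))}}\bigr)\,\widehat{s(a_i^1(\cdot+l),j)}^{-\nu}.
\]
Set $\lambda(j)=a_i^1(j+l)$. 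Then $\lambda\in{\rm A}$ by Proposition \ref{Operations on spaced sequences}, and $\dist(a_i^1,\lambda)\le|l|\le\Delta_\gamma$ by Proposition \ref{Properties of distance of sequences}.

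First I bound $\rho$. The multiplier has the form \eqref{E:definition-rho} with $\lambda_-=a_i^1(j+l-1)$, $\lambda_+=\lambda(j)$, $\mu_-=a_i^1(j+l-1)$, $\mu_+=a_i^1(j+l)$, and exponent $-\nu/2\in[-1,0)$ applied to $\g(\mu_-\xi)-\g(\mu_+\xi)$. The spacing condition \eqref{E:assumption-on-mu-lambda} holds because $a_i^1\in{\rm A}$. Proposition \ref{four scale Gaussian kernel} with $N=2$, combined with Proposition \ref{standard bump properties} for the constant $c=\tilde C_{\ref{standard bump properties},0,2}$, then gives
\[
|\rho(p)|\le cC_{\ref{four scale Gaussian kernel},2}\bigl(\langle p\rangle^2_{(\lambda/2)}+\langle p\rangle^2_{(\lambda)}\bigr)\le 5cC_{\ref{four scale Gaussian kernel},2}\langle p\rangle^2_{(\lambda/2)}.
\]
Using $\lambda/2\ge a_i^1(j+l-1)$, I keep the single scale $\lambda'(j)=a_i^1(j+l-1)$, which lies in $B_{\dist}(a_i^1,\Delta_\gamma+1)$.

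Next I insert the bound for $H$ from Proposition \ref{H kernel estimate Gaussian domination} and estimate the one-dimensional integral, exactly as in the far case. For $(t_0,t_1,0)\in\mathcal P$ the integral is
\[
\int\langle p\rangle^2_{(\lambda')}\langle v_0-p\rangle^2_{(t_0)}\langle v_1-p\rangle^2_{(t_1)}\,dp .
\]
I use Proposition \ref{bump triangle} with coefficients $1,1$ to split off one bracket evaluated at a fixed point. Proposition \ref{two bump estimate} then integrates the remaining pair, and Proposition \ref{orthogonal decay} converts mixed diagonal/coordinate brackets into products in a single frame $W_u$. This yields at most three products per element of $\mathcal P$, with scales among $\lambda',t_0,t_1$ and maxima of these. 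For $(t_0,t_1,1)\in\mathcal P$ the variable $p$ enters only through $(W_1(v-p(1,1)))_0$, so Proposition \ref{two bump estimate} alone gives a product with scales $\max(\lambda',t_0)$ and $t_1$. Counting gives $\#\mathcal B\le 30\le36$. All scale sequences lie within distance $\Delta_\gamma+1$ of $a_i^1$, because maxima preserve distance bounds by Proposition \ref{Operations on spaced sequences}.

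Finally, Proposition \ref{Gaussian domination} with $N=2$ in each factor produces the sum over $m\in\N^2$ with weights $2^{-|m|}\le2^{-|m|/2}$ and sequence pairs $p_{b,m}=(2^{m_0}p_{b,0},2^{m_1}p_{b,1})$. Proposition \ref{Properties of distance of sequences}(vi) gives
\[
\Delta(p_{b,m})\le 2(\Delta_\gamma+1)+|m|\le 2(\Delta_\gamma+|m|)
\]
after absorbing the $1$. This absorption is valid since $\Delta_\gamma\ge1$; if it fails I would use the slack in $C_{\ref{Gaussian domination combined},1}$. The main obstacle is the bookkeeping of constants in the bump-triangle and orthogonal-decay step. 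Each such step costs at most a factor $4\cdot 2^2$, and multiplying these with $e^{2\pi}$ from the two Gaussian dominations should land within the stated $2^7e^{2\pi}$ prefactor. Convergence pointwise and in $L^1$ follows by comparison with a product of geometric series.
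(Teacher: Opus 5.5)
Your outline matches the paper's: a pointwise bound on $\rho$ from Proposition \ref{four scale Gaussian kernel} with $c=\tilde C_{\ref{standard bump properties},0,2}$, then Proposition \ref{H kernel estimate Gaussian domination}, then integration via Propositions \ref{bump triangle}, \ref{two bump estimate}, \ref{orthogonal decay}, and finally Proposition \ref{Gaussian domination}. However, two steps fail.

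\textbf{The single-scale reduction of $\rho$ is wrong.} In this case $\lambda_-=a_i^1(j+l-1)$ is \emph{not} $\lambda_+/2$. That identity holds in the near and far cases, which is where merging the two scales is legitimate. Here you only know $\lambda_-\le\lambda_+/2$, and the ratio $a_i^1(j+l)/a_i^1(j+l-1)$ has no upper bound.

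Bracket bumps at scales $s<S$ do not dominate each other uniformly. Near $p=0$, $\langle p\rangle^2_{(s)}$ exceeds $\langle p\rangle^2_{(S)}$ by a factor of about $S/s$, and the reverse holds for $|p|\gg S$. So both of your replacements ($\lambda_-\to\lambda/2$, then $\lambda/2\to\lambda'$) are invalid. The end result $|\rho(p)|\lesssim\langle p\rangle^2_{(\lambda_-)}$ is also false in general: $\widehat\rho\ge1$ on $\lambda_+^{-1}<|\xi|\le(2\lambda_-)^{-1}$, so $\rho$ genuinely carries a piece at scale $\lambda_+$.

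The fix, as in the paper, is to keep $\langle p\rangle^2_{(\lambda_-)}+\langle p\rangle^2_{(\lambda_+)}$ and run the integral estimate separately for each $\lambda\in\{\lambda_-,\lambda_+\}$. This doubles the family of products, which is exactly why the statement allows $\#\mathcal B\le36=2\cdot18$. It also removes your extra factor $5$, which the prefactor cannot absorb: $2^7e^{2\pi}=(2\sqrt2)^2(2^2e^{\pi})^2$ is fully spent on Proposition \ref{orthogonal decay} (coordinate versus diagonal frame, exponent $2$) and the two applications of Proposition \ref{Gaussian domination} with $N=2$.

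\textbf{The distance bound does not close.} The absorption $2(\Delta_\gamma+1)+|m|\le2(\Delta_\gamma+|m|)$ fails for $|m|\le1$, whatever $\Delta_\gamma$ is. There is no slack to fall back on: $C_{\ref{Gaussian domination combined},1}=2$ is fixed and $\iota_0=0$, so you need $\Delta(p_b)\le2\Delta_\gamma$ exactly.

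Routing every distance through $a_i^1$ with radius $\Delta_\gamma+1$ loses too much; compute pairwise instead. Take $t_{m,\ell}=a_i^\ell(\cdot+m-1)$ as in the proof of Proposition \ref{H kernel estimate Gaussian domination}, and recall $\lambda_-=a_i^1(\cdot+l-1)$, $\lambda_+=a_i^1(\cdot+l)$. Shift invariance of $\dist$ and Proposition \ref{Properties of distance of sequences} give, for $\epsilon\in[2)$,
\[
\dist\bigl(a_i^1(\cdot+l+\epsilon-1),t_{m,\ell}\bigr)\le|l+\epsilon-m|+\Delta(a_i)\le\Delta_\gamma+1+\Delta(a_i)\le2\Delta_\gamma,
\]
since $\Delta_\gamma\ge1+\Delta(a_i)$. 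The pairs inside $\mathcal P$ have distance at most $\max(1,\Delta(a_i))\le\Delta_\gamma$.

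All maxima are handled by $\dist(c,\max(a,b))\le\max(\dist(c,a),\dist(c,b))$, which is immediate from the definition. Note that Proposition \ref{Operations on spaced sequences} only gives $\max(a,b)\in{\rm A}$, not this distance bound. Together these yield $\Delta(p_b)\le2\Delta_\gamma$, hence $\Delta(p_{b,m})\le2\Delta_\gamma+|m|\le2(\Delta_\gamma+|m|)$.
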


\begin{proof}[Proof \auto]
\[ C = C_{\ref{Gauss domination case 3}} = 2^7e^{2\pi}
\tilde{C}_{\ref{standard bump properties},0,2}
C_{\ref{four scale Gaussian kernel},2}
C_{\ref{H kernel estimate Gaussian domination}}
C_{\ref{bump triangle},1,1,2,2}
C_{\ref{two bump estimate},2,2} \]
Let $\iota=(0,l)\in\mathcal I_\gamma$ with $|l|\le\Delta_\gamma$, and let
\[
\rho=\mathcal F^{-1}\Bigg(
\frac{
\widehat{\Phi_{(a_i^1(j+l-1))}}
-
\widehat{\Phi_{(a_i^1(j+l))}}
}{
\widehat{s(a_i^1(\cdot+l),j)}^\nu
}
\Bigg).
\]
Set
\[
\lambda_-(j)=a_i^1(j+l-1),
\qquad
\lambda_+(j)=a_i^1(j+l).
\]
Then $\lambda_-,\lambda_+\in{\rm A}$ by Proposition
\ref{Operations on spaced sequences}. Writing
$F=(N_{\gamma,\iota})_{i,j}$ and $H=(H_\gamma)_{i,j}$, we have
\[
F(v)=\int_\R\rho(p)H(v_0-p,v_1-p)\,dp.
\]
By Propositions \ref{four scale Gaussian kernel} and
\ref{standard bump properties}, applied with $N=2$,
\[
|\rho(p)|
\le
\tilde{C}_{\ref{standard bump properties},0,2}
C_{\ref{four scale Gaussian kernel},2}
\bigl(
\langle p\rangle_{(\lambda_-)}^2+
\langle p\rangle_{(\lambda_+)}^2
\bigr).
\]

Let $\mathcal P$ be the multiset from Proposition
\ref{H kernel estimate Gaussian domination}, and omit the argument $j$ from the scales. Set $v_0=w_0-w_1$ and $v_1=w_0+w_1$. For each
$\lambda\in\{\lambda_-,\lambda_+\}$ and
$(t_0,t_1,0)\in\mathcal P$, Propositions
\ref{bump triangle}, \ref{two bump estimate}, and
\ref{orthogonal decay} bound
\[
\int_\R
\langle p\rangle_{(\lambda)}^2
\langle v_0-p\rangle_{(t_0)}^2
\langle v_1-p\rangle_{(t_1)}^2\,dp
\]
by
\[
9C_{\ref{bump triangle},1,1,2,2}
C_{\ref{two bump estimate},2,2}
\Bigl(
\langle v_0\rangle_{(t_0)}^2
\langle v_1\rangle_{(\max(\lambda,t_1))}^2
+
\langle v_0\rangle_{(\lambda)}^2
\langle v_1\rangle_{(\max(t_0,t_1))}^2
+
\langle (W_1v)_0\rangle_{(\lambda)}^2
\langle (W_1v)_1\rangle_{(\max(t_0,t_1))}^2
\Bigr).
\]
For $(t_0,t_1,1)\in\mathcal P$, the corresponding integral is bounded by
\[
4C_{\ref{two bump estimate},2,2}
\langle (W_1v)_0\rangle_{(\max(\lambda,t_0))}^2
\langle (W_1v)_1\rangle_{(t_1)}^2.
\]

Let $\mathcal B$ index the resulting bracket products after collecting repetitions, and let $u_b$ and $p_b$ denote their orientations and sequence pairs. From the explicit form of $\mathcal P$ in the proof of Proposition
\ref{H kernel estimate Gaussian domination}, the resulting multiset has at most $36$ elements. Hence
\[
\#\mathcal B\le36.
\]
We obtain
\[
|F(v)|
\le
9
\tilde{C}_{\ref{standard bump properties},0,2}
C_{\ref{four scale Gaussian kernel},2}
C_{\ref{H kernel estimate Gaussian domination}}
C_{\ref{bump triangle},1,1,2,2}
C_{\ref{two bump estimate},2,2}
\sum_{b\in\mathcal B}
\langle (W_{u_b}v)_0\rangle_{(p_{b,0})}^2
\langle (W_{u_b}v)_1\rangle_{(p_{b,1})}^2.
\]

For $m\in\N^2$, set
\[
p_{b,m}=(2^{m_0}p_{b,0},2^{m_1}p_{b,1}).
\]
Propositions \ref{Operations on spaced sequences} and
\ref{Properties of distance of sequences}, together with
$|l|\le\Delta_\gamma$, give
\[
\Delta(p_{b,m})
\le
2(\Delta_\gamma+|m|).
\]
Finally, Proposition \ref{Gaussian domination}, applied with $N=2$ to both factors, gives
\[
|F(v)|
\le
C_{\ref{Gauss domination case 3}}
\sum_{m\in\N^2}2^{-\tfrac{|m|}{2}}
\sum_{b\in\mathcal B}
G_{p_{b,m}(j),u_b}(v).
\]
The series converges pointwise and has finite $L^1$ norm by comparison with the product of two geometric series.

\medskip
\end{proof}

\begin{lemma}[constant $C_{\ref{Gaussian domination combined},2}$ \auto]\label{Gauss domination constant}\uses{standard bump properties,constant four scale Gaussian kernel,constant mean four scale Gaussian kernel,four scale Gaussian kernel,mean four scale Gaussian kernel,constant H kernel estimate Gaussian domination,constant H kernel derivative estimate Gaussian domination,H kernel estimate Gaussian domination,H kernel derivative estimate Gaussian domination,two bump estimate,bump triangle,Gauss domination case 1,Gauss domination case 2,Gauss domination case 3}\usesdefs{}
\lean{Auto.gaussDominationConstant}
\leanok
We have
\begin{equation}\label{auto:Gaussian-domination-case-constant-bounds}
C_{\ref{Gauss domination case 1}}<2^{117},
\end{equation}
\begin{equation}\label{auto:Gaussian-domination-case-two-constant-bound}
C_{\ref{Gauss domination case 2}}<2^{153},
\end{equation}
and
\begin{equation}\label{auto:Gaussian-domination-case-three-constant-bound}
C_{\ref{Gauss domination case 3}}<2^{110}.
\end{equation}
In particular,
\begin{equation}\label{auto:Gaussian-domination-common-constant-bound}
\max\bigl(C_{\ref{Gauss domination case 1}},C_{\ref{Gauss domination case 2}},C_{\ref{Gauss domination case 3}}\bigr)
\le C_{\ref{Gaussian domination combined},2}.
\end{equation}
Thus Proposition \ref{Gaussian domination combined} holds with the claimed constant in \eqref{Gaussian domination main estimate}.
\end{lemma}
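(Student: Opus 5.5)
The lemma is pure arithmetic: substitute the explicit constants recorded in the earlier constant lemmas into the three defining products \eqref{auto:Gaussian-domination-case-one-constant}, \eqref{auto:Gaussian-domination-case-two-constant} and \eqref{auto:Gaussian-domination-case-three-constant}. Each product is then bounded by a power of two. We will use $e^{2\pi}<2^{10}$ (since $e^{2\pi}\approx 535$) and $\pi<4$ throughout. Once the three numerical bounds hold, \eqref{auto:Gaussian-domination-common-constant-bound} follows because $2^{117},2^{110}\le 2^{153}$. Proposition \ref{Gaussian domination combined} then follows by combining Propositions \ref{Gauss domination case 1}, \ref{Gauss domination case 2} and \ref{Gauss domination case 3}, since $\mathcal I_\gamma$ is exactly the disjoint union of the three cases. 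A bigger constant only weakens \eqref{Gaussian domination main estimate}. The cardinality bounds $30,30,36\le 36$ and the distance bounds with factor $2$ are already supplied by those propositions.

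First we gather the ingredients.

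\begin{itemize}
\item Proposition \ref{standard bump properties} gives $\tilde C_{\ref{standard bump properties},0,2}=2^{18}$ and $\tilde C_{\ref{standard bump properties},0,3}=2^{33}$.
\item Lemma \ref{constant mean four scale Gaussian kernel} gives $C_{\ref{mean four scale Gaussian kernel},2}<2^{45}$.
\item Lemma \ref{constant four scale Gaussian kernel} gives $C_{\ref{four scale Gaussian kernel},2}<2^{40}$ and $C_{\ref{four scale Gaussian kernel},3}<2^{63}$.
\item Lemma \ref{constant H kernel estimate Gaussian domination} gives $C_{\ref{H kernel estimate Gaussian domination}}<2^{27}$.
\item Lemma \ref{constant H kernel derivative estimate Gaussian domination} gives $C_{\ref{H kernel derivative estimate Gaussian domination}}<2^{30}$.
\item From \eqref{tb0}, $C_{\ref{two bump estimate},2,2}=16$ and $C_{\ref{two bump estimate},3/2,3/2}=2^{5/2}\cdot 3<17$.
\item From \eqref{auto:bump-triangle-scale-constant}, with $c_0=c_1=1$ we get $\tilde C=2$, so $C_{\ref{bump triangle},1,1,2,2}=4$. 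With $c_0=-\tfrac12$, $c_1=\tfrac12$ we get $\tilde C=1$, so $C_{\ref{bump triangle},-1/2,1/2,3/2,3/2}=1$.
\end{itemize}

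Next we substitute into each product.

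\textbf{Case 3.} The product is $2^7\cdot 2^{10}\cdot 2^{18}\cdot 2^{40}\cdot 2^{27}\cdot 2^2\cdot 2^4=2^{108}<2^{110}$.

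\textbf{Case 2.} The product is $3\cdot 2^7\cdot 2^{10}\cdot 2^{33}\cdot 2^{63}\cdot 2^{30}\cdot 2^2\cdot 2^4<2^{151}<2^{153}$.

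\textbf{Case 1.} The maximum equals $\max(C_{\ref{two bump estimate},3/2,3/2}+4,\,4C_{\ref{two bump estimate},3/2,3/2})<68<2^7$. The product is then $2^8\cdot 2^2\cdot 2^{10}\cdot 2^{18}\cdot 2^{45}\cdot 2^{27}\cdot 2^7=2^{117}$. Because every strict inequality is kept (notably $\pi<4$ and $68<2^7$), this gives $C_{\ref{Gauss domination case 1}}<2^{117}$ strictly.

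The only real obstacle is bookkeeping. We must correctly evaluate $\tilde C_{\ref{standard bump properties},0,N}=2^{2N^2+5N}$ at $N=2$ and $N=3$. We must also read the bump-triangle constant with the right argument order in \eqref{auto:bump-triangle-constant}. Finally, we must keep the inequalities strict in Case 1, where the margin is tight. If the $2^{117}$ bound fails on recomputation, the fallback is to use sharper values, such as the exact $C_{\ref{H kernel estimate Gaussian domination}}=85229712<2^{26.4}$. The final claim only needs all three constants to be at most $2^{153}$, and that holds with large room.
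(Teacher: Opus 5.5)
Your proposal is correct and follows the paper's own proof, which also just substitutes the previously established constants into the three defining products. The only difference is bookkeeping. You round every factor up to a power of two and compensate with the sharper bound $e^{2\pi}<2^{10}$, whereas the paper keeps fractional prefactors such as $\tfrac35 2^{45}$ and $\tfrac{41}{64}2^{27}$ together with the cruder $e^{2\pi}<3025$; both versions give the stated strict bounds.
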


\begin{proof}[Proof \auto]
We have
\begin{equation}\label{auto:standard-bump-small-constants-for-Gaussian-domination}
\widetilde C_{\ref{standard bump properties},0,2}=2^{18},
\end{equation}
and
\begin{equation}\label{auto:standard-bump-small-constant-three-for-Gaussian-domination}
\widetilde C_{\ref{standard bump properties},0,3}=2^{33}.
\end{equation}
Lemmas \ref{constant four scale Gaussian kernel} and \ref{constant mean four scale Gaussian kernel} give
\begin{equation}\label{auto:four-scale-small-constant-two-for-Gaussian-domination}
C_{\ref{four scale Gaussian kernel},2}<637436354528<\tfrac35 2^{40},
\end{equation}
\begin{equation}\label{auto:four-scale-small-constant-three-for-Gaussian-domination}
C_{\ref{four scale Gaussian kernel},3}<6136564156458101504<\tfrac{11}{16}2^{63},
\end{equation}
and
\begin{equation}\label{auto:mean-four-scale-small-constant-two-for-Gaussian-domination}
C_{\ref{mean four scale Gaussian kernel},2}<20397963318112<\tfrac35 2^{45}.
\end{equation}
Lemmas \ref{constant H kernel estimate Gaussian domination} and \ref{constant H kernel derivative estimate Gaussian domination} give
\begin{equation}\label{auto:H-kernel-small-constant-for-Gaussian-domination}
C_{\ref{H kernel estimate Gaussian domination}}<\tfrac{41}{64}2^{27},
\end{equation}
and
\begin{equation}\label{auto:H-kernel-derivative-small-constant-for-Gaussian-domination}
C_{\ref{H kernel derivative estimate Gaussian domination}}<\tfrac{29}{32}2^{30}.
\end{equation}
Furthermore,
\begin{equation}\label{auto:two-bump-small-constant-three-halves-for-Gaussian-domination}
C_{\ref{two bump estimate},\tfrac32,\tfrac32}=12\sqrt2<17,
\end{equation}
and
\begin{equation}\label{auto:two-bump-small-constant-two-for-Gaussian-domination}
C_{\ref{two bump estimate},2,2}=16.
\end{equation}
By Proposition \ref{bump triangle}, the bump triangle constants in Propositions \ref{Gauss domination case 1}, \ref{Gauss domination case 2}, and \ref{Gauss domination case 3} are respectively $1,4,4$. Thus the maximum in \eqref{auto:Gaussian-domination-case-one-constant} is smaller than $68$. Since $\pi<4$ and $e^{2\pi}<3025$, substitution gives
\[
C_{\ref{Gauss domination case 1}}
<2\cdot2^7\cdot4\cdot3025\cdot2^{18}
\cdot\tfrac35 2^{45}\cdot\tfrac{41}{64}2^{27}\cdot\tfrac{17}{32}2^7
<2^{117},
\]
\[
C_{\ref{Gauss domination case 2}}
<\tfrac34 2^2\cdot2^7\cdot\tfrac{13}{16}2^{13}\cdot2^{33}
\cdot\tfrac{11}{16}2^{63}\cdot\tfrac{29}{32}2^{30}\cdot2^2\cdot2^4
<2^{153},
\]
and
\[
C_{\ref{Gauss domination case 3}}
<2^7\cdot\tfrac{13}{16}2^{13}\cdot2^{18}
\cdot\tfrac35 2^{40}\cdot\tfrac{41}{64}2^{27}\cdot2^2\cdot2^4
<2^{110}.
\]
\end{proof}

\subsection{Main induction}

\newcommand{\InductPositiveTerms}[1]{{\hyperref[induct positive terms]{{\rm InductPositiveTerms}(#1)}}}
\begin{definition}[induct positive terms]\label{induct positive terms}\uses{}\usesdefs{geometric parameters,kernel sequences,sandwich kernel,s multiplier}
\lean{Auto.InductPositiveTerms}
Let $k\in \N$ with $1\le k\le n$ and $C\in [1,\infty)$.
We say that $\InductPositiveTerms{k,C}$ holds if

for all $\gamma=(k,u,a)\in \Gamma$, $i\in [k)$,
\begin{equation}\label{auto:positive-terms-square-root-bound}
    \|\M(\gamma,s_{\gamma} \otimes s_{\gamma},i)\|_{{\rm M}(k)}\le C \Delta_\gamma^{2-2^{k-n+1}}.
\end{equation}
\end{definition}

\newcommand{\VanishingDiagonal}[1]{{\hyperref[vanishing diagonal]{{\rm VanishingDiagonal}(#1)}}}
\begin{definition}[vanishing diagonal]\label{vanishing diagonal}\uses{}\usesdefs{geometric parameters,kernel sequences,sandwich kernel,H multiplier}
\lean{Auto.VanishingDiagonal}
Let $k\in \N$ with $1\le k\le n$ and $C\in [1,\infty)$.
We say that $\VanishingDiagonal{k,C}$ holds if

for all $\gamma=(k,u,a)\in \Gamma$, $i\in [k)$,
\begin{equation}\label{auto:H-positive-terms-bound}
    \|\M(\gamma,H_\gamma,i)\|_{{\rm M}(k)}
    \le C \Delta_\gamma^{2-2^{k-n+1}} .
\end{equation}
\end{definition}

\newcommand{\DiagonalBand}[1]{{\hyperref[diagonal band]{{\rm DiagonalBand}(#1)}}}
\begin{definition}[diagonal band]\label{diagonal band}\uses{}\usesdefs{geometric parameters,kernel sequences,sandwich kernel,L multiplier,summation-definition}
\lean{Auto.DiagonalBand}
Let $k\in \N$ with $1\le k\le n-1$ and $C\in [1,\infty)$.
We say that $\DiagonalBand{k,C}$ holds if

for all $\gamma=(k,u,a)\in \Gamma$, $i\in [k)$,
\begin{equation}\label{auto:L-positive-terms-sum-bound}
   \sum_{\iota\in\mathcal{I}_{\gamma}}
   \|\M(\gamma,L_{\gamma,\iota},i)\|_{{\rm M}(k)}
   \le C\Delta_\gamma^{2-2^{k-n+1}} .
\end{equation}
\end{definition}

\newcommand{\IncreaseData}[1]{{\hyperref[increase data]{{\rm IncreaseData}(#1)}}}
\begin{definition}[increase data]\label{increase data}\uses{}\usesdefs{geometric parameters,kernel sequences,2D Gaussians,L multiplier,N multiplier}
\lean{Auto.IncreaseData}
Let $k\in \N$ with $1\le k\le n-1$ and $C\in [1,\infty)$.
We say that $\IncreaseData{k,C}$ holds if

the following holds.

Let $\gamma=(k,u,a)\in \Gamma$, let $i\in [k)$, and let
$\iota\in\mathcal{I}_{\gamma}$.
For $j\in\mathbb Z$ and $y\in(\mathbb R^2)^{k+1}$ define
\begin{equation}\label{auto:increased-data-kernel}
    (\M_{i,\iota})_j(y)=
    \Big(\prod_{m\in [i)} (G_\gamma)_{m,j}(y_m)\Big)
    |(N_{\gamma,\iota})_{i,j}(y_i)|
    \Big(\prod_{m=i+1}^{k-1} (G_\gamma)_{m,j-1}(y_m)\Big)
    \bigl(\sigma_{\gamma,\iota,i,j}^{\otimes 2}\bigr)(y_{k}) .
\end{equation}
Then $\M_{i,\iota}\in {\rm M}(k+1)$ and
\begin{equation}\label{auto:increased-data-M-norm-bound}
  \|\M_{i,\iota}\|_{{\rm M}(k+1)}
  \le
  C 2^{-\tfrac{|\iota_0|}{2}}(1+|\iota_0|)^2
  \Delta_\gamma^{2-2^{k-n+2}} .
\end{equation}
\end{definition}
We will prove that for $1\le k\le n-1$ the following implications hold:
\begin{equation}\label{auto:induction-implication-chain-first} \InductPositiveTerms{k+1} \Longrightarrow \IncreaseData{k} \Longrightarrow \DiagonalBand{k} \end{equation}
\begin{equation}\label{auto:induction-implication-chain-second} \Longrightarrow \VanishingDiagonal{k} \Longrightarrow \InductPositiveTerms{k} \end{equation}
(Here we have omitted the explicit constants for clarity.)
The last implication also holds for $k=n$. After observing that $\VanishingDiagonal{n}$ holds,
Theorem \ref{induct positive terms theorem} below follows by induction on $k$.

\begin{proposition}
\label{vanishing diagonal implies induct positive terms}\uses{positive terms,telescoping terms}\usesdefs{geometric parameters,kernel sequences,sandwich kernel,auto:Gaussian-difference-kernel-definition,s multiplier,H multiplier,induct positive terms,vanishing diagonal}
\lean{Auto.vanishingDiagonal_implies_inductPositiveTerms}
\leanok
Let $k\in\N$ with $1\le k\le n$ and $C\in [1,\infty)$.
If $\VanishingDiagonal{k,C}$ holds, then $\InductPositiveTerms{k,kC+2}$ holds.
\end{proposition}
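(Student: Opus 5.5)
The identity $(H_\gamma)_{i,j}=(s_\gamma)_{i,j}\otimes(s_\gamma)_{i,j}-(Y_\gamma)_{i,j}$ from Definition \ref{H multiplier}, together with linearity of Definition \ref{sandwich kernel} in the middle factor, gives for each $i\in[k)$
\[
\M(\gamma,s_\gamma\otimes s_\gamma,i)=\M(\gamma,H_\gamma,i)+\M(\gamma,Y_\gamma,i).
\]
The diagonal term $\M(\gamma,H_\gamma,i)$ is controlled by the hypothesis $\VanishingDiagonal{k,C}$. The individual terms $\M(\gamma,Y_\gamma,i)$ are \emph{not} controlled one at a time. Proposition \ref{telescoping terms} only controls their sum over $i$, because the telescoping happens across all $i$ simultaneously. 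The plan is to exploit positivity to pass from the full sum back to a single index.

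Fix $J\ge1$ and $\F\in\mathfrak F$. By Proposition \ref{positive terms}, applied with $f_{i',j}=(s_\gamma)_{i',j}$ (real-valued and in $W_0(\R)$ by Proposition \ref{square root Gaussian difference W0}), every $\Lambda_k(\M(\gamma,s_\gamma\otimes s_\gamma,i')_j)(\F)$ is nonnegative. Here $\Lambda_k$ is linear in $M$, since $K_k$ and $\Theta_k$ are both linear. Summing over $j\in[J)$ and over $i'\in[k)$, and dropping the terms with $i'\ne i$, gives
\[
0\le\Lambda_k\Big(\sum_{j\in[J)}\M(\gamma,s_\gamma\otimes s_\gamma,i)_j\Big)(\F)\le\sum_{i'\in[k)}\Lambda_k\Big(\sum_{j\in[J)}\M(\gamma,s_\gamma\otimes s_\gamma,i')_j\Big)(\F).
\]
By the decomposition above, the right-hand side equals
\[
\sum_{i'\in[k)}\Lambda_k\Big(\sum_{j\in[J)}\M(\gamma,H_\gamma,i')_j\Big)(\F)+\Lambda_k\Big(\sum_{j\in[J)}\sum_{i'\in[k)}\M(\gamma,Y_\gamma,i')_j\Big)(\F).
\]

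Next I multiply through by $\min(1,J^{-1+2^{k-n+1}})$ and bound each piece by its absolute value. Definition \ref{kernel sequences} together with $\VanishingDiagonal{k,C}$ bounds each of the $k$ terms with $H_\gamma$ by $C\Delta_\gamma^{2-2^{k-n+1}}$. Proposition \ref{telescoping terms} bounds the $Y_\gamma$ term by $2$. This yields
\[
\|\M(\gamma,s_\gamma\otimes s_\gamma,i)\|_{{\rm M}(k)}\le kC\Delta_\gamma^{2-2^{k-n+1}}+2.
\]
Since $\Delta_\gamma\ge1$ and $2-2^{k-n+1}\ge0$ (because $k\le n$), we have $2\le2\Delta_\gamma^{2-2^{k-n+1}}$. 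The total is therefore at most $(kC+2)\Delta_\gamma^{2-2^{k-n+1}}$, which is $\InductPositiveTerms{k,kC+2}$. Note also that $kC+2\ge1$, as the definition requires.

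The main subtlety is the positivity step: one has to check that Proposition \ref{positive terms} applies for every $i'$ and $j$, i.e. that $s_\gamma\otimes s_\gamma$ has the required factored form $f(u)f(v)$ with real $f\in W_0(\R)$. One also has to check that the membership claims needed for $\Lambda_k$ to be defined hold, namely $\M(\gamma,s_\gamma\otimes s_\gamma,i)\in{\rm M}(k)$; this follows from Proposition \ref{tensor Wiener}. Everything else is bookkeeping with the supremum in Definition \ref{kernel sequences}.
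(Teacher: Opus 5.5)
Your proposal is correct and follows the paper's proof. Positivity (Proposition \ref{positive terms}) reduces the single-index bound to the sum over $i\in[k)$, and then the decomposition $s_\gamma\otimes s_\gamma=H_\gamma+Y_\gamma$, the vanishing-diagonal hypothesis for each of the $k$ terms with $H_\gamma$, and Proposition \ref{telescoping terms} for the telescoped sum of the $Y_\gamma$ terms give the bound. The only differences are that you carry out the positivity step at fixed $J$ and $\F$, and you make explicit the observation $2\le 2\Delta_\gamma^{2-2^{k-n+1}}$ (from $\Delta_\gamma\ge1$ and $k\le n$), which the paper leaves implicit.
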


\begin{proof}
Fix $\gamma$. Let $C'=kC+2$.
By  positivity, Proposition \ref{positive terms}, it suffices to prove
\begin{equation}
    \|\sum_{i\in [k)} \M(\gamma,s_\gamma \otimes s_\gamma,i)\|_{{\rm M}(k)}\le C' \Delta_\gamma^{2-2^{k-n+1}}.
\end{equation}
Adding and subtracting $\M(\gamma, Y_\gamma, i)$ and using the triangle inequality, the left-hand side is
\[
\le \sum_{i\in [k)} \|\M(\gamma, H_\gamma,i)\|_{{\rm M}(k)} + \Big\|\sum_{i\in [k)} \M(\gamma, Y_\gamma,i)\Big\|_{{\rm M}(k)},
\]
which by $\VanishingDiagonal{k,C}$ and Proposition \ref{telescoping terms} is
$\le (kC+2) \Delta_\gamma^{2-2^{k-n+1}}$.
\end{proof}

\begin{proposition}
\label{diagonal band implies vanishing diagonal}\uses{sum L multiplier convergence-L1,prism sum le sum prism-L1,sandwich sums L1}\usesdefs{kernel sequences,sandwich kernel,H multiplier,L multiplier,summation-definition,vanishing diagonal,diagonal band}
\lean{Auto.diagonalBand_implies_vanishingDiagonal}
\leanok
Let $k\in\N$ with $1\le k\le n-1$ and $C\in [1,\infty)$.
If $\DiagonalBand{k,C}$ holds, then $\VanishingDiagonal{k,C}$ holds.
\end{proposition}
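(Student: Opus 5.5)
We fix $\gamma=(k,u,a)\in\Gamma$ and $i\in[k)$, and aim to bound $\|\M(\gamma,H_\gamma,i)\|_{{\rm M}(k)}$ by the left-hand side of \eqref{auto:L-positive-terms-sum-bound}. By Definition \ref{kernel sequences}, it suffices to fix $J\ge1$ and $\F\in\mathfrak F$ and show
\[
\min(1,J^{-1+2^{k-n+1}})\Big|\Lambda_k\Big(\sum_{j\in[J)}\M(\gamma,H_\gamma,i)_j\Big)(\F)\Big|
\le \sum_{\iota\in\mathcal I_\gamma}\|\M(\gamma,L_{\gamma,\iota},i)\|_{{\rm M}(k)},
\]
since $\DiagonalBand{k,C}$ then bounds the right-hand side by $C\Delta_\gamma^{2-2^{k-n+1}}$.

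The first step is the $L^1$ decomposition. Proposition \ref{sum L multiplier convergence-L1} gives $L_{\gamma,\iota}\in\mathcal X_k$ and $(H_\gamma)_{i',j}=\sum_{\iota\in\mathcal I_\gamma}(L_{\gamma,\iota})_{i',j}$ in $L^1$ for every $i'\in[k)$ and $j$. Lemma \ref{sandwich sums L1}, applied with $X=H_\gamma$ and $X_\iota=L_{\gamma,\iota}$, then yields $\M(\gamma,H_\gamma,i)_j=\sum_\iota \M(\gamma,L_{\gamma,\iota},i)_j$ in $L^1$. Summing over the finitely many $j\in[J)$ preserves this: the partial sums over $|\iota|\le N$ converge in $L^1$ term by term in $j$, so $M:=\sum_{j\in[J)}\M(\gamma,H_\gamma,i)_j=\sum_\iota M_\iota$ in $L^1$, where $M_\iota:=\sum_{j\in[J)}\M(\gamma,L_{\gamma,\iota},i)_j$. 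All these functions lie in $W_0((\R^2)^k)$ by Definition \ref{kernel sequences}.

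The second step applies Lemma \ref{prism sum le sum prism-L1}. It gives $|\Lambda_k(M)(\F)|\le\sup_N\sum_{|\iota|\le N}|\Lambda_k(M_\iota)(\F)|$. We multiply by $\min(1,J^{-1+2^{k-n+1}})$ and bound each term by $\|\M(\gamma,L_{\gamma,\iota},i)\|_{{\rm M}(k)}$, which is exactly the quantity inside the supremum defining that norm. The supremum over $N$ of the partial sums of nonnegative terms is the full sum in $[0,\infty]$. Taking the supremum over $J$ and $\F$ then gives the claim with the same constant $C$.

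The only point needing care is the bookkeeping for convergence. We have to check that the symmetric-partial-sum convergence of Definition \ref{summation-definition} passes through the finite sum over $j$ and through Lemma \ref{sandwich sums L1}. We also have to check that the sum in \eqref{auto:L-positive-terms-sum-bound} is interpreted as the monotone limit of nonnegative partial sums, so that it agrees with the supremum produced by Lemma \ref{prism sum le sum prism-L1}. Neither check should require anything beyond the cited results.
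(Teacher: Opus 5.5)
Your proposal is correct and follows the paper's argument. The paper uses the same chain: the $L^1$ decomposition of Proposition \ref{sum L multiplier convergence-L1}, transported to the sandwich kernels by Lemma \ref{sandwich sums L1}, fed into Lemma \ref{prism sum le sum prism-L1}, and closed by the diagonal band hypothesis. You also spell out two points the paper leaves implicit: that $L^1$ convergence passes through the finite sum over $j\in[J)$, and that the supremum of the nonnegative partial sums equals the sum over $\mathcal I_\gamma$.
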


\begin{proof}
By Proposition \ref{sum L multiplier convergence-L1},
\[ \|\M(\gamma,H_\gamma,i)\|_{{\rm M}(k)} \le \sup_{N\in\mathbb N}\sum_{\substack{\iota \in \mathcal{I}_{\gamma}\\|\iota|\le N}} \|\M(\gamma,L_{\gamma,\iota},i)\|_{{\rm M}(k)}, \]
where we used Lemma \ref{prism sum le sum prism-L1}, Lemma \ref{sandwich sums L1} and that $H_\gamma=\sum_{\iota\in \mathcal{I}_{\gamma}} L_{\gamma,\iota}$ converges in $L^1$.
The claim follows from $\DiagonalBand{k,C}$.
\end{proof}

\begin{proposition}[vanishing kernel integral]\label{vanishing kernel integral}\uses{simplification 1 prism,H vanishing integral,M to K}\usesdefs{geometric parameters,kernel sequences,sandwich kernel,H multiplier,vanishing diagonal}
\lean{Auto.vanishingKernelIntegral}
\leanok
Let $\gamma=(n,w,a)\in \Gamma$.
For every $i\in [n)$,
\begin{equation}\label{auto:terminal-H-prism-form-zero}
    \|\M(\gamma,H_\gamma,i)\|_{{\rm M}(n)}=0.
\end{equation}
In particular, $\VanishingDiagonal{n,1}$ holds.
\end{proposition}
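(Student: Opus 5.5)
The plan is to show that for every $J\ge1$ and every $\F\in\mathfrak F$ the form $\Lambda_n\big(\sum_{j\in[J)}\M(\gamma,H_\gamma,i)_j\big)(\F)$ vanishes. Since $\min(1,J^{-1+2^{1}})=1$ for $k=n$, the supremum in Definition \ref{kernel sequences} is then $0$. By linearity of $\Lambda_n$ and of $K_n$, it suffices to prove $\Lambda_n(\M(\gamma,H_\gamma,i)_j)(\F)=0$ for each fixed $j$. By Definition \ref{auto:prism-form-definition}, this means showing $\Theta_n(K_n(M))(\F)=0$ with $M=\M(\gamma,H_\gamma,i)_j$. We will apply Proposition \ref{simplification 1 prism}, so the task reduces to checking that $\int_\R K_n(M)(z+qe_n)\,dq=0$ for every $z\in\R^{n+1}$.

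To verify this hypothesis, use the representation \eqref{kviam_2}. In it, $z_n$ enters through the last pair, $(z_{n-1}+z_n-\Sigma(p),\,z_n-\Sigma(p))$, and $z_n$ appears nowhere else. Integrating over $q$ (the shift of $z_n$) together with $p\in\R^{n-1}$, we use Fubini, which is justified because $M\in W_0$ is integrable and the map $(p,q)\mapsto$ argument is an affine injection. Substituting $q\mapsto q+\Sigma(p)$ turns the last pair into $(z_{n-1}+q,\,q)$ and leaves the first $n-1$ pairs $(z_l+p_l,p_l)$ independent of $q$. The integral therefore becomes
\[
\int_{\R^{n-1}}\int_\R M\big((z_l+p_l,p_l)_{l\in[n-1)},(z_{n-1}+q,q)\big)\,dq\,dp .
\]

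The main point is that $M$ is a tensor product over the pairs $y_m$, $m\in[n)$. The factor $X_{i,j}(y_i)=(H_\gamma)_{i,j}(y_i)$ sits in slot $i$, and the Gaussians $(G_\gamma)_{m,\cdot}(y_m)$ occupy the other slots. After the substitution above, each pair $y_m$ has the diagonal form $(c_m+r_m,r_m)$ with its own free variable $r_m$, where $r_m=p_m$ for $m<n-1$ and $r_{n-1}=q$. The full integral thus factors as a product over $m$ of one-dimensional diagonal integrals $\int_\R(\text{factor}_m)(c_m+r,r)\,dr$. For $m=i$ this factor is $\int_\R (H_\gamma)_{i,j}(z_i+r,r)\,dr$, which vanishes by Proposition \ref{H vanishing integral}. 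The remaining Gaussian factors are integrable along diagonals, so the product is $0$ and Fubini applies to the absolute values.

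The only delicate step is the bookkeeping: after the change of variables every slot must carry an independent diagonal variable, including the case $i=n-1$, where the relevant variable is $q$ itself. The integrability needed for Fubini comes from $M\in W_0((\R^2)^n)$ via Proposition \ref{M to K}. Once this is checked, Proposition \ref{simplification 1 prism} gives the vanishing of each term. Hence the $\mathrm M(n)$ norm is $0\le 1\cdot\Delta_\gamma^{2-2^{1}}$, which establishes $\VanishingDiagonal{n,1}$.
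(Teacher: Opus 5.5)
Your proposal is correct and follows the paper's own proof: apply Proposition~\ref{simplification 1 prism} to each $j$, change variables in the $q$-integral so the last pair becomes diagonal, and kill the resulting product integral via the $i$th factor using Proposition~\ref{H vanishing integral}. One harmless slip: to obtain the last pair $(z_{n-1}+q,q)$ the substitution must be $q\mapsto q-z_n+\Sigma(p)$, as in the paper, not $q\mapsto q+\Sigma(p)$; translation invariance of the $q$-integral absorbs the missing shift.
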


\begin{proof}
The claim follows by Proposition \ref{simplification 1 prism} if we can verify the assumption \eqref{1 prism mean zero assumption}.
Using the formula for $K_n$ in Proposition \ref{M to K}, for $z\in \R^{n+1}$,
\begin{equation}
\int_\R K_n((\M(\gamma,H_\gamma,i))_j)(z+qe_n)\,dq
\end{equation}
\begin{equation}
=\int_\R \int_{\R^{n-1}}  M((z_{l}+p_{l},p_{l})_{l\in [n-1)}, z_{n-1}+z_{n}+q-\Sigma(p),z_{n}+q-\Sigma(p))\,dp\,dq,
\end{equation}
where we abbreviated $M=(\M(\gamma,H_\gamma, i))_j$.
By Fubini's theorem, changing variables $q\mapsto q - z_n + \Sigma(p)$ and renaming $q$ to $p_{n-1}$, this is equal to
\begin{equation}
\int_{\R^{n}}  M((z_{l}+p_{l},p_{l})_{l\in [n)})
 \,dp.
\end{equation}
We claim that this integral vanishes. This follows by unpacking the definition of $M$, separating the $i$th integral and
observing by Proposition \ref{H vanishing integral} that
\begin{equation}
        \int_{\R}  (H_\gamma)_{i,j}(z_{i}+p_{i},p_{i})
     \,dp_i=0.
\end{equation}
\end{proof}

\begin{proposition}\label{increase data implies diagonal band}\uses{Gaussian domination combined,Cauchy-Schwarz at k,Cauchy-Schwarz at n-1}\usesdefs{auto:convolution-along-vector-definition,normalized function tuples,auto:prism-form-definition,geometric parameters,kernel sequences,2D Gaussians,sandwich kernel,L multiplier,N multiplier,diagonal band,increase data}
\lean{Auto.increaseData_implies_diagonalBand}
\leanok
Let $k\in\N$ with $1\le k\le n-1$ and $C\in[1,\infty)$. Suppose that $\IncreaseData{k,C}$ holds. Then:

(i) If $k<n-1$, then
\begin{equation}\label{auto:diagonal-band-induction-constant}
\DiagonalBand{k,2^{15}C_{\ref{Gaussian domination combined},2}^{1/2}C^{1/2}}
\end{equation}
holds.

(ii) If $k=n-1$, then $\DiagonalBand{n-1,2^{10}C}$ holds.
\end{proposition}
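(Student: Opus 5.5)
Fix $\gamma=(k,u,a)\in\Gamma$, $i\in[k)$ and $\iota\in\mathcal I_\gamma$. The plan is to bound each $\|\M(\gamma,L_{\gamma,\iota},i)\|_{{\rm M}(k)}$ by Cauchy--Schwarz, reducing it to the quantity controlled by $\IncreaseData{k,C}$, and then sum over $\iota$. First I would rewrite the middle factor. By Definition \ref{N multiplier}, on the Fourier side $\widehat{(L_{\gamma,\iota})_{i,j}}(\xi,\eta)=\widehat{\sigma_{\gamma,\iota,i,j}}(\xi+\eta)^{\nu}\widehat{(N_{\gamma,\iota})_{i,j}}(\xi,\eta)$. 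Proposition \ref{convolution vector} with $\alpha=(1,1)$ then gives
\[
(L_{\gamma,\iota})_{i,j}=(N_{\gamma,\iota})_{i,j}*_{(1,1)}\sigma,
\]
with $\sigma=\sigma_{\gamma,\iota,i,j}$ when $\nu=1$, and $\sigma=\sigma*\sigma$ when $\nu=2$. Multiplying by the Gaussian factors, which do not involve $y_i$, yields
\[
\M(\gamma,L_{\gamma,\iota},i)_j=\rho_j*_{e_i^0+e_i^1}\varphi_j,
\]
where $\rho_j$ is the sandwich product with middle factor $(N_{\gamma,\iota})_{i,j}$ and $\varphi_j=\sigma_{\gamma,\iota,i,j}$ (convolved with itself when $k=n-1$). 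This is exactly the form of \eqref{before CS} and \eqref{before CS 2}.

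\emph{Case $k<n-1$.} For $J\ge1$, apply Proposition \ref{Cauchy-Schwarz at k} to $\sum_{j\in[J)}$. The kernel $\widetilde M$ produced there coincides, summed over $j\in[J)$, with the kernel $\M_{i,\iota}$ of Definition \ref{increase data}. This is because the Gaussians are nonnegative and $|\rho_j|$ only takes the absolute value of the $N$ factor. Hence
\[
|\Lambda_{k+1}(\widetilde M)(\F)|\le J^{1-2^{k-n+2}}\,C2^{-|\iota_0|/2}(1+|\iota_0|)^2\Delta_\gamma^{2-2^{k-n+2}}.
\]
For the other factor I would use Proposition \ref{Gaussian domination combined}. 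It gives $\sum_j\|\rho_j\|_1\le J\cdot C_{\ref{Gaussian domination combined},2}C_{\ref{Gaussian domination combined},0}\,2^{-|\iota_0|/2}\sum_m 2^{-|m|/2}$, the Gaussians having unit $L^1$ norm; the $m$-sum equals $(1-2^{-1/2})^{-2}<12$. Multiplying the square roots, the power of $J$ is $J^{1-2^{k-n+1}}$, which is exactly the normalization in \eqref{auto:M-k-norm-definition}. Since $\tfrac12(2-2^{k-n+2})=1-2^{k-n+1}$, the power of $\Delta_\gamma$ is $\Delta_\gamma^{1-2^{k-n+1}}$. The $\iota$-dependence is $2^{-|\iota_0|/2}(1+|\iota_0|)$.

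Summing over $\iota\in\mathcal I_\gamma$: the terms with $\iota=(0,l)$ number $2\Delta_\gamma+1\le 3\Delta_\gamma$ and contribute $O(\Delta_\gamma)$. The terms with $\iota=(h,0)$, $h\ne0$, form a convergent series $\sum_h2^{-|h|/2}(1+|h|)$. The total is therefore $\lesssim \Delta_\gamma\cdot\Delta_\gamma^{1-2^{k-n+1}}=\Delta_\gamma^{2-2^{k-n+1}}$, as required. I expect $2^{15}$ to absorb $\sqrt{36\cdot 12}$, the factor $3$, and the geometric sum.

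\emph{Case $k=n-1$.} Here I would use Proposition \ref{Cauchy-Schwarz at n-1} instead, and no $\|\rho_j\|_1$ factor appears. The $J$-weights are $1$ on both sides, so $\IncreaseData{n-1,C}$ bounds the right-hand side directly by $C2^{-|\iota_0|/2}(1+|\iota_0|)^2\Delta_\gamma^{0}$. Summing over $\iota$ gives a total $\le(3\Delta_\gamma+\sum_h 2^{-|h|/2}(1+|h|)^2)C\le 2^{10}C\Delta_\gamma$, and $\Delta_\gamma=\Delta_\gamma^{2-2^{0}}$, which matches.

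The main obstacle is the bookkeeping in the identification steps. The convolution identity requires the real-valuedness of $\sigma$ and the $W_0$ memberships supplied by Propositions \ref{square root Gaussian difference W0} and \ref{auto:N-kernel-well-definedness}. Separately, $\widetilde M$ must be matched with the increase-data kernel, including the permutation placing the $i$-th pair last.
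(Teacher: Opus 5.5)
Your proposal is correct and follows the paper's proof step for step. You factor $(L_{\gamma,\iota})_{i,j}=(N_{\gamma,\iota})_{i,j}*_{(1,1)}\sigma_{\gamma,\iota,i,j}^{*\nu}$, apply Proposition \ref{Cauchy-Schwarz at k} or Proposition \ref{Cauchy-Schwarz at n-1}, identify $\widetilde M$ with the kernel of Definition \ref{increase data}, bound $\sum_j\|\rho_j\|_1$ via Proposition \ref{Gaussian domination combined}, and sum over $\iota$ using the count $2\Delta_\gamma+1\le 3\Delta_\gamma$ and a convergent series in $h$; the only cosmetic difference is that you keep the factor $2^{-|\iota_0|/2}$ from Gaussian domination in the $L^1$ bound, which the paper discards, and this only improves the decay in $\iota_0$.
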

\begin{proof}
Fix $\gamma,i,\iota$. By definition,
\[
(L_{\gamma,\iota})_{i,j}
=(N_{\gamma,\iota})_{i,j}*_{(1,1)}
\sigma_{\gamma,\iota,i,j}^{*\nu}.
\]
Let
\[
\rho_j(y)=
\Big(\prod_{m\in[i)}(G_\gamma)_{m,j}(y_m)\Big)
(N_{\gamma,\iota})_{i,j}(y_i)
\Big(\prod_{m=i+1}^{k-1}(G_\gamma)_{m,j-1}(y_m)\Big).
\]
The Gaussian factors have $L^1$ norm one. Proposition \ref{Gaussian domination combined} therefore gives
\[
\|\rho_j\|_1
\le
C_{\ref{Gaussian domination combined},2}
C_{\ref{Gaussian domination combined},0}
\sum_{m\in\N^2}2^{-|m|/2}
\le2^9C_{\ref{Gaussian domination combined},2}.
\]

If $k<n-1$, Proposition \ref{Cauchy-Schwarz at k} gives
\[
\left|\Lambda_k\left(\sum_{j\in[J)}
\M(\gamma,L_{\gamma,\iota},i)_j\right)(\F)\right|
\le
2^{9/2}C_{\ref{Gaussian domination combined},2}^{1/2}J^{1/2}
|\Lambda_{k+1}(\widetilde M_\iota)(\F)|^{1/2}.
\]
If $k=n-1$, Proposition \ref{Cauchy-Schwarz at n-1} gives
\[
\left|\Lambda_{n-1}\left(\sum_{j\in[J)}
\M(\gamma,L_{\gamma,\iota},i)_j\right)(\F)\right|
\le
\sup_{\widetilde{\F}\in\mathfrak F}
|\Lambda_n(\widetilde M_\iota)(\widetilde\F)|.
\]
Here in either case
\[
\widetilde M_\iota=
\sum_{j\in[J)}(\M_{i,\iota})_j
\]
with $\M_{i,\iota}$ as in Definition \ref{increase data}. Hence $\IncreaseData{k,C}$ gives
\[
|\Lambda_{k+1}(\widetilde M_\iota)(\F)|
\le
C\max(1,J^{1-2^{k-n+2}})
2^{-|\iota_0|/2}(1+|\iota_0|)^2
\Delta_\gamma^{2-2^{k-n+2}}.
\]
It follows in the case $k<n-1$ that
\[
\left|\Lambda_k\left(\sum_{j\in[J)}
\M(\gamma,L_{\gamma,\iota},i)_j\right)(\F)\right|
\le
2^{9/2}C_{\ref{Gaussian domination combined},2}^{1/2}C^{1/2}
J^{1-2^{k-n+1}}2^{-|\iota_0|/4}(1+|\iota_0|)
\Delta_\gamma^{1-2^{k-n+1}},
\]
and in the case $k=n-1$ that
\[
\left|\Lambda_{n-1}\left(\sum_{j\in[J)}
\M(\gamma,L_{\gamma,\iota},i)_j\right)(\F)\right|
\le
C2^{-|\iota_0|/2}(1+|\iota_0|)^2.
\]
There are $2\Delta_\gamma+1\le3\Delta_\gamma$ indices with $\iota_0=0$. Moreover,
\[
3+2\sum_{h=1}^\infty2^{-h/4}(1+h)\le2^{10},
\qquad
3+2\sum_{h=1}^\infty2^{-h/2}(1+h)^2\le2^{10}.
\]
Summing in $\iota$ proves (i), since $2^{9/2}2^{10}\le2^{15}$, and proves (ii).
\end{proof}

\begin{proposition}
\label{induct positive terms imply increase data}\uses{Gaussian domination combined,Monotonicity K}\usesdefs{multiplicatively spaced monotone sequences,geometric parameters,kernel sequences,sandwich kernel,s multiplier,L multiplier,N multiplier,induct positive terms,increase data}
\lean{Auto.inductPositiveTerms_implies_increaseData}
\leanok
Let $k\in\N$ with $1\le k\le n-1$ and $C\in [1,\infty)$.
Assume that $\InductPositiveTerms{k+1,C}$ holds. Then $\IncreaseData{k,C\cdot C_{\ref{induct positive terms imply increase data}}}$ holds, where
\begin{equation}\label{auto:increase-data-induction-constant-definition}C_{\ref{induct positive terms imply increase data}} = 2^{10} C_{\ref{Gaussian domination combined},0} (1+C_{\ref{Gaussian domination combined},1})^2 C_{\ref{Gaussian domination combined},2}.\end{equation}
\end{proposition}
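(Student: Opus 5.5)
The plan is to dominate the increased kernel $\M_{i,\iota}$ pointwise by a nonnegative combination of sandwich kernels of the form $\M(\gamma',s_{\gamma'}\otimes s_{\gamma'},k)$ with $\gamma'=(k+1,u',a')\in\Gamma$. Monotonicity then turns the pointwise bound into a bound on forms, and $\InductPositiveTerms{k+1,C}$ is applied to each summand.

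\emph{Step 1 (domination of the middle factor).} Fix $\gamma,i,\iota$. Proposition \ref{Gaussian domination combined} provides $\mathcal B$ with $\#\mathcal B\le C_{\ref{Gaussian domination combined},0}$, orientations $u_b$, and pairs $p_{b,m}\in{\rm A}^2$ satisfying \eqref{Gaussian domination combined distance}. It gives
\[
|(N_{\gamma,\iota})_{i,j}(v)|\le C_{\ref{Gaussian domination combined},2}2^{-|\iota_0|/2}\sum_{m\in\N^2}2^{-|m|/2}\sum_{b\in\mathcal B}G_{p_{b,m}(j),u_b}(v).
\]
Inserting this into \eqref{auto:increased-data-kernel} bounds $(\M_{i,\iota})_j$ by the same weighted sum of kernels $(\M^{b,m})_j$. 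In each $\M^{b,m}$, the $i$th factor $|N|$ is replaced by $G_{p_{b,m}(j),u_b}$, and all other factors are unchanged.

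\emph{Step 2 (identifying each summand as a sandwich kernel).} For fixed $(b,m)$ I would define $\gamma'=(k+1,u',a')$ as follows.
\begin{itemize}
\item For $l<i$, set $a'_l=a_l$ and $u'_l=u_l$.
\item At slot $i$, set $a'_i=p_{b,m}$ and $u'_i=u_b$.
\item For $i<l<k$, set $a'_l(j)=a_l(j-1)$ and $u'_l=u_l$. This shift turns the $j-1$ indices of \eqref{auto:increased-data-kernel} into $j$ indices, as required for the sandwich kernel with middle index $k$.
\item At slot $k$, set $u'_k=1$ and let $a'^0_k=a'^1_k=2^{-1/2}c$, where $c\in{\rm A}$ is the sequence with $\sigma_{\gamma,\iota,i,j}=s(c,j)$ in Definition \ref{N multiplier}. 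Then $(s_{\gamma'})_{k,j}=s(\sqrt2 a'^1_k,j)=\sigma_{\gamma,\iota,i,j}$.
\end{itemize}
Proposition \ref{Operations on spaced sequences} gives $a'\in({\rm A}^2)^{k+1}$. The shifts preserve $\Delta$, and slot $k$ has $\Delta=0$. Hence
\[
\Delta_{\gamma'}\le\Delta_\gamma+C_{\ref{Gaussian domination combined},1}(\Delta_\gamma+|\iota_0|+|m|)\le(1+C_{\ref{Gaussian domination combined},1})\Delta_\gamma(1+|\iota_0|)(1+|m|),
\]
using $\Delta_\gamma\ge1$. With these choices, $\M^{b,m}=\M(\gamma',s_{\gamma'}\otimes s_{\gamma'},k)$.

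\emph{Step 3 (positivity, summation and the constant).} All kernels involved have last factor $\sigma^{\otimes2}$, with a nonnegative product in the remaining slots. So Proposition \ref{Positivity M} gives $\Lambda_{k+1}(\cdot)\ge0$ for each of them, and Proposition \ref{Monotonicity K} (via Proposition \ref{M to K}) gives
\[
0\le\Lambda_{k+1}\Big(\sum_{j\in[J)}(\M_{i,\iota})_j\Big)(\F)\le C_{\ref{Gaussian domination combined},2}2^{-|\iota_0|/2}\sum_{m,b}2^{-|m|/2}\,\Lambda_{k+1}\Big(\sum_{j\in[J)}(\M^{b,m})_j\Big)(\F).
\]
Multiplying by $\min(1,J^{-1+2^{k-n+2}})$ and applying $\InductPositiveTerms{k+1,C}$ bounds each term by $C\Delta_{\gamma'}^{2-2^{k-n+2}}$. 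Since the exponent lies in $[0,2]$ and the base is at least $1$, this is at most
\[
C(1+C_{\ref{Gaussian domination combined},1})^2(1+|\iota_0|)^2(1+|m|)^2\Delta_\gamma^{2-2^{k-n+2}}.
\]
Finally, $\sum_{m\in\N^2}2^{-|m|/2}(1+|m|)^2\le2^{10}$ and $\#\mathcal B\le C_{\ref{Gaussian domination combined},0}$, which yields exactly $C\cdot C_{\ref{induct positive terms imply increase data}}$. Membership $\M_{i,\iota}\in{\rm M}(k+1)$ follows from Propositions \ref{auto:N-kernel-well-definedness}, \ref{square root Gaussian difference W0} and \ref{tensor Wiener}.

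The main obstacle is Step 3's use of monotonicity with an infinite majorant. Proposition \ref{Monotonicity K} requires the comparison kernel $\tilde\rho$ to lie in $W_0$, but the full series over $m$ is only known to converge pointwise and in $L^1$. I would first try the following. Truncate to $|m|\le R$, which gives a finite sum in $W_0$. Compare $|N|$ with the truncated majorant plus the nonnegative tail. Control the tail's contribution to $\Lambda_{k+1}$ by its $L^1$ norm using Propositions \ref{prism BL inequality} and \ref{M to K}, and then let $R\to\infty$. If continuity of the tail is an issue, I would instead work at the $K$-kernel level, where positivity holds for general nonnegative $\rho$, together with $L^1$ limits.
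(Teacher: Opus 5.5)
Your proposal is correct and follows the paper's proof. Both arguments use the Gaussian domination of $|N_{\gamma,\iota}|$ and the same new data $\widetilde\gamma_{b,m}$ (slot $i$ replaced by $p_{b,m}$, slots $i<l<k$ shifted by one, the new last slot carrying a zero-distance pair reproducing $\sigma_{\gamma,\iota,i,j}$). They then apply $\mathrm{InductPositiveTerms}(k+1,C)$ with the same distance bookkeeping and the same summation bound $2^{10}$.

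The only differences are cosmetic. You take orientation $u'_k=1$ at the new slot, whereas the paper takes $\widetilde u_b(k)=0$ with both components $2^{-1/2}c$; both choices give $(s_{\gamma'})_{k,j}=\sigma_{\gamma,\iota,i,j}$. Your truncation-plus-$L^1$-tail argument is a fuller version of the limiting step the paper compresses into ``dominated convergence''.
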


\begin{proof}
Estimating the functions $|N_{\gamma,\iota}|$ by Gaussian domination using Propositions \ref{Gaussian domination combined}, Proposition \ref{Monotonicity K} and applying the dominated convergence theorem,
\begin{equation}\label{eqn:increasedatapf1}
\|\M_{i,\iota}\|_{{\rm M}(k+1)} \le C_{\ref{Gaussian domination combined},2} 2^{-|\iota_0|/2} \sum_{m\in\mathbb{N}^2} 2^{-|m|/2} \sum_{b\in\mathcal{B}_{i,\gamma,\iota}} \|\M(\widetilde{\gamma}_{b,m}, s_{\widetilde{\gamma}_{b,m}}^{\otimes 2}  ,k)\|_{{\rm M}(k+1)},
\end{equation}
and for each $(b,m)\in\mathcal{B}\times \mathbb{N}^2$ the new data $\widetilde{\gamma}_{b,m}$ is $(k+1,\widetilde{u}_{b},\widetilde{a}_{b,m})\in \Gamma$ where
\[
\widetilde u_b(i')=u(i')\qquad(i'\in[i)),
\]
\[
\widetilde u_b(i)=u_b,
\]
\[
\widetilde u_b(i')=u(i')\qquad(i<i'<k),
\]
\[
\widetilde u_b(k)=0,
\]
and
\[
(\widetilde a_{b,m})_{i'}=a_{i'}\qquad(i'\in[i)),
\]
\[
(\widetilde a_{b,m})_i=p_{b,m},
\]
\[
(\widetilde a_{b,m})_{i'}=a_{i'}(\cdot-1)\qquad(i<i'<k),
\]
\[
(\widetilde a_{b,m})_k=\alpha_{\iota,i}.
\]
Here $u_b\in [2), p_{b,m}\in {\rm A}^2$ is produced by Proposition \ref{Gaussian domination combined} and
$\alpha_{\iota,i}\in {\rm A}^2$ can be inferred. To be explicit,
$\alpha_{(h,0),i}(j) = (2^{h-1/2} a_i^1(j+\Delta_{\gamma}),2^{h-1/2} a_i^1(j+\Delta_{\gamma}))$ if $h>0$,
$\alpha_{(h,0),i}(j) = (2^{h-1/2} a_i^1(j-\Delta_{\gamma}),2^{h-1/2} a_i^1(j-\Delta_{\gamma}))$ if $h<0$ and
$\alpha_{(0,l),i}(j) = (2^{-1/2} a_i^1(j+l),2^{-1/2} a_i^1(j+l))$ if $|l|\le \Delta_{\gamma}$.
In particular, $\Delta(\alpha_{\iota,i})=0$ and
\[ \Delta_{\widetilde{\gamma}_{b,m}}=\Delta_\gamma-\Delta(a_i)+\Delta(p_{b,m}) \le  (1+C_{\ref{Gaussian domination combined},1})(\Delta_\gamma+|\iota_0|+|m|). \]
Also note $(s_{\widetilde{\gamma}_{b,m}})_{k,j}=\sigma_{\gamma,\iota,i,j}$.
We may apply $\InductPositiveTerms{k+1,C}$ and sum over $b\in \mathcal{B}_{i,\gamma,\iota}$ to obtain for each $\iota\in\mathcal{I}_{\gamma}$:
\[ \|\M_{i,\iota}\|_{{\rm M}(k+1)} \le C\; C_{\ref{Gaussian domination combined},0} (1+C_{\ref{Gaussian domination combined},1})^2 C_{\ref{Gaussian domination combined},2} 2^{-\tfrac{|\iota_0|}{2}} \sum_{m\in\mathbb{N}^2} 2^{-\tfrac{|m|}{2}} (\Delta_\gamma + |\iota_0|+|m|)^{2-2^{k-n+2}}. \]
Summing over $m$ we obtain the claim.
Indeed, estimate
\[ \sum_{m\in\mathbb{N}^2} 2^{-\tfrac{|m|}{2}} (\Delta_\gamma + |\iota_0|+|m|)^{2-2^{k-n+2}} \le \Delta_\gamma^{2-2^{k-n+2}} (1+|\iota_0|)^{2-2^{k-n+2}} \sum_{m\in\N^2} 2^{-\tfrac{|m|}{2}} (1+|m|)^{2-2^{k-n+2}} \]
\[
\leq \Delta_\gamma^{2-2^{k-n+2}} (1+|\iota_0|)^{2} \sum_{m\in\N^2} 2^{-\tfrac{|m|}{2}} (1+|m|)^{2}
\]
and $\sum_{m\in\N^2} 2^{-\frac{|m|}{2}} (1+|m|)^2\le 2^{10}$.
\end{proof}

\begin{lemma}[constant $C_{\ref{induct positive terms imply increase data}}$ \auto]\label{constant induct positive terms imply increase data}\uses{Gaussian domination combined,induct positive terms imply increase data}\usesdefs{}
\lean{Auto.constantInductPositiveTermsImplyIncreaseData}
\leanok
\begin{equation}\label{constant induct positive terms imply increase data bound}
C_{\ref{induct positive terms imply increase data}}<2^{172}.
\end{equation}
\end{lemma}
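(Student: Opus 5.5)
The bound is a direct substitution into the defining formula \eqref{auto:increase-data-induction-constant-definition}:
\[
C_{\ref{induct positive terms imply increase data}} = 2^{10} C_{\ref{Gaussian domination combined},0} (1+C_{\ref{Gaussian domination combined},1})^2 C_{\ref{Gaussian domination combined},2}.
\]
We use the values fixed in \eqref{Gaussian domination combined constants}, namely
\[
C_{\ref{Gaussian domination combined},0}=36,\qquad C_{\ref{Gaussian domination combined},1}=2,\qquad C_{\ref{Gaussian domination combined},2}=2^{153}.
\]
Substituting these values gives
\[
C_{\ref{induct positive terms imply increase data}}=2^{10}\cdot 36\cdot 9\cdot 2^{153}=324\cdot 2^{163}.
\]

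To conclude, compare $324$ with a power of two. Since $324<512=2^9$, we obtain
\[
C_{\ref{induct positive terms imply increase data}}<2^{9}\cdot 2^{163}=2^{172},
\]
which is \eqref{constant induct positive terms imply increase data bound}.

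There is no real obstacle here. The only point to check is that $C_{\ref{Gaussian domination combined},2}$ is the value $2^{153}$ set in Proposition \ref{Gaussian domination combined}. It is not one of the smaller case constants from Lemma \ref{Gauss domination constant}, and that lemma only confirms that $2^{153}$ dominates all three cases. No other estimates are needed.
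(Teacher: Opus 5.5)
Your proposal is correct and matches the paper's proof: both substitute $C_{\ref{Gaussian domination combined},0}=36$, $C_{\ref{Gaussian domination combined},1}=2$, $C_{\ref{Gaussian domination combined},2}=2^{153}$ into the defining product. Both then obtain $324\cdot 2^{163}<2^{172}$; your explicit comparison $324<2^9$ is the one step the paper leaves implicit.
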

\begin{proof}[Proof \auto]
Using the defining product in Proposition \ref{induct positive terms imply increase data} and Proposition \ref{Gaussian domination combined}, in particular \eqref{Gaussian domination combined constants}, we obtain
\[
2^{10}\cdot36\cdot9\cdot2^{153}
=324\cdot2^{163}<2^{172}.
\]
\end{proof}

\begin{proposition}\label{P:C_k-induction}\uses{vanishing diagonal implies induct positive terms,vanishing kernel integral,induct positive terms imply increase data,increase data implies diagonal band,diagonal band implies vanishing diagonal}\usesdefs{induct positive terms}
\lean{Auto.inductPositiveTermsByInduction}
\leanok
For every $k\in\N$ with $1\le k\le n$, $\InductPositiveTerms{k,C_k}$ holds, where
\begin{equation}\label{auto:positive-induction-recursion}
C_n=2+n,
\end{equation}
\begin{equation}\label{auto:positive-induction-recursion-terminal-step}
C_{n-1}=2+(n-1)2^{10}C_{\ref{induct positive terms imply increase data}}C_n,
\end{equation}
and, for $1\le k\le n-2$,
\begin{equation}\label{auto:positive-induction-recursion-interior-step}
C_k=2+k2^{15}C_{\ref{Gaussian domination combined},2}^{1/2}
(C_{\ref{induct positive terms imply increase data}}C_{k+1})^{1/2}.
\end{equation}
\end{proposition}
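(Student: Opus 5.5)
The argument is a downward induction on $k$, from $k=n$ to $k=1$, chaining the implications already established. At each step we track the constants exactly as they appear in the recursion.

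\emph{Base case $k=n$.} Proposition \ref{vanishing kernel integral} gives $\VanishingDiagonal{n,1}$. Proposition \ref{vanishing diagonal implies induct positive terms}, applied with $k=n$ and $C=1$, then yields $\InductPositiveTerms{n,n\cdot 1+2}$, that is $\InductPositiveTerms{n,C_n}$ with $C_n=2+n$.

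\emph{Inductive step.} Let $1\le k\le n-1$ and assume $\InductPositiveTerms{k+1,C_{k+1}}$. Here $C_{k+1}\ge 1$, as the definition requires.

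Proposition \ref{induct positive terms imply increase data} gives $\IncreaseData{k,C'}$ with $C'=C_{\ref{induct positive terms imply increase data}}C_{k+1}$. We check $C'\ge1$ directly from the defining product: every factor is at least $1$.

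Next, Proposition \ref{increase data implies diagonal band} gives a diagonal band bound, in one of two forms depending on $k$.
\begin{itemize}
\item If $k<n-1$, it gives $\DiagonalBand{k,D}$ with $D=2^{15}C_{\ref{Gaussian domination combined},2}^{1/2}(C_{\ref{induct positive terms imply increase data}}C_{k+1})^{1/2}$.
\item If $k=n-1$, it gives $\DiagonalBand{n-1,D}$ with $D=2^{10}C_{\ref{induct positive terms imply increase data}}C_n$.
\end{itemize}
In both cases $D\ge1$.

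Proposition \ref{diagonal band implies vanishing diagonal} then converts this into $\VanishingDiagonal{k,D}$. Finally, Proposition \ref{vanishing diagonal implies induct positive terms} gives $\InductPositiveTerms{k,kD+2}$. Since $kD+2$ is exactly the right-hand side of \eqref{auto:positive-induction-recursion-terminal-step} when $k=n-1$, and of \eqref{auto:positive-induction-recursion-interior-step} when $k\le n-2$, this is $\InductPositiveTerms{k,C_k}$.

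There is no substantive obstacle; the argument is bookkeeping. The one point needing care is the side condition $C\in[1,\infty)$ in each invoked proposition. It holds because all the constants involved are products of quantities at least $1$, plus $2$.

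We should also note that the case split between $k=n-1$ and $k<n-1$ in Proposition \ref{increase data implies diagonal band} matches the two recursion formulas. For $n=2$ only the $k=n-1=1$ branch occurs, so the interior recursion is vacuous.
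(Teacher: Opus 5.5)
Your proof is correct and follows the paper's argument exactly: reverse induction starting from $k=n$ via Propositions \ref{vanishing kernel integral} and \ref{vanishing diagonal implies induct positive terms}, then the chain of Propositions \ref{induct positive terms imply increase data}, \ref{increase data implies diagonal band}, \ref{diagonal band implies vanishing diagonal}, \ref{vanishing diagonal implies induct positive terms} for the inductive step. Your explicit tracking of the constants, including the side condition $C\ge1$ and the case split at $k=n-1$, supplies bookkeeping that the paper leaves implicit.
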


\begin{proof}
This is proved by (reverse) induction on $k$ with the base case being $k=n$.
The claim holds for $k=n$ by Proposition \ref{vanishing diagonal implies induct positive terms} with $k=n$ and Proposition \ref{vanishing kernel integral}.
Next assume $1\le k\le n-1$ and assume that $\InductPositiveTerms{k+1,C_{k+1}}$ holds.
Then $\InductPositiveTerms{k,C_k}$
follows by applying Proposition \ref{induct positive terms imply increase data}, Proposition \ref{increase data implies diagonal band}, Proposition \ref{diagonal band implies vanishing diagonal} and finally Proposition \ref{vanishing diagonal implies induct positive terms}.
\end{proof}

\begin{lemma}[constant $C_k$ \auto]\label{P:better-induction}\uses{P:C_k-induction,Gaussian domination combined,induct positive terms imply increase data}\usesdefs{induct positive terms}
\lean{Auto.betterInduction}
\leanok
For every $k\in\N$ with $1\le k\le n-1$,
\begin{equation}\label{auto:positive-induction-local-bound}
\InductPositiveTerms{k,
(18\cdot2^{173}(k+2)+2^{1/2})^2}
\end{equation}
holds.
\end{lemma}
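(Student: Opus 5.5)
The plan is to bound the recursively defined constants $C_k$ of Proposition \ref{P:C_k-induction} from above, and then use that $\InductPositiveTerms{k,C}$ is monotone in $C$. A larger $C$ only weakens \eqref{auto:positive-terms-square-root-bound}, so $\InductPositiveTerms{k,C_k}$ together with $C_k\le C$ gives $\InductPositiveTerms{k,C}$.

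\emph{Simplifying the recursion.} Set $A=C_{\ref{induct positive terms imply increase data}}$ and $G=C_{\ref{Gaussian domination combined},2}=2^{153}$. From the defining product in Proposition \ref{induct positive terms imply increase data} and the constants in \eqref{Gaussian domination combined constants}, exactly as in the proof of Lemma \ref{constant induct positive terms imply increase data}, we have $A=2^{10}\cdot 36\cdot 9\cdot 2^{153}=324\cdot 2^{163}$. Put
\[
B=2^{15}(GA)^{1/2}=2^{15}\cdot 2^{76.5}\cdot 18\cdot 2^{81.5}=18\cdot 2^{173}.
\]
With this notation the recursion \eqref{auto:positive-induction-recursion-interior-step} reads $C_k=2+kB\,C_{k+1}^{1/2}$ for $1\le k\le n-2$. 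The claim to prove is
\[
C_k^{1/2}\le B(k+2)+2^{1/2}\qquad\text{for } 1\le k\le n-1.
\]

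\emph{Base case $k=n-1$.} By \eqref{auto:positive-induction-recursion} and \eqref{auto:positive-induction-recursion-terminal-step}, $C_{n-1}=2+(n-1)(n+2)2^{10}A$. Since $(n-1)(n+2)\le (n+1)^2$ and $2^{10}A\le 2^{30}GA=B^2$, we get $C_{n-1}\le 2+B^2(n+1)^2\le (B(n+1)+2^{1/2})^2$. This is the claim at $k=n-1$.

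\emph{Inductive step.} We argue by reverse induction. Let $1\le k\le n-2$ and assume $C_{k+1}^{1/2}\le B(k+3)+2^{1/2}$. Then
\[
C_k\le 2+kB^2(k+3)+2^{1/2}kB .
\]
It remains to compare with $(B(k+2)+2^{1/2})^2=B^2(k+2)^2+2^{3/2}B(k+2)+2$. This reduces to $k(k+3)\le (k+2)^2$ and $2^{1/2}k\le 2^{3/2}(k+2)$, both of which are trivial. This closes the induction.

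The only step requiring care is the exact evaluation of $B$. The stated constant $18\cdot 2^{173}$ coincides exactly with $2^{15}(GA)^{1/2}$, so there is no slack there, and one must use the precise value $A=324\cdot 2^{163}$ rather than the cruder bound $A<2^{172}$ from Lemma \ref{constant induct positive terms imply increase data}. Everything else is elementary algebra.
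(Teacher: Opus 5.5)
Your proof is correct and follows the paper's argument essentially verbatim. You identify $2^{15}(C_{\ref{Gaussian domination combined},2}C_{\ref{induct positive terms imply increase data}})^{1/2}=18\cdot 2^{173}$, check the base case $k=n-1$ using $C_n=n+2$, and close the reverse induction through the recursion of Proposition~\ref{P:C_k-induction}, merely spelling out the elementary comparisons $(n-1)(n+2)\le(n+1)^2$ and $k(k+3)\le(k+2)^2$ that the paper leaves implicit.
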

\begin{proof}[Proof \auto]
Let $C_k$ be the constant from Proposition \ref{P:C_k-induction}. It suffices to prove
\begin{equation}\label{E:induction-statement}
C_k\le
\bigl(18\cdot2^{173}(k+2)+2^{1/2}\bigr)^2.
\end{equation}
We argue by reverse induction. For $k=n-1$, Proposition \ref{P:C_k-induction} and $C_n=n+2$ give
\[
C_{n-1}
=2+(n-1)(n+2)2^{10}C_{\ref{induct positive terms imply increase data}},
\]
which is bounded by the right-hand side of \eqref{E:induction-statement}.

Assume $1\le k\le n-2$ and that \eqref{E:induction-statement} holds with $k+1$ in place of $k$. Since
\[
2^{15}(C_{\ref{Gaussian domination combined},2}C_{\ref{induct positive terms imply increase data}})^{1/2}
=18\cdot2^{173},
\]
Proposition \ref{P:C_k-induction} gives
\[
C_k
\le
2+k\cdot18\cdot2^{173}
\bigl(18\cdot2^{173}(k+3)+2^{1/2}\bigr).
\]
This is bounded by the right-hand side of \eqref{E:induction-statement}, which proves the claim.
\end{proof}

\begin{rem}
We only need to use the previous proposition for $k=2$. The result then holds with a constant independent of $n$.
\end{rem}

\begin{theorem}\label{induct positive terms theorem}\uses{P:better-induction,P:C_k-induction}\usesdefs{induct positive terms}
\lean{Auto.inductPositiveTermsTheorem}
\leanok
$\InductPositiveTerms{2,C_{\ref{induct positive terms theorem}}}$ holds, where
\begin{equation}\label{auto:positive-induction-constant-definition}
C_{\ref{induct positive terms theorem}}
=
\bigl(2^{17}(C_{\ref{Gaussian domination combined},2}C_{\ref{induct positive terms imply increase data}})^{1/2}+2^{1/2}\bigr)^2.
\end{equation}
\end{theorem}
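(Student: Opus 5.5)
The theorem is essentially the $k=2$ instance of Lemma \ref{P:better-induction}, combined with Proposition \ref{P:C_k-induction} in the degenerate case $n=2$, where $k=2$ equals $n$. I would split according to whether $n\ge 3$ or $n=2$. In either case the goal is to show that $C_{\ref{induct positive terms theorem}}$ dominates the relevant constant. The monotonicity in $C$ of the property $\InductPositiveTerms{2,C}$ is immediate from Definition \ref{induct positive terms}, since the right-hand side of \eqref{auto:positive-terms-square-root-bound} is increasing in $C$.

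\emph{Case $n\ge 3$.} Here $1\le 2\le n-1$, so Lemma \ref{P:better-induction} with $k=2$ gives
\[
\InductPositiveTerms{2,(18\cdot2^{173}\cdot4+2^{1/2})^2}.
\]
The proof of that lemma records the identity
\[
2^{15}\bigl(C_{\ref{Gaussian domination combined},2}C_{\ref{induct positive terms imply increase data}}\bigr)^{1/2}=18\cdot 2^{173}.
\]
This identity should be read as an upper bound: $C_{\ref{induct positive terms imply increase data}}=2^{10}\cdot36\cdot9\cdot2^{153}$ exactly, so the product under the root is $324\cdot2^{316}$, whose square root is $18\cdot2^{158}$, and multiplying by $2^{15}$ gives exactly $18\cdot 2^{173}$. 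Consequently
\[
18\cdot2^{173}\cdot 4=2^{17}\bigl(C_{\ref{Gaussian domination combined},2}C_{\ref{induct positive terms imply increase data}}\bigr)^{1/2},
\]
and the constant from the lemma coincides with $C_{\ref{induct positive terms theorem}}$.

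\emph{Case $n=2$.} Here Lemma \ref{P:better-induction} does not apply, but Proposition \ref{P:C_k-induction} with $k=n=2$ gives $\InductPositiveTerms{2,C_2}$ with $C_2=2+n=4$. Since $C_{\ref{induct positive terms theorem}}\ge 2\ge$ is clearly too weak a comparison, I will instead note that $C_{\ref{induct positive terms theorem}}\ge (2^{17})^2\ge 4$, which follows from $C_{\ref{Gaussian domination combined},2}C_{\ref{induct positive terms imply increase data}}\ge 1$. Monotonicity then finishes this case.

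The only step needing care is the arithmetic of the constant identity, which I have checked above, together with remembering the separate case $n=2$. I do not expect a genuine obstacle beyond this bookkeeping.
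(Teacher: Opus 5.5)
Your proposal is correct and matches the paper's proof. For $n\ge 3$ you apply Lemma~\ref{P:better-induction} at $k=2$; its constant $(18\cdot 2^{173}\cdot 4+2^{1/2})^2$ coincides exactly with $C_{\ref{induct positive terms theorem}}$, since $C_{\ref{induct positive terms imply increase data}}=324\cdot 2^{163}$ makes the identity exact rather than an upper bound. For $n=2$ you use Proposition~\ref{P:C_k-induction} with $C_2=4$ together with monotonicity in $C$. The only blemish is the garbled sentence fragment in the $n=2$ case, which should be deleted.
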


\begin{proof}
This follows from Proposition~\ref{P:better-induction} if $n\ge 3$ and from Proposition~\ref{P:C_k-induction} if $n=2$.
\end{proof}

\begin{lemma}[constant $C_{\ref{induct positive terms theorem}}$ \auto]\label{constant induct positive terms theorem}\uses{induct positive terms imply increase data,Gaussian domination combined,induct positive terms theorem}\usesdefs{}
\lean{Auto.constantInductPositiveTermsTheorem}
\leanok
\begin{equation}\label{constant induct positive terms theorem bound}
C_{\ref{induct positive terms theorem}}<2^{359}.
\end{equation}
\end{lemma}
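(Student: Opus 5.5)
The bound is a direct numerical substitution into the defining formula \eqref{auto:positive-induction-constant-definition}. I will not use any crude bound; I will use the same identity that appears in the proof of Lemma \ref{P:better-induction}:
\[
2^{15}\bigl(C_{\ref{Gaussian domination combined},2}C_{\ref{induct positive terms imply increase data}}\bigr)^{1/2}=18\cdot2^{173}.
\]

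To justify this identity I will unfold the definitions. By \eqref{Gaussian domination combined constants}, $C_{\ref{Gaussian domination combined},2}=2^{153}$. From the defining formula in Proposition \ref{induct positive terms imply increase data}, $C_{\ref{induct positive terms imply increase data}}=2^{10}\cdot36\cdot9\cdot2^{153}=324\cdot2^{163}$. The product is therefore $324\cdot2^{316}$, whose square root is $18\cdot2^{158}$. Multiplying by $2^{15}$ gives $18\cdot2^{173}$, as claimed.

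With this, $2^{17}(C_{\ref{Gaussian domination combined},2}C_{\ref{induct positive terms imply increase data}})^{1/2}=72\cdot2^{173}$. Adding $2^{1/2}<2^{173}$ makes the expression inside the square less than $73\cdot2^{173}<2^{7}\cdot2^{173}=2^{180}$. Squaring then gives
\[
C_{\ref{induct positive terms theorem}}<2^{360}.
\]

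This falls one power of two short of the claim, so the estimate needs tightening. The sharper step is $73<128/\sqrt2\approx 90.5$. This gives $73\cdot2^{173}<2^{179.5}$, and squaring yields $C_{\ref{induct positive terms theorem}}<2^{359}$. Concretely, it suffices to check $73^2=5329<8192=2^{13}$, since then $(73\cdot 2^{173})^2<2^{13}\cdot2^{346}=2^{359}$. The only place requiring care is this last comparison together with absorbing the $2^{1/2}$ term: $(72\cdot2^{173}+2^{1/2})^2\le(73\cdot2^{173})^2$. Everything else is exact bookkeeping of the constants already fixed earlier in the paper.
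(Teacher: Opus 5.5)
Your proposal is correct and follows the paper's route: both unfold the constants $2^{153}$ and $324\cdot2^{163}$ to get the leading term $18\cdot2^{175}=72\cdot2^{173}$. You then absorb the $2^{1/2}$ by bounding the base by $73\cdot2^{173}$ and checking $73^2<2^{13}$, whereas the paper expands the square and bounds $324\cdot2^{350}$ plus lower-order terms below $2^{182}$; the intermediate $2^{360}$ detour is unnecessary but harmless.
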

\begin{proof}[Proof \auto]
Since
\[
C_{\ref{induct positive terms imply increase data}}
=324\cdot2^{163}
\]
and $C_{\ref{Gaussian domination combined},2}=2^{153}$, the first summand in \eqref{auto:positive-induction-constant-definition} is
\[
18\cdot2^{175}.
\]
Its square is $324\cdot2^{350}$. The remaining two terms in the expanded square are smaller than $2^{182}$. Therefore
\[
C_{\ref{induct positive terms theorem}}
<2^{359}.
\]
\end{proof}

\section{Reduction to the main argument}\label{sec:reduction}
In this section we prove Theorem \ref{thm:nct main real} from Theorem \ref{induct positive terms theorem}.

\subsection{Further preliminaries for the reduction}\label{sec:prelim red}

\subsubsection{\texorpdfstring{$A$ to $\Lambda_1$}{Average to prism form}}

\begin{lemma}[$A$ to $\Lambda_1$]\label{A to Lambda}\uses{}\usesdefs{A_def,auto:prism-form-definition}
\lean{Auto.aToLambda}
\leanok
Let $\phi\in \mathcal{S}(\R)$.
Let $\mathbf{f}=(f_0,\dots,f_{n-1})$ be functions in $\mathcal{S}(\R^n)$.
Then
\begin{equation}\label{auto:twisted-average-prism-identity}
\|A(\phi, {\mathbf f})\|_2^2 = \Lambda_1(\phi^{\otimes 2})(\mathbf{F}),
\end{equation}
where for every $i\in[n)$ and $x\in\R^n$,
\begin{equation}\label{auto:twisted-average-coordinate-transform}
F_i(x)=f_i(-x_{[1,i+1)},\Sigma(x),-x_{[i+1,n)}).
\end{equation}
Moreover, $F_i\in\mathcal S(\R^n)$ and
\begin{equation}\label{auto:twisted-average-coordinate-transform-isometry}
\|F_i\|_p=\|f_i\|_p
\end{equation}
for every $p\in[1,\infty]$.
\end{lemma}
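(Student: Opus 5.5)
The plan is to expand both sides as absolutely convergent integrals and match them by one explicit linear change of variables with unit Jacobian.

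\emph{The coordinate transform.} The map $x\mapsto(-x_{[1,i+1)},\Sigma(x),-x_{[i+1,n)})$ is linear, with integer entries. Its inverse is $w\mapsto(w_i+\sum_{l\neq i}w_l,-w_{[0,i)},-w_{[i+1,n)})$, up to the indexing shift, which also has integer entries. So it is an invertible linear map with determinant $\pm1$, and hence measure preserving. This gives $F_i\in\mathcal S(\R^n)$ and $\|F_i\|_p=\|f_i\|_p$ for all $p\in[1,\infty]$.

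\emph{Unwinding the left-hand side.} Since the $f_i$ are real, the left-hand side is
\[
\|A(\phi,\mathbf f)\|_2^2=\int_{\R^n}\int_{\R^2}\phi(s)\phi(s')\prod_{i\in[n)}f_i(x+se_i)f_i(x+s'e_i)\,ds\,ds'\,dx ,
\]
which is absolutely convergent because all functions are Schwartz.

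\emph{Unwinding the right-hand side.} By Remark~\eqref{auto:K-one-explicit-formula}, $K_1(\phi^{\otimes2})(z)=\phi(z_0+z_1)\phi(z_1)$. Definition~\ref{prism Brascamp--Lieb} with $k=1$ then gives
\[
\Lambda_1(\phi^{\otimes 2})(\F)=\int_{\R^2}\int_{\R^n}\phi\big(y^1+\Sigma(x')\big)\phi\big(y^0+\Sigma(x')\big)\prod_{h\in[2),\,i\in[n)}F_i(y^h,x'_{[0,n)\setminus i})\,dx'\,dy .
\]

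\emph{Matching.} Substitute $x'=-x$, $y^0=\Sigma(x)+s$, $y^1=\Sigma(x)+s'$. The map $(x,s,s')\mapsto(x',y^0,y^1)$ is triangular with diagonal entries $\pm1$, so Lebesgue measure is preserved. The $\phi$ arguments become $y^h+\Sigma(x')=s$ and $s'$.

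For each $i$, plug the definition of $F_i$ into $F_i(y^h,x'_{[0,n)\setminus i})$:
\begin{itemize}
\item the coordinates $l\neq i$ of $f_i$ receive $-x'_l=x_l$;
\item coordinate $i$ receives $y^h+\sum_{l\neq i}x'_l=\Sigma(x)+s-\sum_{l\neq i}x_l=x_i+s$ for $h=0$, and $x_i+s'$ for $h=1$.
\end{itemize}
Thus $F_i(y^h,x'_{[0,n)\setminus i})$ equals $f_i(x+se_i)$ for $h=0$ and $f_i(x+s'e_i)$ for $h=1$, so the two integrals coincide. Fubini--Tonelli justifies the reordering, since everything is Schwartz and the integrand lies in $W_0$ by Proposition~\ref{W_0 Brascamp Lieb}.

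The main obstacle is bookkeeping. One has to keep track of the index convention in $F_i$ (the slot $\Sigma(x)$ sits in position $i$ of $f_i$, while $x_{[0,n)\setminus i}$ is relabelled inside $\Theta_1$). Once the substitution above is fixed, the identification is mechanical.
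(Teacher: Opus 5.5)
Your proposal is correct and takes essentially the same approach as the paper. You expand the square, apply the unimodular substitution $x\mapsto -x$ together with the shift of both scalar variables by $\Sigma(x)$ (done as one triangular change of variables rather than the paper's two steps), and check coordinatewise that $F_i(y^h,x'_{[n)\setminus i})=f_i(x+se_i)$. Your determinant-$\pm1$ argument for $F_i\in\mathcal S(\R^n)$ and $\|F_i\|_p=\|f_i\|_p$ is also the paper's.
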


\begin{proof}
The linear map in the definition of $F_i$ has determinant of modulus one. Hence $F_i\in\mathcal S(\R^n)$ and $\|F_i\|_p=\|f_i\|_p$ for every $p\in[1,\infty]$.

Expanding the $L^2$ norm and the square inside the $L^2$ integral, the left hand side equals
\[
 \int_{\R^n} \int_{\mathbb{R}^2}
 \Big(\prod_{(i,h)\in [n)\times [2)} f_i(u+y_h e_i) \Big)
 (\phi^{\otimes 2})(y)\, dy\, du .
\]
Changing variables $u=-x$, this becomes
\[
 \int_{\R^n} \int_{\mathbb{R}^2}
 \Big(\prod_{(i,h)\in [n)\times [2)} f_i(-x+y_h e_i) \Big)
 (\phi^{\otimes 2})(y)\, dy\, dx .
\]
Now change variables $y_h\mapsto y_h+\Sigma(x)$ for $h\in[2)$. We obtain
\[
 \int_{\R^n} \int_{\mathbb{R}^2}
 \Big(\prod_{(i,h)\in [n)\times [2)}
 f_i(-x+(y_h+\Sigma(x))e_i) \Big)
 \phi(y_0+\Sigma(x))\phi(y_1+\Sigma(x))\, dy\, dx .
\]
By the definition of $F_i$, for every $i\in[n)$ and $h\in[2)$,
\[
 F_i(y_h,x_{[n)\setminus i})
 =
 f_i\bigl(-x_{[0,i)},y_h+\Sigma(x)-x_i,-x_{[i+1,n)}\bigr)
 =
 f_i(-x+(y_h+\Sigma(x))e_i).
\]
Hence the last display is equal to
\[
 \int_{\R^2}\int_{\R^n}
 (\phi^{\otimes 2})(y_1+\Sigma(x),y_0+\Sigma(x))
 \prod_{h\in[2),\,i\in[n)}
 F_i(y_h,x_{[n)\setminus i})\,dx\,dy ,
\]
which is $\Lambda_1(\phi^{\otimes 2})(\mathbf{F})$.
\end{proof}

\subsubsection{Variation seminorms}\label{sec:variation-norm}

Let $B$ be a seminormed space, $I\subset\mathbb{R}$ a subset and $a:I\to B$ and $J\in\mathbb{N}$, $r\ge 1$. The \emph{$r$-variation seminorm} (with at most $J$ jumps) is
\begin{equation}\label{auto:finite-variation-seminorm} \|a\|_{V_{r,J}(I; B)} = \sup_{0\le M\le J}\ \sup_{\substack{t_0<\cdots<t_M,\\t_j\in I\;\text{for all}\;j\in [M+1)}} \Big(\sum_{j\in[M)} \|a(t_{j+1})- a(t_{j})\|^r\Big)^{1/r}. \end{equation}

Also define
\[
\|a\|_{V_r(I; B)}=\sup_{J\ge 1} \|a\|_{V_{r,J}(I; B)}.
\]

In particular, when $I$ is omitted we understand that it equals  $\mathbb{N}_{\ge 1}$, the set of positive integers:

\begin{definition}[$r$-variation seminorms for sequences on the positive integers]\label{variation seminorm on positive integers}\lean{nCT.variationSeminorm}
Define
$$\|a\|_{V_{r}(B)} = \sup_{J\in\mathbb{N}}\ \sup_{\substack{t_0<\cdots<t_J,\\
t_j\in \mathbb{N}_{\ge 1}\;
\text{for all}\;j\in [J+1)}} \Big(\sum_{j\in[J)} \|a(t_{j+1})- a(t_{j})\|^r\Big)^{1/r}.$$
Note we will only use this for $r\ge 1$.
\end{definition}

The $k$-th short variation can be expressed as
$\|a\|_{V_{r,J}([2^k, 2^{k+1}]; B)}$
and long variations as
$\|a\|_{V_{r,J}(2^\mathbb{Z}; B)}.$
For more on variation (semi)norms see \cite{JonesSeegerWright08}.

\begin{lemma}\label{lem:shortlongjumps}\uses{}\usesdefs{}
\lean{Auto.shortlongJumps}
\leanok
For every increasing sequence of positive real numbers $(t_j)_{j\in[J+1)}$,
\begin{equation}\label{auto:short-long-variation-decomposition}
\Big(\sum_{j\in[J)}\|a(t_{j+1})-a(t_j)\|^r\Big)^{1/r}
\le
2\Big(\sum_{k\in\kappa}\|a\|_{V_{r,J}([2^k,2^{k+1}];B)}^r\Big)^{1/r}
+\|a\|_{V_{r,J}(2^\Z;B)},
\end{equation}
where $\kappa$ is the set of $k\in\Z$ such that $2^k\le t_j<2^{k+1}$ for some $j\in[J+1)$. Consequently,
\begin{equation}\label{auto:global-short-long-variation-bound}
\|a\|_{V_{r,J}((0,\infty);B)}
\le
2\Big(\sum_{k\in\Z}\|a\|_{V_{r,J}([2^k,2^{k+1}];B)}^r\Big)^{1/r}
+\|a\|_{V_{r,J}(2^\Z;B)}.
\end{equation}
\end{lemma}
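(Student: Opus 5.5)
The plan is the standard short/long variation splitting. Fix an increasing sequence $t_0<\dots<t_J$ of positive reals and let $k_j$ be the unique integer with $2^{k_j}\le t_j<2^{k_j+1}$, so $k_0\le k_1\le\dots\le k_J$. Split the index set $[J)$ into \emph{short} indices $j$ with $k_j=k_{j+1}$ and \emph{long} indices $j$ with $k_j<k_{j+1}$.

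For a short index, both $t_j,t_{j+1}$ lie in $[2^{k},2^{k+1})$ with $k=k_j$. For each fixed $k\in\kappa$, the short indices with $k_j=k$ form consecutive points of one chain in $[2^k,2^{k+1}]$ with at most $J$ jumps. Their contribution to the $r$-th power sum is therefore at most $\|a\|_{V_{r,J}([2^k,2^{k+1}];B)}^r$. Summing over $k\in\kappa$ bounds the short part by $\sum_{k\in\kappa}\|a\|^r_{V_{r,J}([2^k,2^{k+1}])}$.

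For a long index, write
\[
a(t_{j+1})-a(t_j)=\bigl(a(t_{j+1})-a(2^{k_{j+1}})\bigr)+\bigl(a(2^{k_{j+1}})-a(2^{k_j+1})\bigr)+\bigl(a(2^{k_j+1})-a(t_j)\bigr).
\]
The first and third terms are single-jump differences inside the dyadic intervals $[2^{k_{j+1}},2^{k_{j+1}+1}]$ and $[2^{k_j},2^{k_j+1}]$; the closed intervals make the endpoint $2^{k_j+1}$ admissible. The middle term is a jump between dyadic points. I will need to check that these extra points preserve the chain structure:
\begin{itemize}
\item In each interval $[2^k,2^{k+1}]$, the points used (the left endpoint $2^k$ if it is entered from below, the $t_j$ with $k_j=k$, and the right endpoint $2^{k+1}$ if it is exited) still form a monotone chain.
\item That chain has at most $J$ jumps, since the number of jumps grows by at most the number of boundary pieces added.
\item The middle terms, for the long indices taken in order, use the dyadic points $2^{k_0+1}\le 2^{k_1}\le 2^{k_1+1}\le\dots$, which form a monotone chain in $2^\Z$.
\end{itemize}

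The main obstacle is combining these pieces with the right constant. Using Minkowski in $\ell^r$ over the long indices (as three sequences), the middle terms give at most $\|a\|_{V_{r,J}(2^\Z;B)}$. The middle-term chain may skip dyadic points but stays monotone, and there are at most $J$ long indices, so it has at most $J$ jumps; the degenerate case $k_{j+1}=k_j+1$ gives a zero middle term. The first and third terms each involve at most one boundary piece per dyadic interval on each side. Merging them with the short differences in the same interval gives a single chain, bounded by the short-variation norm. Since every interval appears once from the first-term family and once from the third-term family, the short-variation contribution is at most $2(\sum_{k\in\kappa}\|a\|^r_{V_{r,J}})^{1/r}$, where I will use $(x+y)^{1/r}\le x^{1/r}+y^{1/r}$ and Minkowski to collect terms. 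The intervals touched by these boundary pieces are $[2^{k_j},2^{k_j+1}]$ and $[2^{k_{j+1}},2^{k_{j+1}+1}]$, both of which lie in $\kappa$.

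Finally, the second inequality follows by taking the supremum over $M\le J$ and over all chains in $(0,\infty)$, using $\kappa\subset\Z$.
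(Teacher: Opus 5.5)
\textbf{The gap is in your bound on the middle pieces.} You route the third piece through the right endpoint $2^{k_j+1}$, so the middle piece of a long index $j$ is a difference over $[2^{k_j+1},2^{k_{j+1}}]$.

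If $j<j'$ are consecutive long indices, then $k_{j+1}=k_{j'}$. The next middle piece therefore lives on $[2^{k_{j'}+1},2^{k_{j'+1}}]$, and the two pieces are separated by the gap $[2^{k_{j'}},2^{k_{j'}+1}]$. So the monotone list $2^{k_0+1}\le 2^{k_1}\le 2^{k_1+1}\le\cdots$ you write down has as consecutive differences the middle pieces interleaved with these gap jumps. With $L\le J$ long indices it can have up to $2L-1$ jumps, so it is not admissible in $\|a\|_{V_{r,J}(2^\Z;B)}$. That seminorm only sees consecutive differences of a single chain with at most $J$ jumps, so it does not control $L$ disjoint, non-adjacent jumps.

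This is not just a counting slip. Take $J=2$, $t_0\in[2^{-1},1)$, $t_1\in[2,4)$, $t_2\in[8,16)$, so your middle pieces are $a(2)-a(1)$ and $a(8)-a(4)$. With $B=\R$, $r=2$, and $a(2^k)=0$ for $k\le0$, $a(2)=1$, $a(4)=\tfrac1{10}$, $a(2^k)=2$ for $k\ge3$, their squares sum to $\tfrac{461}{100}$. By contrast, $\|a\|_{V_{2,2}(2^\Z;\R)}^2=\tfrac{442}{100}$, attained by the chain $2<4<8$.

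\textbf{The fix is to use the left endpoint $2^{k_j}$ instead}, which is what the paper does:
$a(t_{j+1})-a(t_j)=\bigl(a(t_{j+1})-a(2^{k_{j+1}})\bigr)+\bigl(a(2^{k_{j+1}})-a(2^{k_j})\bigr)+\bigl(a(2^{k_j})-a(t_j)\bigr)$.
\begin{itemize}
\item The middle pieces of consecutive long indices now share endpoints. They are exactly the consecutive differences of the chain of left endpoints of the occupied dyadic intervals, which has at most $J$ jumps.
\item The third pieces are single jumps in $[2^{k_j},2^{k_j+1}]$, at most one per occupied interval, since only the last point of an interval starts a long jump.
\item The first pieces together with the internal jumps form, in each occupied interval, the chain $2^k$ followed by the $t_j$ lying in that interval. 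This has at most $J$ jumps, because an interval entered from below contains at most $J$ of the $J+1$ points.
\end{itemize}
Minkowski over the three families then gives the constants $2$ and $1$ as you intended. Your treatment of the short part and of the final supremum is otherwise fine.
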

\begin{proof}[Proof \auto]
Group the points $t_j$ according to the dyadic intervals $[2^k,2^{k+1})$. The jumps whose two endpoints lie in the same interval are kept unchanged. If a jump goes from the last point in $[2^k,2^{k+1})$ to the first point in a later occupied interval $[2^l,2^{l+1})$, write it as the sum of
\[
a(t_{j+1})-a(2^l),
\qquad
a(2^l)-a(2^k),
\qquad
a(2^k)-a(t_j).
\]
The internal jumps in an occupied interval, together with the first of these boundary terms when it occurs, form an increasing sequence in $[2^k,2^{k+1}]$ with at most $J$ jumps. Their combined $\ell^r$ norm, summed over the occupied intervals, is therefore at most
\[
\Big(\sum_{k\in\kappa}\|a\|_{V_{r,J}([2^k,2^{k+1}];B)}^r\Big)^{1/r}.
\]
The third boundary terms are bounded by the same expression. The middle terms use the lower endpoints $2^k$, and hence form an increasing sequence on $2^\Z$ after repeated endpoints are deleted, so their $\ell^r$ norm is at most $\|a\|_{V_{r,J}(2^\Z;B)}$. Minkowski's inequality gives the first estimate. Taking the supremum over the sequence $(t_j)$ gives the second.
\end{proof}

\begin{lemma}\label{lem:ftccs-R}\uses{}\usesdefs{}   Let $0<\alpha < \beta$ and $a:[\alpha,\beta]\to \R$ be continuously differentiable.
\lean{Auto.ftcCsR}
\leanok
    Then
\begin{equation}\label{ftccs1-R}
\|a\|_{V_{2,J}([\alpha, \beta]; \R)} \le (\tfrac{\beta-\alpha}{\alpha})^{1/2} \|ta'(t)\|_{L^2(t\in [\alpha, \beta],\tfrac{dt}{t})}, \end{equation}
\begin{equation}\label{ftccs2-R}
\|a\|_{V_{2,J}([\alpha, \beta]; \R)}^2 \le 8 \|a\|_{L^2([\alpha, \beta],\tfrac{dt}{t})}\|ta'(t)\|_{L^2(t\in [\alpha, \beta],\tfrac{dt}{t})}.
\end{equation}
\end{lemma}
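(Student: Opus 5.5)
Both estimates reduce to a bound on a single jump via the fundamental theorem of calculus, followed by summation over the jumps. Fix $M\le J$ and points $\alpha\le t_0<\cdots<t_M\le\beta$. For each jump we write
\[
a(t_{j+1})-a(t_j)=\int_{t_j}^{t_{j+1}} t\,a'(t)\,\frac{dt}{t}
\]
and apply Cauchy--Schwarz with respect to $dt/t$. This gives
\[
|a(t_{j+1})-a(t_j)|^2\le \log(t_{j+1}/t_j)\int_{t_j}^{t_{j+1}}|t a'(t)|^2\frac{dt}{t}.
\]

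For \eqref{ftccs1-R} we bound each $\log(t_{j+1}/t_j)$ by $\log(\beta/\alpha)\le(\beta-\alpha)/\alpha$, using $\log(1+x)\le x$. Summing over $j$, the integrals are over disjoint intervals, so their sum is at most $\|ta'\|^2_{L^2([\alpha,\beta],dt/t)}$. Taking the supremum over $M$ and the points gives the first estimate.

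For \eqref{ftccs2-R} a different pointwise bound is needed. Here we write $|a(t_{j+1})-a(t_j)|^2=|b(t_{j+1})-b(t_j)|$ with $b=a(\cdot)$ replaced by the squared function, that is, we control the increments of $a^2$. The naive route $|x-y|^2\le 2(x^2+y^2)$ does not telescope, so instead we use the identity
\[
|a(s)-a(t)|^2 = \int_t^s 2\,(a(\tau)-a(t))\,a'(\tau)\,d\tau.
\]
This gives $|a(t_{j+1})-a(t_j)|^2\le \int_{t_j}^{t_{j+1}}2\,(|a(\tau)|+|a(t_j)|)\,|\tau a'(\tau)|\,\frac{d\tau}{\tau}$. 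The term with $|a(\tau)|$ sums over $j$ to at most $2\int |a|\,|ta'|\,dt/t$, which Cauchy--Schwarz bounds by $2\|a\|_{L^2(dt/t)}\|ta'\|_{L^2(dt/t)}$.

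The main obstacle is the term with $|a(t_j)|$, since a single point value is not controlled by the $L^2(dt/t)$ norm. To handle it, we first replace $t_j$ by a pivot that is averaged over an interval. For each jump we pick $c$ in $[t_j,t_{j+1}]$ and use the triangle inequality $|a(t_{j+1})-a(t_j)|\le|a(t_{j+1})-a(c)|+|a(c)-a(t_j)|$, which costs a factor $2$ in the square. Each piece is then estimated via $|a(s)-a(c)|^2=2\int_c^s(a-a(c))a'$. We then average the choice of $c$ over the interval with respect to $d\tau/\tau$; this turns $|a(c)|$ into an average of $|a|$, and the resulting double integral is bounded by Cauchy--Schwarz. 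Concretely, we aim for the intermediate estimate
\[
|a(s)-a(t)|^2\le 4\int_t^s |a|\,|\tau a'|\frac{d\tau}{\tau}
\;+\;4\Big(\frac{1}{\log(s/t)}\int_t^s|a|\frac{d\tau}{\tau}\Big)\int_t^s|\tau a'|\frac{d\tau}{\tau}.
\]
The first term is handled exactly like the $|a(\tau)|$ term above. For the second term we apply Cauchy--Schwarz to each factor separately: this produces $\log(s/t)^{-1/2}\|a\|_{L^2}\cdot\log(s/t)^{1/2}\|ta'\|_{L^2}$ on each interval, so the logarithms cancel. A final Cauchy--Schwarz in $j$ then gives the total bound $8\|a\|\,\|ta'\|$. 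If the constant comes out as $12$ rather than $8$ at this stage, we would tighten the averaging step.
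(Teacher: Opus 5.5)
Your proof is correct. For \eqref{ftccs1-R} it is essentially the paper's argument: the paper applies Cauchy--Schwarz with respect to $dt$ and uses $1/t\le1/\alpha$, while your $dt/t$ version gives the slightly sharper factor $\log(\beta/\alpha)$.

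For \eqref{ftccs2-R} you take a genuinely different route. The paper first treats a nonnegative absolutely continuous $b$ and uses
$|b(s)-b(t)|^2\le|b(s)^2-b(t)^2|\le2\int_t^s|b|\,|\tau b'|\,\tfrac{d\tau}{\tau}$.
This avoids point values entirely and gives $\|b\|_{V_{2,J}}^2\le2\|b\|\,\|tb'\|$. It then applies this bound to $a_\pm=\max(\pm a,0)$, and the triangle inequality for the variation seminorm turns $2$ into $8$.

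You instead keep the signed $C^1$ function $a$, expand $(a-a(c))^2$ around a pivot $c\in[t,s]$, and average $c$ against $dc/c$. This gives the local bound $|a(s)-a(t)|^2\le8\|a\|_{L^2([t,s],d\tau/\tau)}\|\tau a'\|_{L^2([t,s],d\tau/\tau)}$. The two terms contribute $4+4$, and in the second term the factors of $\log(s/t)$ cancel exactly. Cauchy--Schwarz in $j$ then gives the constant $8$ on the nose, so your hedge about $12$ is unnecessary.

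What each route buys:
\begin{itemize}
\item Your argument never leaves the $C^1$ class; the paper needs a.e.\ derivatives of $a_\pm$. It also avoids the triangle inequality for $V_{2,J}$, which is convenient for a formalization.
\item The paper's argument is shorter, and for nonnegative functions it gives the better constant $2$.
\end{itemize}

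Delete one sentence in your draft: the claimed identity $|a(t_{j+1})-a(t_j)|^2=|a(t_{j+1})^2-a(t_j)^2|$. It is false for signed $a$ (take values $1$ and $-1$). For nonnegative values it holds only as the inequality $\le$, which is exactly why the paper splits $a$ into $a_\pm$. Since you never actually use this identity, your argument is unaffected.
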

\begin{proof}[Proof \auto]
Let $\alpha\le t_0<\cdots<t_J\le\beta$. For every $j\in[J)$, the fundamental theorem of calculus and Cauchy--Schwarz with respect to Lebesgue measure give
\[
|a(t_{j+1})-a(t_j)|^2
\le
(t_{j+1}-t_j)\int_{t_j}^{t_{j+1}}|a'(t)|^2\,dt
\]
\[
\le
\tfrac{\beta-\alpha}{\alpha}
\int_{t_j}^{t_{j+1}}|ta'(t)|^2\,\tfrac{dt}{t}.
\]
Summing in $j$ and taking the supremum proves \eqref{ftccs1-R}.

For \eqref{ftccs2-R}, first let $b$ be a nonnegative absolutely continuous function. Then
\[
|b(t_{j+1})-b(t_j)|^2
\le
|b(t_{j+1})^2-b(t_j)^2|
\]
\[
\le
2\int_{t_j}^{t_{j+1}}|b(t)||tb'(t)|\,\tfrac{dt}{t}
\le
2\|b\|_{L^2([t_j,t_{j+1}],dt/t)}
\|tb'\|_{L^2([t_j,t_{j+1}],dt/t)}.
\]
Summing and applying Cauchy--Schwarz in $j$ gives
\[
\|b\|_{V_{2,J}([\alpha,\beta];\R)}^2
\le
2\|b\|_{L^2([\alpha,\beta],dt/t)}
\|tb'\|_{L^2([\alpha,\beta],dt/t)}.
\]
Apply this to $a_+=\max(a,0)$ and $a_-=-\min(a,0)$. Their derivatives have modulus at most $|a'|$ almost everywhere. The triangle inequality for the variation seminorm therefore gives
\[
\|a\|_{V_{2,J}([\alpha,\beta];\R)}
\le
2^{3/2}\|a\|_{L^2([\alpha,\beta],dt/t)}^{1/2}
\|ta'\|_{L^2([\alpha,\beta],dt/t)}^{1/2}.
\]
Squaring proves \eqref{ftccs2-R}.
\end{proof}

\subsubsection{Bump functions}\label{sec:bump red}

\begin{lemma}\label{lem:ft_deriv_mul}\uses{}\usesdefs{}
\lean{Auto.fourierDerivativeMul}
\leanok
For a Schwartz function $\phi:\mathbb{R}\to\mathbb{C}$ let
$T\phi(x)=\frac{d}{dx} (x \phi(x))|_x$. Then
for every $m\ge 0$,
\begin{equation}\label{auto:Fourier-derivatives-of-T}(\widehat{T\phi})^{(m)}(\xi) = -(m (\widehat{\phi})^{(m)}(\xi) + \xi (\widehat{\phi})^{(m+1)}(\xi)).\end{equation}
\end{lemma}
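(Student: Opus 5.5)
The plan is to compute the Fourier transform of $T\phi$ directly and then differentiate $m$ times with the Leibniz rule. Set $\psi(x)=x\phi(x)$, which is again Schwartz, so $T\phi=\psi'$. The two standard Fourier identities (valid for Schwartz functions with the convention \eqref{auto:Fourier-transform-formula}) give first $\widehat{\psi'}(\xi)=2\pi i\xi\,\widehat{\psi}(\xi)$, by integration by parts. Second, $\widehat{x\phi}(\xi)=-(2\pi i)^{-1}(\widehat{\phi})'(\xi)$, by differentiating under the integral sign. Combining them yields
\[
\widehat{T\phi}(\xi)=2\pi i\xi\cdot\bigl(-(2\pi i)^{-1}\bigr)(\widehat\phi)'(\xi)=-\xi(\widehat\phi)'(\xi),
\]
which is the case $m=0$ of \eqref{auto:Fourier-derivatives-of-T}.

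For general $m$, apply the Leibniz rule to the product $\xi\cdot(\widehat\phi)'(\xi)$. All derivatives of $\xi$ beyond the first vanish, so only two terms survive:
\[
\frac{d^m}{d\xi^m}\bigl(\xi(\widehat\phi)'(\xi)\bigr)=\xi(\widehat\phi)^{(m+1)}(\xi)+m(\widehat\phi)^{(m)}(\xi).
\]
Negating gives the claim. Alternatively, one can run an induction on $m$: differentiating $-(m(\widehat\phi)^{(m)}+\xi(\widehat\phi)^{(m+1)})$ produces $-((m+1)(\widehat\phi)^{(m+1)}+\xi(\widehat\phi)^{(m+2)})$.

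The only point needing care is the sign and $2\pi$ normalization in the two Fourier identities; their product must come out as exactly $-1$ times $\xi$. Smoothness of $\widehat\phi$ is automatic, since $\widehat\phi$ is Schwartz. There is no real obstacle beyond this bookkeeping.
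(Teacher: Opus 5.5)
Your proposal is correct and follows essentially the same route as the paper: the case $m=0$ comes from the same two Fourier identities giving $\widehat{T\phi}=-\xi(\widehat\phi)'$. The general case then follows from differentiating the product $\xi(\widehat\phi)'(\xi)$; the paper does this by induction via the product rule, and your direct Leibniz computation (or your alternative induction) is the same step written out all at once.
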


\begin{proof}
We proceed by induction on $m$.
For $m=0$ this follows from the rule for taking the Fourier transform of a derivative and vice versa:
$\widehat{T\phi}(\xi) = 2\pi i \xi\,\widehat{\Phi}(\xi)$,
where $\Phi(x)=x \phi(x)$ and $\widehat{\Phi}(\xi) = -(2\pi i)^{-1} \frac{d}{d\xi} \widehat{\phi}(\xi)$.
Suppose the claim holds for $m$. Then by inductive hypothesis and the product rule,
\[ (\widehat{T\phi})^{(m+1)}(\xi) = \tfrac{d}{d\xi} (-(m (\widehat{\phi})^{(m)}(\xi) + \xi (\widehat{\phi})^{(m+1)}(\xi))) \]
\[ = -((m+1) (\widehat{\phi})^{(m+1)}(\xi) + \xi (\widehat{\phi})^{(m+2)}(\xi)). \]
\end{proof}

\begin{lemma} \label{lem:widebump}\uses{}\usesdefs{bracket bump}
\lean{Auto.wideBump}
\leanok
Let $k \leq 2$, $t\in [2^{1-k}, 2^{2-k}]$. Then for every $u\in \R$ and $N\ge 0$,
\begin{equation}\label{auto:wide-bump-translation-bound}\langle u-t\rangle^{N} \le C_{\ref{lem:widebump},N} 2^{-kN}\langle u\rangle^{N},\end{equation}
where $C_{\ref{lem:widebump},N}=2^{3N}$.
\end{lemma}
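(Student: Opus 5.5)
The plan is a direct computation: reduce $\langle u-t\rangle^N \le 2^{3N}2^{-kN}\langle u\rangle^N$ to a pointwise comparison of $1+|u|$ and $1+|u-t|$. Recall $\langle x\rangle^N=(1+|x|)^{-N}$. The case $N=0$ is trivial, so assume $N>0$. Taking $N$-th roots, the claim is equivalent to
\[
1+|u| \le 2^{3}2^{-k}(1+|u-t|).
\]

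For this I use the triangle inequality $|u|\le |u-t|+t$. Since $t\le 2^{2-k}$, this gives
\[
1+|u|\le 1+t+|u-t|\le 1+2^{2-k}+|u-t|.
\]

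It remains to compare with $2^{3-k}(1+|u-t|)$. Because $k\le 2$, we have $2^{-k}\ge 1/4$, so $2^{3-k}\ge 2\ge 1$ and hence $2^{3-k}|u-t|\ge |u-t|$. It also suffices to check $1+2^{2-k}\le 2^{3-k}$, which is equivalent to $1\le 2^{2-k}$, and this holds exactly because $k\le 2$. Adding the two inequalities gives
\[
1+|u|\le 2^{3-k}(1+|u-t|).
\]
Raising both sides to the power $-N$ reverses the inequality and yields the claim. The lower bound $t\ge 2^{1-k}$ is not needed, only $t\le 2^{2-k}$ and $t\ge 0$.

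There is no real obstacle. The only point needing care is that $k$ may be any integer $\le 2$, including negative, so the elementary inequalities must hold uniformly in $k$. They do, since they only use $2^{2-k}\ge 1$.
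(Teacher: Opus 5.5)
Your proof is correct and is essentially the paper's argument: the triangle inequality with $t\ge 0$ and $t\le 2^{2-k}$, together with $k\le 2$, gives $1+|u|\le 2^{3-k}(1+|u-t|)$, and taking reciprocal $N$th powers (valid for real $N>0$) finishes. The only difference is bookkeeping: the paper bounds $1+t+|u-t|\le(1+t)(1+|u-t|)$ and then uses $1+t\le 2^{3-k}$, while you split the same bound additively.
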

\begin{proof}
The triangle inequality gives
\[
1+|u|\le1+|u-t|+t\le(1+|u-t|)(1+t).
\]
Since $k\le2$ and $t\le2^{2-k}$,
\[
1+t\le2^{3-k}.
\]
Taking reciprocal $N$th powers proves the claim.
\end{proof}

\begin{lemma}\label{lem:thetat_offcenter}\uses{lem:widebump}\usesdefs{bracket bump}
\lean{Auto.thetaTOffcenter}
\leanok
Let $k\le-1$. Then for every $v\in\R^2$,
\begin{equation}\label{E:two-terms}
2^{k/2}\int_1^2
\langle v_0-t2^{-k}\rangle^2
\langle v_1-t2^{-k}\rangle^2\,dt
\le C_{\ref{lem:thetat_offcenter}}
\left(
\langle v_0\rangle^{3/2}\langle v_1\rangle^{3/2}
+
\langle v_0+v_1\rangle^{3/2}\langle v_0-v_1\rangle^{3/2}
\right),
\end{equation}
where
\begin{equation}\label{off center bump constant}
C_{\ref{lem:thetat_offcenter}}=133.
\end{equation}
\end{lemma}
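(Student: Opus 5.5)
The plan is to pass to rotated coordinates, split the integrand into a ``near'' regime and a ``far'' regime by comparing $|v_0-v_1|$ with $2^{-k}$, and in each regime trade a power of $2^{-k}$ for the factor $2^{k/2}$. Set $T=t2^{-k}\in[2^{-k},2^{1-k}]$, so $T\ge 2$. Write $d=v_0-v_1$ and $w=v_0-T$. Then the integrand is $\langle w\rangle^2\langle w-d\rangle^2$. The triangle inequality gives $(1+|w|)(1+|w-d|)\ge 1+|d|$, and hence
\[
\langle w\rangle^2\langle w-d\rangle^2\le \langle w\rangle^{1/2}\langle w-d\rangle^{1/2}\,\langle d\rangle^{3/2}.
\]
Note that $\langle d\rangle=\langle v_0-v_1\rangle$ is already one of the bracket factors in the second term on the right of \eqref{E:two-terms}.

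\emph{Case $|d|\le 2^{-k-1}$.} Here both $v_0$ and $v_1$ are within $2^{-k-1}$ of each other. In the factor $\langle w\rangle^{1/2}\langle w-d\rangle^{1/2}$, estimate one factor by $1$ and keep the other. Then integrate in $t$, i.e.\ $dt=2^{k}dT$:
\[
2^{k/2}\int_1^2\langle v_0-T\rangle^{1/2}\,dt\le 2^{k/2}\cdot 2^{k}\int_{2^{-k}}^{2^{1-k}}\langle v_0-T\rangle^{1/2}\,dT.
\]
This alone does not give $\langle v_0+v_1\rangle^{3/2}$. So instead I will split the full $\langle w\rangle^2\langle w-d\rangle^2$ differently. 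If $|v_0|\le 2^{-k-2}$, then $|w|\ge T/2\gtrsim 2^{-k}$, and likewise $|w-d|\gtrsim 2^{-k}$ when $|v_1|$ is small. This gives a bound like $2^{4k}$. Combined with $\langle v_0\rangle,\langle v_1\rangle\ge(1+2^{-k})^{-1}\gtrsim 2^{k}$, this yields $2^{4k}\lesssim 2^{k}\langle v_0\rangle^{3/2}\langle v_1\rangle^{3/2}$, and the $2^{k/2}$ leaves room. If instead $|v_0|$ or $|v_1|\gtrsim 2^{-k}$, then in this case both are (since $|d|$ is small), so $|v_0+v_1|\gtrsim 2^{-k}$. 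Then I use Lemma~\ref{lem:widebump}-type comparisons: $\langle v_0+v_1\rangle^{3/2}\gtrsim \langle 2v_0\rangle^{3/2}$. I bound $\int\langle w\rangle^{2}dT\le 2$ and $\langle w-d\rangle^2\le \langle d\rangle^{3/2}$ up to the comparison, which gives $2^{k/2}2^{k}\langle d\rangle^{3/2}$. Then $2^{3k/2}\lesssim\langle v_0+v_1\rangle^{3/2}$ whenever $|v_0+v_1|\lesssim 2^{-k}$; the range $|v_0+v_1|\gg 2^{-k}$ forces $|w|\gtrsim|v_0|$, which decays even better.

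\emph{Case $|d|>2^{-k-1}$.} One of $|w|,|w-d|$ is at least $|d|/2$. The other bracket is integrated in $T$ with $\int\langle\cdot\rangle^2\le 2$, which yields $2^{k/2}2^{k}\cdot 2\cdot 16\langle d\rangle^{2}$. Then $2^{3k/2}\langle d\rangle^2\le\langle d\rangle^{3/2}\cdot 2^{3k/2}\langle d\rangle^{1/2}$, and $\langle d\rangle\lesssim 2^{k}$. This gives $\lesssim 2^{2k}\langle d\rangle^{3/2}$. To finish I still need $\langle v_0+v_1\rangle^{3/2}\gtrsim 2^{2k}$-ish. This fails for $|v_0+v_1|$ large, so in that range I instead use that $|w|,|w-d|$ are both comparable to $|v_0|,|v_1|$ respectively, giving the product term $\langle v_0\rangle^{3/2}\langle v_1\rangle^{3/2}$.

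The main obstacle is organizing these subcases cleanly so that each lands on exactly one of the two products with an explicit constant; the final number $133$ comes from tracking constants like $2^{3}$ from Lemma~\ref{lem:widebump}, the factor $16$ from $(1/2)^{-2}$ powers, and $\int\langle x\rangle^2=2$. I would first try to reduce to the single dichotomy ``$\min(|w|,|w-d|)$ versus $\max$'' and apply Proposition~\ref{orthogonal decay} with $\alpha_0=(1,0),\alpha_1=(0,1)$ to pass between the product $\langle v_0\rangle\langle v_1\rangle$ and the rotated product $\langle v_0+v_1\rangle\langle v_0-v_1\rangle$.
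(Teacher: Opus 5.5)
There is a genuine gap in your case $|d|>2^{-k-1}$ with $|v_0+v_1|$ large. There you assert that $|w|$ and $|w-d|$ are comparable to $|v_0|$ and $|v_1|$, respectively. Write $R=2^{-k}\ge 2$, so the moving point is $T=tR\in[R,2R]$. The assertion fails whenever one coordinate lies within $O(R)$ of $[R,2R]$ while the other is far away. For instance, $v_0=\tfrac32 R$, $v_1=100R$ has $|d|>R/2$ and $|v_0+v_1|\gg R$, yet $w=v_0-T$ vanishes at $t=\tfrac32$, so $\langle w\rangle\lesssim\langle v_0\rangle$ is false.

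Your split by the sizes of $v_0-v_1$ and $v_0+v_1$ does not detect this mixed configuration. The inequality still holds there, but by a different mechanism:
\begin{itemize}
\item integrate the near factor, $\int_1^2\langle v_0-tR\rangle^2\,dt=R^{-1}\int_R^{2R}\langle v_0-q\rangle^2\,dq\le 2R^{-1}$;
\item turn the resulting $2^{k/2}\cdot 2R^{-1}=2R^{-3/2}$ into $2(7/2)^{3/2}\langle v_0\rangle^{3/2}$, using $1+|v_0|\le\tfrac72R$;
\item use comparability only for the far coordinate, $\langle v_1-tR\rangle^2\le 9\langle v_1\rangle^{3/2}$.
\end{itemize}
This is exactly the middle case of the paper's proof. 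The paper splits according to whether $|v_0|$ and $|v_1|$ are at least $3R$, i.e.\ whether each coordinate can come close to the moving point. Both far gives the product term by comparability. Exactly one near is handled as just described. Both near forces $|v_0+v_1|<6R$ and gives the rotated term.

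Several further steps need repair.
\begin{itemize}
\item In the case $|d|\le 2^{-k-1}$, the pointwise bound $\langle w-d\rangle^2\le\langle d\rangle^{3/2}$ is false as written: take $v_1=T$, so $w-d=0$, with $|d|=R/2$. What you want is either the observation $\max(|w|,|w-d|)\ge |d|/2$, which you use in your other case, or the sharper $\langle a\rangle\langle b\rangle\le\langle a-b\rangle(\langle a\rangle+\langle b\rangle)$. The latter gives $\int_R^{2R}\langle v_0-q\rangle^2\langle v_1-q\rangle^2\,dq\le 8\langle v_0-v_1\rangle^2$ for every $v$, which makes the split on $|d|$ unnecessary.
\item The inference that $|v_0|>2^{-k-2}$ and $|d|\le 2^{-k-1}$ give $|v_0+v_1|\gtrsim 2^{-k}$ also fails, e.g.\ $v_0=R/4+\epsilon$, $v_1=-R/4+\epsilon$. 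You do not actually need it.
\item Proposition \ref{orthogonal decay} with $\alpha_0=e_0$, $\alpha_1=e_1$ is vacuous; it does not pass to the rotated product.
\item The lemma asserts the explicit constant $133$, and your sketch works only up to $\lesssim$. This is not cosmetic. The both-near estimate $8R^{-3/2}\langle v_0-v_1\rangle^2\le 8(13/2)^{3/2}\langle v_0+v_1\rangle^{3/2}\langle v_0-v_1\rangle^{3/2}$ already has constant about $132.6$. So whatever thresholds separate your regimes must be chosen carefully to stay within $133$.
\end{itemize}
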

\begin{proof}[Proof \auto]
Put $R=2^{-k}$. We distinguish three cases.
Suppose first that $|v_0|,|v_1|\ge3R$. For $t\in[1,2]$ and $i\in[2)$,
\[
1+|v_i|\le3(1+|v_i-tR|).
\]
Since $tR\in[2^{-k},2^{1-k}]$, Lemma \ref{lem:widebump}, applied with $k+1$ and $N=1/2$, gives
\[
R^{-1/2}\langle v_0-tR\rangle^{1/2}
\le2\langle v_0\rangle^{1/2}.
\]
Consequently the integrand, including the factor $R^{-1/2}$, is at most
\[
2\cdot3^{3/2}\cdot9
\langle v_0\rangle^2\langle v_1\rangle^2
<94\langle v_0\rangle^{3/2}\langle v_1\rangle^{3/2}.
\]
Integration over $t\in[1,2]$ proves the required bound in this case.

Suppose next that exactly one of $|v_0|,|v_1|$ is smaller than $3R$; by symmetry, assume that $|v_0|<3R$. As above,
\[
\langle v_1-tR\rangle^2\le9\langle v_1\rangle^{3/2}.
\]
Moreover,
\[
\int_1^2\langle v_0-tR\rangle^2\,dt
=R^{-1}\int_R^{2R}\langle v_0-q\rangle^2\,dq
\le2R^{-1}.
\]
Since $1+|v_0|\le\frac72R$, the left-hand side of \eqref{E:two-terms} is at most
\[
18R^{-3/2}\langle v_1\rangle^{3/2}
\le18\left(\tfrac72\right)^{3/2}
\langle v_0\rangle^{3/2}\langle v_1\rangle^{3/2}
<118\langle v_0\rangle^{3/2}\langle v_1\rangle^{3/2}.
\]

It remains to consider $|v_0|,|v_1|<3R$. Put $w=v_0-v_1$. The triangle inequality and $(a+b)^2\le2(a^2+b^2)$ give
\[
\langle v_0-q\rangle^2\langle v_1-q\rangle^2
\le2\langle w\rangle^2
\bigl(\langle v_0-q\rangle^2+\langle v_1-q\rangle^2\bigr).
\]
Hence
\[
\int_R^{2R}\langle v_0-q\rangle^2\langle v_1-q\rangle^2\,dq
\le8\langle w\rangle^2.
\]
Since $1+|v_0+v_1|\le\frac{13}{2}R$, the left-hand side of \eqref{E:two-terms} is at most
\[
8R^{-3/2}\langle w\rangle^2
\le8\left(\tfrac{13}{2}\right)^{3/2}
\langle v_0+v_1\rangle^{3/2}\langle w\rangle^{3/2}
<133\langle v_0+v_1\rangle^{3/2}\langle w\rangle^{3/2}.
\]
This proves \eqref{E:two-terms}.
\end{proof}

\begin{lemma}[constant $C_{\ref{lem:thetat_offcenter}}$ \auto]\label{constant off center bump}\uses{lem:thetat_offcenter}\usesdefs{}
\lean{Auto.constantOffCenterBump}
\leanok
\begin{equation}\label{constant off center bump bound}
C_{\ref{lem:thetat_offcenter}}\le133.
\end{equation}
\end{lemma}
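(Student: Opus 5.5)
The statement to prove is Lemma~\ref{constant off center bump}, which asserts $C_{\ref{lem:thetat_offcenter}}\le133$. The plan is to read it off directly from the definition of that constant.

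In Lemma~\ref{lem:thetat_offcenter} the constant is \emph{defined} by \eqref{off center bump constant}: $C_{\ref{lem:thetat_offcenter}}=133$. The bound \eqref{constant off center bump bound} therefore follows from reflexivity of $\le$, with no estimate required.

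For the record, the value $133$ is consistent with the three cases in the proof of Lemma~\ref{lem:thetat_offcenter}. The constants produced there are below $94$, below $118$, and below $133$, the last coming from $8(13/2)^{3/2}<133$. Their maximum is at most $133$, so the defining choice is the binding one. This check is not part of the proof of the present lemma.

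There is no real obstacle. The only subtlety is on the formalization side: the Lean statement should reference the same named constant, so that the inequality is closed by unfolding the definition and applying \texttt{le\_refl}, or by \texttt{norm\_num}.
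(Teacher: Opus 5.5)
Your proposal is correct and matches the paper: the paper's proof likewise observes only that $133$ is the defining value of $C_{\ref{lem:thetat_offcenter}}$ in \eqref{off center bump constant}, so the inequality is immediate. Your side check of the three case constants ($94$, $118$, and $8(13/2)^{3/2}<133$) is accurate but, as you note, not needed.
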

\begin{proof}[Proof \auto]
This is the defining value in Lemma \ref{lem:thetat_offcenter}.
\end{proof}

For a function $\phi$ we define
\begin{equation}\label{auto:multiplication-operator-X}(X\phi)(u) = u\phi(u).\end{equation}

   \begin{lemma}
         \label{lem:int_fct}\uses{}\usesdefs{}
\lean{Auto.integralFct}
\leanok
          Let $\psi:\R\to \R$ be a Schwartz function and assume that
          \begin{equation}
              \label{int_fct_assumption}
              \textup{supp}(\widehat{\psi})
\subset  [-1,-2^{-2}]\cup [2^{-2},1]
          \end{equation}
     Let $\Psi:\R^2\to \R$ be defined as
     \begin{equation}\label{auto:integrated-tensor-kernel}\Psi(u,v) = \int_1^2 \psi_{(t)}(u) \psi_{(t)}(v) \tfrac{dt}{t}\end{equation}
     and let $\ell\in\Z$.
   Then
        \begin{enumerate}
    \item  \label{int_fct_symm} For all $(u,v)\in \R^2$,  $\Psi_{(2^\ell)}(u,v) = \Psi_{(2^\ell)}(v,u)$
     \item \label{int_fct_pos}  For all bounded functions $f:\R\to\R$,
     \begin{equation}\label{auto:integrated-tensor-positivity}
         \int_{\R^2} f(u){f(v)} \Psi_{(2^\ell)}(u,v)\,dudv
         \ge 0
     \end{equation}
     \item  \label{int_fct_supp}
     $         \textup{supp}(\widehat{\Psi_{(2^\ell)}})
\subset  ([-2^{-\ell},-2^{-3-\ell}]\cup [2^{-3-\ell},2^{-\ell}])^2 \subset \textup{Ann}_1(2^{-\ell},2^3)^2$
     \end{enumerate}
     \end{lemma}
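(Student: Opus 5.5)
Since $\psi$ is Schwartz, the integrand $\psi_{(t)}(u)\psi_{(t)}(v)t^{-1}$ is jointly continuous in $(t,u,v)$ and dominated, uniformly for $t\in[1,2]$, by $C\langle u\rangle^{N}\langle v\rangle^{N}$. Hence $\Psi$ is continuous and integrable on $\R^2$, and so is $\Psi_{(2^\ell)}(u,v)=2^{-2\ell}\Psi(2^{-\ell}u,2^{-\ell}v)$. All three claims then follow from Fubini together with simple scaling identities.

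\emph{Symmetry (item \ref{int_fct_symm}).} The integrand $\psi_{(t)}(u)\psi_{(t)}(v)$ is symmetric in $(u,v)$ for every $t$, so $\Psi(u,v)=\Psi(v,u)$. Rescaling applies the same dilation to both coordinates, so the symmetry transfers to $\Psi_{(2^\ell)}$.

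\emph{Positivity (item \ref{int_fct_pos}).} First rewrite
\[
\Psi_{(2^\ell)}(u,v)=\int_1^2\psi_{(2^\ell t)}(u)\,\psi_{(2^\ell t)}(v)\,\tfrac{dt}{t}.
\]
This holds because $(\psi_{(t)}\otimes\psi_{(t)})_{(2^\ell)}=\psi_{(2^\ell t)}\otimes\psi_{(2^\ell t)}$, where the two-dimensional dilation contributes $2^{-2\ell}$, which splits as one factor $2^{-\ell}$ per variable. Now let $f$ be bounded. The function $(t,u,v)\mapsto f(u)f(v)\psi_{(2^\ell t)}(u)\psi_{(2^\ell t)}(v)t^{-1}$ is absolutely integrable on $[1,2]\times\R^2$, by the uniform domination above and $\|f\|_\infty<\infty$. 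Fubini therefore gives
\[
\int_{\R^2}f(u)f(v)\Psi_{(2^\ell)}(u,v)\,du\,dv=\int_1^2\Big(\int_\R f\,\psi_{(2^\ell t)}\Big)^2\tfrac{dt}{t}\ge0,
\]
since $f$ and $\psi$ are real-valued.

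\emph{Support (item \ref{int_fct_supp}).} Take the Fourier transform of the $t$-integral and exchange the order of integration, which is justified by integrability:
\[
\widehat{\Psi_{(2^\ell)}}(\xi,\eta)=\int_1^2\widehat\psi(2^\ell t\xi)\,\widehat\psi(2^\ell t\eta)\,\tfrac{dt}{t}.
\]
Suppose the integrand is nonzero for some $t\in[1,2]$. By the support assumption \eqref{int_fct_assumption}, $2^{-2}\le 2^\ell t|\xi|\le 1$, so
\[
|\xi|\in[2^{-2-\ell}t^{-1},2^{-\ell}t^{-1}]\subset[2^{-3-\ell},2^{-\ell}],
\]
and likewise for $\eta$. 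Consequently $\widehat{\Psi_{(2^\ell)}}$ vanishes on the open complement of the closed set $([-2^{-\ell},-2^{-3-\ell}]\cup[2^{-3-\ell},2^{-\ell}])^2$, which gives the first inclusion. For the second inclusion, a point with $2^{-3}\cdot2^{-\ell}\le|\xi|\le 2^{-\ell}\le 2^3\cdot 2^{-\ell}$ lies in $\mathrm{Ann}_1(2^{-\ell},2^3)$ by \eqref{one dimensional annulus notation}.

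\emph{Main obstacle.} No step is hard. The only care needed is to justify the Fubini exchanges, which rests on the uniform Schwartz domination over $t\in[1,2]$, and to handle the dilation normalizations correctly in two dimensions.
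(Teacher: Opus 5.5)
Your proposal is correct and follows essentially the same route as the paper: symmetry comes directly from the definition, positivity from rewriting $\Psi_{(2^\ell)}=\int_1^2\psi_{(2^\ell t)}\otimes\psi_{(2^\ell t)}\,\frac{dt}{t}$ to get $\int_1^2\bigl(\int_\R f\psi_{(2^\ell t)}\bigr)^2\frac{dt}{t}\ge0$, and the support claim from $\widehat{\psi_{(2^\ell t)}}(\xi)=\widehat\psi(2^\ell t\xi)$ with $t\in[1,2]$. Your uniform Schwartz domination, which justifies the Fubini exchanges and taking the Fourier transform inside the $t$-integral, fills in details the paper leaves implicit.
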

          \begin{proof}
The symmetry \eqref{int_fct_symm}
 is immediate from the definition.
 The claim \eqref{int_fct_pos} follows from
          \[
\int_{\R^2}f(u)f(v)\Psi_{(2^\ell)}(u,v)\,du\,dv
=
\int_1^2\left(\int_\R f(u)\psi_{(2^\ell t)}(u)\,du\right)^2\tfrac{dt}{t}
\ge0.
\]
To see \eqref{int_fct_supp}, we use the assumption \eqref{int_fct_assumption} and $1\le t \le 2$. Thus, if  $\widehat{\psi_{(t)}}(\xi) = \widehat{\psi}(t\xi)$ is non-zero, then necessarily $2^{-2}t^{-1}\le |\xi|\le t^{-1} $, so $2^{-3}\le |\xi| \le 1 $ and thus
\[         \textup{supp}(\widehat{\psi_{(t)}})
\subset  [-1,-2^{-3}]\cup [2^{-3},1] \subset \textup{Ann}_1(1,2^3) \]
By rescaling,
\[         \textup{supp}(\widehat{\psi_{(2^\ell t)}})
\subset  [-2^{-\ell},-2^{-3-\ell}]\cup [2^{-3-\ell},2^{-\ell}] \subset \textup{Ann}_1(2^{-\ell},2^3) \]
Then the bound \eqref{int_fct_supp}
  follows.
     \end{proof}

\begin{lemma}\label{lem:Phipos_v2}\uses{}\usesdefs{}
\lean{Auto.phiPosV2}
\leanok
Let
$\Psi:\mathbb{R}^2\to \mathbb{R}$ be such   that $\Psi(u_0,u_1)=\Psi(u_1,u_0)$ for all $(u_0,u_1)\in \R^2$ and such that
for all bounded functions $g:\R\to\R$,
\begin{equation}\label{nonneg_assumption_v2}
    \int_{\R^2} g(u_0){g(u_1)} \Psi(u)\,du \ge 0
\end{equation}
Then for all $\xi\in \R$, \begin{equation}\label{auto:positive-kernel-diagonal-Fourier}\widehat{\Psi}(\xi,-\xi)\geq 0.\end{equation}
\end{lemma}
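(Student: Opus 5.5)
The idea is to test the positivity hypothesis \eqref{nonneg_assumption_v2} against modulated functions $g(u)=\cos(2\pi\xi u)$ and $g(u)=\sin(2\pi\xi u)$. Their tensor products, added together, reproduce the character $e^{-2\pi i\xi(u_0-u_1)}$, which is exactly what is integrated against $\Psi$ to produce $\widehat\Psi(\xi,-\xi)$. This tacitly needs $\Psi\in L^1(\R^2)$ so that $\widehat\Psi$ is defined pointwise. That is the setting in which the lemma will be applied (for example to $\Psi_{(2^\ell)}$ from Lemma \ref{lem:int_fct}, built from Schwartz functions). If the hypothesis is only implicit, I would add it to the statement.

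First, fix $\xi\in\R$. The functions $c(u)=\cos(2\pi\xi u)$ and $s(u)=\sin(2\pi\xi u)$ are bounded and real valued, so \eqref{nonneg_assumption_v2} gives
\[
\int_{\R^2} c(u_0)c(u_1)\Psi(u)\,du\ge0,
\qquad
\int_{\R^2} s(u_0)s(u_1)\Psi(u)\,du\ge0 .
\]
All integrals are absolutely convergent because $\Psi\in L^1$ and the test functions are bounded by $1$.

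Second, expand the Fourier transform. By definition
\[
\widehat\Psi(\xi,-\xi)=\int_{\R^2}\Psi(u)e^{-2\pi i\xi(u_0-u_1)}\,du .
\]
Write
\[
e^{-2\pi i\xi(u_0-u_1)}=\bigl(c(u_0)c(u_1)+s(u_0)s(u_1)\bigr)+i\bigl(s(u_0)c(u_1)-c(u_0)s(u_1)\bigr).
\]
The imaginary part is antisymmetric under $u_0\leftrightarrow u_1$. Since $\Psi$ is symmetric, the change of variables swapping the coordinates (which preserves Lebesgue measure) shows that the imaginary part integrates to zero against $\Psi$. Hence $\widehat\Psi(\xi,-\xi)$ is real and equals the sum of the two nonnegative integrals from the first step, so it is $\ge0$.

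The only delicate point is the integrability of $\Psi$, which is needed both for $\widehat\Psi$ to make sense pointwise and to justify the splitting and the symmetry substitution. The trigonometric bookkeeping is routine.
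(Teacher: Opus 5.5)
Your proof is correct and is essentially the paper's argument. Symmetry of $\Psi$ kills the sine part of the character, so $\widehat\Psi(\xi,-\xi)=\int_{\R^2}\Psi(u)\cos(2\pi\xi(u_0-u_1))\,du$, and the addition formula splits this into the hypothesis tested with $g=\cos(2\pi\xi\,\cdot)$ and $g=\sin(2\pi\xi\,\cdot)$.

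There is one harmless slip. The imaginary part of $e^{-2\pi i\xi(u_0-u_1)}$ is $i\bigl(c(u_0)s(u_1)-s(u_0)c(u_1)\bigr)$, the negative of what you wrote, but it integrates to zero by symmetry either way. Your remark that $\Psi\in L^1$ is tacitly needed is fair; the paper leaves it implicit.
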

\begin{proof}[Proof \auto]
By symmetry,
\[
\widehat\Psi(\xi,-\xi)
=
\int_{\R^2}\Psi(u)
\cos(2\pi\xi(u_1-u_0))\,du.
\]
Since
\[
\cos(2\pi\xi(u_1-u_0))
=
\cos(2\pi\xi u_0)\cos(2\pi\xi u_1)
+
\sin(2\pi\xi u_0)\sin(2\pi\xi u_1),
\]
the right-hand side is the sum of the two expressions in \eqref{nonneg_assumption_v2} obtained by taking $g(u)=\cos(2\pi\xi u)$ and $g(u)=\sin(2\pi\xi u)$. Both are nonnegative.
\end{proof}

\subsubsection{Windows and pairs}\label{sec:windows}
\begin{definition}[$(c,N)$-window]\label{def:cn-window}\uses{}\usesdefs{}
\lean{Auto.cnWindow}
Let $c > 0$, $N\in\mathbb{N}$. A {\em $(c,N)$-window} is a Schwartz function $\phi:\R\to\R$ such that
\begin{enumerate}
\item For all $\xi\in \R$, $\widehat{\phi}$ is real and
\begin{equation}\label{auto:window-Fourier-range}0\le \widehat{\phi}(\xi)\le 1.\end{equation}
\item For all $\xi\in \R\setminus [-1,1]$ we have
\begin{equation}\label{ft_window_eq_zero}
\widehat{\phi}(\xi)=0.
\end{equation}
\item For all $\xi\in   [-\frac12,\frac12]$ we have
\begin{equation}\label{ft_window_eq_one}
\widehat{\phi}(\xi)=1.
\end{equation}
\item For all $\xi \in [-1,1]$ and all $m\le N$,
\begin{equation}\label{windowbound}
|\widehat{\phi}^{(m)}(\xi)| \leq c.
\end{equation}
\end{enumerate}
\end{definition}

\begin{definition}[$(c,N)$-pair]
\label{def:cpair}\uses{}\usesdefs{def:cn-window}
\lean{Auto.cPair}
Let $c > 0$, $N\in\mathbb{N}$.  A pair $(\phi_0,\phi_1)$ of $(c,N)$-windows is a {\em $(c,N)$-pair} if
\begin{equation}\label{windowsum}
(\widehat{\phi_0})^2+(1-\widehat{\phi_1})^2=1.
\end{equation}
\end{definition}
If $(\phi_0,\phi_1)$ is a $(c,N)$-pair, then we write
\begin{equation}\label{auto:rescaled-window-notation}\phi_{i,k}=(\phi_i)_{(2^{k})}\end{equation}
for $i\in [2), k\in\mathbb{Z}$

Existence of $(c,N)$-pairs is shown in Lemma \ref{lem:cpair}.

\begin{definition}[Universal pair]\label{def:unipair}\uses{}\usesdefs{def:cpair}
\lean{Auto.uniPair}
Set
\begin{equation}\label{universal pair parameters}
N_{\ref{def:unipair}}=3,
\end{equation}
and
\begin{equation}\label{auto:universal-window-constant}
C_{\ref{def:unipair}}=2^{15}.
\end{equation}
A universal pair is a $(C_{\ref{def:unipair}},N_{\ref{def:unipair}})$-pair.
\end{definition}

\begin{definition}[Windows]\label{def:window}\uses{}\usesdefs{def:unipair}
\lean{Auto.window}
A function is a window if it is one of the two components of a universal pair.
\end{definition}

\begin{lemma}[Existence of a universal pair]\label{lem:cpair}\lean{Auto.existsUniversalPair}\leanok\uses{standard bump properties}\usesdefs{standard bump,def:cn-window,def:cpair,def:unipair}
There exists a universal pair.
\end{lemma}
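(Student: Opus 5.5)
We have to produce $(\phi_0,\phi_1)$, Schwartz, with $\widehat{\phi_0},\widehat{\phi_1}$ even, real, valued in $[0,1]$, supported in $[-1,1]$, equal to $1$ on $[-\tfrac12,\tfrac12]$, with derivatives up to order $3$ bounded by $2^{15}$, and satisfying $(\widehat{\phi_0})^2+(1-\widehat{\phi_1})^2=1$. The plan is to build both windows from the standard bump $\Phi$ of Proposition \ref{standard bump properties}. Its Fourier transform $\widehat\Phi$ is smooth, takes values in $[0,1]$, is supported in $[-1,1]$, equals $1$ on $[-\tfrac12,\tfrac12]$, and satisfies $\|\widehat\Phi^{(m)}\|_\infty\le \tilde C_{\ref{standard bump properties},0,m}=2^{2m^2+5m}$.

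To make the pair identity automatic, we take an angle parametrization. Define
\[
\widehat{\phi_0}(\xi)=\sin\bigl(\tfrac{\pi}{2}\widehat\Phi(\xi)\bigr),\qquad
\widehat{\phi_1}(\xi)=1-\cos\bigl(\tfrac{\pi}{2}\widehat\Phi(\xi)\bigr).
\]
Then $(\widehat{\phi_0})^2+(1-\widehat{\phi_1})^2=\sin^2+\cos^2=1$ holds identically. The remaining window properties follow from those of $\widehat\Phi$:
\begin{itemize}
\item Since $\widehat\Phi\in[0,1]$, both $\widehat{\phi_0}$ and $\widehat{\phi_1}$ take values in $[0,1]$.
\item Where $\widehat\Phi=0$, which includes everything outside $[-1,1]$, we get $\sin 0=0$ and $1-\cos 0=0$, so both vanish there.
\item Where $\widehat\Phi=1$, which includes $[-\tfrac12,\tfrac12]$, we get $\sin\tfrac{\pi}{2}=1$ and $1-\cos\tfrac{\pi}{2}=1$.
\end{itemize}
Both transforms are real and even because $\widehat\Phi$ is. They are smooth and compactly supported, being compositions of entire functions with $\widehat\Phi$, and they vanish outside the support of $\widehat\Phi$. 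Hence $\phi_0,\phi_1$ are Schwartz functions, and since their Fourier transforms are even and real, the functions themselves are real-valued.

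The one quantitative step is the derivative bound $|\widehat{\phi_i}^{(m)}|\le 2^{15}$ for $m\le 3$; this is where I expect the (modest) obstacle to lie. We write $\widehat{\phi_i}=g\circ\widehat\Phi$ with $g=\sin(\tfrac\pi2\,\cdot)$ or $1-\cos(\tfrac\pi2\,\cdot)$, so that $|g^{(r)}|\le(\pi/2)^r$. We then apply the Fa\`a di Bruno formula explicitly for $m\le3$:
\[
(g\circ h)'''=g'''(h)(h')^3+3g''(h)h'h''+g'(h)h'''.
\]
The bounds from Proposition \ref{standard bump properties} give $\|h'\|_\infty\le 2^7$, $\|h''\|_\infty\le 2^{18}$ and $\|h'''\|_\infty\le 2^{33}$. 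These are too crude: the third-order term alone is already of size about $2^{33}$, far above $2^{15}$.

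The construction itself is still the right one, so I would first try to sharpen the constants. The route is to recompute the derivative bounds for $\widehat\Phi$ directly from the infinite convolution structure in the proof of Proposition \ref{standard bump properties}. Differentiating the factors $i=0,1,2$ once each gives $\|\widehat\Phi'''\|_\infty\le 2^{3}\cdot 2^4\cdot 2^5=2^{12}$, and similarly $\|\widehat\Phi''\|_\infty\le 2^7$ and $\|\widehat\Phi'\|_\infty\le 2^3$. With these, the three-term sum above is bounded by
\[
(\tfrac\pi2)^3 2^9+3(\tfrac\pi2)^2 2^{10}+\tfrac\pi2\, 2^{12}<2^{15},
\]
and the lower orders $m\le2$ are smaller still.

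If the sharpened bounds turn out not to be available as stated, the fallback is to replace $\Phi$ by a rescaled standard bump with a gentler transition region. The price is re-verifying the support and plateau conditions in Definition \ref{def:cn-window}.
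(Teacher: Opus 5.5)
Your proposal is correct and is essentially the paper's proof: the paper sets $h=1-\widehat\Phi$ and takes $\widehat{\phi_0}=\cos(\pi h/2)$, $\widehat{\phi_1}=1-\sin(\pi h/2)$, which is exactly your $\sin(\tfrac\pi2\widehat\Phi)$, $1-\cos(\tfrac\pi2\widehat\Phi)$. Like you, it avoids the crude stated constant $\tilde C_{\ref{standard bump properties},0,3}=2^{33}$ and instead uses the sharper bounds $\|\widehat\Phi^{(q)}\|_\infty\le 2^{q(q-1)/2+3q}$ (that is, $2^3,2^7,2^{12}$) from the convolution argument inside the \emph{proof} of Proposition~\ref{standard bump properties}, then applies Fa\`a di Bruno with $\pi/2<2$ to get $2^4,2^9,2^{15}$, so your fallback is not needed.
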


\begin{proof}[Proof \auto]
Let $h=1-\widehat\Phi$, where $\Phi$ is the standard bump from Proposition \ref{standard bump properties}, and define
\[
\widehat{\phi_0}=\cos(\pi h/2),
\qquad
\widehat{\phi_1}=1-\sin(\pi h/2).
\]
Both functions are smooth, real, and even. On $[-1/2,1/2]$ we have $h=0$, so both equal $1$. Outside $[-1,1]$ we have $h=1$, so both vanish. Since $0\le h\le1$, both take values in $[0,1]$, and
\[
(\widehat{\phi_0})^2+(1-\widehat{\phi_1})^2
=\cos^2(\pi h/2)+\sin^2(\pi h/2)=1.
\]
Their inverse Fourier transforms are real-valued Schwartz functions.

It remains to verify the derivative bound. The proof of Proposition \ref{standard bump properties} gives
\[
\|h^{(q)}\|_\infty
\le2^{q(q-1)/2+3q}
\]
for $1\le q\le3$. Applying the Fa\`a di Bruno formula to the two trigonometric compositions, using $\pi/2<2$, and grouping terms by their block sizes gives the following bounds for derivatives of orders $1,2,3$:
\[
2^4,
\qquad
2^9,
\qquad
2^{15}.
\]
Before rounding to powers of two, the corresponding Fa\`a di Bruno sums are
\[
2\cdot2^3,
\]
\[
2\cdot2^7+2^2\cdot2^6,
\]
and
\[
2\cdot2^{12}+3\cdot2^2\cdot2^{10}+2^3\cdot2^9.
\]
These are bounded by the three displayed powers of two. Thus every derivative of order at most $N_{\ref{def:unipair}}$ is bounded by $C_{\ref{def:unipair}}$. The order-zero bound is at most $1$, so $(\phi_0,\phi_1)$ is a $(C_{\ref{def:unipair}},N_{\ref{def:unipair}})$-pair.
\end{proof}

\subsubsection{A smoothing decomposition}
In this section we  decompose the characteristic function $\mathbf{1}_{[0,1)}$ into smoother functions.

\begin{definition}[Window-based bump functions]\label{defn:window based bump functions}\uses{lem:cpair}\usesdefs{def:unipair}
\lean{Auto.windowBasedBumpFunctions}
Let $(\phi_0,\phi_1)$ be a universal pair.
Define
\begin{equation}\label{eqn:thetadef}
{\theta}={\phi_0}-{(\phi_0)}_{(2)}.
\end{equation}
Define for $k\in\mathbb{Z}$,
\begin{equation}\label{eqn:varphi0kdef}
\varphi_{0,k} = (\mathbf{1}_{[0,1)}\ast\phi_0 - \phi_0)\ast \theta_{(2^{k})}
\end{equation}
\begin{equation}\label{eqn:varphi1kdef}\varphi_{1,k} = \mathbf{1}_{[0,\infty)}\ast\theta_{(2^k)}
\end{equation}
\begin{equation}\label{eqn:varphi2kdef}\varphi_{2,k} = -\mathbf{1}_{[1,\infty)}\ast\theta_{(2^k)}
\end{equation}
\begin{equation}\label{eqn:varphi3def}
\varphi_{3,k} = 2^{k}(\varphi_{0,k})_{(2^{-k})}
\end{equation}
Let
\begin{equation}\label{eqn:varphi4kdef}
\varphi_{4,k}(u)=2^k\widetilde{\theta}(u-2^{-k}),
\end{equation}
where $\widetilde{\theta}=\mathbf{1}_{[0,\infty)}*\theta$ (a primitive of $\theta$).
\end{definition}

\begin{lemma}\label{lem:bumpbasic}\lean{Auto.bumpBasic}\leanok\uses{}\usesdefs{def:cn-window,def:unipair,defn:window based bump functions}
Let $\theta$ be as in \eqref{eqn:thetadef}.\\

(i) We have
\begin{equation}\label{auto:theta-Fourier-support} \mathrm{supp}\;\widehat{\theta} \subset [-1,-2^{-2}]\cup [2^{-2},1] \end{equation}

(ii) For every $m\in\mathbb{Z}$
\begin{equation}\label{auto:theta-telescoping-sum} \sum_{\ell=m}^\infty \theta_{(2^\ell)} = (\phi_0)_{(2^m)} \end{equation}
and the series converges uniformly.
\end{lemma}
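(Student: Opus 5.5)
For (i), I work on the Fourier side. By the scaling rule $\widehat{\phi_{(t)}}(\xi)=\widehat\phi(t\xi)$, we have
\[
\widehat\theta(\xi)=\widehat{\phi_0}(\xi)-\widehat{\phi_0}(2\xi).
\]
Since $\phi_0$ is a window (Definition \ref{def:cn-window}), $\widehat{\phi_0}$ vanishes outside $[-1,1]$ and equals $1$ on $[-\frac12,\frac12]$. If $|\xi|>1$, both terms vanish, because $|2\xi|>1$ as well. If $|\xi|<\frac14$, then $|\xi|,|2\xi|\le \frac12$, so both terms equal $1$ and cancel. Hence $\widehat\theta$ vanishes on the open set $\{|\xi|<\tfrac14\}\cup\{|\xi|>1\}$, and the closed support lies in $[-1,-2^{-2}]\cup[2^{-2},1]$.

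For (ii), I first telescope the finite partial sums. Using $(\psi_{(a)})_{(b)}=\psi_{(ab)}$,
\[
\theta_{(2^\ell)}=(\phi_0)_{(2^\ell)}-(\phi_0)_{(2^{\ell+1})},
\qquad\text{so}\qquad
\sum_{\ell=m}^{L}\theta_{(2^\ell)}=(\phi_0)_{(2^m)}-(\phi_0)_{(2^{L+1})}.
\]
It therefore suffices to show $(\phi_0)_{(2^{L+1})}\to0$ uniformly as $L\to\infty$. This follows from the pointwise bound
\[
|\psi_{(t)}(x)|=t^{-1}|\psi(x/t)|\le t^{-1}\|\psi\|_\infty,
\]
applied to $\psi=\phi_0$, which is bounded as a Schwartz function. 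The bound gives $\|(\phi_0)_{(2^{L+1})}\|_\infty\le 2^{-L-1}\|\phi_0\|_\infty\to0$. So the partial sums converge uniformly to $(\phi_0)_{(2^m)}$, which is the claimed identity.

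No step is a real obstacle. The only care needed is that the support statement in (i) concerns the closed support, which is why I argue vanishing on the open set $\{|\xi|<\frac14\}\cup\{|\xi|>1\}$ rather than on its closure.
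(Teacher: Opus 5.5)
Your proof is correct and follows the paper's argument. Part (i) comes from the vanishing and plateau properties of $\widehat{\phi_0}$, which you spell out in more detail than the paper's one-line remark. Part (ii) uses the same telescoping identity together with the uniform bound $\|(\phi_0)_{(2^{L+1})}\|_\infty\le 2^{-L-1}\|\phi_0\|_\infty$.
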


\begin{proof}
(i) This is immediate from the support of windows.
(ii) For  $N\ge m$ and write
\[ S_N = \sum_{\ell=m}^N \theta_{(2^\ell)} =  \sum_{\ell=m}^N (\phi_0)_{(2^\ell)} - (\phi_0)_{(2^{\ell+1})} = (\phi_0)_{(2^m)} - (\phi_0)_{(2^{N+1})} \]
For any $x\in \R$ we  bound
\[|S_N(x)-(\phi_0)_{(2^m)}(x)| = |(\phi_0)_{(2^{N+1})}(x)|  \le 2^{-N-1}\|\phi_0\|_\infty \le 2^{-N} \]
Since $2^{-N}$ converges   to $0$ as $N\to \infty$, the conclusion follows.
\end{proof}

\begin{lemma}\label{lem:chardecomp}\uses{}\usesdefs{def:cn-window,def:unipair,defn:window based bump functions}
\lean{Auto.charDecomp}
\leanok
With $\phi_0,\theta$ defined as in \eqref{eqn:thetadef}, the identity
 \begin{equation}\label{auto:indicator-smoothing-decomposition} \mathbf{1}_{[0,1]} = \mathbf{1}_{[0,1]}* \phi_0 +  \sum_{\ell=-\infty}^{-1} \mathbf{1}_{[0,1]} * \theta_{(2^\ell)}  \end{equation}
holds in $L^2$.
\end{lemma}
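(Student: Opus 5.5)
The plan is to combine the telescoping identity of Lemma \ref{lem:bumpbasic}(ii) with the Fourier transform. In Fourier space the claimed identity reads
\[
\widehat{\mathbf 1_{[0,1]}}(\xi)=\widehat{\mathbf 1_{[0,1]}}(\xi)\Big(\widehat{\phi_0}(\xi)+\sum_{\ell=-\infty}^{-1}\widehat{\theta}(2^\ell\xi)\Big),
\]
so it suffices to show that the bracket converges to $1$ pointwise and boundedly. Plancherel together with dominated convergence then gives convergence of the partial sums in $L^2$, since $\widehat{\mathbf 1_{[0,1]}}\in L^2$.

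First we compute the finite partial sums exactly. For $M\ge 1$,
\[
\sum_{\ell=-M}^{-1}\theta_{(2^\ell)}=(\phi_0)_{(2^{-M})}-\phi_0
\]
by the same telescoping as in the proof of Lemma \ref{lem:bumpbasic}(ii), using $\theta=\phi_0-(\phi_0)_{(2)}$. Hence the $M$-th partial sum of the right-hand side equals
\[
\mathbf 1_{[0,1]}*(\phi_0)_{(2^{-M})}.
\]
This reduces the statement to a standard approximate-identity limit,
\[
\mathbf 1_{[0,1]}*(\phi_0)_{(2^{-M})}\to\mathbf 1_{[0,1]}\quad\text{in }L^2 \text{ as } M\to\infty.
\]

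On the Fourier side this convergence reads
\[
\bigl\|\widehat{\mathbf 1_{[0,1]}}(\xi)\bigl(\widehat{\phi_0}(2^{-M}\xi)-1\bigr)\bigr\|_{L^2(d\xi)}\to0.
\]
Here we use the window properties from Definition \ref{def:cn-window}: $\widehat{\phi_0}\in[0,1]$, and $\widehat{\phi_0}=1$ on $[-\tfrac12,\tfrac12]$. Consequently $|\widehat{\phi_0}(2^{-M}\xi)-1|\le 1$ everywhere, and the expression vanishes once $2^{-M}|\xi|\le\tfrac12$. Dominated convergence with majorant $|\widehat{\mathbf 1_{[0,1]}}|^2$ then yields the limit.

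Alternatively, one could avoid the Fourier transform and argue with continuity of translation in $L^2$, using $\int\phi_0=\widehat{\phi_0}(0)=1$ and $\phi_0\in L^1$.

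The only mildly delicate point is interpreting the doubly infinite sum as the limit of the partial sums $\sum_{\ell=-M}^{-1}$ in $L^2$, and justifying that convolution of $\mathbf 1_{[0,1]}$ with Schwartz functions commutes with the finite sums. Both are routine, so no step is expected to be a real obstacle.
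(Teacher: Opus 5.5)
Your proposal is correct and follows the paper's proof. Both arguments telescope the partial sums to $\mathbf 1_{[0,1]}*(\phi_0)_{(2^{-N})}$, pass to the Fourier side by Plancherel, and use that $1-\widehat{\phi_0}(2^{-N}\xi)$ has modulus at most $1$ and vanishes for $|\xi|\le 2^{N-1}$. The only difference is that where you invoke dominated convergence, the paper uses $|\widehat{\mathbf 1_{[0,1]}}(\xi)|\le\min(1,(\pi|\xi|)^{-1})$ to get the explicit error bound $2^{1-N/2}/\pi$.
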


\begin{rem}
    This identity also holds pointwise a.e., which can be shown using the weak $L^2$ boundedness of the maximally truncated convolution-type singular integrals. However, norm  convergence   suffices for our proof.
\end{rem}

\begin{proof}[Proof \auto]
For $N>1$, the definition of $\theta$ gives
\[
\phi_0+\sum_{\ell=-N}^{-1}\theta_{(2^\ell)}
=(\phi_0)_{(2^{-N})}.
\]
By Plancherel's identity, the $L^2$ norm of the difference between the two sides of the asserted partial sum identity is
\[
\left\|\widehat{\mathbf1_{[0,1]}}(\xi)
\bigl(1-\widehat{\phi_0}(2^{-N}\xi)\bigr)\right\|_{L^2(\xi)}.
\]
The second factor vanishes when $|\xi|\le2^{N-1}$ and has modulus at most one. Moreover,
\[
|\widehat{\mathbf1_{[0,1]}}(\xi)|
\le\min(1,(\pi|\xi|)^{-1}).
\]
Consequently the preceding norm is at most
\[
\left(2\int_{2^{N-1}}^\infty\tfrac{d\xi}{\pi^2\xi^2}\right)^{1/2}
=\tfrac{2^{1-N/2}}{\pi},
\]
which tends to zero.
\end{proof}

\begin{lemma}[Smoothing decomposition]\label{lem:smoothingdecomp}\uses{lem:chardecomp,lem:bumpbasic}\usesdefs{def:cn-window,def:unipair,defn:window based bump functions}
\lean{Auto.smoothingDecomp}
\leanok
The identity
\begin{equation}\label{eq:seriesexpansion}
\mathbf{1}_{[0,1]} = \phi_0 + \sum_{k=-2}^\infty \varphi_{0,k}  + \sum_{k=-\infty}^{-1} \varphi_{1,k}+ \sum_{k=-\infty}^{-1}\varphi_{2,k}.
\end{equation}
holds in the $L^2$ sense.
\end{lemma}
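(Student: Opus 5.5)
The plan is to start from Lemma \ref{lem:chardecomp}, which gives in $L^2$
\[
\mathbf 1_{[0,1]}=\mathbf 1_{[0,1]}*\phi_0+\sum_{\ell\le -1}\mathbf 1_{[0,1]}*\theta_{(2^\ell)},
\]
and to rewrite each of its two parts in terms of the bumps $\varphi_{i,k}$.

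\emph{First part.} Write
\[
\mathbf 1_{[0,1]}*\phi_0=\phi_0+(\mathbf 1_{[0,1)}*\phi_0-\phi_0).
\]
(Endpoint choices of the indicator do not affect convolutions.) Then apply Lemma \ref{lem:bumpbasic}(ii) with $m=-2$:
\[
\sum_{k\ge -2}\theta_{(2^k)}=(\phi_0)_{(2^{-2})}.
\]
This telescoping rests on $\theta=\phi_0-(\phi_0)_{(2)}$. It also yields the Fourier identity $\widehat{(\phi_0)_{(2^{-2})}}(\xi)=\widehat{\phi_0}(\xi/4)$, which equals $1$ on $[-2,2]$. Next I check that $g:=\mathbf 1_{[0,1)}*\phi_0-\phi_0$ has $\widehat g$ supported in $[-1,1]$, because $\widehat g=(\widehat{\mathbf 1_{[0,1)}}-1)\widehat{\phi_0}$. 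Hence $g*(\phi_0)_{(2^{-2})}=g$. Convergence of $\sum_{k\ge-2} g*\theta_{(2^k)}$ to $g$ in $L^2$ then follows from Plancherel: the partial sums have multiplier
\[
\widehat g(\xi)\bigl(\widehat{\phi_0}(\xi/4)-\widehat{\phi_0}(2^{N+1}\xi)\bigr),
\]
and dominated convergence applies since $\widehat g\in L^2$ and the factor in brackets is bounded by $2$. This gives
\[
\mathbf 1_{[0,1]}*\phi_0=\phi_0+\sum_{k\ge-2}\varphi_{0,k}.
\]

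\emph{Second part.} For $\ell\le -1$, use $\mathbf 1_{[0,1)}=\mathbf 1_{[0,\infty)}-\mathbf 1_{[1,\infty)}$. Then
\[
\mathbf 1_{[0,1]}*\theta_{(2^\ell)}=\varphi_{1,\ell}+\varphi_{2,\ell}.
\]
Both summands are well defined pointwise: $\widehat\theta$ vanishes near the origin, so $\int\theta=0$, and $\mathbf 1_{[0,\infty)}*\theta_{(2^\ell)}$ is a Schwartz-type primitive. The issue is that the statement writes $\sum_{k\le-1}\varphi_{1,k}$ and $\sum_{k\le -1}\varphi_{2,k}$ as separate $L^2$ series, so I must show each converges in $L^2$ by itself, not only their sum.

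\emph{Main obstacle: separate convergence.} Here I argue directly in Fourier space. We have
\[
\widehat{\varphi_{1,k}}(\xi)=\widehat\theta(2^k\xi)/(2\pi i\xi),
\]
which is legitimate because the support of $\widehat\theta(2^k\cdot)$ lies in $|\xi|\ge 2^{-2-k}$, away from the origin. The supports of these functions overlap only boundedly: each $\xi$ lies in at most three of the annuli $2^{-2-k}\le|\xi|\le2^{-k}$. Since $|\widehat\theta|\le 1$, the partial sums over $k\in[-N,-1]$ are dominated by $3/(2\pi|\xi|)\mathbf 1_{|\xi|\ge 1/2}$, which is in $L^2$. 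Pointwise convergence of the multipliers is trivial, as only finitely many terms are nonzero at each $\xi$. Dominated convergence then gives $L^2$ convergence of $\sum\varphi_{1,k}$. The same argument works for $\varphi_{2,k}$, whose transform carries an extra unimodular factor $e^{-2\pi i\xi}$.

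Finally, linearity of $L^2$ limits combines the two parts and proves \eqref{eq:seriesexpansion}.
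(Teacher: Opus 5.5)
Your proposal is correct and follows the paper's proof. It starts from Lemma \ref{lem:chardecomp}, splits $\mathbf 1_{[0,1]}=\mathbf 1_{[0,\infty)}-\mathbf 1_{[1,\infty)}$ for the $\theta$-terms, and handles $\mathbf 1_{[0,1)}*\phi_0-\phi_0$ through the Fourier-support identity $g=g*(\phi_0)_{(2^{-2})}$ together with Lemma \ref{lem:bumpbasic}(ii). Your Plancherel and dominated-convergence arguments also supply details the paper leaves implicit: the separate $L^2$ convergence of $\sum_k\varphi_{1,k}$ and $\sum_k\varphi_{2,k}$, which the paper only asserts, and $L^2$ (rather than uniform) convergence of $\sum_{k\ge-2}\varphi_{0,k}$.
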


\begin{proof}
By Lemma \ref{lem:chardecomp} we have
\begin{equation}
    \label{eq:L2char}
    \mathbf{1}_{[0,1]} =\mathbf{1}_{[0,1]}\ast\phi_0  +  \sum_{k=-\infty}^{-1} \mathbf{1}_{[0,1]}\ast  \theta_{(2^k)}
\end{equation}
where the identity holds in $L^2$.
Denoting $\varphi=\mathbf{1}_{[0,1]}\ast\phi_0$ and splitting
\[\mathbf{1}_{[0,1]} = \mathbf{1}_{[0,\infty)} - \mathbf{1}_{[1,\infty)}\]
 we  thus have
\[\mathbf{1}_{[0,1]} = \varphi +  \sum_{k=-\infty}^{-1} \varphi_{1,k}+ \sum_{k=-\infty}^{-1}\varphi_{2,k},\]
where the sums converge in $L^2$.
It remains to decompose $\varphi$.
We  write $\varphi= \phi_0 + (\varphi-\phi_0)$
and
\[
\varphi-\phi_0 = (\varphi - \phi_0)*(\phi_0)_{(2^{-2})},\]
where the last identity holds due to
\[\widehat{\varphi-\phi_0}(\xi) \widehat{(\phi_0)_{(2^{-2})}}(\xi) = \widehat{\varphi-\phi_0}(\xi) \]
for each $\xi\in \R$. Indeed, the function $\widehat{\varphi-\phi_0}(\xi)=0$ for $|\xi|\ge 1$ and $\widehat{(\phi_0)_{(2^{-2})}}(\xi) = \widehat{\phi_0}(2^{-2}\xi) =1$ for $|\xi|\le 2$.
By part (ii) of Lemma \ref{lem:bumpbasic} we obtain
\[ (\varphi - \phi_0)*(\phi_0)_{(2^{-2})} =  (\varphi - \phi_0)*\Big( \sum_{k=-2}^\infty \theta_{(2^k)}\Big )  =  \sum_{k=-2}^\infty (\varphi - \phi_0)*\theta_{(2^k)} =   \sum_{k=-2}^\infty \varphi_{0,k}.
\]
The  second equality is justified, for example, by uniform convergence.
\end{proof}

\begin{lemma}\label{lem:theta_decay}\uses{lem:smoothdecay2,lem: min and bracket}\usesdefs{bracket bump,def:cn-window,def:unipair,defn:window based bump functions}
\lean{Auto.thetaDecay}
\leanok
Let $1\le N\le3$. Then
\begin{equation}\label{auto:theta-decay}
|\theta(u)|\le C_{\ref{lem:theta_decay},N}\langle u\rangle^N,
\end{equation}
where
\begin{equation}\label{auto:theta-decay-constant-definition}
C_{\ref{lem:theta_decay},N}=2^{2N+2}C_{\ref{def:unipair}}.
\end{equation}
\end{lemma}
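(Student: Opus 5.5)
We apply Lemma \ref{lem:smoothdecay2} to $\theta$. Since $\theta=\phi_0-(\phi_0)_{(2)}$, we have
\[
\widehat\theta(\xi)=\widehat{\phi_0}(\xi)-\widehat{\phi_0}(2\xi).
\]
The function $\widehat\theta$ is smooth, and by Lemma \ref{lem:bumpbasic}(i) it is supported in $[-1,-2^{-2}]\cup[2^{-2},1]$. So $|\supp\widehat\theta|\le 2$; using only $\supp\widehat\theta\subset[-1,1]$ gives the same bound.

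For the derivative bounds, Definition \ref{def:cn-window} with $N_{\ref{def:unipair}}=3$ gives $|\widehat{\phi_0}^{(m)}|\le C_{\ref{def:unipair}}$ for $m\le 3$. The chain rule then gives
\[
|\widehat\theta^{(m)}(\xi)|\le |\widehat{\phi_0}^{(m)}(\xi)|+2^m|\widehat{\phi_0}^{(m)}(2\xi)|\le (1+2^m)\,C_{\ref{def:unipair}}.
\]
We use this at $m=0$ and $m=N$. Since $(2\pi)^{-N}(1+2^N)\le 1$ for $N\ge1$, and the $m=0$ term is at most $2C_{\ref{def:unipair}}$, we get
\[
\max\bigl(\|\widehat\theta\|_\infty,(2\pi)^{-N}\|\widehat\theta^{(N)}\|_\infty\bigr)\le 2C_{\ref{def:unipair}}.
\]
The $m=0$ bound can also be improved to $1$, because $0\le\widehat{\phi_0}\le1$.

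For $N\in\{2,3\}$ we apply Lemma \ref{lem:smoothdecay2} directly:
\[
|\theta(u)|\le 2^N\cdot 2C_{\ref{def:unipair}}\cdot 2\,\langle u\rangle^N=2^{N+2}C_{\ref{def:unipair}}\langle u\rangle^N,
\]
which is well within $2^{2N+2}C_{\ref{def:unipair}}$.

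The case $N=1$ is the only real obstacle, since Lemma \ref{lem:smoothdecay2} requires $N\ge2$. Here we argue directly as in the proof of Lemma \ref{lem:smoothdecay}. One integration by parts, valid because $\widehat\theta$ is compactly supported and $C^1$, gives
\[
|\theta(u)|\le\min\bigl(\|\widehat\theta\|_1,(2\pi|u|)^{-1}\|\widehat\theta'\|_1\bigr)\le 2\cdot 2C_{\ref{def:unipair}}\min(1,|u|^{-1}).
\]
Lemma \ref{lem: min and bracket} with $N=1$ then gives
\[
|\theta(u)|\le 8C_{\ref{def:unipair}}\langle u\rangle=2^{2\cdot1+2}C_{\ref{def:unipair}}\langle u\rangle.
\]
This is exactly the claimed constant. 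The only work is bookkeeping the factors of $2$ from the support length, the $1+2^N$ chain-rule factor, and Lemma \ref{lem: min and bracket}; the stated constant $2^{2N+2}C_{\ref{def:unipair}}$ has room to absorb all of them.
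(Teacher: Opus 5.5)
Your proof is correct and follows the paper's route: the chain-rule bound $(1+2^m)C_{\ref{def:unipair}}$ on $\widehat\theta^{(m)}$, support measure at most $2$, Lemma \ref{lem:smoothdecay2} for $N\in\{2,3\}$, and a direct integration by parts followed by Lemma \ref{lem: min and bracket} for $N=1$. The paper keeps the cruder bound $2^{N+1}C_{\ref{def:unipair}}$ and lands exactly on $2^{2N+2}C_{\ref{def:unipair}}$, whereas your use of the $(2\pi)^{-N}$ factor gives the sharper $2^{N+2}C_{\ref{def:unipair}}$. One harmless slip: for $N=1$ you have $8C_{\ref{def:unipair}}=2^3C_{\ref{def:unipair}}$, not $2^{2\cdot1+2}C_{\ref{def:unipair}}$, so your bound is below the claimed constant rather than equal to it, which still suffices.
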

\begin{proof}[Proof \auto]
We have
\[
\widehat\theta(\xi)=\widehat{\phi_0}(\xi)-\widehat{\phi_0}(2\xi)
\]
and, for $m\le N$,
\[
\|\widehat\theta^{(m)}\|_\infty
\le(1+2^m)C_{\ref{def:unipair}}\le2^{m+1}C_{\ref{def:unipair}}.
\]
The support of $\widehat\theta$ has measure at most two. If $N=1$, Fourier inversion and one integration by parts, followed by Lemma \ref{lem: min and bracket}, give
\[
|\theta(u)|
\le4C_{\ref{def:unipair}}\min(1,|u|^{-1})
\le2^3C_{\ref{def:unipair}}\langle u\rangle
\le2^4C_{\ref{def:unipair}}\langle u\rangle.
\]
If $N\in\{2,3\}$, Lemma \ref{lem:smoothdecay2}, applied with $m=N$, gives
\[
|\theta(u)|
\le2^N\,2^{N+1}C_{\ref{def:unipair}}\,2\langle u\rangle^N
=2^{2N+2}C_{\ref{def:unipair}}\langle u\rangle^N.
\]
\end{proof}

\begin{lemma}[constant $C_{\ref{lem:theta_decay},N}$ \auto]\label{constant theta decay}\uses{lem:theta_decay}\usesdefs{def:unipair}
\lean{Auto.constantThetaDecay}
\leanok
For $1\le N\le3$,
\begin{equation}\label{constant theta decay bound}
C_{\ref{lem:theta_decay},N}\le2^{2N+17}.
\end{equation}
\end{lemma}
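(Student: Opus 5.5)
This is a bookkeeping step: substitute the defining value of the constant from Lemma \ref{lem:theta_decay} and bound it. By \eqref{auto:theta-decay-constant-definition} we have
\[
C_{\ref{lem:theta_decay},N}=2^{2N+2}C_{\ref{def:unipair}},
\]
and Definition \ref{def:unipair}, specifically \eqref{auto:universal-window-constant}, fixes $C_{\ref{def:unipair}}=2^{15}$. Multiplying the two powers of two gives
\[
C_{\ref{lem:theta_decay},N}=2^{2N+2}\cdot 2^{15}=2^{2N+17}
\]
for every $1\le N\le 3$.

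So the claimed inequality \eqref{constant theta decay bound} holds, in fact with equality. There is no analytic content and no real obstacle.

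The only care needed is to use the value $C_{\ref{def:unipair}}=2^{15}$ fixed in Definition \ref{def:unipair}. One should not mistake it for one of the intermediate Fa\`a di Bruno sums from the proof of Lemma \ref{lem:cpair}. Those sums only serve to verify that the universal pair exists with this constant; they do not change the constant itself.
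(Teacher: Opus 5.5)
Your proposal is correct and matches the paper's proof. The paper likewise substitutes $C_{\ref{def:unipair}}=2^{15}$ into \eqref{auto:theta-decay-constant-definition}, and, as you note, the bound holds with equality.
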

\begin{proof}[Proof \auto]
Substitute $C_{\ref{def:unipair}}=2^{15}$ in \eqref{auto:theta-decay-constant-definition}.
\end{proof}

\begin{lemma}\label{lem:ft_phi3_eq}\uses{}\usesdefs{defn:window based bump functions}
\lean{Auto.fourierPhiThreeEq}
\leanok
For $k\in\mathbb{Z}$ and $\xi\in\mathbb{R}$,
\begin{equation}\label{auto:varphi-three-Fourier-transform} \widehat{\varphi_{3,k}}(\xi) = 2^k(\widehat{\mathbf{1}_{[0,1)}}-1)(2^{-k}\xi)\widehat{\phi_0}(2^{-k}\xi)\widehat{\theta}(\xi) \end{equation}
\end{lemma}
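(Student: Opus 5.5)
The plan is to unwind the definition \eqref{eqn:varphi3def} and the definition \eqref{eqn:varphi0kdef} of $\varphi_{0,k}$, and then compute Fourier transforms factor by factor. Two facts about the $L^1$-normalized dilation \eqref{auto:L1-normalized-dilation} do most of the work. First, $\widehat{\psi_{(t)}}(\xi)=\widehat{\psi}(t\xi)$, which is the same change of variables used in Proposition~\ref{Elementary Gaussian properties}(iv). Second, $(f*g)_{(t)}=f_{(t)}*g_{(t)}$. Together they give
\[
\varphi_{3,k}=2^k\,(\mathbf 1_{[0,1)}*\phi_0-\phi_0)_{(2^{-k})}*\theta,
\]
because $(\theta_{(2^k)})_{(2^{-k})}=\theta$.

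Next I take Fourier transforms and use $\widehat{f*g}=\widehat f\,\widehat g$. The function $\mathbf 1_{[0,1)}*\phi_0-\phi_0$ is the convolution of an $L^1$ function with a Schwartz function, minus a Schwartz function, so it lies in $L^1$. Its Fourier transform is $(\widehat{\mathbf 1_{[0,1)}}-1)\widehat{\phi_0}$. Composing with the dilation by $2^{-k}$ evaluates this at $2^{-k}\xi$, and multiplying by $2^k\widehat\theta(\xi)$ yields exactly \eqref{auto:varphi-three-Fourier-transform}.

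The only point that needs care is justification. All the convolutions are of $L^1$ functions, since $\theta$ and $\phi_0$ are Schwartz, so the convolution theorem applies pointwise in $\xi$. The dilation commutes with convolution by a direct substitution in the convolution integral. I expect the main obstacle to be bookkeeping rather than mathematics: the scaling convention is $\psi_{(t)}(x)=t^{-1}\psi(t^{-1}x)$ with $t=2^{-k}$, which has to be matched correctly against the factor $2^k$ in \eqref{eqn:varphi3def}, and these must not be confused with each other.
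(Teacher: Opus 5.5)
Your proposal is correct and follows the paper's proof. Like the paper, you rewrite $\varphi_{3,k}=2^k(\mathbf 1_{[0,1)}*\phi_0-\phi_0)_{(2^{-k})}*\theta$ using that dilation commutes with convolution and that $(\theta_{(2^k)})_{(2^{-k})}=\theta$, and then apply the convolution theorem together with $\widehat{\psi_{(t)}}(\xi)=\widehat\psi(t\xi)$; your integrability remarks only make explicit what the paper leaves implicit.
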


\begin{proof}
By definition,
\[\varphi_{3,k} = 2^k (\mathbf{1}_{[0,1)} * \phi_0 - \phi_0)_{(2^{-k}) }*\theta.\]
By the convolution theorem,
\[\widehat{\varphi_{3,k}}(\xi) = 2^k(\widehat{\mathbf{1}_{[0,1)}}-1)(2^{-k}\xi)\widehat{\phi_0}(2^{-k}\xi)\widehat{\theta}(\xi).\]
\end{proof}

\begin{lemma}\label{lem:abs_deriv_ft_phi3_le}\uses{lem:ft_phi3_eq}\usesdefs{def:cn-window,def:unipair,defn:window based bump functions}
\lean{Auto.absDerivFourierPhiThreeLe}
\leanok
Let $m\in[4)$, $k\in\Z$, and $|\xi|\le1$. Then
\begin{equation}\label{auto:varphi-three-Fourier-derivative-bound}
|\widehat{\varphi_{3,k}}^{(m)}(\xi)|
\le C_{\ref{lem:abs_deriv_ft_phi3_le},m},
\end{equation}
where for $m=0$,
\begin{equation}\label{auto:varphi-three-Fourier-derivative-constant}
C_{\ref{lem:abs_deriv_ft_phi3_le},0}=4,
\end{equation}
for $m=1$,
\begin{equation}\label{auto:varphi-three-Fourier-derivative-constant-one}
C_{\ref{lem:abs_deriv_ft_phi3_le},1}=28C_{\ref{def:unipair}}+4,
\end{equation}
for $m=2$,
\begin{equation}\label{auto:varphi-three-Fourier-derivative-constant-two}
C_{\ref{lem:abs_deriv_ft_phi3_le},2}=96C_{\ref{def:unipair}}^2+140C_{\ref{def:unipair}}+86,
\end{equation}
and, for $m=3$,
\begin{equation}\label{auto:varphi-three-Fourier-derivative-constant-three}
C_{\ref{lem:abs_deriv_ft_phi3_le},3}=69\cdot2^4C_{\ref{def:unipair}}^2+47\cdot2\cdot5^2C_{\ref{def:unipair}}+2^{11}.
\end{equation}
\end{lemma}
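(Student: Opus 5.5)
The plan is to work directly from the product formula in Lemma \ref{lem:ft_phi3_eq}. Write
\[
\widehat{\varphi_{3,k}}(\xi)=g_k(\xi)\,w_k(\xi)\,\widehat\theta(\xi),\qquad g_k(\xi)=2^k(\widehat{\mathbf 1_{[0,1)}}-1)(2^{-k}\xi),\quad w_k(\xi)=\widehat{\phi_0}(2^{-k}\xi),
\]
and apply the Leibniz rule for the $m$-th derivative, $m\le3$. Each factor gets its own bounds on $|\xi|\le1$, and the resulting trinomial sums are collected into the stated constants. The constant structure is consistent with this decomposition. The terms quadratic in $C_{\ref{def:unipair}}$ should come from derivatives falling on both $w_k$ and $\widehat\theta$. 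The linear terms should come from a derivative falling on one of them. The pure numerical terms should come from derivatives falling only on $g_k$.

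\emph{Bounds on $g_k$.} Set $E(\eta)=\widehat{\mathbf 1_{[0,1)}}(\eta)-1=\int_0^1(e^{-2\pi i x\eta}-1)\,dx$. Then $g_k^{(q)}(\xi)=2^{k-qk}E^{(q)}(2^{-k}\xi)$, and there are two regimes.
\begin{itemize}
\item For $k\ge 0$ I use Taylor/mean value bounds: $|E(\eta)|\le \pi|\eta|$ and $|E^{(q)}(\eta)|\le (2\pi)^q/(q+1)$. Together with $|\xi|\le1$ these give $|g_k|\le\pi$ and $|g_k^{(1)}|\le\pi$. For $q\ge1$ the factor $2^{k(1-q)}\le1$ gives $|g_k^{(q)}|\le(2\pi)^q/(q+1)$.
\item For $k<0$ the prefactor $2^k$ is small, but $2^{-qk}$ grows. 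Here I use instead that $w_k(\xi)=\widehat{\phi_0}(2^{-k}\xi)$ vanishes unless $|\xi|\le 2^k$. On that region $|2^{-k}\xi|\le1$, so every term carries at most a bounded $E^{(q)}$ on $|\eta|\le1$.
\end{itemize}
Derivatives of $w_k$ contribute $2^{-kq}\widehat{\phi_0}^{(q)}$. This is at most $2^{-kq}C_{\ref{def:unipair}}$, which is harmless for $k\ge0$.

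\emph{Combining the factors for $k<0$.} The main obstacle is the case $k<0$, where the powers of $2^{-k}$ from differentiating $g_k$ and $w_k$ must be cancelled. The plan is to exploit that $\widehat\theta(\xi)=\widehat{\phi_0}(\xi)-\widehat{\phi_0}(2\xi)$ vanishes for $|\xi|\le1/4$, while $w_k$ is supported in $|\xi|\le 2^k$. For $k\le-3$ the product is therefore identically zero. This leaves only $k\in\{-2,-1\}$ (and $k\ge0$). In these cases $2^{-kq}\le 4^q$, giving finite explicit factors.

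\emph{Bookkeeping.} The remaining work is routine. Bound $\|\widehat\theta^{(q)}\|_\infty\le(1+2^q)C_{\ref{def:unipair}}$ for $q\ge1$ and $\le1$ for $q=0$, as in Lemma \ref{lem:theta_decay}. Then take, for each $m$, the maximum over the cases $k\ge0$, $k=-1$, $k=-2$ of the Leibniz sum. Finally, round the resulting numbers up to the stated coefficients $4$, $28C+4$, $96C^2+140C+86$, and $69\cdot 2^4C^2+47\cdot2\cdot5^2C+2^{11}$.
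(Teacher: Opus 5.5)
Your proposal is correct and follows essentially the same route as the paper. The paper also applies the three-factor Leibniz rule to the formula from Lemma \ref{lem:ft_phi3_eq}, discards $k<-2$ because the supports of $\widehat{\phi_0}(2^{-k}\cdot)$ and $\widehat\theta$ are disjoint, and substitutes factor bounds; the only difference is that the paper uses the uniform bounds $2^{2(q-1)}(2\pi)^q/(q+1)$ for $g_k^{(q)}$ and $2^{2q}C_{\ref{def:unipair}}$ for $w_k^{(q)}$, valid for all $k\ge-2$, which reproduce the stated constants exactly, whereas your split into $k\ge0$, $k=-1$, $k=-2$ gives factor bounds dominated by these (for $k=-2$ the product even vanishes identically), so your Leibniz sums are bounded by the stated values.
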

\begin{proof}[Proof \auto]
By Lemma \ref{lem:ft_phi3_eq},
\[
\widehat{\varphi_{3,k}}(\xi)
=
2^k\bigl(\widehat{\mathbf1_{[0,1)}}(2^{-k}\xi)-1\bigr)
\widehat{\phi_0}(2^{-k}\xi)\widehat\theta(\xi).
\]
On the support of $\widehat\theta$ we have $|\xi|\ge1/4$. If $k<-2$, then $|2^{-k}\xi|>1$, so the middle factor vanishes. We may therefore assume $k\ge-2$.

For $q=0$,
\[
2^k|\widehat{\mathbf1_{[0,1)}}(2^{-k}\xi)-1|
\le\pi|\xi|<4.
\]
For $1\le q\le3$, differentiation under the defining integral for $\widehat{\mathbf1_{[0,1)}}$ gives
\[
\left|\tfrac{d^q}{d\xi^q}
\left(2^k\bigl(\widehat{\mathbf1_{[0,1)}}(2^{-k}\xi)-1\bigr)\right)\right|
\le2^{2(q-1)}\tfrac{(2\pi)^q}{q+1}.
\]
Using $\pi<4$, these bounds for $q=1,2,3$, respectively, are
\[
4,
\qquad
86,
\qquad
2048.
\]
Moreover, the zeroth derivatives of the second and third factors are at most $1$, while for $1\le q\le3$,
\[
\left|\tfrac{d^q}{d\xi^q}\widehat{\phi_0}(2^{-k}\xi)\right|
\le2^{2q}C_{\ref{def:unipair}},
\qquad
|\widehat\theta^{(q)}(\xi)|\le(1+2^q)C_{\ref{def:unipair}}.
\]
Substitution in the three-factor Leibniz rule gives, for $m=0,1,2,3$, respectively,
\[
4,
\qquad
28C_{\ref{def:unipair}}+4,
\qquad
96C_{\ref{def:unipair}}^2+140C_{\ref{def:unipair}}+86,
\]
and
\[
1104C_{\ref{def:unipair}}^2+2350C_{\ref{def:unipair}}+2048.
\]
These are the defining values.
\end{proof}

\begin{lemma}[constant $C_{\ref{lem:abs_deriv_ft_phi3_le},m}$ \auto]\label{constant phi three derivative}\uses{lem:abs_deriv_ft_phi3_le}\usesdefs{def:unipair}
\lean{Auto.constantPhiThreeDerivative}
\leanok
For $m=0$,
\begin{equation}\label{constant phi three derivative bound}
C_{\ref{lem:abs_deriv_ft_phi3_le},0}=2^2.
\end{equation}
For $m=1$,
\begin{equation}\label{auto:constant-phi-three-derivative-one}
C_{\ref{lem:abs_deriv_ft_phi3_le},1}<2^{20}.
\end{equation}
For $m=2$,
\begin{equation}\label{auto:constant-phi-three-derivative-two}
C_{\ref{lem:abs_deriv_ft_phi3_le},2}<2^{37}.
\end{equation}
For $m=3$,
\begin{equation}\label{auto:constant-phi-three-derivative-three}
C_{\ref{lem:abs_deriv_ft_phi3_le},3}<2^{41}.
\end{equation}
\end{lemma}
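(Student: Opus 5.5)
The plan is to substitute $C_{\ref{def:unipair}}=2^{15}$ into each of the four defining formulas of Lemma \ref{lem:abs_deriv_ft_phi3_le} and compare the result with a power of two. No further analysis is needed; the whole argument is arithmetic.

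For $m=0$ the defining value is $4=2^2$, so that case is immediate.

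For $m=1$ we get
\[
28\cdot2^{15}+4<2^5\cdot2^{15}=2^{20}.
\]

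For $m=2$ we get
\[
96\cdot2^{30}+140\cdot2^{15}+86 .
\]
Here $96\cdot2^{30}=3\cdot2^{35}$, and the remaining two terms are below $2^{23}$. Hence the total is less than $4\cdot2^{35}=2^{37}$.

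For $m=3$ the defining constant, as written in \eqref{auto:varphi-three-Fourier-derivative-constant-three}, is
\[
69\cdot2^4\cdot2^{30}+47\cdot50\cdot2^{15}+2^{11}=1104\cdot2^{30}+2350\cdot2^{15}+2^{11}.
\]
Since $1104<2^{11}$, the first term is below $2^{41}$. The margin that remains is $(2048-1104)\cdot2^{30}=944\cdot2^{30}$, which is far larger than $2350\cdot2^{15}+2^{11}<2^{27}$. So the total is less than $2^{41}$.

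The only place needing care is confirming that the product form $69\cdot2^4$ in \eqref{auto:varphi-three-Fourier-derivative-constant-three} agrees with the value $1104$ found in the proof of Lemma \ref{lem:abs_deriv_ft_phi3_le}, and likewise that $47\cdot2\cdot5^2=2350$. Both identities check out, so the stated bounds follow.
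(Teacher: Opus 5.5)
Your proposal is correct and takes the same approach as the paper: substitute $C_{\ref{def:unipair}}=2^{15}$ into the four defining values of Lemma \ref{lem:abs_deriv_ft_phi3_le} and compare each with the stated power of two. Your arithmetic checks out in every case, including $1104\cdot2^{30}+2350\cdot2^{15}+2^{11}<2^{41}$ and the consistency of $69\cdot2^4=1104$ and $47\cdot2\cdot5^2=2350$ with the values obtained in that lemma's proof.
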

\begin{proof}[Proof \auto]
Substitute $C_{\ref{def:unipair}}=2^{15}$ in the four defining values.
\end{proof}

\begin{lemma}\label{lem:abs_deriv_ft_Tphi3_le}\uses{lem:ft_deriv_mul,lem:abs_deriv_ft_phi3_le}\usesdefs{def:unipair,defn:window based bump functions}
\lean{Auto.absDerivFourierTPhiThreeLe}
\leanok
Let $m\in[3)$, $k\in\Z$, and $|\xi|\le1$. Then
\begin{equation}\label{auto:T-varphi-three-Fourier-derivative-bound}
|\widehat{T\varphi_{3,k}}^{(m)}(\xi)|
\le C_{\ref{lem:abs_deriv_ft_Tphi3_le},m},
\end{equation}
where
\begin{equation}\label{auto:T-varphi-three-Fourier-derivative-constant}
C_{\ref{lem:abs_deriv_ft_Tphi3_le},m}
=mC_{\ref{lem:abs_deriv_ft_phi3_le},m}
+C_{\ref{lem:abs_deriv_ft_phi3_le},m+1}.
\end{equation}
More explicitly,
\begin{equation}\label{auto:T-varphi-three-Fourier-derivative-constant-zero-exact}
C_{\ref{lem:abs_deriv_ft_Tphi3_le},0}=28C_{\ref{def:unipair}}+4,
\end{equation}
\begin{equation}\label{auto:T-varphi-three-Fourier-derivative-constant-one-exact}
C_{\ref{lem:abs_deriv_ft_Tphi3_le},1}=96C_{\ref{def:unipair}}^2+168C_{\ref{def:unipair}}+90,
\end{equation}
and
\begin{equation}\label{auto:T-varphi-three-Fourier-derivative-constant-two-exact}
C_{\ref{lem:abs_deriv_ft_Tphi3_le},2}=81\cdot2^4C_{\ref{def:unipair}}^2+263\cdot2\cdot5C_{\ref{def:unipair}}+555\cdot2^2.
\end{equation}
\end{lemma}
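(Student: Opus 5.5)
The proof is a direct application of Lemma \ref{lem:ft_deriv_mul} to $\phi=\varphi_{3,k}$, followed by the bounds of Lemma \ref{lem:abs_deriv_ft_phi3_le}. First we need $\varphi_{3,k}$ to be Schwartz. By Lemma \ref{lem:ft_phi3_eq}, $\widehat{\varphi_{3,k}}$ is a product of a smooth function, namely $(\widehat{\mathbf 1_{[0,1)}}-1)(2^{-k}\cdot)$, with the compactly supported smooth functions $\widehat{\phi_0}(2^{-k}\cdot)$ and $\widehat\theta$. Hence $\widehat{\varphi_{3,k}}$ is smooth and compactly supported, so $\varphi_{3,k}$ is Schwartz and Lemma \ref{lem:ft_deriv_mul} applies. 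It gives, for $m\in[3)$,
\[
\widehat{T\varphi_{3,k}}^{(m)}(\xi)=-\bigl(m\,\widehat{\varphi_{3,k}}^{(m)}(\xi)+\xi\,\widehat{\varphi_{3,k}}^{(m+1)}(\xi)\bigr).
\]

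Next, for $|\xi|\le 1$ the triangle inequality and $|\xi|\le1$ give
\[
|\widehat{T\varphi_{3,k}}^{(m)}(\xi)|\le m|\widehat{\varphi_{3,k}}^{(m)}(\xi)|+|\widehat{\varphi_{3,k}}^{(m+1)}(\xi)|.
\]
Since $m\in[3)$, we have $m+1\in[4)$, so Lemma \ref{lem:abs_deriv_ft_phi3_le} applies to both terms. This yields the bound $mC_{\ref{lem:abs_deriv_ft_phi3_le},m}+C_{\ref{lem:abs_deriv_ft_phi3_le},m+1}$, which is the defining constant \eqref{auto:T-varphi-three-Fourier-derivative-constant}.

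It remains to substitute the explicit values from \eqref{auto:varphi-three-Fourier-derivative-constant}--\eqref{auto:varphi-three-Fourier-derivative-constant-three}. For $m=0$ the constant is $C_{\ref{lem:abs_deriv_ft_phi3_le},1}=28C_{\ref{def:unipair}}+4$. For $m=1$ it is
\[
(28C_{\ref{def:unipair}}+4)+(96C_{\ref{def:unipair}}^2+140C_{\ref{def:unipair}}+86)=96C_{\ref{def:unipair}}^2+168C_{\ref{def:unipair}}+90.
\]
For $m=2$ it is
\[
2(96C_{\ref{def:unipair}}^2+140C_{\ref{def:unipair}}+86)+1104C_{\ref{def:unipair}}^2+2350C_{\ref{def:unipair}}+2048
=1296C_{\ref{def:unipair}}^2+2630C_{\ref{def:unipair}}+2220.
\]
Finally, $1296=81\cdot2^4$, $2630=263\cdot2\cdot5$ and $2220=555\cdot2^2$, which matches the stated forms.

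There is no real obstacle. The only points needing care are confirming that $\varphi_{3,k}$ is Schwartz, so that Lemma \ref{lem:ft_deriv_mul} applies, and checking the arithmetic in the last step.
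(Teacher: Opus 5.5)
Your proposal is correct and follows the paper's proof: apply Lemma \ref{lem:ft_deriv_mul} to $\varphi_{3,k}$, then bound both terms using $|\xi|\le1$ and Lemma \ref{lem:abs_deriv_ft_phi3_le}, then substitute the explicit constants. Your arithmetic checks out. The check that $\varphi_{3,k}$ is Schwartz, via the smooth compactly supported Fourier transform from Lemma \ref{lem:ft_phi3_eq}, is a detail the paper leaves implicit.
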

\begin{proof}[Proof \auto]
Lemma \ref{lem:ft_deriv_mul} and Lemma \ref{lem:abs_deriv_ft_phi3_le} give
\[
|\widehat{T\varphi_{3,k}}^{(m)}(\xi)|
\le
mC_{\ref{lem:abs_deriv_ft_phi3_le},m}
+C_{\ref{lem:abs_deriv_ft_phi3_le},m+1}.
\]
Substitution of the defining values from Lemma \ref{lem:abs_deriv_ft_phi3_le} gives the three stated polynomials.
\end{proof}

\begin{lemma}[constant $C_{\ref{lem:abs_deriv_ft_Tphi3_le},m}$ \auto]\label{constant T phi three derivative}\uses{lem:abs_deriv_ft_Tphi3_le}\usesdefs{def:unipair}
\lean{Auto.constantTPhiThreeDerivative}
\leanok
For $m=0$,
\begin{equation}\label{constant T phi three derivative bound}
C_{\ref{lem:abs_deriv_ft_Tphi3_le},0}<2^{20}.
\end{equation}
For $m=1$,
\begin{equation}\label{auto:constant-T-phi-three-derivative-one}
C_{\ref{lem:abs_deriv_ft_Tphi3_le},1}<2^{37}.
\end{equation}
For $m=2$,
\begin{equation}\label{auto:constant-T-phi-three-derivative-two}
C_{\ref{lem:abs_deriv_ft_Tphi3_le},2}<2^{41}.
\end{equation}
\end{lemma}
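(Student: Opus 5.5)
The plan is to substitute $C_{\ref{def:unipair}}=2^{15}$ into the three exact expressions \eqref{auto:T-varphi-three-Fourier-derivative-constant-zero-exact}, \eqref{auto:T-varphi-three-Fourier-derivative-constant-one-exact}, \eqref{auto:T-varphi-three-Fourier-derivative-constant-two-exact} from Lemma \ref{lem:abs_deriv_ft_Tphi3_le} and bound each polynomial by the dominant term times a small slack factor. No analytic input is needed beyond those exact values.

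For $m=0$, write $28\cdot2^{15}+4<32\cdot2^{15}=2^{20}$, since $4<4\cdot 2^{15}$.

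For $m=1$, the leading term is $96\cdot2^{30}=3\cdot2^{35}$. The lower-order terms satisfy $168\cdot2^{15}+90<2^{23}$. Hence the total is below $3\cdot2^{35}+2^{35}=2^{37}$.

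For $m=2$, the leading term is $81\cdot2^{4}\cdot2^{30}=81\cdot2^{34}$. Next, $263\cdot10\cdot2^{15}=2630\cdot2^{15}<2^{27}$, and $555\cdot4=2220<2^{12}$. So the total is less than $81\cdot2^{34}+2^{28}<82\cdot2^{34}<128\cdot2^{34}=2^{41}$.

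The only point requiring care is reading the exact coefficients correctly, in particular $263\cdot2\cdot5$ and $555\cdot2^2$. Beyond that, each step is elementary integer arithmetic with ample slack, so I expect no real obstacle.
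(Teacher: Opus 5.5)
Your proposal is correct and is the same argument as the paper, which just substitutes $C_{\ref{def:unipair}}=2^{15}$ into the three exact values from Lemma~\ref{lem:abs_deriv_ft_Tphi3_le}. Your arithmetic checks out, and the exact coefficients you use agree with the defining formula $mC_{\ref{lem:abs_deriv_ft_phi3_le},m}+C_{\ref{lem:abs_deriv_ft_phi3_le},m+1}$.
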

\begin{proof}[Proof \auto]
Substitute $C_{\ref{def:unipair}}=2^{15}$ in the three exact values from Lemma \ref{lem:abs_deriv_ft_Tphi3_le}.
\end{proof}

\begin{lemma}\label{lem:theta_prim}\uses{lem:smoothdecay2}\usesdefs{bracket bump,def:cn-window,def:unipair,defn:window based bump functions}
\lean{Auto.thetaPrimitive}
\leanok
Let $\widetilde\theta$ be the primitive of $\theta$ fixed above. Then
\begin{equation}\label{thetaprim_supp}
\operatorname{supp}(\widehat{\widetilde\theta})
\subset[-1,-2^{-2}]\cup[2^{-2},1]
\subset\operatorname{Ann}_1(1,2^2),
\end{equation}
\begin{equation}\label{Tthetaprim_supp}
\operatorname{supp}(\widehat{T\widetilde\theta})
\subset[-1,-2^{-2}]\cup[2^{-2},1]
\subset\operatorname{Ann}_1(1,2^2).
\end{equation}
For $2\le N<N_{\ref{def:unipair}}$,
\begin{equation}\label{thetaprim_decay}
|\widetilde\theta(u)|
\le C_{\ref{lem:theta_prim},N}\langle u\rangle^N,
\end{equation}
\begin{equation}\label{Thetaprim_decay}
|T\widetilde\theta(u)|
\le C_{\ref{lem:theta_prim},N}\langle u\rangle^N,
\end{equation}
where
\begin{equation}\label{auto:theta-primitive-constant-definition}
C_{\ref{lem:theta_prim},N}
=2^{5N+6}C_{\ref{def:unipair}}.
\end{equation}
\end{lemma}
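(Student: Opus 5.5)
We first identify the Fourier transforms. Since $\widetilde\theta=\mathbf 1_{[0,\infty)}*\theta$ and $\widehat\theta$ vanishes near the origin (Lemma \ref{lem:bumpbasic}(i)), we get on $\R\setminus\{0\}$
\[
\widehat{\widetilde\theta}(\xi)=\frac{\widehat\theta(\xi)}{2\pi i\xi}.
\]
This is the regularized transform of the Heaviside function, and the principal value part is harmless because $\widehat\theta$ vanishes on $[-1/4,1/4]$. We would justify this by checking that $\widetilde\theta$ is Schwartz. Indeed $\int\theta=\widehat\theta(0)=0$, so $\widetilde\theta(u)=-\int_u^\infty\theta$ decays rapidly at both ends. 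Alternatively, one can note directly that $\zeta(\xi)=\widehat\theta(\xi)/(2\pi i\xi)$ is smooth, compactly supported and has derivative $\widehat\theta$ after inversion. So $\widetilde\theta=\mathcal F^{-1}\zeta$, and \eqref{thetaprim_supp} follows from Lemma \ref{lem:bumpbasic}(i).

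For $T\widetilde\theta$, Lemma \ref{lem:ft_deriv_mul} with $m=0$ gives
\[
\widehat{T\widetilde\theta}(\xi)=-\xi\,\widehat{\widetilde\theta}\,'(\xi),
\]
which has the same support, proving \eqref{Tthetaprim_supp}. The inclusions into $\operatorname{Ann}_1(1,2^2)$ hold by definition.

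For the decay bounds, we apply Lemma \ref{lem:smoothdecay2} with $N=2$ (the range $2\le N<3$ forces $N=2$). The support has measure at most $2$, so it suffices to bound $\|\zeta\|_\infty$ and $(2\pi)^{-N}\|\zeta^{(N)}\|_\infty$, and likewise for $\eta(\xi)=-\xi\zeta'(\xi)$. On $1/4\le|\xi|\le1$ we write $\zeta=\widehat\theta\cdot g$ with $g(\xi)=(2\pi i\xi)^{-1}$. The derivatives satisfy
\[
|g^{(q)}(\xi)|=\frac{q!}{2\pi}\,|\xi|^{-q-1}\le \frac{q!\,4^{q+1}}{2\pi}.
\]
As in the proof of Lemma \ref{lem:theta_decay}, $\|\widehat\theta^{(m)}\|_\infty\le 2^{m+1}C_{\ref{def:unipair}}$ for $m\le 3$. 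The Leibniz rule then bounds $\zeta^{(q)}$ for $q\le N$, and also for $q\le N+1$ as needed for $\eta$. Since
\[
\eta^{(N)}=-N\zeta^{(N)}-\xi\zeta^{(N+1)}
\]
and $|\xi|\le1$, we use $N+1\le N_{\ref{def:unipair}}=3$ derivatives of $\widehat\theta$; this is exactly why $N<N_{\ref{def:unipair}}$ is required.

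The main obstacle is the bookkeeping of constants: the bound must come out below $2^{5N+6}C_{\ref{def:unipair}}/(2^N\cdot 2)$ after the factor $C_{\ref{lem:smoothdecay2},N}=2^N$ and the support measure $2$ are absorbed. The Leibniz sum at $N=2$ contains terms like $\binom{3}{q}\,2^{4-q}C\cdot q!\,4^{q+1}/(2\pi)$, which are of size at most $2^{10}C$. Combined with $(2\pi)^{-N}$ and the extra factor $N+1$ for $\eta$, we expect the total to fit under $2^{16}C_{\ref{def:unipair}}$. We would verify this by direct substitution, using $\pi>3$ and $C_{\ref{def:unipair}}\ge1$ to absorb lower-order terms.
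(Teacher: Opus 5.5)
Your proposal is correct and follows the paper's route. You identify $\widehat{\widetilde\theta}=\widehat\theta/(2\pi i\xi)$ from $\int\theta=0$, and read off the support of $\widehat{T\widetilde\theta}=-\xi\,\widehat{\widetilde\theta}\,'$ via Lemma \ref{lem:ft_deriv_mul}. You then bound the Fourier derivatives on $1/4\le|\xi|\le1$ by the Leibniz rule, using up to three derivatives of $\widehat\theta$, which is why $N<N_{\ref{def:unipair}}$. Finally you apply Lemma \ref{lem:smoothdecay2} at $N=2$. The constant check you defer closes with room to spare: even the paper's cruder bounds give a total of order $2^{11}C_{\ref{def:unipair}}$, well below $2^{16}C_{\ref{def:unipair}}$.
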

\begin{proof}[Proof \auto]
Since $\int\theta=\widehat\theta(0)=0$ and $\widetilde\theta'=\theta$,
\[
\widehat{\widetilde\theta}(\xi)=(2\pi i\xi)^{-1}\widehat\theta(\xi).
\]
This proves \eqref{thetaprim_supp}. Also
\[
\widehat{T\widetilde\theta}(\xi)
=-\xi\widehat{\widetilde\theta}'(\xi),
\]
which proves \eqref{Tthetaprim_supp}.

For $m\le3$, the product rule gives
\[
\widehat{\widetilde\theta}^{(m)}(\xi)
=(2\pi i)^{-1}
\sum_{q=0}^m\binom{m}{q}(-1)^q q!\xi^{-q-1}
\widehat\theta^{(m-q)}(\xi).
\]
On the support, $|\xi|^{-q-1}\le2^{2q+2}$, while
$\|\widehat\theta^{(m-q)}\|_\infty\le2^{m-q+1}C_{\ref{def:unipair}}$. For $m=0,1,2,3$, direct substitution in this finite sum gives upper bounds
\[
2^3C_{\ref{def:unipair}},
\qquad
2^6C_{\ref{def:unipair}},
\qquad
2^9C_{\ref{def:unipair}},
\qquad
2^{13}C_{\ref{def:unipair}},
\]
respectively. Furthermore,
\[
(\widehat{T\widetilde\theta})^{(m)}
=-\xi\widehat{\widetilde\theta}^{(m+1)}
-m\widehat{\widetilde\theta}^{(m)}.
\]
Lemma \ref{lem:smoothdecay2}, the support bound, and the preceding finite estimates give
\[
|\widetilde\theta(u)|+|T\widetilde\theta(u)|
\le2^{5N+6}C_{\ref{def:unipair}}\langle u\rangle^N
\]
for $2\le N<N_{\ref{def:unipair}}$. This proves \eqref{thetaprim_decay} and \eqref{Thetaprim_decay}.
\end{proof}

\begin{lemma}[constant $C_{\ref{lem:theta_prim},N}$ \auto]\label{constant theta primitive}\uses{lem:theta_prim}\usesdefs{def:unipair}
\lean{Auto.constantThetaPrimitive}
\leanok
For $2\le N<N_{\ref{def:unipair}}$,
\begin{equation}\label{constant theta primitive bound}
C_{\ref{lem:theta_prim},N}\le2^{5N+21}.
\end{equation}
In particular,
\begin{equation}\label{auto:constant-theta-primitive-two}
C_{\ref{lem:theta_prim},2}\le2^{31}.
\end{equation}
\end{lemma}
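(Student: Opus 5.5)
The bound is obtained by substituting the universal window constant into the defining formula \eqref{auto:theta-primitive-constant-definition} of Lemma \ref{lem:theta_prim}. By Definition \ref{def:unipair}, $C_{\ref{def:unipair}}=2^{15}$. Inserting this into
\[
C_{\ref{lem:theta_prim},N}=2^{5N+6}C_{\ref{def:unipair}}
\]
gives
\[
C_{\ref{lem:theta_prim},N}=2^{5N+6}\cdot 2^{15}=2^{5N+21}
\]
for every $N$ with $2\le N<N_{\ref{def:unipair}}$. This is an equality, so the asserted inequality \eqref{constant theta primitive bound} holds.

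Since $N_{\ref{def:unipair}}=3$, the admissible range consists of the single value $N=2$. In that case the formula gives $C_{\ref{lem:theta_prim},2}=2^{10+21}=2^{31}$, which is exactly \eqref{auto:constant-theta-primitive-two}.

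The argument has no real obstacle. The only point to check is that the constant in Lemma \ref{lem:theta_prim} is defined by the stated closed formula, not merely bounded by it, so that direct substitution is legitimate.
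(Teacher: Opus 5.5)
Your proposal is correct and is the same as the paper's proof, which simply substitutes $C_{\ref{def:unipair}}=2^{15}$ into the defining formula $C_{\ref{lem:theta_prim},N}=2^{5N+6}C_{\ref{def:unipair}}$. Your extra observation that $N_{\ref{def:unipair}}=3$ leaves only $N=2$ in the admissible range is accurate, though the proof does not need it.
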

\begin{proof}[Proof \auto]
Substitute $C_{\ref{def:unipair}}=2^{15}$ in \eqref{auto:theta-primitive-constant-definition}.
\end{proof}

\begin{lemma}\label{lem:phi4_supp}\uses{lem:theta_prim,lem:ft_deriv_mul}\usesdefs{defn:window based bump functions} We have
\lean{Auto.phiFourSupport}
\leanok
(a) $\textup{supp}(\widehat{\varphi_{4,k}})
\subset  [-1,-2^{-2}]\cup [2^{-2},1]$\\

(b) $\textup{supp}(\widehat{T\varphi_{4,k}})
\subset  [-1,-2^{-2}]\cup [2^{-2},1]$
\end{lemma}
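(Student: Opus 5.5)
The plan is to compute $\widehat{\varphi_{4,k}}$ explicitly in terms of $\widehat{\widetilde\theta}$ and read off the support from Lemma \ref{lem:theta_prim}. By definition, $\varphi_{4,k}(u)=2^k\widetilde\theta(u-2^{-k})$ is a constant multiple of a translate of $\widetilde\theta$. Translation multiplies the Fourier transform by a unimodular exponential, so
\[
\widehat{\varphi_{4,k}}(\xi)=2^k e^{-2\pi i 2^{-k}\xi}\,\widehat{\widetilde\theta}(\xi).
\]
Hence $\operatorname{supp}\widehat{\varphi_{4,k}}=\operatorname{supp}\widehat{\widetilde\theta}$, and (a) follows directly from \eqref{thetaprim_supp}.

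For (b), I will apply Lemma \ref{lem:ft_deriv_mul} with $m=0$ to $\phi=\varphi_{4,k}$. This lemma also applies here because $\widetilde\theta$ is Schwartz: its Fourier transform $(2\pi i\xi)^{-1}\widehat\theta(\xi)$ is smooth and compactly supported away from the origin. The lemma gives
\[
\widehat{T\varphi_{4,k}}(\xi)=-\xi\,\widehat{\varphi_{4,k}}'(\xi).
\]
The derivative of $\widehat{\varphi_{4,k}}$ vanishes on the open set where $\widehat{\varphi_{4,k}}$ vanishes identically. By (a), that set contains the complement of $[-1,-2^{-2}]\cup[2^{-2},1]$. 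Therefore $\widehat{T\varphi_{4,k}}$ also vanishes off this closed set, and since the set is closed, the support of $\widehat{T\varphi_{4,k}}$ is contained in it.

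The only point that needs care is the smoothness and Schwartz property of the translated primitive, which is needed to apply Lemma \ref{lem:ft_deriv_mul}. This is handled by the explicit form of $\widehat{\widetilde\theta}$ from the proof of Lemma \ref{lem:theta_prim}, so I expect no real obstacle.
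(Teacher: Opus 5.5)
Your proposal is correct and follows the paper's proof. For (a) you use the phase-factor identity $\widehat{\varphi_{4,k}}(\xi)=2^ke^{-2\pi i2^{-k}\xi}\widehat{\widetilde\theta}(\xi)$ together with \eqref{thetaprim_supp}, and for (b) you use Lemma~\ref{lem:ft_deriv_mul} with $m=0$ plus the fact that the derivative vanishes wherever the function vanishes on an open set; your open-complement argument spells out the lower bound $|\xi|\ge 2^{-2}$ that the paper's wording (``non-zero only if $|\xi|\le 1$'') leaves implicit.
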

\begin{proof}
(a) By Lemma \ref{lem:theta_prim},
\[\textup{supp}(\widehat{\widetilde{\theta}}) \subset [-1,-2^{-2}]\cup [2^{-2},1]\]
The claim follows since
\[\widehat{\varphi_{4,k}}(\xi)=2^ke^{-2\pi i2^{-k}\xi}\widehat{\widetilde\theta}(\xi).  \]

(b) By (a), $\widehat{\varphi_{4,k}}$ is supported in
$ [-1,-2^{-2}]\cup [2^{-2},1] $.
  By Lemma \ref{lem:ft_deriv_mul} applied with $m=0$,
  \[\widehat{T\varphi_{4,k}}(\xi) = -\xi(\widehat{\varphi_{4,k}})'(\xi) \]
The support of $(\widehat{\varphi_{4,k}})'$ is contained in the support of $\widehat{\varphi_{4,k}}$, and thus the last display is non-zero only if  $|\xi|\le 1$. The claim follows.
\end{proof}

\subsubsection{Miscellany}
Let $\mathbf{f}=(f_0,\dots,f_{n-1})$ be real-valued Schwartz functions satisfying \eqref{main twisted normalization}.

\begin{lemma}[Rescaling]\label{lem:rescaling}\uses{}\usesdefs{A_def}Let $\phi\in L^1(\mathbb{R})$.
\lean{Auto.rescaling}
\leanok
For all $r,q,\lambda>0$,
\begin{equation}\label{auto:variation-rescaling-invariance} \|A_t({\phi_{(\lambda)}})\|_{V_{r,J}(t\in (0,\infty); L^q)} =  \|A_t({\phi})\|_{V_{r,J}(t\in (0,\infty); L^q)}. \end{equation}
\end{lemma}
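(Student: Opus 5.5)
The plan is a pure change-of-variables argument: show that rescaling $\phi$ by $\lambda$ just reparametrizes time, $t\mapsto \lambda t$, which is a bijection of $(0,\infty)$.

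\emph{Step 1: the dilation identity.} Unwinding the notation in \eqref{A_def} with \eqref{auto:L1-normalized-dilation} for $n=1$, we have $(\phi_{(\lambda)})_{(t)}(s)=t^{-1}\lambda^{-1}\phi((\lambda t)^{-1}s)=\phi_{(\lambda t)}(s)$ for every $s\in\R$. Hence
\[
A_t(\phi_{(\lambda)},\mathbf f)=A(\phi_{(\lambda t)},\mathbf f)=A_{\lambda t}(\phi,\mathbf f)
\]
pointwise on $\R^n$, wherever the defining integral exists. Integrability is the same on both sides, because the two integrands coincide. For $\phi\in L^1$ and the bounded Schwartz functions $f_i$ the integral converges absolutely anyway. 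The $L^q$ norms of all increments therefore agree:
\[
\|A_{t_{j+1}}(\phi_{(\lambda)})-A_{t_j}(\phi_{(\lambda)})\|_q=\|A_{\lambda t_{j+1}}(\phi)-A_{\lambda t_j}(\phi)\|_q .
\]

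\emph{Step 2: comparing the suprema in \eqref{auto:finite-variation-seminorm}.} Multiplication by $\lambda>0$ is an order-preserving bijection of $(0,\infty)$. So for each $M\le J$, strictly increasing sequences $t_0<\dots<t_M$ in $(0,\infty)$ correspond bijectively to strictly increasing sequences $\lambda t_0<\dots<\lambda t_M$. By Step 1, corresponding sequences give identical sums $\sum_{j\in[M)}\|\cdot\|_q^r$. Taking suprema over sequences and then over $M\le J$ yields equality of the two $V_{r,J}$ seminorms, including the case where both are $+\infty$.

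\emph{Main obstacle.} There is no genuine analytic difficulty here. The only delicate point is bookkeeping: we must check that the one-dimensional dilation convention $\chi_{(t)}(x)=t^{-1}\chi(t^{-1}x)$ composes multiplicatively, $(\chi_{(\lambda)})_{(t)}=\chi_{(\lambda t)}$. We must also state precisely that the "whenever integrable" clause in the definition of $A$ is preserved under the reparametrization. Both follow directly from the pointwise identity of integrands in Step 1.
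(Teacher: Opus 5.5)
Your proposal is correct and follows the same route as the paper, which proves this by substituting $\lambda t_j \mapsto t_j$ in the supremum defining the variation seminorm. Your dilation identity $(\phi_{(\lambda)})_{(t)}=\phi_{(\lambda t)}$, giving $A_t(\phi_{(\lambda)})=A_{\lambda t}(\phi)$, is exactly the bookkeeping that makes that substitution legitimate.
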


\begin{proof}
By replacing $t_j \lambda\mapsto t_j$ in the supremum defining the variation seminorm.
\end{proof}

\begin{lemma}\label{lem:norm_A_sum_le_sum}\uses{}\usesdefs{A_def,bracket bump}
\lean{Auto.normASumLeSum}
\leanok
If $\phi=\sum_{j=1}^\infty \phi_j$ holds in $L^2$ (with $L^2$ functions $\phi, \phi_j$), then
\begin{equation}\label{auto:twisted-average-series-triangle} \|A(\phi, \mathbf{f})\|_{L^2(\mathbb{R}^n)} \le \sum_{j=1}^\infty \|A({\phi_j}, \mathbf{f})\|_{L^2(\mathbb{R}^n)}. \end{equation}
\end{lemma}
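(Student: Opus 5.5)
The plan is to view $\phi\mapsto A(\phi,\mathbf f)$ as a linear map defined on $L^2(\R)$ (or a dense subspace), prove it is bounded from $L^2(\R)$ into $L^2(\R^n)$, and then combine linearity on finite partial sums with continuity to pass to the limit.

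\textbf{Step 1: the $L^2$ bound.} Since the $f_i$ are Schwartz functions, they are bounded and square integrable. By Cauchy--Schwarz in $s$,
\[
|A(\phi,\mathbf f)(x)|^2\le \|\phi\|_2^2\int_\R\prod_{i\in[n)}|f_i(x+se_i)|^2\,ds .
\]
Now estimate $|f_i|\le\|f_i\|_\infty$ for every $i\ge 1$ and integrate in $x$. By Fubini and translation invariance,
\[
\int_{\R^n}\int_\R |f_0(x+se_0)|^2\,ds\,dx .
\]
This is infinite, so the $s$-integral must not be decoupled from $x$ so crudely. Instead, keep two factors, $f_0$ and $f_1$. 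Substitute $x'=x+se_0$. Then $f_1(x+se_1)=f_1(x'-se_0+se_1)$, so
\[
\int_\R\!\int_{\R^n}|f_0(x')|^2|f_1(x'+s(e_1-e_0))|^2\,dx'\,ds\le \|f_0\|_{L^2}^2\,\sup_{x'}\int_\R|f_1(x'+s(e_1-e_0))|^2\,ds .
\]
The last supremum is finite because $f_1$ is Schwartz and therefore decays like $\langle\cdot\rangle^{N}$ along lines. This gives $\|A(\phi,\mathbf f)\|_2\le C_{\mathbf f}\|\phi\|_2$ with $C_{\mathbf f}<\infty$ depending only on $\mathbf f$, using $n\ge2$. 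The integrand defining $A$ is measurable and integrable for almost every $x$, which we also get from this computation together with Tonelli.

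\textbf{Step 2: passing to the limit.} Linearity in $\phi$ gives
\[
A\Big(\sum_{j\le J}\phi_j,\mathbf f\Big)=\sum_{j\le J}A(\phi_j,\mathbf f)
\]
almost everywhere. Step 1 then yields
\[
\|A(\phi,\mathbf f)\|_2\le \Big\|A\Big(\phi-\sum_{j\le J}\phi_j,\mathbf f\Big)\Big\|_2+\sum_{j\le J}\|A(\phi_j,\mathbf f)\|_2 .
\]
The first term is at most $C_{\mathbf f}\|\phi-\sum_{j\le J}\phi_j\|_2\to0$. Letting $J\to\infty$ bounds the remaining sum by the full series, which proves \eqref{auto:twisted-average-series-triangle}; the series may be $+\infty$, in which case the inequality is trivial.

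\textbf{Main obstacle.} The only delicate point is Step 1: producing a finite constant $C_{\mathbf f}$ and justifying measurability and almost-everywhere integrability so that $A(\phi,\mathbf f)$ makes sense for general $L^2$ data $\phi$. The constant's value is irrelevant, since it disappears in the limit.
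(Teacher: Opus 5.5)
Your proof is correct and follows the paper's route. Cauchy--Schwarz in $s$ reduces the operator bound to finiteness of $\int_{\R^{n+1}}\prod_i|f_i(x+se_i)|^2\,d(x,s)$, which you and the paper both get by keeping only $f_0$ and $f_1$ (using $n\ge2$), and the triangle inequality with the remainder $\phi-\sum_{j\le J}\phi_j$ finishes the argument. The only cosmetic difference is how finiteness is checked: you use the substitution $x'=x+se_0$ and a uniform line-integral bound for $|f_1|^2$, while the paper uses explicit bracket majorants and the substitution $y=x_0+s$.
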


\begin{proof}[Proof \auto]
Let $E_N=\phi-\sum_{j=1}^N\phi_j$. By the triangle inequality,
\[
\|A(\phi,\mathbf f)\|_2
\le
\sum_{j=1}^N\|A(\phi_j,\mathbf f)\|_2
+\|A(E_N,\mathbf f)\|_2.
\]
Cauchy--Schwarz in $s$ gives
\[
\|A(E_N,\mathbf f)\|_2
\le
\left(\int_{\R^{n+1}}
\left|\prod_{i\in[n)}f_i(x+se_i)\right|^2\,d(x,s)\right)^{1/2}
\|E_N\|_2.
\]
The first factor is finite. Indeed, write $x=(x_0,x')$. Since the $f_i$ are Schwartz and $n\ge2$, there is a finite $C$ such that
\[
|f_1(x+se_1)|\le C\langle x_0\rangle^2,
\qquad
|f_0(x+se_0)|\le C\langle x_0+s\rangle^2\langle x'\rangle^{2n},
\]
and $|f_i|\le C$ for $i\ge2$. After the change of variables $y=x_0+s$, the resulting majorant is integrable in $x_0,y,x'$. Since $\|E_N\|_2\to0$, letting $N\to\infty$ proves the claim.
\end{proof}

\begin{lemma}\label{lem:form_pos}\uses{M to K,prism BL inequality}\usesdefs{auto:Wiener-space-definition,normalized function tuples,prism Brascamp--Lieb,auto:prism-form-definition}
\lean{Auto.formPos}
\leanok
(i) Let $\Psi\in W_0(\R^2)$ be real-valued and assume that
\begin{equation}\label{eqn:hyp_form_pos}
\int_{\R^2}g(u_0)g(u_1)\Psi(u)\,du\ge0
\end{equation}
for every bounded measurable $g:\R\to\R$. Then, for every $\F\in\mathfrak F$,
\begin{equation}\label{auto:distributional-prism-positivity}
\Lambda_1(\Psi)(\F)\ge0.
\end{equation}

(ii) In particular, for every real-valued $\psi\in W_0(\R)$ and every $\F\in\mathfrak F$,
\begin{equation}\label{form_pos_psipsi}
\Lambda_1(\psi^{\otimes2})(\F)\ge0.
\end{equation}
\end{lemma}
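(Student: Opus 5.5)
The plan is to write $\Lambda_1(\Psi)(\F)$ explicitly and recognize it as an average of expressions of the form in hypothesis \eqref{eqn:hyp_form_pos}. By Definition \ref{auto:prism-form-definition} and the remark after Proposition \ref{M to K}, $K_1(\Psi)(z)=\Psi(z_0+z_1,z_1)$. Inserting this into Definition \ref{prism Brascamp--Lieb} with $k=1$ gives
\[
\Lambda_1(\Psi)(\F)=\int_{\R^2}\int_{\R^n}\Psi\bigl(y^1-y^0+\Sigma(y^0)+\Sigma(x),\,\Sigma(y^0)+\Sigma(x)\bigr)\prod_{h\in[2),\,i\in[n)}F_i(y^h,x_{[n)\setminus i})\,dx\,dy,
\]
where $x=x_{[0,n)}$ and $y=(y^0,y^1)\in\R^2$. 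I will then substitute $y^h\mapsto y^h-\Sigma(x)$ for both $h\in[2)$ (Jacobian one). This turns the argument of $\Psi$ into $(y^1,y^0)$, up to the precise bookkeeping, which I expect to produce either $(y^1,y^0)$ or $(y^0,y^1)$.

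The key structural observation is that after this substitution the integrand factors as $\Psi(y^{1},y^{0})\,g_x(y^0)g_x(y^1)$ with
\[
g_x(r)=\prod_{i\in[n)}F_i\bigl(r-\Sigma(x),x_{[n)\setminus i}\bigr).
\]
Fubini then gives $\Lambda_1(\Psi)(\F)=\int_{\R^n}\int_{\R^2}g_x(u_0)g_x(u_1)\Psi(u_1,u_0)\,du\,dx$. For each fixed $x$ the function $g_x$ is bounded, since the $F_i$ are Schwartz, and measurable. Applying the hypothesis to $g_x$ makes the inner integral nonnegative; the swap of $u_0,u_1$ is harmless because the hypothesis is symmetric under renaming variables. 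Integrating a nonnegative function in $x$ yields (i).

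The main obstacle is justifying Fubini, that is, absolute integrability of the full integrand. I would bound $|\Psi|\le\|\Psi\|_\infty$ (finite by Proposition \ref{P:lp-embedding}) and use the Brascamp--Lieb/H\"older argument of Proposition \ref{prism BL inequality}. Alternatively, I would note that the integrand is in $W_0$ by Proposition \ref{W_0 Brascamp Lieb}, so it is integrable, and integrability of $|\,\cdot\,|$ follows from the same construction applied to $|\Psi|$ and $|F_i|$. With the nonnegative integrand $|\Psi|\prod|F_i|$, Tonelli applies directly.

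For (ii), $\Psi=\psi\otimes\psi$ lies in $W_0(\R^2)$ by Proposition \ref{tensor Wiener}. Its hypothesis integral equals $\bigl(\int g\psi\bigr)^2\ge0$, which is finite since $\psi\in L^1$ by Proposition \ref{P:lp-embedding} and $g$ is bounded. So (ii) follows from (i).
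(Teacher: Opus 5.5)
Your proposal is correct and follows the paper's proof essentially line by line. The shared steps are the explicit formula $K_1(\Psi)(z)=\Psi(z_0+z_1,z_1)$, the translation $y^h\mapsto y^h-\Sigma(x)$ yielding $\Psi(y^1,y^0)g_x(y^0)g_x(y^1)$ with the same $g_x$, Fubini with the harmless interchange of $y^0$ and $y^1$, and $\bigl(\int g\psi\bigr)^2\ge0$ for (ii). For absolute convergence the paper simply cites Propositions \ref{M to K} and \ref{prism BL inequality}, whose proof bounds the integral of absolute values by $\|K_1(\Psi)\|_1\le\|\Psi\|_1$. That is cleaner than your $\|\Psi\|_\infty$ variant, though your $W_0$ argument via Proposition \ref{W_0 Brascamp Lieb} also works.
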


\begin{proof}[Proof \auto]
By Definitions \ref{prism Brascamp--Lieb} and \ref{auto:prism-form-definition}, Proposition \ref{M to K}, and the explicit formula \eqref{auto:K-one-explicit-formula},
\begin{equation}\label{auto:Lambda-one-explicit-formula-for-positivity}
\Lambda_1(\Psi)(\F)
=
\int_{\R^n}\int_{\R^2}
\Psi(y^1,y^0)
\prod_{h\in[2),\,i\in[n)}
F_i(y^h-\Sigma(x),x_{[n)\setminus i})
\,dy\,dx.
\end{equation}
The integral is absolutely convergent by Propositions \ref{M to K} and \ref{prism BL inequality}. For $x\in\R^n$, set
\begin{equation}\label{auto:Lambda-one-positivity-test-function}
g_x(y)=\prod_{i\in[n)}F_i(y-\Sigma(x),x_{[n)\setminus i}).
\end{equation}
This is a bounded measurable real-valued function. Fubini's theorem and the change of variables interchanging $y^0$ and $y^1$ show that \eqref{auto:Lambda-one-explicit-formula-for-positivity} equals
\begin{equation}\label{auto:Lambda-one-positive-fiber-integral}
\int_{\R^n}\int_{\R^2}
\Psi(y^0,y^1)g_x(y^0)g_x(y^1)\,dy\,dx,
\end{equation}
which is nonnegative by \eqref{eqn:hyp_form_pos}. This proves (i).

For (ii), take $\Psi=\psi^{\otimes2}$. Then, for every bounded measurable real-valued $g$,
\begin{equation}\label{auto:tensor-square-kernel-positivity}
\int_{\R^2}g(u_0)g(u_1)\psi(u_0)\psi(u_1)\,du
=
\left(\int_\R g(u)\psi(u)\,du\right)^2
\ge0.
\end{equation}
Part (i) gives the result.
\end{proof}

\begin{lemma} \label{lem:ftc_ATphi}\uses{}\usesdefs{A_def}
\lean{Auto.ftcATphi}
\leanok
Let $x\in \R^n$, $\phi$ a Schwartz function,  $t\in [2^k, 2^{k+1}]$, $k\in \Z$. Let $a(t)=A_t(\phi)(x)$. Then
\begin{equation}\label{auto:average-logarithmic-derivative-identity} \|ta'(t)\|_{L^2(t\in [2^k, 2^{k+1}],\tfrac{dt}{t})}^2 = \int_{1}^2 |A_{2^k t}(T\phi)(x)|^2\,\tfrac{dt}{t}. \end{equation}
\end{lemma}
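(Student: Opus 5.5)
We prove the identity pointwise in $t$ by computing $ta'(t)$ explicitly, and then integrate against $dt/t$.

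\emph{Step 1: write out $a$.} By definition,
\[
a(t)=A_t(\phi)(x)=\int_\R t^{-1}\phi(t^{-1}s)\,P(s)\,ds,\qquad P(s)=\prod_{i\in[n)}f_i(x+se_i).
\]
Substituting $s=t\sigma$ gives
\[
a(t)=\int_\R \phi(\sigma)P(t\sigma)\,d\sigma.
\]

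\emph{Step 2: differentiate in $t$.} Differentiating under the integral sign is justified by dominated convergence, since $P$ is smooth and bounded with bounded derivative and $\phi$ is Schwartz. We get
\[
ta'(t)=\int_\R \phi(\sigma)\,t\sigma\,P'(t\sigma)\,d\sigma .
\]
We then move the derivative off $P$ and onto $\phi$. Note that $t\sigma P'(t\sigma)=\sigma\frac{d}{d\sigma}[P(t\sigma)]$. Integrating by parts in $\sigma$ gives
\[
ta'(t)=-\int_\R \frac{d}{d\sigma}\bigl(\sigma\phi(\sigma)\bigr)P(t\sigma)\,d\sigma=-\int_\R (T\phi)(\sigma)P(t\sigma)\,d\sigma .
\]
The boundary terms vanish because $\sigma\phi(\sigma)$ is Schwartz and $P$ is bounded. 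Undoing the substitution yields
\[
ta'(t)=-A_t(T\phi)(x).
\]

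\emph{Step 3: square and integrate.} Since $|ta'(t)|^2=|A_t(T\phi)(x)|^2$, we integrate over $t\in[2^k,2^{k+1}]$ with respect to $dt/t$. The substitution $t=2^k\tau$ preserves the measure $dt/t$ and maps the interval to $\tau\in[1,2]$. This gives exactly the right-hand side of the claimed identity.

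The only delicate point is the justification in Step 2, namely differentiation under the integral and the vanishing of the boundary terms. Both follow from the Schwartz decay of $\phi$ and $T\phi$ together with the boundedness of $P$ and $P'$; the latter holds for Schwartz $f_i$, and in any case $x$ is fixed.
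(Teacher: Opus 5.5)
Your proof is correct and matches the paper's in substance: both establish $ta'(t)=-A_t(T\phi)(x)$ and then substitute $t\mapsto 2^k t$, which preserves $dt/t$, to reach the right-hand side. The only difference is that the paper gets this identity directly from the pointwise formula $t\frac{\partial}{\partial t}\bigl(\phi_{(t)}(s)\bigr)=-(T\phi)_{(t)}(s)$, which needs no integration by parts and no differentiability of the $f_i$, whereas your detour through $P(t\sigma)$ requires $P'$ to be bounded (harmless here, since the $f_i$ are Schwartz).
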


\begin{proof}
Note that
\begin{equation}\label{shortlongftc_reduction_pf3}
t\tfrac{\partial}{\partial t}(\phi_{(t)}(s)) = -(T\phi)_{(t)}(s),
\end{equation}
where $T\phi(s) = (s \phi(s))'$.
    By this and \eqref{A_def},
\[ ta'(t) = t\tfrac{\partial}{\partial t}\int_{\mathbb{R}} \Big(\prod_{i\in [n)} f_i(x+se_i)\Big)\phi_{(t)}(s)\,ds = -A_t(T\phi)(x). \]
Integrating in $t$ over $[2^k,2^{k+1}]$ and changing variables $t \mapsto 2^k t$ in the $t$-integration on the right-hand side,
the claim follows.
\end{proof}

\begin{lemma}
\label{lem:Phij_prop}\uses{lem:Phipos_v2}\usesdefs{def:cn-window,def:unipair}
\lean{Auto.phiJProperties}
\leanok
Let $\Psi:\mathbb{R}^2\to\mathbb{R}$ be a non-zero Schwartz function such that for all $u=(u_0,u_1)\in\R^2$,
\begin{equation}\label{auto:Phij-symmetry-assumption}
\Psi(u_0,u_1)=\Psi(u_1,u_0).
\end{equation}
Assume also that for every bounded measurable $g:\R\to\R$,
\begin{equation}\label{auto:Phij-positivity-assumption}
\int_{\R^2}g(u_0)g(u_1)\Psi(u)\,du\ge0,
\end{equation}
and that
\begin{equation}\label{auto:Phij-Fourier-support-assumption}
\mathrm{supp}(\widehat{\Psi}) \subset \mathrm{Ann}_1(1, 2^3)^2
\end{equation}
and, for $a\in [3)$ and $\xi \in \R$,
\begin{equation}\label{Phij_psi_der}
\Big|\tfrac{d^a}{d\xi^{a}} \widehat{\Psi}(\xi,-\xi)\Big| \leq 1.
\end{equation}
Let $(\phi_0,\phi_1)$ be a universal pair. There exists a real even smooth function $\psi:\R\to \R$ such that
\begin{enumerate}
\item For all $\xi\in\R$,
\begin{equation}\label{Phij_psi_diag}
\widehat{\psi}(\xi)^2 = 2 (\widehat{\phi_{0,-5}}-\widehat{\phi_{1,5}})(\xi)^2 - \widehat{\Psi}(\xi,-\xi).
\end{equation}
\item $\widehat{\psi}$ is supported in $\textup{Ann}_1(1,2^6)$.
\item For all $a\in [3)$ and $\xi\in \R$,
\begin{equation}\label{psi_decay}
|\widehat{\psi}^{(a)}(\xi)|\le C_{\ref{lem:Phij_prop}},
\end{equation}
where
\begin{equation}\label{auto:Phij-constant-definition}
C_{\ref{lem:Phij_prop}}=2^{12}C_{\ref{def:unipair}}.
\end{equation}
\end{enumerate}
\end{lemma}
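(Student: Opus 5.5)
The plan is to define $\widehat{\psi}$ directly as the nonnegative square root of the right-hand side of \eqref{Phij_psi_diag}, after showing that this right-hand side is nonnegative, smooth, even, and supported in $\mathrm{Ann}_1(1,2^6)$. Write $m(\xi)=\widehat{\phi_{0,-5}}(\xi)-\widehat{\phi_{1,5}}(\xi)=\widehat{\phi_0}(2^{-5}\xi)-\widehat{\phi_1}(2^{5}\xi)$ and $D(\xi)=\widehat\Psi(\xi,-\xi)$.

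\textbf{Where $m=1$.} For $2^{-5}\le|\xi|\le 2^{4}$ we have $\widehat{\phi_0}(2^{-5}\xi)=1$ and $\widehat{\phi_1}(2^5\xi)=0$, so $m=1$. The support hypothesis \eqref{auto:Phij-Fourier-support-assumption} confines $D$ to $2^{-3}\le|\xi|\le 2^3$, strictly inside this region. By Lemma \ref{lem:Phipos_v2} we have $D\ge0$. Moreover $|D|\le 1$ by \eqref{Phij_psi_der} with $a=0$. Hence on the plateau the quantity $2m^2-D=2-D$ lies in $[1,2]$, bounded away from zero.

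\textbf{Off the plateau.} There $D=0$ and $2m^2-D=2m^2$. The right-hand side is therefore everywhere equal to $2m^2-D$, which is nonnegative. Its support lies where $m\neq0$, namely $2^{-6}\le|\xi|\le 2^{5}$, which is inside $\mathrm{Ann}_1(1,2^6)$. This gives (2).

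\textbf{Definition of $\widehat\psi$ and smoothness.} Define
\[
\widehat\psi=\sqrt2\,m\quad\text{off the plateau region }2^{-3}\le|\xi|\le2^3,
\qquad
\widehat\psi=\sqrt{2-D}\quad\text{on } 2^{-5}\le|\xi|\le 2^4,
\]
which agree on the overlap because $D=0$ there. Smoothness follows from two facts: $\sqrt{\cdot}$ is smooth on $[1,2]$, and $m$ is smooth; using $\sqrt2 m$ rather than $\sqrt{2m^2}$ avoids any square root near zero. The function is real and even because windows have even Fourier transforms (in the universal pair construction $\widehat\Phi$ is even) and $D$ is even by the symmetry of $\Psi$. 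Setting $\psi=\mathcal F^{-1}\widehat\psi$ then gives a real, even, smooth (indeed Schwartz) function satisfying (1).

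\textbf{Derivative bounds (3).} This is the main, though routine, obstacle. On the $m$-part, $|(\sqrt2 m)^{(a)}|\le\sqrt2(2^{-5a}+2^{5a})C_{\ref{def:unipair}}\le\sqrt2\cdot 2^{11}C_{\ref{def:unipair}}$ for $a\le2$. On the square-root part, the chain rule applied to $g(x)=\sqrt{2-x}$ uses $|g'|\le\tfrac12$ and $|g''|\le\tfrac14$ on $[0,1]$, together with $|D'|,|D''|\le1$, to give bounds of order $1$. Both are below $2^{12}C_{\ref{def:unipair}}$.

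The one point to check carefully is that the derivatives of $\widehat{\phi_1}(2^5\cdot)$ bring a factor $2^{10}$ at second order. This stays within the stated constant $2^{12}C_{\ref{def:unipair}}$.
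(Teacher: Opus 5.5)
Your proposal is correct and follows essentially the same route as the paper: show the right-hand side $2m^2-D$ equals $2m^2$ off $\mathrm{Ann}_1(1,2^3)$ and lies in $[1,2]$ on the plateau where $m=1$, glue $\sqrt2\,m$ with $(2-D)^{1/2}$ over two open sets covering $\R$, and bound derivatives by $(2^{-5a}+2^{5a})C_{\ref{def:unipair}}$ and the chain rule. One small correction: the lemma is about an arbitrary universal pair, so evenness of $\widehat{\phi_0},\widehat{\phi_1}$ should not come from the particular construction in Lemma \ref{lem:cpair}; it follows directly from Definition \ref{def:cn-window}, since a real-valued $\phi$ with real $\widehat\phi$ satisfies $\widehat\phi(-\xi)=\overline{\widehat\phi(\xi)}=\widehat\phi(\xi)$.
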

\begin{proof}[Proof \auto]
Put
\[
\vartheta(\xi)=\widehat{\phi_0}(2^{-5}\xi)-\widehat{\phi_1}(2^5\xi).
\]
The function $\vartheta$ is nonnegative, supported in $\mathrm{Ann}_1(1,2^6)$, and equal to $1$ on $\mathrm{Ann}_1(1,2^4)$. By Lemma \ref{lem:Phipos_v2} and \eqref{Phij_psi_der},
\[
0\le\widehat\Psi(\xi,-\xi)\le1.
\]
Since $\widehat\Psi(\xi,-\xi)$ is supported in $\mathrm{Ann}_1(1,2^3)$, it follows that
\begin{equation}\label{eqn:lemPhij_proppf}
0\le\widehat\Psi(\xi,-\xi)\le\vartheta(\xi)^2.
\end{equation}
Define
\begin{equation}\label{def_sqrtpsi}
\widehat\psi(\xi)=\bigl(2\vartheta(\xi)^2-\widehat\Psi(\xi,-\xi)\bigr)^{1/2}.
\end{equation}
On the complement of $\mathrm{Ann}_1(1,2^3)$ this is $2^{1/2}\vartheta$, while on the interior of $\mathrm{Ann}_1(1,2^4)$ the expression under the square root is $2-\widehat\Psi(\xi,-\xi)\ge1$. These two open sets cover $\R$, so $\widehat\psi$ is smooth. It is real and even, and its support is contained in $\mathrm{Ann}_1(1,2^6)$. This proves the first two assertions.

For $a\in[3)$,
\[
|\vartheta^{(a)}(\xi)|
\le (2^{-5a}+2^{5a})C_{\ref{def:unipair}}.
\]
Thus, outside $\mathrm{Ann}_1(1,2^3)$,
\[
|\widehat\psi(\xi)|\le2,
\qquad
|\widehat\psi'(\xi)|\le2^7C_{\ref{def:unipair}},
\qquad
|\widehat\psi''(\xi)|\le2^{12}C_{\ref{def:unipair}}.
\]
On the interior of $\mathrm{Ann}_1(1,2^4)$, set $f(\xi)=2-\widehat\Psi(\xi,-\xi)$. Then $1\le f\le2$ and $|f'|,|f''|\le1$, whence
\[
|f^{1/2}|\le2,
\qquad
|(f^{1/2})'|\le\tfrac12,
\qquad
|(f^{1/2})''|\le\tfrac34.
\]
The derivative estimate follows.
\end{proof}

\begin{lemma}[constant $C_{\ref{lem:Phij_prop}}$ \auto]\label{constant Phij proposition}\uses{lem:Phij_prop}\usesdefs{def:unipair}
\lean{Auto.constantPhiJProposition}
\leanok
\begin{equation}\label{constant Phij proposition bound}
C_{\ref{lem:Phij_prop}}=2^{27}.
\end{equation}
\end{lemma}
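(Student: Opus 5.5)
The statement is a pure constant evaluation. I would unfold the defining formula \eqref{auto:Phij-constant-definition}, which sets $C_{\ref{lem:Phij_prop}}=2^{12}C_{\ref{def:unipair}}$. I would then substitute the value $C_{\ref{def:unipair}}=2^{15}$ fixed in Definition \ref{def:unipair}, equation \eqref{auto:universal-window-constant}. The identity $2^{12}\cdot 2^{15}=2^{27}$ then gives \eqref{constant Phij proposition bound} exactly, as an equality rather than a bound.

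No estimate from the proof of Lemma \ref{lem:Phij_prop} is needed here. This lemma only unravels the recursive definition of the constant, in the same way as Lemmas \ref{constant theta decay} and \ref{constant theta primitive}.

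There is no real obstacle. The only point to check is that the universal pair constant is used exactly as defined, and not a derived quantity such as $C_{\ref{lem:theta_decay},N}$. Once that is confirmed, the exponent arithmetic is immediate.
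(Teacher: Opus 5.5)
Your proposal is correct and matches the paper's proof: substitute $C_{\ref{def:unipair}}=2^{15}$ into the defining formula $C_{\ref{lem:Phij_prop}}=2^{12}C_{\ref{def:unipair}}$ to obtain exactly $2^{27}$.
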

\begin{proof}[Proof \auto]
This follows from \eqref{auto:Phij-constant-definition} and $C_{\ref{def:unipair}}=2^{15}$.
\end{proof}

\begin{lemma}[Bootstrapping]
\label{lem:bootstrap}\uses{}\usesdefs{A_def}
\lean{Auto.bootstrap}
\leanok
Let $\phi:\R\to \R$ be a Schwartz function.
Then
    \begin{equation}\label{auto:dyadic-variation-bootstrap}\|A_{t}(\phi)\|^2_{V_{2,J}(t\in 2^{\Z}; L^2)} \le  4 \cdot \sup \sum_{\ell\in [J)} \|A_{2^{k_{\ell}}}(\phi)\|^2_2     \end{equation}
    where the supremum is over all sequences of integers $k_0<\cdots <k_{J-1}$.
\end{lemma}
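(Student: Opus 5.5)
The plan is to reduce this to the classical fact that long variation of a sequence along a dyadic set is controlled by square functions of its jumps. Fix integers $k_0<\dots<k_M$ with $M\le J$, and set $a(k)=A_{2^k}(\phi)\in L^2$. We must bound $\sum_{\ell\in[M)}\|a(k_{\ell+1})-a(k_\ell)\|_2^2$.

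The first step is the triangle inequality in the form $\|x-y\|^2\le 2\|x\|^2+2\|y\|^2$, which gives
\[
\sum_{\ell\in[M)}\|a(k_{\ell+1})-a(k_\ell)\|_2^2\le 2\sum_{\ell\in[M)}\|a(k_{\ell+1})\|_2^2+2\sum_{\ell\in[M)}\|a(k_\ell)\|_2^2 .
\]
Each of the two sums on the right has exactly $M\le J$ terms, indexed by a strictly increasing sequence of integers: $k_1<\dots<k_M$ and $k_0<\dots<k_{M-1}$, respectively.

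The second step is to pad each sum to $J$ terms. Whenever $M<J$, adjoin additional integers larger than all the existing ones. Since the extra summands are nonnegative, this only increases each sum, and each sum is then bounded by
\[
\sup_{k_0<\dots<k_{J-1}}\sum_{\ell\in[J)}\|A_{2^{k_\ell}}(\phi)\|_2^2 .
\]
The case $M=0$ is trivial because the sum is empty. The two sums together give the factor $2+2=4$. Taking the supremum over $M\le J$ and over the sequences $(k_\ell)$ then yields \eqref{auto:dyadic-variation-bootstrap}.

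The only possible obstacle is bookkeeping: matching the definition of $V_{2,J}(2^\Z;L^2)$, whose points are $t_j=2^{k_j}$ with $0\le M\le J$, to the supremum on the right, which requires exactly $J$ strictly increasing integers. The padding handles this. We should also note that each $A_{2^k}(\phi)$ lies in $L^2$ (for instance by the Cauchy--Schwarz argument used in the proof of Lemma \ref{lem:norm_A_sum_le_sum}), although the inequality remains valid in $[0,\infty]$ even without this.
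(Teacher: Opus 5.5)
Your argument is correct and gives the same constant $4$, but by a more direct route than the paper.

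The paper fixes $k_0<\cdots<k_J$ and writes each $\|A_{2^{k_{\ell+1}}}(\phi)-A_{2^{k_\ell}}(\phi)\|_2^2$ as the pairing of the difference with itself. It then distributes and applies Cauchy--Schwarz in $x$ and in $\ell$. Let $D$ be the jump sum, and let $S_1,S_2$ be the sums of $\|A_{2^{k_{\ell+1}}}(\phi)\|_2^2$ and $\|A_{2^{k_\ell}}(\phi)\|_2^2$ over $\ell$. The paper obtains $D\le D^{1/2}(S_1^{1/2}+S_2^{1/2})$. It turns this into $V^2\le 2V(\sup)^{1/2}$ for the dyadic variation norm $V$, divides by $V$, and squares. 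In effect this is the triangle inequality for $L^2$-valued sequences in $\ell^2$, namely $D^{1/2}\le S_1^{1/2}+S_2^{1/2}$. That is marginally sharper before squaring, but it collapses to the same $4\sup$ once $S_1,S_2$ are replaced by the supremum.

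Your termwise bound $\|x-y\|^2\le 2\|x\|^2+2\|y\|^2$ has three advantages:
\begin{itemize}
\item it skips the inner-product expansion;
\item it avoids dividing by $V$, which tacitly requires $0<V<\infty$;
\item your padding step explicitly handles sequences with fewer than $J$ jumps, which the paper leaves implicit.
\end{itemize}
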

{\em Note:} This lemma will only be applied to mean zero bump functions.

\begin{proof}
Let $k_0<\cdots <k_{J}$ be integers. For $\ell \in [J)$ we write
\begin{equation}
    \label{expandrhs}
    \|A_{2^{k_{\ell+1}}}(\phi)-A_{2^{k_{\ell}}}(\phi)\|_2^2 = \|A(\phi_{(2^{k_{\ell+1}})}-\phi_{(2^{k_\ell})})\|_2^2
    \end{equation}
 \[= \int_{\R^n} A(\phi_{(2^{k_{\ell+1}})}-\phi_{(2^{k_{\ell}})})(x)A(\phi_{(2^{k_{\ell+1}})}-\phi_{(2^{k_{\ell}})})(x)  dx \]

    where  $k_0<\ldots < k_J$ is any increasing sequence of integers.

By the distributive law,
  \[\sum_{\ell\in [J)}  \|A_{2^{k_{\ell+1}}}(\phi)-A_{2^{k_{\ell}}}(\phi)\|_2^2 \]
\[ = \sum_{\ell\in [J)} \int_{\R^n} A(\phi_{(2^{k_{\ell+1}})})(x)A(\phi_{(2^{k_{\ell+1}})}-\phi_{(2^{k_{\ell}})})(x)  dx \]
\[ - \sum_{\ell\in [J)}  \int_{\R^n} A(\phi_{(2^{k_{\ell}})})(x)A(\phi_{(2^{k_{\ell+1}})}-\phi_{(2^{k_{\ell}})})(x)  dx \]
By the Cauchy-Schwarz inequality in $x$ and in the summation,  this is
\[ \le  \Big ( \sum_{\ell\in [J)}  \|A(\phi_{(2^{k_{\ell+1}})})\|^2_2 \Big)^{1/2} \Big (\sum_{\ell\in [J)} \|A(\phi_{(2^{k_{\ell+1}})}-\phi_{(2^{k_{\ell}})})\|^2_2\Big)^{1/2}\]
\[+ \Big ( \sum_{\ell\in [J)} \|A(\phi_{(2^{k_{\ell}})})\|_2^2\Big)^{1/2} \Big(\sum_{\ell\in [J)} \|A(\phi_{(2^{k_{\ell+1}})}-\phi_{(2^{k_{\ell}})})\|^2_2\Big)^{1/2} \]
And thus, with $V= \|A_{t}(\phi)\|_{V_{2,J}(t\in 2^\mathbb{Z}; L^2)}$,
\[V^2 \le  V\cdot  \Big( \Big(\sum_{\ell\in [J)} \|A(\phi_{(2^{k_{\ell+1}})})\|^2_2\Big)^{1/2} + \Big( \sum_{\ell\in [J)} \|A(\phi_{(2^{k_{\ell}})})\|^2_2 \Big)^{1/2} \Big)   \]
We may assume $V\neq 0$  as otherwise the claim holds trivially. Dividing by $V$ and estimating the terms on the   right-hand side by the supremum over all increasing sequences of integers gives
\[V \le  2 \cdot  \Big( \sup \sum_{\ell\in [J)} \|A(\phi_{(2^{k_{\ell}})})\|^2_2\Big)^{1/2}     \]
Squaring both sides gives the desired estimate.
\end{proof}

\subsection{On-diagonal from main argument}\label{sec:off-diag}

Throughout this section, let
$\nu=1$ if $n>2$ and $\nu=2$ if $n=2$.

\begin{lemma}[$\rho$ kernels - reduction variant] \label{lem:rho-kernels-reduction}\uses{mean four scale Gaussian kernel,standard bump properties,four scale Gaussian kernel}\usesdefs{gaussian,bracket bump,auto:Wiener-space-definition,multiplicatively spaced monotone sequences,standard bump}  Let $a\in A$. For $h\in\N$ with $h\ge1$ and $j\in\Z$, define
\lean{Auto.rhoKernelsReduction}
\leanok
\begin{equation}\label{auto:rho-lambda-scales}
\lambda_{h,j}^-=2^{h-1}a(j),
\end{equation}
and, for $j\in\Z$, define
\begin{equation}\label{auto:rho-lambda-zero-scale}
\lambda_{0,j}^-=a(j-1).
\end{equation}
For $h\in\N$ and $j\in\Z$, define
\begin{equation}\label{auto:rho-lambda-upper-scale}
\lambda_{h,j}^+=2^ha(j).
\end{equation}
For $h\in\N$ and $j\in\Z$, also define
\begin{equation}\label{auto:rho-mu-scales}
\mu_{h,j}^-=2^ha(j-1),
\qquad
\mu_{h,j}^+=2^ha(j).
\end{equation}
Let \(\rho_{h,j}\) be the function whose Fourier transform is
\begin{equation}\label{auto:rho-kernel-Fourier-transform}
\widehat{\rho_{h,j}}(\zeta)
=
\bigl(
\widehat\Phi(\lambda_{h,j}^-\zeta)
-
\widehat\Phi(\lambda_{h,j}^+\zeta)
\bigr)
\bigl(
\g(\mu_{h,j}^-\zeta)
-
\g(\mu_{h,j}^+\zeta)
\bigr)^{-\nu/2}.
\end{equation}

Then \(\rho_{h,j}\in W_0(\mathbb R)\). Moreover, if \(h\geq1\), then, with $\lambda=2^ha(j)$,
\begin{equation}
    \label{E:increase-data-rho-mean-estimate}
    |\rho_{h,j}(x+p)-\rho_{h,j}(x)|
    \leq
    C_{\ref{lem:rho-kernels-reduction}}
    \min(1,\lambda^{-1}|p|)
    \left(
    \langle x+p\rangle_{(\lambda)}^2
    +
    \langle x\rangle_{(\lambda)}^2
    \right),
\end{equation}
and if $h\in \N$, then
\begin{equation}
    \label{E:increase-data-rho-zero-estimate}
    |\rho_{h,j}(x)|
    \leq
    C_{\ref{lem:rho-kernels-reduction}}
    \big(
    \langle x\rangle_{(\lambda_{h,j}^-)}^2
    +
    \langle x\rangle_{(\lambda_{h,j}^+)}^2
    \big),
\end{equation}
where $C_{\ref{lem:rho-kernels-reduction}} = 2^{21} (C_{\ref{mean four scale Gaussian kernel},2}+C_{\ref{four scale Gaussian kernel},2})$.
\end{lemma}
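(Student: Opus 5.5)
The plan is to reduce the lemma to Propositions \ref{four scale Gaussian kernel} and \ref{mean four scale Gaussian kernel}, with $\phi=\Phi$ the standard bump and $N=2$. By Proposition \ref{standard bump properties}, $\widehat\Phi$ is supported in $[-1,1]$ and equals $1$ on $[-1/2,1/2]$. Its derivatives of order at most $2$ are bounded by $c=\tilde C_{\ref{standard bump properties},0,2}=2^{18}$; the order-zero and order-one bounds are dominated by the order-two constant, which we check from the formula. The exponent in \eqref{auto:rho-kernel-Fourier-transform} is $-\nu/2\in\{-1/2,-1\}\subset[-1,0)$, so the parameter $\nu$ in those propositions is $-\nu/2$ here.

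Next we verify the scale condition \eqref{E:assumption-on-mu-lambda}, namely $2\mu_-\le2\lambda_-\le\lambda_+\le\mu_+$, with $\mu_\pm=\mu^\pm_{h,j}$ and $\lambda_\pm=\lambda^\pm_{h,j}$.
\begin{itemize}
\item For $h\ge1$: $2\mu^-=2^{h+1}a(j-1)\le 2^ha(j)=2\lambda^-$ by the spacing $2a(j-1)\le a(j)$. Moreover $2\lambda^-=\lambda^+=\mu^+$.
\item For $h=0$: $2\mu^-=2a(j-1)=2\lambda^-\le a(j)=\lambda^+=\mu^+$.
\end{itemize}
With this, Proposition \ref{four scale Gaussian kernel} gives $\rho_{h,j}\in W_0(\R)$ and \eqref{E:increase-data-rho-zero-estimate} with constant $2^{18}C_{\ref{four scale Gaussian kernel},2}$, which is at most $C_{\ref{lem:rho-kernels-reduction}}$.

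For $h\ge1$ we additionally have $\lambda_+=2\lambda_-$, so Proposition \ref{mean four scale Gaussian kernel} applies with $\lambda=\lambda_+=2^ha(j)$. It yields the bound $2^{18}C_{\ref{mean four scale Gaussian kernel},2}\min(1,2\pi\lambda^{-1}|p|)$. Since $\min(1,2\pi x)\le 2\pi\min(1,x)\le 2^3\min(1,x)$, the constant becomes $2^{21}C_{\ref{mean four scale Gaussian kernel},2}$, which is at most $C_{\ref{lem:rho-kernels-reduction}}$. This gives \eqref{E:increase-data-rho-mean-estimate}.

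The main (minor) obstacle is bookkeeping. First, we must confirm that the max-over-$m\le2$ hypothesis on $\widehat\Phi$ holds with $c=2^{18}$. Second, the Gaussian-power expression in \eqref{auto:rho-kernel-Fourier-transform} must match \eqref{E:definition-rho} exactly, including the sign convention for $\nu$ and the fact that $\widehat\Phi$ is even and real. Both are direct checks.
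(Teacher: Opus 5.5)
Your proposal is correct and follows essentially the same route as the paper: verify \eqref{E:assumption-on-mu-lambda} separately for $h\ge1$ and $h=0$, then apply Propositions \ref{four scale Gaussian kernel} and \ref{mean four scale Gaussian kernel} with $\phi=\Phi$, $N=2$, $c=2^{18}$ (from Proposition \ref{standard bump properties}) and exponent $-\nu/2\in[-1,0)$. Your explicit step $\min(1,2\pi x)\le 2^3\min(1,x)$ accounts for the factor $2^{21}=2^{18}\cdot2^3$ in the constant, which the paper leaves implicit.
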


\begin{proof}
If \(h\geq1\), the parameters satisfy
\[
2\mu_{h,j}^-
\leq
2\lambda_{h,j}^-
=
\lambda_{h,j}^+
=
\mu_{h,j}^+,
\]
because \(2a(j-1)\leq a(j)\).
If $h=0$, we have
\[
2\mu_{0,j}^-
=
2\lambda_{0,j}^-
\leq
\lambda_{0,j}^+
=
\mu_{0,j}^+.
\]

Now, if $h\ge 1$, we  use Proposition
\ref{mean four scale Gaussian kernel}   with $N=2$, which  gives
\[|\rho_{h,j}(x+p)-\rho_{h,j}(x)| \le 2^{18} C_{\ref{mean four scale Gaussian kernel},2} \min(1,2\pi \lambda^{-1}|p|) (\langle x+p\rangle^2_{(\lambda)}+
\langle x\rangle^2_{(\lambda)}),
\]
which gives \eqref{E:increase-data-rho-mean-estimate}.
Here we also used that by Proposition~\ref{standard bump properties},
\[
\max_{0\leq m\leq2}
\|(\widehat\Phi)^{(m)}\|_\infty
\leq2^{18}.
\]
If $h\in \N$, we use  Proposition~\ref{four scale Gaussian kernel}   with $N=2$, which   gives $\rho_{h,j}\in W_0(\R)$ and
\[ |\rho_{h,j}(x)| \le 2^{18}C_{\ref{four scale Gaussian kernel},2} (\langle x\rangle^2_{(\lambda_{h,j}^-)} + \langle x\rangle^2_{(\lambda_{h,j}^+)}). \]
\end{proof}

\begin{lemma}[constant $C_{\ref{lem:rho-kernels-reduction}}$ \auto]\label{constant rho kernels reduction}\uses{lem:rho-kernels-reduction,constant mean four scale Gaussian kernel,constant four scale Gaussian kernel,mean four scale Gaussian kernel,four scale Gaussian kernel}\usesdefs{}
\lean{Auto.constantRhoKernelsReduction}
\leanok
\begin{equation}\label{constant rho kernels reduction bound}
C_{\ref{lem:rho-kernels-reduction}}
<2^{66}.
\end{equation}
\end{lemma}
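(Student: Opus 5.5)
The bound follows by substituting the already established numerical bounds into the defining formula
\[
C_{\ref{lem:rho-kernels-reduction}} = 2^{21}\bigl(C_{\ref{mean four scale Gaussian kernel},2}+C_{\ref{four scale Gaussian kernel},2}\bigr)
\]
from Lemma \ref{lem:rho-kernels-reduction}. No new analysis is needed.

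First, I invoke Lemma \ref{constant mean four scale Gaussian kernel}. By \eqref{auto:constant-mean-four-scale-Gaussian-kernel-two} it gives $C_{\ref{mean four scale Gaussian kernel},2}<2^{45}$. I would rather use the sharper value $C_{\ref{mean four scale Gaussian kernel},2}<20397963318112<\tfrac35 2^{45}$ recorded in its proof, and quoted again in \eqref{auto:mean-four-scale-small-constant-two-for-Gaussian-domination}.

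Second, Lemma \ref{constant four scale Gaussian kernel} gives $C_{\ref{four scale Gaussian kernel},2}<2^{40}$, and more precisely $C_{\ref{four scale Gaussian kernel},2}<637436354528<\tfrac35 2^{40}$.

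Adding the two bounds gives
\[
C_{\ref{mean four scale Gaussian kernel},2}+C_{\ref{four scale Gaussian kernel},2}<2^{45}+2^{40}<2^{46}.
\]
Multiplying by $2^{21}$ then yields
\[
C_{\ref{lem:rho-kernels-reduction}}<2^{67}.
\]
This is one power of two short of the claim, so the cruder bounds alone do not suffice. With the sharper bounds the sum is below $\tfrac35 2^{45}+\tfrac35 2^{40}<\tfrac{2}{3}2^{45}<2^{45}$. Hence
\[
C_{\ref{lem:rho-kernels-reduction}}<2^{21}\cdot 2^{45}=2^{66},
\]
as claimed.

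The only point requiring care is this slack: one must use the refined inequality $C_{\ref{mean four scale Gaussian kernel},2}<\tfrac35 2^{45}$ rather than the rounded bound $2^{45}$. If that refinement were unavailable, I would compute $20397963318112+637436354528=21035399672640$ directly and compare it with $2^{45}=35184372088832$.
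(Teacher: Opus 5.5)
Your proposal is correct and matches the paper's proof. Both substitute the sharpened values $C_{\ref{mean four scale Gaussian kernel},2}<20397963318112$ and $C_{\ref{four scale Gaussian kernel},2}<637436354528$, taken from the proofs of Lemmas \ref{constant mean four scale Gaussian kernel} and \ref{constant four scale Gaussian kernel}, into $2^{21}\bigl(C_{\ref{mean four scale Gaussian kernel},2}+C_{\ref{four scale Gaussian kernel},2}\bigr)$, and you rightly note that the rounded bounds $2^{45}$ and $2^{40}$ alone would only give $2^{67}$. The paper carries out your fallback computation exactly, obtaining $C_{\ref{lem:rho-kernels-reduction}}<44114430494276321280<\tfrac35 2^{66}$; later constant lemmas reuse this sharper value, not just $2^{66}$.
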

\begin{proof}[Proof \auto]
Lemmas \ref{constant mean four scale Gaussian kernel} and \ref{constant four scale Gaussian kernel}, at $N=2$, give
\[
C_{\ref{mean four scale Gaussian kernel},2}
+C_{\ref{four scale Gaussian kernel},2}
<20397963318112+637436354528.
\]
Multiplication by the defining factor $2^{21}$ gives
\[
C_{\ref{lem:rho-kernels-reduction}}
<44114430494276321280<\tfrac35 2^{66}.
\]
\end{proof}

\begin{rem}
The value of $\widehat{\rho_{h,j}}$   at the origin is understood through the continuous
extension supplied by Proposition~\ref{four scale Gaussian kernel}.
\end{rem}

\begin{lemma}[affine diagonal cancellation - reduction variant]
\label{lem:affine-diagonal-cancellation-reduction}\uses{W_0 fiber integrals}\usesdefs{auto:Wiener-space-definition}
\lean{Auto.affineDiagonalCancellationReduction}
\leanok
Let \(M\in W_0(\mathbb R^2)\) satisfy for every \(\xi\in\mathbb R\)
\begin{equation}\label{auto:affine-diagonal-cancellation-assumption}
\widehat M(\xi,-\xi)=0
\end{equation}
 Then for every \(x\in\mathbb R\),
\begin{equation}
    \label{E:affine-diagonal-cancellation}
    \int_{\mathbb R}M(x+q,q)\,dq=0
\end{equation}
\end{lemma}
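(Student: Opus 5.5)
The plan is to pass to the one-dimensional fiber integral and identify its Fourier transform with the restriction of $\widehat M$ to the anti-diagonal. Define
\[
g(x)=\int_{\mathbb R}M(x+q,q)\,dq .
\]
We apply Proposition \ref{W_0 fiber integrals} with the injective linear map $\pi:\mathbb R\times\mathbb R\to\mathbb R^2$, $\pi(x,q)=(x+q,q)$. This shows that the integrand is in $W_0(\mathbb R)$ for each $x$, so $g(x)$ is well defined, and that $g\in W_0(\mathbb R)$. In particular, $g$ is continuous, and by Proposition \ref{P:lp-embedding} it is integrable.

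Next we compute $\widehat g$. Since $M\in L^1(\mathbb R^2)$ (again by Proposition \ref{P:lp-embedding}), Fubini is justified, and the change of variables $(x,q)\mapsto(u,v)=(x+q,q)$, which has unit Jacobian, gives
\[
\widehat g(\xi)=\int_{\mathbb R^2}M(x+q,q)e^{-2\pi i x\xi}\,dq\,dx=\int_{\mathbb R^2}M(u,v)e^{-2\pi i (u-v)\xi}\,du\,dv=\widehat M(\xi,-\xi).
\]
By hypothesis \eqref{auto:affine-diagonal-cancellation-assumption}, this vanishes for every $\xi$.

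Finally, $g$ is an $L^1$ function with identically zero Fourier transform. Injectivity of the Fourier transform on $L^1$ gives $g=0$ almost everywhere, and continuity of $g$ upgrades this to $g(x)=0$ for every $x$, which is \eqref{E:affine-diagonal-cancellation}.

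The only point needing care is the pointwise conclusion, as opposed to an almost-everywhere one. This is exactly what the $W_0$ membership of $g$ from Proposition \ref{W_0 fiber integrals} supplies, so I expect no real obstacle beyond checking the hypotheses of that proposition for this particular $\pi$.
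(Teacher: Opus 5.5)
Your proposal is correct and follows the paper's proof essentially step for step. You define the fiber integral, use Proposition~\ref{W_0 fiber integrals} with $\pi(x,q)=(x+q,q)$ to get $W_0$ membership and continuity, and identify its Fourier transform with $\widehat M(\xi,-\xi)$ by Fubini and the unimodular change of variables. You then conclude pointwise vanishing from injectivity of the Fourier transform on $L^1$ together with continuity, exactly as the paper does. Your explicit appeal to Proposition~\ref{P:lp-embedding} for integrability is a harmless extra detail.
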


\begin{proof}
    Define
\[
G(x)=\int_{\mathbb R}M(x+q,q)\,dq.
\]
By Proposition~\ref{W_0 fiber integrals}, \(G\in W_0(\mathbb R)\).
By Fubini's theorem and the change of variables
$ u=x+q, \, v=q, $
we have
\[
\widehat G(\xi)
=
\int_{\mathbb R^2}
M(x+q,q)e^{-2\pi ix\xi}\,dq\,dx
=
\int_{\mathbb R^2}
M(u,v)e^{-2\pi i(u-v)\xi}\,du\,dv=
\widehat M(\xi,-\xi)
=
0.
\]
Injectivity of the Fourier transform on \(L^1\) and continuity of $G$ give
\(G=0\) everywhere.
\end{proof}

\begin{proposition}[bracket domination - reduction variant]
\label{lem:increase-data-bracket-domination}\uses{lem:rho-kernels-reduction,two bump estimate,bump triangle,orthogonal decay,convolution vector,lem:affine-diagonal-cancellation-reduction,Properties of distance of sequences,Operations on spaced sequences}\usesdefs{gaussian,bracket bump,auto:Wiener-space-definition,auto:convolution-along-vector-definition,multiplicatively spaced monotone sequences,Distance of spaced sequences,closed balls in A,standard bump,geometric parameters,auto:unitary-matrices-definition,square root Gaussian difference}
\lean{Auto.increaseDataBracketDomination}
\leanok
Let \(a\in A\), and let
$
\mathcal P\subset B_{\dist}(a,1)^2\times[2)$ be such that $\#\mathcal P\leq5$. Assume additionally that
\begin{equation}
    \label{E:increase-data-one-sided-scales}
    t_0(j)\leq a(j),
    \quad
    t_1(j)\leq a(j)
\end{equation}
for every \((t_0,t_1,u)\in\mathcal P\) and \(j\in\mathbb Z\).
Suppose that \(M_j\in W_0(\mathbb R^2)\) satisfies
\begin{equation}\label{auto:reduction-diagonal-cancellation}
\widehat M_j(\xi,-\xi)=0
\end{equation}
and
\begin{equation}\label{auto:reduction-kernel-decay}
|M_j(v)|
\leq
\sum_{(t_0,t_1,u)\in\mathcal P}
\langle(W_uv)_0\rangle_{(t_0(j))}^{3/2}
\langle(W_uv)_1\rangle_{(t_1(j))}^{3/2}.
\end{equation}

Let $\lambda_{h,j}^-,\lambda_{h,j}^+$ be as in Lemma \ref{lem:rho-kernels-reduction}.
For $h\in\N$, $j\in \Z$, we define
$
\sigma_{h,j} = s(2^{h} a,j)
$
and
\begin{equation}\label{auto:increase-data-N-kernel}
N_{h,j} =
\mathcal F^{-1}\left((\xi,\eta) \mapsto (\widehat{\Phi_{(\lambda_{h,j}^-)}}-\widehat{\Phi_{(\lambda_{h,j}^+)}})(\xi+\eta) \widehat{\sigma_{h,j}}(\xi+\eta)^{-\nu} \widehat{M_j}(\xi,\eta)\right),
\end{equation}

Then for every \(h\in\mathbb N\), there exist a finite set
\(\mathcal B_h\), numbers \(u_{h,b}\in[2)\), and pairs
\begin{equation}\label{auto:increase-data-scale-pairs}
\beta_{h,b}
=
(\beta_{h,b}^0,\beta_{h,b}^1)\in A^2,
\qquad b\in\mathcal B_h,
\end{equation}
satisfying
\begin{equation}
    \label{E:increase-data-B-cardinality}
    \#\mathcal B_h
    \leq
    C_{\ref{lem:increase-data-bracket-domination},0},
\end{equation}
\begin{equation}
    \label{E:increase-data-beta-base-distance}
    \dist(a,\beta_{h,b}^\ell)
    \leq
     (1+h)
\end{equation}
for every \(b\in\mathcal B_h\) and \(\ell\in[2)\), and
\begin{equation}
    \label{E:increase-data-bracket-majorant}
    |N_{h,j}(v)| \leq C_{\ref{lem:increase-data-bracket-domination},1} 2^{-h/3} \sum_{b\in\mathcal B_h} \langle(W_{u_{h,b}}v)_0 \rangle_{(\beta_{h,b}^0(j))}^{7/6} \langle(W_{u_{h,b}}v)_1 \rangle_{(\beta_{h,b}^1(j))}^{7/6}
\end{equation}
where $C_{\ref{lem:increase-data-bracket-domination},0}=2^6$, $C_{\ref{lem:increase-data-bracket-domination},1}
=
2^{10}C_{\ref{lem:rho-kernels-reduction}}$.
In particular,
\begin{equation}
    \label{E:increase-data-beta-pair-distance}
    \Delta(\beta_{h,b})
    \leq
    2(1+h).
\end{equation}
\end{proposition}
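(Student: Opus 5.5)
We follow the scheme of the Gaussian domination propositions (Propositions \ref{Gauss domination case 1}--\ref{Gauss domination case 3}), but we stop at bracket products rather than Gaussians, and we use the weaker hypotheses here. First we identify the multiplier. Since $\widehat{\sigma_{h,j}}(\zeta)^{-\nu}=(\g(2^ha(j-1)\zeta)-\g(2^ha(j)\zeta))^{-\nu/2}$, the multiplier of $N_{h,j}$ equals $\widehat{\rho_{h,j}}(\xi+\eta)\widehat{M_j}(\xi,\eta)$, with $\rho_{h,j}$ as in Lemma \ref{lem:rho-kernels-reduction}. By Proposition \ref{convolution vector} this gives
\[
N_{h,j}(v)=\int_\R\rho_{h,j}(p)M_j(v_0-p,v_1-p)\,dp .
\]
Lemma \ref{lem:affine-diagonal-cancellation-reduction} shows that $M_j$ has vanishing integral along the diagonal.

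For $h\ge1$ we pass to the rotated coordinates $v_0=w_0-w_1$, $v_1=w_0+w_1$ and subtract $\rho_{h,j}(w_0)$ times the zero diagonal integral, exactly as in \eqref{eqn:Gaussianbumppf1}. This produces the factor $\rho_{h,j}(w_0+p)-\rho_{h,j}(w_0)$. We bound that factor by \eqref{E:increase-data-rho-mean-estimate}, with $\lambda=2^ha(j)$. The scale hypothesis \eqref{E:increase-data-one-sided-scales} gives $t_\ell(j)\le a(j)=2^{-h}\lambda$.

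We then need to extract decay from $\min(1,\lambda^{-1}|p|)$. Instead of the square root used before, we use $\min(1,x)\le x^{1/3}$. This gives
\[
\min(1,\lambda^{-1}|p\pm w_1|)\le 2^{-h/3}(1+t^{-1}|p\pm w_1|)^{1/3}.
\]
The resulting factor absorbs $1/3$ of a power from the $\langle\cdot\rangle^{3/2}$ brackets of $M_j$, leaving $\langle\cdot\rangle^{7/6}$. Since $7/6>1$, Proposition \ref{two bump estimate} still applies, with constant $C_{\ref{two bump estimate},7/6,7/6}=2^{13/6}\cdot7$.

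Next we integrate in $p$. We split exactly as in \eqref{eqn:Gaussianbumppf1bump}--\eqref{eqn:Gaussianbumppf3bump}, using Proposition \ref{two bump estimate}, Proposition \ref{bump triangle} with $c=\mp\tfrac12$, and Proposition \ref{orthogonal decay}. We keep only exponent $7/6$ throughout: the $\rho$ brackets of power $2$ are lowered to $7/6$ by Proposition \ref{compare brackets}, which is trivial since $\langle x\rangle^2\le\langle x\rangle^{7/6}$ at equal scale. The outcome is a sum of at most $6\cdot5=30$ bracket products. Their scales are among $\lambda$, $t_0$, $t_1$ and $\max(t_0,t_1)$, with orientation $u\in[2)$.

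The case $h=0$ needs no cancellation. We bound $\rho_{0,j}$ by \eqref{E:increase-data-rho-zero-estimate}, at scales $a(j-1)$ and $a(j)$, and proceed as in Proposition \ref{Gauss domination case 3}. The factor $2^{-0/3}=1$ is harmless.

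It remains to verify the distance bounds. Each scale sequence $\beta$ lies in $A$ by Proposition \ref{Operations on spaced sequences}, since maxima and the multiples $2^ha$ stay in $A$. Proposition \ref{Properties of distance of sequences}(vi),(iv),(iii) gives $\dist(a,2^ha)\le h$, $\dist(a,a(\cdot-1))\le1$, and $\dist(a,t)\le1$ for $t\in B_{\dist}(a,1)$. A maximum of sequences within distance $1$ is again within distance $1$. Hence \eqref{E:increase-data-beta-base-distance} holds, and \eqref{E:increase-data-beta-pair-distance} follows from the triangle inequality.

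The count $\#\mathcal B_h\le 2^6$ comes from the explicit multisets. The main obstacle is the constant: we must check that the accumulated factor, namely $2\cdot$(bump triangle)$\cdot$(two bump)$\cdot16\cdot C_{\ref{lem:rho-kernels-reduction}}$ together with the $2\pi$ and the $\min$ loss, stays below $2^{10}C_{\ref{lem:rho-kernels-reduction}}$. To keep it there, we would restrict the use of the scale $\max(t_0,t_1)$ so that the orthogonal-decay constant $2^{7/6}$ is not squared unnecessarily.
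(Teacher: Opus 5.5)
Your proposal is correct and follows the paper's proof essentially step for step: you factor $N_{h,j}=M_j\ast_{(1,1)}\rho_{h,j}$; for $h\ge1$ you combine the diagonal cancellation with the mean-value bound and $\min(1,x)\le x^{1/3}$, trading $3/2$ for $7/6$ and gaining $2^{-h/3}$; for $h=0$ you use the uncancelled bound at scales $a(j-1),a(j)$; and the two-bump, bump-triangle, orthogonal-decay and distance bookkeeping is the same. Your worry about the constant is unnecessary, since the $2\pi$ is already absorbed into $C_{\ref{lem:rho-kernels-reduction}}$ in \eqref{E:increase-data-rho-mean-estimate} and at exponent $7/6$ the orthogonal-decay step (with the $\sqrt2$ rescaling) costs only $2^3$, so each product carries coefficient at most $2\cdot 2^6\cdot 2^3\,C_{\ref{lem:rho-kernels-reduction}}=2^{10}C_{\ref{lem:rho-kernels-reduction}}$ without any restriction on the scale $\max(t_0,t_1)$.
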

\begin{proof}[Proof \auto]
Set
\[
r=\tfrac76,
\qquad
C_\rho=C_{\ref{lem:rho-kernels-reduction}},
\]
and, for \(s>0\), define
\[
\omega_s(x)=\langle x\rangle_{(s)}^r.
\]

We first record several elementary estimates that will be used
throughout the proof. Since
\[
r=\tfrac76<\tfrac32<2,
\]
we have
\begin{equation}
    \label{E:increase-data-weaken-exponents}
    \langle x\rangle_{(s)}^2
    \leq
    \langle x\rangle_{(s)}^{3/2}
    \leq
    \omega_s(x).
\end{equation}
Moreover,
\begin{equation}
    \label{E:increase-data-r-bump-L1}
    \|\omega_s\|_1
    =
    2\int_0^\infty(1+x)^{-7/6}\,dx
    =
    12
    \leq 2^4.
\end{equation}
Directly from the definition of the bracket bumps,
\begin{equation}
    \label{E:increase-data-fixed-scaling}
    \omega_s(x/\sqrt2)\leq2\omega_s(x),
    \qquad
    \omega_s(\sqrt2x)\leq\omega_s(x).
\end{equation}

Proposition~\ref{two bump estimate}, applied with both exponents
equal to \(r\), gives
\begin{equation}
    \label{E:increase-data-two-bump-corollary}
    \int_{\mathbb R}
    \omega_s(x-p)\omega_t(y-p)\,dp
    \leq
    2^6
    \omega_{\max(s,t)}(x-y).
\end{equation}
Indeed,
\[
C_{\ref{two bump estimate},r,r}
=
2^{1+r}\left(1+(r-1)^{-1}\right)
=
7\cdot2^{13/6}
<
2^6.
\]

We will also use the following consequence of Propositions~
\ref{bump triangle} and~\ref{orthogonal decay}. Suppose that
\[
v_0=w_0-w_1,
\qquad
v_1=w_0+w_1.
\]
Then
\[
W_1v=(\sqrt2w_0,\sqrt2w_1).
\]
Let
\[
e_0=(1,0),
\qquad
e_1=(0,1),
\]
and
\[
\theta_0=2^{-1/2}(1,1),
\qquad
\theta_1=2^{-1/2}(-1,1).
\]
Thus
\[
v\cdot\theta_0=(W_1v)_0,
\qquad
v\cdot\theta_1=(W_1v)_1,
\qquad
w_1=2^{-1/2}v\cdot\theta_1.
\]
For either \(i=0\) or \(i=1\), the constant in Proposition~
\ref{orthogonal decay}, applied to \(e_i\) and \(\theta_1\), is
\[
(2|e_i\cdot\theta_0|^{-1})^r
=
(2\sqrt2)^{7/6}
=
2^{7/4}
<
2^2.
\]
Combining this with
\eqref{E:increase-data-fixed-scaling}, we obtain
\begin{equation}
    \label{E:increase-data-mixed-zero}
    \omega_s(v_0)\omega_t(w_1) \leq 2^3\Big( \omega_s(v_0)\omega_t(v_1) + \omega_s((W_1v)_0) \omega_t((W_1v)_1) \Big)
\end{equation}
and
\begin{equation}
    \label{E:increase-data-mixed-one}
    \omega_s(v_1)\omega_t(w_1) \leq 2^3\Big( \omega_t(v_0)\omega_s(v_1) + \omega_s((W_1v)_0) \omega_t((W_1v)_1) \Big).
\end{equation}

We next express \(N_{h,j}\) as a convolution. Let
\(\rho_{h,j}\) be as in Lemma~
\ref{lem:rho-kernels-reduction}. By the definition of
\(\sigma_{h,j}=s(2^ha,j)\),
\[
\widehat{\sigma_{h,j}}(\zeta)
=
\left(
\g(2^ha(j-1)\zeta)
-
\g(2^ha(j)\zeta)
\right)^{1/2}.
\]
It follows from Proposition~\ref{convolution vector} that
\begin{equation}
    \label{E:increase-data-N-rho-convolution}
    N_{h,j}
    =
    M_j\ast_{(1,1)}\rho_{h,j}.
\end{equation}

Fix \(h\in\mathbb N\), \(j\in\mathbb Z\), and \(v\in\mathbb R^2\),
and write
\[
v_0=w_0-w_1,
\qquad
v_1=w_0+w_1.
\]
Changing variables in
\eqref{E:increase-data-N-rho-convolution}, we obtain
\begin{equation}
    \label{E:increase-data-convolution-coordinates}
    N_{h,j}(v)
    =
    \int_{\mathbb R}
    \rho_{h,j}(w_0+p)
    M_j(-w_1-p,w_1-p)\,dp.
\end{equation}

We first treat the case \(h\geq1\). Set
\[
\lambda=2^ha(j).
\]
By Lemma~\ref{lem:affine-diagonal-cancellation-reduction},
applied with \(x=-2w_1\),
\[
\int_{\mathbb R}
M_j(-w_1-p,w_1-p)\,dp=0.
\]
Consequently,
\begin{equation}
    \label{E:increase-data-cancelled-convolution}
    N_{h,j}(v) = \int_{\mathbb R} \bigl( \rho_{h,j}(w_0+p)-\rho_{h,j}(w_0) \bigr) M_j(-w_1-p,w_1-p)\,dp.
\end{equation}
Lemma~\ref{lem:rho-kernels-reduction}, together with
\eqref{E:increase-data-weaken-exponents}, gives
\begin{equation}
    \label{E:increase-data-cancelled-majorant}
    |N_{h,j}(v)| \leq C_\rho \sum_{\alpha\in[2)} \int_{\mathbb R} \min(1,\lambda^{-1}|p|) \omega_\lambda(w_0+\alpha p) \times |M_j(-w_1-p,w_1-p)|\,dp.
\end{equation}

Fix
\[
(t_0,t_1,u)\in\mathcal P,
\]
and suppress the argument \(j\) in \(t_0(j)\) and \(t_1(j)\).
The additional one-sided assumption \eqref{E:increase-data-one-sided-scales} implies
\begin{equation}
    \label{E:increase-data-scale-comparison}
    t_\ell
    \leq
    a(j)
    \leq
    2^ha(j)
    =
    \lambda,
    \qquad
    \ell\in[2).
\end{equation}

Suppose first that \(u=0\). Then
\[
\langle(-w_1-p)\rangle_{(t_0)}^{3/2}
\langle(w_1-p)\rangle_{(t_1)}^{3/2}
=
\langle w_1+p\rangle_{(t_0)}^{3/2}
\langle w_1-p\rangle_{(t_1)}^{3/2}.
\]
Since
\[
2|p|
\leq
|w_1+p|+|w_1-p|
\]
and \(\min(1,s)\leq s^{1/3}\), we have
\[
\min(1,\lambda^{-1}|p|)
\leq
2^{-1/3}\lambda^{-1/3}
\left(|w_1+p|^{1/3}+|w_1-p|^{1/3}\right).
\]
Since
\[
t_\ell/\lambda\leq2^{-h},
\]
it follows that
\[
\min(1,\lambda^{-1}|p|)
\leq
2^{-h/3}
\left((1+t_0^{-1}|w_1+p|)^{1/3}+(1+t_1^{-1}|w_1-p|)^{1/3}\right).
\]
Multiplying by the two \(3/2\)-bumps and using
\[
\tfrac32-\tfrac13=\tfrac76=r,
\]
we obtain
\begin{equation}
    \label{E:increase-data-loss-u-zero}
    \min(1,\lambda^{-1}|p|) \langle w_1+p\rangle_{(t_0)}^{3/2} \langle w_1-p\rangle_{(t_1)}^{3/2} \qquad\leq 2 \cdot 2^{-h/3} \omega_{t_0}(w_1+p) \omega_{t_1}(w_1-p).
\end{equation}

Suppose now that \(u=1\). Since
\[
W_1(-w_1-p,w_1-p)
=
(-\sqrt2p,\sqrt2w_1),
\]
we have
\[
\min(1,\lambda^{-1}|p|) \leq (\lambda^{-1}|p|)^{1/3} = 2^{-1/6} (t_0/\lambda)^{1/3} (t_0^{-1}|\sqrt2p|)^{1/3} \leq 2^{-h/3} (1+t_0^{-1}|\sqrt2p|)^{1/3}.
\]
It follows that
\begin{equation}
    \label{E:increase-data-loss-u-one}
    \min(1,\lambda^{-1}|p|) \langle\sqrt2p\rangle_{(t_0)}^{3/2} \langle\sqrt2w_1\rangle_{(t_1)}^{3/2} \qquad\leq 2\cdot 2^{-h/3} \omega_{t_0}(\sqrt2p) \omega_{t_1}(\sqrt2w_1).
\end{equation}

We now integrate the expressions resulting from
\eqref{E:increase-data-loss-u-zero} and
\eqref{E:increase-data-loss-u-one}.

Assume first that \(u=0\). For \(\alpha=0\), let
\[
\tau=\max(t_0,t_1).
\]
By \eqref{E:increase-data-two-bump-corollary},
\[
\omega_\lambda(w_0) \int_{\mathbb R} \omega_{t_0}(w_1+p) \omega_{t_1}(w_1-p)\,dp \qquad\leq 2^6 \omega_\lambda(w_0) \omega_\tau(2w_1).
\]
Using
\[
(W_1v)_0=\sqrt2w_0,
\qquad
(W_1v)_1=\sqrt2w_1
\]
and \eqref{E:increase-data-fixed-scaling}, this is at most
\begin{equation}
    \label{E:increase-data-u0-alpha0}
    2^7
    \omega_\lambda((W_1v)_0)
    \omega_\tau((W_1v)_1).
\end{equation}

For \(\alpha=1\), Proposition~\ref{bump triangle}, applied with
\[
w_1
=
\tfrac12(w_1+p)+\tfrac12(w_1-p),
\]
has constant one and gives
\begin{equation}
    \label{E:increase-data-narrow-triangle}
    \omega_{t_0}(w_1+p) \omega_{t_1}(w_1-p) \qquad\leq \omega_{t_0}(w_1+p)\omega_{t_1}(w_1) + \omega_{t_0}(w_1)\omega_{t_1}(w_1-p).
\end{equation}
Since \(\lambda\geq t_0,t_1\), Proposition~
\ref{two bump estimate} gives
\[
\int_{\mathbb R} \omega_\lambda(w_0+p) \omega_{t_0}(w_1+p) \omega_{t_1}(w_1-p)\,dp \qquad\leq 2^6\Big( \omega_\lambda(v_0)\omega_{t_1}(w_1) + \omega_\lambda(v_1)\omega_{t_0}(w_1) \Big).
\]
Applying \eqref{E:increase-data-mixed-zero} and
\eqref{E:increase-data-mixed-one}, the last expression is at most
\begin{equation}
    \label{E:increase-data-u0-alpha1}
2^9\omega_\lambda(v_0)\omega_{t_1}(v_1)
+2^9\omega_{t_0}(v_0)\omega_\lambda(v_1)
\end{equation}
\[
{}+2^9\omega_\lambda((W_1v)_0)
\left(
\omega_{t_1}((W_1v)_1)
+
\omega_{t_0}((W_1v)_1)
\right).
\]

Suppose next that \(u=1\). For \(\alpha=0\),
\eqref{E:increase-data-r-bump-L1} gives
\[
\omega_\lambda(w_0)
\int_{\mathbb R}\omega_{t_0}(\sqrt2p)\,dp\,
\omega_{t_1}(\sqrt2w_1)
\leq
2^4\omega_\lambda(w_0)\omega_{t_1}(\sqrt2w_1).
\]
By \eqref{E:increase-data-fixed-scaling}, this is at most
\[
2^5\omega_\lambda((W_1v)_0)\omega_{t_1}((W_1v)_1).
\]
For \(\alpha=1\), we use
\[
\omega_{t_0}(\sqrt2p)\leq\omega_{t_0}(p)
\]
and Proposition~\ref{two bump estimate}. Since
\(\lambda\geq t_0\), we obtain
\[
\int_{\mathbb R}\omega_\lambda(w_0+p)\omega_{t_0}(\sqrt2p)\,dp\,
\omega_{t_1}(\sqrt2w_1)
\leq
2^6\omega_\lambda(w_0)\omega_{t_1}(\sqrt2w_1).
\]
By \eqref{E:increase-data-fixed-scaling}, this is at most
\[
2^7\omega_\lambda((W_1v)_0)\omega_{t_1}((W_1v)_1).
\]
Thus, for both \(\alpha\in[2)\),
\begin{equation}
    \label{E:increase-data-u1-alpha}
    \int_{\mathbb R} \omega_\lambda(w_0+\alpha p) \omega_{t_0}(\sqrt2p) \omega_{t_1}(\sqrt2w_1)\,dp \qquad\leq 2^7 \omega_\lambda((W_1v)_0) \omega_{t_1}((W_1v)_1).
\end{equation}

Combining
\eqref{E:increase-data-cancelled-majorant},
\eqref{E:increase-data-loss-u-zero},
\eqref{E:increase-data-loss-u-one},
\eqref{E:increase-data-u0-alpha0},
\eqref{E:increase-data-u0-alpha1}, and
\eqref{E:increase-data-u1-alpha}, we conclude that, when
\(h\geq1\), every resulting product of two \(r\)-bumps occurs with
coefficient at most
\[
2\cdot2^9 C_\rho\,2^{-h/3}
=
2^{10}C_\rho\,2^{-h/3}.
\]
Every such product is in either the \(W_0\)- or the
\(W_1\)-coordinates. Its two scale sequences are among
\[
2^ha,\qquad
t_0,\qquad
t_1,\qquad
\max(t_0,t_1).
\]

It remains to treat \(h=0\). Define the two sequences
\[
d_0(j)=a(j-1),
\qquad
d_1(j)=a(j).
\]
By Lemma~\ref{lem:rho-kernels-reduction},
\eqref{E:increase-data-convolution-coordinates}, and
\eqref{E:increase-data-weaken-exponents},
\begin{equation}
    \label{E:increase-data-h-zero-initial}
    |N_{0,j}(v)| \leq C_\rho \sum_{\epsilon\in[2)} \int_{\mathbb R} \omega_{d_\epsilon(j)}(w_0+p) |M_j(-w_1-p,w_1-p)|\,dp.
\end{equation}

Fix again \((t_0,t_1,u)\in\mathcal P\), and define
\[
r_{\epsilon,\ell}
=
\max(d_\epsilon,t_\ell),
\qquad
\epsilon,\ell\in[2).
\]
If \(u=0\), then Proposition~\ref{bump triangle}, followed by
Proposition~\ref{two bump estimate}, gives
\[
\int_{\mathbb R}
\omega_{d_\epsilon(j)}(w_0+p)
\omega_{t_0(j)}(w_1+p)
\omega_{t_1(j)}(w_1-p)\,dp
\]
\[
{}\leq
2^6\Big(
\omega_{r_{\epsilon,0}(j)}(v_0)
\omega_{t_1(j)}(w_1)
+
\omega_{r_{\epsilon,1}(j)}(v_1)
\omega_{t_0(j)}(w_1)
\Big).
\]
Using
\eqref{E:increase-data-mixed-zero} and
\eqref{E:increase-data-mixed-one}, this is bounded by
\begin{equation}
    \label{E:increase-data-h0-u0}
2^9\omega_{r_{\epsilon,0}(j)}(v_0)
\omega_{t_1(j)}(v_1)
+2^9\omega_{t_0(j)}(v_0)
\omega_{r_{\epsilon,1}(j)}(v_1)
\end{equation}
\[
{}+2^9\omega_{r_{\epsilon,0}(j)}((W_1v)_0)
\omega_{t_1(j)}((W_1v)_1)
+2^9\omega_{r_{\epsilon,1}(j)}((W_1v)_0)
\omega_{t_0(j)}((W_1v)_1).
\]

If \(u=1\), then
\[
\omega_{t_0(j)}(\sqrt2p)
\leq
\omega_{t_0(j)}(p).
\]
Proposition~\ref{two bump estimate} and
\eqref{E:increase-data-fixed-scaling} therefore give
\begin{equation}
    \label{E:increase-data-h0-u1}
    \int_{\mathbb R} \omega_{d_\epsilon(j)}(w_0+p) \omega_{t_0(j)}(\sqrt2p) \omega_{t_1(j)}(\sqrt2w_1)\,dp \qquad\leq 2^7 \omega_{r_{\epsilon,0}(j)}((W_1v)_0) \omega_{t_1(j)}((W_1v)_1).
\end{equation}
Thus every product produced in the case \(h=0\) has coefficient at
most
\[
2^9C_\rho
\leq
2^{10}C_\rho.
\]

We now collect all products obtained above, retaining
multiplicities. Each product has the form
\[
\omega_{\beta^0(j)}((W_uv)_0)
\omega_{\beta^1(j)}((W_uv)_1)
\]
for some \(u\in[2)\) and some
\(\beta=(\beta^0,\beta^1)\in A^2\).
For every such product introduce one label \(b\), and denote the
resulting index set by \(\mathcal B_h\). Set
\[
u_{h,b}=u,
\qquad
\beta_{h,b}=\beta.
\]
Since
\[
2^{10}C_\rho
=
C_{\ref{lem:increase-data-bracket-domination},1},
\]
the preceding estimates prove
\[
|N_{h,j}(v)| \leq C_{\ref{lem:increase-data-bracket-domination},1} 2^{-h/3} \quad\times \sum_{b\in\mathcal B_h} \langle(W_{u_{h,b}}v)_0 \rangle_{(\beta_{h,b}^0(j))}^{7/6} \langle(W_{u_{h,b}}v)_1 \rangle_{(\beta_{h,b}^1(j))}^{7/6}.
\]

We verify the cardinality bound. When \(h\geq1\), a summand of the
pointwise majorant for \(M_j\) with \(u=0\) produces one term from
\(\alpha=0\) and four terms from \(\alpha=1\), hence at most five
terms. A summand with \(u=1\) produces at most two terms. Therefore,
\[
\#\mathcal B_h
\leq
5\#\mathcal P
\leq
25
\]
when \(h\geq1\). When \(h=0\), a summand with \(u=0\) produces four
terms for each \(\epsilon\in[2)\), hence at most eight terms, while
a summand with \(u=1\) produces at most two terms. Thus
\[
\#\mathcal B_0
\leq
8\#\mathcal P
\leq
40.
\]
Consequently,
\[
\#\mathcal B_h
\leq
40
\leq
2^6
=
C_{\ref{lem:increase-data-bracket-domination},0}.
\]

Finally, we verify the distance bound. If \(h\geq1\), every scale
sequence occurring above is one of
\[
2^ha,\qquad
t_0,\qquad
t_1,\qquad
\max(t_0,t_1).
\]
By Proposition~\ref{Properties of distance of sequences},
\[
\dist(a,2^ha)\leq h.
\]
By hypothesis,
\[
\dist(a,t_\ell)\leq1.
\]
Moreover, since
\[
a(j-1)\leq t_\ell(j)\leq a(j+1),
\]
we also have
\[
a(j-1)
\leq
\max(t_0(j),t_1(j))
\leq
a(j+1),
\]
and hence
\[
\dist(a,\max(t_0,t_1))\leq1.
\]

When \(h=0\), every scale sequence occurring above is one of
\[
t_0,\qquad
t_1,\qquad
\max(d_\epsilon,t_\ell).
\]
The sequence \(d_0=a(\,\cdot-1)\) satisfies
\[
\dist(a,d_0)\leq1,
\]
while \(d_1=a\). Since both \(d_\epsilon\) and \(t_\ell\) belong to
\(B_{\dist}(a,1)\), their pointwise maximum also belongs to
\(B_{\dist}(a,1)\). All these sequences belong to \(A\) by
Proposition~\ref{Operations on spaced sequences}.

It follows in all cases that
\[
\dist(a,\beta_{h,b}^\ell)
\leq
1+h
\]
Finally, by symmetry and the triangle inequality for \(\dist\),
\[
\Delta(\beta_{h,b}) = \dist(\beta_{h,b}^0,\beta_{h,b}^1) \leq \dist(\beta_{h,b}^0,a) + \dist(a,\beta_{h,b}^1) \leq 2(1+h),
\]
which is
\eqref{E:increase-data-beta-pair-distance}.
\end{proof}

\begin{lemma}[constant $C_{\ref{lem:increase-data-bracket-domination},1}$ \auto]\label{constant increase data bracket domination}\uses{lem:increase-data-bracket-domination,lem:rho-kernels-reduction,constant rho kernels reduction}\usesdefs{}
\lean{Auto.constantIncreaseDataBracketDomination}
\leanok
\begin{equation}\label{constant increase data bracket domination bound}
C_{\ref{lem:increase-data-bracket-domination},1}
<\tfrac35 2^{76}<2^{76}.
\end{equation}
\end{lemma}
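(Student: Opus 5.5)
The constant in question is defined in Proposition \ref{lem:increase-data-bracket-domination} by
\[
C_{\ref{lem:increase-data-bracket-domination},1}=2^{10}C_{\ref{lem:rho-kernels-reduction}},
\]
so the plan is purely arithmetic substitution, in the same style as Lemma \ref{constant rho kernels reduction}.

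First, I will not use only the rounded bound $C_{\ref{lem:rho-kernels-reduction}}<2^{66}$, because that gives $2^{76}$ but not the sharper factor $\tfrac35$. Instead I will use the intermediate estimate recorded in the proof of Lemma \ref{constant rho kernels reduction}:
\[
C_{\ref{lem:rho-kernels-reduction}}<44114430494276321280<\tfrac35 2^{66}.
\]
That estimate comes from the values $C_{\ref{mean four scale Gaussian kernel},2}<20397963318112$ (Lemma \ref{constant mean four scale Gaussian kernel}) and $C_{\ref{four scale Gaussian kernel},2}<637436354528$ (Lemma \ref{constant four scale Gaussian kernel}), multiplied by the factor $2^{21}$.

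Multiplying by $2^{10}$ then gives
\[
C_{\ref{lem:increase-data-bracket-domination},1}<\tfrac35 2^{76}<2^{76}.
\]

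The only point needing care is confirming the inequality $44114430494276321280<\tfrac35 2^{66}$. We have $2^{66}\approx 7.3787\cdot 10^{19}$, so $\tfrac35 2^{66}\approx 4.427\cdot 10^{19}$, which exceeds $4.4114\cdot 10^{19}$.

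Equivalently, one can check that $(20397963318112+637436354528)\cdot 2^{21}\cdot 5<3\cdot 2^{66}$. Dividing both sides by $2^{21}$, this reads $21035399672640\cdot 5<3\cdot 2^{45}$, that is, $105176998363200<105553116266496$, which holds. No other obstacle is expected.
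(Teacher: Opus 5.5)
Your proposal is correct and matches the paper's proof, which likewise substitutes $C_{\ref{lem:increase-data-bracket-domination},1}=2^{10}C_{\ref{lem:rho-kernels-reduction}}$ and invokes Lemma \ref{constant rho kernels reduction}. You are right that the factor $\tfrac35$ needs the sharper bound $C_{\ref{lem:rho-kernels-reduction}}<\tfrac35 2^{66}$ from that lemma's proof rather than its stated bound $2^{66}$, and your verification $105176998363200<105553116266496$ is correct.
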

\begin{proof}[Proof \auto]
Use $C_{\ref{lem:increase-data-bracket-domination},1}=2^{10}C_{\ref{lem:rho-kernels-reduction}}$ and Lemma \ref{constant rho kernels reduction}.
\end{proof}

\begin{proposition}[Gaussian domination - reduction variant]
\label{lem:increase-data-Gaussian-expansion}\uses{Gaussian domination,lem:increase-data-bracket-domination,Operations on spaced sequences,Properties of distance of sequences}\usesdefs{gaussian,bracket bump,multiplicatively spaced monotone sequences,Distance of spaced sequences,geometric parameters,auto:unitary-matrices-definition,2D Gaussians}
\lean{Auto.increaseDataGaussianExpansion}
\leanok
Let the notation and conclusions of Proposition~
\ref{lem:increase-data-bracket-domination} hold. For
\(b\in\mathcal B_h\) and
\(m=(m_0,m_1)\in\mathbb N^2\), define
\begin{equation}\label{auto:Gaussian-domination-rescaled-pairs}
p_{h,b,m}
=
\left(
2^{m_0}\beta_{h,b}^0,
2^{m_1}\beta_{h,b}^1
\right)\in A^2.
\end{equation}
Then
\begin{equation}
    \label{E:increase-data-p-distance}
    \Delta(p_{h,b,m})
    \leq
    2(1+h)
    +
    |m|,
\end{equation}
and
\begin{equation}
    \label{E:increase-data-Gaussian-majorant}
    |N_{h,j}(v)| \leq C_{\ref{lem:increase-data-Gaussian-expansion}} 2^{-h/3} \sum_{b\in\mathcal B_h} \sum_{m\in\mathbb N^2} 2^{-|m|/6} G_{p_{h,b,m}(j),u_{h,b}}(v).
\end{equation}
where $ C_{\ref{lem:increase-data-Gaussian-expansion}}
    =
    2^3C_{\ref{Gaussian domination}}^2C_{\ref{lem:increase-data-bracket-domination},1}.$
The   series on the right-hand side converges for every
\(v\in\mathbb R^2\) and in \(L^1(\mathbb R^2)\).
\end{proposition}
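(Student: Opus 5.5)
We start from the bracket majorant \eqref{E:increase-data-bracket-majorant} of Proposition~\ref{lem:increase-data-bracket-domination} and convert each factor $\langle x\rangle^{7/6}_{(s)}$ into a Gaussian sum. We apply Proposition~\ref{Gaussian domination} with $N=7/6>1$; this gives
\[
\langle x\rangle^{7/6}_{(s)}\le C_{\ref{Gaussian domination}}\,2^{7/6}\sum_{m\in\N}2^{-m/6}\g_{(2^m s)}(x).
\]
We use this in both coordinates $(W_{u_{h,b}}v)_0$ and $(W_{u_{h,b}}v)_1$, with scales $s=\beta^0_{h,b}(j)$ and $s=\beta^1_{h,b}(j)$ respectively. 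Multiplying the two expansions, the product of the Gaussians is by Definition~\ref{2D Gaussians} exactly $G_{p_{h,b,m}(j),u_{h,b}}(v)$, where $p_{h,b,m}(j)=(2^{m_0}\beta^0_{h,b}(j),2^{m_1}\beta^1_{h,b}(j))$. The coefficients multiply to $2^{-(m_0+m_1)/6}=2^{-|m|/6}$. The constant is $C_{\ref{Gaussian domination}}^2\,2^{7/3}\le 2^3C_{\ref{Gaussian domination}}^2$, and together with $C_{\ref{lem:increase-data-bracket-domination},1}2^{-h/3}$ this gives \eqref{E:increase-data-Gaussian-majorant} with the stated constant. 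All terms are nonnegative, so the rearrangement of sums is justified by Tonelli.

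Each $p_{h,b,m}$ lies in ${\rm A}^2$ by Proposition~\ref{Operations on spaced sequences}(ii). For the distance bound, Proposition~\ref{Properties of distance of sequences}(vi) gives $\dist(\beta,2^{m_\ell}\beta)\le m_\ell$ for $\beta\in{\rm A}$. The triangle inequality (iii), together with symmetry (ii), then gives
\[
\Delta(p_{h,b,m})\le \dist(2^{m_0}\beta^0_{h,b},\beta^0_{h,b})+\Delta(\beta_{h,b})+\dist(\beta^1_{h,b},2^{m_1}\beta^1_{h,b})\le 2(1+h)+m_0+m_1.
\]
Here we used \eqref{E:increase-data-beta-pair-distance} for the middle term. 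This proves \eqref{E:increase-data-p-distance}.

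For convergence, each $G_{q,u}$ is nonnegative with $\|G_{q,u}\|_1=1$, since $W_u$ is unitary and each one-dimensional Gaussian has integral one. The series therefore has $L^1$ norm at most $\#\mathcal B_h\sum_{m}2^{-|m|/6}<\infty$, a product of two geometric series. Pointwise convergence follows since $\g_{(t)}\le t^{-1}$ is bounded and $\beta$ is positive, or alternatively from monotone convergence together with finiteness of the bracket majorant.

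There is no real obstacle in this argument. The only points needing care are the bookkeeping of the factor $2^{7/6}$ per coordinate against the $2^3$ in the constant, and checking that Proposition~\ref{Gaussian domination} applies for the non-integer exponent $N=7/6$. It does, since that proposition only requires $N>1$ real.
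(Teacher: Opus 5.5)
Your argument is correct and follows the paper's proof essentially step by step. You apply Proposition~\ref{Gaussian domination} with $N=7/6$ in each coordinate, absorb $2^{7/3}C_{\ref{Gaussian domination}}^2\le 2^3C_{\ref{Gaussian domination}}^2$, and get convergence from $\g_{(t)}\le t^{-1}$ and $\|G_{q,u}\|_1=1$. For the distance bound, the paper routes the triangle inequality through $2^{m_0}\beta^1_{h,b}$ and uses scale invariance (v); your route through $\beta^0_{h,b},\beta^1_{h,b}$ using (ii), (iii), (vi) works equally well.

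Drop your alternative justification of pointwise convergence via finiteness of the bracket majorant. The inequality runs the wrong way: the bracket is bounded by the Gaussian series, not conversely. Your first justification already suffices.
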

\begin{proof}[Proof \auto]
Set
\[
C_G=C_{\ref{Gaussian domination}}
\qquad\text{and}\qquad
C_B=C_{\ref{lem:increase-data-bracket-domination},1}.
\]
We prove the proposition with
\begin{equation}
    \label{E:increase-data-Gaussian-expansion-constant}
    C_{\ref{lem:increase-data-Gaussian-expansion}}
    =
    2^3C_G^2C_B.
\end{equation}

Fix \(h\in\mathbb N\), \(b\in\mathcal B_h\), and
\(m=(m_0,m_1)\in\mathbb N^2\). Since
\(\beta_{h,b}^\ell\in A\) for \(\ell\in[2)\), Proposition~
\ref{Operations on spaced sequences} implies that
\[
2^{m_\ell}\beta_{h,b}^\ell\in A.
\]
Consequently,
\[
p_{h,b,m}
=
\left(
2^{m_0}\beta_{h,b}^0,
2^{m_1}\beta_{h,b}^1
\right)
\in A^2.
\]

We first prove the distance estimate. By the triangle inequality
for \(\dist\), followed by Proposition~
\ref{Properties of distance of sequences},
\[
\Delta(p_{h,b,m})
=
\dist\left(
2^{m_0}\beta_{h,b}^0,
2^{m_1}\beta_{h,b}^1
\right).
\]
\[
\Delta(p_{h,b,m})
\leq
\dist\left(
2^{m_0}\beta_{h,b}^0,
2^{m_0}\beta_{h,b}^1
\right)
+
\dist\left(
2^{m_0}\beta_{h,b}^1,
2^{m_1}\beta_{h,b}^1
\right).
\]
\[
\Delta(p_{h,b,m})
=
\dist\left(
\beta_{h,b}^0,
\beta_{h,b}^1
\right)
+
\dist\left(
\beta_{h,b}^1,
2^{m_1-m_0}\beta_{h,b}^1
\right).
\]
\[
\Delta(p_{h,b,m})
\leq
\Delta(\beta_{h,b})
+
|m_1-m_0|
\leq
\Delta(\beta_{h,b})
+
m_0+m_1.
\]
Here the equality in the fourth line follows by multiplying both
sequences in the first distance by \(2^{-m_0}\), and the subsequent
inequality follows from
\[
\dist(c,2^qc)\leq |q|,
\qquad q\in\mathbb Z.
\]
Since \(|m|=m_0+m_1\), estimate
\eqref{E:increase-data-beta-pair-distance} now gives
\[
\Delta(p_{h,b,m})
\leq
2(1+h)
+
|m|,
\]
which is \eqref{E:increase-data-p-distance}.

We next prove the Gaussian majorant. Fix \(j\in\mathbb Z\) and
\(v\in\mathbb R^2\). Proposition~\ref{Gaussian domination}, applied
with
\[
N=\tfrac76,
\qquad
s=\beta_{h,b}^\ell(j),
\qquad
x=(W_{u_{h,b}}v)_\ell,
\]
gives, for each \(\ell\in[2)\),
\begin{equation}
    \label{E:increase-data-one-coordinate-Gaussian}
    \left\langle (W_{u_{h,b}}v)_\ell \right\rangle_{(\beta_{h,b}^\ell(j))}^{7/6} \qquad\leq C_G2^{7/6} \sum_{m_\ell\in\mathbb N} 2^{-m_\ell/6} \g_{(2^{m_\ell}\beta_{h,b}^\ell(j))} \left((W_{u_{h,b}}v)_\ell\right).
\end{equation}
All terms in these series are nonnegative. Multiplying
\eqref{E:increase-data-one-coordinate-Gaussian} for
\(\ell=0\) and \(\ell=1\), we obtain
The resulting upper bound is
\[
C_G^2 2^{7/3}
\sum_{m\in\mathbb N^2}
2^{-|m|/6}
\g_{(2^{m_0}\beta_{h,b}^0(j))}
\left((W_{u_{h,b}}v)_0\right)
\g_{(2^{m_1}\beta_{h,b}^1(j))}
\left((W_{u_{h,b}}v)_1\right).
\]
By Definition~\ref{2D Gaussians}, the product of Gaussians on the
right-hand side is
\[
G_{p_{h,b,m}(j),u_{h,b}}(v).
\]
Since
\[
2^{7/3}\leq2^3,
\]
it follows that
\begin{equation}
    \label{E:increase-data-two-coordinate-Gaussian}
    \left\langle (W_{u_{h,b}}v)_0 \right\rangle_{(\beta_{h,b}^0(j))}^{7/6} \left\langle (W_{u_{h,b}}v)_1 \right\rangle_{(\beta_{h,b}^1(j))}^{7/6} \qquad\leq 2^3C_G^2 \sum_{m\in\mathbb N^2} 2^{-|m|/6} G_{p_{h,b,m}(j),u_{h,b}}(v).
\end{equation}

Applying \eqref{E:increase-data-two-coordinate-Gaussian} to every
summand in
\eqref{E:increase-data-bracket-majorant}, we conclude that
\[
|N_{h,j}(v)| \leq 2^3C_G^2C_B 2^{-h/3} \quad\times \sum_{b\in\mathcal B_h} \sum_{m\in\mathbb N^2} 2^{-|m|/6} G_{p_{h,b,m}(j),u_{h,b}}(v).
\]
In view of
\eqref{E:increase-data-Gaussian-expansion-constant}, this is
\eqref{E:increase-data-Gaussian-majorant}.

It remains to verify the two convergence assertions. Fix
\(h\in\mathbb N\) and \(j\in\mathbb Z\). Since
\[
0\leq\g_{(s)}(x)
=
s^{-1}\g(s^{-1}x)
\leq s^{-1},
\]
we have, for every \(b\in\mathcal B_h\), \(m\in\mathbb N^2\),
and \(v\in\mathbb R^2\),
\[
G_{p_{h,b,m}(j),u_{h,b}}(v) \leq \tfrac{1}{ 2^{m_0}\beta_{h,b}^0(j) } \tfrac{1}{ 2^{m_1}\beta_{h,b}^1(j) } = \tfrac{2^{-|m|}}{ \beta_{h,b}^0(j)\beta_{h,b}^1(j) }.
\]
Therefore,
\[
\sum_{m\in\mathbb N^2}
2^{-|m|/6}
G_{p_{h,b,m}(j),u_{h,b}}(v)
\leq
\frac{1}{
\beta_{h,b}^0(j)\beta_{h,b}^1(j)
}
\sum_{m\in\mathbb N^2}
2^{-7|m|/6}.
\]
The sum on the right is the square of a geometric series, and hence
\[
\sum_{m\in\mathbb N^2}2^{-7|m|/6}
=
\left(
\sum_{q\in\mathbb N}2^{-7q/6}
\right)^2
\leq2^2.
\]
Consequently,
\[
\sum_{m\in\mathbb N^2}
2^{-|m|/6}
G_{p_{h,b,m}(j),u_{h,b}}(v)
\leq
\frac{2^2}{
\beta_{h,b}^0(j)\beta_{h,b}^1(j)
}.
\]
where in the final line we used
\[
2^{-7q/6}\leq2^{-q}
\qquad\text{and}\qquad
\sum_{q\in\mathbb N}2^{-q}=2.
\]
Since \(\mathcal B_h\) is finite, the full nonnegative series
converges for every \(v\in\mathbb R^2\).

Finally, since \(W_{u_{h,b}}\) is unitary and every rescaled
one-dimensional Gaussian has integral one,
\[
\left\|
G_{p_{h,b,m}(j),u_{h,b}}
\right\|_1
=
1.
\]
Consequently,
\[
\sum_{b\in\mathcal B_h} \sum_{m\in\mathbb N^2} \left\| 2^{-|m|/6} G_{p_{h,b,m}(j),u_{h,b}} \right\|_1 \qquad= \#\mathcal B_h \left( \sum_{q\in\mathbb N}2^{-q/6} \right)^2.
\]
Writing \(q=6d+r\), where \(d\in\mathbb N\) and \(r\in[6)\), gives
\[
\sum_{q\in\mathbb N}2^{-q/6} = \sum_{r\in[6)} 2^{-r/6} \sum_{d\in\mathbb N}2^{-d} \leq 6\cdot2 = 12 \leq2^4.
\]
Thus
\[
\sum_{b\in\mathcal B_h} \sum_{m\in\mathbb N^2} \left\| 2^{-|m|/6} G_{p_{h,b,m}(j),u_{h,b}} \right\|_1 \qquad\leq 2^8\#\mathcal B_h \leq 2^8 C_{\ref{lem:increase-data-bracket-domination},0} < \infty.
\]
The series therefore converges absolutely in
\(L^1(\mathbb R^2)\), completing the proof.
\end{proof}

\begin{lemma}[constant $C_{\ref{lem:increase-data-Gaussian-expansion}}$ \auto]\label{constant increase data Gaussian expansion}\uses{lem:increase-data-bracket-domination,constant increase data bracket domination,constant rho kernels reduction,lem:rho-kernels-reduction,Gaussian domination,lem:increase-data-Gaussian-expansion}\usesdefs{}
\lean{Auto.constantIncreaseDataGaussianExpansion}
\leanok
\begin{equation}\label{constant increase data Gaussian expansion bound}
C_{\ref{lem:increase-data-Gaussian-expansion}}
<\tfrac{123}{128}2^{91}<2^{91}.
\end{equation}
\end{lemma}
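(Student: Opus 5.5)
The bound follows by substituting the explicit values of the three constants in the definition $C_{\ref{lem:increase-data-Gaussian-expansion}}=2^3C_{\ref{Gaussian domination}}^2C_{\ref{lem:increase-data-bracket-domination},1}$ from Proposition \ref{lem:increase-data-Gaussian-expansion}, and then checking a single numerical inequality. No new analytic input is needed.

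First, Proposition \ref{Gaussian domination} gives $C_{\ref{Gaussian domination}}=e^\pi$, so $C_{\ref{Gaussian domination}}^2=e^{2\pi}$. As in the proof of Lemma \ref{Gauss domination constant}, we use the elementary bound $e^{2\pi}<3025$; more precisely we will want $e^{2\pi}<536$, since $e^{2\pi}\approx 535.49$. Second, Lemma \ref{constant increase data bracket domination} gives $C_{\ref{lem:increase-data-bracket-domination},1}<\tfrac35 2^{76}$. This in turn rests on $C_{\ref{lem:increase-data-bracket-domination},1}=2^{10}C_{\ref{lem:rho-kernels-reduction}}$ and the bound from Lemma \ref{constant rho kernels reduction}.

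The obstacle is that the crude form of this argument does not close. The product is
\[
8\cdot e^{2\pi}\cdot\tfrac35 2^{76}<8\cdot 536\cdot 0.6\cdot 2^{76}\approx 2573\cdot 2^{76}\approx 1.26\cdot 2^{87},
\]
which is comfortably below $2^{91}$. Checking against the finer threshold, $\tfrac{123}{128}2^{91}=123\cdot 2^{84}=15.375\cdot 2^{87}$, and $1.26\cdot 2^{87}$ is well under that. So the crude bound $\tfrac35 2^{76}$ together with $e^{2\pi}<536$ already suffices.

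To obtain $e^{2\pi}<536$ rigorously, bound $\pi<3.1416$ and use a truncated exponential series with a remainder estimate, or use $e<2.7183$ and compute $2.7183^{6.2832}$. If a cleaner route is preferred, the looser $e^{2\pi}<3025$ from Lemma \ref{Gauss domination constant} gives $8\cdot 3025\cdot 0.6\cdot 2^{76}=14520\cdot 2^{76}\approx 7.09\cdot 2^{87}$. This is still below $15.375\cdot 2^{87}$, so even that estimate suffices and needs no new numerics.

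The final step is to write $2^3\cdot 3025\cdot\tfrac35\cdot 2^{76}=14520\cdot 2^{76}<123\cdot 2^{84}=31488\cdot 2^{76}$. Hence $C_{\ref{lem:increase-data-Gaussian-expansion}}<\tfrac{123}{128}2^{91}<2^{91}$.
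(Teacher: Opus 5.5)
Your proof is correct and is essentially the paper's argument: substitute known bounds into $2^3C_{\ref{Gaussian domination}}^2C_{\ref{lem:increase-data-bracket-domination},1}$ and compare numbers. The only difference is the numerical bookkeeping. The paper uses the coarse bound $e^{2\pi}<3^8=6561$, with which the rounded bound $\tfrac35 2^{76}$ would narrowly fail ($8\cdot 6561\cdot\tfrac35=31492.8>31488$), so it substitutes the unrounded value of $C_{\ref{lem:rho-kernels-reduction}}$ instead; your sharper $e^{2\pi}<3025$ leaves ample room. Your sentence saying the crude argument ``does not close'' contradicts your own computation and should be dropped.
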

\begin{proof}[Proof \auto]
Since $C_{\ref{Gaussian domination}}=e^\pi$, the estimates $e<3$ and $\pi<4$ give
\[
e^{2\pi}<e^8<3^8=6561.
\]
Proposition \ref{lem:increase-data-bracket-domination} gives
$C_{\ref{lem:increase-data-bracket-domination},1}=2^{10}C_{\ref{lem:rho-kernels-reduction}}$.
Using the exact upper bound from Lemma \ref{constant rho kernels reduction}, and hence the bound in Lemma \ref{constant increase data bracket domination}, the defining product is smaller than
\[
2^3\cdot6561\cdot2^{10}\cdot44114430494276321280
<\tfrac{123}{128}2^{91}.
\]
\end{proof}

\begin{lemma}
\label{lem:N-reduction}\uses{lem:rho-kernels-reduction,convolution vector,lem:increase-data-bracket-domination}\usesdefs{bracket bump,auto:Wiener-space-definition,auto:convolution-along-vector-definition,multiplicatively spaced monotone sequences,auto:unitary-matrices-definition} Let $a\in A$ and let $\mathcal P$, $(M_j)_{j\in \Z}$, and $(N_{h,j})_{h\in \N,j\in \Z}$,  be as in Proposition~\ref{lem:increase-data-bracket-domination}.
\lean{Auto.nReduction}
\leanok
Then for each $h\in \N$ and $j\in \Z$,   $N_{h,j}\in W_0(\R^2)$ and
\begin{equation}
    \label{E:diagonal-band-reduction-N-L1}
    \|N_{h,j}\|_1\leq  C_{\ref{lem:N-reduction}}
\end{equation}
with $C_{\ref{lem:N-reduction}} = 2^9C_{\ref{lem:rho-kernels-reduction}}$
\end{lemma}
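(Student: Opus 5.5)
The plan is to use the convolution representation $N_{h,j}=M_j\ast_{(1,1)}\rho_{h,j}$ from \eqref{E:increase-data-N-rho-convolution}, and then Young's inequality along the diagonal direction, i.e. $\|M\ast_{(1,1)}\rho\|_1\le\|M\|_1\|\rho\|_1$.

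First, membership in $W_0(\R^2)$. By Lemma~\ref{lem:rho-kernels-reduction} we have $\rho_{h,j}\in W_0(\R)$, and $M_j\in W_0(\R^2)$ by hypothesis. Proposition~\ref{convolution vector} then gives $M_j\ast_{(1,1)}\rho_{h,j}\in W_0(\R^2)$. Its Fourier transform is $\widehat{M_j}(\xi,\eta)\widehat{\rho_{h,j}}(\xi+\eta)$. Here $\widehat{\rho_{h,j}}$ is exactly the multiplier $(\widehat{\Phi_{(\lambda^-)}}-\widehat{\Phi_{(\lambda^+)}})\widehat{\sigma_{h,j}}^{-\nu}$, since $\widehat{\sigma_{h,j}}=(\g(\mu^-\cdot)-\g(\mu^+\cdot))^{1/2}$. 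So by Fourier injectivity this function coincides with $N_{h,j}$, just as in the proof of Proposition~\ref{lem:increase-data-bracket-domination}.

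Next, the $L^1$ bound. Minkowski's inequality (Fubini) gives $\|N_{h,j}\|_1\le\|\rho_{h,j}\|_1\|M_j\|_1$. For $\|\rho_{h,j}\|_1$, use \eqref{E:increase-data-rho-zero-estimate}: $|\rho_{h,j}|\le C_{\ref{lem:rho-kernels-reduction}}(\langle x\rangle^2_{(\lambda^-)}+\langle x\rangle^2_{(\lambda^+)})$. Each scaled bracket has $L^1$ norm $\int(1+|x|)^{-2}dx=2$, so $\|\rho_{h,j}\|_1\le 4C_{\ref{lem:rho-kernels-reduction}}$. For $\|M_j\|_1$, use \eqref{auto:reduction-kernel-decay}. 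Each summand is a product of two $3/2$-brackets in the rotated coordinates $W_uv$, and since $W_u$ is unitary its $L^1$ norm is $(\int(1+|x|)^{-3/2})^2=4^2=16$. With $\#\mathcal P\le 5$ this gives $\|M_j\|_1\le 80$.

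Combining, $\|N_{h,j}\|_1\le 320\,C_{\ref{lem:rho-kernels-reduction}}\le 2^9C_{\ref{lem:rho-kernels-reduction}}$. No step is a real obstacle. The only points needing care are checking that the Fourier-side definition of $N_{h,j}$ agrees with the convolution (including the value at the origin, handled by continuity as in the remark after Lemma~\ref{lem:rho-kernels-reduction}) and keeping the numerical constant within $2^9$.
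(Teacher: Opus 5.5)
Your proposal is correct and matches the paper's proof: write $N_{h,j}=M_j\ast_{(1,1)}\rho_{h,j}$ via Proposition~\ref{convolution vector}, bound $\|M_j\|_1\le 5\cdot 4^2=80$ from \eqref{auto:reduction-kernel-decay} and $\|\rho_{h,j}\|_1\le 4C_{\ref{lem:rho-kernels-reduction}}$ from \eqref{E:increase-data-rho-zero-estimate}, then apply the $L^1$ convolution inequality and use $320\le 2^9$. You spell out the $W_0$ membership and the Fourier identification of $\widehat{\rho_{h,j}}$ with the multiplier defining $N_{h,j}$ in a bit more detail than the paper does, but it is the same argument.
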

\begin{proof}
Let \(\rho_{h,j}\) be as in Lemma \ref{lem:rho-kernels-reduction}.
Then,
\[
N_{h,j}=M_j\ast_{(1,1)}\rho_{h,j}.
\]
Indeed, this follows from
\[
\widehat{M_j\ast_{(1,1)}\rho_{h,j}}(\xi,\eta)
=
\widehat M_j(\xi,\eta)
\widehat{\rho_{h,j}}(\xi+\eta)
=
\widehat{N_{h,j}}(\xi,\eta).
\]
using Proposition~\ref{convolution vector}.

By the pointwise bound on \(M_j\), the fact that each
\(W_u\) is measure preserving, and
\[
\int_{\mathbb R}\langle x\rangle_{(t)}^{3/2}\,dx=4,
\]
we have
\begin{equation}
    \label{E:diagonal-band-reduction-M-L1}
    \|M_j\|_1
    \leq
    5\cdot 4^2
    =
    80.
\end{equation}
 To estimate  $\rho_{h,j}$  we   apply  Proposition \ref{lem:rho-kernels-reduction},
which gives
\[
|\rho_{h,j}(x)|
\leq
C_{\ref{lem:rho-kernels-reduction}}
\left(
\langle x\rangle_{(\lambda_{h,j}^-)}^2
+
\langle x\rangle_{(\lambda_{h,j}^+)}^2
\right).
\]
Since
\[
\int_{\mathbb R}\langle x\rangle_{(t)}^2\,dx=2,
\]
it follows that
\[
\|\rho_{h,j}\|_1
\leq
4C_{\ref{lem:rho-kernels-reduction}}
\]
Combining this last estimate with
\eqref{E:diagonal-band-reduction-M-L1} and the \(L^1\) convolution
inequality proves
\[
\|N_{h,j}\|_1
\leq
80\cdot 4 C_{\ref{lem:rho-kernels-reduction}}
\leq
C_{\ref{lem:N-reduction}}.
\]
\end{proof}

\begin{lemma}[constant $C_{\ref{lem:N-reduction}}$ \auto]\label{constant N reduction}\uses{lem:N-reduction,lem:rho-kernels-reduction,constant rho kernels reduction}\usesdefs{}
\lean{Auto.constantNReduction}
\leanok
\begin{equation}\label{constant N reduction bound}
C_{\ref{lem:N-reduction}}
<\tfrac35 2^{75}<2^{75}.
\end{equation}
\end{lemma}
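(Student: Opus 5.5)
This is a direct numerical substitution. By Lemma \ref{lem:N-reduction}, $C_{\ref{lem:N-reduction}}=2^9C_{\ref{lem:rho-kernels-reduction}}$. I will use the explicit bound from the proof of Lemma \ref{constant rho kernels reduction}:
\[
C_{\ref{lem:rho-kernels-reduction}}<44114430494276321280<\tfrac35 2^{66}.
\]
Multiplying by $2^9$ gives
\[
C_{\ref{lem:N-reduction}}<\tfrac35 2^{75}<2^{75},
\]
which is the claimed inequality.

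The only point to verify is the numerical inequality $44114430494276321280<\tfrac35 2^{66}$. It was already asserted in Lemma \ref{constant rho kernels reduction}. For an independent check, $2^{66}=73786976294838206464$, so $\tfrac35 2^{66}\approx 4.427\cdot 10^{19}$, which exceeds $4.4114\cdot 10^{19}$.

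The last step, $\tfrac35 2^{75}<2^{75}$, is trivial. I expect no real obstacle in this argument.
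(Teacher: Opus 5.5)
Your argument is correct and is exactly the paper's: substitute $C_{\ref{lem:N-reduction}}=2^9C_{\ref{lem:rho-kernels-reduction}}$ and use the bound $C_{\ref{lem:rho-kernels-reduction}}<44114430494276321280<\tfrac35 2^{66}$ established in the proof of Lemma \ref{constant rho kernels reduction}. You correctly note that this sharper intermediate bound is needed for the factor $\tfrac35$; the lemma's stated bound $<2^{66}$ alone would only give $<2^{75}$. Your independent numerical check is also accurate.
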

\begin{proof}[Proof \auto]
Use $C_{\ref{lem:N-reduction}}=2^9C_{\ref{lem:rho-kernels-reduction}}$ and Lemma \ref{constant rho kernels reduction}.
\end{proof}

\begin{proposition}[increase data - reduction variant]
\label{P:increase-data-reduction}\uses{lem:N-reduction,square root Gaussian difference W0,tensor Wiener,lem:increase-data-Gaussian-expansion,induct positive terms theorem,Monotonicity K,prism BL inequality,lem:increase-data-bracket-domination}\usesdefs{auto:Wiener-space-definition,multiplicatively spaced monotone sequences,geometric parameters,kernel sequences,2D Gaussians,sandwich kernel,square root Gaussian difference,s multiplier}
\lean{Auto.increaseDataReduction}
\leanok
Let $a\in A$ and let $\mathcal P$, $(M_j)_{j\in \Z}$, and $(N_{h,j})_{h\in\N,j\in\Z}$ be as in Proposition~\ref{lem:increase-data-bracket-domination}.
 For $j\in \Z$ and $y\in (\R^2)^2$ let
    \begin{equation}\label{auto:increase-data-majorant-kernel}\tilde{M}_{h,j}(y) = |N_{h,j}(y_0)| (\sigma_{h,j}^{\otimes2})(y_1). \end{equation}

Then with $\mathbf{\tilde{M}}_h=(\tilde{M}_{h,j})_{j\in \Z}$,
    \begin{equation}\label{auto:increase-data-majorant-bound}\|\mathbf{\tilde{M}}_h\|_{\rm M(2)}\le C_{\ref{P:increase-data-reduction}} 2^{-h/4}\end{equation}
    where $C_{\ref{P:increase-data-reduction}} = 2^{22}C_{\ref{induct positive terms theorem}}C_{\ref{lem:increase-data-Gaussian-expansion}}C_{\ref{lem:increase-data-bracket-domination},0}$

\end{proposition}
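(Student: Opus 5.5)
The approach is to dominate $|N_{h,j}|$ by Gaussians using Proposition~\ref{lem:increase-data-Gaussian-expansion}, so that each resulting term is a sandwich kernel with $k=2$. Each such term is then controlled by Theorem~\ref{induct positive terms theorem}.

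\textbf{Step 1: reduce to finite sums.} Fix $J\ge1$ and $\F\in\mathfrak F$. We must bound
\[
\min(1,J^{-1+2^{3-n}})\,\Big|\Lambda_2\Big(\sum_{j\in[J)}\tilde M_{h,j}\Big)(\F)\Big|.
\]
Membership $\tilde M_{h,j}\in W_0((\R^2)^2)$ follows from Lemma~\ref{lem:N-reduction}, Proposition~\ref{square root Gaussian difference W0} and Proposition~\ref{tensor Wiener}.

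\textbf{Step 2: replace $|N_{h,j}|$ by Gaussians.} By \eqref{E:increase-data-Gaussian-majorant} we have, pointwise,
\[
|N_{h,j}|\le C_{\ref{lem:increase-data-Gaussian-expansion}}\,2^{-h/3}\sum_{b\in\mathcal B_h}\sum_{m\in\N^2}2^{-|m|/6}\,G_{p_{h,b,m}(j),u_{h,b}}.
\]
The right-hand side converges in $L^1$. The kernels here have the form of Proposition~\ref{Positivity K}/\ref{Monotonicity K}: a nonnegative $\rho$ in the $y_0$ slot tensored with $\sigma^{\otimes 2}$ in the $y_1$ slot. So monotonicity gives
\[
\Lambda_2\big(|N_{h,j}|\otimes\sigma^{\otimes2}\big)\le \Lambda_2\big(\text{majorant}\otimes\sigma^{\otimes2}\big),
\]
and the left side is nonnegative by positivity. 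Truncating the $m$-sum, applying monotonicity, and passing to the limit is justified by Proposition~\ref{prism BL inequality} together with Proposition~\ref{M to K}, since $L^1$ convergence of kernels gives convergence of the forms. This yields
\[
0\le\Lambda_2\Big(\sum_{j\in[J)}\tilde M_{h,j}\Big)(\F)\le C_{\ref{lem:increase-data-Gaussian-expansion}}\,2^{-h/3}\sum_{b,m}2^{-|m|/6}\,\Lambda_2\Big(\sum_{j\in[J)}\M(\gamma_{b,m},s_{\gamma_{b,m}}\otimes s_{\gamma_{b,m}},1)_j\Big)(\F).
\]

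\textbf{Step 3: identify the sandwich kernels.} The data are $\gamma_{b,m}=(2,u,\alpha)\in\Gamma$ with $u=(u_{h,b},0)$ and $\alpha_0=p_{h,b,m}$. For $\alpha_1$ we take $\alpha_1=(2^{h-1/2}a,\,2^{h-1/2}a)$. With this choice the $u_1=0$ case of Definition~\ref{s multiplier} gives
\[
b(j)=\sqrt{2}\,2^{h-1/2}a(j)=2^h a(j),
\]
so $(s_{\gamma})_{1,j}=s(2^ha,j)=\sigma_{h,j}$. We also need $\Delta(\alpha_1)=0$. The indexing in Definition~\ref{sandwich kernel} with $i=1$ puts $G_{\gamma,0,j}$ in slot $0$, which matches the $y_0$ factor $G_{p(j),u}$. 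Then $\Delta_{\gamma_{b,m}}=1+\Delta(p_{h,b,m})\le 3+2h+|m|$ by \eqref{E:increase-data-p-distance}.

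\textbf{Step 4: apply the induction theorem and sum.} Theorem~\ref{induct positive terms theorem}, i.e.\ $\InductPositiveTerms{2,C_{\ref{induct positive terms theorem}}}$, bounds each term after multiplying by $\min(1,J^{-1+2^{3-n}})$ by
\[
C_{\ref{induct positive terms theorem}}\,(3+2h+|m|)^{2-2^{3-n}}\le C_{\ref{induct positive terms theorem}}\,(3+2h+|m|)^2.
\]
Summing over $b$ gives a factor $C_{\ref{lem:increase-data-bracket-domination},0}$. For the $m$-sum we use $(3+2h+|m|)^2\le 9(1+h)^2(1+|m|)^2$ and bound $\sum_m 2^{-|m|/6}(1+|m|)^2$ by an explicit power of $2$ (of order $2^{12}$). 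Finally $(1+h)^2\,2^{-h/3}\le c\,2^{-h/4}$, since $(1+h)^2 2^{-h/12}$ is bounded, with $c$ at most $2^{6}$ or so.

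\textbf{Main obstacle.} The delicate step is bookkeeping the constants to land within $2^{22}$. The $m$-sum with the slow decay $2^{-|m|/6}$ is costly, giving roughly $(\sum_q 2^{-q/6}(1+q))^2$. I would first compute this sum precisely. If it is too large, I would use the exact exponent $2-2^{3-n}\le 2$ only as needed, and split $(1+h)$ from $(1+|m|)$ more economically.
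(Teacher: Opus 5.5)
Your argument is the paper's argument. You use the same data $\gamma_{h,b,m}=(2,(u_{h,b},0),(p_{h,b,m},\alpha_h))$ with $\alpha_h=(2^{h-1/2}a,2^{h-1/2}a)$, and the same identifications $(s_{\gamma_{h,b,m}})_{1,j}=\sigma_{h,j}$ and $\Delta_{\gamma_{h,b,m}}=1+\Delta(p_{h,b,m})$. You also use positivity/monotonicity in the same way to replace $|N_{h,j}|$ by the Gaussian majorant of Proposition~\ref{lem:increase-data-Gaussian-expansion}, and you apply Theorem~\ref{induct positive terms theorem} with $i=1$ as the paper does.

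The one step that fails as written is the final summation, and this matters because the statement fixes the factor $2^{22}$. Both of your numerical guesses are too optimistic. Since there are $q+1$ multi-indices with $|m|=q$,
\[
\sum_{m\in\N^2}2^{-|m|/6}(1+|m|)^2=\sum_{q\in\N}(q+1)^3x^q=\frac{1+4x+x^2}{(1-x)^4}\approx 3.8\cdot 10^4>2^{15},\qquad x=2^{-1/6},
\]
so this sum is not of order $2^{12}$. Likewise $\sup_{h\in\N}(1+h)^2 2^{-h/12}\approx 172>2^7$, attained near $h=34$, not at most $2^6$. With your decoupling $(3+2h+|m|)^2\le 9(1+h)^2(1+|m|)^2$, the numerical factor comes out near $9\cdot 3.8\cdot 10^4\cdot 172\approx 2^{25.8}$. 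That is about fourteen times the $2^{22}$ you need.

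The fix is the fallback you mention, and it is exactly what the paper does: do not separate $h$ from $m$. First bound
\[
\Delta_{\gamma_{h,b,m}}^{2-2^{3-n}}\le(3+2h+|m|)^2\le 2^4(1+h+|m|)^2 .
\]
Then write $2^{-h/3}=2^{-h/4}\,2^{-h/12}$ and estimate the coupled quantity
\[
\sup_{h\in\N}2^{-h/12}\sum_{m\in\N^2}2^{-|m|/6}(1+h+|m|)^2\le 2^{18}
\]
directly. Expanding $(1+h+|m|)^2$ gives $S_3+2hS_2+h^2S_1$ with $S_k=\sum_{q}(q+1)^k2^{-q/6}$, where $S_1\approx 84$, $S_2\approx 1456$ and $S_3\approx 3.8\cdot 10^4$. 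The growing terms in $h$ thus multiply only the much smaller sums $S_1$ and $S_2$, and the supremum is about $4.2\cdot 10^4$, near $h=12$. This yields exactly $2^4\cdot 2^{18}=2^{22}$.
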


\begin{proof}
By Lemma \ref{lem:N-reduction}, \(N_{h,j}\in W_0(\mathbb R^2)\) and consequently $|N_{h,j}|\in W_0(\mathbb R^2)$. Together with Propositions~\ref{square root Gaussian difference W0}
and~\ref{tensor Wiener}, this gives
\[
\tilde M_{h,j}\in W_0((\mathbb R^2)^2).
\]
Let \(\mathcal B_h\), \(u_{h,b}\), and \(p_{h,b,m}\) be given by
Lemma~\ref{lem:increase-data-Gaussian-expansion}. For
\(b\in\mathcal B_h\) and \(m\in\mathbb N^2\), define
\[
\alpha_h
=
\left(
2^{h-1/2}a,
2^{h-1/2}a
\right)\in A^2
\]
and
\[
\gamma_{h,b,m}
=
\left(
2,
(u_{h,b},0),
(p_{h,b,m},\alpha_h)
\right)\in\Gamma.
\]

Since the second orientation is zero, Definition~
\ref{s multiplier} gives
\[
(s_{\gamma_{h,b,m}})_{1,j}
=
s\Big(
\big(
(2^{h-1/2}a)^2+
(2^{h-1/2}a)^2
\big )^{1/2},
j
\Big)
=
s(2^ha,j)
=
\sigma_{h,j}.
\]
Consequently,
\begin{equation}
    \label{E:increase-data-positive-term-recognition}
    \bigl(
    \M(
    \gamma_{h,b,m},
    s_{\gamma_{h,b,m}}\otimes s_{\gamma_{h,b,m}},
    1
    )
    \bigr)_j(y) =
    G_{p_{h,b,m}(j),u_{h,b}}(y_0)
    \bigl(\sigma_{h,j}^{\otimes2}\bigr)(y_1).
\end{equation}
Moreover,
$\Delta(\alpha_h)=0$
and therefore
\[ \Delta_{\gamma_{h,b,m}} =
1+\Delta(p_{h,b,m}). \]

Set $
q_n=2-2^{3-n}\in[0,2).$
By Theorem~\ref{induct positive terms theorem}, applied with
\(i=1\),
\[
\left\|
\M(
\gamma_{h,b,m},
s_{\gamma_{h,b,m}}\otimes s_{\gamma_{h,b,m}},
1
)
\right\|_{\mathrm M(2)}
 \leq
C_{\ref{induct positive terms theorem}}
\bigl(
1+\Delta(p_{h,b,m})
\bigr)^{q_n}.
\]

Using Lemma~\ref{lem:increase-data-Gaussian-expansion},
Proposition~\ref{Monotonicity K}, we obtain
\[
\|\tilde{\mathbf M}_h\|_{\mathrm M(2)}
\leq
C_{\ref{lem:increase-data-Gaussian-expansion}}
2^{-h/3}
\sum_{b\in\mathcal B_h}
\sum_{m\in\mathbb N^2}
2^{-|m|/6}
\left\|
\M(
\gamma_{h,b,m},
s_{\gamma_{h,b,m}}\otimes s_{\gamma_{h,b,m}},
1
)
\right\|_{\mathrm M(2)}.
\]
(first sum partial
sums of the Gaussian series, then pass to the full series by Proposition~
\ref{prism BL inequality} and dominated convergence). Hence, together with the estimate on $\#\mathcal{B}$ and $(1+\Delta(p_{h,b,m}))^{q_n}\le 2^{4}(1+h+|m|)^2$,
\[
\|\tilde{\mathbf M}_h\|_{\mathrm M(2)}
\leq
C_{\ref{induct positive terms theorem}}C_{\ref{lem:increase-data-Gaussian-expansion}}C_{\ref{lem:increase-data-bracket-domination},0}2^4
2^{-h/3}
\sum_{m\in\mathbb N^2}
2^{-|m|/6}
(1+h+|m|)^{2}.
\]
Since
\[\sup_{h\in \N}2^{-h/12}\sum_{m\in \N^2}2^{-|m|/6}(1+h+|m|)^2\le 2^{18}\]

it follows that
\[
\|\tilde{\mathbf M}_h\|_{\mathrm M(2)}
\leq
C_{\ref{P:increase-data-reduction}}2^{-h/4},
\]
as desired.
\end{proof}

\begin{lemma}[constant $C_{\ref{P:increase-data-reduction}}$ \auto]\label{constant increase data reduction}\uses{constant induct positive terms theorem,constant increase data Gaussian expansion,constant rho kernels reduction,lem:increase-data-bracket-domination,P:increase-data-reduction}\usesdefs{}
\lean{Auto.constantIncreaseDataReduction}
\leanok
\begin{equation}\label{constant increase data reduction bound}
C_{\ref{P:increase-data-reduction}}
<\tfrac{31}{32}2^{477}<2^{477}.
\end{equation}
\end{lemma}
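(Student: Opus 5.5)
The statement is a pure constant computation. The plan is to substitute the defining formula from Proposition \ref{P:increase-data-reduction},
\[
C_{\ref{P:increase-data-reduction}} = 2^{22}C_{\ref{induct positive terms theorem}}C_{\ref{lem:increase-data-Gaussian-expansion}}C_{\ref{lem:increase-data-bracket-domination},0},
\]
and then bound each factor by an earlier constant lemma.

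The factor values are as follows.
\begin{itemize}
\item $C_{\ref{lem:increase-data-bracket-domination},0}=2^6$, by definition in Proposition \ref{lem:increase-data-bracket-domination}.
\item Lemma \ref{constant increase data Gaussian expansion} gives $C_{\ref{lem:increase-data-Gaussian-expansion}}<\tfrac{123}{128}2^{91}$.
\item For $C_{\ref{induct positive terms theorem}}$ the bound $<2^{359}$ of Lemma \ref{constant induct positive terms theorem} is too crude, since together with the other two factors it yields $2^{22+359+91+6}=2^{478}$.
\end{itemize}
I will therefore reuse the finer information from the proof of that lemma. There the first summand is $18\cdot 2^{175}$, and its square is $324\cdot 2^{350}$. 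The cross term is $2\cdot 18\cdot 2^{175}\cdot 2^{1/2}<2^{182}$, and the last term is $2$. Hence
\[
C_{\ref{induct positive terms theorem}} < 324\cdot 2^{350} + 2^{183} < \tfrac{81}{64}\cdot 2^{358}\,(1+2^{-170}).
\]

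Multiplying the three bounds gives
\[
C_{\ref{P:increase-data-reduction}} < 2^{22}\cdot \tfrac{81}{64}2^{358}(1+2^{-170})\cdot \tfrac{123}{128}2^{91}\cdot 2^6
= \tfrac{81\cdot 123}{64\cdot128}(1+2^{-170})\, 2^{477}.
\]
The remaining step is to check that this leading coefficient is below $\tfrac{31}{32}$, and this check is where the plan may fail. We have $81\cdot 123=9963$ and $64\cdot 128=8192$, so the coefficient is about $1.216>1$. This would even exceed the claimed $2^{477}$.

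I therefore expect the main obstacle to be that the cited bound $\tfrac{123}{128}2^{91}$ is not sharp enough. In that case I would fall back on the exact numbers. First I would recompute $C_{\ref{lem:increase-data-Gaussian-expansion}}=2^{3}e^{2\pi}\cdot 2^{10}\cdot 2^{21}(C_{\ref{mean four scale Gaussian kernel},2}+C_{\ref{four scale Gaussian kernel},2})$, using the sharper $e^{2\pi}<536$ instead of $6561$. This replaces the factor $6561$ with $536$ and gains roughly a factor $12$, giving about $C_{\ref{lem:increase-data-Gaussian-expansion}}<2^{88}$ and hence a total comfortably below $\tfrac{31}{32}2^{477}$. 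The argument then reduces to a finite rational check: $e^{2\pi}<536$, which follows from $\pi<3.1416$ and a truncated exponential series bound, multiplied against the explicit integers $20397963318112+637436354528$ and $324$. I would carry out that arithmetic last, since it is mechanical.
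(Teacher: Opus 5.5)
Your fallback argument is correct and is essentially the paper's proof. The paper also sets aside the stated bound $\tfrac{123}{128}2^{91}$ and re-expands $C_{\ref{lem:increase-data-Gaussian-expansion}}=2^{13}e^{2\pi}C_{\ref{lem:rho-kernels-reduction}}$, using the explicit bound $C_{\ref{lem:rho-kernels-reduction}}<44114430494276321280$ together with $e^{\pi}<56$; with that choice the crude $C_{\ref{induct positive terms theorem}}<2^{359}$ already gives roughly $0.92\cdot 2^{477}$. Your refinement of $C_{\ref{induct positive terms theorem}}$ and the rather tight bound $e^{2\pi}<536$ (true, since $e^{2\pi}\approx 535.49$) are therefore harmless but unnecessary.
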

\begin{proof}[Proof \auto]
Use Lemmas \ref{constant induct positive terms theorem}, \ref{constant increase data Gaussian expansion}, and \ref{constant rho kernels reduction} in the defining product, together with $C_{\ref{lem:increase-data-bracket-domination},0}=2^6$ and $e^\pi<56$. This gives
\[
C_{\ref{P:increase-data-reduction}}
<2^{22}\cdot2^{359}\cdot\bigl(2^3\cdot56^2\cdot(2^{10}\cdot44114430494276321280)\bigr)\cdot2^6
<\tfrac{31}{32}2^{477}.
\]
\end{proof}

\subsection{On-diagonal from off-diagonal estimates}\label{sec:on-diag}
\begin{proposition}[diagonal band - reduction variant]
\label{P:diagonal-band-reduction}\uses{P:increase-data-reduction,convolution vector,Cauchy-Schwarz at k,lem:N-reduction,Cauchy-Schwarz at n-1}\usesdefs{auto:Wiener-space-definition,auto:convolution-along-vector-definition,normalized function tuples,auto:prism-form-definition,multiplicatively spaced monotone sequences,standard bump,kernel sequences,square root Gaussian difference}
\lean{Auto.diagonalBandReduction}
\leanok
Let $a\in A$ and let $\mathcal P$, $(M_j)_{j\in \Z}$ be as in Proposition~\ref{P:increase-data-reduction}. For $h\in\N$ with $h\ge1$ and $j\in\Z$, define
\begin{equation}\label{auto:diagonal-band-positive-scale}
L_{h,j}= M_j \ast_{(1,1)} (\Phi_{(2^{h-1}a(j))}-\Phi_{(2^{h}a(j))}),
\end{equation}
and for $j\in \Z$,
\begin{equation}\label{auto:diagonal-band-zero-scale}
L_{0,j}=M_j \ast_{(1,1)} (\Phi_{(a(j-1))}-\Phi_{(a(j))}).
\end{equation}
Let $\mathbf{L}_h=(L_{h,j})_{j\in \Z}$. Then
\begin{equation}\label{auto:diagonal-band-sum-bound}
\sum_{h\in \N} \|\mathbf{L}_h\|_{\rm M(1)} \leq C_{\ref{P:diagonal-band-reduction}}
\end{equation}
with  $C_{\ref{P:diagonal-band-reduction}} = 2^4(
    C_{\ref{lem:N-reduction}}
    C_{\ref{P:increase-data-reduction}}
  )^{1/2} + 2^3 C_{\ref{P:increase-data-reduction}}$.
\end{proposition}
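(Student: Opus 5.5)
The plan is to mirror the proof of Proposition \ref{increase data implies diagonal band} in the special case $k=1$, with the reduction-variant objects replacing the main-argument ones. Fix $h\in\N$ and $j\in\Z$. By Proposition \ref{convolution vector} and the definitions of $\rho_{h,j}$ (Lemma \ref{lem:rho-kernels-reduction}) and $N_{h,j}$ (Proposition \ref{lem:increase-data-bracket-domination}), we will first establish the factorization
\[
L_{h,j}=N_{h,j}\ast_{(1,1)}\sigma_{h,j}^{*\nu}.
\]
To see this, note that on the Fourier side $\widehat{\sigma_{h,j}}^{\,\nu}\cdot\widehat{\sigma_{h,j}}^{\,-\nu}=1$ wherever the bump factor $\widehat\Phi(\lambda^-_{h,j}\cdot)-\widehat\Phi(\lambda^+_{h,j}\cdot)$ is nonzero. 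That bump factor vanishes near the origin, so the division is harmless. Note also that $\lambda^\pm_{h,j}$ agree with the scales appearing in $L_{h,j}$.

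The argument then splits according to $n$.

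\textbf{Case $n>2$ ($\nu=1$, $k=1<n-1$).} Here $L_{h,j}=N_{h,j}\ast_{(1,1)}\sigma_{h,j}$, which is of the form \eqref{before CS} with $J=1$, $i=0$, $\rho=N_{h,j}$ and $\varphi=\sigma_{h,j}$. Summing over $j\in[J)$, Proposition \ref{Cauchy-Schwarz at k} gives
\[
|\Lambda_1(\textstyle\sum_{j\in[J)}L_{h,j})(\F)|\le |\Lambda_2(\textstyle\sum_{j\in[J)}\tilde M_{h,j})(\F)|^{1/2}\big(\textstyle\sum_{j\in[J)}\|N_{h,j}\|_1\big)^{1/2}.
\]
The right-hand side is bounded using Lemma \ref{lem:N-reduction} (each $\|N_{h,j}\|_1\le C_{\ref{lem:N-reduction}}$) together with Proposition \ref{P:increase-data-reduction}, which yields
\[
|\Lambda_2(\cdot)|\le C_{\ref{P:increase-data-reduction}}2^{-h/4}J^{1-2^{3-n}}.
\]
Combining, the powers of $J$ multiply to $J^{1/2}\cdot J^{(1-2^{3-n})/2}=J^{1-2^{2-n}}$. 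This is exactly the normalization $J^{1-2^{1-n+1}}$ in the ${\rm M}(1)$ norm, so
\[
\|\mathbf L_h\|_{{\rm M}(1)}\le (C_{\ref{lem:N-reduction}}C_{\ref{P:increase-data-reduction}})^{1/2}2^{-h/8}.
\]

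\textbf{Case $n=2$ ($\nu=2$, $k=n-1=1$).} Here $L_{h,j}=N_{h,j}\ast_{(1,1)}(\sigma_{h,j}*\sigma_{h,j})$, which fits \eqref{before CS 2}. Proposition \ref{Cauchy-Schwarz at n-1} then bounds the form by $\sup_{\tilde\F}|\Lambda_2(\sum_j\tilde M_{h,j})(\tilde\F)|$. Since $k+1=n$, the ${\rm M}(2)$ normalization carries no power of $J$, and Proposition \ref{P:increase-data-reduction} gives $\|\mathbf L_h\|_{{\rm M}(1)}\le C_{\ref{P:increase-data-reduction}}2^{-h/4}$.

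Finally, summing over $h$ uses $\sum_h 2^{-h/8}\le 2^4$ and $\sum_h 2^{-h/4}\le 2^3$. Since we only need one of the two cases, the sum of the two bounds is a valid common constant, which matches $C_{\ref{P:diagonal-band-reduction}}$.

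The main obstacle is the factorization step together with checking the membership hypotheses. Specifically, one must verify that $\sigma_{h,j}$ and $\sigma_{h,j}*\sigma_{h,j}$ lie in $W_0(\R)$ (Proposition \ref{square root Gaussian difference W0}) and that $N_{h,j}\in W_0$ (Lemma \ref{lem:N-reduction}), so that the convolution identity holds pointwise by injectivity of the Fourier transform on $L^1$ and continuity. One must also confirm that the Cauchy--Schwarz lift $\tilde M$ produced by Propositions \ref{Cauchy-Schwarz at k} and \ref{Cauchy-Schwarz at n-1} is exactly $\tilde M_{h,j}=|N_{h,j}|\otimes\sigma_{h,j}^{\otimes2}$. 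The remaining steps are bookkeeping of the powers of $J$ and summation of geometric series.
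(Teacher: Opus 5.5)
Your proposal is correct and follows the paper's proof essentially step for step. You factor $L_{h,j}=N_{h,j}\ast_{(1,1)}\sigma_{h,j}^{*\nu}$ via Proposition~\ref{convolution vector}, then apply Proposition~\ref{Cauchy-Schwarz at k} together with Lemma~\ref{lem:N-reduction} when $n>2$, or Proposition~\ref{Cauchy-Schwarz at n-1} when $n=2$. You then feed in Proposition~\ref{P:increase-data-reduction} and sum the geometric series $\sum_h 2^{-h/8}\le 2^4$ and $\sum_h 2^{-h/4}\le 2^3$. The only cosmetic difference is that the paper applies Cauchy--Schwarz to the normalized kernels $C_{\ref{lem:N-reduction}}^{-1}N_{h,j}$, which yields the same inequality.
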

\begin{proof}[Proof \auto]
Assume first that $n>2$.\phantomsection\label{P:diagonal-band-reduction case 1}
Since $n>2$ we have  $\nu=1$.  Let $\sigma_{h,j}$ and $N_{h,j}$ be as  Proposition~\ref{P:increase-data-reduction}.
We have
\begin{equation}
    \label{E:diagonal-band-reduction-factorization}
    L_{h,j}
    =
    N_{h,j}\ast_{(1,1)}\sigma_{h,j}.
\end{equation}
Indeed, by Proposition~\ref{convolution vector},
\[\widehat{
N_{h,j}\ast_{(1,1)}\sigma_{h,j}
}(\xi,\eta)=
\widehat{N_{h,j}}(\xi,\eta)
\widehat{\sigma_{h,j}}(\xi+\eta)=
\widehat{L_{h,j}}(\xi,\eta).
\]

Fix
\(J\in\mathbb N\) with \(J>0\) and \(\mathbf F\in\mathfrak F\).
 Proposition
\ref{Cauchy-Schwarz at k}  with \(k=1\), applied to
\[
\rho_j=C_{\ref{lem:N-reduction}}^{-1}N_{h,j},
\qquad
\varphi_j=\sigma_{h,j},
\]
gives
\[
\Big|
\Lambda_1(\sum_{j\in[J)}L_{h,j})(\mathbf F)
\Big|^2
\leq C_{\ref{lem:N-reduction}} J
\Big|
\Lambda_2(
\sum_{j\in[J)}\tilde M_{h,j}
)(\mathbf F)
\Big|
\]
Here
\[
\tilde M_{h,j}(y)
=
|N_{h,j}(y_0)|
\bigl(\sigma_{h,j}^{\otimes2}\bigr)(y_1),
\]
as in Proposition~\ref{P:increase-data-reduction}.

 Proposition~\ref{P:increase-data-reduction} and the
definition of the \({\rm M}(2)\)-norm give
\[
\Big |
\Lambda_2 (
\sum_{j\in[J)}\tilde M_{h,j}
)(\mathbf F)
\Big |
\leq
C_{\ref{P:increase-data-reduction}}
2^{-h/4}
J^{1-2^{3-n}}.
\]
It follows that
\[
\Big|
\Lambda_1(\sum_{j\in[J)}L_{h,j})(\mathbf F)
\Big|
\leq
\bigl(
C_{\ref{lem:N-reduction}}
C_{\ref{P:increase-data-reduction}}
\bigr)^{1/2}
2^{-h/8}
J^{1-2^{2-n}},
\]
since
$
\frac12+\frac12(1-2^{3-n})
=
1-2^{2-n}.
$
Taking the supremum over \(J\) and \(\mathbf F\) therefore gives
\begin{equation}
    \label{E:diagonal-band-reduction-n-greater-2}
    \|\mathbf L_h\|_{\mathrm M(1)}
    \leq
    \bigl(
    C_{\ref{lem:N-reduction}}
    C_{\ref{P:increase-data-reduction}}
    \bigr)^{1/2}
    2^{-h/8}.
\end{equation}
Summing over $h\in \N$ finishes the proof.

Assume now that $n=2$.\phantomsection\label{P:diagonal-band-reduction case 2}
Now $\nu=2$ and
    \[
L_{h,j}
=
N_{h,j}\ast_{(1,1)}
(\sigma_{h,j}\ast\sigma_{h,j})
\]
where    $\sigma_{h,j}$   and $N_{h,j}$ are  as   in Proposition~\ref{P:increase-data-reduction}.
Proposition
\ref{Cauchy-Schwarz at n-1} gives
\[
\Big|
\Lambda_1(\sum_{j\in[J)}L_{h,j})(\mathbf F)
\Big|
\leq
\sup_{\widetilde{\mathbf F}\in\mathfrak F}
\Big|
\Lambda_2(
\sum_{j\in[J)}\tilde M_{h,j}
)(\tilde{\mathbf F})
\Big|
\]
with $\tilde M_{h,j}$ as in Proposition~\ref{P:increase-data-reduction}.
Proposition \ref{P:increase-data-reduction} now gives
\begin{equation}
    \label{E:diagonal-band-reduction-n-equals-2}
    \|\mathbf L_h\|_{\mathrm M(1)}
    \leq
    C_{\ref{P:increase-data-reduction}}2^{-h/4}.
\end{equation}
Summing in $h\in \N$ finishes the proof.
\end{proof}

\begin{lemma}[constant $C_{\ref{P:diagonal-band-reduction}}$ \auto]\label{constant diagonal band reduction}\uses{constant N reduction,constant increase data reduction,P:diagonal-band-reduction}\usesdefs{}
\lean{Auto.constantDiagonalBandReduction}
\leanok
\begin{equation}\label{constant diagonal band reduction bound}
C_{\ref{P:diagonal-band-reduction}}
<\tfrac{63}{64}2^{480}<2^{480}.
\end{equation}
\end{lemma}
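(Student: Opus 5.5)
The plan is to substitute the explicit numerical bounds already obtained into the defining formula
\[
C_{\ref{P:diagonal-band-reduction}}=2^4\bigl(C_{\ref{lem:N-reduction}}C_{\ref{P:increase-data-reduction}}\bigr)^{1/2}+2^3C_{\ref{P:increase-data-reduction}}.
\]
The required inputs are Lemma \ref{constant N reduction}, which gives $C_{\ref{lem:N-reduction}}<\tfrac35 2^{75}$, and Lemma \ref{constant increase data reduction}, which gives $C_{\ref{P:increase-data-reduction}}<\tfrac{31}{32}2^{477}$.

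The second summand is the dominant one. It satisfies
\[
2^3C_{\ref{P:increase-data-reduction}}<\tfrac{31}{32}2^{480}.
\]

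For the first summand, the product inside the square root obeys
\[
C_{\ref{lem:N-reduction}}C_{\ref{P:increase-data-reduction}}<\tfrac35\cdot\tfrac{31}{32}\,2^{552}<2^{552}.
\]
Its square root is therefore below $2^{276}$, and the whole first summand is below $2^{280}$.

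It remains to check that $2^{280}\le\tfrac1{64}2^{480}=2^{474}$, which holds with enormous room. Adding the two pieces gives
\[
C_{\ref{P:diagonal-band-reduction}}<\tfrac{31}{32}2^{480}+\tfrac1{64}2^{480}=\tfrac{63}{64}2^{480}<2^{480}.
\]

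There is no real obstacle here. The only care needed is to use the sharpened fractional bounds from the two cited lemmas rather than the rounded powers of two. With the rounded bound $C_{\ref{P:increase-data-reduction}}<2^{477}$, the second summand alone would already reach $2^{480}$, and the claimed strict inequality would be lost.
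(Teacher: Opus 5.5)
Your proposal is correct and is the paper's proof: you substitute $C_{\ref{lem:N-reduction}}<\tfrac35 2^{75}$ and $C_{\ref{P:increase-data-reduction}}<\tfrac{31}{32}2^{477}$ into the defining sum, bound the square-root term by $2^{280}<2^{-6}2^{480}$ and the second term by $\tfrac{31}{32}2^{480}$, and add. Your closing remark is also accurate: the fractional bound on $C_{\ref{P:increase-data-reduction}}$ is needed, since the rounded bound would lose the strict inequality.
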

\begin{proof}[Proof \auto]
Using the defining sum in Proposition \ref{P:diagonal-band-reduction}, Lemmas \ref{constant N reduction} and \ref{constant increase data reduction} bound its first term by
\[
2^4\left(\tfrac35 2^{75}\tfrac{31}{32}2^{477}\right)^{1/2}
<2^{280}.
\]
The second term is smaller than
\[
\tfrac{31}{32}2^{480}.
\]
Since $2^{280}<2^{-6}2^{480}$, their sum is smaller than
\[
\tfrac{63}{64}2^{480}.
\]
\end{proof}

\begin{lemma}\label{lem:L1-reduction}\uses{lem:affine-diagonal-cancellation-reduction}\usesdefs{auto:Wiener-space-definition,auto:convolution-along-vector-definition,multiplicatively spaced monotone sequences,standard bump}
\lean{Auto.lOneReduction}
\leanok

Let $a\in A$ and suppose that $(M_j)_{j\in\Z}\subset W_0(\R^2)$ satisfies for all $\xi \in \R$ and $j\in \Z$
\begin{equation}\label{auto:vanishing-diagonal-assumption}
\widehat{M_j}(\xi,-\xi)=0.
\end{equation}
Then
\begin{equation}\label{auto:vanishing-diagonal-large-scale-limit}    \lim_{t\to\infty}
    \| M_j\ast_{(1,1)}\Phi_{(t)} \|_1
    =0
    \end{equation}
\end{lemma}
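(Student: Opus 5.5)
The plan is to transplant the proof of Lemma \ref{L:ft-infty} almost verbatim. The only thing that changes is where the mean-zero property in the diagonal direction comes from: there it was Proposition \ref{H vanishing integral}, and here it will be Lemma \ref{lem:affine-diagonal-cancellation-reduction}.

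Fix $j$ and $t>0$. Parametrize $\R^2$ by $(x+y,y)$ and write
\[
\|M_j\ast_{(1,1)}\Phi_{(t)}\|_1=\int_{\R^2}\Big|\int_\R \Phi_{(t)}(p)\,M_j(x+y-p,y-p)\,dp\Big|\,dx\,dy .
\]
Substitute $q=y-p$. The inner integral then becomes $\int_\R \Phi_{(t)}(y-q)M_j(x+q,q)\,dq$.

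By the hypothesis $\widehat{M_j}(\xi,-\xi)=0$, Lemma \ref{lem:affine-diagonal-cancellation-reduction} gives $\int_\R M_j(x+q,q)\,dq=0$ for every $x$. We may therefore subtract $\Phi_{(t)}(y)\int_\R M_j(x+q,q)\,dq=0$ inside. The triangle inequality and Tonelli then bound the norm by
\[
\int_{\R^2}|M_j(x+q,q)|\int_\R|\Phi_{(t)}(y-q)-\Phi_{(t)}(y)|\,dy\,dx\,dq .
\]
Rescaling $y=tz$ shows that the inner $y$-integral equals $\omega(q/t)$, where $\omega(p)=\int_\R|\Phi(z-p)-\Phi(z)|\,dz$.

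To finish, note three facts. First, $\omega(q/t)\to0$ as $t\to\infty$ for each fixed $q$, by continuity of translation in $L^1$ (here $\Phi\in\mathcal S\subset L^1$ by Proposition \ref{standard bump properties}). Second, $\omega\le 2\|\Phi\|_1$ uniformly. Third, $|M_j(x+q,q)|$ is integrable in $(x,q)$, since $M_j\in W_0(\R^2)\subset L^1$ by Proposition \ref{P:lp-embedding} and the map $(x,q)\mapsto(x+q,q)$ has determinant one. Dominated convergence then yields the limit $0$.

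The main point requiring care is justifying the Fubini/Tonelli interchanges and the subtraction step, which needs absolute integrability of $\Phi_{(t)}(y-q)M_j(x+q,q)$ on $\R^3$. This follows from $\Phi\in L^1$ and $M_j\in L^1$. Measurability is routine because all functions involved are continuous.
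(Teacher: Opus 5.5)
Your proposal is correct and matches the paper's proof essentially step for step. The paper also uses Lemma \ref{lem:affine-diagonal-cancellation-reduction} to get $\int_\R M_j(x+q,q)\,dq=0$, substitutes $q=y-p$ and subtracts $\Phi_{(t)}(y)$, bounds $\|M_j\ast_{(1,1)}\Phi_{(t)}\|_1$ by $\int_{\R^2}\omega(q/t)\,|M_j(x+q,q)|\,dx\,dq$ with the same $\omega$, and concludes by continuity of translation in $L^1$ and dominated convergence; your remarks on integrability via Proposition \ref{P:lp-embedding} and on the Fubini--Tonelli interchanges only make explicit what the paper leaves implicit.
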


\begin{proof}

By Lemma \ref{lem:affine-diagonal-cancellation-reduction},
\begin{equation}
    \label{E:vanishing-diagonal-reduction-line-cancellation}
    \int_{\mathbb R}M_j(x+q,q)\,dq=0
\end{equation}
for every \(x\in\mathbb R\).
Let
\[
R_{t,j}=M_j\ast_{(1,1)}\Phi_{(t)}.
\]
Using \eqref{E:vanishing-diagonal-reduction-line-cancellation} and
changing variables \(q=y-p\), we obtain
\[
R_{t,j}(x+y,y) = \int_{\mathbb R} \Phi_{(t)}(y-q)M_j(x+q,q)\,dq = \int_{\mathbb R} \bigl(\Phi_{(t)}(y-q)-\Phi_{(t)}(y)\bigr) M_j(x+q,q)\,dq.
\]
It follows from the triangle inequality and Fubini's theorem that
    \[
    \|R_{t,j}\|_1
    \le
    \int_{\mathbb R^2}
    \omega(q/t)\,
    |M_j(x+q,q)|\,dx\,dq,
    \]
    where
    \[
    \omega(s)
    =
    \int_{\mathbb R}|\Phi(z-s)-\Phi(z)|\,dz.
    \]
    By continuity of translations and the $L^1$ norm,
    \(\omega(q/t)\to0\) as \(t\to\infty\) for   \(q\in\mathbb R\).
    Also,
    \[
    0\le \omega(q/t)\le 2\|\Phi\|_1.
    \]
The claim now follows by the
    dominated convergence theorem.
\end{proof}

\begin{proposition}[vanishing diagonal - reduction variant]
\label{P:vanishing-diagonal-reduction}\uses{P:diagonal-band-reduction,prism BL inequality,M to K,lem:L1-reduction,P:increase-data-reduction}\usesdefs{auto:Wiener-space-definition,auto:convolution-along-vector-definition,normalized function tuples,auto:prism-form-definition,multiplicatively spaced monotone sequences,standard bump,kernel sequences}
\lean{Auto.vanishingDiagonalReduction}
\leanok
Let $a\in A$ and let $\mathcal P$, $(M_j)_{j\in \Z}$ be as in Proposition~\ref{P:increase-data-reduction}. In addition, assume that $\widehat{M_j}(\xi,\eta)$ is supported in the set where $|\xi+\eta| \leq 2^{-1} (a(j-1))^{-1}$.
Let $\mathbf{M}=(M_j)_{j\in \Z}$. Then
\begin{equation}\label{auto:vanishing-diagonal-M-bound}
\|\mathbf{M}\|_{\rm M(1)} \leq C_{\ref{P:diagonal-band-reduction}}
\end{equation}
\end{proposition}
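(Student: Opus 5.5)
The plan is to decompose each $M_j$ along the diagonal direction using the standard bump $\Phi$, in exactly the way Proposition~\ref{sum L multiplier convergence-L1} decomposes $H_\gamma$. Then each piece is controlled by Proposition~\ref{P:diagonal-band-reduction}, and the pieces are reassembled with the triangle inequality plus an $L^1$ limiting argument as in Lemma~\ref{prism sum le sum prism-L1}.

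First I identify the telescoping decomposition. For $H\in\N$, summing the definitions of $L_{h,j}$ gives
\[
\sum_{h=0}^{H} L_{h,j} = M_j\ast_{(1,1)}\Phi_{(a(j-1))} - M_j\ast_{(1,1)}\Phi_{(2^{H}a(j))}.
\]
This holds because the $h=0$ term contributes $\Phi_{(a(j-1))}-\Phi_{(a(j))}$ and the terms $h\ge1$ telescope from $\Phi_{(a(j))}$ to $\Phi_{(2^Ha(j))}$. By Proposition~\ref{convolution vector}, the Fourier transform of $M_j\ast_{(1,1)}\Phi_{(a(j-1))}$ is $\widehat{M_j}(\xi,\eta)\widehat\Phi(a(j-1)(\xi+\eta))$. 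On the support of $\widehat{M_j}$ we have $|a(j-1)(\xi+\eta)|\le 1/2$, and $\widehat\Phi=1$ on $[-1/2,1/2]$ by Proposition~\ref{standard bump properties}. Hence this first term equals $M_j$ by Fourier injectivity on $L^1$, and both functions are continuous. So $M_j=\sum_{h\le H}L_{h,j}+R_{2^Ha(j),j}$, where $R_{t,j}=M_j\ast_{(1,1)}\Phi_{(t)}$. By Lemma~\ref{lem:L1-reduction}, which applies since $\widehat{M_j}(\xi,-\xi)=0$, we get $\|R_{2^Ha(j),j}\|_1\to0$ as $H\to\infty$ for each fixed $j$.

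Next I fix $J\ge1$ and $\F\in\mathfrak F$. Summing over $j\in[J)$ and using linearity of $\Lambda_1$ gives
\[
\Big|\Lambda_1\Big(\sum_{j\in[J)}M_j\Big)(\F)\Big| \le \sum_{h\le H}\Big|\Lambda_1\Big(\sum_{j\in[J)}L_{h,j}\Big)(\F)\Big| + \sum_{j\in[J)}\|R_{2^Ha(j),j}\|_1 .
\]
The remainder is bounded by Propositions~\ref{prism BL inequality} and~\ref{M to K}, since $|\Lambda_1(E)(\F)|\le\|K_1(E)\|_1\le\|E\|_1$. I multiply by $\min(1,J^{-1+2^{2-n}})$. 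The first sum is then bounded by $\sum_h\|\mathbf L_h\|_{{\rm M}(1)}\le C_{\ref{P:diagonal-band-reduction}}$ by Proposition~\ref{P:diagonal-band-reduction}. The remainder is a finite sum over $j$, so it tends to zero as $H\to\infty$. Letting $H\to\infty$ and then taking the supremum over $J,\F$ gives the claim.

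The only delicate step is the identification $M_j\ast_{(1,1)}\Phi_{(a(j-1))}=M_j$. It requires that this convolution is a $W_0$ (hence $L^1$ and continuous) function whose Fourier transform is computed by Proposition~\ref{convolution vector}. That holds because $M_j\in W_0(\R^2)$ and $\Phi\in\mathcal S\subset W_0$ by Proposition~\ref{P:schwartz-into-wiener}. Everything else is bookkeeping. The hypotheses of Proposition~\ref{P:diagonal-band-reduction} are inherited verbatim, because $\mathcal P$ and $(M_j)$ are taken as in Proposition~\ref{P:increase-data-reduction}.
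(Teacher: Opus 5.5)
Your proposal is correct and follows the paper's proof essentially step for step. It uses the same identity $M_j\ast_{(1,1)}\Phi_{(a(j-1))}=M_j$ (from $\widehat\Phi=1$ on $[-1/2,1/2]$), the same telescoping $\sum_{h\le H}L_{h,j}=M_j-M_j\ast_{(1,1)}\Phi_{(2^Ha(j))}$, the same remainder bound via Propositions~\ref{prism BL inequality}, \ref{M to K} and Lemma~\ref{lem:L1-reduction}, and the same final appeal to Proposition~\ref{P:diagonal-band-reduction} before taking the supremum over $J$ and $\F$.
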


\begin{proof}Since    $|\xi+\eta|\le 2^{-1} (a(j-1))^{-1}$ and   $\widehat{\Phi}=1$ on $[-1/2,1/2]$,
\[M_j*_{(1,1)}\Phi_{(a(j-1))}=M_j\]
Let $L_{h,j}$ be as in Proposition \ref{P:diagonal-band-reduction}. For $H\in \N$ we telescope
\[\sum_{h=0}^H L_{h,j} = M_j*_{(1,1)} \Big( \Phi_{(a(j-1))}-\Phi_{(a(j))} + \sum_{h=1}^{H}
( \Phi_{(2^{h-1}a(j))} - \Phi_{(2^ha(j))} ) \Big)\]
\[= M_j*_{(1,1)} ( \Phi_{(a(j-1))} - \Phi_{(2^Ha(j))} )\]
\[= M_j-M_j*_{(1,1)}\Phi_{(2^Ha(j))} =: M_j-R_{H,j}.\]
Let $J\ge 1$ and $\F\in \mathfrak{F}$. By linearity and the triangle inequality,

\[|\Lambda_1 (\sum_{j\in [J)} M_j )(\F)| \le  \sum_{h=0}^H |\Lambda_1( \sum_{j\in [J)}L_{h,j})(\F)| + |\Lambda_1( \sum_{j\in [J)}R_{H,j})(\F)| \]
By Propositions  \ref{prism BL inequality} and \ref{M to K},
\[|\Lambda_1( \sum_{j\in [J)}R_{H,j})(\F)| \le \|\sum_{j\in [J)}R_{H,j}\|_1 \le \sum_{j\in [J)}\|R_{H,j}\|_1,\]
which tends to zero as $H\to \infty$ by Proposition \ref{lem:L1-reduction}.

Therefore, setting $c_J = \min(1,J^{-1+2^{2-n}})$,
\[c_J |\Lambda_1 (\sum_{j\in [J)} M_j )(\F)| \le  \sum_{h\in\N} c_J |\Lambda_1( \sum_{j\in [J)}L_{h,j})(\F)|\le \sum_{h\in \N} \|\mathbf{L}_h\|_{\rm M(1)} \le C_{\ref{P:diagonal-band-reduction}}\]
where the last inequality is by Proposition
\ref{P:diagonal-band-reduction}. Taking the supremum over
$J$ and $\mathbf F\in\mathfrak F$ yields the claim.
\end{proof}

Fix a universal pair $(\phi_0,\phi_1)$. For $\lambda>0$ and $i\in[2)$ write
\begin{equation}\label{auto:rescaled-window-continuous-notation} \phi_{i,\lambda} = (\phi_i)_{(\lambda)}. \end{equation}

\begin{proposition}\label{P:one-scale-estimate-window}\uses{prism BL inequality,M to K,lem:smoothdecay2}\usesdefs{bracket bump,normalized function tuples,auto:prism-form-definition,multiplicatively spaced monotone sequences,kernel sequences,def:cn-window,def:unipair}
\lean{Auto.oneScaleEstimateWindow}
\leanok
Let $a\in A$. For $j\in \Z$, we set
\begin{equation}\label{auto:one-scale-window-kernel}
M_j=(\phi_{0,a(j-1)})^{\otimes 2} - (\phi_{0,a(j)})^{\otimes 2}
\end{equation}
and $\mathbf{M}=(M_j)_{j\in \Z}$. Then
\begin{equation}\label{E:telescoping-estimate}
\|\mathbf{M}\|_{\rm M(1)} \leq C_{\ref{P:one-scale-estimate-window}},
\end{equation}
where $C_{\ref{P:one-scale-estimate-window}}=2^9C_{\ref{def:unipair}}^2$.

\end{proposition}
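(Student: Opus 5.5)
The statement is about $k=1$, so the normalization factor in the ${\rm M}(1)$ norm is $\min(1,J^{-1+2^{2-n}})\le 1$. It therefore suffices to show that, for every $J\ge1$ and $\F\in\mathfrak F$, $|\Lambda_1(\sum_{j\in[J)}M_j)(\F)|\le C_{\ref{P:one-scale-estimate-window}}$.

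The plan is to telescope. The sum $\sum_{j\in[J)}M_j$ collapses to $(\phi_{0,a(-1)})^{\otimes2}-(\phi_{0,a(J-1)})^{\otimes2}$. By linearity and the triangle inequality it is enough to bound $|\Lambda_1(\psi^{\otimes2})(\F)|$ for a single rescaled window $\psi=\phi_{0,\lambda}$. Propositions \ref{M to K} and \ref{prism BL inequality} bound this by $\|\psi^{\otimes2}\|_1=\|\psi\|_1^2=\|\phi_0\|_1^2$, since $L^1$ norms are dilation invariant. The claimed bound then follows once we know $\|\phi_0\|_1^2\le 2^8C_{\ref{def:unipair}}^2$. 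The factor $2$ from the two boundary terms gives $2^9C_{\ref{def:unipair}}^2$.

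For the membership requirement $M_j\in W_0((\R^2)^1)$: $\phi_0$ is Schwartz, so Propositions \ref{P:schwartz-into-wiener} and \ref{tensor Wiener} give the tensor squares in $W_0(\R^2)$.

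The remaining step is the $L^1$ bound on the window $\phi_0$; I expect this to be the main bookkeeping, since it is where the constant is fixed. The window has $\widehat{\phi_0}$ supported in $[-1,1]$ and $|\widehat{\phi_0}^{(m)}|\le C_{\ref{def:unipair}}$ for $m\le 3$. Lemma \ref{lem:smoothdecay2} with $N=2$ then gives $|\phi_0(x)|\le 2^2\max(C_{\ref{def:unipair}},(2\pi)^{-2}C_{\ref{def:unipair}})\cdot 2\,\langle x\rangle^2 = 2^3C_{\ref{def:unipair}}\langle x\rangle^2$. Integrating with $\int\langle x\rangle^2\,dx=2$ yields $\|\phi_0\|_1\le 2^4C_{\ref{def:unipair}}$, and hence $\|\phi_0\|_1^2\le 2^8C_{\ref{def:unipair}}^2$, as needed. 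The constant is stated as $2^9C_{\ref{def:unipair}}^2$ even though the bound on $\|\phi_0\|_1^2$ already uses $C_{\ref{def:unipair}}\ge 1$; since this is a direct integration, that slack should be harmless.
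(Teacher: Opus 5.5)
Your proposal is correct and follows the paper's proof essentially step for step: telescope the sum to the two boundary tensor squares, bound each one by $\|\phi_0\|_1^2$ using Propositions~\ref{M to K} and~\ref{prism BL inequality}, and obtain $\|\phi_0\|_1\le 2^4C_{\ref{def:unipair}}$ from Lemma~\ref{lem:smoothdecay2} with $N=2$ together with $\int\langle x\rangle^2\,dx=2$. Your closing remark about slack is unnecessary, since $2\cdot 2^8$ is exactly the stated constant and $C_{\ref{def:unipair}}\ge 1$ is only used to absorb $\|\widehat{\phi_0}\|_\infty\le 1$ into the maximum.
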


\begin{proof}
For a given $J\in \mathbb N$ with $J>0$,
\begin{equation}\label{E:telescoping}
\sum_{j\in [J)} M_j=(\phi_{0,a(-1)})^{\otimes 2}-(\phi_{0,a(J-1)})^{\otimes 2}.
\end{equation}
Let $\lambda \in \{a(-1), a(J-1)\}$.
A combination of Propositions~\ref{prism BL inequality} and~\ref{M to K} yields for any $\mathbf{F} \in \mathfrak{F}$,
\begin{equation}\label{E:est-l1-norm}
|\Lambda_1 ((\phi_{0,\lambda})^{\otimes 2})(\mathbf{F})| \leq
\|K_1((\phi_{0,\lambda})^{\otimes 2})\|_1 \leq\|(\phi_{0,\lambda})^{\otimes 2}\|_1
=\|\phi_0\|_1^2.
\end{equation}
By Definition \ref{def:cn-window} and Lemma \ref{lem:smoothdecay2},
\begin{equation}\label{E:est-of-l1-norm}
|\phi_0(x)|
\le 2C_{\ref{lem:smoothdecay2},2}C_{\ref{def:unipair}}\langle x\rangle^2
=8C_{\ref{def:unipair}}\langle x\rangle^2.
\end{equation}
Since $\int_\R\langle x\rangle^2\,dx=2$, we have $\|\phi_0\|_1\le16C_{\ref{def:unipair}}$. Combining this with \eqref{E:telescoping} and \eqref{E:est-l1-norm} yields \eqref{E:telescoping-estimate}.
\end{proof}

\begin{lemma}[constant $C_{\ref{P:one-scale-estimate-window}}$ \auto]\label{auto:constant-one-scale-window}\uses{P:one-scale-estimate-window}\usesdefs{def:unipair}
\lean{Auto.constantOneScaleWindow}
\leanok
\begin{equation}\label{auto:constant-one-scale-window-bound}
C_{\ref{P:one-scale-estimate-window}}=2^{39}.
\end{equation}
\end{lemma}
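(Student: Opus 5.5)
The statement is purely a numerical evaluation: by Proposition \ref{P:one-scale-estimate-window} the constant is defined as
\[
C_{\ref{P:one-scale-estimate-window}}=2^9C_{\ref{def:unipair}}^2 ,
\]
and Definition \ref{def:unipair} fixes $C_{\ref{def:unipair}}=2^{15}$. So the plan is simply to substitute this value into the defining formula.

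Concretely, first square the universal window constant, $C_{\ref{def:unipair}}^2=2^{30}$. Then multiply by the prefactor, obtaining
\[
2^9\cdot 2^{30}=2^{39},
\]
which is exactly \eqref{auto:constant-one-scale-window-bound}.

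No step here is a genuine obstacle. The only point to check is that the defining formula in Proposition \ref{P:one-scale-estimate-window} is used exactly as stated, with the factor $2^9$ and no further $\pi$- or $e$-dependent factors. That factor already absorbs the bound $\|\phi_0\|_1\le 16C_{\ref{def:unipair}}$ from the proof, squared, together with the factor $2$ from the two telescoping boundary terms: indeed $2\cdot 16^2=2^9$. Since the constant is an exact power of two, the result is an equality rather than an inequality.
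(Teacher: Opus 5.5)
Your proposal is correct and is exactly the paper's proof: substitute $C_{\ref{def:unipair}}=2^{15}$ into the defining value $2^9C_{\ref{def:unipair}}^2$ from Proposition \ref{P:one-scale-estimate-window}, which gives $2^9\cdot2^{30}=2^{39}$. Your side remark that $2^9=2\cdot16^2$ comes from the two telescoping boundary terms and the bound $\|\phi_0\|_1\le16C_{\ref{def:unipair}}$ is also accurate, though this lemma does not need it.
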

\begin{proof}[Proof \auto]
Substitute $C_{\ref{def:unipair}}=2^{15}$ in the defining value $2^9C_{\ref{def:unipair}}^2$ from Proposition \ref{P:one-scale-estimate-window}.
\end{proof}

\begin{lemma}\label{L:fourier-transform-window}\uses{}\usesdefs{multiplicatively spaced monotone sequences,def:cn-window,def:cpair,def:unipair}
\lean{Auto.fourierTransformWindow}
\leanok
Let $a\in A$. Then for any $j\in \Z$,
\begin{equation}\label{auto:window-square-Fourier-identity}
\mathcal F({\phi}_{0,a(j-1)}-{\phi}_{1,a(j)})^2=
(\widehat{{\phi}_{0,a(j-1)}})^2-(\widehat{{\phi}_{0,a(j)}})^2.
\end{equation}
\end{lemma}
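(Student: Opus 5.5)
The identity is a pointwise statement on the Fourier side, so I will verify it for each frequency $\zeta\in\R$. First, note the left-hand side is the Fourier transform of the square of the function $g=\phi_{0,a(j-1)}-\phi_{1,a(j)}$. Both windows are real even Schwartz functions with real Fourier transforms. I read the left-hand side as $(\widehat g)^2$, the Fourier multiplier of $g*g$, consistent with the notation on the right. By the dilation rule $\widehat{\psi_{(\lambda)}}(\zeta)=\widehat\psi(\lambda\zeta)$,
\[
\widehat g(\zeta)=\widehat{\phi_0}(a(j-1)\zeta)-\widehat{\phi_1}(a(j)\zeta).
\]
Abbreviate $x=\widehat{\phi_0}(a(j-1)\zeta)$, $y=\widehat{\phi_1}(a(j)\zeta)$ and $z=\widehat{\phi_0}(a(j)\zeta)$. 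The goal is then $(x-y)^2=x^2-z^2$.

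The key input is the spacing $a(j)\ge 2a(j-1)$ together with the support and plateau properties from Definition \ref{def:cn-window}. I split into two cases according to $|a(j)\zeta|$.

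\emph{Case $|a(j)\zeta|>1$.} Then $y=z=0$ by \eqref{ft_window_eq_zero}, and the identity reads $x^2=x^2$.

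\emph{Case $|a(j)\zeta|\le 1$.} Then $|a(j-1)\zeta|\le \tfrac12|a(j)\zeta|\le\tfrac12$, so $x=1$ by \eqref{ft_window_eq_one}. The goal becomes $(1-y)^2=1-z^2$. This is exactly the pair relation \eqref{windowsum}, evaluated at the point $a(j)\zeta$.

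Injectivity or inversion of the Fourier transform is not needed if the statement is read on the Fourier side as above. If instead the left-hand side is meant as $\mathcal F(g*g)$, I would first invoke the convolution theorem, $\widehat{g*g}=\widehat g^{\,2}$, valid for Schwartz $g$. I do not expect any real obstacle. The only point needing care is the case split: the plateau of $\widehat{\phi_0}$ at scale $a(j-1)$ must cover the support of the functions at scale $a(j)$, which is exactly where $2a(j-1)\le a(j)$ is used.
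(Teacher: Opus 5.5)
Your proof is correct and is essentially the paper's argument. The paper expands the square and uses $2a(j-1)\le a(j)$ to see that $\widehat{\phi_{0,a(j-1)}}=1$ wherever $\widehat{\phi_{1,a(j)}}\neq0$, which rewrites the cross and quadratic terms as $(1-\widehat{\phi_{1,a(j)}})^2-1$; it then applies \eqref{windowsum}, which is the same content as your two-case check, and your reading of the left-hand side as $(\widehat g)^2$ matches the paper's.
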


\begin{proof}
We have
\begin{equation}\label{E:square-fourier-transform}
\mathcal F({\phi}_{0,a(j-1)}-{\phi}_{1,a(j)})^2=
(\widehat{{\phi}_{0,a(j-1)}})^2 - 2 \widehat{{\phi}_{0,a(j-1)}} \widehat{{\phi}_{1,a(j)}} +(\widehat{{\phi}_{1,a(j)}})^2.
\end{equation}
Using~\eqref{ft_window_eq_zero},~\eqref{ft_window_eq_one} and the fact that $2a(j-1) \leq a(j)$, we see that $\widehat{{\phi}_{0,a(j-1)}}(\xi)=1$ whenever $\widehat{{\phi}_{1,a(j)}}(\xi) \neq 0$. Thus, the right-hand side of~\eqref{E:square-fourier-transform} can be rewritten as
\[
(\widehat{{\phi}_{0,a(j-1)}})^2 +(1-\widehat{{\phi}_{1,a(j)}})^2-1.
\]
The conclusion follows by applying~\eqref{windowsum}.
\end{proof}

\begin{lemma} \label{lem:scaleest}\uses{}\usesdefs{bracket bump}
\lean{Auto.scaleEstimate}
\leanok
    For any $x\in \R$ and $\lambda, s>0$,
    \begin{equation}\label{auto:scaled-bracket-comparison}\langle x \rangle^2_{(\lambda s)} \le \max\{\lambda,\lambda^{-1}\}\langle x \rangle^2_{(s)}\end{equation}
\end{lemma}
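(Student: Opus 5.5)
We argue directly from the definition of the scaled bracket in Definition \ref{bracket bump}. Unfolding,
\[
\langle x\rangle^2_{(\lambda s)}=(\lambda s)^{-1}\bigl(1+(\lambda s)^{-1}|x|\bigr)^{-2},
\qquad
\langle x\rangle^2_{(s)}=s^{-1}\bigl(1+s^{-1}|x|\bigr)^{-2},
\]
so it suffices to show
\[
\lambda^{-1}\bigl(1+\lambda^{-1}u\bigr)^{-2}\le \max\{\lambda,\lambda^{-1}\}\,(1+u)^{-2}
\]
for all $u=s^{-1}|x|\ge 0$. This reduces the claim to a one-variable inequality in $u$ and $\lambda$, and we split according to whether $\lambda\ge 1$ or $\lambda<1$.

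\emph{Case $\lambda\ge1$.} Here $1+\lambda^{-1}u\ge \lambda^{-1}(1+u)$, since $\lambda+u\ge 1+u$. Hence
\[
\lambda^{-1}(1+\lambda^{-1}u)^{-2}\le \lambda^{-1}\lambda^2(1+u)^{-2}=\lambda(1+u)^{-2}.
\]
This is at most $\max\{\lambda,\lambda^{-1}\}(1+u)^{-2}$.

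\emph{Case $\lambda<1$.} Here $1+\lambda^{-1}u\ge 1+u$, so
\[
\lambda^{-1}(1+\lambda^{-1}u)^{-2}\le\lambda^{-1}(1+u)^{-2}=\max\{\lambda,\lambda^{-1}\}(1+u)^{-2}.
\]

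Multiplying back by $s^{-1}$ gives the claim. This is essentially the same comparison as Proposition \ref{compare brackets}, with $N=2$ and $(\lambda,\mu)$ chosen as $(1,\lambda)$ or $(\lambda,1)$ after rescaling, and could alternatively be cited from there. There is no real obstacle; the only care needed is tracking which factor is $\ge1$ in each case.
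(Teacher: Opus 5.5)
Your proof is correct and is essentially the paper's argument: after normalizing to $u=|x|/s$, the case $\lambda\ge1$ uses $1+u\le\lambda(1+\lambda^{-1}u)$ and the case $\lambda<1$ uses $1+\lambda^{-1}u\ge1+u$, exactly as in the paper. One minor slip in your closing aside: for $\lambda<1$, deriving the bound from Proposition~\ref{compare brackets} needs the parameters $(1,1)$ (plain monotonicity of the bracket), because $(\lambda,1)$ would lose an extra factor $\lambda^{-2}$; this does not affect your main argument.
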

\begin{proof}
    Let $r=|x|/s$. Then
    \[\tfrac{\langle x \rangle^2_{(\lambda s)} }{\langle x \rangle^2_{(s)}} = \lambda^{-1}\Big(\tfrac{1+r}{1+\tfrac{r}{\lambda}} \Big)^2\]
    If $\lambda\ge 1$, then
    \[1+r\le \lambda\Big(1+\tfrac{r}{\lambda} \Big),\]
    so the quotient is at most $\lambda$. If $0<\lambda <1$, then
    \[1+\tfrac{r}{\lambda}\ge 1+ r,\]
    so the quotient is at most $\lambda^{-1}$.
\end{proof}

\begin{proposition}[induct positive terms - reduction variant, non-Whitney]
\label{P:induct-positive-terms-reduction-non-whitney}\uses{P:one-scale-estimate-window,L:fourier-transform-window,Operations on spaced sequences,lem:smoothdecay2,lem:scaleest,P:vanishing-diagonal-reduction,P:diagonal-band-reduction}\usesdefs{bracket bump,multiplicatively spaced monotone sequences,auto:unitary-matrices-definition,kernel sequences,def:cn-window,def:unipair}
\lean{Auto.inductPositiveTermsReductionNonWhitney}
\leanok
Let $a\in A$ and let
\begin{equation}\label{auto:non-Whitney-kernel}M_j=({\phi}_{0,a(j-1)}-{\phi}_{1,a(j)})^{\otimes 2}.\end{equation}
Then with $\M=(M_j)_{j\in \Z}$,  \begin{equation}\label{auto:non-Whitney-reduction-bound}\|\M\|_{\rm M(1)}\le C_{\ref{P:induct-positive-terms-reduction-non-whitney}},\end{equation}
where $C_{\ref{P:induct-positive-terms-reduction-non-whitney}}=C_{\ref{P:one-scale-estimate-window}}+ 128 \max(1,(C_{\ref{def:unipair}}/2\pi)^4) C_{\ref{lem:smoothdecay2},2}^2C_{\ref{P:diagonal-band-reduction}}$.
\end{proposition}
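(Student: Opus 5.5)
We split $M_j$ into a telescoping part and a part with vanishing diagonal Fourier transform. Write
\[
M_j=\bigl((\phi_{0,a(j-1)})^{\otimes2}-(\phi_{0,a(j)})^{\otimes2}\bigr)+D_j,
\qquad
D_j=M_j-(\phi_{0,a(j-1)})^{\otimes2}+(\phi_{0,a(j)})^{\otimes2}.
\]
The first part is handled by Proposition \ref{P:one-scale-estimate-window}, which contributes $C_{\ref{P:one-scale-estimate-window}}$ to the ${\rm M}(1)$ norm; the ${\rm M}(1)$ seminorm is subadditive because $\Lambda_1$ is linear.

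For $D_j$ we use Lemma \ref{L:fourier-transform-window}. Since $\widehat{f^{\otimes2}}(\xi,-\xi)=\widehat f(\xi)\widehat f(-\xi)=\widehat f(\xi)^2$ for real even windows, it gives $\widehat{D_j}(\xi,-\xi)=0$. We also need the support condition of Proposition \ref{P:vanishing-diagonal-reduction}. Every summand of $D_j$ has Fourier transform supported where $|\xi|,|\eta|\le a(j-1)^{-1}$, so the condition $|\xi+\eta|\le 2^{-1}a(j-1)^{-1}$ is not automatic. If it fails, we pass to $D_j*_{(1,1)}\Phi_{(a(j-1)/4)}$ and a remainder, or rescale the sequence $a$ (e.g.\ replace $a$ by $4a$, still in ${\rm A}$ by Proposition \ref{Operations on spaced sequences}). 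This rescaling is the likely origin of the factor $(C_{\ref{def:unipair}}/2\pi)^4$ in the constant.

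To apply Proposition \ref{P:vanishing-diagonal-reduction} to $D_j$ we need a pointwise majorant of the form required in Proposition \ref{lem:increase-data-bracket-domination}: a sum of at most $5$ products of $3/2$-bracket bumps in the $W_u$ coordinates, with scales in $B_{\dist}(a,1)$ that are at most $a(j)$. By Lemma \ref{lem:smoothdecay2} with $N=2$ and the window bounds, $|\phi_{i,\lambda}(x)|\le 2^2\max(1,C_{\ref{def:unipair}}(2\pi)^{-2})C_{\ref{lem:smoothdecay2},2}\langle x\rangle^2_{(\lambda)}$. Expanding $D_j$ gives four tensor products:
\[
\phi_{0,a(j-1)}\otimes\phi_{1,a(j)},\quad
\phi_{1,a(j)}\otimes\phi_{0,a(j-1)},\quad
\phi_{1,a(j)}^{\otimes2},\quad
\phi_{0,a(j)}^{\otimes2}.
\]
Each is bounded by a product $\langle v_0\rangle^2_{(t_0)}\langle v_1\rangle^2_{(t_1)}$ with $t_\ell\in\{a(j-1),a(j)\}$ and $u=0$, and $\langle\cdot\rangle^2\le\langle\cdot\rangle^{3/2}$. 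These scales lie in $B_{\dist}(a,1)$ and are at most $a(j)$, and Lemma \ref{lem:scaleest} absorbs any rescaling of the scales. Dividing $D_j$ by the resulting constant, roughly $2^7\max(1,(C_{\ref{def:unipair}}/2\pi)^4)C_{\ref{lem:smoothdecay2},2}^2$, normalizes the majorant as required. Proposition \ref{P:vanishing-diagonal-reduction} then yields $\|\mathbf D\|_{{\rm M}(1)}\le C_{\ref{P:diagonal-band-reduction}}$ times that constant, and adding the first part gives $C_{\ref{P:induct-positive-terms-reduction-non-whitney}}$.

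The main obstacle is the Fourier support hypothesis of Proposition \ref{P:vanishing-diagonal-reduction}. The terms $\phi_{1,a(j)}$ have frequency support up to $a(j)^{-1}$ and $\phi_{0,a(j-1)}$ up to $a(j-1)^{-1}$, so $|\xi+\eta|$ can reach $2a(j-1)^{-1}$. I would first try applying the proposition with the shifted sequence $c(j)=a(j-2)/4$ (or $a/4$ with an index shift), which lies in ${\rm A}$ with $\dist(a,c)$ small. I would check that the majorant scales still satisfy \eqref{E:increase-data-one-sided-scales} relative to $c$, paying the comparison losses through Lemma \ref{lem:scaleest}.
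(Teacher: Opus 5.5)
Your decomposition, the use of Lemma~\ref{L:fourier-transform-window} and evenness for the diagonal cancellation, and the bracket majorant all match the paper's proof. However, the one non-routine step is left open: meeting the support hypothesis of Proposition~\ref{P:vanishing-diagonal-reduction}. None of the concrete options you list closes it.

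The sequence fed into that proposition must satisfy two things at once: the support condition, and the majorant conditions (scales in $B_{\dist}(\cdot,1)$ together with the one-sided bound \eqref{E:increase-data-one-sided-scales}). Your majorant is stated relative to $a$, and for $a$ the support condition fails. The alternatives you suggest do not work:
\begin{itemize}
\item Replacing $a$ by $4a$ goes the wrong way, since it demands $|\xi+\eta|\le 2^{-3}a(j-1)^{-1}$.
\item Index-shifted choices such as $c(j)=a(j-2)/4$ do satisfy the support condition. But the scale $a(j)$ in your majorant must then be pushed down to a scale $\le c(j)$. Lemma~\ref{lem:scaleest} charges a factor $4a(j)/a(j-2)$ for this, which is unbounded because sequences in ${\rm A}$ only have a lower growth bound.
\item Convolving with $\Phi_{(a(j-1)/4)}$ is a no-op. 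Indeed $\widehat\Phi=1$ on $[-\frac12,\frac12]$, and already $|\xi+\eta|\le 2a(j-1)^{-1}$ on $\operatorname{supp}\widehat{D_j}$. Convolving at the coarser scale that would actually be needed leaves a remainder you have no estimate for.
\end{itemize}

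The fix is the unshifted choice $b=a/4$, which lies in ${\rm A}$ by Proposition~\ref{Operations on spaced sequences}(ii). On the Fourier support you then have $|\xi+\eta|\le 2a(j-1)^{-1}=2^{-1}b(j-1)^{-1}$, exactly as required. Lemma~\ref{lem:scaleest} with $\lambda=4$ gives
$\langle x\rangle^2_{(a(j-1))}\le 4\langle x\rangle^2_{(b(j-1))}$ and $\langle x\rangle^2_{(a(j))}\le 4\langle x\rangle^2_{(b(j))}$.
So the majorant has scale sequences $b(\cdot-1)$ and $b$, which lie in $B_{\dist}(b,1)$ and are bounded by $b(j)$.

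The constant then comes out as follows:
\begin{itemize}
\item The rescaling costs $4^2=16$ per product.
\item Each window contributes a factor at most $2C_{\ref{lem:smoothdecay2},2}\max(1,(C_{\ref{def:unipair}}/2\pi)^2)$. The support of $\widehat{\phi_i}$ has length at most $2$, so the leading factor is $2$, not your $2^2$.
\item The pair $(b,b)$ occurs with multiplicity $2$.
\end{itemize}
Together these give exactly the $128$ in the constant. In particular, the factor $\max(1,(C_{\ref{def:unipair}}/2\pi)^4)$ comes from squaring the window decay bound of Lemma~\ref{lem:smoothdecay2}, not from the rescaling as you guessed.
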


\begin{proof}
For $j\in \Z$, we write $M_j=M_{0,j}+M_{1,j}$, where
\[
M_{0,j}= ({\phi}_{0,a(j-1)})^{\otimes 2} -({\phi}_{0,a(j)})^{\otimes 2}
\]
and
\[
M_{1,j}=({\phi}_{0,a(j-1)}-{\phi}_{1,a(j)})^{\otimes 2} + ({\phi}_{0,a(j)})^{\otimes 2} -({\phi}_{0,a(j-1)})^{\otimes 2}.
\]
For $i\in [2)$, we also set $\mathbf{M}_i=(M_{i,j})_{j\in \Z}$. By Proposition~\ref{P:one-scale-estimate-window},
\[
\|\mathbf{M}_0\|_{\rm M(1)} \leq C_{\ref{P:one-scale-estimate-window}}.
\]

We proceed by estimating $\mathbf{M}_1$.
Using Lemma~\ref{L:fourier-transform-window} and the fact that $\phi_0$ is an even function, we deduce that for every $\xi \in \R$,
\[
\widehat{M_{1,j}}(\xi,-\xi)=0.
\]
Let $b=a/4$, then $b\in A$ by Proposition~\ref{Operations on spaced sequences}.
If $(\xi,\eta) \in \R^2$ is such that $\widehat{M_{1,j}}(\xi,\eta) \neq 0$ then $|\xi+\eta| \leq |\xi|+|\eta| \leq 2(a(j-1))^{-1}=2^{-1} (b(j-1))^{-1}$.

By Lemma~\ref{lem:smoothdecay2}, we get for $i\in [2)$ and $x\in \R$,
\[
|\phi_i(x)| \leq 2 C_{\ref{lem:smoothdecay2},2} \max(1,(C_{\ref{def:unipair}}/2\pi)^2) \langle x \rangle^2.
\]
Thus, for $v\in \R^2$,
\[
|M_{1,j}(v)|
\leq 4 C_{\ref{lem:smoothdecay2},2}^2 \max(1,(C_{\ref{def:unipair}}/2\pi)^4)\Big[
(\langle v_0 \rangle^2_{(a(j-1))}+\langle v_0 \rangle^2_{(a(j))})
(\langle v_1 \rangle^2_{(a(j-1))}+\langle v_1 \rangle^2_{(a(j))})
\]
\[
+\langle v_0 \rangle^2_{a(j)} \langle v_1 \rangle^2_{(a(j))}+\langle v_0 \rangle^2_{(a(j-1))} \langle v_1 \rangle^2_{(a(j-1))} \Big].
\]
Applying Lemma~\ref{lem:scaleest} and lowering all exponents to $3/2$, each of the two diagonal scale pairs occurs twice in the preceding bracket, while each off-diagonal pair occurs once. Thus the expression is bounded by
\[
128 C_{\ref{lem:smoothdecay2},2}^2 \max(1,(C_{\ref{def:unipair}}/2\pi)^4)
\sum_{(t_0,t_1,u) \in \mathcal P} \langle (W_u v)_0 \rangle^{3/2}_{(t_0(j))} \langle (W_u v)_1 \rangle^{3/2}_{(t_1(j))},
\]
where $\mathcal P$ consists of all triples of the form $(t_0,t_1,0)$, where for each $i\in [2)$ either $t_i(j)=b(j-1)$ for all $j\in \Z$, or $t_i(j)=b(j)$ for all $j\in \Z$. In particular, $\# \mathcal P=4$. Applying Proposition~\ref{P:vanishing-diagonal-reduction}, we thus get
\[
\|\mathbf{M}_1\|_{\rm M(1)} \leq 128 \max(1,(C_{\ref{def:unipair}}/2\pi)^4) C_{\ref{lem:smoothdecay2},2}^2C_{\ref{P:diagonal-band-reduction}}.
\]
\end{proof}

\begin{lemma}[constant $C_{\ref{P:induct-positive-terms-reduction-non-whitney}}$ \auto]\label{constant non Whitney reduction}\uses{auto:constant-one-scale-window,lem:smoothdecay2,P:diagonal-band-reduction,constant diagonal band reduction,P:induct-positive-terms-reduction-non-whitney}\usesdefs{def:unipair}
\lean{Auto.constantNonWhitneyReduction}
\leanok
\begin{equation}\label{constant non Whitney reduction bound}
C_{\ref{P:induct-positive-terms-reduction-non-whitney}}
<\tfrac89 2^{541}<2^{541}.
\end{equation}
\end{lemma}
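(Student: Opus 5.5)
The plan is to substitute the known values into the defining formula
\[
C_{\ref{P:induct-positive-terms-reduction-non-whitney}}=C_{\ref{P:one-scale-estimate-window}}+128\max(1,(C_{\ref{def:unipair}}/2\pi)^4)\,C_{\ref{lem:smoothdecay2},2}^2\,C_{\ref{P:diagonal-band-reduction}}
\]
from Proposition \ref{P:induct-positive-terms-reduction-non-whitney} and bound each factor separately.

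\begin{enumerate}
\item Lemma \ref{auto:constant-one-scale-window} gives $C_{\ref{P:one-scale-estimate-window}}=2^{39}$.
\item $C_{\ref{lem:smoothdecay2},2}=2^2$, so $C_{\ref{lem:smoothdecay2},2}^2=2^4$.
\item Since $C_{\ref{def:unipair}}=2^{15}$ and $2\pi>6$, we get $C_{\ref{def:unipair}}/2\pi<2^{15}/6$. Hence the maximum equals $(C_{\ref{def:unipair}}/2\pi)^4<2^{60}/1296$.
\item Lemma \ref{constant diagonal band reduction} gives $C_{\ref{P:diagonal-band-reduction}}<\tfrac{63}{64}2^{480}$.
\end{enumerate}

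Multiplying these, the second summand is less than
\[
2^7\cdot 2^4\cdot \tfrac{2^{60}}{1296}\cdot \tfrac{63}{64}2^{480}=\tfrac{63}{64\cdot 1296}\,2^{551}.
\]
Since $2^{10}/1296=1024/1296<0.8$, this is at most $\tfrac{63}{64}\cdot 0.8\cdot 2^{541}<0.79\cdot 2^{541}$.

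Adding the first summand $2^{39}$, which is negligible against $2^{541}$, keeps the total below $\tfrac89 2^{541}$, since $0.79<0.888\ldots$. The last inequality, $\tfrac89 2^{541}<2^{541}$, is trivial.

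The only step needing any care is the arithmetic margin. With the crude bound $\pi>3$ the slack is comfortable, so no sharper estimate on $\pi$ should be needed.
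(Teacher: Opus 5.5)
Your proposal is correct and follows the paper's proof. Like the paper, you substitute $C_{\ref{P:one-scale-estimate-window}}=2^{39}$, $C_{\ref{def:unipair}}=2^{15}$, $C_{\ref{lem:smoothdecay2},2}=2^2$, the bound from Lemma \ref{constant diagonal band reduction}, and $\pi>3$ into the defining sum. The only difference is cosmetic: the paper evaluates the second term exactly as $2^{11}(2^{15}/6)^4\tfrac{63}{64}2^{480}=\tfrac79 2^{541}$ and uses $2^{39}<\tfrac19 2^{541}$, whereas you round through decimals to $0.79\cdot 2^{541}$, which is equally valid.
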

\begin{proof}[Proof \auto]
Use Lemma \ref{auto:constant-one-scale-window}, $C_{\ref{def:unipair}}=2^{15}$, $C_{\ref{lem:smoothdecay2},2}=2^2$, and Lemma \ref{constant diagonal band reduction}, which unfolds the constant from Proposition \ref{P:diagonal-band-reduction}. Since $\pi>3$, the second term in the defining sum is smaller than
\[
2^{11}\left(\tfrac{2^{15}}6\right)^4\tfrac{63}{64}2^{480}
=\tfrac79 2^{541}.
\]
The first term is $2^{39}<\frac19 2^{541}$, which proves the claim.
\end{proof}

\begin{proposition}[induct positive terms - reduction variant, non-Whitney, skip terms]
\label{P:induct-positive-terms-reduction-non-whitney-skip}\uses{positive terms,P:induct-positive-terms-reduction-non-whitney}\usesdefs{normalized function tuples,auto:prism-form-definition,multiplicatively spaced monotone sequences,geometric parameters,double sequence of 2D functions,kernel sequences,def:unipair}
\lean{Auto.inductPositiveTermsReductionNonWhitneySkip}
\leanok
Let $a\in A$ and let
\begin{equation}\label{auto:skipped-non-Whitney-kernel}\tilde{M}_j=({\phi}_{0,a(2j)}-{\phi}_{1,a(2j+1)})^{\otimes 2}.\end{equation}
Then with $\tilde{\M}=(\tilde{M}_j)_{j\in \Z}$,
\begin{equation}\label{E:est-skipped-terms}
\|\tilde{\M}\|_{\rm M(1)}\le C_{\ref{P:induct-positive-terms-reduction-non-whitney-skip}},
\end{equation}
where $C_{\ref{P:induct-positive-terms-reduction-non-whitney-skip}}= 2^{1-2^{2-n}} C_{\ref{P:induct-positive-terms-reduction-non-whitney}}$.
\end{proposition}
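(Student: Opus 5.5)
The plan is to reduce to Proposition \ref{P:induct-positive-terms-reduction-non-whitney} by recognizing the skipped sequence as a subsequence of the non-Whitney sequence, and then to use positivity to discard the remaining terms.

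With $M_j=(\phi_{0,a(j-1)}-\phi_{1,a(j)})^{\otimes 2}$ as in \eqref{auto:non-Whitney-kernel}, we have $\tilde M_j=M_{2j+1}$ for every $j\in\Z$. Fix $J\ge1$ and $\F\in\mathfrak F$. Then
\[
\sum_{j\in[J)}\tilde M_j=\sum_{\substack{l\in[2J)\\ l\ \text{odd}}}M_l
=\sum_{l\in[2J)}M_l-\sum_{\substack{l\in[2J)\\ l\ \text{even}}}M_l .
\]

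Each $M_l$ has the form $\psi^{\otimes2}$ with $\psi=\phi_{0,a(l-1)}-\phi_{1,a(l)}$. This $\psi$ is a real-valued Schwartz function, hence lies in $W_0(\R)$ by Proposition \ref{P:schwartz-into-wiener}. Lemma \ref{lem:form_pos}(ii) therefore gives $\Lambda_1(M_l)(\F)\ge0$ for every $l$; alternatively one can invoke Proposition \ref{Positivity M} with $k=1$. By linearity of $\Lambda_1$,
\[
0\le\Lambda_1\Big(\sum_{j\in[J)}\tilde M_j\Big)(\F)\le\Lambda_1\Big(\sum_{l\in[2J)}M_l\Big)(\F).
\]

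Definition \ref{kernel sequences} and Proposition \ref{P:induct-positive-terms-reduction-non-whitney} bound the right-hand side by
\[
C_{\ref{P:induct-positive-terms-reduction-non-whitney}}\max\big(1,(2J)^{1-2^{2-n}}\big).
\]
We then split into cases.
\begin{itemize}
\item For $n\ge3$, write $(2J)^{1-2^{2-n}}=2^{1-2^{2-n}}J^{1-2^{2-n}}$. Multiplying by $\min(1,J^{-1+2^{2-n}})=J^{-1+2^{2-n}}$ gives at most $2^{1-2^{2-n}}C_{\ref{P:induct-positive-terms-reduction-non-whitney}}$.
\item For $n=2$, the exponent vanishes, the weight is $1$, and the claimed constant $2^{0}C_{\ref{P:induct-positive-terms-reduction-non-whitney}}$ follows directly.
\end{itemize}
Taking the supremum over $J$ and $\F$ yields \eqref{E:est-skipped-terms}.

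The only point requiring care is the positivity step: confirming that $\psi$ is real-valued and in $W_0$, so that Lemma \ref{lem:form_pos} applies. The rest is bookkeeping of indices and of the exponent $-1+2^{2-n}$, including the degenerate case $n=2$.
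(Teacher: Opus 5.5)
Your proposal is correct and follows the paper's proof. You identify $\tilde M_j=M_{2j+1}$, use nonnegativity of every $\Lambda_1(M_l)(\F)$ to bound the odd-index partial sum by $\Lambda_1\bigl(\sum_{l\in[2J)}M_l\bigr)(\F)$, and apply Proposition \ref{P:induct-positive-terms-reduction-non-whitney} with $2J$ in place of $J$.

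The only differences are cosmetic. The paper gets positivity from Proposition \ref{positive terms} with $\gamma=(1,0,(a,a))$ rather than from Lemma \ref{lem:form_pos}(ii). Your separate case $n=2$ is unnecessary, since $(2J)^{1-2^{2-n}}=2^{1-2^{2-n}}J^{1-2^{2-n}}$ covers all $n\ge2$ at once.
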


\begin{proof}
Let $\gamma=(1,0,(a,a))$, then $\gamma \in \Gamma$. For $j\in \Z$, we set
\[
X_{0,j}=({\phi}_{0,a(2j-1)}-{\phi}_{1,a(2j)})^{\otimes 2}.
\]
Then $X=(X_{i,j})_{i\in [1),j\in \Z} \in \mathcal X_1$. By Proposition~\ref{positive terms}, we deduce that for any $j\in \Z$ and $\mathbf{F} \in \mathfrak{F}$,
\begin{equation}\label{E:intermediate-terms-nonnegative}
\Lambda_1(X_{0,j})(\mathbf{F}) \geq 0.
\end{equation}
The same proposition, applied with $\widetilde X_{0,j}=\widetilde M_j$, also gives
\[
\Lambda_1(\widetilde M_j)(\mathbf F)\ge0.
\]
Let $(M_j)_{j\in \Z}$ be as in Proposition~\ref{P:induct-positive-terms-reduction-non-whitney} and let $J\in\N$ with $J\ge1$. By linearity, \eqref{E:intermediate-terms-nonnegative}, and Proposition~\ref{P:induct-positive-terms-reduction-non-whitney}, we get that for every $\mathbf{F} \in \mathfrak{F}$,
\[
0\le\Lambda_1(\sum_{j\in [J)} \tilde{M}_j)(\mathbf{F})
\leq \Lambda_1(\sum_{j\in [2J)} M_j)(\mathbf{F})
\leq C_{\ref{P:induct-positive-terms-reduction-non-whitney}} 2^{1-2^{2-n}} J^{1-2^{2-n}}.
\]
This yields~\eqref{E:est-skipped-terms}.
\end{proof}

\begin{lemma}[constant $C_{\ref{P:induct-positive-terms-reduction-non-whitney-skip}}$ \auto]\label{constant non Whitney skip reduction}\uses{constant non Whitney reduction,P:induct-positive-terms-reduction-non-whitney-skip}\usesdefs{}
\lean{Auto.constantNonWhitneySkipReduction}
\leanok
\begin{equation}\label{constant non Whitney skip reduction bound}
C_{\ref{P:induct-positive-terms-reduction-non-whitney-skip}}
<\tfrac89 2^{542}<2^{542}.
\end{equation}
\end{lemma}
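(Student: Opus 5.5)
The statement is a pure constant bound: by definition in Proposition~\ref{P:induct-positive-terms-reduction-non-whitney-skip}, $C_{\ref{P:induct-positive-terms-reduction-non-whitney-skip}}=2^{1-2^{2-n}}C_{\ref{P:induct-positive-terms-reduction-non-whitney}}$. The plan is to bound each factor separately and multiply, with no analysis involved.

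For the first factor, since $n\ge 2$ we have $2^{2-n}\in(0,1]$, so $1-2^{2-n}\in[0,1)$ and hence $2^{1-2^{2-n}}\le 2$ (equality is never attained, but $\le 2$ suffices). For the second factor, Lemma~\ref{constant non Whitney reduction} gives the strict bound $C_{\ref{P:induct-positive-terms-reduction-non-whitney}}<\tfrac89 2^{541}$.

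Multiplying, using positivity of both factors,
\[
C_{\ref{P:induct-positive-terms-reduction-non-whitney-skip}}\le 2\,C_{\ref{P:induct-positive-terms-reduction-non-whitney}}<2\cdot\tfrac89 2^{541}=\tfrac89 2^{542},
\]
and trivially $\tfrac89 2^{542}<2^{542}$. The only point requiring any care is to invoke the strict inequality from Lemma~\ref{constant non Whitney reduction}, so the overall inequality stays strict even though the exponent bound $2^{1-2^{2-n}}\le2$ may be used non-strictly.
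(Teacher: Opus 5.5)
Your proposal is correct and matches the paper's proof exactly. Both bound the factor $2^{1-2^{2-n}}$ by $2$ and multiply by the strict bound $C_{\ref{P:induct-positive-terms-reduction-non-whitney}}<\tfrac89 2^{541}$ from Lemma~\ref{constant non Whitney reduction}.
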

\begin{proof}[Proof \auto]
The factor $2^{1-2^{2-n}}$ is at most $2$. Lemma \ref{constant non Whitney reduction} therefore gives the displayed estimate.
\end{proof}

\begin{proposition}[induct positive terms - reduction variant, Whitney, with gap]
\label{P:induct-positive-terms-reduction-whitney-gap}\uses{lem:Phij_prop,lem:form_pos,P:induct-positive-terms-reduction-non-whitney-skip,P:vanishing-diagonal-reduction,lem:smoothdecay2,lem:scaleest,P:diagonal-band-reduction}\usesdefs{bracket bump,normalized function tuples,auto:prism-form-definition,multiplicatively spaced monotone sequences,auto:unitary-matrices-definition,kernel sequences,def:cn-window,def:unipair}
\lean{Auto.inductPositiveTermsReductionWhitneyGap}
\leanok
Let $a\in {\rm A}$ satisfy $a(j)\geq 2^{11}a(j-1)$ for every $j\in\mathbb Z$. Let $M_j:\mathbb{R}^2\to \mathbb{R}$ be of the form
\begin{equation}\label{auto:Whitney-gap-kernel}
M_j=\Psi_{(a(j))}
\end{equation}
for every $j\in\Z$, where $\Psi:\mathbb{R}^2\to \mathbb{R}$ is a Schwartz function satisfying $\Psi(v_0,v_1)=\Psi(v_1,v_0)$ for all $v=(v_0,v_1)\in\mathbb{R}^2$,
\begin{equation}\label{p1-on-W_pos}
\int_{\mathbb{R}^2} g(v_0){g(v_1)}\Psi(v)\,dv\ge 0
\end{equation}
for all bounded measurable $g:\mathbb{R}\to\mathbb{R}$,
\begin{equation}\label{p1-on-W_supp}
\mathrm{supp}(\widehat{\Psi}) \subset \mathrm{Ann}_1(1, 2^3)^2,
\end{equation}
for $m\in [3)$ and $\xi \in \R$,
\begin{equation}\label{p1-on-W_der}
\Big|\tfrac{d^m}{d\xi^{m}} \widehat{\Psi}(\xi,-\xi)\Big| \leq 1,
\end{equation}
and for all $v\in\mathbb{R}^2$,
\begin{equation}\label{p1-on-W_decay}
|\Psi(v)| \le \sum_{u=0}^1 \langle(W_u v)_0\rangle^{3/2} \langle (W_u v)_1\rangle^{3/2}.
\end{equation}
Then with $\M=(M_j)_{j\in\Z}$,
\begin{equation}\label{auto:Whitney-gap-reduction-bound}
\|\M\|_{\rm M(1)} \le C_{\ref{P:induct-positive-terms-reduction-whitney-gap}}.
\end{equation}
where
\begin{equation}\label{auto:Whitney-gap-constant-definition}
C_{\ref{P:induct-positive-terms-reduction-whitney-gap}}
=2C_{\ref{P:induct-positive-terms-reduction-non-whitney-skip}}
+2^{19}(C_{\ref{def:unipair}}^2+C_{\ref{lem:Phij_prop}}^2)C_{\ref{P:diagonal-band-reduction}}.
\end{equation}
\end{proposition}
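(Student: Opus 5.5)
We will compare $\Psi_{(a(j))}$ with a positive tensor-square kernel whose diagonal Fourier restriction is known. The vanishing-diagonal remainder is then handled by Proposition \ref{P:vanishing-diagonal-reduction}. Apply Lemma \ref{lem:Phij_prop} to $\Psi$; its hypotheses are exactly \eqref{p1-on-W_pos}, \eqref{p1-on-W_supp}, \eqref{p1-on-W_der} together with symmetry. This produces an even real $\psi$ with $\widehat\psi^2=2(\widehat{\phi_{0,-5}}-\widehat{\phi_{1,5}})^2-\widehat\Psi(\xi,-\xi)$, with support in $\mathrm{Ann}_1(1,2^6)$ and derivative bounds by $C_{\ref{lem:Phij_prop}}$. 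Rescaling by $a(j)$, we decompose
\[
M_j=\Psi_{(a(j))}=2\bigl(\phi_{0,2^{-5}a(j)}-\phi_{1,2^{5}a(j)}\bigr)^{\otimes2}-(\psi_{(a(j))})^{\otimes2}-R_j ,
\]
which defines $R_j$. By Lemma \ref{L:fourier-transform-window}-type algebra and the identity for $\widehat\psi^2$, the diagonal Fourier restriction $\widehat{R_j}(\xi,-\xi)$ vanishes, since $\phi_i$ and $\psi$ are even.

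For the first piece, set $b(2j)=2^{-5}a(j)$ and $b(2j+1)=2^5a(j)$. The gap $a(j)\ge 2^{11}a(j-1)$ gives $b(2j+1)\ge 2b(2j)$ and $b(2j+2)=2^{-5}a(j+1)\ge 2^{6}a(j)\ge 2b(2j+1)$, so $b\in A$. The first sum is then $2\sum_j(\phi_{0,b(2j)}-\phi_{1,b(2j+1)})^{\otimes 2}$, and Proposition \ref{P:induct-positive-terms-reduction-non-whitney-skip} bounds its ${\rm M}(1)$-contribution by $2C_{\ref{P:induct-positive-terms-reduction-non-whitney-skip}}$. The second piece $-(\psi_{(a(j))})^{\otimes2}$ has $\Lambda_1\ge0$ by Lemma \ref{lem:form_pos}(ii), and the same lemma gives $\Lambda_1(M_j)\ge 0$ via \eqref{p1-on-W_pos}. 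To use this positivity, write $M_j+(\psi_{(a(j))})^{\otimes2}=2(\cdots)^{\otimes2}-R_j$. Then
\[
0\le \Lambda_1\Bigl(\sum_j M_j\Bigr)\le \Lambda_1\Bigl(\sum_j\bigl(M_j+\psi^{\otimes2}_{(a(j))}\bigr)\Bigr)\le 2C_{\ref{P:induct-positive-terms-reduction-non-whitney-skip}}J^{1-2^{2-n}}+\Bigl|\Lambda_1\Bigl(\sum_j R_j\Bigr)\Bigr| .
\]
So it remains to bound $\|(R_j)\|_{{\rm M}(1)}$ by $2^{19}(C_{\ref{def:unipair}}^2+C_{\ref{lem:Phij_prop}}^2)C_{\ref{P:diagonal-band-reduction}}$.

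For $R_j$ we verify the hypotheses of Proposition \ref{P:vanishing-diagonal-reduction}, which forwards the hypotheses of Proposition \ref{lem:increase-data-bracket-domination}. We need four things:
\begin{itemize}
\item the diagonal vanishing, which is established above;
\item Fourier support of $\widehat{R_j}$ in $|\xi+\eta|\lesssim$ the appropriate reciprocal scale;
\item a pointwise bound by $\sum_{\mathcal P}$ of $3/2$-brackets with $\#\mathcal P\le5$;
\item the one-sided scale condition \eqref{E:increase-data-one-sided-scales} and $\mathcal P\subset B_{\dist}(\tilde a,1)^2\times[2)$ for a suitable $\tilde a\in A$.
\end{itemize}
All constituents have Fourier support in $|\xi|,|\eta|\le 2^5a(j)^{-1}$. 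This suggests choosing $\tilde a(j)=c\,a(j)$ with $c$ a suitable power of $2$ chosen small, namely $c=2^{-6}$ or so, so that $|\xi+\eta|\le 2^{-1}\tilde a(j-1)^{-1}$. I expect this step to be the main obstacle. The scale bookkeeping is tight: the windows sit at scales $2^{\pm5}a(j)$ while $\Psi$ sits at $a(j)$, so every bracket must be brought to a common scale sequence within distance one of $\tilde a$ and lying below $\tilde a(j)$. Here the $2^{11}$ gap is what lets the sequences $\tilde a(\cdot)$ and $\tilde a(\cdot-1)$, or $2^{\pm k}$-rescalings of $a$, absorb the spread. Lemma \ref{lem:scaleest} converts scale mismatches into factors $2^{5}$ or so.

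For the pointwise bound, $\phi_i$ and $\psi$ decay like $\langle x\rangle^2$ by Lemma \ref{lem:smoothdecay2}, with constants $C_{\ref{def:unipair}}$ and $C_{\ref{lem:Phij_prop}}$. The kernel $\Psi$ obeys \eqref{p1-on-W_decay}. Lowering exponents to $3/2$ and collecting terms gives at most five triples: tensor terms with $u=0$, and $\Psi$'s $u=1$ term. The combinatorial constant from this collection, together with the rescaling factors, should produce the factor $2^{19}$. After dividing $R_j$ by this constant, Proposition \ref{P:vanishing-diagonal-reduction} gives $\|(R_j)\|_{{\rm M}(1)}\le 2^{19}(C_{\ref{def:unipair}}^2+C_{\ref{lem:Phij_prop}}^2)C_{\ref{P:diagonal-band-reduction}}$, completing the bound.
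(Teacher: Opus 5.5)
Your decomposition $M_j=2(\phi_{0,2^{-5}a(j)}-\phi_{1,2^{5}a(j)})^{\otimes2}-(\psi_{(a(j))})^{\otimes2}-R_j$, the interleaved sequence $b$, the positivity sandwich via Lemma \ref{lem:form_pos}, and the use of Proposition \ref{P:induct-positive-terms-reduction-non-whitney-skip} are exactly the paper's argument; your $R_j$ is the paper's $-M_{3,j}$. The weak point is the step you flag as the main obstacle, and your proposed resolution $\tilde a=2^{-6}a$ goes in the wrong direction. The paper applies Proposition \ref{P:vanishing-diagonal-reduction} with base sequence $a$ itself and $\mathcal P=\{(a,a,0),(a,a,1)\}$. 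Then $\mathcal P\subset B_{\dist}(a,1)^2\times[2)$ and the one-sided condition \eqref{E:increase-data-one-sided-scales} hold trivially, with equality.

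The Fourier condition is stated in terms of $a(j-1)$, not $a(j)$, so the gap handles it directly. Note that $\widehat\psi$ lives in $\mathrm{Ann}_1(1,2^6)$, so the supports reach $2^6a(j)^{-1}$, not $2^5a(j)^{-1}$. This gives $|\xi+\eta|\le 2^7a(j)^{-1}\le 2^{-4}a(j-1)^{-1}$ from $a(j)\ge2^{11}a(j-1)$. With $\tilde a=2^{-6}a$, by contrast, the scale-$a(j)$ brackets violate $t_\ell(j)\le\tilde a(j)$ and lie at distance up to $6$ from $\tilde a$. You would then have to dominate wide brackets by narrow ones, which costs extra powers of $2$. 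For the $3/2$-brackets of $\Psi$ this also needs a variant of Lemma \ref{lem:scaleest}, which is only stated for exponent $2$. So that route does not give the stated constant.

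With base $a$, the constant is a short computation rather than a collection of five triples. Lemma \ref{lem:smoothdecay2} with $N=2$ gives $|\phi_i|\le 2^3C_{\ref{def:unipair}}\langle\cdot\rangle^2$, and, using a support of measure at most $2^7$, $|\psi|\le 2^9C_{\ref{lem:Phij_prop}}\langle\cdot\rangle^2$. Lemma \ref{lem:scaleest} moves the window scales $2^{\pm5}$ to $1$ at cost $2^5$ each, so $|\phi_{0,2^{-5}}-\phi_{1,2^5}|\le 2^9C_{\ref{def:unipair}}\langle\cdot\rangle^2$. After rescaling by $a(j)$ and lowering exponents from $2$ to $3/2$, $R_j$ is bounded by brackets at scale $a(j)$ in two orientations:
\begin{itemize}
\item on the $u=0$ product, with coefficient $2\cdot2^{18}C_{\ref{def:unipair}}^2+2^{18}C_{\ref{lem:Phij_prop}}^2+1\le 2^{19}(C_{\ref{def:unipair}}^2+C_{\ref{lem:Phij_prop}}^2)$ (the $u=0$ term of \eqref{p1-on-W_decay} lands here too, not only its $u=1$ term);
\item on the $u=1$ product, with coefficient $1$.
\end{itemize}
Dividing $R_j$ by $2^{19}(C_{\ref{def:unipair}}^2+C_{\ref{lem:Phij_prop}}^2)$ and applying Proposition \ref{P:vanishing-diagonal-reduction} finishes the proof with the stated constant.
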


\begin{proof}[Proof \auto]
Use Lemma \ref{lem:Phij_prop} to obtain a function $\psi$ satisfying \eqref{Phij_psi_diag} and \eqref{psi_decay}. Define $\alpha(2j)=2^{-5}a(j)$ and $\alpha(2j+1)=2^5a(j)$. Then $\alpha\in A$. By \eqref{Phij_psi_diag}, for each $j\in\Z$,
\begin{equation}\label{psi_diag}
\widehat{\psi_{(a(j))}}(\xi)^2
=2(\widehat{\phi_{0,\alpha(2j)}}-\widehat{\phi_{1,\alpha(2j+1)}})(\xi)^2
-\widehat{\Psi_{(a(j))}}(\xi,-\xi).
\end{equation}
Decompose $M_j=M_{0,j}-M_{1,j}$ with
\[
M_{0,j}=\Psi_{(a(j))}+(\psi_{(a(j))})^{\otimes2},
\qquad
M_{1,j}=(\psi_{(a(j))})^{\otimes2}.
\]
Let $J\ge1$, $M=\sum_{j\in[J)}M_j$, and $\F\in\mathfrak F$. The condition \eqref{p1-on-W_pos} and Lemma \ref{lem:form_pos}, applied term by term, give
\[
\Lambda_1(M)(\F)\ge0,
\qquad
\Lambda_1\Big(\sum_{j\in[J)}M_{1,j}\Big)(\F)\ge0.
\]
Therefore,
\begin{equation}\label{K0_split}
|\Lambda_1(M)(\F)|
\le\Lambda_1\Big(\sum_{j\in[J)}M_{0,j}\Big)(\F).
\end{equation}

Write $M_{0,j}=M_{2,j}+M_{3,j}$, where
\[
M_{2,j}=2(\phi_{0,\alpha(2j)}-\phi_{1,\alpha(2j+1)})^{\otimes2},
\qquad
M_{3,j}=M_{0,j}-M_{2,j}.
\]
Proposition \ref{P:induct-positive-terms-reduction-non-whitney-skip} gives
\[
\|(M_{2,j})_{j\in\Z}\|_{\rm M(1)}
\le2C_{\ref{P:induct-positive-terms-reduction-non-whitney-skip}}.
\]
It remains to estimate $(M_{3,j})_{j\in\Z}$ by Proposition \ref{P:vanishing-diagonal-reduction}.

By \eqref{psi_diag},
\[
\widehat{M_{3,j}}(\xi,-\xi)=0.
\]
The Fourier supports of
\[
\Psi,
\qquad
\psi^{\otimes2},
\qquad
(\phi_{0,2^{-5}}-\phi_{1,2^5})^{\otimes2}
\]
are contained in $\mathrm{Ann}_1(1,2^6)^2$. Hence, on the support of $\widehat{M_{3,j}}$,
\begin{equation}\label{xi_plus_eta}
|\xi+\eta|\le2^7a(j)^{-1}\le2^{-1}a(j-1)^{-1}.
\end{equation}

Lemma \ref{lem:smoothdecay2} with $N=2$ gives
\[
|\phi_i(x)|\le2^3C_{\ref{def:unipair}}\langle x\rangle^2
\]
for $i\in[2)$. Thus, by Lemma \ref{lem:scaleest},
\[
|(\phi_{0,2^{-5}}-\phi_{1,2^5})^{\otimes2}(v)|
\le2^{18}C_{\ref{def:unipair}}^2\langle v_0\rangle^2\langle v_1\rangle^2.
\]
Using \eqref{psi_decay}, the support bound for $\widehat\psi$, and Lemma \ref{lem:smoothdecay2} with $N=2$, we also have
\begin{equation}\label{decay_2}
|\psi^{\otimes2}(v)|
\le2^{18}C_{\ref{lem:Phij_prop}}^2
\langle v_0\rangle^2\langle v_1\rangle^2.
\end{equation}
Together with \eqref{p1-on-W_decay}, rescaling, and $\langle\cdot\rangle^2\le\langle\cdot\rangle^{3/2}$, this gives
\[
c^{-1}|M_{3,j}(v)|
\le
\langle(W_1v)_0\rangle_{a(j)}^{3/2}
\langle(W_1v)_1\rangle_{a(j)}^{3/2}
+
\langle v_0\rangle_{a(j)}^{3/2}
\langle v_1\rangle_{a(j)}^{3/2},
\]
where
\[
c=2^{19}(C_{\ref{def:unipair}}^2+C_{\ref{lem:Phij_prop}}^2).
\]
Proposition \ref{P:vanishing-diagonal-reduction} now yields
\[
\|(M_{3,j})_{j\in\Z}\|_{\rm M(1)}
\le2^{19}(C_{\ref{def:unipair}}^2+C_{\ref{lem:Phij_prop}}^2)
C_{\ref{P:diagonal-band-reduction}},
\]
which proves the claim.
\end{proof}

\begin{lemma}[constant $C_{\ref{P:induct-positive-terms-reduction-whitney-gap}}$ \auto]\label{constant Whitney gap reduction}\uses{constant non Whitney skip reduction,P:induct-positive-terms-reduction-whitney-gap,constant Phij proposition,constant diagonal band reduction}\usesdefs{def:unipair}
\lean{Auto.constantWhitneyGapReduction}
\leanok
\begin{equation}\label{constant Whitney gap reduction bound}
C_{\ref{P:induct-positive-terms-reduction-whitney-gap}}
<\tfrac{127}{128}2^{553}<2^{553}.
\end{equation}
\end{lemma}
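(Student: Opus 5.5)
The plan is a pure numerical verification. We unfold the defining formula \eqref{auto:Whitney-gap-constant-definition}, insert the previously established upper bounds for each constituent constant, and show that the two summands together stay below $\tfrac{127}{128}2^{553}$.

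The inputs are all available from earlier results:
\begin{itemize}
\item Lemma \ref{constant non Whitney skip reduction} gives $C_{\ref{P:induct-positive-terms-reduction-non-whitney-skip}}<\tfrac89 2^{542}$.
\item Definition \ref{def:unipair} gives $C_{\ref{def:unipair}}=2^{15}$.
\item Lemma \ref{constant Phij proposition} gives $C_{\ref{lem:Phij_prop}}=2^{27}$.
\item Lemma \ref{constant diagonal band reduction} gives $C_{\ref{P:diagonal-band-reduction}}<\tfrac{63}{64}2^{480}$.
\end{itemize}

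For the first summand we have $2C_{\ref{P:induct-positive-terms-reduction-non-whitney-skip}}<\tfrac89 2^{543}$, which is tiny relative to $2^{553}$, namely at most $2^{-10}\cdot 2^{553}$.

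For the second summand we bound $C_{\ref{def:unipair}}^2+C_{\ref{lem:Phij_prop}}^2=2^{30}+2^{54}=2^{54}(1+2^{-24})$. Multiplying this by $2^{19}$ and by $\tfrac{63}{64}2^{480}$ gives at most $\tfrac{63}{64}(1+2^{-24})\,2^{553}$.

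Adding the two pieces, we need
\[
\tfrac{63}{64}(1+2^{-24})+\tfrac89 2^{-10}<\tfrac{127}{128}.
\]
The left-hand side is $\tfrac{126}{128}$ plus a quantity below $2^{-9}$. The gap to $\tfrac{127}{128}$ is $\tfrac{1}{128}=2^{-7}$, so the inequality holds.

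There is no real obstacle. The only care needed is to confirm that the headroom in the fraction $\tfrac{63}{64}$ coming from Lemma \ref{constant diagonal band reduction} absorbs both the $2^{30}$ correction term and the first summand, and the computation above shows it does with room to spare.
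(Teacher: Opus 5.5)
Your proposal is correct and is the same computation as the paper's: you bound the first term by $\tfrac89 2^{543}$ via Lemma \ref{constant non Whitney skip reduction}, and the second by $2^{19}(2^{30}+2^{54})\tfrac{63}{64}2^{480}$ via Lemmas \ref{constant Phij proposition} and \ref{constant diagonal band reduction}, then add. Your explicit check that the excess over $\tfrac{126}{128}$ is below $2^{-9}<2^{-7}$ simply spells out the final comparison that the paper asserts without detail.
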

\begin{proof}[Proof \auto]
Lemma \ref{constant non Whitney skip reduction} bounds the first term in \eqref{auto:Whitney-gap-constant-definition} by
\[
\tfrac89 2^{543}.
\]
Lemmas \ref{constant Phij proposition} and \ref{constant diagonal band reduction}, together with $C_{\ref{def:unipair}}=2^{15}$, bound the second term by
\[
2^{19}(2^{30}+2^{54})\tfrac{63}{64}2^{480}.
\]
The sum of these two bounds is smaller than $\frac{127}{128}2^{553}$.
\end{proof}

\begin{proposition}[induct positive terms - reduction variant, Whitney]
\label{P:induct-positive-terms-reduction-whitney}\uses{P:induct-positive-terms-reduction-whitney-gap}\usesdefs{bracket bump,multiplicatively spaced monotone sequences,auto:unitary-matrices-definition,kernel sequences}
\lean{Auto.inductPositiveTermsReductionWhitney}
\leanok
Let $a\in {\rm A}$. Let $M_j:\mathbb{R}^2\to \mathbb{R}$ be of the form
\begin{equation}\label{auto:Whitney-kernel}
M_j=\Psi_{(a(j))}
\end{equation}
for every $j\in\Z$, where $\Psi:\mathbb{R}^2\to \mathbb{R}$ is a Schwartz function satisfying $\Psi(u_0,u_1)=\Psi(u_1,u_0)$ for all $u=(u_0,u_1)\in\mathbb{R}^2$, and \eqref{p1-on-W_pos}, \eqref{p1-on-W_supp}, \eqref{p1-on-W_der}, \eqref{p1-on-W_decay}. Then with $\M=(M_j)_{j\in\Z}$,
\begin{equation}\label{auto:Whitney-reduction-bound}
\|\M\|_{\rm M(1)} \le C_{\ref{P:induct-positive-terms-reduction-whitney}},
\end{equation}
where
\begin{equation}\label{auto:Whitney-constant-definition}
C_{\ref{P:induct-positive-terms-reduction-whitney}}
=11C_{\ref{P:induct-positive-terms-reduction-whitney-gap}}.
\end{equation}
\end{proposition}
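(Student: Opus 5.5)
The plan is to reduce to Proposition \ref{P:induct-positive-terms-reduction-whitney-gap} by splitting the index set $\Z$ into $11$ residue classes modulo $11$. Fix $r\in[11)$ and define $a_r(j)=a(11j+r)$. Since $a\in{\rm A}$, iterating $2a(j)\le a(j+1)$ gives
\[
a_r(j)=a(11j+r)\ge 2^{11}a(11j+r-11)=2^{11}a_r(j-1).
\]
In particular $a_r(j)>0$ and $2a_r(j)\le a_r(j+1)$, so $a_r\in{\rm A}$ and $a_r$ has the gap required in Proposition \ref{P:induct-positive-terms-reduction-whitney-gap}. The hypotheses on $\Psi$ are identical in both propositions. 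Hence, with $\M^{(r)}=(\Psi_{(a_r(j))})_{j\in\Z}$, that proposition gives
\[
\|\M^{(r)}\|_{\rm M(1)}\le C_{\ref{P:induct-positive-terms-reduction-whitney-gap}}.
\]

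Next I would compare partial sums. Fix $J\ge1$ and $\F\in\mathfrak F$. Write
\[
\sum_{j\in[J)}M_j=\sum_{r\in[11)}\ \sum_{j'\in[J_r)}\Psi_{(a_r(j'))},
\]
where $J_r=\#\{j\in[J):j\equiv r \bmod 11\}\le\lceil J/11\rceil\le J$. Terms with $J_r=0$ vanish. By linearity and the triangle inequality for $\Lambda_1$,
\[
\Big|\Lambda_1\Big(\sum_{j\in[J)}M_j\Big)(\F)\Big|\le\sum_{r:\,J_r\ge1}\Big|\Lambda_1\Big(\sum_{j'\in[J_r)}\Psi_{(a_r(j'))}\Big)(\F)\Big|.
\]
Each summand on the right is at most $C_{\ref{P:induct-positive-terms-reduction-whitney-gap}}\max(1,J_r^{1-2^{2-n}})$, by the definition of the ${\rm M}(1)$ norm.

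The exponent $1-2^{2-n}$ is nonnegative for $n\ge2$, so $J\mapsto\max(1,J^{1-2^{2-n}})$ is nondecreasing. Together with $J_r\le J$, this bounds each summand by $C_{\ref{P:induct-positive-terms-reduction-whitney-gap}}\max(1,J^{1-2^{2-n}})$. Summing over the $11$ values of $r$, then multiplying by $\min(1,J^{-1+2^{2-n}})$ and taking the supremum over $J$ and $\F$, gives the claim with constant $11C_{\ref{P:induct-positive-terms-reduction-whitney-gap}}$.

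There is no real obstacle in this argument. The only points needing care are the monotonicity of the normalizing weight and the fact that the index of the subsequence starts at $0$, which ensures the partial sums over each residue class are exactly the partial sums appearing in $\|\M^{(r)}\|_{\rm M(1)}$.
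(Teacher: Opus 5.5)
Your proposal is correct and follows the paper's proof: split the indices into the eleven residue classes $a(11k+s)$, apply Proposition~\ref{P:induct-positive-terms-reduction-whitney-gap} to each class, and combine the eleven bounds with the triangle inequality. Your additional bookkeeping spells out what the paper leaves implicit. That consists of the fact that each residue class contributes an initial partial sum of the subsequence, together with the monotonicity of the normalizing weight in $J$.
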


\begin{proof}[Proof \auto]
For $s\in[11)$, define
\[
a^{(s)}(k)=a(11k+s),
\qquad
M^{(s)}_k=M_{11k+s}.
\]
Since $a\in A$,
\[
a^{(s)}(k)\ge2^{11}a^{(s)}(k-1),
\qquad
M^{(s)}_k=\Psi_{(a^{(s)}(k))}.
\]
Split any finite sum into these eleven residue classes and apply Proposition \ref{P:induct-positive-terms-reduction-whitney-gap} to each class. The triangle inequality gives
\[
\|\M\|_{\rm M(1)}
\le11C_{\ref{P:induct-positive-terms-reduction-whitney-gap}}
= C_{\ref{P:induct-positive-terms-reduction-whitney}}.
\]
\end{proof}

\begin{lemma}[constant $C_{\ref{P:induct-positive-terms-reduction-whitney}}$ \auto]\label{constant Whitney reduction}\uses{constant Whitney gap reduction,P:induct-positive-terms-reduction-whitney}\usesdefs{}
\lean{Auto.constantWhitneyReduction}
\leanok
\begin{equation}\label{constant Whitney reduction bound}
C_{\ref{P:induct-positive-terms-reduction-whitney}}
<2^{557}.
\end{equation}
\end{lemma}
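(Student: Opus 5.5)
The statement is a pure constant bound. By \eqref{auto:Whitney-constant-definition},
\[
C_{\ref{P:induct-positive-terms-reduction-whitney}}=11\,C_{\ref{P:induct-positive-terms-reduction-whitney-gap}},
\]
so the plan is to substitute the bound from Lemma \ref{constant Whitney gap reduction} and do one numerical comparison.

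First, Lemma \ref{constant Whitney gap reduction} gives
\[
C_{\ref{P:induct-positive-terms-reduction-whitney-gap}}<\tfrac{127}{128}2^{553}.
\]
Multiplying by $11$ yields
\[
C_{\ref{P:induct-positive-terms-reduction-whitney}}<11\cdot\tfrac{127}{128}\,2^{553}.
\]
Since $11<16=2^4$ and $\tfrac{127}{128}<1$, the right-hand side is smaller than $2^{557}$, which is the claim.

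There is no real obstacle here. The only point to check is that the factor $11$ in \eqref{auto:Whitney-constant-definition} is indeed below $2^4$. The slack is comfortable, so even the cruder bound $C_{\ref{P:induct-positive-terms-reduction-whitney-gap}}<2^{553}$ from the same lemma would suffice.
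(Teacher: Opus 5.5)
Your proposal is correct and matches the paper's proof: the paper also substitutes the bound $C_{\ref{P:induct-positive-terms-reduction-whitney-gap}}<\tfrac{127}{128}2^{553}$ from Lemma \ref{constant Whitney gap reduction} into $C_{\ref{P:induct-positive-terms-reduction-whitney}}=11C_{\ref{P:induct-positive-terms-reduction-whitney-gap}}$. It then writes the product $11\cdot\tfrac{127}{128}2^{553}$ as $\tfrac{1397}{2048}2^{557}<2^{557}$.
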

\begin{proof}[Proof \auto]
The defining factor is $11$. Lemma \ref{constant Whitney gap reduction} gives
\[
C_{\ref{P:induct-positive-terms-reduction-whitney}}
<11\cdot\tfrac{127}{128}2^{553}
=\tfrac{1397}{2048}2^{557}.
\]
\end{proof}

\begin{proposition}[induct positive terms - reduction variant, Whitney, product-type]
\label{P:induct-positive-terms-reduction-whitney-product}\uses{P:induct-positive-terms-reduction-whitney,lem:smoothdecay2}\usesdefs{bracket bump,multiplicatively spaced monotone sequences,kernel sequences}
\lean{Auto.inductPositiveTermsReductionWhitneyProduct}
\leanok
Let $a\in A$ and let
\begin{equation}\label{auto:Whitney-product-kernel}
M_j=(\psi_{(a(j))})^{\otimes2}
\end{equation}
for $j\in\Z$, where $\psi:\R\to\R$ is a Schwartz function satisfying
\begin{equation}\label{1-on-W_supp}
\operatorname{supp}(\widehat\psi)\subset\operatorname{Ann}_1(1,2^3),
\end{equation}
\begin{equation}\label{auto:Whitney-product-Fourier-derivative-assumption}
|\widehat\psi^{(m)}(\xi)|\le1,
\qquad
m\in[3),
\quad
\xi\in\R.
\end{equation}
Then, with $\M=(M_j)_{j\in\Z}$,
\begin{equation}\label{auto:Whitney-product-reduction-bound}
\|\M\|_{{\rm M}(1)}\le C_{\ref{P:induct-positive-terms-reduction-whitney-product}},
\end{equation}
where
\begin{equation}\label{auto:Whitney-product-constant-definition}
C_{\ref{P:induct-positive-terms-reduction-whitney-product}}
=2^{12}C_{\ref{P:induct-positive-terms-reduction-whitney}}.
\end{equation}
\end{proposition}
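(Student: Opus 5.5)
The plan is to reduce directly to Proposition~\ref{P:induct-positive-terms-reduction-whitney} by taking $\Psi=c\,\psi^{\otimes2}$ with a suitable normalizing constant $c\in(0,1]$. Then $M_j=c^{-1}\Psi_{(a(j))}$, and linearity of $\Lambda_1$ gives $\|\M\|_{{\rm M}(1)}= c^{-1}\|(\Psi_{(a(j))})_j\|_{{\rm M}(1)}$. We will aim for $c=2^{-12}$, which yields the constant $2^{12}C_{\ref{P:induct-positive-terms-reduction-whitney}}$.

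The hypotheses of Proposition~\ref{P:induct-positive-terms-reduction-whitney} are checked as follows.
\begin{itemize}
\item \emph{Symmetry.} $\Psi(u_0,u_1)=\Psi(u_1,u_0)$ is immediate.
\item \emph{Positivity \eqref{p1-on-W_pos}.} We have $\int g(u_0)g(u_1)\psi(u_0)\psi(u_1)\,du=(\int g\psi)^2\ge0$.
\item \emph{Fourier support \eqref{p1-on-W_supp}.} We have $\widehat{\Psi}(\xi,\eta)=c\,\widehat\psi(\xi)\widehat\psi(\eta)$, so \eqref{1-on-W_supp} gives support in $\mathrm{Ann}_1(1,2^3)^2$.
\item \emph{Diagonal derivatives \eqref{p1-on-W_der}.} Since $\psi$ is real, $\widehat\psi(-\xi)=\overline{\widehat\psi(\xi)}$, so $\widehat\Psi(\xi,-\xi)=c|\widehat\psi(\xi)|^2$. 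Its $m$-th derivative for $m\le2$ is, by Leibniz, a sum of at most $4$ products of derivatives each bounded by $1$. So $c\le 1/4$ suffices.
\end{itemize}

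The remaining hypothesis is the decay bound \eqref{p1-on-W_decay}, and it fixes $c$. By Lemma~\ref{lem:smoothdecay2} with $N=2$, using $|\supp\widehat\psi|\le 2\cdot 8=16$ and $\max(\|\widehat\psi\|_\infty,(2\pi)^{-2}\|\widehat\psi''\|_\infty)\le1$, we get $|\psi(x)|\le 2^2\cdot 16\langle x\rangle^2=2^6\langle x\rangle^2$. Hence $|\psi^{\otimes2}(v)|\le 2^{12}\langle v_0\rangle^2\langle v_1\rangle^2\le 2^{12}\langle v_0\rangle^{3/2}\langle v_1\rangle^{3/2}$, using $\langle x\rangle\le1$. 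This is the $u=0$ term of \eqref{p1-on-W_decay}, so $c=2^{-12}$ works and also satisfies $c\le1/4$.

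Finally, $\Psi$ is Schwartz because $\psi$ is. Proposition~\ref{P:induct-positive-terms-reduction-whitney} then gives $\|(\Psi_{(a(j))})_j\|_{{\rm M}(1)}\le C_{\ref{P:induct-positive-terms-reduction-whitney}}$. Multiplying by $c^{-1}=2^{12}$ gives the claim.

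The main obstacle is the bookkeeping of the support measure and the constant in Lemma~\ref{lem:smoothdecay2}, which fixes the power $2^{12}$. If the support measure were counted differently, the decay constant would change, but the factor $2^{12}$ has enough room for this argument.
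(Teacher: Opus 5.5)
Your proposal is correct and follows the paper's proof essentially step for step. The paper also sets $\Psi=2^{-12}\psi\otimes\psi$ and gets $|\psi(x)|\le 2^6\langle x\rangle^2$ from Lemma~\ref{lem:smoothdecay2} with $N=2$, using that the support has measure below $16$. It then checks symmetry, positivity, the support condition, the diagonal derivative bounds $1,2,4$, and the $u=0$ decay term, and applies Proposition~\ref{P:induct-positive-terms-reduction-whitney} to $2^{-12}M_j$.
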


\begin{proof}[Proof \auto]
The support in \eqref{1-on-W_supp} has measure smaller than $16$. Hence Lemma \ref{lem:smoothdecay2}, with $N=2$, and \eqref{auto:Whitney-product-Fourier-derivative-assumption} give
\begin{equation}\label{Whitney product decay assumption}
|\psi(x)|\le2^6\langle x\rangle^2
\end{equation}
for every $x\in\R$.
Set $\Psi=2^{-12}\psi\otimes\psi$. Then $\Psi$ is symmetric and satisfies \eqref{p1-on-W_pos} and \eqref{p1-on-W_supp}. By \eqref{auto:Whitney-product-Fourier-derivative-assumption}, the zeroth, first, and second derivatives of $\widehat\psi(\xi)\widehat\psi(-\xi)$ are bounded by $1$, $2$, and $4$, respectively, so \eqref{p1-on-W_der} holds for $\Psi$. Moreover, \eqref{Whitney product decay assumption} gives
\[
|\Psi(v)|
\le\langle v_0\rangle^2\langle v_1\rangle^2
\le\langle v_0\rangle^{3/2}\langle v_1\rangle^{3/2},
\]
so \eqref{p1-on-W_decay} holds. Proposition \ref{P:induct-positive-terms-reduction-whitney} applied to $2^{-12}M_j$ proves the claim.
\end{proof}

\begin{lemma}[constant $C_{\ref{P:induct-positive-terms-reduction-whitney-product}}$ \auto]\label{constant Whitney product reduction}\uses{constant Whitney reduction,P:induct-positive-terms-reduction-whitney-product}\usesdefs{}
\lean{Auto.constantWhitneyProductReduction}
\leanok
\begin{equation}\label{constant Whitney product reduction bound}
C_{\ref{P:induct-positive-terms-reduction-whitney-product}}
<2^{569}.
\end{equation}
\end{lemma}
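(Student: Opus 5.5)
The plan is to unfold the definition \eqref{auto:Whitney-product-constant-definition}, which gives $C_{\ref{P:induct-positive-terms-reduction-whitney-product}}=2^{12}C_{\ref{P:induct-positive-terms-reduction-whitney}}$. We then substitute the bound from Lemma \ref{constant Whitney reduction}.

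Lemma \ref{constant Whitney reduction} gives $C_{\ref{P:induct-positive-terms-reduction-whitney}}<2^{557}$. Multiplying by the positive factor $2^{12}$ preserves the strict inequality, so
\[
C_{\ref{P:induct-positive-terms-reduction-whitney-product}}<2^{12}\cdot2^{557}=2^{569}.
\]
This is exactly \eqref{constant Whitney product reduction bound}.

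There is no real obstacle here. The only point to check is that the constant in the proposition is defined exactly as $2^{12}$ times the Whitney constant, with no further factor, and \eqref{auto:Whitney-product-constant-definition} confirms this.
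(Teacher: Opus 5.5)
Your proposal is correct and follows the paper's own proof: unfold \eqref{auto:Whitney-product-constant-definition} to get $C_{\ref{P:induct-positive-terms-reduction-whitney-product}}=2^{12}C_{\ref{P:induct-positive-terms-reduction-whitney}}$, then apply the bound $C_{\ref{P:induct-positive-terms-reduction-whitney}}<2^{557}$ from Lemma \ref{constant Whitney reduction}.
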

\begin{proof}[Proof \auto]
Use \eqref{auto:Whitney-product-constant-definition} and Lemma \ref{constant Whitney reduction}.
\end{proof}

\subsection{Final reduction: proof of main theorem}\label{sec:pfmainthm}
In this section we prove Theorem \ref{thm:nct main real} using Propositions \ref{P:induct-positive-terms-reduction-whitney}, \ref{P:induct-positive-terms-reduction-whitney-product} and \ref{P:induct-positive-terms-reduction-non-whitney-skip}.
Let $n\ge 2$.
Let $\mathbf{f}=(f_0,\dots,f_{n-1})$ be Schwartz functions.
Throughout this section we abbreviate
\begin{equation}\label{abbreviated twisted average}
A(\chi)=A(\chi,\mathbf f).
\end{equation}
since $\mathbf{f}$ will not change.
We will also write
\begin{equation}
    \label{avg_rescale}
    A_t(\chi) = A(\chi_{(t)}),
\end{equation}

Let $(\phi_0, \phi_1)$ be a universal pair
and let $\theta, \varphi_{0,k}, \varphi_{1,k}, \varphi_{2,k}, \varphi_{3,k}, \varphi_{4,k}$ be defined as in \eqref{eqn:thetadef}-\eqref{eqn:varphi4kdef}. 
In what follows we denote
\begin{equation}\label{variation exponent alpha}
\alpha(n)=1-2^{-n+2}.
\end{equation}

\begin{lemma}\label{lem:main_aux1}\uses{Extension of sequences,A to Lambda,P:induct-positive-terms-reduction-whitney-product}\usesdefs{A_def,normalized function tuples,auto:prism-form-definition,multiplicatively spaced monotone sequences}
\lean{Auto.mainAuxOne}
\leanok
Suppose that $\psi:\R\to\R$ is a Schwartz function with
\begin{equation}\label{main_aux1_supp}
{\rm supp}(\widehat\psi)\subset\mathrm{Ann}_1(1,2^2),
\end{equation}
and
\begin{equation}\label{auto:main-auxiliary-one-Fourier-derivative-assumption}
|\widehat\psi^{(m)}(\xi)|\le1,
\qquad
m\in[3),
\quad
\xi\in\R.
\end{equation}
Then for every $t\in[1,2]$, $J\ge1$, and every strictly increasing sequence of integers $(k_j)_{j\in[J)}$,
\begin{equation}\label{main_aux1}
\sum_{j\in[J)}\|A_{2^{k_j}t}(\psi)\|_2^2
\le C_{\ref{lem:main_aux1}}J^{\alpha(n)},
\end{equation}
where
\begin{equation}\label{auto:main-auxiliary-one-constant-definition}
C_{\ref{lem:main_aux1}}
=2^4C_{\ref{P:induct-positive-terms-reduction-whitney-product}}.
\end{equation}
\end{lemma}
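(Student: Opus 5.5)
The plan is to write each summand $\|A_{2^{k_j}t}(\psi)\|_2^2$ as a prism form via Lemma \ref{A to Lambda}. Then we identify the resulting sum with $\Lambda_1$ of a partial sum of a kernel sequence of the type treated in Proposition \ref{P:induct-positive-terms-reduction-whitney-product}. The $M(1)$ bound then gives the factor $J^{\alpha(n)}$.

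First, fix $t\in[1,2]$ and the strictly increasing integers $k_0<\dots<k_{J-1}$. Set $a(j)=2^{k_j}t$ for $j\in[J)$. Since $k_{j+1}\ge k_j+1$, we have $a(j+1)\ge 2a(j)$, so Proposition \ref{Extension of sequences} extends $a$ to an element of ${\rm A}$, still called $a$.

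Second, by the normalization \eqref{main twisted normalization}, we may assume $\|f_i\|_{2^{\min(n,i+2)}}=1$. Lemma \ref{A to Lambda}, applied with $\phi=\psi_{(a(j))}$, gives
\[
\|A_{a(j)}(\psi)\|_2^2=\Lambda_1\big((\psi_{(a(j))})^{\otimes2}\big)(\F).
\]
Here $\F$ is the transformed tuple. It is Schwartz and satisfies $\|F_i\|_p=\|f_i\|_p$, and $2^{\min(n,i+2)}=2^{i+\min(n-i,2)}$, so $\F\in\mathfrak F$. Summing over $j\in[J)$ and using linearity of $\Lambda_1$ in the kernel, the left side of \eqref{main_aux1} equals $\Lambda_1(\sum_{j\in[J)}M_j)(\F)$ with $M_j=(\psi_{(a(j))})^{\otimes2}$.

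Third, we check the hypotheses of Proposition \ref{P:induct-positive-terms-reduction-whitney-product}. The support condition holds because $\mathrm{Ann}_1(1,2^2)\subset\mathrm{Ann}_1(1,2^3)$, and the derivative bounds are assumed. The definition of $\|\cdot\|_{{\rm M}(1)}$ then gives
\[
\min(1,J^{-1+2^{2-n}})\,\big|\Lambda_1\big(\textstyle\sum_{j\in[J)}M_j\big)(\F)\big|\le C_{\ref{P:induct-positive-terms-reduction-whitney-product}}.
\]
Since $-1+2^{2-n}\le0$ and $J\ge1$, the minimum equals $J^{-\alpha(n)}$. This yields the estimate with constant $C_{\ref{P:induct-positive-terms-reduction-whitney-product}}\le 2^4C_{\ref{P:induct-positive-terms-reduction-whitney-product}}$. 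The extra factor $2^4$ is slack; it would absorb any loss, for example from rescaling $\psi$ if a normalization mismatch appears.

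The main obstacle is bookkeeping in the Lemma \ref{A to Lambda} identification. We must check that the rescaled function $\psi_{(a(j))}$ is the kernel appearing there, and that the ordering convention $(y^1,y^0)$ in $K_1$ is harmless. The latter holds because $\psi^{\otimes2}$ is symmetric. We must also confirm that the $\F$ produced lies in $\mathfrak F$ with exactly the exponents $2^{i+\min(n-i,2)}$.
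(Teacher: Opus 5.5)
Your proof is correct and follows the paper's route: extend the sequence with Proposition \ref{Extension of sequences}, rewrite the sum as $\Lambda_1$ of an initial partial sum via Lemma \ref{A to Lambda}, and apply the $\mathrm{M}(1)$ bound of Proposition \ref{P:induct-positive-terms-reduction-whitney-product}. The only difference is where $t$ goes: you absorb it into the scales $a(j)=2^{k_j}t$, which are still $2$-lacunary, so $\psi$ itself satisfies the hypotheses; the paper keeps $a(j)=2^{k_j}$ and applies the proposition to $2^{-2}\psi_{(t)}$, whose Fourier derivatives pick up $t^m\le 2^m$, and that normalization is the source of the factor $2^4$, which your version shows is unnecessary.
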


\begin{proof}[Proof \auto]
By Proposition \ref{Extension of sequences}, the finite sequence $(2^{k_j})_{j\in[J)}$ extends to a sequence $a\in A$. By Lemma \ref{A to Lambda} and linearity, the left-hand side equals $\Lambda_1(M_t)(\F)$, where
\[
M_t=\sum_{j\in[J)}(\psi_{(a(j)t)})^{\otimes2}.
\]
Apply Proposition \ref{P:induct-positive-terms-reduction-whitney-product} to the corresponding initial partial sum, with $2^{-2}\psi_{(t)}$ in place of $\psi$, and multiply the resulting estimate by $2^4$. The Fourier support condition follows from \eqref{main_aux1_supp} and $t\in[1,2]$. Finally,
\[
\left|\tfrac{d^m}{d\xi^m}
\left(2^{-2}\widehat\psi(t\xi)\right)\right|
\le2^{m-2}\le1
\]
for $m\in[3)$, which verifies \eqref{auto:Whitney-product-Fourier-derivative-assumption}.
\end{proof}

\begin{lemma}[constant $C_{\ref{lem:main_aux1}}$ \auto]\label{constant main auxiliary one}\uses{constant Whitney product reduction,lem:main_aux1}\usesdefs{}
\lean{Auto.constantMainAuxiliaryOne}
\leanok
\begin{equation}\label{constant main auxiliary one bound}
C_{\ref{lem:main_aux1}}<2^{573}.
\end{equation}
\end{lemma}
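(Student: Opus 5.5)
The plan is a pure arithmetic unfolding of the constant. By \eqref{auto:main-auxiliary-one-constant-definition}, $C_{\ref{lem:main_aux1}}$ is defined as $2^4C_{\ref{P:induct-positive-terms-reduction-whitney-product}}$. I will not reopen any analytic argument; I will only combine this definition with the already established numerical bound for the product-type Whitney constant.

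The key step is to invoke Lemma \ref{constant Whitney product reduction}, which gives $C_{\ref{P:induct-positive-terms-reduction-whitney-product}}<2^{569}$. Multiplying by the positive factor $2^4$ preserves the strict inequality, and this yields
\[
C_{\ref{lem:main_aux1}}=2^4C_{\ref{P:induct-positive-terms-reduction-whitney-product}}<2^4\cdot2^{569}=2^{573}.
\]

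There is no genuine obstacle here. The only point to double-check is that the constant in Lemma \ref{lem:main_aux1} is exactly the defining product $2^4C_{\ref{P:induct-positive-terms-reduction-whitney-product}}$, with no additional hidden factors. Such factors could come, for instance, from the rescaling $2^{-2}\psi_{(t)}$ used in that proof. However, that rescaling is already accounted for in the stated factor $2^4$, so the chain of recursively defined constants closes immediately.
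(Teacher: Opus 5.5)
Your proposal is correct and is exactly the paper's argument: substitute the bound $C_{\ref{P:induct-positive-terms-reduction-whitney-product}}<2^{569}$ from Lemma \ref{constant Whitney product reduction} into the defining identity $C_{\ref{lem:main_aux1}}=2^4C_{\ref{P:induct-positive-terms-reduction-whitney-product}}$. The check for hidden factors is unnecessary here, because the constant is defined by that product regardless of how the rescaling inside the proof of Lemma \ref{lem:main_aux1} works out.
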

\begin{proof}[Proof \auto]
Use Lemma \ref{constant Whitney product reduction} in \eqref{auto:main-auxiliary-one-constant-definition}.
\end{proof}

\begin{lemma}[Long and short variation]\label{lem:shortlongftc_reduction}\uses{lem:shortlongjumps,lem:ftccs-R,lem:ftc_ATphi,lem:main_aux1}\usesdefs{A_def}
\lean{Auto.shortLongFtcReduction}
\leanok
Let $J\ge 1$ and $\phi$ a Schwartz function.
Assume that the function $T\phi(s) = (s \phi(s))'$ satisfies \eqref{main_aux1_supp} and \eqref{auto:main-auxiliary-one-Fourier-derivative-assumption}.
Suppose that $A\in (0,\infty)$ is such that
\begin{equation}\label{shortlongftc_reduction1} \|A_{t}(\phi)\|^2_{V_{2,J}(t\in 2^\mathbb{Z}; L^2)} \le A
\end{equation}
Then
\begin{equation}\label{shortlongftc_reduction2}
\|A_{t}(\phi)\|_{V_{2,J}(t\in (0,\infty); L^2)}^2 \le 2^4 C_{\ref{lem:main_aux1}} J^{\alpha(n)} + 2A.
\end{equation}
\end{lemma}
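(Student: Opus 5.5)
We combine Lemma \ref{lem:shortlongjumps} with the pointwise-in-$x$ short variation bound of Lemma \ref{lem:ftccs-R}. Fix an increasing sequence $t_0<\dots<t_J$ in $(0,\infty)$. Applying \eqref{auto:short-long-variation-decomposition} with $B=L^2(\R^n)$, squaring, and using $(x+y)^2\le 2x^2+2y^2$ gives
\[
\Big(\sum_{j\in[J)}\|A_{t_{j+1}}(\phi)-A_{t_j}(\phi)\|_2^2\Big)
\le 8\sum_{k\in\kappa}\|A_t(\phi)\|^2_{V_{2,J}(t\in[2^k,2^{k+1}];L^2)}+2A ,
\]
where $\kappa$ has at most $J+1$ elements. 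The long-variation term is already controlled by \eqref{shortlongftc_reduction1}. It remains to bound the short variations by $2C_{\ref{lem:main_aux1}}J^{\alpha(n)}$.

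For a fixed $k$, the $L^2$-valued short variation is controlled by the scalar one. For any points $s_0<\dots<s_M$ in $[2^k,2^{k+1}]$ with $M\le J$, Fubini gives
\[
\sum_{\ell}\|A_{s_{\ell+1}}(\phi)-A_{s_\ell}(\phi)\|_2^2=\int_{\R^n}\sum_\ell|a_x(s_{\ell+1})-a_x(s_\ell)|^2\,dx ,
\]
where $a_x(t)=A_t(\phi)(x)$. The function $a_x$ is continuously differentiable in $t$ since $\phi$ is Schwartz. By \eqref{ftccs1-R} with $\alpha=2^k$ and $\beta=2^{k+1}$, the ratio $(\beta-\alpha)/\alpha=1$, so the inner sum is at most $\|ta_x'\|^2_{L^2([2^k,2^{k+1}],dt/t)}$. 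Lemma \ref{lem:ftc_ATphi} identifies this as $\int_1^2|A_{2^kt}(T\phi)(x)|^2\,dt/t$. Integrating in $x$ then yields
\[
\|A_t(\phi)\|^2_{V_{2,J}([2^k,2^{k+1}];L^2)}\le\int_1^2\|A_{2^kt}(T\phi)\|_2^2\,\tfrac{dt}{t}.
\]

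Next we sum over $k\in\kappa$ and interchange the finite sum with the $t$-integral. For each fixed $t\in[1,2]$, Lemma \ref{lem:main_aux1}, applied to $\psi=T\phi$ (whose hypotheses are exactly the assumptions on $T\phi$) and to the increasing integer sequence enumerating $\kappa$, gives the bound $C_{\ref{lem:main_aux1}}(\#\kappa)^{\alpha(n)}$. Integrating over $t$ costs a factor $\log 2<1$. Since $\#\kappa\le J+1\le 2J$ and $\alpha(n)\le1$, this gives at most $2C_{\ref{lem:main_aux1}}J^{\alpha(n)}$. Multiplying by the factor $8$ from the first display gives $2^4C_{\ref{lem:main_aux1}}J^{\alpha(n)}+2A$. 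Taking the supremum over sequences with at most $J$ jumps proves \eqref{shortlongftc_reduction2}.

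The main point to check is the bookkeeping in the first step. In Lemma \ref{lem:shortlongjumps}, the short variation seminorms $V_{2,J}$ must allow at most $J$ jumps, and $\kappa$ must have at most $J+1$ elements. The $\#\kappa\le 2J$ slack and the constant $2^4$ are chosen to absorb exactly these two losses. A minor further point is justifying differentiation under the integral and the use of Fubini, which both follow from the Schwartz decay of $\phi$ and $\mathbf f$.
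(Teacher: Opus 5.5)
Your proposal is correct and follows the paper's proof step for step. You use Lemma \ref{lem:shortlongjumps} together with $(x+y)^2\le 2x^2+2y^2$ to get the factors $8$ and $2$. You then apply \eqref{ftccs1-R} pointwise in $x$ with ratio $(\beta-\alpha)/\alpha=1$, identify the result via Lemma \ref{lem:ftc_ATphi}, and integrate in $x$. Finally you apply Lemma \ref{lem:main_aux1} to $T\phi$ for each fixed $t\in[1,2]$, using $\#\kappa\le J+1\le 2J$. Your explicit Fubini step, passing from the $L^2$-valued short variation to the scalar one, makes precise what the paper leaves implicit.
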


\begin{proof}
Let $(t_j)_{j\in[J+1)}$ be an increasing sequence of positive real numbers.
By Lemma \ref{lem:shortlongjumps} with $r=2$, $a(t)=A_t(\phi)$, and $B=L^2$,
\begin{equation}\label{shortlongftc_reduction_pf1}
\Big(\sum_{j\in [J)} \|A_{t_{j+1}}(\phi)- A_{t_j}(\phi)\|_2^2\Big)^{1/2}
\le 2 \Big(\sum_{k\in \kappa} \|A_t(\phi)\|_{V_{2,J}(t\in [2^k, 2^{k+1}]; L^2)}^2\Big)^{1/2}
\end{equation}
\begin{equation}\label{shortlongftc_reduction_pf2}
+\|A_t(\phi)\|_{V_{2,J}(t\in 2^\mathbb{Z}; L^2)},
\end{equation}
where $\kappa$ is the set of $k\in \mathbb{Z}$ such that $2^{k}\le t_j<2^{k+1}$ for some $j\in [J+1)$.
Fix $x\in\mathbb{R}^n$ and $k\in\mathbb{Z}$. Set $a(t)=A_t(\phi)(x)$ and use Lemma \ref{lem:ftccs-R}, specifically \eqref{ftccs1-R}, to estimate
\[
\|A_t(\phi)(x)\|_{V_{2,J}(t\in [2^k, 2^{k+1}])}^2
\le
\|ta'(t)\|_{L^2(t\in [2^k, 2^{k+1}],dt/t)}^2.
\]
By Lemma \ref{lem:ftc_ATphi},
\[
\|ta'(t)\|_{L^2(t\in [2^k, 2^{k+1}],dt/t)}^2
=
\int_{1}^2 |A_{2^k t}(T\phi)(x)|^2\,\tfrac{dt}{t}.
\]
Thus
\[
\|A_t(\phi)(x)\|_{V_{2,J}(t\in [2^k, 2^{k+1}])}^2
\le
\int_{1}^2 |A_{2^k t}(T\phi)(x)|^2\,\tfrac{dt}{t}.
\]
Integrating over $x\in\mathbb{R}^n$ gives
\[
\|A_t(\phi)\|_{V_{2,J}(t\in [2^k, 2^{k+1}]; L^2)}^2
\le
\int_1^2 \|A_{2^k t}(T\phi)\|_2^2 \tfrac{dt}{t}.
\]
Using this to estimate the sum on the right in \eqref{shortlongftc_reduction_pf1} and interchanging the sum with the integral,
\[
\sum_{k\in \kappa} \|A_t(\phi)\|_{V_{2,J}(t\in [2^k, 2^{k+1}]; L^2)}^2
\le
\int_1^2 \sum_{k\in\kappa} \|A_{2^k t}(T\phi)\|_2^2\,\tfrac{dt}{t}.
\]
For each fixed $t\in [1,2]$, Lemma \ref{lem:main_aux1} applies because $T\phi$ satisfies both of its assumptions. Since $\#\kappa\le J+1$,
\[
\sum_{k\in\kappa} \|A_{2^k t}(T\phi)\|_2^2
\le 2 C_{\ref{lem:main_aux1}} J^{\alpha(n)}.
\]
The claim now follows from
\[
\sum_{j\in [J)} \|A_{t_{j+1}}(\phi)- A_{t_j}(\phi)\|_2^2
\le 8 \sum_{k\in \kappa} \|A_t(\phi)\|_{V_{2,J}(t\in [2^k, 2^{k+1}]; L^2)}^2
+ 2 \|A_t(\phi)\|_{V_{2,J}(t\in 2^\mathbb{Z}; L^2)}^2
\]
\[
\le 2^4 C_{\ref{lem:main_aux1}} J^{\alpha(n)} + 2A.
\]
\end{proof}

\begin{lemma}\label{lem:mainbump1_long1}\uses{lem:main_aux1}\usesdefs{A_def,def:cn-window,def:unipair}
\lean{Auto.mainBumpOneLongOne}
\leanok
For every $J\ge1$ and every strictly increasing sequence of integers $(k_j)_{j\in[J)}$,
\begin{equation}\label{mainbump1_long1}
\sum_{j\in[J)}\|A_{2^{k_j}}(\phi_0)-A_{2^{k_j}}(\phi_1)\|_2^2
\le C_{\ref{lem:mainbump1_long1}}J^{\alpha(n)},
\end{equation}
where
\begin{equation}\label{auto:main-bump-one-long-one-constant-definition}
C_{\ref{lem:mainbump1_long1}}
=(2C_{\ref{def:unipair}})^2C_{\ref{lem:main_aux1}}.
\end{equation}
\end{lemma}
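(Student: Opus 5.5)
The plan is to reduce directly to Lemma \ref{lem:main_aux1} with $t=1$, applied to a suitably normalized function $\psi$ built from the difference $\phi_0-\phi_1$. By linearity of $A$ in the kernel and the rescaling convention, $A_{2^{k_j}}(\phi_0)-A_{2^{k_j}}(\phi_1)=A_{2^{k_j}}(\phi_0-\phi_1)$. Set $\psi=(2C_{\ref{def:unipair}})^{-1}(\phi_0-\phi_1)$. Then the left-hand side of \eqref{mainbump1_long1} equals $(2C_{\ref{def:unipair}})^2\sum_{j\in[J)}\|A_{2^{k_j}}(\psi)\|_2^2$. It therefore suffices to check the hypotheses \eqref{main_aux1_supp} and \eqref{auto:main-auxiliary-one-Fourier-derivative-assumption} for $\psi$, and then invoke \eqref{main_aux1} with $t=1$.

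\emph{Support.} By Definition \ref{def:cn-window}, both $\widehat{\phi_0}$ and $\widehat{\phi_1}$ vanish outside $[-1,1]$ and equal $1$ on $[-1/2,1/2]$. Hence $\widehat{\psi}$ vanishes on $[-1/2,1/2]$ and outside $[-1,1]$. So its support lies in $\{1/2\le|\xi|\le1\}\subset\mathrm{Ann}_1(1,2^2)$, which is \eqref{main_aux1_supp}. The function $\psi$ is real-valued and Schwartz, since the windows are.

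\emph{Derivative bounds.} For $m\in[3)$ we use $m\le N_{\ref{def:unipair}}=3$ and \eqref{windowbound}. On $[-1,1]$ this gives $|\widehat{\phi_i}^{(m)}|\le C_{\ref{def:unipair}}$, and outside $[-1,1]$ all derivatives vanish by smoothness and the support property. Therefore $|\widehat{\psi}^{(m)}|\le (2C_{\ref{def:unipair}})^{-1}\cdot 2C_{\ref{def:unipair}}=1$. For $m=0$ one could even use $0\le\widehat{\phi_i}\le1$, but the uniform bound already suffices.

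Lemma \ref{lem:main_aux1} then gives $\sum_j\|A_{2^{k_j}}(\psi)\|_2^2\le C_{\ref{lem:main_aux1}}J^{\alpha(n)}$, and multiplying by $(2C_{\ref{def:unipair}})^2$ yields the claim with $C_{\ref{lem:mainbump1_long1}}$ as defined. There is no real obstacle. The only point needing care is the justification that derivative bounds hold on all of $\R$ and not only on $[-1,1]$. This follows because $\widehat{\phi_i}$ is smooth and identically zero on the open set $|\xi|>1$, so by continuity all derivatives vanish at $\pm1$ as well.
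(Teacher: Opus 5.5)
Your proposal is correct and matches the paper's proof: apply Lemma~\ref{lem:main_aux1} with $t=1$ to $(2C_{\ref{def:unipair}})^{-1}(\phi_0-\phi_1)$. Its Fourier transform is supported in $\{1/2\le|\xi|\le1\}\subset\mathrm{Ann}_1(1,2^2)$, and its Fourier derivatives through order $2$ are bounded by $1$. Your extra remark that the window derivative bounds extend from $[-1,1]$ to all of $\R$ fills in a detail the paper leaves implicit.
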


\begin{proof}[Proof \auto]
Apply Lemma \ref{lem:main_aux1} with $t=1$ and $c_1^{-1}(\phi_0-\phi_1)$ in place of $\psi$, where
\[
c_1=2C_{\ref{def:unipair}}.
\]
The Fourier transform of $\phi_0-\phi_1$ is supported in $\mathrm{Ann}_1(1,2)$. By \eqref{windowbound}, its derivatives through order $2$ have modulus at most $2C_{\ref{def:unipair}}$. After division by $c_1$, both assumptions of Lemma \ref{lem:main_aux1} hold, and the result follows.
\end{proof}

\begin{lemma}[constant $C_{\ref{lem:mainbump1_long1}}$ \auto]\label{constant main bump one long one}\uses{constant main auxiliary one,lem:mainbump1_long1}\usesdefs{def:unipair}
\lean{Auto.constantMainBumpOneLongOne}
\leanok
\begin{equation}\label{constant main bump one long one bound}
C_{\ref{lem:mainbump1_long1}}<2^{605}.
\end{equation}
\end{lemma}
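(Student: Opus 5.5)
This is a constant-unravelling lemma. I will bound $C_{\ref{lem:mainbump1_long1}}$ by substituting the already established numerical bounds into the defining formula \eqref{auto:main-bump-one-long-one-constant-definition}.

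By Definition \ref{def:unipair}, $C_{\ref{def:unipair}}=2^{15}$. Hence the prefactor is exact:
\[
(2C_{\ref{def:unipair}})^2=(2^{16})^2=2^{32}.
\]

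Lemma \ref{constant main auxiliary one} gives $C_{\ref{lem:main_aux1}}<2^{573}$. Multiplying the two bounds gives
\[
C_{\ref{lem:mainbump1_long1}}=2^{32}C_{\ref{lem:main_aux1}}<2^{32}\cdot2^{573}=2^{605},
\]
which is the claim \eqref{constant main bump one long one bound}. The inequality stays strict because the prefactor is a positive exact power of two multiplying a strict bound.

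There is no real obstacle. The only points to check are that the exponent arithmetic $2\cdot16=32$ and $32+573=605$ is correct, and that the constant from Lemma \ref{lem:main_aux1} enters \eqref{auto:main-bump-one-long-one-constant-definition} linearly, with no further factors hidden in that definition.
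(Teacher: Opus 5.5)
Your proposal is correct and is exactly the paper's argument: substitute $C_{\ref{def:unipair}}=2^{15}$ so that the prefactor $(2C_{\ref{def:unipair}})^2$ in \eqref{auto:main-bump-one-long-one-constant-definition} equals $2^{32}$. Combined with $C_{\ref{lem:main_aux1}}<2^{573}$ from Lemma \ref{constant main auxiliary one}, this gives $C_{\ref{lem:mainbump1_long1}}<2^{605}$.
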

\begin{proof}[Proof \auto]
Use $C_{\ref{def:unipair}}=2^{15}$ and Lemma \ref{constant main auxiliary one} in \eqref{auto:main-bump-one-long-one-constant-definition}. The squared factor is $2^{32}$, which gives the displayed estimate.
\end{proof}

\begin{lemma}\label{lem:mainbump1_long2}\uses{Extension of sequences,A to Lambda,P:induct-positive-terms-reduction-non-whitney-skip,Operations on spaced sequences}\usesdefs{A_def,normalized function tuples,auto:prism-form-definition,multiplicatively spaced monotone sequences,def:unipair}
\lean{Auto.mainBumpOneLongTwo}
\leanok
For every $J\ge 1$ and every strictly increasing sequence of integers $(m_j)_{j\in[J+1)}$,
\begin{equation}\label{mainbump1_long2}
\sum_{j\in[J)} \|
A_{2^{m_{j}}}(\phi_0)-A_{2^{m_{j+1}}}(\phi_1) \|_{2}^2  \le C_{\ref{lem:mainbump1_long2}} J^{\alpha(n)},\end{equation}
where $C_{\ref{lem:mainbump1_long2}}=2 C_{\ref{P:induct-positive-terms-reduction-non-whitney-skip}}$.
\end{lemma}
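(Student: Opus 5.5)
The plan is to express the left-hand side of \eqref{mainbump1_long2} as a prism form of a sum of positive-type kernels, then bound it by Proposition \ref{P:induct-positive-terms-reduction-non-whitney-skip}.

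\textbf{Step 1: build a sequence in ${\rm A}$.} Interleave the scales: consider the finite sequence of length $2J$ given by $b(2j)=2^{m_j}$ and $b(2j+1)=2^{m_{j+1}}$ for $j\in[J)$. This sequence is not multiplicatively spaced, since $b(2j+1)=b(2j+2)$. So instead use the distinct values directly. The kernel for index $j$ is $(\phi_{0,2^{m_j}}-\phi_{1,2^{m_{j+1}}})^{\otimes 2}$, and by Proposition \ref{P:induct-positive-terms-reduction-non-whitney} its natural form is $(\phi_{0,a(j-1)}-\phi_{1,a(j)})^{\otimes2}$ with $a(j)=2^{m_{j+1}}$.

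Set $a(j-1)=2^{m_j}$, i.e. $a(i)=2^{m_{i+1}}$ for $i\in[-1,J)$. Strict monotonicity of integers gives $a(i+1)\ge 2a(i)$. By Proposition \ref{Extension of sequences}, applied after shifting the index by one, extend $a$ to an element of ${\rm A}$.

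\textbf{Step 2: rewrite as a prism form.} By Lemma \ref{A to Lambda} and linearity in $\phi$ (note $A_t(\phi_0)-A_s(\phi_1)=A(\phi_{0,t}-\phi_{1,s})$), the left-hand side equals $\Lambda_1\big(\sum_{j\in[J)}M_j\big)(\F)$, where $M_j=(\phi_{0,a(j-1)}-\phi_{1,a(j)})^{\otimes2}$. The tuple $\F$ is normalized because Lemma \ref{A to Lambda} preserves $L^p$ norms, so $\F\in\mathfrak F$ under \eqref{main twisted normalization}.

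\textbf{Step 3: reduce to the skip sequence.} Proposition \ref{P:induct-positive-terms-reduction-non-whitney} alone would give the bound with constant $C_{\ref{P:induct-positive-terms-reduction-non-whitney}}$. The stated constant $2C_{\ref{P:induct-positive-terms-reduction-non-whitney-skip}}$ instead suggests using the skip variant, as follows.

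1. Define $\tilde a(2i)=a(i-1)$ and $\tilde a(2i+1)=a(i)$. This fails to be strictly spaced since $\tilde a(2i+1)=\tilde a(2i+2)$, so I expect to instead take $\tilde a(i)=a(i-1)$ directly.
2. Set $\tilde a(2j)=a(j-1)$ and $\tilde a(2j+1)=a(j)\cdot$ (nothing); this again fails the spacing.
3. The cleanest route is therefore to apply Proposition \ref{P:induct-positive-terms-reduction-non-whitney-skip} with the sequence $c(i)=a(\lfloor i/2\rfloor)$-type interleavings, which respect ${\rm A}$ only with doubling.

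The main obstacle is matching to the skip form. A natural fix is $c(2j)=2^{m_j}$ and $c(2j+1)=2^{m_{j+1}}$, but then $c(2j+2)=c(2j+1)$, which violates ${\rm A}$.

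If this fails, I will bound by Proposition \ref{P:induct-positive-terms-reduction-non-whitney} directly. Taking $\min(1,J^{-\alpha(n)})$ into account, that gives $C_{\ref{P:induct-positive-terms-reduction-non-whitney}}J^{\alpha(n)}\le 2C_{\ref{P:induct-positive-terms-reduction-non-whitney-skip}}J^{\alpha(n)}$, since $2^{1-2^{2-n}}\ge1$ implies $C_{\ref{P:induct-positive-terms-reduction-non-whitney-skip}}\ge C_{\ref{P:induct-positive-terms-reduction-non-whitney}}$. This already yields the claim, and is the argument I would commit to.
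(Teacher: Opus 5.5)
Your committed argument is correct, and it is more direct than the paper's. You index so that $a(j-1)=2^{m_j}$ and $a(j)=2^{m_{j+1}}$: extend $(2^{m_i})_{i\in[J+1)}$ by Proposition~\ref{Extension of sequences} and shift by one via Proposition~\ref{Operations on spaced sequences}(iii). Then the $j$th kernel is literally $(\phi_{0,a(j-1)}-\phi_{1,a(j)})^{\otimes 2}$. Proposition~\ref{P:induct-positive-terms-reduction-non-whitney}, together with the definition of the ${\rm M}(1)$ norm, bounds the sum by $C_{\ref{P:induct-positive-terms-reduction-non-whitney}}J^{\alpha(n)}$. This is at most $2C_{\ref{P:induct-positive-terms-reduction-non-whitney-skip}}J^{\alpha(n)}$ because $2^{1-2^{2-n}}\ge 1$.

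The paper instead takes $a(j)=2^{m_j}$, so its $j$th kernel is $(\phi_{0,a(j)}-\phi_{1,a(j+1)})^{\otimes 2}$. It splits $j$ by parity and reads the even terms as an initial partial sum of the skipped sequence in Proposition~\ref{P:induct-positive-terms-reduction-non-whitney-skip} for $a$. The odd terms are read as such a partial sum for $a(\cdot+1)$, and the two bounds are added. The skip proposition was itself obtained from the non-Whitney one at the cost of the factor $2^{1-2^{2-n}}$. So the paper's detour costs a total factor $2^{2-2^{2-n}}$ that your route avoids; its only merit is that it lands exactly on the stated constant.

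The interleaving attempts in your Step~1 and in items 1--3 of Step~3 are dead ends and should be deleted. With the shifted indexing there is nothing to interleave, and the ``obstacle'' you describe never arises.
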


\begin{proof}[Proof \auto]
By Proposition \ref{Extension of sequences}, the finite sequence $(2^{m_j})_{j\in[J+1)}$ extends to a sequence $a\in A$. By Lemma \ref{A to Lambda}, the left-hand side is
\[
\Lambda_1\left(\sum_{j\in[J)}
(\phi_{0,a(j)}-\phi_{1,a(j+1)})^{\otimes2}\right)(\F).
\]
Split this sum according to the parity of $j$. The even terms are an initial partial sum of the sequence in Proposition \ref{P:induct-positive-terms-reduction-non-whitney-skip} for $a$, while the odd terms are such a partial sum for the shifted sequence $a(\cdot+1)$, which belongs to $A$ by Proposition \ref{Operations on spaced sequences}. Proposition \ref{P:induct-positive-terms-reduction-non-whitney-skip} and the triangle inequality therefore give
\[
\sum_{j\in[J)}
\|A_{2^{m_j}}(\phi_0)-A_{2^{m_{j+1}}}(\phi_1)\|_2^2
\le2C_{\ref{P:induct-positive-terms-reduction-non-whitney-skip}}J^{\alpha(n)}.
\]
\end{proof}

\begin{lemma}[constant $C_{\ref{lem:mainbump1_long2}}$ \auto]\label{constant main bump one long two}\uses{P:induct-positive-terms-reduction-non-whitney-skip,constant non Whitney skip reduction,lem:mainbump1_long2}\usesdefs{}
\lean{Auto.constantMainBumpOneLongTwo}
\leanok
\begin{equation}\label{constant main bump one long two bound}
C_{\ref{lem:mainbump1_long2}}<\tfrac89 2^{543}<2^{543}.
\end{equation}
\end{lemma}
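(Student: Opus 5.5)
The claim is the constant bound $C_{\ref{lem:mainbump1_long2}}<\tfrac89 2^{543}<2^{543}$ for the constant defined in Lemma \ref{lem:mainbump1_long2}. The plan is to unfold that definition and invoke the constant lemma for the skip proposition, with no new analysis.

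By definition, $C_{\ref{lem:mainbump1_long2}}=2C_{\ref{P:induct-positive-terms-reduction-non-whitney-skip}}$. Lemma \ref{constant non Whitney skip reduction} gives
\[
C_{\ref{P:induct-positive-terms-reduction-non-whitney-skip}}<\tfrac89 2^{542}.
\]
Multiplying by $2$ yields $C_{\ref{lem:mainbump1_long2}}<\tfrac89 2^{543}$. The second inequality $\tfrac89 2^{543}<2^{543}$ holds simply because $\tfrac89<1$.

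I expect no real obstacle here. The only point to check is that the factor $2^{1-2^{2-n}}\le 2$ has already been absorbed into the skip constant in Lemma \ref{constant non Whitney skip reduction}, so that the bound used there holds uniformly in $n\ge2$. That lemma does this, so the final bound is independent of $n$.
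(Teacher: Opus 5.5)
Your proposal is correct and is the paper's argument: $C_{\ref{lem:mainbump1_long2}}=2C_{\ref{P:induct-positive-terms-reduction-non-whitney-skip}}$, and Lemma \ref{constant non Whitney skip reduction} gives $C_{\ref{P:induct-positive-terms-reduction-non-whitney-skip}}<\tfrac89 2^{542}$, which already absorbs the factor $2^{1-2^{2-n}}\le 2$ uniformly in $n\ge 2$. Doubling this bound gives $C_{\ref{lem:mainbump1_long2}}<\tfrac89 2^{543}<2^{543}$.
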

\begin{proof}[Proof \auto]
The defining constant is twice the constant in Proposition \ref{P:induct-positive-terms-reduction-non-whitney-skip}. Apply Lemma \ref{constant non Whitney skip reduction}.
\end{proof}

\begin{lemma}\label{lem:mainbump1_long}\uses{lem:mainbump1_long2,lem:mainbump1_long1}\usesdefs{A_def,def:unipair}
\lean{Auto.mainBumpOneLong}
\leanok
Let $J\ge 1$. Then
\begin{equation}\label{mainbump1_long}
\|A_{t}(\phi_0)\|^2_{V_{2,J}(t\in 2^\mathbb{Z}; L^2)}  \le C_{\ref{lem:mainbump1_long}} J^{\alpha(n)},
\end{equation}
where $C_{\ref{lem:mainbump1_long}}=2(C_{\ref{lem:mainbump1_long2}} + C_{\ref{lem:mainbump1_long1}})$.
\end{lemma}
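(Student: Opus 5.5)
The plan is to split each dyadic jump of $A_t(\phi_0)$ by inserting $\phi_1$ at the upper scale. Fix integers $k_0<\dots<k_J$; by the definition of the variation seminorm it suffices to bound $\sum_{j\in[J)}\|A_{2^{k_{j+1}}}(\phi_0)-A_{2^{k_j}}(\phi_0)\|_2^2$ uniformly in such sequences (fewer jumps $M\le J$ are handled identically, using $M^{\alpha(n)}\le J^{\alpha(n)}$). For each $j$ we write
\[
A_{2^{k_{j+1}}}(\phi_0)-A_{2^{k_j}}(\phi_0)
=\bigl(A_{2^{k_{j+1}}}(\phi_0)-A_{2^{k_{j+1}}}(\phi_1)\bigr)
+\bigl(A_{2^{k_{j+1}}}(\phi_1)-A_{2^{k_j}}(\phi_0)\bigr),
\]
using linearity of $A$ in its first argument.

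By $|x+y|^2\le 2|x|^2+2|y|^2$, the sum of squares is at most twice the sum of the first-type terms plus twice the sum of the second-type terms. The first-type terms are controlled by Lemma \ref{lem:mainbump1_long1}, applied to the strictly increasing sequence $(k_{j+1})_{j\in[J)}$, which gives at most $C_{\ref{lem:mainbump1_long1}}J^{\alpha(n)}$. The second-type terms have the form $\|A_{2^{m_j}}(\phi_0)-A_{2^{m_{j+1}}}(\phi_1)\|_2^2$ with $m_j=k_j$ strictly increasing, indexed by $j\in[J+1)$. Lemma \ref{lem:mainbump1_long2} therefore bounds their sum by $C_{\ref{lem:mainbump1_long2}}J^{\alpha(n)}$.

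Adding the two bounds gives $2(C_{\ref{lem:mainbump1_long2}}+C_{\ref{lem:mainbump1_long1}})J^{\alpha(n)}$. Taking the supremum over sequences then yields \eqref{mainbump1_long}.

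There is no real obstacle here. The only point needing care is the bookkeeping: the sign and order convention in Lemma \ref{lem:mainbump1_long2} (the term is $\phi_0$ at the lower scale minus $\phi_1$ at the upper scale) must match the second bracket up to an overall sign, which does not affect the norm.
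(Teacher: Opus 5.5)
Your proposal is correct and follows the paper's proof. Both insert $A_{2^{k_{j+1}}}(\phi_1)$, then apply Lemma \ref{lem:mainbump1_long1} to $(k_{j+1})_{j\in[J)}$ and Lemma \ref{lem:mainbump1_long2} to $(k_j)_{j\in[J+1)}$. The paper uses Minkowski in $\ell^2([J))$ followed by $(a^{1/2}+b^{1/2})^2\le 2(a+b)$, whereas you use $\|x+y\|^2\le 2\|x\|^2+2\|y\|^2$ termwise; the two give the same constant. Your remark about sequences with fewer than $J$ jumps is a point the paper leaves implicit.
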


\begin{proof}
Let $(k_j)_{j\in [J+1)}$ be a strictly increasing sequence of integers.
By the triangle inequality,
\[ \Big(\sum_{j\in [J)} \|A_{2^{k_{j+1}}}(\phi_0)- A_{2^{k_j}}(\phi_0)\|_2^2\Big)^{1/2} \]
\[ \le \Big(\sum_{j\in [J)} \|A_{2^{k_{j}}}(\phi_0)- A_{2^{k_{j+1}}}(\phi_1)\|_2^2\Big)^{1/2}
+ \Big(\sum_{j\in [J)} \|A_{2^{k_{j+1}}}(\phi_0)- A_{2^{k_{j+1}}}(\phi_1)\|_2^2\Big)^{1/2}.
\]
By Lemma \ref{lem:mainbump1_long2} and Lemma \ref{lem:mainbump1_long1} this is
\[ \le C_{\ref{lem:mainbump1_long2}}^{\tfrac12} J^{\tfrac{\alpha(n)}{2}} + C_{\ref{lem:mainbump1_long1}}^{\tfrac12} J^{\tfrac{\alpha(n)}{2}} \le C_{\ref{lem:mainbump1_long}}^{\tfrac12} J^{\tfrac{\alpha(n)}{2}}.  \]
\end{proof}

\begin{lemma}[constant $C_{\ref{lem:mainbump1_long}}$ \auto]\label{constant main bump one long}\uses{constant main bump one long one,constant main bump one long two,lem:mainbump1_long}\usesdefs{}
\lean{Auto.constantMainBumpOneLong}
\leanok
\begin{equation}\label{constant main bump one long bound}
C_{\ref{lem:mainbump1_long}}<\tfrac{11}{16}2^{606}<2^{606}.
\end{equation}
\end{lemma}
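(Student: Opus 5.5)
The statement to be proved is the constant bound $C_{\ref{lem:mainbump1_long}}<\tfrac{11}{16}2^{606}$. Since $C_{\ref{lem:mainbump1_long}}=2(C_{\ref{lem:mainbump1_long2}}+C_{\ref{lem:mainbump1_long1}})$ by definition in Lemma \ref{lem:mainbump1_long}, the plan is to bound each summand by an earlier constant lemma. The first summand comes from Lemma \ref{constant main bump one long two}, which gives $C_{\ref{lem:mainbump1_long2}}<\tfrac89 2^{543}$. The second comes from Lemma \ref{constant main bump one long one}, which gives $C_{\ref{lem:mainbump1_long1}}<2^{605}$.

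The crude bound $2^{605}$ alone would only give $C_{\ref{lem:mainbump1_long}}<2\cdot 2^{605}+\dots$, which slightly exceeds $2^{606}$. So we need a sharper value of $C_{\ref{lem:mainbump1_long1}}$. We unfold it as $(2C_{\ref{def:unipair}})^2C_{\ref{lem:main_aux1}}=2^{32}\cdot 2^4 C_{\ref{P:induct-positive-terms-reduction-whitney-product}}=2^{36}\cdot 2^{12}\cdot 11\, C_{\ref{P:induct-positive-terms-reduction-whitney-gap}}$. Lemma \ref{constant Whitney gap reduction} gives $C_{\ref{P:induct-positive-terms-reduction-whitney-gap}}<\tfrac{127}{128}2^{553}$. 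Together these give
\[
C_{\ref{lem:mainbump1_long1}}<11\cdot\tfrac{127}{128}\,2^{601}=\tfrac{1397}{2048}\,2^{605}.
\]
This is the same fraction appearing in Lemma \ref{constant Whitney reduction}, shifted by $2^{48}$.

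Then
\[
C_{\ref{lem:mainbump1_long}}<2\Big(\tfrac{1397}{2048}2^{605}+\tfrac89 2^{543}\Big)=\tfrac{1397}{2048}2^{606}+\tfrac{16}{9}2^{543}.
\]
The second term is far below $2^{-60}\cdot 2^{606}$. Since $\tfrac{1397}{2048}<\tfrac{1408}{2048}=\tfrac{11}{16}$, the gap is $\tfrac{11}{2048}2^{606}$, which comfortably absorbs it. This proves $C_{\ref{lem:mainbump1_long}}<\tfrac{11}{16}2^{606}<2^{606}$.

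The only delicate step is recognizing that the stated bound $2^{605}$ in Lemma \ref{constant main bump one long one} is too weak. We must instead carry the fraction $\tfrac{1397}{2048}$ through the exact recursive definitions. These are all products by powers of two and the factor $11$, so no further estimates are needed.
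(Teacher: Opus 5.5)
Your proposal is correct and follows the paper's proof. Both bound the contribution $2C_{\ref{lem:mainbump1_long1}}$ by $\tfrac{1397}{2048}2^{606}$ and the contribution $2C_{\ref{lem:mainbump1_long2}}$ by $\tfrac{16}{9}2^{543}=\tfrac89 2^{544}$, and then absorb the latter in the gap below $\tfrac{11}{16}2^{606}$.

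Your explicit unfolding $C_{\ref{lem:mainbump1_long1}}=2^{48}\cdot 11\,C_{\ref{P:induct-positive-terms-reduction-whitney-gap}}$ is a useful addition. It justifies the sharper fraction, which the paper uses without comment even though Lemma \ref{constant main bump one long one} itself only records $C_{\ref{lem:mainbump1_long1}}<2^{605}$.
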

\begin{proof}[Proof \auto]
Use Lemmas \ref{constant main bump one long one} and \ref{constant main bump one long two} in the defining sum. The first contribution after multiplication by $2$ is smaller than $\frac{1397}{2048}2^{606}$, while the second is smaller than $\frac89 2^{544}$. Their sum is smaller than $\frac{11}{16}2^{606}$.
\end{proof}

\begin{lemma}\label{lem:mainbump1}\uses{lem:shortlongftc_reduction,lem:mainbump1_long,lem:ft_deriv_mul}\usesdefs{A_def,def:cn-window,def:unipair}
\lean{Auto.mainBumpOne}
\leanok
Let $J\ge1$. Then
\begin{equation}\label{mainbump1}
\|A_t(\phi_0)\|_{V_{2,J}(t\in(0,\infty);L^2)}^2
\le C_{\ref{lem:mainbump1}}J^{\alpha(n)},
\end{equation}
where
\begin{equation}\label{auto:main-bump-one-constant-definition}
C_{\ref{lem:mainbump1}}
=2^4(3C_{\ref{def:unipair}})^2C_{\ref{lem:main_aux1}}
+2C_{\ref{lem:mainbump1_long}}.
\end{equation}
\end{lemma}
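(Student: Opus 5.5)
The plan is to apply Lemma \ref{lem:shortlongftc_reduction} with $\phi=\phi_0$ and with $A=C_{\ref{lem:mainbump1_long}}J^{\alpha(n)}$, the long-variation bound supplied by Lemma \ref{lem:mainbump1_long}. That lemma asks for hypotheses on $T\phi_0$ rather than on $\phi_0$ itself, so we will apply it to a rescaled function: set $\phi=c^{-1}\phi_0$ with $c=3C_{\ref{def:unipair}}$. The variation seminorm is homogeneous, so every estimate picks up a factor $c^{-2}$ that we undo at the end.

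The first step is the Fourier support of $T\phi_0$. By Lemma \ref{lem:ft_deriv_mul} with $m=0$,
\[
\widehat{T\phi_0}(\xi)=-\xi\,\widehat{\phi_0}'(\xi).
\]
By Definition \ref{def:cn-window}, $\widehat{\phi_0}$ is constant on $[-1/2,1/2]$ and vanishes outside $[-1,1]$. Hence $\widehat{\phi_0}'$ is supported in $\{1/2\le|\xi|\le1\}$, which lies inside $\mathrm{Ann}_1(1,2^2)$, so \eqref{main_aux1_supp} holds.

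The second step is the derivative bounds. Lemma \ref{lem:ft_deriv_mul} gives
\[
(\widehat{T\phi_0})^{(m)}=-\bigl(m\widehat{\phi_0}^{(m)}+\xi\widehat{\phi_0}^{(m+1)}\bigr).
\]
On the support we have $|\xi|\le1$. For $m\le2$ we also have $m+1\le3=N_{\ref{def:unipair}}$, so \eqref{windowbound} bounds each derivative by $(m+1)C_{\ref{def:unipair}}\le3C_{\ref{def:unipair}}$. Dividing by $c$ therefore gives \eqref{auto:main-auxiliary-one-Fourier-derivative-assumption} for $T(c^{-1}\phi_0)=c^{-1}T\phi_0$. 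One point to check is that the case $m=0$ also obeys the bound $1$; it does, since $C_{\ref{def:unipair}}\ge1$.

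The third step is the hypothesis \eqref{shortlongftc_reduction1}. Lemma \ref{lem:mainbump1_long} gives it for $c^{-1}\phi_0$ with $A=c^{-2}C_{\ref{lem:mainbump1_long}}J^{\alpha(n)}$. Lemma \ref{lem:shortlongftc_reduction} then yields
\[
\|A_t(c^{-1}\phi_0)\|^2_{V_{2,J}}\le 2^4C_{\ref{lem:main_aux1}}J^{\alpha(n)}+2c^{-2}C_{\ref{lem:mainbump1_long}}J^{\alpha(n)}.
\]
Since $A(\cdot)$ is linear in the kernel, multiplying by $c^2$ gives
\[
\|A_t(\phi_0)\|^2_{V_{2,J}}\le\bigl(2^4c^2C_{\ref{lem:main_aux1}}+2C_{\ref{lem:mainbump1_long}}\bigr)J^{\alpha(n)}.
\]
This is exactly the constant $C_{\ref{lem:mainbump1}}$ in \eqref{auto:main-bump-one-constant-definition}.

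The only delicate step is the support and derivative verification for $T\phi_0$. In particular we must use that the derivatives vanish on the plateau $[-1/2,1/2]$ and outside $[-1,1]$, and that \eqref{windowbound} reaches order $3$, which it does because $N_{\ref{def:unipair}}=3$.
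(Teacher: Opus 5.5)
Your proposal is correct and follows the paper's proof essentially step for step. It uses the same normalization $c=3C_{\ref{def:unipair}}$ and the same choice $A=c^{-2}C_{\ref{lem:mainbump1_long}}J^{\alpha(n)}$ from Lemma \ref{lem:mainbump1_long}. It also checks the support and derivative hypotheses on $c^{-1}T\phi_0$ the same way, via Lemma \ref{lem:ft_deriv_mul} and \eqref{windowbound}.
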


\begin{proof}[Proof \auto]
Let
\[
c_1=3C_{\ref{def:unipair}}.
\]
Apply Lemma \ref{lem:shortlongftc_reduction} with $c_1^{-1}\phi_0$ in place of $\phi$ and
\[
A=c_1^{-2}C_{\ref{lem:mainbump1_long}}J^{\alpha(n)}.
\]
The long variation assumption follows from Lemma \ref{lem:mainbump1_long}. It remains to verify the two assumptions on $c_1^{-1}T\phi_0$.

By Lemma \ref{lem:ft_deriv_mul}, $\widehat{T\phi_0}$ is supported in $\mathrm{Ann}_1(1,2)$. The identity \eqref{auto:Fourier-derivatives-of-T} and \eqref{windowbound} bound all Fourier derivatives of $T\phi_0$ through order $2$ by $3C_{\ref{def:unipair}}$. Hence both assumptions of Lemma \ref{lem:shortlongftc_reduction} hold after division by $c_1$, and that lemma gives the stated constant.
\end{proof}

\begin{lemma}[constant $C_{\ref{lem:mainbump1}}$ \auto]\label{constant main bump one}\uses{constant main auxiliary one,lem:mainbump1,constant main bump one long}\usesdefs{def:unipair}
\lean{Auto.constantMainBumpOne}
\leanok
\begin{equation}\label{constant main bump one bound}
C_{\ref{lem:mainbump1}}<\tfrac78 2^{610}<2^{610}.
\end{equation}
\end{lemma}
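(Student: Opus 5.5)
The bound in Lemma \ref{constant main bump one} is pure arithmetic. The plan is to substitute the numerical bounds already established into the defining formula \eqref{auto:main-bump-one-constant-definition} and compare with $\tfrac78 2^{610}$. We take $C_{\ref{def:unipair}}=2^{15}$ from Definition \ref{def:unipair}. We take $C_{\ref{lem:main_aux1}}<2^{573}$ from Lemma \ref{constant main auxiliary one}, and $C_{\ref{lem:mainbump1_long}}<\tfrac{11}{16}2^{606}$ from Lemma \ref{constant main bump one long}.

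For the first summand we compute $2^4(3\cdot 2^{15})^2=9\cdot 2^{34}$. Multiplying by the bound for $C_{\ref{lem:main_aux1}}$ gives a first term smaller than $9\cdot 2^{607}=\tfrac{9}{8}2^{610}$.

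This is already larger than $\tfrac78 2^{610}$, so the crude bound $2^{573}$ is not sharp enough, and this is the main obstacle. The fix is to unfold the finer bounds. By \eqref{auto:main-auxiliary-one-constant-definition} and Lemmas \ref{constant Whitney product reduction}, \ref{constant Whitney reduction} and \ref{constant Whitney gap reduction},
\[
C_{\ref{lem:main_aux1}}=2^{16}\,C_{\ref{P:induct-positive-terms-reduction-whitney}}<2^{16}\cdot\tfrac{1397}{2048}2^{557}=\tfrac{1397}{2048}2^{573}.
\]
With this, the first term is smaller than
\[
9\cdot\tfrac{1397}{2048}\,2^{607}=\tfrac{12573}{16384}\,2^{610}.
\]
The second term is $2C_{\ref{lem:mainbump1_long}}<\tfrac{11}{8}2^{606}=\tfrac{11}{128}2^{610}=\tfrac{1408}{16384}2^{610}$. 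Adding the two gives
\[
\tfrac{13981}{16384}\,2^{610}<\tfrac{14336}{16384}\,2^{610}=\tfrac78 2^{610},
\]
which is the claim.

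If the tighter fraction were not already recorded, I would instead go back through Lemma \ref{constant Whitney gap reduction} and track the fractional constants along the chain leading to it.
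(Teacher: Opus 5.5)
Your proposal is correct and matches the paper's proof. The paper's bound $\tfrac{12573}{16384}2^{610}$ for the first term is exactly what the unfolded estimate $C_{\ref{lem:main_aux1}}<\tfrac{1397}{2048}2^{573}$ gives, and its bound $\tfrac{11}{16}2^{607}$ for the second term equals your $\tfrac{11}{128}2^{610}$. You are also right that the stated bound $2^{573}$ from Lemma \ref{constant main auxiliary one} alone does not suffice; the paper uses the finer fraction without saying so.
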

\begin{proof}[Proof \auto]
Use $C_{\ref{def:unipair}}=2^{15}$ and Lemma \ref{constant main auxiliary one} in the first term of \eqref{auto:main-bump-one-constant-definition}. It is smaller than
\[
\tfrac{12573}{16384}2^{610}.
\]
Lemma \ref{constant main bump one long} bounds the second term by
\[
\tfrac{11}{16}2^{607}.
\]
The sum is smaller than $\frac78 2^{610}$.
\end{proof}

\begin{lemma}\label{lem:main_aux2}\uses{lem:shortlongftc_reduction,lem:main_aux1,lem:bootstrap}\usesdefs{A_def}
\lean{Auto.mainAuxTwo}
\leanok
Let $\psi$ satisfy \eqref{main_aux1_supp} and \eqref{auto:main-auxiliary-one-Fourier-derivative-assumption}. Assume also that
\begin{equation}\label{auto:main-auxiliary-two-Fourier-derivative-assumption}
|\widehat{T\psi}^{(m)}(\xi)|\le1,
\qquad
m\in[3),
\quad
\xi\in\R.
\end{equation}
Then
\begin{equation}\label{main_aux2}
\|A_t(\psi)\|_{V_{2,J}(t\in(0,\infty);L^2)}^2
\le C_{\ref{lem:main_aux2}}J^{\alpha(n)},
\end{equation}
where $C_{\ref{lem:main_aux2}}=24C_{\ref{lem:main_aux1}}$.
\end{lemma}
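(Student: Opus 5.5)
We apply Lemma \ref{lem:shortlongftc_reduction} with $\phi=\psi$. Its hypothesis on $T\psi$ is the Fourier support condition \eqref{main_aux1_supp} together with the derivative bound. The derivative bound is exactly \eqref{auto:main-auxiliary-two-Fourier-derivative-assumption}. For the support, Lemma \ref{lem:ft_deriv_mul} with $m=0$ gives $\widehat{T\psi}(\xi)=-\xi\widehat{\psi}'(\xi)$. Since $\widehat\psi$ is supported in $\mathrm{Ann}_1(1,2^2)$, so is $\widehat\psi'$, and hence so is $\widehat{T\psi}$. Lemma \ref{lem:shortlongftc_reduction} then gives
\[
\|A_t(\psi)\|_{V_{2,J}(t\in(0,\infty);L^2)}^2\le 2^4C_{\ref{lem:main_aux1}}J^{\alpha(n)}+2A,
\]
where $A$ is any upper bound for the squared long variation $\|A_t(\psi)\|^2_{V_{2,J}(t\in 2^\Z;L^2)}$.

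To bound the long variation we use the bootstrapping Lemma \ref{lem:bootstrap}. It gives
\[
\|A_t(\psi)\|^2_{V_{2,J}(t\in 2^\Z;L^2)}\le 4\sup\sum_{\ell\in[J)}\|A_{2^{k_\ell}}(\psi)\|_2^2,
\]
where the supremum runs over strictly increasing integer sequences $k_0<\dots<k_{J-1}$. Each such sum is controlled by Lemma \ref{lem:main_aux1} with $t=1$, since $\psi$ itself satisfies \eqref{main_aux1_supp} and \eqref{auto:main-auxiliary-one-Fourier-derivative-assumption}. This yields the bound $C_{\ref{lem:main_aux1}}J^{\alpha(n)}$ for each sum, so we may take $A=4C_{\ref{lem:main_aux1}}J^{\alpha(n)}$.

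Combining the two bounds gives the constant $(16+8)C_{\ref{lem:main_aux1}}=24C_{\ref{lem:main_aux1}}$, which matches $C_{\ref{lem:main_aux2}}$.

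The only point needing care is the Fourier support of $T\psi$. Because $\widehat{T\psi}=-\xi\widehat\psi'$ and derivatives do not enlarge support, it stays inside $\mathrm{Ann}_1(1,2^2)$.

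A minor point is that Lemma \ref{lem:bootstrap} is stated for a Schwartz $\phi$ and imposes no further hypotheses, so no mean-zero property of $\psi$ is needed. That property nonetheless holds, since $0\notin\operatorname{supp}\widehat\psi$.
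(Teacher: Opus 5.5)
Your proposal is correct and follows the paper's proof essentially step for step. Like the paper, you apply Lemma \ref{lem:shortlongftc_reduction} with $\phi=\psi$, bound the long variation via Lemma \ref{lem:bootstrap} and Lemma \ref{lem:main_aux1} at $t=1$ to take $A=4C_{\ref{lem:main_aux1}}J^{\alpha(n)}$, and combine to obtain $24C_{\ref{lem:main_aux1}}$. Your justification of the support of $\widehat{T\psi}$ via Lemma \ref{lem:ft_deriv_mul} fills in a step the paper only asserts.
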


\begin{proof}
We apply Lemma \ref{lem:shortlongftc_reduction} with $\phi=\psi$. The support of $\widehat{T\psi}$ is contained in the support of $\widehat\psi$, and the other assumption on $T\psi$ is \eqref{auto:main-auxiliary-two-Fourier-derivative-assumption}.

We will show that \eqref{shortlongftc_reduction1} holds with $A=4C_{\ref{lem:main_aux1}}J^{\alpha(n)}$, that is,
\begin{equation}\label{eq:main_aux2_long}
\|A_t(\psi)\|^2_{V_{2,J}(t\in2^\Z;L^2)}
\le4C_{\ref{lem:main_aux1}}J^{\alpha(n)}.
\end{equation}
Lemma \ref{lem:shortlongftc_reduction} then gives
\[
\|A_t(\psi)\|_{V_{2,J}(t\in(0,\infty);L^2)}^2
\le2^4C_{\ref{lem:main_aux1}}J^{\alpha(n)}
+2^3C_{\ref{lem:main_aux1}}J^{\alpha(n)}
=24C_{\ref{lem:main_aux1}}J^{\alpha(n)}.
\]
By Lemma \ref{lem:bootstrap},
\[
\|A_t(\psi)\|^2_{V_{2,J}(t\in2^\Z;L^2)}
\le4\sup\sum_{\ell\in[J)}\|A_{2^{k_\ell}}(\psi)\|_2^2.
\]
Lemma \ref{lem:main_aux1}, applied with $t=1$, gives \eqref{eq:main_aux2_long} because $\psi$ satisfies both of its assumptions.
\end{proof}

\begin{lemma}[constant $C_{\ref{lem:main_aux2}}$ \auto]\label{constant main auxiliary two}\uses{constant main auxiliary one,lem:main_aux2}\usesdefs{}
\lean{Auto.constantMainAuxiliaryTwo}
\leanok
\begin{equation}\label{constant main auxiliary two bound}
C_{\ref{lem:main_aux2}}<2^{578}.
\end{equation}
\end{lemma}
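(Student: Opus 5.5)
The bound follows by substituting the constant chain into the defining identity $C_{\ref{lem:main_aux2}}=24C_{\ref{lem:main_aux1}}$ from Lemma \ref{lem:main_aux2}. No analysis is involved; this is pure arithmetic with powers of two.

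First, I would recall from Lemma \ref{constant main auxiliary one} that $C_{\ref{lem:main_aux1}}<2^{573}$. That value in turn came from $C_{\ref{lem:main_aux1}}=2^4C_{\ref{P:induct-positive-terms-reduction-whitney-product}}$ with $C_{\ref{P:induct-positive-terms-reduction-whitney-product}}<2^{569}$. Multiplying by $24<2^5$ gives
\[
C_{\ref{lem:main_aux2}}=24\,C_{\ref{lem:main_aux1}}<2^5\cdot2^{573}=2^{578},
\]
which is the claim.

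There is no real obstacle. The only point to check is that the inequality in Lemma \ref{constant main auxiliary one} is strict, or at least that $24<32$ leaves slack, so the final inequality is strict. Both hold.
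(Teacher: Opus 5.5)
Your argument is correct and matches the paper's: substitute the defining factor $24$ and the bound $C_{\ref{lem:main_aux1}}<2^{573}$ from Lemma \ref{constant main auxiliary one}. The only difference is that the paper carries the sharper bound $C_{\ref{lem:main_aux1}}<\tfrac{1397}{2048}2^{573}$ to get $C_{\ref{lem:main_aux2}}<\tfrac{4191}{8192}2^{578}$, which later constant computations use, whereas your cruder estimate $24<2^5$ already suffices for the stated bound.
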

\begin{proof}[Proof \auto]
Lemma \ref{constant main auxiliary one} and the defining factor $24$ give
\[
C_{\ref{lem:main_aux2}}
<24\cdot\tfrac{1397}{2048}2^{573}
=\tfrac{4191}{8192}2^{578}.
\]
\end{proof}

\begin{lemma}\label{lem:mainbump2}\uses{lem:rescaling,lem:main_aux2,lem:abs_deriv_ft_Tphi3_le,lem:ft_phi3_eq,lem:abs_deriv_ft_phi3_le,lem:ft_deriv_mul}\usesdefs{A_def,defn:window based bump functions}
\lean{Auto.mainBumpTwo}
\leanok
For all $k\ge-2$,
\begin{equation}\label{mainbump2}
\|A_t(\varphi_{0,k})\|_{V_{2,J}(t\in(0,\infty);L^2)}^2
\le C_{\ref{lem:mainbump2}}2^{-2k}J^{\alpha(n)},
\end{equation}
where
\begin{equation}\label{auto:main-bump-two-constant-definition}
C_{\ref{lem:mainbump2}}
=C_{\ref{lem:main_aux2}}C_{\ref{lem:abs_deriv_ft_Tphi3_le},2}^2.
\end{equation}
\end{lemma}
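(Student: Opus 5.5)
The plan is to rescale $\varphi_{0,k}$ to the fixed-scale function $\varphi_{3,k}$ and then apply Lemma \ref{lem:main_aux2} to a normalized multiple of it.

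First, by \eqref{eqn:varphi3def} we have $\varphi_{3,k}=2^k(\varphi_{0,k})_{(2^{-k})}$, so $\varphi_{0,k}=2^{-k}(\varphi_{3,k})_{(2^{k})}$. Lemma \ref{lem:rescaling} with $\lambda=2^k$ and linearity of $A_t$ in $\chi$ then give
\[
\|A_t(\varphi_{0,k})\|_{V_{2,J}(t\in(0,\infty);L^2)}^2
=2^{-2k}\|A_t(\varphi_{3,k})\|_{V_{2,J}(t\in(0,\infty);L^2)}^2 .
\]
It therefore suffices to show that $\|A_t(\varphi_{3,k})\|^2_{V_{2,J}}\le C_{\ref{lem:main_aux2}}C_{\ref{lem:abs_deriv_ft_Tphi3_le},2}^2J^{\alpha(n)}$ uniformly in $k\ge-2$.

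For this I apply Lemma \ref{lem:main_aux2} to $\psi=c^{-1}\varphi_{3,k}$ with $c=C_{\ref{lem:abs_deriv_ft_Tphi3_le},2}$, and then multiply back by $c^2$. Three hypotheses have to be checked.

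\begin{itemize}
\item \emph{Support.} By Lemma \ref{lem:ft_phi3_eq}, $\widehat{\varphi_{3,k}}$ contains the factor $\widehat\theta$. By Lemma \ref{lem:bumpbasic}(i) this factor is supported in $[-1,-2^{-2}]\cup[2^{-2},1]\subset\mathrm{Ann}_1(1,2^2)$, which is \eqref{main_aux1_supp}. The support of $\widehat{T\varphi_{3,k}}$ lies in the same set: by Lemma \ref{lem:ft_deriv_mul} with $m=0$, $\widehat{T\varphi_{3,k}}=-\xi\,\widehat{\varphi_{3,k}}'$.
\item \emph{Derivatives of $\widehat\psi$.} Since the support lies in $|\xi|\le1$, Lemma \ref{lem:abs_deriv_ft_phi3_le} bounds $|\widehat{\varphi_{3,k}}^{(m)}|$ for $m\in[3)$ by $C_{\ref{lem:abs_deriv_ft_phi3_le},m}$.
\item \emph{Derivatives of $\widehat{T\psi}$.} Lemma \ref{lem:abs_deriv_ft_Tphi3_le} bounds $|\widehat{T\varphi_{3,k}}^{(m)}|$ for $m\in[3)$ by $C_{\ref{lem:abs_deriv_ft_Tphi3_le},m}$.
\end{itemize}

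The only step needing care is that one normalizing constant $c$ controls all six of these quantities. From the defining formula $C_{\ref{lem:abs_deriv_ft_Tphi3_le},m}=mC_{\ref{lem:abs_deriv_ft_phi3_le},m}+C_{\ref{lem:abs_deriv_ft_phi3_le},m+1}$, we get $C_{\ref{lem:abs_deriv_ft_phi3_le},m}\le C_{\ref{lem:abs_deriv_ft_Tphi3_le},m-1}$ for $m\ge1$. The explicit polynomials in Lemma \ref{lem:abs_deriv_ft_Tphi3_le} are increasing in $m$, and $C_{\ref{lem:abs_deriv_ft_phi3_le},0}=4$ is tiny. Hence $c=C_{\ref{lem:abs_deriv_ft_Tphi3_le},2}$ dominates every constant involved, and after division by $c$ all the hypotheses of Lemma \ref{lem:main_aux2} hold.

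It remains to confirm that $\varphi_{3,k}$ is Schwartz. Its Fourier transform is smooth and compactly supported: the factor $(\widehat{\mathbf 1_{[0,1)}}-1)(2^{-k}\cdot)$ is entire, and it is multiplied by compactly supported smooth window factors. Lemma \ref{lem:main_aux2} then gives the bound $c^2C_{\ref{lem:main_aux2}}J^{\alpha(n)}$, and combined with the rescaling identity this yields \eqref{mainbump2}.
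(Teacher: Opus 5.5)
Your proposal is correct and follows the paper's proof: rescale to $\varphi_{3,k}$ via Lemma \ref{lem:rescaling}, then apply Lemma \ref{lem:main_aux2} to $C_{\ref{lem:abs_deriv_ft_Tphi3_le},2}^{-1}\varphi_{3,k}$, using Lemmas \ref{lem:ft_phi3_eq} and \ref{lem:ft_deriv_mul} for the supports and Lemmas \ref{lem:abs_deriv_ft_phi3_le} and \ref{lem:abs_deriv_ft_Tphi3_le} for the derivative bounds. You also spell out two points the paper leaves implicit: why the single constant $C_{\ref{lem:abs_deriv_ft_Tphi3_le},2}$ dominates all six derivative bounds, and why $\varphi_{3,k}$ is Schwartz.
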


\begin{proof}[Proof \auto]
By Lemma \ref{lem:rescaling}, it suffices to prove
\begin{equation}\label{mainbump2_1}
\|A_t(\varphi_{3,k})\|_{V_{2,J}(t\in(0,\infty);L^2)}^2
\le C_{\ref{lem:mainbump2}}J^{\alpha(n)}.
\end{equation}
Apply Lemma \ref{lem:main_aux2} with $c_1^{-1}\varphi_{3,k}$ in place of $\psi$, where
\[
c_1=C_{\ref{lem:abs_deriv_ft_Tphi3_le},2}.
\]
By Lemma \ref{lem:ft_phi3_eq},
\begin{equation}\label{mainbump2_supp}
\operatorname{supp}(\widehat{\varphi_{3,k}})
\subset[-1,-2^{-2}]\cup[2^{-2},1]
\subset\mathrm{Ann}_1(1,2^2).
\end{equation}
The support of $\widehat{T\varphi_{3,k}}$ is contained in the same annulus by Lemma \ref{lem:ft_deriv_mul}. Lemmas \ref{lem:abs_deriv_ft_phi3_le} and \ref{lem:abs_deriv_ft_Tphi3_le} show that, after division by $c_1$, the Fourier derivatives of both functions through order $2$ are bounded by $1$. Thus all assumptions of Lemma \ref{lem:main_aux2} hold.
\end{proof}

\begin{lemma}[constant $C_{\ref{lem:mainbump2}}$ \auto]\label{constant main bump two}\uses{constant main auxiliary two,lem:abs_deriv_ft_Tphi3_le,constant T phi three derivative,lem:mainbump2}\usesdefs{def:unipair}
\lean{Auto.constantMainBumpTwo}
\leanok
\begin{equation}\label{constant main bump two bound}
C_{\ref{lem:mainbump2}}<\tfrac{27}{32}2^{658}<2^{658}.
\end{equation}
\end{lemma}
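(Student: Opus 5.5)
The statement is a pure constant bound, so the plan is to substitute the already-established numerical bounds into the defining product \eqref{auto:main-bump-two-constant-definition},
\[
C_{\ref{lem:mainbump2}}=C_{\ref{lem:main_aux2}}\,C_{\ref{lem:abs_deriv_ft_Tphi3_le},2}^2 .
\]
No analysis is involved. For the first factor I will not use the rounded bound $2^{578}$ from Lemma \ref{constant main auxiliary two}, since that would cost too much. Instead I will use the sharper intermediate value recorded in its proof, $C_{\ref{lem:main_aux2}}<\tfrac{4191}{8192}2^{578}$.

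For the second factor, Lemma \ref{constant T phi three derivative} only gives $C_{\ref{lem:abs_deriv_ft_Tphi3_le},2}<2^{41}$. Squaring that yields $2^{82}$, and combined with the first factor this gives only about $\tfrac12 2^{660}$, which is too weak. So I will return to the exact polynomial \eqref{auto:T-varphi-three-Fourier-derivative-constant-two-exact} with $C_{\ref{def:unipair}}=2^{15}$:
\[
C_{\ref{lem:abs_deriv_ft_Tphi3_le},2}=81\cdot 2^{34}+2630\cdot 2^{15}+2220 .
\]
This is slightly above $81\cdot 2^{34}=\tfrac{81}{128}2^{41}$. The lower-order terms are about $2^{26.4}$, which is negligible relative to $2^{40.3}$. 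Hence $C_{\ref{lem:abs_deriv_ft_Tphi3_le},2}<\tfrac{81.01}{128}2^{41}$, and its square is below $0.4007\cdot 2^{82}$.

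Multiplying the two factors gives
\[
C_{\ref{lem:mainbump2}}<0.5116\cdot 0.4007\cdot 2^{660}\approx 0.205\cdot 2^{660}=0.820\cdot 2^{658}.
\]
Since $0.820<\tfrac{27}{32}=0.84375$, the stated bound follows, and trivially also $C_{\ref{lem:mainbump2}}<2^{658}$.

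The main obstacle is purely arithmetic. The margin between $0.820$ and $0.844$ is only about three percent, so the bound for $C_{\ref{lem:abs_deriv_ft_Tphi3_le},2}$ must be written as an explicit rational, such as $\tfrac{8101}{12800}2^{41}$ or a cleaner one, that survives squaring. One should check carefully that the cross terms $2\cdot 81\cdot 2^{34}\cdot 2630\cdot 2^{15}$ remain small. They do: they are of relative size about $2^{-13}$. If needed, exact integer arithmetic settles the comparison.
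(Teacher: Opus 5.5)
Your proposal is correct and matches the paper's proof: it also combines the sharpened intermediate bound $C_{\ref{lem:main_aux2}}<\tfrac{4191}{8192}2^{578}$ with the exact value $C_{\ref{lem:abs_deriv_ft_Tphi3_le},2}=1391655585964$ rather than the rounded $2^{41}$. The only difference is that the paper rounds this exact value up to $\tfrac{41}{64}2^{41}$, which is coarser than your $\tfrac{8101}{12800}2^{41}$ but still leaves enough margin, since $4\cdot\tfrac{4191}{8192}(\tfrac{41}{64})^2<\tfrac{27}{32}$.
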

\begin{proof}[Proof \auto]
Use Lemma \ref{constant main auxiliary two}. Lemma \ref{constant T phi three derivative} gives the exact value
\[
C_{\ref{lem:abs_deriv_ft_Tphi3_le},2}
=1296C_{\ref{def:unipair}}^2+2630C_{\ref{def:unipair}}+2220
=1391655585964
<\tfrac{41}{64}2^{41}
\]
with $C_{\ref{def:unipair}}=2^{15}$ in \eqref{auto:main-bump-two-constant-definition}. Hence
\[
C_{\ref{lem:mainbump2}}
<\tfrac{4191}{8192}\left(\tfrac{41}{64}\right)^2 2^{660}
<\tfrac{27}{32}2^{658}.
\]
\end{proof}

\begin{lemma}\label{lem:leftbump}\uses{lem:rescaling,lem:main_aux2,lem:theta_prim}\usesdefs{A_def,def:unipair,defn:window based bump functions}
\lean{Auto.leftBump}
\leanok
For every $k\le-1$,
\begin{equation}\label{leftbump}
\|A_t(\varphi_{1,k})\|_{V_{2,J}(t\in(0,\infty);L^2)}^2
\le C_{\ref{lem:leftbump}}2^{2k}J^{\alpha(n)},
\end{equation}
where
\begin{equation}\label{auto:left-bump-constant-definition}
C_{\ref{lem:leftbump}}
=C_{\ref{lem:main_aux2}}(2^{14}C_{\ref{def:unipair}})^2.
\end{equation}
\end{lemma}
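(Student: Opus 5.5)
The plan mirrors the proof of Lemma \ref{lem:mainbump2}: reduce by rescaling to a fixed-scale bump whose Fourier transform lives in $\mathrm{Ann}_1(1,2^2)$, then apply Lemma \ref{lem:main_aux2}.

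\emph{Rewriting $\varphi_{1,k}$.} Since $\widetilde\theta=\mathbf 1_{[0,\infty)}*\theta$, we have
\[
\varphi_{1,k}=\mathbf 1_{[0,\infty)}*\theta_{(2^k)}=\widetilde\theta_{(2^k)}\cdot 2^k .
\]
To check this, compute $(\mathbf 1_{[0,\infty)}*\theta_{(2^k)})(u)=\int_{-\infty}^{u}\theta_{(2^k)}=\int_{-\infty}^{2^{-k}u}\theta=\widetilde\theta(2^{-k}u)$, and note that $2^k\widetilde\theta_{(2^k)}(u)=\widetilde\theta(2^{-k}u)$. Thus $\varphi_{1,k}=2^k\widetilde\theta_{(2^k)}$.

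\emph{Rescaling.} By Lemma \ref{lem:rescaling} with $\lambda=2^k$ and linearity of $A_t$, the left-hand side of \eqref{leftbump} equals
\[
2^{2k}\|A_t(\widetilde\theta)\|_{V_{2,J}(t\in(0,\infty);L^2)}^2 .
\]
It therefore suffices to show
\[
\|A_t(\widetilde\theta)\|^2_{V_{2,J}}\le C_{\ref{lem:main_aux2}}(2^{14}C_{\ref{def:unipair}})^2J^{\alpha(n)}.
\]

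\emph{Applying Lemma \ref{lem:main_aux2}.} We apply it with $\psi=c_1^{-1}\widetilde\theta$, where $c_1=2^{14}C_{\ref{def:unipair}}$. Lemma \ref{lem:theta_prim} gives that both $\widehat{\widetilde\theta}$ and $\widehat{T\widetilde\theta}$ are supported in $[-1,-2^{-2}]\cup[2^{-2},1]\subset\mathrm{Ann}_1(1,2^2)$. This verifies \eqref{main_aux1_supp}, and the same support holds for $T\psi$.

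It remains to bound the Fourier derivatives of orders $m\in[3)$ for both $\widetilde\theta$ and $T\widetilde\theta$ by $c_1$. This is the step needing care: the proof of Lemma \ref{lem:theta_prim} records bounds $2^3C,2^6C,2^9C,2^{13}C$ (with $C=C_{\ref{def:unipair}}$) for $\widehat{\widetilde\theta}^{(m)}$, $m=0,\dots,3$. Since $\widehat{\widetilde\theta}=(2\pi i\xi)^{-1}\widehat\theta$, one has these bounds rather than only the stated decay, and the orders $m\le 2$ are at most $2^9C\le c_1$.

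For $T\widetilde\theta$, use the identity
\[
(\widehat{T\widetilde\theta})^{(m)}=-\xi\,\widehat{\widetilde\theta}^{(m+1)}-m\,\widehat{\widetilde\theta}^{(m)},
\]
together with $|\xi|\le1$ on the support. For $m\le 2$ this gives a bound of at most $2^{13}C+2\cdot2^9C\le 2^{14}C=c_1$. This uses order-$3$ derivative control of $\widehat\theta$, which is available since $N_{\ref{def:unipair}}=3$.

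With both hypotheses verified after dividing by $c_1$, Lemma \ref{lem:main_aux2} applies. Multiplying back by $c_1^2$ and restoring the factor $2^{2k}$ yields \eqref{leftbump} with the stated constant.
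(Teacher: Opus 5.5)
Your proposal is correct and follows essentially the same route as the paper: write $\varphi_{1,k}=2^k\widetilde\theta_{(2^k)}$, reduce via Lemma \ref{lem:rescaling}, and apply Lemma \ref{lem:main_aux2} to $(2^{14}C_{\ref{def:unipair}})^{-1}\widetilde\theta$. For the hypotheses you use the support statements of Lemma \ref{lem:theta_prim}, the finite Fourier-derivative bounds from its proof, and the identity for $(\widehat{T\widetilde\theta})^{(m)}$; your explicit check $2^{13}C_{\ref{def:unipair}}+2\cdot 2^{9}C_{\ref{def:unipair}}\le 2^{14}C_{\ref{def:unipair}}$ simply spells out the step the paper states in one line.
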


\begin{proof}[Proof \auto]
We have
\[
\varphi_{1,k}=2^k\widetilde\theta_{(2^k)}.
\]
By Lemma \ref{lem:rescaling}, it suffices to apply Lemma \ref{lem:main_aux2} to $(2^{14}C_{\ref{def:unipair}})^{-1}\widetilde\theta$. The support assumptions are \eqref{thetaprim_supp} and \eqref{Tthetaprim_supp}. The finite Fourier-derivative estimates in the proof of Lemma \ref{lem:theta_prim}, together with
\[
(\widehat{T\widetilde\theta})^{(m)}
=-\xi\widehat{\widetilde\theta}^{(m+1)}
-m\widehat{\widetilde\theta}^{(m)},
\]
bound the Fourier derivatives of both functions through order $2$ by $2^{14}C_{\ref{def:unipair}}$. Thus both Fourier-derivative assumptions hold after the stated normalization.
\end{proof}

\begin{lemma}[constant $C_{\ref{lem:leftbump}}$ \auto]\label{constant left bump}\uses{constant main auxiliary two,lem:leftbump}\usesdefs{def:unipair}
\lean{Auto.constantLeftBump}
\leanok
\begin{equation}\label{constant left bump bound}
C_{\ref{lem:leftbump}}<\tfrac{33}{64}2^{636}<2^{636}.
\end{equation}
\end{lemma}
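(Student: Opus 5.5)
The plan is to unfold the definition \eqref{auto:left-bump-constant-definition} and substitute the bounds already available for its two ingredients. Lemma \ref{constant main auxiliary two} gives
\[
C_{\ref{lem:main_aux2}}<\tfrac{4191}{8192}2^{578},
\]
and Definition \ref{def:unipair} gives $C_{\ref{def:unipair}}=2^{15}$. Hence the squared factor is exactly
\[
(2^{14}C_{\ref{def:unipair}})^2=(2^{29})^2=2^{58}.
\]

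Multiplying the two estimates yields
\[
C_{\ref{lem:leftbump}}<\tfrac{4191}{8192}\,2^{578}\cdot2^{58}=\tfrac{4191}{8192}\,2^{636}.
\]
It remains to compare the rational prefactors. We have $\tfrac{33}{64}=\tfrac{4224}{8192}$, and $4191<4224$, so
\[
\tfrac{4191}{8192}<\tfrac{33}{64}<1.
\]
This gives both displayed inequalities in \eqref{constant left bump bound}.

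Nothing here is a genuine obstacle. The only care needed is to take the sharper fractional form of the bound on $C_{\ref{lem:main_aux2}}$, as recorded in the proof of Lemma \ref{constant main auxiliary two}, rather than the rounded bound $2^{578}$. The rounded bound would only give $C_{\ref{lem:leftbump}}<2^{636}$ and not the $\tfrac{33}{64}$ factor.
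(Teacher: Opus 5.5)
Your proposal is correct and matches the paper's own proof: substituting $C_{\ref{def:unipair}}=2^{15}$ and the sharpened bound $C_{\ref{lem:main_aux2}}<\tfrac{4191}{8192}2^{578}$ from the proof of Lemma \ref{constant main auxiliary two} into \eqref{auto:left-bump-constant-definition} gives $C_{\ref{lem:leftbump}}<\tfrac{4191}{8192}2^{636}<\tfrac{33}{64}2^{636}$. You are also right that the rounded bound $2^{578}$ from the lemma's statement would not yield the $\tfrac{33}{64}$ factor, and the paper likewise uses the unrounded fraction.
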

\begin{proof}[Proof \auto]
Use Lemma \ref{constant main auxiliary two} and $C_{\ref{def:unipair}}=2^{15}$ in \eqref{auto:left-bump-constant-definition}. This gives
\[
C_{\ref{lem:leftbump}}
<\tfrac{4191}{8192}2^{636}
<\tfrac{33}{64}2^{636}.
\]
\end{proof}

\begin{lemma}\label{lem:leftbump1_short1}\uses{Extension of sequences,A to Lambda,lem:theta_prim,lem:thetat_offcenter,P:induct-positive-terms-reduction-whitney,lem:int_fct,lem:phi4_supp}\usesdefs{A_def,bracket bump,normalized function tuples,auto:prism-form-definition,multiplicatively spaced monotone sequences,defn:window based bump functions}
\lean{Auto.leftBumpOneShortOne}
\leanok
Let $\gamma=\frac12$. For every $k\le-1$ and every strictly increasing sequence of integers $(k_j)_{j\in[J)}$,
\begin{equation}\label{auto:left-bump-one-short-one-bound}
\sum_{j\in[J)}2^{-(1+\gamma)k}\int_1^2
\|A_{2^{k_j}t}(\varphi_{4,k})\|_2^2\,\tfrac{dt}{t}
\le C_{\ref{lem:leftbump1_short1}}J^{\alpha(n)},
\end{equation}
where
\begin{equation}\label{auto:left-bump-one-short-one-constant-definition}
C_{\ref{lem:leftbump1_short1}}
=2^2C_{\ref{P:induct-positive-terms-reduction-whitney}}
C_{\ref{lem:theta_prim},2}^2C_{\ref{lem:thetat_offcenter}}.
\end{equation}
\end{lemma}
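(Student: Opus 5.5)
The plan is to turn the $t$-averaged sum into a single prism form $\Lambda_1(\,\cdot\,)(\F)$ with a Whitney-type kernel, and then apply Proposition \ref{P:induct-positive-terms-reduction-whitney}.

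\textbf{Step 1: reduce to a prism form.} By Proposition \ref{Extension of sequences}, extend $(2^{k_j})_{j\in[J)}$ to some $a\in{\rm A}$. Lemma \ref{A to Lambda} and linearity (Fubini in $t$) then write the left-hand side as $\Lambda_1(M)(\F)$, with $\F\in\mathfrak F$ obtained from $\mathbf f$ and
\[
M=\sum_{j\in[J)}\Psi_{(a(j))},\qquad \Psi(u,v)=2^{-(1+\gamma)k}\int_1^2(\varphi_{4,k})_{(t)}(u)\,(\varphi_{4,k})_{(t)}(v)\,\tfrac{dt}{t}.
\]
Write $\varphi_{4,k}=2^k\widetilde\theta(\cdot-2^{-k})$. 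The prefactor then becomes $2^{2k-(1+\gamma)k}=2^{k/2}$, and the translation is by $t2^{-k}$ in both variables.

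\textbf{Step 2: structural hypotheses for $\Psi$.} By Lemma \ref{lem:int_fct}, applied with $\psi=\varphi_{4,k}$ (its Fourier support is given by Lemma \ref{lem:phi4_supp}), $\Psi$ is symmetric and satisfies \eqref{p1-on-W_pos} and \eqref{p1-on-W_supp}. The modulation $e^{-2\pi i 2^{-k}t\xi}$ cancels on the antidiagonal, so
\[
\widehat\Psi(\xi,-\xi)=2^{k/2}\int_1^2|\widehat{\widetilde\theta}(t\xi)|^2\,\tfrac{dt}{t}.
\]
Its derivatives up to order $2$ are controlled by the Fourier-derivative bounds on $\widehat{\widetilde\theta}$ from the proof of Lemma \ref{lem:theta_prim}, uniformly in $k\le-1$.

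\textbf{Step 3: the decay hypothesis \eqref{p1-on-W_decay}.} Lemma \ref{lem:theta_prim} with $N=2$ gives $|\widetilde\theta(x)|\le C_{\ref{lem:theta_prim},2}\langle x\rangle^2$. Hence, for $t\in[1,2]$, $|\widetilde\theta_{(t)}(u-t2^{-k})|$ is bounded by a multiple of $\langle u-t2^{-k}\rangle^2$, using $\langle\cdot\rangle_{(t)}^2\le\langle\cdot\rangle^2$ up to a factor $2$ for $t\in[1,2]$. This reduces $|\Psi(v)|$ to the left-hand side of Lemma \ref{lem:thetat_offcenter}. That lemma bounds it by $C_{\ref{lem:thetat_offcenter}}$ times exactly the two products $\langle(W_uv)_0\rangle^{3/2}\langle(W_uv)_1\rangle^{3/2}$, $u\in[2)$, required in \eqref{p1-on-W_decay}. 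One must check that $\langle v_0\pm v_1\rangle^{3/2}$ dominates $\langle (W_1v)_m\rangle^{3/2}$ up to the $\sqrt2$ scaling; it does, since $\langle x\sqrt2\rangle\le\langle x\rangle$.

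\textbf{Step 4: normalize and conclude.} Divide $\Psi$ by $c=2^2C_{\ref{lem:theta_prim},2}^2C_{\ref{lem:thetat_offcenter}}$ so that all hypotheses of Proposition \ref{P:induct-positive-terms-reduction-whitney} hold. By the definition of $\|\cdot\|_{{\rm M}(1)}$, that proposition gives $|\Lambda_1(M)(\F)|\le c\,C_{\ref{P:induct-positive-terms-reduction-whitney}}J^{\alpha(n)}$, because $\min(1,J^{-1+2^{2-n}})=J^{-\alpha(n)}$.

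\textbf{Main obstacle.} I expect the delicate step to be the bookkeeping of constants when verifying \eqref{p1-on-W_der}. The $t$-dilation scales derivatives by $t^m\le4$, and it is not obvious that the same normalizing constant $c$ both achieves the derivative bound $\le1$ and absorbs the decay constant. The factor $2^{k/2}\le1$ helps here, but if $c$ turns out to be insufficient, the bound on $\widehat{\widetilde\theta}$ from Lemma \ref{lem:theta_prim} will need to be used more carefully.
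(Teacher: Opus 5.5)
Your proposal is correct and follows the paper's proof essentially step for step. You extend the sequence, apply Lemma \ref{A to Lambda}, use Lemmas \ref{lem:int_fct} and \ref{lem:phi4_supp} for symmetry, positivity and support, cancel the phases on the antidiagonal, combine Lemma \ref{lem:theta_prim} (with $N=2$) and Lemma \ref{lem:thetat_offcenter} for the decay, and apply Proposition \ref{P:induct-positive-terms-reduction-whitney} to $c^{-1}\Psi$ with exactly the paper's normalizing constant.

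The derivative bookkeeping you flag as the main obstacle closes at once, as in the paper. The Leibniz factor $2^m\le 4$ and the dilation factor $t^m\le 4$ bound the derivatives of $\widehat\Psi(\xi,-\xi)$ through order $2$ by $2^4C_{\ref{lem:theta_prim},2}^2\,2^{k/2}$. This is at most $c$ because $C_{\ref{lem:thetat_offcenter}}=133\ge 2^2$.
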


\begin{proof}[Proof \auto]
By Proposition \ref{Extension of sequences}, the finite sequence $(2^{k_j})_{j\in[J)}$ extends to a sequence $a\in A$. By Lemma \ref{A to Lambda} and linearity, the left-hand side equals $\Lambda_1(M)(\F)$, where
\[
M=\sum_{j\in[J)}\Psi_{(a(j))},
\qquad
\Psi=2^{-(1+\gamma)k}\int_1^2(\varphi_{4,k})_{(t)}^{\otimes2}\,\tfrac{dt}{t}.
\]
Put
\[
c_1=2^2C_{\ref{lem:theta_prim},2}^2C_{\ref{lem:thetat_offcenter}}.
\]
We apply Proposition \ref{P:induct-positive-terms-reduction-whitney} to $c_1^{-1}\Psi$. Symmetry, positivity, and the Fourier support condition follow from Lemmas \ref{lem:int_fct} and \ref{lem:phi4_supp}.

By Lemma \ref{lem:theta_prim}, for $t\in[1,2]$,
\[
|(\varphi_{4,k})_{(t)}(u)|
\le2^{k+1}C_{\ref{lem:theta_prim},2}
\langle u-t2^{-k}\rangle^2.
\]
Lemma \ref{lem:thetat_offcenter} therefore gives
\[
|\Psi(u,v)|
\le c_1\bigl(
\langle u\rangle^{3/2}\langle v\rangle^{3/2}
+\langle u+v\rangle^{3/2}\langle u-v\rangle^{3/2}
\bigr).
\]
This proves \eqref{p1-on-W_decay} for $c_1^{-1}\Psi$.

Write $h=\widehat{\widetilde\theta}$. By Lemma \ref{lem:phi4_supp},
\[
\widehat{\varphi_{4,k}}(\xi)
=2^ke^{-2\pi i2^{-k}\xi}h(\xi).
\]
In $\widehat\Psi(\xi,-\xi)$ the two translation phases cancel. Differentiating the remaining product through order $2$, using $t\in[1,2]$ and the Fourier-derivative estimates in the proof of Lemma \ref{lem:theta_prim}, gives, for $m\in[3)$,
\[
\left|\tfrac{d^m}{d\xi^m}\widehat\Psi(\xi,-\xi)\right|
\le2^4C_{\ref{lem:theta_prim},2}^2 2^{(1-\gamma)k}
\le c_1.
\]
Thus \eqref{p1-on-W_der} holds. Proposition \ref{P:induct-positive-terms-reduction-whitney} proves the result.
\end{proof}

\begin{lemma}[constant $C_{\ref{lem:leftbump1_short1}}$ \auto]\label{constant left bump one short one}\uses{constant Whitney reduction,constant theta primitive,constant off center bump,lem:leftbump1_short1}\usesdefs{}
\lean{Auto.constantLeftBumpOneShortOne}
\leanok
\begin{equation}\label{constant left bump one short one bound}
C_{\ref{lem:leftbump1_short1}}
<2^{628}.
\end{equation}
\end{lemma}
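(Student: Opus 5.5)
The bound follows by substituting the explicit bounds already established for the three factors into the defining product \eqref{auto:left-bump-one-short-one-constant-definition},
\[
C_{\ref{lem:leftbump1_short1}}=2^2C_{\ref{P:induct-positive-terms-reduction-whitney}}C_{\ref{lem:theta_prim},2}^2C_{\ref{lem:thetat_offcenter}}.
\]
No further analysis is needed; the work is bookkeeping of exponents, so the plan is to collect the three estimates and multiply.

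First, Lemma \ref{constant Whitney reduction} gives $C_{\ref{P:induct-positive-terms-reduction-whitney}}<2^{557}$. For a sharper form I would use the exact intermediate bound from its proof, namely $C_{\ref{P:induct-positive-terms-reduction-whitney}}<\tfrac{1397}{2048}2^{557}$. Second, \eqref{auto:constant-theta-primitive-two} in Lemma \ref{constant theta primitive} gives $C_{\ref{lem:theta_prim},2}\le 2^{31}$, hence $C_{\ref{lem:theta_prim},2}^2\le 2^{62}$. Third, Lemma \ref{constant off center bump} gives $C_{\ref{lem:thetat_offcenter}}\le 133<2^8$.

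Multiplying, the exponents add up as
\[
C_{\ref{lem:leftbump1_short1}}<2^2\cdot 2^{557}\cdot 2^{62}\cdot 2^{8}=2^{629}.
\]
This is one power of two too large, which is the only delicate step. To recover it I would use the fractional prefactors. Since $133=\tfrac{133}{256}2^8$, the product is
\[
\tfrac{1397}{2048}\cdot\tfrac{133}{256}\cdot 2^{629},
\]
and
\[
\tfrac{1397}{2048}\cdot\tfrac{133}{256}=\tfrac{185801}{524288}<\tfrac12 .
\]
Therefore $C_{\ref{lem:leftbump1_short1}}<2^{628}$, as claimed.
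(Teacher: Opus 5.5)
Your proof is correct and is the same substitution the paper performs. Using the sharpened bound $\tfrac{1397}{2048}2^{557}$ from the proof of Lemma~\ref{constant Whitney reduction}, together with $C_{\ref{lem:theta_prim},2}=2^{31}$ and $C_{\ref{lem:thetat_offcenter}}=133$, your $\tfrac{185801}{524288}2^{629}$ equals the paper's $\tfrac{185801}{262144}2^{628}$, and you were right that the rounded bound $2^{557}$ alone would overshoot by a factor of two.
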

\begin{proof}[Proof \auto]
Use Lemmas \ref{constant Whitney reduction}, \ref{constant theta primitive}, and \ref{constant off center bump} in \eqref{auto:left-bump-one-short-one-constant-definition}. This gives
\[
C_{\ref{lem:leftbump1_short1}}
<\tfrac{185801}{262144}2^{628}
<\tfrac{23}{32}2^{628}.
\]
\end{proof}

\begin{lemma}\label{lem:leftbump1_short2}\uses{Extension of sequences,A to Lambda,lem:theta_prim,lem:theta_decay,lem:thetat_offcenter,P:induct-positive-terms-reduction-whitney,lem:int_fct,lem:phi4_supp,lem:ft_deriv_mul}\usesdefs{A_def,bracket bump,normalized function tuples,auto:prism-form-definition,multiplicatively spaced monotone sequences,defn:window based bump functions}
\lean{Auto.leftBumpOneShortTwo}
\leanok
Let $\gamma=\frac12$. For every $k\le-1$ and every strictly increasing sequence of integers $(k_j)_{j\in[J)}$,
\begin{equation}\label{auto:left-bump-one-short-two-bound}
\sum_{j\in[J)}2^{(1-\gamma)k}\int_1^2
\|A_{2^{k_j}t}(T\varphi_{4,k})\|_2^2\,\tfrac{dt}{t}
\le C_{\ref{lem:leftbump1_short2}}J^{\alpha(n)},
\end{equation}
where, with
\begin{equation}\label{auto:left-bump-one-short-two-auxiliary-constant}
C=\max\bigl(C_{\ref{lem:theta_prim},2},C_{\ref{lem:theta_decay},2},C_{\ref{lem:theta_decay},3}\bigr),
\end{equation}
we have
\begin{equation}\label{auto:left-bump-one-short-two-constant-definition}
C_{\ref{lem:leftbump1_short2}}
=2^4C_{\ref{P:induct-positive-terms-reduction-whitney}}
C_{\ref{lem:thetat_offcenter}}C^2.
\end{equation}
\end{lemma}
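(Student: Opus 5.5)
The plan is to mirror the proof of Lemma \ref{lem:leftbump1_short1}, with $T\varphi_{4,k}$ in place of $\varphi_{4,k}$ and the weight $2^{(1-\gamma)k}$ in place of $2^{-(1+\gamma)k}$.

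\textbf{Reduction to a Whitney form.} By Proposition \ref{Extension of sequences}, extend $(2^{k_j})_{j\in[J)}$ to some $a\in A$. By Lemma \ref{A to Lambda} and linearity, the left-hand side equals $\Lambda_1(M)(\F)$ with
\[
M=\sum_{j\in[J)}\Psi_{(a(j))},\qquad
\Psi=2^{(1-\gamma)k}\int_1^2 (T\varphi_{4,k})_{(t)}^{\otimes2}\,\tfrac{dt}{t}.
\]
We then apply Proposition \ref{P:induct-positive-terms-reduction-whitney} to $c_1^{-1}\Psi$, where $c_1=2^4C_{\ref{lem:thetat_offcenter}}C^2$. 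Symmetry, the positivity \eqref{p1-on-W_pos} and the support condition \eqref{p1-on-W_supp} follow from Lemma \ref{lem:int_fct}, applied with $\psi=T\varphi_{4,k}$ and $\ell=0$, together with Lemma \ref{lem:phi4_supp}(b).

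\textbf{Pointwise decay.} Write $T\varphi_{4,k}(u)=2^k\theta(u-2^{-k})\,(u-2^{-k})+2^k\widetilde\theta(u-2^{-k})+2^{2k}\cdot 2^{-k}\theta(u-2^{-k})$. Equivalently, $T\varphi_{4,k}=2^k(T\widetilde\theta)(\cdot-2^{-k})+\theta(\cdot-2^{-k})$, using $T f(u)=f(u)+uf'(u)$ and $\widetilde\theta'=\theta$.
\begin{itemize}
\item The first piece is bounded by $2^kC_{\ref{lem:theta_prim},2}\langle u-2^{-k}\rangle^2$ via \eqref{Thetaprim_decay}.
\item The second piece is bounded by $C_{\ref{lem:theta_decay},2}\langle u-2^{-k}\rangle^2$, with no factor $2^k$.
\end{itemize}
This missing factor is the main obstacle. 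The squared tensor then carries the prefactor $2^{(1-\gamma)k}$ against $2^{k}$ rather than the $2^{2k}$ available in Lemma \ref{lem:leftbump1_short1}, while Lemma \ref{lem:thetat_offcenter} requires total weight $2^{k/2}$.

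To balance this, I would keep the weight $2^{k/2}$ from the prefactor and use the extra decay of $\theta$ at exponent $3$ (via $C_{\ref{lem:theta_decay},3}$) to gain back the missing $2^{k}$.

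\textbf{Off-centre estimate.} After rescaling by $t\in[1,2]$, each cross term is a product of two bumps centred at $t2^{-k}$ with weight at most $2^{k/2}$. Lemma \ref{lem:thetat_offcenter} then bounds the integrated product by $C_{\ref{lem:thetat_offcenter}}$ times the two $3/2$-bump products. This yields \eqref{p1-on-W_decay} for $c_1^{-1}\Psi$.

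If that bookkeeping fails, I would instead bound the $\theta$-piece directly: since $|u-t2^{-k}|\gtrsim 2^{-k}$ off a bounded region, the cubic decay of $\theta$ supplies the needed factor.

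\textbf{Derivative bound on the diagonal.} On the diagonal, $\widehat{T\varphi_{4,k}}(\xi)\widehat{T\varphi_{4,k}}(-\xi)$ loses the phases only partly. Using Lemma \ref{lem:ft_deriv_mul}, we have $\widehat{T\varphi_{4,k}}=-\xi\,\partial_\xi\widehat{\varphi_{4,k}}$. Differentiating the phase $e^{-2\pi i2^{-k}\xi}$ produces a factor $2^{-k}$, so $|\widehat{T\varphi_{4,k}}|\lesssim 2^k\cdot 2^{-k}=1$.

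Differentiating the product twice gives phase-cancelling combinations, since the pure-phase terms are conjugates with moduli that combine. The resulting bound is $\lesssim C^2\,2^{(1-\gamma)k}$ times powers $2^{-mk}$ from derivatives acting on surviving phases. I expect those to cancel in $\widehat\Psi(\xi,-\xi)$, as in the previous lemma, leaving at most $2^4C^2\le c_1$.

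Proposition \ref{P:induct-positive-terms-reduction-whitney} then gives the claimed bound with constant $C_{\ref{lem:leftbump1_short2}}$.
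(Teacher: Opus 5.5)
Your reduction to Proposition~\ref{P:induct-positive-terms-reduction-whitney} is the same as in the paper: extending $(2^{k_j})$, applying Lemma~\ref{A to Lambda}, and taking symmetry, positivity and Fourier support from Lemmas~\ref{lem:int_fct} and~\ref{lem:phi4_supp}. Your identity $T\varphi_{4,k}=(2^kT\widetilde\theta+\theta)(\cdot-2^{-k})$ is correct. Your first expansion has a typo: its last term should be $\theta(u-2^{-k})$, not $2^{k}\theta(u-2^{-k})$.

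The gap is in the derivative condition \eqref{p1-on-W_der}. You assert that the phases cancel only partly, and that derivatives acting on surviving phases produce powers $2^{-mk}$. You then only expect these to cancel. As written this step fails. A surviving phase $e^{\mp2\pi i2^{-k}t\xi}$ differentiated $m$ times costs a factor of size $2^{-mk}$. Since $2^{(1-\gamma)k}2^{-mk}=2^{(1/2-m)k}$ is unbounded as $k\to-\infty$ for $m=1,2$, \eqref{p1-on-W_der} would not hold uniformly in $k\le-1$.

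The missing observation is already in your own identity: $T\varphi_{4,k}$ is an exact translate by $2^{-k}$. Hence
\[
\widehat{T\varphi_{4,k}}(\xi)=e^{-2\pi i2^{-k}\xi}g(\xi),\qquad
g=\widehat\theta+2^k\widehat{T\widetilde\theta}=2\pi i\xi h(\xi)-2^k\xi h'(\xi),\qquad
h=\widehat{\widetilde\theta},
\]
and the phases cancel \emph{identically}. This gives $\widehat\Psi(\xi,-\xi)=2^{(1-\gamma)k}\int_1^2 g(t\xi)g(-t\xi)\,\tfrac{dt}{t}$. Its derivatives through order $2$ involve only $h,h',h'',h'''$ on $|\xi|\le1$. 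These are controlled by the finite estimates in the proof of Lemma~\ref{lem:theta_prim}, which yields the bound $2^4C^2\,2^{(1-\gamma)k}\le c_1$ uniformly in $k$, as in the paper.

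Your pointwise step also identifies an obstacle that is not there; there is no missing factor $2^k$. Your decomposition gives $|T\varphi_{4,k}(u)|\le(2^kC_{\ref{lem:theta_prim},2}+C_{\ref{lem:theta_decay},2})\langle u-2^{-k}\rangle^2$. Hence $|(T\varphi_{4,k})_{(t)}(u)|\le2^2C\langle u-t2^{-k}\rangle^2$ for $t\in[1,2]$. So every product in $\Psi$ carries total weight at most $2^{(1-\gamma)k}=2^{k/2}$, which is exactly the weight in Lemma~\ref{lem:thetat_offcenter}. That lemma then gives \eqref{p1-on-W_decay} for $c_1^{-1}\Psi$ directly.

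Nothing has to be gained back from the cubic decay of $\theta$, and your fallback is unnecessary. With your decomposition, which uses \eqref{Thetaprim_decay} for $T\widetilde\theta$, the order-$3$ bound $C_{\ref{lem:theta_decay},3}$ is not needed at all. The paper uses it only because it splits off $X\theta$ as a separate term. Your off-centre paragraph, which works with total weight at most $2^{k/2}$, is correct as it stands.
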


\begin{proof}[Proof \auto]
By Proposition \ref{Extension of sequences}, the finite sequence $(2^{k_j})_{j\in[J)}$ extends to a sequence $a\in A$. By Lemma \ref{A to Lambda} and linearity, the left-hand side equals $\Lambda_1(M)(\F)$, where
\[
M=\sum_{j\in[J)}\Psi_{(a(j))},
\qquad
\Psi=2^{(1-\gamma)k}\int_1^2(T\varphi_{4,k})_{(t)}^{\otimes2}\,\tfrac{dt}{t}.
\]
Let $C$ be as in \eqref{auto:left-bump-one-short-two-auxiliary-constant} and put
\[
c_1=2^4C^2C_{\ref{lem:thetat_offcenter}}.
\]
We apply Proposition \ref{P:induct-positive-terms-reduction-whitney} to $c_1^{-1}\Psi$. Symmetry, positivity, and the Fourier support condition follow from Lemmas \ref{lem:int_fct} and \ref{lem:phi4_supp}.

Directly from the definitions,
\[
T\varphi_{4,k}(u)
=2^k\widetilde\theta(u-2^{-k})
+2^k(X\theta)(u-2^{-k})
+\theta(u-2^{-k}).
\]
Use order $2$ in Lemmas \ref{lem:theta_prim} and \ref{lem:theta_decay} for the first and third terms, and order $3$ in Lemma \ref{lem:theta_decay} for the second. Since $|x|\langle x\rangle^3\le\langle x\rangle^2$, for $t\in[1,2]$ this gives
\[
|(T\varphi_{4,k})_{(t)}(u)|
\le2^2C\langle u-t2^{-k}\rangle^2.
\]
Lemma \ref{lem:thetat_offcenter} now gives
\[
|\Psi(u,v)|
\le c_1\bigl(
\langle u\rangle^{3/2}\langle v\rangle^{3/2}
+\langle u+v\rangle^{3/2}\langle u-v\rangle^{3/2}
\bigr).
\]
Thus \eqref{p1-on-W_decay} holds for $c_1^{-1}\Psi$.

Writing again $h=\widehat{\widetilde\theta}$, Lemma \ref{lem:phi4_supp} and Lemma \ref{lem:ft_deriv_mul} give
\[
\widehat{T\varphi_{4,k}}(\xi)
=e^{-2\pi i2^{-k}\xi}
\left(2\pi i\xi h(\xi)-2^k\xi h'(\xi)\right).
\]
The phases cancel in $\widehat\Psi(\xi,-\xi)$. Differentiating the remaining amplitudes through order $2$ uses $h$ only through order $3$. The finite estimates in the proof of Lemma \ref{lem:theta_prim} bound each of those amplitude derivatives by $C$, and hence, for $m\in[3)$,
\[
\left|\tfrac{d^m}{d\xi^m}\widehat\Psi(\xi,-\xi)\right|
\le2^4C^2 2^{(1-\gamma)k}
\le c_1.
\]
Thus \eqref{p1-on-W_der} holds, and Proposition \ref{P:induct-positive-terms-reduction-whitney} completes the proof.
\end{proof}

\begin{lemma}[constant $C_{\ref{lem:leftbump1_short2}}$ \auto]\label{constant left bump one short two}\uses{lem:theta_prim,constant theta primitive,constant theta decay,constant Whitney reduction,constant off center bump,lem:leftbump1_short2}\usesdefs{}
\lean{Auto.constantLeftBumpOneShortTwo}
\leanok
\begin{equation}\label{constant left bump one short two bound}
C_{\ref{lem:leftbump1_short2}}
<2^{630}.
\end{equation}
\end{lemma}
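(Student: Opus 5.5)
The claim is a pure numerical bound on the constant defined in \eqref{auto:left-bump-one-short-two-constant-definition},
\[
C_{\ref{lem:leftbump1_short2}}=2^4C_{\ref{P:induct-positive-terms-reduction-whitney}}C_{\ref{lem:thetat_offcenter}}C^2 ,
\]
where $C$ is the maximum in \eqref{auto:left-bump-one-short-two-auxiliary-constant}. I would substitute the bounds for each factor and multiply. No analysis is needed.

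First, I would bound the auxiliary maximum $C$. Lemma \ref{constant theta primitive} gives $C_{\ref{lem:theta_prim},2}\le 2^{31}$. Lemma \ref{constant theta decay} gives $C_{\ref{lem:theta_decay},2}\le 2^{21}$ and $C_{\ref{lem:theta_decay},3}\le 2^{23}$. Hence $C\le 2^{31}$ and $C^2\le 2^{62}$.

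Next, I would take the remaining factors from earlier lemmas. Lemma \ref{constant off center bump} gives $C_{\ref{lem:thetat_offcenter}}\le 133<2^8$. Lemma \ref{constant Whitney reduction} gives $C_{\ref{P:induct-positive-terms-reduction-whitney}}<\tfrac{1397}{2048}2^{557}$, as shown in its proof.

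Multiplying gives
\[
C_{\ref{lem:leftbump1_short2}}<2^4\cdot\tfrac{1397}{2048}2^{557}\cdot 133\cdot 2^{62}.
\]
Since $\tfrac{1397}{2048}\cdot133<\tfrac{2}{3}\cdot 133<89<2^7$, the right-hand side is below $2^{4+557+62+7}=2^{630}$. This establishes \eqref{constant left bump one short two bound}.

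The only point requiring care is to keep the fractional prefactor $\tfrac{1397}{2048}$ rather than rounding $C_{\ref{P:induct-positive-terms-reduction-whitney}}$ up to $2^{557}$. With that rounding, $133<2^8$ would give only $2^{631}$. The bound has one bit of slack only because $133\cdot\tfrac{1397}{2048}<2^7$.
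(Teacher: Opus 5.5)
Your route is the same as the paper's: bound $C\le C_{\ref{lem:theta_prim},2}\le 2^{31}$, then insert $C_{\ref{lem:thetat_offcenter}}=133$ and $C_{\ref{P:induct-positive-terms-reduction-whitney}}<\tfrac{1397}{2048}2^{557}$ into the defining product. You are also right that the fractional prefactor must be kept, since rounding it to $1$ would leave $133\cdot 2^{623}>2^{630}$.

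The final arithmetic chain is false as written, though. You claim $\tfrac{1397}{2048}<\tfrac23$, but $3\cdot1397=4191>4096=2\cdot2048$, so the inequality goes the other way. The product is $\tfrac{1397}{2048}\cdot133=\tfrac{185801}{2048}\approx 90.7$, which is not below $89$.

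The inequality you actually need still holds, because $1397\cdot133=185801<2^{18}=262144$. This is exactly the paper's computation, $C_{\ref{lem:leftbump1_short2}}<\tfrac{185801}{262144}2^{630}<\tfrac{23}{32}2^{630}$. Replace your chain with this comparison. Note also that the remaining slack is about half a bit, not one bit.
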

\begin{proof}[Proof \auto]
Lemmas \ref{constant theta primitive} and \ref{constant theta decay} give
\[
C\le C_{\ref{lem:theta_prim},2}\le2^{31}.
\]
Use this and Lemmas \ref{constant Whitney reduction} and \ref{constant off center bump} in \eqref{auto:left-bump-one-short-two-constant-definition}. The result is
\[
C_{\ref{lem:leftbump1_short2}}
<\tfrac{185801}{262144}2^{630}
<\tfrac{23}{32}2^{630}.
\]
\end{proof}

\begin{lemma}\label{lem:leftbump1_long}\uses{lem:bootstrap,Extension of sequences,A to Lambda,lem:theta_prim,P:induct-positive-terms-reduction-whitney,lem:phi4_supp,lem:form_pos}\usesdefs{A_def,bracket bump,normalized function tuples,auto:prism-form-definition,multiplicatively spaced monotone sequences,defn:window based bump functions}
\lean{Auto.leftBumpOneLong}
\leanok
Let $\gamma=\frac12$. For every $k\le-1$,
\begin{equation}\label{auto:left-bump-one-long-bound}
\|A_t(\varphi_{4,k})\|_{V_{2,J}(t\in2^\mathbb Z;L^2)}^2
\le C_{\ref{lem:leftbump1_long}}2^{\gamma k}J^{\alpha(n)},
\end{equation}
where
\begin{equation}\label{auto:left-bump-one-long-constant-definition}
C_{\ref{lem:leftbump1_long}}
=2^6C_{\ref{P:induct-positive-terms-reduction-whitney}}
C_{\ref{lem:theta_prim},2}^2.
\end{equation}
\end{lemma}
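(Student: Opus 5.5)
The plan is to follow the pattern of Lemma \ref{lem:main_aux2}: first reduce the dyadic long variation to sums of single-scale squares via the bootstrapping Lemma \ref{lem:bootstrap}, and then control those sums by a Whitney-type positive form estimate. Concretely, Lemma \ref{lem:bootstrap} applied with $\phi=\varphi_{4,k}$ gives
\[
\|A_t(\varphi_{4,k})\|_{V_{2,J}(t\in2^\Z;L^2)}^2\le 4\sup\sum_{j\in[J)}\|A_{2^{k_j}}(\varphi_{4,k})\|_2^2 ,
\]
with the supremum over strictly increasing integer sequences $(k_j)_{j\in[J)}$. It therefore suffices to prove, for each such sequence, the bound $\sum_{j}\|A_{2^{k_j}}(\varphi_{4,k})\|_2^2\le 2^4C_{\ref{P:induct-positive-terms-reduction-whitney}}C_{\ref{lem:theta_prim},2}^2\,2^{\gamma k}J^{\alpha(n)}$.

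For this single-scale sum, I extend $(2^{k_j})$ to some $a\in A$ by Proposition \ref{Extension of sequences}. By Lemma \ref{A to Lambda} the sum equals $\Lambda_1(\sum_{j\in[J)}\Psi_{(a(j))})(\F)$, where $\Psi=\varphi_{4,k}^{\otimes2}$. I then apply Proposition \ref{P:induct-positive-terms-reduction-whitney} to $c_1^{-1}2^{-\gamma k}\Psi$ with $c_1=2^4C_{\ref{lem:theta_prim},2}^2$. Several hypotheses are immediate:
\begin{itemize}
\item symmetry is clear;
\item positivity \eqref{p1-on-W_pos} holds because $\Psi$ is a tensor square (cf.\ Lemma \ref{lem:form_pos});
\item the Fourier support of $\Psi$ lies in $\mathrm{Ann}_1(1,2^2)^2\subset\mathrm{Ann}_1(1,2^3)^2$ by Lemma \ref{lem:phi4_supp}.
\end{itemize}
For the derivative condition, the translation phases cancel on the antidiagonal, so $\widehat\Psi(\xi,-\xi)=2^{2k}h(\xi)h(-\xi)$ with $h=\widehat{\widetilde\theta}$. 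The Fourier-derivative bounds from the proof of Lemma \ref{lem:theta_prim} then give derivatives through order $2$ of size at most $2^{2k}\cdot 2^2(2^9C_{\ref{def:unipair}})^2$, which after normalization is $\le c_1 2^{\gamma k}$ since $2k\le\gamma k$ for $k\le-1$.

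The main obstacle is the decay condition \eqref{p1-on-W_decay}. Unlike Lemma \ref{lem:leftbump1_short1}, there is no $t$-average, so Lemma \ref{lem:thetat_offcenter} is not available. Pointwise, Lemma \ref{lem:theta_prim} gives $|\Psi(u,v)|\le 2^{2k}C_{\ref{lem:theta_prim},2}^2\langle u-2^{-k}\rangle^2\langle v-2^{-k}\rangle^2$. I would bound this by $\langle u\rangle^{3/2}\langle v\rangle^{3/2}+\langle u+v\rangle^{3/2}\langle u-v\rangle^{3/2}$ times $2^{\gamma k}$ up to a constant, arguing by case analysis on $R=2^{-k}$:
\begin{itemize}
\item if $|u|,|v|\ge 3R$ or exactly one is small, use Lemma \ref{lem:widebump} as in the proof of Lemma \ref{lem:thetat_offcenter};
\item if both are within $3R$, use $2^{2k}\le R^{-3/2}\cdot R^{-1/2}$. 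Here $R^{-3/2}\lesssim\langle u+v\rangle^{3/2}$ is absorbed into that factor, and the remaining $2^{k/2}$ gives the gain $2^{\gamma k}$, provided $\langle u-R\rangle^2\langle v-R\rangle^2\lesssim\langle u-v\rangle^{3/2}$. This holds because the two brackets cannot both be large relative to $\langle u-v\rangle$.
\end{itemize}
The factor $2^{2k}$ leaves enough room that the gain $2^{\gamma k}$ survives in every case. I expect the constants to need some care, though the claimed $2^6$ suggests a generous budget. Finally, Proposition \ref{P:induct-positive-terms-reduction-whitney} yields $\Lambda_1(\sum_j c_1^{-1}2^{-\gamma k}\Psi_{(a(j))})(\F)\le C_{\ref{P:induct-positive-terms-reduction-whitney}}J^{\alpha(n)}$, and multiplying by $4c_1$ from the bootstrapping step gives the stated constant.
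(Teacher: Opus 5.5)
Your proposal is correct and is essentially the paper's proof. The steps match: Lemma \ref{lem:bootstrap}, extension of $(2^{k_j})$ and Lemma \ref{A to Lambda}, then Proposition \ref{P:induct-positive-terms-reduction-whitney} applied to $c_1^{-1}2^{-\gamma k}\varphi_{4,k}^{\otimes 2}$ with $c_1=2^4C_{\ref{lem:theta_prim},2}^2$. The only real input is the pointwise claim $R^{-3/2}\langle u-R\rangle^2\langle v-R\rangle^2\le 2^4\bigl(\langle u\rangle^{3/2}\langle v\rangle^{3/2}+\langle u+v\rangle^{3/2}\langle u-v\rangle^{3/2}\bigr)$.

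The paper proves this claim by splitting on whether $|u-R|,|v-R|<R/2$, which gives $2^4$ directly; Lemma \ref{lem:widebump} is not needed, since it loses powers of $R$ and $1+|u|\le\frac32(1+|u-R|)$ for $|u|\ge 3R$ suffices. With your split at $|u|,|v|<3R$, the first two cases give constants below $2^4$. In the doubly-small case, however, the crude bound $R^{-3/2}\le(13/2)^{3/2}\langle u+v\rangle^{3/2}$ gives about $16.6>2^4$. To reach the stated constant there, keep part of the decay, for instance $\langle u-R\rangle^2\langle v-R\rangle^2\le\langle u-v\rangle^{3/2}\langle u+v-2R\rangle^{1/2}$; the factor $\langle u+v-2R\rangle^{1/2}$ then brings the constant down to about $10.3$.
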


\begin{proof}[Proof \auto]
By Lemma \ref{lem:bootstrap}, it suffices to estimate
\[
4\sup\sum_{j\in[J)}\|A_{2^{k_j}}(\varphi_{4,k})\|_2^2.
\]
By Proposition \ref{Extension of sequences}, the finite sequence $(2^{k_j})_{j\in[J)}$ extends to a sequence $a\in A$. By Lemma \ref{A to Lambda}, the sum is $\Lambda_1(M)(\F)$ with
\[
M=\sum_{j\in[J)}(\varphi_{4,k})_{(a(j))}^{\otimes2}.
\]
Set
\[
\Psi=2^{-\gamma k}\varphi_{4,k}^{\otimes2},
\qquad
c_1=2^4C_{\ref{lem:theta_prim},2}^2.
\]
We apply Proposition \ref{P:induct-positive-terms-reduction-whitney} to $c_1^{-1}\Psi$. Symmetry, positivity, and the Fourier support condition follow from Lemmas \ref{lem:phi4_supp} and \ref{lem:form_pos}.

Put $R=2^{-k}$. We claim that
\[
R^{-3/2}\langle u-R\rangle^2\langle v-R\rangle^2
\le2^4\bigl(
\langle u\rangle^{3/2}\langle v\rangle^{3/2}
+\langle u+v\rangle^{3/2}\langle u-v\rangle^{3/2}
\bigr).
\]
If $|u-R|,|v-R|<R/2$, then
\[
R^{-3/2}\le(7/2)^{3/2}\langle u+v\rangle^{3/2}
\]
and
\[
\langle u-R\rangle^2\langle v-R\rangle^2
\le\langle u-v\rangle^{3/2},
\]
so the claim follows. Otherwise, by symmetry suppose $|u-R|\ge R/2$. Then
\[
\langle u-R\rangle^2\le3^{3/2}\langle u\rangle^{3/2}
\]
and
\[
R^{-3/2}\langle v-R\rangle^2
\le(3/2)^{3/2}\langle v\rangle^{3/2}.
\]
Their product is smaller than $2^4\langle u\rangle^{3/2}\langle v\rangle^{3/2}$, proving the claim.

Lemma \ref{lem:theta_prim} and the claim prove \eqref{p1-on-W_decay} for $c_1^{-1}\Psi$. Moreover,
\[
\widehat{\varphi_{4,k}}(\xi)
=2^ke^{-2\pi i2^{-k}\xi}\widehat{\widetilde\theta}(\xi).
\]
The phases cancel in $\widehat\Psi(\xi,-\xi)$. Differentiating the remaining amplitudes through order $2$, and using the finite estimates in the proof of Lemma \ref{lem:theta_prim}, gives, for $m\in[3)$,
\[
\left|\tfrac{d^m}{d\xi^m}\widehat\Psi(\xi,-\xi)\right|
\le2^2C_{\ref{lem:theta_prim},2}^2 2^{(2-\gamma)k}
\le c_1.
\]
Thus \eqref{p1-on-W_der} holds. Proposition \ref{P:induct-positive-terms-reduction-whitney}, followed by Lemma \ref{lem:bootstrap}, proves the result.
\end{proof}

\begin{lemma}[constant $C_{\ref{lem:leftbump1_long}}$ \auto]\label{constant left bump one long}\uses{constant Whitney reduction,constant theta primitive,lem:leftbump1_long}\usesdefs{}
\lean{Auto.constantLeftBumpOneLong}
\leanok
\begin{equation}\label{constant left bump one long bound}
C_{\ref{lem:leftbump1_long}}<2^{625}.
\end{equation}
\end{lemma}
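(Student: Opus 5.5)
This is a constant-verification lemma, so the plan is to unfold the definition \eqref{auto:left-bump-one-long-constant-definition} and substitute the explicit bounds already established for each factor. The constant is
\[
C_{\ref{lem:leftbump1_long}}=2^6C_{\ref{P:induct-positive-terms-reduction-whitney}}C_{\ref{lem:theta_prim},2}^2,
\]
so only two inputs are needed.

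First, Lemma \ref{constant Whitney reduction} controls the Whitney factor. Its proof gives the sharper form
\[
C_{\ref{P:induct-positive-terms-reduction-whitney}}<\tfrac{1397}{2048}2^{557},
\]
and for the present purpose even the crude bound $2^{557}$ suffices. Second, Lemma \ref{constant theta primitive}, via \eqref{auto:constant-theta-primitive-two}, gives $C_{\ref{lem:theta_prim},2}\le 2^{31}$. Squaring this yields $C_{\ref{lem:theta_prim},2}^2\le 2^{62}$.

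Multiplying the three factors gives
\[
C_{\ref{lem:leftbump1_long}}<2^6\cdot 2^{557}\cdot 2^{62}=2^{625}.
\]
The inequality is strict because the Whitney bound is strict. If a sharper form is preferred, one can instead record $C_{\ref{lem:leftbump1_long}}<\tfrac{1397}{2048}2^{625}$.

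There is no real obstacle; the argument is pure substitution. The only point to check is that the defining formula uses exactly $C_{\ref{lem:theta_prim},2}$ squared with the prefactor $2^6$, so that the exponent bookkeeping $6+557+62=625$ is correct.
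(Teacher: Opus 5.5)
Your proposal is correct and matches the paper's proof: substituting $C_{\ref{P:induct-positive-terms-reduction-whitney}}<2^{557}$ from Lemma \ref{constant Whitney reduction} and $C_{\ref{lem:theta_prim},2}\le 2^{31}$ from Lemma \ref{constant theta primitive} into $2^6C_{\ref{P:induct-positive-terms-reduction-whitney}}C_{\ref{lem:theta_prim},2}^2$ gives exactly $2^{625}$, with strictness inherited from the Whitney bound. Your sharper form $\tfrac{1397}{2048}2^{625}$ is also right, and it is the one the paper later uses in Lemma \ref{constant left bump one}.
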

\begin{proof}[Proof \auto]
Use Lemmas \ref{constant Whitney reduction} and \ref{constant theta primitive} in \eqref{auto:left-bump-one-long-constant-definition}. This gives the displayed estimate.
\end{proof}

\begin{lemma}\label{lem:leftbump1}\uses{lem:rescaling,lem:shortlongjumps,lem:leftbump1_long,lem:leftbump1_short1,lem:leftbump1_short2,lem:ftccs-R,lem:ftc_ATphi}\usesdefs{A_def,defn:window based bump functions}
\lean{Auto.leftBumpOne}
\leanok

Let $\gamma=\frac12$. For every $k\le -1$,
\begin{equation}\label{leftbump1}
\|A_{t}(\varphi_{2,k})\|_{V_{2,J}(t\in(0,\infty);L^2)}^2  \le C_{\ref{lem:leftbump1}} 2^{\gamma k} J^{\alpha(n)}, \end{equation}
where  $C_{\ref{lem:leftbump1}}
=2^7C_{\ref{lem:leftbump1_short1}}^{1/2}C_{\ref{lem:leftbump1_short2}}^{1/2}
+2C_{\ref{lem:leftbump1_long}}$.
\end{lemma}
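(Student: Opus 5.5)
First relate $\varphi_{2,k}$ to $\varphi_{4,k}$. By definition $\varphi_{2,k}=-\mathbf 1_{[1,\infty)}*\theta_{(2^k)}$. Since $\widetilde\theta=\mathbf 1_{[0,\infty)}*\theta$, we have $\mathbf 1_{[1,\infty)}*\theta_{(2^k)}(u)=\widetilde\theta(2^{-k}(u-1))$. Hence $\varphi_{2,k}=-2^{-k}(\varphi_{4,k})_{(2^{-k})}$. By Lemma \ref{lem:rescaling} and homogeneity of the variation seminorm, it therefore suffices to prove
\[
\|A_t(\varphi_{4,k})\|_{V_{2,J}(t\in(0,\infty);L^2)}^2\le C_{\ref{lem:leftbump1}}\,2^{2k}\,2^{\gamma k}J^{\alpha(n)}.
\]
The bookkeeping of the factor $2^{-k}$ is the first thing I would double check. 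The powers of $2^k$ in Lemmas \ref{lem:leftbump1_short1}, \ref{lem:leftbump1_short2}, \ref{lem:leftbump1_long} are arranged so that the target is exactly $2^{\gamma k}J^{\alpha(n)}$ after the normalisation built into $\varphi_{4,k}=2^k\widetilde\theta(\cdot-2^{-k})$. So I expect the reduction to read $\|A_t(\varphi_{2,k})\|_{V}=\|A_t(2^{-k}\varphi_{4,k})\|_{V}$, with the $2^k$ inside $\varphi_{4,k}$ absorbing the prefactor. If not, I will adjust by rescaling the lemmas' outputs.

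Fix an increasing sequence $(t_j)_{j\in[J+1)}$ and apply Lemma \ref{lem:shortlongjumps} with $r=2$, $B=L^2$, $a(t)=A_t(\varphi_{4,k})$. The square of the jump sum is then bounded by $8\sum_{k'\in\kappa}\|a\|^2_{V_{2,J}([2^{k'},2^{k'+1}];L^2)}+2\|a\|^2_{V_{2,J}(2^\Z;L^2)}$. The long part is controlled directly by Lemma \ref{lem:leftbump1_long}, contributing $2C_{\ref{lem:leftbump1_long}}2^{\gamma k}J^{\alpha(n)}$.

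For the short part, fix $x$ and use \eqref{ftccs2-R} of Lemma \ref{lem:ftccs-R} rather than \eqref{ftccs1-R}. This gives
\[
\|a(\cdot)(x)\|^2_{V_{2,J}([2^{k'},2^{k'+1}])}\le 8\|a\|_{L^2(dt/t)}\|ta'\|_{L^2(dt/t)}.
\]
By Lemma \ref{lem:ftc_ATphi}, $\|ta'\|^2_{L^2(dt/t)}=\int_1^2|A_{2^{k'}t}(T\varphi_{4,k})(x)|^2\,dt/t$, and similarly $\|a\|^2_{L^2(dt/t)}=\int_1^2|A_{2^{k'}t}(\varphi_{4,k})(x)|^2\,dt/t$. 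Integrate in $x$ and apply Cauchy--Schwarz in $x$ and then in $k'\in\kappa$. The short part is then at most $8$ times the geometric mean of
\[
\sum_{k'\in\kappa}\int_1^2\|A_{2^{k'}t}(\varphi_{4,k})\|_2^2\,\tfrac{dt}{t}
\quad\text{and}\quad
\sum_{k'\in\kappa}\int_1^2\|A_{2^{k'}t}(T\varphi_{4,k})\|_2^2\,\tfrac{dt}{t}.
\]
The set $\kappa$ has at most $J+1\le 2J$ elements. Split it into at most two strictly increasing sequences of length $\le J$, or use $(2J)^{\alpha(n)}\le 2J^{\alpha(n)}$. Lemmas \ref{lem:leftbump1_short1} and \ref{lem:leftbump1_short2} then bound these two sums by $2^{(1+\gamma)k}C_{\ref{lem:leftbump1_short1}}\cdot 2J^{\alpha(n)}$ and $2^{-(1-\gamma)k}C_{\ref{lem:leftbump1_short2}}\cdot 2J^{\alpha(n)}$. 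Their geometric mean is $2\,(C_{\ref{lem:leftbump1_short1}}C_{\ref{lem:leftbump1_short2}})^{1/2}2^{\gamma k}J^{\alpha(n)}$. This cancellation of the mismatched powers $2^{(1+\gamma)k}$ and $2^{-(1-\gamma)k}$ is exactly why the interpolated inequality \eqref{ftccs2-R} is used.

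Collecting the constants gives $8\cdot 8\cdot 2\,(C_{\ref{lem:leftbump1_short1}}C_{\ref{lem:leftbump1_short2}})^{1/2}+2C_{\ref{lem:leftbump1_long}}$, which fits within $C_{\ref{lem:leftbump1}}$. The main obstacle I expect is the initial rescaling identity between $\varphi_{2,k}$ and $\varphi_{4,k}$, namely tracking the powers of $2^k$ and the sign/translation so that the long and short lemmas combine with the stated $2^{\gamma k}$. The other steps are routine once that normalisation is pinned down.
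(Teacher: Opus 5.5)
Everything after your first step is the paper's proof. You use Lemma~\ref{lem:shortlongjumps} with the $8/2$ split, Lemma~\ref{lem:leftbump1_long} for the long part, and \eqref{ftccs2-R} with Lemma~\ref{lem:ftc_ATphi}. You then apply Cauchy--Schwarz in $x$ and in the dyadic index, and Lemmas~\ref{lem:leftbump1_short1}, \ref{lem:leftbump1_short2} with $(J+1)^{\alpha(n)}\le 2J^{\alpha(n)}$. This yields exactly $\bigl(2^7(C_{\ref{lem:leftbump1_short1}}C_{\ref{lem:leftbump1_short2}})^{1/2}+2C_{\ref{lem:leftbump1_long}}\bigr)2^{\gamma k}J^{\alpha(n)}$ as a bound for $\|A_t(\varphi_{4,k})\|^2_{V_{2,J}((0,\infty);L^2)}$.

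The gap is the reduction step, which as written breaks the argument.

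\begin{itemize}
\item The identity $\varphi_{2,k}=-2^{-k}(\varphi_{4,k})_{(2^{-k})}$ is false. Its right side is $-2^{k}\widetilde\theta(2^{k}u-2^{-k})$, not $-\widetilde\theta(2^{-k}(u-1))$.
\item Your fallback $\|A_t(\varphi_{2,k})\|_V=\|A_t(2^{-k}\varphi_{4,k})\|_V$ carries the same spurious factor $2^{-k}$.
\item Taken at face value, either version requires $\|A_t(\varphi_{4,k})\|_V^2\le C\,2^{2k}2^{\gamma k}J^{\alpha(n)}$. That is a factor $2^{2k}\le 1/4$ stronger than what your later steps deliver.
\item Equivalently, you would only get $C\,2^{(\gamma-2)k}J^{\alpha(n)}$ for $\varphi_{2,k}$. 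This grows as $k\to-\infty$ and would make the sum over $k\le -1$ in the proof of Theorem~\ref{thm:nct main real} diverge.
\item ``Adjusting by rescaling the lemmas' outputs'' cannot repair this. By Lemma~\ref{lem:rescaling} the variation norm is invariant under $L^1$-normalized dilations, so only an amplitude factor changes it.
\end{itemize}

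The fix comes from your own correct formula $\mathbf 1_{[1,\infty)}*\theta_{(2^k)}(u)=\widetilde\theta(2^{-k}(u-1))$. Since $(\varphi_{4,k})_{(2^k)}(u)=2^{-k}\varphi_{4,k}(2^{-k}u)=\widetilde\theta(2^{-k}u-2^{-k})$, you get $\varphi_{2,k}=-(\varphi_{4,k})_{(2^k)}$ with no amplitude factor, as in the paper. Linearity of $A$ and Lemma~\ref{lem:rescaling} then give $\|A_t(\varphi_{2,k})\|_{V_{2,J}((0,\infty);L^2)}=\|A_t(\varphi_{4,k})\|_{V_{2,J}((0,\infty);L^2)}$. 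So the target for $\varphi_{4,k}$ is exactly $C_{\ref{lem:leftbump1}}2^{\gamma k}J^{\alpha(n)}$, which the rest of your argument proves with the stated constant.
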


\begin{proof}
We have
\[\varphi_{2,k}(u)=-(\varphi_{4,k})_{(2^k)}(u),\]
so by Lemma \ref{lem:rescaling} (rescaling) it suffices to show
\begin{equation}
\|A_{t}({\varphi_{4,k}}) \|_{V_{2,J}(t\in(0,\infty);L^2)}^2\le C_{\ref{lem:leftbump1}} 2^{\gamma k} J^{\alpha(n)}
\end{equation}

Let $(t_j)_{j\in[J+1)}$ be an increasing sequence of positive real numbers.
By Lemma \ref{lem:shortlongjumps} (with $r=2$, $a(t) = A_t({\varphi_{4,k}}) $ and $B=L^2$),
\begin{equation}  \Big(\sum_{j\in [J)} \|A_{t_{j+1}}({\varphi_{4,k}}) - A_{t_j}({\varphi_{4,k}}) \|_2^2\Big)^{1/2} \le 2 \Big(\sum_{\ell \in L} \|A_t({\varphi_{4,k}}) \|_{V_{2,J}(t\in [2^\ell, 2^{\ell+1}]; L^2)}^2\Big)^{1/2}
\end{equation}
\begin{equation}
+\|A_t({\varphi_{4,k}}) \|_{V_{2,J}(t\in 2^\mathbb{Z}; L^2)}.
\end{equation}
where $L$ is the set of $\ell\in \mathbb{Z}$ such that $2^{\ell}\le t_j<2^{\ell+1}$ for some $j\in [J+1)$.
Squaring and using $(a+b)^2\le 2(a^2+b^2)$ we obtain
\begin{equation}\label{shortlongftc_reduction_pf12} \sum_{j\in [J)} \|A_{t_{j+1}}(\varphi_{4,k})- A_{t_j}(\varphi_{4,k})\|_2^2  \le 2^3 \sum_{\ell \in L} \|A_t(\varphi_{4,k})\|_{V_{2,J}(t\in [2^\ell, 2^{\ell+1}]; L^2)}^2
\end{equation}
\begin{equation}\label{shortlongftc_reduction_pf22}
+ 2 \|A_t(\varphi_{4,k})\|^2_{V_{2,J}(t\in 2^\mathbb{Z}; L^2)}.
\end{equation}

 Now we use Lemma \ref{lem:leftbump1_long} to estimate
 \begin{equation}
     \label{leftbump_long_est}
     \|A_{t}(\varphi_{4,k})\|^2_{V_{2,J}(t\in 2^\mathbb{Z}; L^2)} \le C_{\ref{lem:leftbump1_long}} 2^{\gamma k}J^{\alpha(n)}
 \end{equation}
We will also show
\begin{equation}
    \label{leftbump_short_est}
    \sum_{\ell \in L} \|A_t(\varphi_{4,k})\|_{V_{2,J}(t\in [2^\ell, 2^{\ell+1}]; L^2)}^2 \le2^4C_{\ref{lem:leftbump1_short1}}^{1/2}C_{\ref{lem:leftbump1_short2}}^{1/2}2^{\gamma k}J^{\alpha(n)}
\end{equation}
The estimates \eqref{leftbump_short_est}, \eqref{leftbump_long_est}, and \eqref{shortlongftc_reduction_pf22} together give
\[\|A_{t}(\varphi_{4,k})\|_{V_{2,J}(t\in(0,\infty);L^2)}^2  \le (2^7C_{\ref{lem:leftbump1_short1}}^{1/2} C_{\ref{lem:leftbump1_short2}}^{1/2}  + 2C_{\ref{lem:leftbump1_long}}) 2^{\gamma k} J^{\alpha(n)} = C_{\ref{lem:leftbump1}} 2^{\gamma k} J^{\alpha(n)}  \]
as desired.
To see \eqref{leftbump_short_est},
for each $\ell\in\mathbb{Z}$ we estimate
\[ \|A_t(\varphi_{4,k})\|_{V_{2,J}(t\in [2^\ell, 2^{\ell+1}]; L^2)}^2 \le \int_{\mathbb{R}^n}  \|A_t(\varphi_{4,k})(x)\|_{V_{2,J}(t\in [2^\ell, 2^{\ell+1}])}^2\, dx.\]
Fix $x\in\mathbb{R}^n$ and $\ell\in\mathbb{Z}$. Set $a(t)=A_t(\varphi_{4,k})(x)$ and use Lemma \ref{lem:ftccs-R} (specifically, \eqref{ftccs2-R}) to estimate for each $x$
\[ \|A_t(\varphi_{4,k})(x)\|_{V_{2,J}(t\in [2^\ell, 2^{\ell+1}])}^2 \le  2^3
\|a(t)\|_{L^2(t\in [2^\ell, 2^{\ell+1}],\tfrac{dt}{t})} \|ta'(t)\|_{L^2(t\in [2^\ell, 2^{\ell+1}],\tfrac{dt}{t})}.  \]
By Lemma \ref{lem:ftc_ATphi},
\[ \|ta'(t)\|_{L^2(t\in [2^\ell, 2^{\ell+1}],\tfrac{dt}{t})}^2 = \int_{1}^2 |A_{2^\ell t}(T\varphi_{4,k})(x)|^2\,\tfrac{dt}{t}. \]
Thus for every $x\in\mathbb{R}^n$ and $\ell\in\mathbb{Z}$,
\[ \|A_t(\varphi_{4,k})(x)\|_{V_{2,J}(t\in [2^\ell, 2^{\ell+1}])}^2\]
\[ \le 2^3 \Big( 2^{-k} \int_{1}^2 |A_{2^\ell t}(\varphi_{4,k})(x)|^2\,\tfrac{dt}{t} \Big)^{1/2} \Big( 2^{k}  \int_{1}^2 |A_{2^\ell t}(T{\varphi_{4,k}})(x)|^2\,\tfrac{dt}{t} \Big)^{1/2}.  \]
We integrate over $x\in\mathbb{R}^n$ and sum in $\ell\in L$.  Then we apply the Cauchy-Schwarz inequality in $x$ and in the summation.   We also interchange the integrals over $x$ with the integrals $t$ and the summation. This gives
\[ \sum_{\ell \in L}\|A_t(\varphi_{4,k})\|_{V_{2,J}(t\in [2^\ell, 2^{\ell+1}]; L^2)}^2 \]
\[\le2^3\Big( 2^{-k} \int_1^2 \sum_{\ell\in L}\|A_{2^\ell t}(\varphi_{4,k})\|_2^2 \tfrac{dt}{t} \Big)^{1/2}\Big( 2^k \int_1^2 \sum_{\ell \in L}\|A_{2^\ell t}(T\varphi_{4,k})\|_2^2 \tfrac{dt}{t} \Big)^{1/2}   . \]
Order the elements of $L$ increasingly and apply Lemmas \ref{lem:leftbump1_short1} and \ref{lem:leftbump1_short2} directly to the two integrated sums. We also use $\#L\le J+1$.

This bounds the last display by
$2^4C_{\ref{lem:leftbump1_short1}}^{1/2}C_{\ref{lem:leftbump1_short2}}^{1/2}2^{\gamma k}J^{\alpha(n)},$
where we used $(J+1)^{\alpha(n)}\le2J^{\alpha(n)}$. This shows \eqref{leftbump_short_est}.
\end{proof}

\begin{lemma}[constant $C_{\ref{lem:leftbump1}}$ \auto]\label{constant left bump one}\uses{constant left bump one short one,constant left bump one short two,constant left bump one long,lem:leftbump1}\usesdefs{}
\lean{Auto.constantLeftBumpOne}
\leanok
\begin{equation}\label{constant left bump one bound}
C_{\ref{lem:leftbump1}}<\tfrac{23}{32}2^{636}<2^{636}.
\end{equation}
\end{lemma}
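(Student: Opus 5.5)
This is a numerical bound on the constant defined in Lemma \ref{lem:leftbump1},
\[
C_{\ref{lem:leftbump1}}=2^7C_{\ref{lem:leftbump1_short1}}^{1/2}C_{\ref{lem:leftbump1_short2}}^{1/2}+2C_{\ref{lem:leftbump1_long}}.
\]
The plan is to substitute the explicit bounds from the three preceding constant lemmas. For the two short-variation constants we take the sharper fractional forms recorded in their proofs, namely $C_{\ref{lem:leftbump1_short1}}<\tfrac{23}{32}2^{628}$ and $C_{\ref{lem:leftbump1_short2}}<\tfrac{23}{32}2^{630}$. For the long-variation constant we use $C_{\ref{lem:leftbump1_long}}<2^{625}$ from Lemma \ref{constant left bump one long}.

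For the first term, the geometric mean gives
\[
C_{\ref{lem:leftbump1_short1}}^{1/2}C_{\ref{lem:leftbump1_short2}}^{1/2}<\tfrac{23}{32}\,2^{629}.
\]
Multiplying by $2^7$ bounds the first term by $\tfrac{23}{32}2^{636}$.

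The second term is at most $2\cdot 2^{625}=2^{626}$, which is $2^{-10}\cdot 2^{636}$. So the sum is below $\bigl(\tfrac{23}{32}+2^{-10}\bigr)2^{636}$, and this is just slightly more than the claimed $\tfrac{23}{32}2^{636}$.

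The main obstacle is therefore this small slack, and I would close it by exploiting the fact that the earlier inequalities are strict with room to spare. The proofs of Lemmas \ref{constant left bump one short one} and \ref{constant left bump one short two} actually give the factor $\tfrac{185801}{262144}$ in place of $\tfrac{23}{32}$. Since $\tfrac{185801}{262144}\approx0.70877$ and $\tfrac{23}{32}=0.71875$, the gap is about $0.00998\cdot 2^{636}$, which comfortably absorbs the extra $2^{-10}\approx0.00098$.

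So the final step is to check one elementary rational inequality:
\[
\tfrac{185801}{262144}+2^{-10}<\tfrac{23}{32}.
\]
Together with the trivial comparison $\tfrac{23}{32}2^{636}<2^{636}$, this gives the claim.
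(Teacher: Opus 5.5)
Your proposal is correct and follows essentially the same route as the paper: substitute the common factor $\tfrac{185801}{262144}$ from the proofs of Lemmas \ref{constant left bump one short one} and \ref{constant left bump one short two} into the first term, then add the bound for $2C_{\ref{lem:leftbump1_long}}$. The only difference is that you bound the second term by $2^{-10}2^{636}$, whereas the paper uses the sharper $\tfrac{1397}{2097152}2^{636}$; your cruder bound still suffices, since $185801+256<188416=23\cdot 2^{13}$.
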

\begin{proof}[Proof \auto]
The proofs of Lemmas \ref{constant left bump one short one} and \ref{constant left bump one short two} give the common factor
\[
\tfrac{185801}{262144}.
\]
Hence the first term in the defining sum is smaller than
\[
\tfrac{185801}{262144}2^{636}.
\]
Lemma \ref{constant left bump one long} bounds the second term by
\[
\tfrac{1397}{2097152}2^{636}.
\]
Their sum is smaller than $\frac{23}{32}2^{636}$.
\end{proof}

\subsubsection{\texorpdfstring{Proof of Theorem \ref{thm:nct main real}}{Proof of main theorem}}
\begin{proof}[Proof \auto]
Fix $j\in [J)$.
By Lemma \ref{lem:smoothingdecomp},
$\|A_{t_{j + 1}}(\mathbf{1}_{[0,1]}) - A_{t_{j}}(\mathbf{1}_{[0,1]})\|_{{L}^2(\R^n)}$
is no larger than
\[ \|A_{t_{j+1}}(\phi_0) - A_{t_j}(\phi_0)\|_{L^2(\R^n)}  + \sum_{k=-2}^\infty\|A_{t_{j+1}}(\varphi_{0,k}) - A_{t_{j}}(\varphi_{0,k})\|_{L^2(\R^n)}  + \]
\[ +\sum_{k=-\infty}^{-1}\|A_{t_{j+1}}(\varphi_{1,k}) - A_{t_{j}}(\varphi_{1,k})\|_{L^2(\R^n)}  + \sum_{k=-\infty}^{-1}\|A_{t_{j+1}}({\varphi_{2,k}}) - A_{t_{j}}({\varphi_{2,k}})\|_{L^2(\R^n)}. \]
Here we have also applied Lemma \ref{lem:norm_A_sum_le_sum} to interchange series and integrals.
Applying the triangle inequality in $\ell^2([J))$ and then Lemmas \ref{lem:mainbump1}, \ref{lem:mainbump2}, \ref{lem:leftbump}, and \ref{lem:leftbump1}, we obtain
\[
\Big(\sum_{j\in[J)}\|A_{t_{j+1}}(\mathbf1_{[0,1]})-A_{t_j}(\mathbf1_{[0,1]})\|_2^2\Big)^{1/2}
\]
\[
\le J^{\alpha(n)/2}\Big(
C_{\ref{lem:mainbump1}}^{1/2}
+8C_{\ref{lem:mainbump2}}^{1/2}
+C_{\ref{lem:leftbump}}^{1/2}
+8C_{\ref{lem:leftbump1}}^{1/2}
\Big),
\]
where we used
\[
\sum_{k=-2}^\infty2^{-k}=8,
\qquad
\sum_{k=-\infty}^{-1}2^k=1,
\qquad
\sum_{k=-\infty}^{-1}2^{k/4}\le8.
\]
Consequently,
\[
\sum_{j\in[J)}\|A_{t_{j+1}}(\mathbf1_{[0,1]})-A_{t_j}(\mathbf1_{[0,1]})\|_2^2
\le C_{\mathrm{final}}J^{\alpha(n)},
\]
where
\[
C_{\mathrm{final}}
=2^2\Big(
C_{\ref{lem:mainbump1}}
+2^6C_{\ref{lem:mainbump2}}
+C_{\ref{lem:leftbump}}
+2^6C_{\ref{lem:leftbump1}}
\Big).
\]
Lemmas \ref{constant main bump one}, \ref{constant main bump two}, \ref{constant left bump}, and \ref{constant left bump one} give
\[
C_{\mathrm{final}}
<\tfrac78 2^{612}
+\tfrac{27}{32}2^{666}
+\tfrac{33}{64}2^{638}
+\tfrac{23}{32}2^{644}
<\tfrac78 2^{666}
<2^{666}
=C_{\ref{thm:nct main real}}.
\]
\end{proof}

\section{From real to ergodic}\label{sec:realtoergodic}
The purpose of this section is to prove Theorem \ref{thm:ergodicthm} from Theorem \ref{thm:nct main real}.
This section is the only place in the blueprint where we invoke two external theorems, i.e. theorems that are not proved in this blueprint. These are Theorem \ref{multilinear interpolation external} (multilinear complex interpolation) and Theorem \ref{Calderon transference external} (Calder\'on transference principle).
The latter was recently formalized in Lean by Pernegger \cite{Pernegger2025}.

\begin{proposition}[Permutation of the endpoint estimate]\label{permutation of endpoint estimate}\lean{Auto.permutation_of_endpoint_estimate}\leanok\uses{thm:nct main real}\usesdefs{A_def}
Let $n\ge 2$, let $\pi$ be a permutation of $[n)$, and let $J\ge 1$ and $0<t_0<\cdots<t_J$. For real-valued Schwartz functions $(f_i)_{i\in[n)}$,
\begin{equation}\label{permuted endpoint estimate}
\sum_{j\in[J)}\|A_{t_{j+1}}(\mathbf 1_{[0,1]},\mathbf f)-A_{t_j}(\mathbf 1_{[0,1]},\mathbf f)\|_2^2
\le C_{\ref{thm:nct main real}}J^{1-2^{2-n}}
\prod_{i\in[n)}\|f_i\|_{2^{\pi(i)+\min(n-\pi(i),2)}}^2.
\end{equation}
\end{proposition}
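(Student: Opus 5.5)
The plan is to reduce to Theorem \ref{thm:nct main real} by relabeling coordinates, followed by homogeneity. The average $A(\chi,\mathbf f)(x)=\int\chi(s)\prod_i f_i(x+se_i)\,ds$ is symmetric under simultaneous permutation of the coordinates of $\R^n$ and the functions.

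\emph{Step 1: relabeling.} Let $P:\R^n\to\R^n$ be the linear isometry $(Px)_{\pi(i)}=x_i$, so that $Pe_i=e_{\pi(i)}$. Define $g_{\pi(i)}=f_i\circ P^{-1}$, i.e. $g_k=f_{\pi^{-1}(k)}\circ P^{-1}$. Then
\[
\prod_{k} g_k(Px+se_k)=\prod_i f_i(P^{-1}(Px+se_{\pi(i)}))=\prod_i f_i(x+se_i).
\]
Hence $A(\chi,\mathbf g)(Px)=A(\chi,\mathbf f)(x)$ for every $\chi$, and in particular for every $\chi=(\mathbf 1_{[0,1]})_{(t)}$. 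Since $P$ is measure preserving, all the $L^2$ norms of the differences on the left of \eqref{permuted endpoint estimate} coincide for $\mathbf f$ and $\mathbf g$. The $g_k$ are real Schwartz functions, and $\|g_{\pi(i)}\|_p=\|f_i\|_p$. So the exponent attached to $g_k$ is $2^{k+\min(n-k,2)}=2^{\min(n,k+2)}$, which is exactly the exponent in \eqref{main twisted normalization}.

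\emph{Step 2: normalization.} If some $f_i=0$, both sides vanish. Otherwise set $\tilde g_k=g_k/\|g_k\|_{2^{\min(n,k+2)}}$. The averages are multilinear in $\mathbf g$, so each difference on the left scales by $\prod_k\|g_k\|$, and its square by $\prod_k\|g_k\|^2$. Applying Theorem \ref{thm:nct main real} to $\tilde{\mathbf g}$ and multiplying back gives \eqref{permuted endpoint estimate}. Finally, $1-2^{-n+2}=1-2^{2-n}$ matches the exponent of $J$.

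\emph{Main obstacle.} There is no real obstacle. The only care needed is the index bookkeeping: the permutation must act on both the coordinates and the function labels consistently, so that $e_i$ is paired with $f_i$ after transport. It is also worth checking that the case $J=0$ and the conventions of the theorem, which allows $J\in\N$, cover the hypothesis $J\ge1$.
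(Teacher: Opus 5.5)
Your proposal is correct and follows the paper's proof. You transport by the coordinate permutation $P$, relabeling the functions consistently so that $e_i$ stays paired with $f_i$, note that the left-hand side and the $L^p$ norms are invariant, and then apply Theorem \ref{thm:nct main real} to the normalized tuple and use multilinearity (homogeneity); your bookkeeping, including $2^{\pi(i)+\min(n-\pi(i),2)}=2^{\min(n,\pi(i)+2)}$ and the trivial case of a vanishing $f_i$, is accurate.
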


\begin{proof}[Proof \auto]
Apply Theorem \ref{thm:nct main real} after permuting the coordinate axes by $\pi$ and permuting the functions by the same permutation. Lebesgue measure, the $L^2$ norm, and the kernel $\mathbf 1_{[0,1]}$ are unchanged by this coordinate permutation. The normalization in Theorem \ref{thm:nct main real} becomes the product on the right-hand side of \eqref{permuted endpoint estimate}; homogeneity gives the stated form.
\end{proof}

\begin{exttheorem}[Multilinear interpolation]\label{multilinear interpolation external}\lean{Auto.multilinear_interpolation_external}\leanok\uses{}\usesdefs{}
Let $m,n\ge 1$ be integers, let $(X_i,\mu_i)$ and $(Y,\nu)$ be measure spaces, let $p\in[1,\infty]$, and let $T$ be an $n$-linear map on $n$-tuples of complex-valued simple functions with values in $L^p(Y)$. For $a\in[m)$ and $i\in[n)$, let $q_{a,i}\in[1,\infty]$, and assume
\begin{equation}\label{multilinear interpolation endpoint}
\|T(f_0,\ldots,f_{n-1})\|_{L^p(Y)}\le A_a\prod_{i\in[n)}\|f_i\|_{L^{q_{a,i}}(X_i)}.
\end{equation}
Let $\theta_a\ge 0$ satisfy $\sum_{a\in[m)}\theta_a=1$, and define $p_i\in[1,\infty]$ by
\begin{equation}\label{multilinear interpolation exponents}
\tfrac1{p_i}=\sum_{a\in[m)}\tfrac{\theta_a}{q_{a,i}}.
\end{equation}
Then $T$ extends to an $n$-linear map from $\prod_{i\in[n)}L^{p_i}(X_i)$ to $L^p(Y)$ and
\begin{equation}\label{multilinear interpolation conclusion}
\|T(f_0,\ldots,f_{n-1})\|_{L^p(Y)}\le \prod_{a\in[m)}A_a^{\theta_a}\prod_{i\in[n)}\|f_i\|_{L^{p_i}(X_i)}.
\end{equation}
\end{exttheorem}

This is a standard result in interpolation theory, see e.g. \cite{BerghLofstrom}.

\begin{proposition}[Range of exponents]\label{exponent polytope}\lean{Auto.exponent_polytope,Auto.exponent_polytope_of_exponents}\leanok\uses{}\usesdefs{}
Let $p_i\in[1,\infty]$ for $i\in[n)$. The vector $(p_i^{-1})_{i\in[n)}$ belongs to the convex hull of the permutations of
\begin{equation}\label{endpoint reciprocal vector}
(2^{-2},2^{-3},\ldots,2^{-n},2^{-n})
\end{equation}
if and only if
\begin{equation}\label{exponent polytope sum}
\sum_{i\in[n)}p_i^{-1}=\tfrac12
\end{equation}
and, for every nonempty $I\subset[n)$,
\begin{equation}\label{exponent polytope inequalities}
\sum_{i\in I}p_i^{-1}\ge 2^{|I|-n-1}.
\end{equation}
\end{proposition}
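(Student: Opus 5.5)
The plan is to recognize the set in question as a permutohedron-type polytope generated by a submodular function, and to use the standard description of such polytopes. Write $v=(v_0,\dots,v_{n-1})=(2^{-2},2^{-3},\dots,2^{-n},2^{-n})$. The entries are non-increasing, and they sum to $\sum_{k=2}^{n}2^{-k}+2^{-n}=\tfrac12$. The sum of the $s$ smallest entries is
\[
g(s)=\sum_{i=n-s}^{n-1}v_i=2^{s-n-1}\qquad (1\le s\le n).
\]
One checks this by induction: $g(1)=2^{-n}$, and $g(s+1)=g(s)+2^{s-n-1}=2^{s-n}$ for $s\ge1$. So the claimed inequalities say exactly that every sub-sum of $|I|$ coordinates is at least the sum of the $|I|$ smallest entries of $v$. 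This is the classical Rado description of the permutohedron $\mathrm{conv}\{\sigma v\}$: $x\in\mathrm{conv}\{\sigma v\}$ iff $\sum_i x_i=\sum_i v_i$ and $\sum_{i\in I}x_i\ge g(|I|)$ for all $I$. This is equivalent to majorization.

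For necessity, which is easy, each permutation $\sigma v$ satisfies the sum condition. For any $I$, $\sum_{i\in I}(\sigma v)_i$ is a sum of $|I|$ distinct entries of $v$, hence at least $g(|I|)$. Both conditions are affine, respectively convex, and so pass to convex combinations.

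For sufficiency, I would prove the majorization statement directly by induction on $n$, or by a transfer (Robin Hood) argument, instead of quoting it, since the blueprint prefers elementary proofs. First sort: by permutation invariance of both conditions, assume $x_0\ge x_1\ge\dots\ge x_{n-1}$. The lower bounds on tail sums, combined with the total sum, become upper bounds on the partial sums, $\sum_{i<k}x_i\le\sum_{i<k}v_i$, with equality at $k=n$; that is, $x\prec v$. Then apply the Hardy--Littlewood--Pólya/Birkhoff route. By a finite sequence of T-transforms (averaging two coordinates), $x$ is obtained from $v$, and each T-transform $y\mapsto \lambda y+(1-\lambda)\tau y$, with $\tau$ a transposition, keeps us inside $\mathrm{conv}\{\sigma v\}$. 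The inductive construction is standard. Take the largest $j$ with $v_j> x_j$ and the smallest $k>j$ with $v_k<x_k$. Move mass $\min(v_j-x_j,x_k-v_k)$ from coordinate $j$ to $k$; this is a T-transform of $v$ along the transposition $(j\,k)$. The new vector still majorizes $x$ and agrees with $x$ in one more coordinate. Iterate at most $n$ times.

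The main obstacle is this sufficiency direction, specifically checking that majorization is preserved after each transfer. For a formalization-friendly alternative, I would instead try a separation argument. If $x$ is not in the hull, there is a linear functional $c$ with $c\cdot x<\min_\sigma c\cdot\sigma v$. Sort $c$ decreasingly and pair it with the increasing arrangement of $v$ (rearrangement inequality). Then write $c\cdot x$ via Abel summation as $c_{\min}\sum_i x_i$ plus nonnegative combinations of tail sums $\sum_{i\in I_s}x_i$ over the level sets of $c$. Each tail sum is bounded below by $g(s)$, and that bound is attained by the minimizing permutation, which gives a contradiction. This avoids the iteration bookkeeping, so I would try it first.
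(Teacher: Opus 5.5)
Your proof is correct. The separation argument you say you would try first is essentially the paper's proof. The paper passes to complements to turn the lower bounds into upper bounds on initial partial sums, and then compares $c\cdot x$ with the \emph{maximum} of $c\cdot\sigma v$ over permutations (the similarly ordered pairing). You keep the lower bounds and compare with the \emph{minimum} (the oppositely ordered pairing). This is the same Abel-summation computation with $c$ replaced by $-c$, and your version saves the complement step.

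The majorization/T-transform route in your main text is a genuinely different, constructive argument. It avoids the separation theorem and produces an explicit convex combination built from finitely many transposition averages. The cost is the bookkeeping you flagged, and that bookkeeping does go through. Take $x$ sorted decreasingly and let $y$ be the current vector, so that $x_i=y_i$ for $j<i<k$ by the choice of $j,k$.
\begin{itemize}
\item From $y_j>x_j\ge x_k>y_k$ you get $0<\delta<y_j-y_k$, so the averaging weight lies in $(0,1)$.
\item For $j\le m<k$, the surplus is $\sum_{i\le m}(y_i-x_i)=\sum_{i<j}(y_i-x_i)+(y_j-x_j)\ge y_j-x_j\ge\delta$, using the majorization inequality at $j-1$. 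So these partial sums stay above those of $x$ after subtracting $\delta$.
\item All other partial sums are unchanged.
\end{itemize}
The paper's choice is the shorter one and maps directly onto the separation theorem available for the formalization.
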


\begin{proof}[Proof \auto]
Every permutation of \eqref{endpoint reciprocal vector} satisfies \eqref{exponent polytope sum}. The sum of its $|I|$ smallest coordinates is $2^{|I|-n-1}$, so every convex combination satisfies \eqref{exponent polytope inequalities}.

Conversely, put $x_i=p_i^{-1}$ and let $b_0\ge\cdots\ge b_{n-1}$ be the coordinates in \eqref{endpoint reciprocal vector} in decreasing order. Applying \eqref{exponent polytope inequalities} to complements gives, for every $I\subset[n)$,
\begin{equation}\label{exponent polytope upper inequalities}
\sum_{i\in I}x_i\le \sum_{i=0}^{|I|-1}b_i.
\end{equation}
Let $c\in\R^n$ and simultaneously relabel the coordinates of $c$ and $x$ so that $c_0\ge\cdots\ge c_{n-1}$. By summation by parts, \eqref{exponent polytope sum}, and \eqref{exponent polytope upper inequalities},
\[
\sum_{i\in[n)}c_ix_i
=c_{n-1}\sum_{i\in[n)}x_i+
\sum_{m\in[n-1)}(c_m-c_{m+1})\sum_{i=0}^m x_i
\]
\[
\le c_{n-1}\sum_{i\in[n)}b_i+
\sum_{m\in[n-1)}(c_m-c_{m+1})\sum_{i=0}^m b_i
=\sum_{i\in[n)}c_ib_i.
\]
The last expression is the maximum of $\sum_i c_i y_i$ over the permutations $y$ of \eqref{endpoint reciprocal vector}. The finite-dimensional separation theorem therefore places $x$ in their convex hull.
\end{proof}

\begin{proposition}[Interpolated real-variable estimate]\label{interpolated real variable estimate}\lean{Auto.interpolated_real_variable_estimate}\leanok\uses{permutation of endpoint estimate,exponent polytope,multilinear interpolation external}\usesdefs{A_def}
Let $n\ge 2$, and let $(p_i)_{i\in[n)}$ satisfy \eqref{exponent polytope sum} and \eqref{exponent polytope inequalities}. For every $J\ge1$, every $0<t_0<\cdots<t_J$, and every $n$-tuple of complex-valued measurable functions $f_i\in L^{p_i}(\R^n)$,
\begin{equation}\label{interpolated real estimate}
\sum_{j\in[J)}\|A_{t_{j+1}}(\mathbf1_{[0,1]},\mathbf f)-A_{t_j}(\mathbf1_{[0,1]},\mathbf f)\|_2^2
\le 2^{2n}C_{\ref{thm:nct main real}}J^{1-2^{2-n}}
\prod_{i\in[n)}\|f_i\|_{p_i}^2.
\end{equation}
\end{proposition}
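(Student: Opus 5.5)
We recast the estimate as a bound for a single multilinear operator with values in a mixed-norm space and then interpolate between the permuted endpoints. Fix $J$ and $t_0<\cdots<t_J$. Set $Y=\R^n\times[J)$ with Lebesgue measure times counting measure, and define the $n$-linear operator
\[
T(f_0,\ldots,f_{n-1})(x,j)=A_{t_{j+1}}(\mathbf 1_{[0,1]},\mathbf f)(x)-A_{t_j}(\mathbf 1_{[0,1]},\mathbf f)(x).
\]
Then the left-hand side of \eqref{interpolated real estimate} is $\|T(\mathbf f)\|_{L^2(Y)}^2$. It therefore suffices to prove
\[
\|T(\mathbf f)\|_{L^2(Y)}\le 2^{n}C_{\ref{thm:nct main real}}^{1/2}J^{(1-2^{2-n})/2}\prod_i\|f_i\|_{p_i}.
\]
This is an $n$-linear estimate of the form covered by Theorem \ref{multilinear interpolation external}, with $p=2$ and each $X_i=\R^n$.

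The endpoints come from Proposition \ref{permutation of endpoint estimate}, which needs two upgrades. First, it is stated only for real-valued Schwartz functions, while interpolation needs complex simple functions. Write a complex $f_i=f_i^{\Re}+if_i^{\Im}$ and expand $T$ multilinearly into $2^n$ real pieces. Each real piece satisfies the endpoint bound with constant $A=C_{\ref{thm:nct main real}}^{1/2}J^{(1-2^{2-n})/2}$, and the $L^q$ norms of the real and imaginary parts are at most $\|f_i\|_q$. The triangle inequality then costs a factor $2^n$; this is the source of the $2^{2n}$ in \eqref{interpolated real estimate} after squaring. Second, to pass from real Schwartz functions to real simple functions, we approximate in $L^{q_{a,i}}$, which is possible because all endpoint exponents are finite ($q\ge4$). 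The needed continuity of $T$ on each fixed finite set of scales follows, for instance, from Hölder and the $L^2$ bound in Lemma \ref{lem:norm_A_sum_le_sum}-type estimates, or from Fatou applied to a.e. convergent subsequences. This gives, for each permutation $\pi$, the bound \eqref{multilinear interpolation endpoint} with $q_{\pi,i}=2^{\pi(i)+\min(n-\pi(i),2)}$ and the constant $2^nA$.

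Next we pick the interpolation weights. By Proposition \ref{exponent polytope}, conditions \eqref{exponent polytope sum} and \eqref{exponent polytope inequalities} say exactly that $(p_i^{-1})$ lies in the convex hull of the permutations of \eqref{endpoint reciprocal vector}. These permuted vectors are precisely $(q_{\pi,i}^{-1})_i$. Choosing convex weights $\theta_\pi$, Theorem \ref{multilinear interpolation external} with $m=n!$ yields the bound with constant $\prod_\pi(2^nA)^{\theta_\pi}=2^nA$ on simple functions. The extension of $T$ it provides has to agree with the original $T$ on $\prod L^{p_i}$. We check this by approximating by simple functions and again using the continuity of $T$ for fixed $J$ together with Fatou.

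The main obstacle is this density and identification step: making sure $T(\mathbf f)$ is actually defined and continuous for general $L^{p_i}$ functions, whose exponents may include $\infty$. When some $p_i=\infty$, simple functions are not dense. In that case we instead truncate $f_i$ to large balls and use monotone convergence on $|T|$-majorants. Squaring the bound $2^nA\prod_i\|f_i\|_{p_i}$ gives \eqref{interpolated real estimate}.
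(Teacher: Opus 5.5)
Your proposal is correct and follows the paper's proof essentially step for step: the same $n$-linear map into $L^2(\R^n\times[J))\cong L^2(\R^n;\ell^2([J)))$, endpoint bounds from Proposition \ref{permutation of endpoint estimate} extended to real simple functions by density (all endpoint exponents are finite), the factor $2^n$ from splitting into real and imaginary parts, convex weights from Proposition \ref{exponent polytope}, Theorem \ref{multilinear interpolation external}, and a final density argument. The one superfluous piece is your treatment of $p_i=\infty$: the singleton case of \eqref{exponent polytope inequalities} gives $p_i^{-1}\ge 2^{-n}$, so $p_i\le 2^n$ and simple functions are always dense, which is how the paper dispatches this step. Separately, the fixed-$J$ H\"older bound $\|A_t(\mathbf 1_{[0,1]},\mathbf f)\|_2\le\prod_i\|f_i\|_{p_i}$, valid since $\sum_i p_i^{-1}=1/2$, makes identifying the extension with the original map routine.
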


\begin{proof}[Proof \auto]
For fixed $J$ and $(t_j)_{j\in[J+1)}$, consider the $n$-linear map whose value is
\[
\big(A_{t_{j+1}}(\mathbf1_{[0,1]},\mathbf f)-A_{t_j}(\mathbf1_{[0,1]},\mathbf f)\big)_{j\in[J)}
\]
in $L^2(\R^n;\ell^2([J)))$. Proposition \ref{permutation of endpoint estimate} gives its endpoint norm at every permutation of \eqref{endpoint reciprocal vector}. Since all endpoint exponents are finite, approximation by Schwartz functions and the endpoint estimate extend this map uniquely to the corresponding products of real $L^p$ spaces. For complex inputs, expand each $f_i$ into real and imaginary parts and use the triangle inequality. This multiplies every endpoint norm by at most $2^n$. Proposition \ref{exponent polytope} and Theorem \ref{multilinear interpolation external} give \eqref{interpolated real estimate} for complex-valued simple functions. Since \eqref{exponent polytope inequalities} for singleton sets gives $p_i<\infty$, approximation by simple functions and the interpolated estimate extend the map to all $f_i\in L^{p_i}(\R^n)$.
\end{proof}

\begin{exttheorem}[Calder\'on transference principle]\label{Calderon transference external}\lean{Auto.calderon_transference_external}\leanok\uses{}\usesdefs{ergodic averages,A_def}
Let $n\ge2$, let $p_i\in(1,\infty)$ satisfy $\sum_{i\in[n)}p_i^{-1}=1/2$, and let $g:\N\to[1,\infty)$. Assume that for some $C>0$, every $J\ge1$, every $0<t_0<\cdots<t_J$, and every real-valued $f_i\in L^{p_i}(\R^n)$ satisfy
\begin{equation}\label{Calderon transference assumption}
\sum_{j\in[J)}\|A_{t_{j+1}}(\mathbf1_{[0,1]},\mathbf f)-A_{t_j}(\mathbf1_{[0,1]},\mathbf f)\|_2^2
\le Cg(J)\prod_{i\in[n)}\|f_i\|_{p_i}^2.
\end{equation}
Then, for every measure space $(X,\Sigma,\mu)$, every family of mutually commuting measure preserving transformations $(T_i)_{i\in[n)}$, every real-valued $f_i\in L^{p_i}(X)$, every $J\ge1$, and every positive integers $N_0<\cdots<N_J$,
\begin{equation}\label{Calderon transference conclusion}
\sum_{j\in[J)}\|M_{N_{j+1}}(\mathbf f)-M_{N_j}(\mathbf f)\|_2^2
\le C\,C_{\ref{Calderon transference external},n}\,g(J)
\prod_{i\in[n)}\|f_i\|_{p_i}^2,
\end{equation}
where
\begin{equation}\label{Calderon transference constant}
C_{\ref{Calderon transference external},n}
=
2^{4n+6}.
\end{equation}
\end{exttheorem}
This is a standard result and can be found e.g. as Theorem 2.5 of \cite{Pernegger2025}, up to slight restatement with an explicit constant and the specific real-variable averages and ergodic averages used here.
\begin{theorem}[Ergodic jump estimate with general exponents]\label{thm:ergodicthm-interpolation}\lean{Auto.ergodic_jump_estimate_interpolation}\leanok\uses{interpolated real variable estimate,Calderon transference external}\usesdefs{ergodic averages}
Let $n\ge2$, and let $(p_i)_{i\in[n)}$ satisfy \eqref{exponent polytope sum} and \eqref{exponent polytope inequalities}. Let $(X,\Sigma,\mu)$ be a measure space, let $(T_i)_{i\in[n)}$ be mutually commuting measure preserving transformations, and let $(f_i)_{i\in[n)}$ be complex-valued functions with $f_i\in L^{p_i}(X)$. For every $J\ge1$ and positive integers $N_0<\cdots<N_J$,
\begin{equation}\label{ergodic jump general exponents}
\sum_{j\in[J)}\|M_{N_{j+1}}(\mathbf f)-M_{N_j}(\mathbf f)\|_2^2
\le C_{\ref{thm:ergodicthm-interpolation},n,(p_i)_{i\in[n)}}J^{1-2^{2-n}}
\prod_{i\in[n)}\|f_i\|_{p_i}^2,
\end{equation}
where
\begin{equation}\label{ergodic interpolation constant}
C_{\ref{thm:ergodicthm-interpolation},n,(p_i)_{i\in[n)}}
=2^{2n}C_{\ref{Calderon transference external},n}
\bigl(2^{2n}C_{\ref{thm:nct main real}}\bigr)
=2^{8n+672}.
\end{equation}
\end{theorem}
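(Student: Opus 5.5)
The plan is to combine Proposition \ref{interpolated real variable estimate} with Theorem \ref{Calderon transference external}; the extra factor $2^{2n}$ in the constant comes from passing from complex-valued to real-valued inputs.

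First I check the hypotheses of the transference theorem. By \eqref{exponent polytope sum}, $\sum_i p_i^{-1}=1/2$. Applying \eqref{exponent polytope inequalities} to the singleton $I=\{i\}$ gives $p_i^{-1}\ge 2^{-n}>0$, so $p_i<\infty$. Applying it to the complement $I=[n)\setminus\{i\}$ gives $\sum_{l\ne i}p_l^{-1}\ge 2^{-2}$, hence $p_i^{-1}\le 1/2-1/4<1$, so $p_i>1$. Thus $p_i\in(1,\infty)$. For every $J\ge1$ and $0<t_0<\dots<t_J$, Proposition \ref{interpolated real variable estimate}, restricted to real-valued $f_i\in L^{p_i}(\R^n)$, gives \eqref{Calderon transference assumption} with
\[
C=2^{2n}C_{\ref{thm:nct main real}},\qquad g(J)=J^{1-2^{2-n}}.
\]
This $g$ takes values in $[1,\infty)$ for $J\ge1$; at $J=0$ I set $g(0)=1$ to match the domain $\N$. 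Theorem \ref{Calderon transference external} then yields \eqref{ergodic jump general exponents} for real-valued $f_i$ on $X$, with constant $C_{\ref{Calderon transference external},n}\,2^{2n}C_{\ref{thm:nct main real}}$.

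Next I pass to complex-valued $f_i$. Write $f_i=f_i^{0}+\mathrm{i}f_i^{1}$ with real parts satisfying $\|f_i^{\epsilon}\|_{p_i}\le\|f_i\|_{p_i}$. Each $M_N$ is $n$-linear, so $M_{N_{j+1}}(\mathbf f)-M_{N_j}(\mathbf f)$ expands into $2^n$ terms, one for each $\epsilon\in[2)^n$, with unimodular coefficients and real inputs $(f_i^{\epsilon_i})_i$. Applying Minkowski's inequality in $L^2(X;\ell^2([J)))$ bounds the square root of the left-hand side by $2^n$ times the maximal real-case bound. Squaring gives the factor $2^{2n}$, which produces the stated constant
\[
2^{2n}C_{\ref{Calderon transference external},n}\bigl(2^{2n}C_{\ref{thm:nct main real}}\bigr)=2^{2n}\cdot2^{4n+6}\cdot2^{2n}\cdot2^{666}=2^{8n+672}.
\]

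I expect the main obstacle to be bookkeeping rather than analysis: checking that $p_i\in(1,\infty)$ as above, and confirming that the transference theorem accepts a general measure space, not only a $\sigma$-finite one, as stated. Measurability and finiteness of $M_N(\mathbf f)$ in $L^2$ follow from Hölder's inequality together with the measure-preserving property, since $\sum_i p_i^{-1}=1/2$.
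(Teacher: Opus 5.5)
Your proposal is correct and follows the paper's proof exactly. You apply Proposition~\ref{interpolated real variable estimate} to real-valued inputs, feed the result into Theorem~\ref{Calderon transference external} with $g(J)=J^{1-2^{2-n}}$, and then split complex inputs into real and imaginary parts to pick up the extra factor $2^{2n}$. The paper leaves two checks implicit, and you supply both correctly: that $p_i\in(1,\infty)$, via the singleton and complement cases of \eqref{exponent polytope inequalities}, and that $g$ can be taken $[1,\infty)$-valued on all of $\N$.
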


\begin{proof}[Proof \auto]
Apply Proposition \ref{interpolated real variable estimate} to real-valued inputs and then Theorem \ref{Calderon transference external} with $g(J)=J^{1-2^{2-n}}$. This gives the factor $C_{\ref{Calderon transference external},n}$ in addition to the constant $2^{2n}C_{\ref{thm:nct main real}}$ from Proposition \ref{interpolated real variable estimate}. For complex inputs, expand every $f_i$ into real and imaginary parts. The triangle inequality in $L^2(X;\ell^2([J)))$ contributes at most another factor $2^n$ to the operator norm. Squaring gives the factor $2^{2n}$ in \eqref{ergodic interpolation constant}.
\end{proof}

\begin{theorem}[Symmetric ergodic jump estimate]\label{thm:ergodicthmJ}\lean{Auto.symmetric_ergodic_jump_estimate}\leanok\uses{thm:ergodicthm-interpolation}\usesdefs{ergodic averages}
Let $n\ge2$. Let $(X,\Sigma,\mu)$ be a measure space, let $(T_i)_{i\in[n)}$ be mutually commuting measure preserving transformations, and let $(f_i)_{i\in[n)}$ be complex-valued functions in $L^{2n}(X)$. For every $J\ge1$ and positive integers $N_0<\cdots<N_J$,
\begin{equation}\label{symmetric ergodic jump estimate}
\sum_{j\in[J)}\|M_{N_{j+1}}(\mathbf f)-M_{N_j}(\mathbf f)\|_2^2
\le C_{\ref{thm:ergodicthmJ},n}J^{1-2^{2-n}}
\prod_{i\in[n)}\|f_i\|_{2n}^2,
\end{equation}
where
\begin{equation}\label{symmetric ergodic jump constant}
C_{\ref{thm:ergodicthmJ},n}
=2^{8n+672}.
\end{equation}
\end{theorem}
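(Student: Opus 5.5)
This is a direct specialization of Theorem \ref{thm:ergodicthm-interpolation} to the symmetric choice $p_i=2n$ for all $i\in[n)$. The plan is to check that this exponent tuple satisfies the two hypotheses \eqref{exponent polytope sum} and \eqref{exponent polytope inequalities}, and then read off the constant.

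First, the sum condition \eqref{exponent polytope sum} holds because $\sum_{i\in[n)}(2n)^{-1}=n\cdot(2n)^{-1}=\tfrac12$.

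Second, for the inequalities \eqref{exponent polytope inequalities}, let $I\subset[n)$ be nonempty with $|I|=m$, where $1\le m\le n$. We need
\[
\tfrac{m}{2n}\ge 2^{m-n-1},\qquad\text{equivalently}\qquad \tfrac{m}{n}\ge 2^{m-n}.
\]
Write $k=n-m\in[0,n-1]$; the inequality becomes $\tfrac{n-k}{n}\ge 2^{-k}$. Since $k\mapsto 2^{-k}$ is convex, the function $k\mapsto 1-\tfrac kn-2^{-k}$ is concave on $[0,n]$. It therefore suffices to check it at the endpoints $k=0$ and $k=n$, where it takes the values $0$ and $-2^{-n}$. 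The second value is negative, so this endpoint check fails, and concavity only gives the inequality on part of the range.

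I will therefore handle the inequality directly. Since $n\ge 2$, the $k=n$ endpoint is not needed, because $k\le n-1$. At $k=n-1$ the inequality reads $\tfrac1n\ge 2^{1-n}$, i.e.\ $2^{n-1}\ge n$, which holds for $n\ge1$. Concavity on $[0,n-1]$ together with the endpoint values $k=0$ (equality) and $k=n-1$ (true) then gives the inequality for all integers $k\in[0,n-1]$. This verification is the only step with any content, and even it is elementary.

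With both hypotheses verified, Theorem \ref{thm:ergodicthm-interpolation} applies to complex-valued $f_i\in L^{2n}(X)$ on an arbitrary measure space with commuting measure preserving transformations. It yields \eqref{symmetric ergodic jump estimate} with constant $C_{\ref{thm:ergodicthm-interpolation},n,(2n)_{i\in[n)}}=2^{8n+672}$, which is the stated value $C_{\ref{thm:ergodicthmJ},n}$.
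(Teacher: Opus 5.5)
Your proposal is correct and follows the paper's proof. Both specialize Theorem~\ref{thm:ergodicthm-interpolation} to $p_i=2n$, check \eqref{exponent polytope sum} and \eqref{exponent polytope inequalities}, and read off the constant $2^{8n+672}$. Your concavity argument on $k\in[0,n-1]$ supplies a proof of the elementary inequality $m2^{n-m}\ge n$ for $1\le m\le n$, which the paper simply asserts; you were right to discard the endpoint $k=n$, since it corresponds to $I=\emptyset$.
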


\begin{proof}[Proof \auto]
We have $\sum_{i\in[n)}(2n)^{-1}=1/2$. If $I\subset[n)$ has $m\ge1$ elements, then
\[
\sum_{i\in I}(2n)^{-1}=\tfrac{m}{2n}\ge2^{m-n-1},
\]
because $m2^{n-m}\ge n$ for $1\le m\le n$. Thus Theorem \ref{thm:ergodicthm-interpolation} applies.
\end{proof}

\begin{lemma}[From jump estimates to variation]\label{jump estimates imply variation}\lean{Auto.jump_estimates_imply_variation_enorm,Auto.jump_estimates_imply_variation_endpoint_enorm}\leanok\uses{}\usesdefs{}
Let $B$ be a seminormed space, let $a:\N\to B$, let $r_0\ge2$, and let $D>0$. Assume that, for every $J\ge1$ and every $0<N_0<\cdots<N_J$,
\begin{equation}\label{abstract jump estimate}
\sum_{j\in[J)}\|a(N_{j+1})-a(N_j)\|^2\le D J^{1-2/r_0}.
\end{equation}
If $r>r_0$, then
\begin{equation}\label{abstract variation estimate}
\|a\|_{V_r(N\in\N,\ N\ge1;B)}
\le 2^{1-1/r_0}\left(\tfrac{r}{r-r_0}\right)^{1/r}D^{1/2}.
\end{equation}
If $r_0=2$, then
\begin{equation}\label{abstract endpoint variation estimate}
\|a\|_{V_2(N\in\N,\ N\ge1;B)}\le D^{1/2}.
\end{equation}
\end{lemma}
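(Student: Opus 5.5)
The plan is a layer-cake (dyadic pigeonholing) decomposition of the jumps by size. Fix an increasing sequence $N_0<\cdots<N_M$ of positive integers and write $d_j=\|a(N_{j+1})-a(N_j)\|$. We need to bound $\sum_j d_j^r$ uniformly in $M$.

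The endpoint case $r_0=2$ comes first. The exponent $1-2/r_0$ vanishes, so the hypothesis applied with $J=M$ gives $\sum_j d_j^2\le D$ for every increasing sequence. Taking the supremum over sequences gives \eqref{abstract endpoint variation estimate} at once.

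For $r>r_0$ we first turn the hypothesis into a bound on the number of large jumps. For $\lambda>0$ let $J_\lambda=\#\{j: d_j>\lambda\}$. The sub-collection of jumps with $d_j>\lambda$ need not consist of consecutive pairs, so we cannot apply \eqref{abstract jump estimate} to it directly. Instead we use the two-to-one trick. Split the set $\{j:d_j>\lambda\}$ into the indices with $j$ even and those with $j$ odd. One of these classes has at least $J_\lambda/2$ members, and within it the intervals $[N_j,N_{j+1}]$ are disjoint and ordered. Even so, consecutive elements of the class are not consecutive points of an admissible sequence: the points $N_{j+1}$ and $N_{j'}$ sit in between.

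The cleaner route is to apply the hypothesis to the full subsequence $N_j<N_{j+1}\le N_{j'}<N_{j'+1}<\cdots$ made of all endpoints of the selected jumps. If some $N_{j+1}=N_{j'}$, that repeated point is simply deleted. This sequence has at most $2J_\lambda$ jumps, and its jump sum dominates $\sum_{d_j>\lambda} d_j^2>J_\lambda\lambda^2$. The hypothesis then yields
\[
J_\lambda\lambda^2\le D(2J_\lambda)^{1-2/r_0},
\qquad\text{hence}\qquad
J_\lambda\le 2^{r_0/2-1}D^{r_0/2}\lambda^{-r_0}.
\]
(One may also need to handle $J_\lambda=0$ trivially.)

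Then we integrate this tail bound. Write
\[
\sum_j d_j^r=\int_0^\infty r\lambda^{r-1}J_\lambda\,d\lambda .
\]
Split the integral at a threshold $\lambda_0$. For $\lambda\ge\lambda_0$ we use the tail bound; this part converges because $r>r_0$ and produces a factor $\frac{r}{r-r_0}$. For $\lambda<\lambda_0$ we need a second estimate, and the natural one is $J_\lambda\le J_{\lambda}$ capped by the maximal jump size. Indeed, every single jump satisfies $d_j^2\le D$ by the hypothesis with $J=1$, so $J_\lambda=0$ for $\lambda\ge D^{1/2}$. So a better strategy is to bound over $\lambda\in(0,D^{1/2})$ only and use the tail bound throughout:
\[
\sum_j d_j^r\le \int_0^{D^{1/2}} r\lambda^{r-1}\min\bigl(M,\,2^{r_0/2-1}D^{r_0/2}\lambda^{-r_0}\bigr)\,d\lambda .
\]
With the tail bound alone, the integrand near $0$ behaves like $\lambda^{r-r_0-1}$, which is integrable since $r>r_0$. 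This gives
\[
\sum_j d_j^r\le 2^{r_0/2-1}\tfrac{r}{r-r_0}D^{r/2}.
\]
Taking $r$-th roots gives the constant $2^{(r_0/2-1)/r}(r/(r-r_0))^{1/r}D^{1/2}$. Since $r>r_0\ge2$, we have $(r_0/2-1)/r\le 1-1/r_0$, which matches \eqref{abstract variation estimate}.

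The main obstacle is the counting step. One must make sure the selected large jumps can be embedded into an admissible increasing sequence with at most $2J_\lambda$ jumps whose squared-jump sum still dominates $J_\lambda\lambda^2$. Since the extra jumps contribute nonnegatively, this domination holds, so the step should go through. Measurability is not an issue here, because for a finite sequence $\lambda\mapsto J_\lambda$ is a step function.
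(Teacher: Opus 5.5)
Your argument is correct. Its core is the paper's: embed the large jumps into an admissible sequence with at most twice as many jumps, deduce a weak-$\ell^{r_0}$ bound, then sum. The bookkeeping differs in three ways.

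First, the paper works with the decreasing rearrangement $d_m^*$ and uses $\sum_{m\ge1} m^{-r/r_0}\le \frac{r}{r-r_0}$. You use the distribution function $J_\lambda$ and the layer-cake formula instead. These are two dual forms of the embedding of weak $\ell^{r_0}$ into $\ell^r$.

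Second, the paper splits the selected indices by parity so that the endpoints of the selected intervals are distinct. You take all endpoints at once and merge coincident points. This is legitimate, since $N_j$ and $N_{j+1}$ remain adjacent in any subsequence of $(N_i)$ containing both. It also saves the factor $2$ that the paper incurs from summing over the two parity classes.

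Third, your layer-cake integral genuinely needs the cutoff $J_\lambda=0$ for $\lambda\ge D^{1/2}$, which comes from the hypothesis with $J=1$. In the paper this is automatic, because the rearranged sum starts at $m=1$. The tail-bound integrand $\lambda^{r-r_0-1}$ is integrable at $0$ but not at $\infty$. So your intermediate remark that the range $\lambda\ge\lambda_0$ "converges because $r>r_0$" is backwards, but the final version of your argument uses the cutoff correctly.

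Your resulting constant $2^{(r_0/2-1)/r}$ is at most $2^{1-1/r_0}$, so you obtain a slightly stronger bound than stated.
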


\begin{proof}[Proof \auto]
Fix $0<N_0<\cdots<N_J$ and write $d_j=\|a(N_{j+1})-a(N_j)\|$. Let $d_1^*\ge\cdots\ge d_J^*$ be the decreasing rearrangement. Choose the indices of the $m$ largest $d_j$ and split them according to the parity of $j$. Within either parity class the selected intervals $[N_j,N_{j+1}]$ are disjoint and nonadjacent. Listing their endpoints in increasing order and applying \eqref{abstract jump estimate} gives
\[
\sum_{j\in E}d_j^2\le D(2|E|)^{1-2/r_0}
\]
for either class $E$. Hence
\[
m(d_m^*)^2\le 2D(2m)^{1-2/r_0},
\]
so
\[
d_m^*\le2^{1-1/r_0}D^{1/2}m^{-1/r_0}.
\]
Therefore, when $r>r_0$,
\[
\sum_{j\in[J)}d_j^r
\le2^{r(1-1/r_0)}D^{r/2}\sum_{m=1}^{\infty}m^{-r/r_0}
\le2^{r(1-1/r_0)}D^{r/2}\tfrac{r}{r-r_0}.
\]
Taking the $r$th root and then the supremum gives \eqref{abstract variation estimate}. If $r_0=2$, \eqref{abstract jump estimate} itself is uniform in $J$ and gives \eqref{abstract endpoint variation estimate} directly.
\end{proof}

\begin{proof}[Proof of Theorem \ref{thm:ergodicthm} \auto]
Put
\[
D=C_{\ref{thm:ergodicthmJ},n}\prod_{i\in[n)}\|f_i\|_{2n}^2.
\]
Theorem \ref{thm:ergodicthmJ} gives \eqref{abstract jump estimate} with $r_0=2^{n-1}$. If $n=2$, Lemma \ref{jump estimates imply variation} gives the result for $r=2$, and the monotonicity of finite $\ell^r$ norms gives it for every $r\ge2$. If $n\ge3$ and $r>2^{n-1}$, the same lemma gives
\[
\|M_N(\mathbf f)\|_{V_r(N\in\N,\ N\ge1;L^2(X))}
\le 2^{1-2^{1-n}}\left(\tfrac{r}{r-2^{n-1}}\right)^{1/r}
C_{\ref{thm:ergodicthmJ},n}^{1/2}
\prod_{i\in[n)}\|f_i\|_{2n}.
\]
\end{proof}


\begin{thebibliography}{99}
\bibitem{Austin10}
T.~Austin,
On the norm convergence of non-conventional ergodic averages,
\emph{Ergodic Theory Dynam. Systems} \textbf{30} (2010), 321--338.

\bibitem{AvigadRute15}
J.~Avigad and J.~Rute,
Oscillation and the mean ergodic theorem for uniformly convex Banach spaces,
\emph{Ergodic Theory Dynam. Systems} \textbf{35} (2015), 1009--1027.

\bibitem{BerghLofstrom}
J.~Bergh and J.~L\"ofstr\"om,
\emph{Interpolation Spaces: An Introduction},
Springer-Verlag, Berlin, 1976.

\bibitem{CarlesonBlueprint}
L.~Becker, M.~I.~de Frutos-Fernández, L.~Diedering, F.~van Doorn,
S.~Gouëzel, A.~Jamneshan, E.~Karunus, E.~van de Meent,
P.~Monticone, J.~Mulder-Sohn, J.~Portegies, J.~Roos,
M.~Rothgang, R.~Srivastava, J.~Sundstrom, J.~Tan, and C.~Thiele,
\emph{A blueprint for the formalization of Carleson's theorem on convergence of Fourier series},
arXiv:2405.06423v2, 2025.

\bibitem{CarlesonPaper}
L.~Becker, F.~van Doorn, A.~Jamneshan, R.~Srivastava, and C.~Thiele,
\emph{Carleson operators on doubling metric measure spaces},
arXiv:2508.05563, 2025.

\bibitem{DurcikKovacSkrebThiele19}
P.~Durcik, V.~Kova\v{c}, K.~A.~\v{S}kreb, and C.~Thiele,
Norm variation of ergodic averages with respect to two commuting transformations,
\emph{Ergodic Theory Dynam. Systems} \textbf{39} (2019), 658--688.

\bibitem{DurcikKovacThiele19}
P.~Durcik, V.~Kova\v{c}, and C.~Thiele,
Power-type cancellation for the simplex Hilbert transform,
\emph{J. Anal. Math.} \textbf{139} (2019), 67--82.

\bibitem{DurcikSlavikovaThiele22}
P.~Durcik, L.~Slav\'ikov\'a, and C.~Thiele,
Local bounds for singular Brascamp--Lieb forms with cubical structure,
\emph{Math. Z.} \textbf{302} (2022), 2375--2405.

\bibitem{DurcikSlavikovaThiele26}
P.~Durcik, L.~Slav\'ikov\'a, and C.~Thiele,
Norm-variation of triple ergodic averages for commuting transformations,
\emph{Anal. PDE} \textbf{19} (2026), 539--586.

\bibitem{DurcikThiele21survey}
P.~Durcik and C.~Thiele,
Singular Brascamp--Lieb: a survey,
in \emph{Geometric Aspects of Harmonic Analysis},
Springer INdAM Ser. \textbf{45},
Springer, Cham, 2021, 321--349.

\bibitem{Groechenig}
K.~Gr\"ochenig,
\emph{Foundations of Time-Frequency Analysis},
Birkh\"auser, 2001.

\bibitem{Hormander1}
L.~H\"ormander,
\emph{The Analysis of Linear Partial Differential Operators I: Distribution Theory and Fourier Analysis},
second edition, Springer-Verlag, Berlin, 1990.

\bibitem{Host09}
B.~Host,
Ergodic seminorms for commuting transformations and applications,
\emph{Studia Math.} \textbf{195} (2009), 31--49.

\bibitem{JonesOstrovskiiRosenblatt96}
R.~L.~Jones, I.~V.~Ostrovskii, and J.~M.~Rosenblatt,
Square functions in ergodic theory,
\emph{Ergodic Theory Dynam. Systems} \textbf{16} (1996), 267--305.

\bibitem{JonesSeegerWright08}
R.~L.~Jones, A.~Seeger, and J.~Wright,
Strong variational and jump inequalities in harmonic analysis,
\emph{Trans. Amer. Math. Soc.} \textbf{360} (2008), 6711--6742.

\bibitem{Kovac12}
V.~Kova\v{c},
Boundedness of the twisted paraproduct,
\emph{Rev. Mat. Iberoam.} \textbf{28} (2012), 1143--1164.

\bibitem{Kovac16}
V.~Kova\v{c},
Quantitative norm convergence of double ergodic averages associated with two commuting group actions,
\emph{Ergodic Theory Dynam. Systems} \textbf{36} (2016), 860--874.

\bibitem{Mathlib}
The mathlib Community,
\emph{The Lean mathematical library},
in \emph{Proceedings of the 9th ACM SIGPLAN International Conference
on Certified Programs and Proofs (CPP 2020)},
Association for Computing Machinery, New York, 2020,
pp.~367--381,
doi:10.1145/3372885.3373824.

\bibitem{LeanPaper}
L.~de Moura and S.~Ullrich,
\emph{The Lean 4 theorem prover and programming language},
in \emph{Automated Deduction -- CADE 28},
Lecture Notes in Computer Science, vol.~12699,
Springer, 2021, pp.~625--635.

\bibitem{Pernegger2025}
F.~Pernegger,
\emph{Formalisation of the Calder\'on Transference Principle in Ergodic Theory},
Bachelor's thesis, Rheinische Friedrich-Wilhelms-Universit\"at Bonn, September 2025.

\bibitem{Tao07}
T.~Tao,
Norm convergence of multiple ergodic averages for commuting transformations,
\emph{Ergodic Theory Dynam. Systems} \textbf{28} (2008), 657--688.

\bibitem{Walsh12}
M.~N.~Walsh,
Norm convergence of nilpotent ergodic averages,
\emph{Ann. of Math.} \textbf{175} (2012), 1667--1688.

\bibitem{Wiener32}
N.~Wiener,
Tauberian theorems,
\emph{Ann. of Math.} \textbf{33} (1932), 1--100.
\end{thebibliography}
\end{document}